\documentclass[12pt,a4paper]{book}
\usepackage{amsthm,amsfonts,amsmath,amssymb,latexsym}
\usepackage{epsfig,graphics,color}
\usepackage[utf8]{inputenc}
\usepackage[matrix,arrow,curve]{xy}
\usepackage[breaklinks=true]{hyperref}
\usepackage{tikz-cd}
\usepackage{verbatim}
\usepackage{bm}
\usepackage{enumitem}

%%%%%%%%%%%%%%%%  header %%%%%%%%%%%%%%%%%

%%% Commandes de Mihai

\newcommand{\ri}{\rightarrow}

\newcommand{\calm}{{\mathcal{M}}}
\newcommand{\calf}{{\mathcal{F}}}
\newcommand{\calc}{{\mathcal{C}_*}}
\newcommand{\call}{{\mathcal{L}}}
\newcommand{\calp}{{\mathcal{P}}}
\newcommand{\cals}{{\mathcal{S}}}
\newcommand{\calb}{{\mathcal{B}}}
\newcommand{\cale}{{\mathcal{E}}}

\newcommand{\gol}{\, \, \, \, \, \, \, \, }

\newcommand{\bfs}{{\bf S}}
\newcommand{\piu}{\pi_{1}}

\newcommand{\wu}{\overline{W}^u}
\newcommand{\crit}{\mathrm{ Crit}}

%
%%%%%%%%%%%%%%%% Environments/macros %%%%%%%%%%
%
%default is \theoremstyle{theorem}. 
%But there are also other styles !!
\newtheorem{PARA}{}[section]
\newtheorem{theorem}[PARA]{Theorem}
\newtheorem{corollary}[PARA]{Corollary}
\newtheorem{lemma}[PARA]{Lemma}
\newtheorem{proposition}[PARA]{Proposition}
\newtheorem{definition}[PARA]{Definition}

\theoremstyle{definition}
\newtheorem{remark}[PARA]{Remark}
\theoremstyle{theorem}
\newtheorem{example}[PARA]{Example}
\theoremstyle{theorem}
\newtheorem{question}[PARA]{Question}

\newcommand{\para}{\begin{PARA}\rm}
\newcommand{\arap}{\end{PARA}\rm}
\newcommand{\dfn}{\begin{definition}\rm}
\newcommand{\nfd}{\end{definition}\rm}
\newcommand{\rmk}{\begin{remark}\rm}
\newcommand{\kmr}{\end{remark}\rm}
\newcommand{\xmpl}{\begin{example}\rm}
\newcommand{\lpmx}{\end{example}\rm}

\newcommand{\cB}{\mathcal{B}}

\newcommand{\cC}{\mathcal{C}}
\newcommand{\cD}{\mathcal{D}}
\newcommand{\cF}{\mathcal{F}}
\newcommand{\cE}{\mathcal{E}}
\newcommand{\cG}{\mathcal{G}}
\newcommand{\cH}{\mathcal{H}}

\newcommand{\cK}{\mathcal{K}}
\newcommand{\cL}{\mathcal{L}}
\newcommand{\cM}{\mathcal{M}}

\newcommand{\cP}{\mathcal{P}}

\newcommand{\cY}{\mathcal{Y}}

%%%%%%%%%%%%%%%%%%%%%%%%%%%%%%%%%%%%%%
%% For fancy "o", uncomment the next two lines and comment the third one
\usepackage[scr=stixfancy]{mathalfa}
\newcommand{\scro}{\mathscr{o}}
%\newcommand{\scro}{\mathrm{o}}
%%%%%%%%%%%%%%%%%%%%%%%%%%%%%%%%%%%%%%

\newcommand{\ori}{\mbox{Or}}
\newcommand{\coori}{\mbox{Coor}}
\newcommand{\linecall}{\overline{\call}}

%%%% Bold math symbols

\newcommand{\h}{\boldsymbol{\mathrm{h}}}

%%%%

%

%
%

%

%
%

\newcommand{\K}{{\mathbb{K}}}

\newcommand{\R}{{\mathbb{R}}}

\newcommand{\Z}{{\mathbb{Z}}}
\newcommand{\caly}{{\mathcal{Y}}}
%
  % cokernel
  % colimit
  % homotopy colimit
  % Critical value
\newcommand{\im}{\mathrm{im}\,}        % image
  % range
    % span
      % Det
      % domain
      % divergence
  % trace
        % trace
    % sign
\newcommand{\id}{\mathrm{ Id}}         % identity
\newcommand{\Id}{\mathrm{ Id}}
    % rank
   % codimension
     % diagonal matrix
         % closure
     % distance
       % interior
     % support
   % (Fredholm)index
\newcommand{\ind}{\mathrm{ind}\,}
    % gradient
       % real part
       % imaginary part
       % space of metrics
   %regular J's

\newcommand{\pr}{{\mathrm{pr}}}
\newcommand{\ev}{\mathrm{ev}}
          % Automorphisms
\newcommand{\Crit}{\mathrm{ Crit}}
        % Diffeomorphisms
\newcommand{\Vect}{\mathrm{ Vect}}        % Vector fields
          % Holomorphic
\newcommand{\End}{\mathrm{ End}}          % Endomorphisms

\newcommand{\Per}{\mathrm{Per}}

\newcommand{\Hom}{\mathrm{Hom}}

\newcommand{\eps}{{\varepsilon}}

\def\NABLA#1{{\mathop{\nabla\kern-.5ex\lower1ex\hbox{$#1$}}}}
\def\Nabla#1{\nabla\kern-.5ex{}_{#1}}
\def\Tabla#1{\Tilde\nabla\kern-.5ex{}_{#1}}
\renewcommand{\Tilde}{\widetilde}

\newcommand{\p}{{\partial}}

\newcommand{\ol}{\overline}
\newcommand{\ul}{\underline}

\newcommand{\fibreprod}[2]{\sideset{_{#1}}{_{#2}}{\mathop{\times}}}

\definecolor{vincent}{rgb}{0,0,1.0}

\definecolor{dartmouthgreen}{rgb}{0.05, 0.5, 0.06}

\parskip=5pt
\parindent=0pt

%%%%%%%%%%%%%%%%%
% "article" class title and authors
%
% Uncomment when using the "article" class
% Comment out when using the "amsart" class
%%%%%%%%%%%%%%%%%
\newcommand{\footremember}[2]{
\footnote{#2}
\newcounter{#1}
\setcounter{#1}{\value{footnote}}
}
\newcommand{\footrecall}[1]{
\footnotemark[\value{#1}]
} 
\title{Morse Homology with DG Coefficients}
\author{%
  Jean-Fran\c{c}ois Barraud\footremember{Toulouse}{IMT, Universit\'e de Toulouse}
  \and Mihai Damian\footremember{Strasbourg}{IRMA, Universit\'e de Strasbourg}
  \and Vincent Humili\`ere\footremember{Jussieu}{IMJ-PRG, Sorbonne Universit\'e}
  \and Alexandru Oancea\footrecall{Strasbourg}%\footrecall{Jussieu}
}
\date{\today}

\begin{document}

%%%%%%%%%%%%%%%%%%
%% "amsart" class title and authors
%%
%% Uncomment when using the "amsart" class
%% Comment out when using the "article" class
%%%%%%%%%%%%%%%%%%

%\title{Morse Homology with DG Coefficients}
%
%\author[J.-F.~Barraud]{Jean-François Barraud}
%\address{Institut de mathématiques de Toulouse \newline Université Paul Sabatier – Toulouse
%III \newline 118 route de Narbonne, F-31062 Toulouse Cedex 9, France}
%\email{barraud@math.univ-toulouse.fr}
%
%\author[M.~Damian]{Mihai Damian}
%\address{Institut de recherche mathématique avancée, IRMA\newline 
%Université de Strasbourg\newline  Strasbourg, France}
%\email{damian@math.unistra.fr} 
%
%\author[V.~Humili\`ere]{Vincent Humili\`ere}
%\address{Sorbonne Université and Université de Paris, CNRS, IMJ-PRG, F-75006 Paris, France
%\& Institut Universitaire de France}
%\email{vincent.humiliere@imj-prg.fr}
%
%\author[A.~Oancea]{Alexandru Oancea}
%\address{Institut de recherche mathématique avancée, IRMA\newline 
%Université de Strasbourg\newline  Strasbourg, France}
%\email{oancea@unistra.fr}

%%

\maketitle

%%%%%%%%%%%%%%%%%%%%%%%%%%%%%%%%%%%%%%%%%%%%%%
%%%%%%%%%%%%%%%%%%%%%%%%%%%%%%%%%%%%%%%%%%%%%%
%%%%%%%%%%%%%%%% Abstract %%%%%%%%%%%%%%%%%%%%
%%%%%%%%%%%%%%%%%%%%%%%%%%%%%%%%%%%%%%%%%%%%%%
%%%%%%%%%%%%%%%%%%%%%%%%%%%%%%%%%%%%%%%%%%%%%%

\vspace{-.5cm} 

% \begin{abstract}
\paragraph{Abstract.}
We develop a theory of Morse homology and cohomology with coefficients in a derived local system, for manifolds and also more generally for colimits of spaces that have the homotopy type of manifolds, with a view towards Floer theory. The model that we adopt for derived, or differential graded (DG) local systems is that of DG modules over chains on the based loop space of a manifold. These encompass both classical (non DG) local systems and chains on fibers of Hurewicz fibrations. We prove that the Morse homology and cohomology groups that we construct are isomorphic to DG Tor and Ext functors. The key ingredient in the definition is a notion of twisting cocycle obtained by evaluating into based loops a coherent system of representatives for the fundamental classes of the moduli spaces of Morse trajectories of arbitrary dimensions. From this perspective, our construction sits midway between classical Morse homology with twisted coefficients and more refined invariants of Floer homotopical flavor. 

The construction of the twisting cocycle is originally due to Barraud and Cornea with $\mathbb{Z}/2$-coefficients. We show that the twisting cocycle with integer coefficients is equivalent to Brown's universal twisting cocycle. We prove that Morse homology with coefficients in chains on the fiber of a Hurewicz fibration recovers the homology of the total space of the fibration. We study several structural properties of the theory: invariance, functoriality, and Poincaré duality, also in the nonorientable case.

% \qquad 

% \end{abstract}

\setcounter{tocdepth}{1}
\tableofcontents

\vfill \pagebreak

%%%%%%%%%%%%%%%%%%%%%%%%%%%%%%%%%%%%%%
\chapter{Introduction and main results}
%%%%%%%%%%%%%%%%%%%%%%%%%%%%%%%%%%%%%%

Given a based and path-connected topological space $X$, we denote by $C_*(\Omega X)$ the DGA of cubical chains on the based Moore loop space $\Omega X$. A \emph{DG local system} on $X$ is a chain complex $\cF$ which is a DG right $C_*(\Omega X)$-module. The purpose of this book is to lay down the foundations of Morse homology and cohomology theory with coefficients in $\cF$ for smooth manifolds.
The main source of DG local
systems are fibrations over $X$ (see below), but our setup also includes classical (non-DG) local
systems, in which case we recover homology with local
coefficients (see~\S5.1.2). 
As explained below, this is equivalent to giving Morse models for the DG Tor and Ext functors 
$$
\mathrm{Tor}^{C_*(\Omega X)} (\cF,\Z),\qquad \mathrm{Ext}_{C_*(\Omega X)}(\Z,\cF).
$$

{\it The Morse complex with DG coefficients}. Assume that $X$ is a smooth closed manifold and choose auxiliary data consisting of a Morse function $f:X\to \R$, a Morse-Smale negative pseudo-gradient vector field $\xi$, and an embedded tree $\cY\subset X$ rooted at a basepoint and containing the set $\Crit(f)$ of critical points of $f$.
Denote $|x|$ the Morse index of a critical point $x$. Following 
Barraud-Cornea~\cite{BC07}, we associate to the triple $(f,\xi,\cY)$ a collection of chains on the based loop space $(m_{x, y})$, $x,y\in \Crit(f)$, $m_{x, y} \in C_{|x|-|y|-1}(\Omega X)$ such that 
$$
\p m_{x, y} = \sum_z (-1)^{|x|-|z|} m_{x, z} m_{z, y},
$$
where the multiplication of chains is understood with respect to the Pontryagin product in $C_*(\Omega X)$. We call $(m_{x, y})$ \emph{the Barraud-Cornea twisting cocycle}. Roughly speaking $m_{x, y}$ is obtained by evaluating into $\Omega (X/\cY)\simeq \Omega X$ the fundamental class rel boundary of the compactified moduli space $\ol\cL(x,y)$ of pseudo-gradient trajectories  from $x$ to $y$. Given a DG local system $\cF$, we further define 
a chain complex 
$$
C_*(X;\cF) = \cF\otimes \langle \Crit (f)\rangle,
$$ 
with differential 
$$
\p(\alpha\otimes x) = \p \alpha \otimes x + (-1)^{|\alpha|} \sum_y \alpha m_{x, y}\otimes y. 
$$
We refer to $C_*(X,\cF)$ as being \emph{the Morse complex with DG-coefficients}, or \emph{the enriched Morse Complex}. Its homology $H_*(X;\cF)$ is called  \emph{Morse homology with coefficients in $\cF$}, or \emph{enriched Morse homology}.

This construction is due to Barraud and Cornea for $\cF=C_*(\Omega X;\Z/2)$, with module structure given by right multiplication. In that case $H_*(X;\cF)$ is the homology of a point and the original point of view of~\cite{BC07} was to emphasize the associated spectral sequence as being the object of interest. In the current work we insist on the target of the spectral sequence as being an object of interest in its own right, and view the spectral sequence as a tool to approach it. Regardless of the module of coefficients used, we will refer in the sequel to the previous construction as \emph{the Barraud-Cornea construction}.  

{\it Fibrations}. The main source of DG-local systems are Hurewicz fibrations $F\hookrightarrow E\to X$. The holonomy of such a fibration is encoded in a \emph{holonomy map} $F\times \Omega X\to F$ determined by a (unique up to homotopy) choice of \emph{lifting function}. This induces a right $C_*(\Omega X)$-module structure on $\cF=C_*(F)$. 

{\bf Theorem~A (Fibration theorem)}. 
{\it Let $E\to X$ be a Hurewicz fibration and let $\cF$ be the associated DG-local system on $X$. 
There is a chain map 
	$$
\Psi:	C_*(X;\cF)\longrightarrow C_*(E)
	$$
 which induces an isomorphism between the canonical spectral sequence of the Morse complex determined by the index filtration, and the Leray-Serre spectral sequence of the fibration $E$. In particular $\Psi$ induces an isomorphism 
 $$
 \Psi_* :H_*(X;\cF)\stackrel\simeq\longrightarrow H_*(E). 
 $$
}

We discuss lifting functions in~\S\ref{sec:fibrations} and prove Theorem~A as Theorem~\ref{thm:fibration}. This statement was proved previously with $\Z/2$-coefficients, for the path-loop fibration by Barraud and Cornea in~\cite[Theorem~1.1.d]{BC07} and for Hurewicz fibrations by Charette in~\cite{Charette2017}. 

{\it A hybrid construction.} We find it important to emphasize the hybrid character of this construction: \emph{Morse data} on $X$ is combined with the DGA of \emph{cubical chains} on the space of based loops $\Omega X$. For the purpose of the discussion in this paragraph, we emphasize this by denoting $\cF=\cF^{\mathrm{cubical}}$, $C_*(\Omega X)=C_*^{\mathrm{cubical}}(\Omega X)$, $C_*(X;\cF)=C_*^{\mathrm{Morse}}(X;\cF^{\mathrm{cubical}})$. This construction sits midway between two other ones: 
\begin{center}
{\small
\begin{tabular}{c|c|c}
$C_*^{\mathrm{cubical}}(X;\cF^{\mathrm{cubical}})$ & $\boldsymbol{C_*^{\mathrm{Morse}}(X;\cF^{\mathrm{cubical}})}$ & $C_*^{\mathrm{Morse}}(X;\cF^{\mathrm{Morse}})$ \\
& & \\
Cubical chains on $X$ & {\bf Morse theory on $\boldsymbol{X}$} & Morse theory on $X$ \\
& & \\
$C_*^{\mathrm{cubical}}(\Omega X)$-module & {\bf $\boldsymbol{C_*^{\mathrm{cubical}}(\Omega X)}$-module} & $C_*^{\mathrm{Morse}}(\Omega X)$-module
\end{tabular}
}
\end{center}
As shown in the table, one could consider a theory which uses cubical chains on both $X$ and $\Omega X$, and also a theory which uses Morse data on both $X$ and $\Omega X$. These theories would be equivalent models for the use of DG local coefficients, but their implementation would require different techniques of varying levels of complexity. The ``all cubical" theory on the first column is classical and is essentially due to Brown~\cite{Brown1959}. The ``all Morse" theory on the third column has not yet been fully constructed, though parts of it can be recovered from the existing literature, e.g. the $A_\infty$-algebra structure on $C_*^{\mathrm{Morse}}(\Omega X)$, which can be emulated from the $A_\infty$-algebra structure of wrapped Floer homology~\cite{Abouzaid2012a}. 

One important motivation for considering a theory involving Morse data on $X$ is that it admits a direct symplectic generalization in Floer theory. Among the two options represented by the second and third column in the previous table, the one that we study in this book is by far technically simpler. We will apply it to Floer theory in a sequel paper~\cite{BDHO-cotangent}. 

{\it Range of applicability of the Barraud-Cornea construction}. This construction applies to any Floer theory (Lagrangian, instanton etc.) and can be used to enrich Floer maps in a variety of settings, for example in the context of Lagrangian correspondences. It constitutes an efficient way of extracting algebraic information from the higher dimensional moduli spaces of Morse/Floer/instanton trajectories. Barraud and Cornea used it in a Lagrangian setting in~\cite{BC07}, and in a sequel paper we will give applications in a Hamiltonian setting~\cite{BDHO-cotangent}. One of the roles of this book is to serve as a blueprint for the expected structural properties of enriched Floer complexes.

{\it Relation to Brown's universal twisting cocycle}. The Brown cocycle of $X$, traditionally viewed as a degree $-1$ map $\varphi:C_*(X)\to C_{*-1}(\Omega X)$, can be interpreted~\cite{Brown1959,Gugenheim60} as an element 
$$
\mathfrak{m}_\varphi \in \End_{-1}(C_*(\Omega X)\otimes C_*(X))
$$
that satisfies the relation 
$$
(\p + \mathfrak{m}_\varphi)^2=0,
$$ 
where $\p$ is the tensor differential on $C_*(\Omega X)\otimes C_*(X)$. We view $\mathfrak{m}_\varphi$ as defining a perturbation of the tensor differential $\p$ on $C_*(\Omega X)\otimes C_*(X)$. An equivalent point of view stems from the  differential graded Lie algebra (dgLa) structure on $\End_*(C_*(\Omega X)\otimes C_*(X))$. Denoting the differential $D$ and the bracket $[\cdot,\cdot]$, the previous relation for $\mathfrak{m}_\varphi$ is equivalent to 
$$
D\mathfrak{m}_\varphi + \frac 1 2 [\mathfrak{m}_\varphi,\mathfrak{m}_\varphi]=0.
$$ 
We say that $\mathfrak{m}_\varphi$ is a \emph{Maurer-Cartan element}. 

Similarly, subtracting from the Barraud-Cornea cocycle $\mathfrak{m}=(m_{x, y})$ the contribution of the differential of the classical Morse complex $C_*(f)$ we obtain a Maurer-Cartan element
$$
\mathfrak{m}' \in \End_{-1}(C_*(\Omega X)\otimes C_*(f))
$$
that satisfies the relation 
$$
(\p + \mathfrak{m}')^2=0,
$$ 
where $\p$ is the tensor differential on $C_*(\Omega X)\otimes C_*(X)$. We view therefore $\mathfrak{m}'$ as defining a perturbation of the tensor differential $\p$ on $C_*(\Omega X)\otimes C_*(f)$. 

The modified Barraud-Cornea cocycle $\mathfrak{m}'$ and the Brown cocycle $\mathfrak{m}_\varphi$ can be interpreted from a unifying perspective in the context of differential homological algebra: they both provide semi-free $C_*(\Omega X)$-resolutions of the trivial $C_*(\Omega X)$-module $\mathbb{Z}$ (see Definition~\ref{defi:semi-free-resolution}). This 
follows from Theorem~A, respectively~\cite[Theorem~4.2]{Brown1959}, which provide quasi-isomorphisms 
$$
(C_*(\Omega X)\otimes C_*(f),\p + \mathfrak{m}')\stackrel\simeq\longrightarrow C_*(\cP_{\star\to X}X)
$$
and
$$
(C_*(\Omega X)\otimes C_*(X),\p + \mathfrak{m}_\varphi)\stackrel\simeq\longrightarrow C_*(\cP_{\star\to X}X),
$$
where $\cP_{\star\to X}X$ is the space of paths in $X$ starting at the basepoint. This space is contractible, so that $C_*(\cP_{\star\to X}X)$ is chain homotopy equivalent to the trivial $C_*(\Omega X)$-module $\mathbb{Z}$.

Let $C_*$ be a complex of free $\mathbb{Z}$-modules. A Maurer-Cartan element $\mathfrak{m}\in\End_{-1}(C_*(\Omega X)\otimes C_*)$ is said to be a \emph{universal cocycle} if $(C_*(\Omega X)\otimes C_*,\p + \mathfrak{m})$ is a semi-free resolution of the trivial $C_*(\Omega X)$-module $\mathbb{Z}$. 
With this definition at hand, our second main result reads as follows.

{\bf Theorem~B (Equivalence of Barraud-Cornea and Brown cocycles)}. {\it 
The Barraud-Cornea and Brown cocycles are universal, and therefore equivalent in the sense that they give rise to chain homotopy equivalent semi-free resolutions of the trivial $C_*(\Omega X)$-module $\Z$. 
}

That universality implies equivalence is a consequence of the fact that any two semi-free resolutions of the same $C_*(\Omega X)$-module are chain homotopy equivalent. In~\S\ref{sec:BC-Brown} we prove Theorem~B as Theorem~\ref{thm:equivalence-BC-Brown}. 

Brown's twisting cocycle is a foundational discovery which paved the way for the development of homotopical algebra. It plays a key role in Koszul duality, which is in turn the conceptual path to defining algebraic structures up to homotopy~\cite{LodayVallette-AlgOp}. See~\S\ref{sec:Brown-twisting-MC} for further references. Our Theorem~B places its Morse theoretic analogue, the Barraud-Cornea cocycle, on an equal footing.

{\it Tor, Ext, and Poincar\'e duality}. As a consequence of this discussion, we see that the Morse chain complex with DG-coefficients is a model for the derived tensor product with the trivial $C_*(\Omega X)$-module $\Z$,
$$
C_*(X;\cF)\simeq \cF\stackrel{L}\otimes_{C_*(\Omega X)} \Z,
$$
denoted also $\mathrm{Tor}_*^{C_*(\Omega X)}(-,\Z)$. In~\S\ref{sec:cohomology-PD} we define the \emph{Morse cochain complex} 
$$
C^*(X;\cG)
$$
with DG coefficients in a (cohomological) right $C_{-*}(\Omega X)$-module $\cG$, and we show in Proposition~\ref{prop:Morse-coh-as-derived-hom} that it can be interpreted as a derived $\Hom$ functor. More precisely, given a  
right $C_{-*}(\Omega X)$-module $\cG$, we denote $\ol\cG$ the  
right $C_*(\Omega X)$-module obtained by regrading in opposite degree, and $\ol\cG^{\mathrm{left}}$ the  
left $C_*(\Omega X)$-module obtained from $\ol\cG$ using the canonical involution on $\Omega X$ given by parametrizing the loops backwards. Then 
$$
C^*(X;\cG)\simeq R\Hom_{C_*(\Omega X)}(\Z,\ol\cG^{\mathrm{left}}). 
$$
Our main result in this context is the following Poincar\'e duality theorem. A direct proof in the orientable case starting from the algebraic definitions of homology as derived $\otimes$ and cohomology  as derived $\Hom$ was previously given by Malm in his thesis~\cite[Theorem~3.1.2]{Malm-thesis}.

{\bf Theorem~C (Poincar\'e duality)}. {\it Let $X$ be a closed manifold of dimension $n$ and let $\ul\scro^X$ be its orientation local system supported in degree $0$. Given $\cF$ a homological DG local system, let $\ol\cF$ be the cohomological local system obtained from $\cF$ by reversing the sign of the grading. There is an isomorphism 
$$
PD: H_*(X;\cF) \stackrel\simeq\longrightarrow H^{n-*}(X;\ol\cF\otimes \ul\scro^X).
$$
}

We prove Theorem~C in the orientable case as Theorem~\ref{thm:PD-orientable}, and in the general case as Theorem~\ref{thm:PD-non-orientable}.

\emph{Homology vs. homotopy.} Homology with DG local coefficients stands midway between homology theory and homotopy theory. At a conceptual level, this comes from the role played by universal cocycles in homotopical algebra. 

The following example is significant. We consider $X=S^2$, for which usual homology is unable to see homotopical information beyond degree $2$. It is well-known that $\pi_3(S^2)\simeq \Z$, with a generator represented by the Hopf fibration $S^3\to S^2$. 
The fibration gives rise to a DG local system with fiber $C_*(S^1)$ and $H_*(S^2;C_*(S^1))\simeq H_*(S^3)$ by Theorem~A. The top dimensional generator of this homology group is an instantiation of the generator of $\pi_3(S^2)$.

{\it Connection to Floer homotopy theory and previous related work.} The construction of the Morse complex with DG coefficients relies heavily on the original ideas of Barraud and Cornea~\cite{BC07} and connects to Floer homotopy theory. This is a rapidly unfolding field envisioned by Cohen-Jones-Segal~\cite{cohen-jones-segal} and going back to Floer~\cite{Floer-Witten}. For recent results inspired by this perspective, see  Abouzaid-Blumberg~\cite{Abouzaid-Blumberg}, Large~\cite{Large-thesis}, Porcelli~\cite{Porcelli}, Hirschi-Porcelli~\cite{Hirschi-Porcelli}, Abouzaid-McLean-Smith \cite{AMS}.

Floer observed in~\cite[\S2, pp.~212-213]{Floer-Witten} that ``the trajectory spaces are framed rather than only oriented" and that, as a consequence, ``it should be possible to obtain [generalized Floer cohomology theories]" in which ``[the boundary operator], in contrast to the singular case, would depend on trajectory spaces of arbitrary dimension". Floer homotopy theory aims to  extract generalized cohomology theories, or spectra, from higher dimensional trajectory spaces. Using the terminology of Cohen-Jones-Segal~\cite{cohen-jones-segal}, the ensemble of all trajectory spaces has the structure of a \emph{flow category}, see also Pardon~\cite{Pardon-algebraic} and Abouzaid~\cite{Abouzaid-flows}. 

One can place our book in context by discussing how much information one retains from the Morse or Floer trajectory spaces: classical Morse and Floer theory use $0$-dimensional moduli spaces; Floer homotopy theory uses the full moduli spaces together with their framing; our Morse homology with DG-coefficients, and the Floer homology with DG-coefficients from~\cite{BDHO-cotangent}, uses (representatives of) the fundamental classes rel boundary of all the compactified moduli spaces. This gives further meaning to the previous statement that our construction stands midway between homology and homotopy.

The original construction of Barraud-Cornea has been also revisited in recent years by Zhou~\cite{Zhou-MB,Zhou-ring}, Rezchikov~\cite{rezchikov}, Charette~\cite{Charette2017}. There is a substantial overlap between our~\S\ref{sec:fibrations} and the work of Charette, who proves essentially the same theorem as our Theorem~A, but with $\Z/2$-coefficients. 

Morse homology with classical (non DG) local coefficients has been widely used in the literature. We refer to the recent article by Banyaga-Hurtubise-Spaeth~\cite{Banyaga-Hurtubise-Spaeth} for a recent treatment and a comprehensive list of references. 

{\it Functoriality}. The bulk of the technical work in this book goes into defining over $\Z$ the Barraud-Cornea twisting cocycle, and also continuation cocycles associated to homotopies of Morse functions and pseudo-gradient vector fields, and homotopy cocycles associated to homotopies of such homotopies, and in proving their functoriality properties. The general structure obtained in this way is laid out in~\S\ref{sec:Morse-DGMorse}, and the structural properties of the resulting homology theory are summarized in the following statement.

{\bf Theorem~D (Functoriality)}. 
{\it A continuous map between smooth closed manifolds $\varphi:X\to Y$ induces in homology a canonical map 
$$
\varphi_*:H_*(X;\varphi^*\cF)\to H_*(Y;\cF),
$$
where $\varphi^*\cF$ denotes the $C_*(\Omega X)$-module structure induced on $\cF$ by the DG map $(\Omega\varphi)_*:C_*(\Omega X)\to C_*(\Omega Y)$. The map $\varphi_*$ has the following properties: 
\begin{enumerate} 
\item {\sc (Identity)} We have $\mathrm{Id}_*=\mathrm{Id}$. 
\item {\sc (Composition)} Given maps $X\stackrel{\varphi}\longrightarrow Y \stackrel{\psi}\longrightarrow Z$ and a DG local system $\cF$ on $Z$, 
we have 
$$
(\psi\varphi)^*\cF=\varphi^*\psi^*\cF
$$
and
$$
(\psi\varphi)_* = \psi_*\varphi_* : H_*(X;\varphi^*\psi^*\cF)\to H_*(Z;\cF).
$$
\item {\sc (Homotopy)} A homotopy of maps $\varphi_{[0,1]} =(\varphi_t)$, $t\in [0,1]$ induces a 
canonical isomorphism 
$$
\varphi_{[0,1]*}:H_*(X;\varphi_0^*\cF)\stackrel\simeq\longrightarrow H_*(X;\varphi_1^*\cF)
$$
such that $\varphi_{1*}\circ \varphi_{[0,1]*} = \varphi_{0*}$. 
\item {\sc (spectral sequence)} The morphism $\varphi_*$ is the limit of a morphism between spectral sequences that are canonically associated to the corresponding enriched complexes, given at the second page by 
$$ \varphi_{p,*}: H_{p}( X ; \varphi^*H_q(\calf) )\ri H_p( Y; H_q(\calf))$$
i.e., the usual direct map induced by $\varphi$ in (Morse)  homology with coefficients in $H_q(\calf)$. 
\end{enumerate}
}

We prove Theorem~D as Theorem~\ref{thm:f*!}. Similar properties hold for shriek maps $\varphi_!$ in homology (Theorem~\ref{thm:f*!}) and for maps $\varphi^*$ induced in cohomology (Proposition~\ref{prop:f_upper_*}). These are related to each other via Poincar\'e duality, and this whole discussion is carried over in~\S\S\ref{sec:functoriality}-\ref{sec:second-definition}.

{\it Further developments and questions}. There are many directions of study which open up from the current book. We mention the following.
\begin{itemize}
\item {\it Floer theory}. This book on Morse theory serves as a blueprint for Floer theory. The authors address Floer homology with DG-coefficients in their work~\cite{BDHO-cotangent}.  
\item {\it Quantitative aspects}. Morse and Floer complexes with DG-coefficients are naturally filtered. It would be interesting to develop applications, including in interaction with barcodes. 
\item {\it Relation to homotopy theory}. (i) How much of the homotopy of $X$ can be seen by fibrations over $X$ and their chain complexes? (ii) Study homology with DG coefficients in relation with generalized homology theories like stable homotopy theory and K-theory. This connects to the previous discussion on Floer homotopy theory. 
\item {\it String topology}. The homology $H_*(X;C_*(\Omega X)^{\mathrm{ad}})$ with DG-coefficients in $C_*(\Omega X)$ seen as a module over itself via the adjoint action, denoted $C_*(\Omega X)^{\mathrm{ad}}$, 
is isomorphic to the homology of the free loop space $\cL X$.  Robin Riegel is developing in his Ph.D. thesis~\cite{Riegel} a model for string topology using Morse theory with DG-coefficients. 
Relevant references for this perspective are~\cite{Gruher-Salvatore,Malm-thesis}.
\end{itemize}

We end this introduction with a discussion of our setup involving $C_*(\Omega X)$-modules. 

{\it Based loop vs. paths, DG-algebra vs. DG-category}. Morse trajectories evaluate naturally into Moore paths and they can be used to define morphisms in the DG-category $Path$ described as follows: the objects are given by the critical points of the Morse function, and the morphisms are given by (cubical) chains on the spaces of paths connecting these points. DG-local systems can be described as derived representations of the category $Path$, and it is possible to recast the results presented in this book in that language. 

In contrast, we use a DGA instead of a DG-category (chains on based loops instead of the category $Path$), and we use DG-modules instead of derived representations of $Path$. This results in a drastic simplification at the level of homological algebra. The price to pay is that we need to choose as additional data an embedded tree that connects the basepoint to the critical points of the Morse function. This choice results in evaluation maps that have a slightly more cumbersome form, involving a homotopy inverse of the map which collapses the tree to a point. Also, we need to additionally prove invariance of our constructions with respect to that choice of tree.

{\it $C_*(\Omega X)$-modules \emph{vs.} representations of the infinity groupoid}. It is a well-known fact that classical local systems on a manifold $X$ can equivalently be viewed either as $\Z[\pi_1(X)]$-modules, or as bundles of groups with trivial holonomy. In a similar fashion, DG-local systems, which we defined as differential graded $C_*(\Omega X)$-modules, should be equivalently viewed as representations of the full homotopy type of $X$, i.e. representations of the infinity groupoid $\pi_\infty(X)$. This was proved in full generality by 
Holstein~\cite[Theorem~26]{Holstein}. 
The topic is discussed from a broader perspective by Arias Abad and Sch\"{a}tz~\cite{Arias-Abad-Schatz}. See also Porta and Teyssier~\cite[Corollary~1.5]{Porta-Teyssier}.

{\it Structure of the book}. We start in~\S\ref{sec:Morse-DGMorse} with a comparison between the classical Morse homology and the DG Morse homology toolsets. This chapter can serve as a reading guide for chapters~\S\S\ref{section:Morse}-\ref{sec:invariance-homology}, in which we  construct the Morse chain complex with DG coefficients and prove its invariance. In~\S\ref{sec:BC-Brown} we prove Theorem~B, showing the equivalence between the Barraud-Cornea twisting cocycle and the Brown cocycle. This gives topological meaning and context for the construction of the Morse complex with DG coefficients. Chapter~\S\ref{sec:algebraicsetup} is of preparatory nature, and we discuss therein various algebraic properties of complexes of DG Morse type. In~\S\ref{sec:fibrations} we prove Theorem~A, showing that DG Morse homology recovers the homology of total spaces of fibrations. This is a fundamental result, which is also used in the proof of Theorem~B. In~\S\ref{sec:functoriality} we spell out various functoriality properties of Morse homology with DG-coefficients, involving both direct and shriek maps. These maps are constructed and their properties are established from two equivalent perspectives in~\S\S\ref{sec:functoriality-first-definition}-\ref{sec:second-definition}, which cover in particular Theorem~D. In~\S\ref{sec:degree} we discuss the functoriality properties of maps between closed manifolds of equal dimensions. In~\S\ref{sec:cohomology-PD} we define cohomology and prove the Poincar\'e duality Theorem~C in the orientable case. In~\S\ref{sec:non-orientable} we construct shriek maps and prove the Poincar\'e duality Theorem~C for non-orientable manifolds.  In~\S\ref{sec:beyond-manifolds} we extend our constructions (in particular Theorems A and D) to topological spaces that are homotopy equivalent to finite dimensional manifolds and to colimits of such spaces; we then apply this to free loop spaces.

{\it Disclaimer and acknowledgements.} The debt owed to~\cite{BC07} by this book is obvious. In order to acknowledge this, and also to stress the novelty of the main construction from~\cite{BC07}, the second, third, and fourth authors of this book (Damian, Humili\`ere, Oancea) insisted with the first author that we refer to it as the ``Barraud-Cornea construction/cocycle".

The authors were able to meet in excellent conditions during the Covid pandemic at CIRM Marseille in February 2021 and at IHP Paris in May-June 2021. They acknowledge partial funding from the ANR, grants no. 18-CE40-0009 (ENUMGEOM ) and 21-CE40-0002 (COSY). The third author is also partially supported by Institut Universitaire de France. The fourth author acknowledges support from a Fellowship of the University of Strasbourg Institute for Advanced Study (USIAS), within the French national programme ``Investment for the future" (IdEx-Unistra).

\chapter{Morse vs DG Morse homology toolset} \label{sec:Morse-DGMorse}

In this chapter we present the structure of the enriched theory at homology level, which we call ``the DG Morse homology toolset" (we do not address higher chain level structures in this book). It consists of data defining the differential, continuation maps, and homotopy maps. We start the discussion with a few definitions involving Maurer-Cartan elements. These allow us to place on the same formal footing the classical Morse toolset and the DG one, in the hope that this will render the new objects more familiar to the reader. 

\section{$\End$ and $\Hom$ complexes} 

Let $(C_*,\p)$ be a chain complex graded by the integers $\Z$, with differential $\p$ of degree $-1$. The graded $\text{End}$ complex
$\End(C_*)$ is a differential graded Lie algebra (dgLa). The degree $k$ component $\End_k$ consists of degree $k$ linear maps $C_*\to C_{*+k}$, the bracket is $[f,g]=f\circ g - (-1)^{|f|\cdot |g|}g\circ f$, and the differential is $D=[\p,\cdot]$. 

Let $(C_*^\pm,\p^\pm)$ be chain complexes. The $\Hom$ complex $\Hom(C_*^+,C_*^-)$ is a differential graded module, whose degree $k$ component is the space of degree $k$ linear maps $C_*^+\to C_{*+k}^-$, and with differential $D$ given on a homogeneous element by $Df=\p^-f - (-1)^{|f|}f\p^+$. We denote symbolically $Df=[\p,f]$. The $\Hom$ complex is a left Lie module over $\End(C^-)$ by post-composition and a right Lie module over $\End(C^+)$ by pre-composition.

{\bf Twisted differential}. A \emph{Maurer-Cartan element} for $(C_*,\p_0)$ is an element $\mathfrak{m}\in \End_{-1}(C_*)$ satisfying the \emph{Maurer-Cartan equation}
$$
D\mathfrak{m}+\frac 1 2 [\mathfrak{m},\mathfrak{m}]=0.
$$
This relation is equivalent to $(\p_0+\mathfrak{m})^2=0$, so that Maurer-Cartan elements can (and should) be viewed as perturbations of the differential $\p_0$ on $C_*$. We refer to $\p=\p_0+\mathfrak{m}$ as the \emph{twisted differential}. 

This is the most trivial instance of a very general philosophy initiated by Deligne~\cite{Deligne}, 
according to which any deformation problem is ``controlled" by a dgLA through the intermediate of its Maurer-Cartan elements. 

{\bf Chain maps}. Consider now complexes $(C_*^i,\p^i_0)$ with Maurer-Cartan elements $\mathfrak{m}^i$ for $i=0,1$,  denote $\p^i=\p^i_0+\mathfrak{m}^i$ the corresponding perturbed differentials and denote $D$ the differential on the complex $\Hom((C_*^0,\p^0),(C_*^1,\p^1))$. Since we use on $C^i$, $i=0,1$ the twisted differentials, we refer to this as the \emph{twisted $\Hom$ complex}. A degree $0$ chain map $\Psi:(C_*^0,\p^0)\to (C_*^1,\p^1)$ satisfies by definition $\p^1\Psi=\Psi\p^0$, and this is equivalent to 
$$
D\Psi=0,
$$
i.e., $\Psi\in\Hom_0((C_*^0,\p^0),(C_*^1,\p^1))$ is a cycle in the twisted $\Hom$ complex. 

{\bf Homotopies}. 
Given chain maps $\Psi_0, \Psi_1\in \Hom_0((C_*^0,\p^0),(C_*^1,\p^1))$ so that $D\Psi_0=D\Psi_1=0$, a chain homotopy between $\Psi_0$ and $\Psi_1$ is a degree $1$ map $\h :(C_*^0,\p^0)\to (C_*^1,\p^1)$ such that $\Psi_1 - \Psi_0 = \p^1 \h + \h \p^0$. Equivalently
$$
\Psi_1 - \Psi_0 = D \h,
$$
i.e., $\Psi_1 - \Psi_0$ is a boundary in the twisted $\Hom$ complex with primitive $\h\in\Hom_1((C_*^0,\p^0),(C_*^1,\p^1))$.

%%%%%%%%%%%%%%%
\section{The Morse toolset} 
%%%%%%%%%%%%%%%

Let $X$ be a closed manifold. The standard toolset in classical Morse homology consists of the differential, continuation maps and homotopy maps, see for example~\cite{Audin-Damian_English}. With a view towards the DG setup, we explain how to interpret these in the language of Maurer-Cartan elements and twisted $\Hom$ complexes of the previous section.

{\bf Differential}. Let $(f,\xi)$ be a regular pair consisting of a Morse function $f$ and a Morse-Smale pseudo-gradient $\xi$, and denote 
$$
C_\bullet=\langle \Crit(f)\rangle,
$$
the free $\Z$-module generated by the critical points of $f$.  

The Morse differential $\p$ can be seen as a perturbation of the zero differential $\p_0=0$ by a Maurer-Cartan element $\mathfrak{m}\in \End(C_\bullet)$. This Maurer-Cartan element is the matrix $\mathfrak{m}=(n_{x, y})$, $x,y\in\Crit(f)$ whose entries are given by the signed count of gradient trajectories between points $x$ and $y$ such that $|x|-|y|=1$, and are zero otherwise. The Maurer-Cartan relation amounts to the fact that this matrix squares to zero, and we have $\p=\mathfrak{m}$.  

The resulting twisted complex 
$$
MC_*(f,\xi)=(\langle \Crit(f)\rangle,\p)
$$ 
is \emph{the Morse complex}.

{\bf Continuation maps}. Let $(f_t,\xi_t)$, $t\in [0,1]$ be a regular homotopy interpolating between two regular pairs $(f_i,\xi_i)$, $i=0,1$. To this data we associate a degree $0$ \emph{continuation map} 
$$
\Psi:MC_*(f_0,\xi_0)\to MC_*(f_1,\xi_1).
$$ 
This is a chain map, i.e., a cycle in $\Hom(MC_*(f_0,\xi_0),MC_*(f_1,\xi_1))$. 

The map $\Psi$ is a matrix whose rows and columns are indexed by $\Crit(f_1)$, resp. $\Crit(f_0)$. The entries of this matrix can be defined as signed counts of pseudo-gradient trajectories in $[0,1]\times X$ for the function $(t,x)\mapsto g(t) + f_t(x)$, where $g:[0,1]\to \R$ is smooth with a critical maximum at $0$ and a critical minimum at $1$.

{\bf Homotopies}. Let $\{(f_{\tau,t},\xi_{\tau,t})\}$, $\tau\in[0,1]$ be a regular homotopy of homotopies connecting two regular homotopies $(f_{0,t}, \xi_{0,t})$ and $(f_{1,t}, \xi_{1,t})$ which interpolate between $(f_0,\xi_0)$ and $(f_1,\xi_1)$. To this data we associate a degree $1$ map 
$$
\h:MC_*(f_0,\xi_0)\to MC_{*+1}(f_1,\xi_1)
$$ 
which is a chain homotopy between the continuation chain maps $\Psi_0$ and $\Psi_1$ determined by $(f_{0,t}, \xi_{0,t})$, resp.  $(f_{1,t}, \xi_{1,t})$. Thus $\Psi_1 - \Psi_0$ is a boundary in the twisted $\Hom$ complex $\Hom(MC_*(f_0,\xi_0),MC_*(f_1,\xi_1))$, and $\h$ is a primitive of $\Psi_1 - \Psi_0$.

The map $\h$ is a matrix whose rows and columns are indexed by $\Crit(f_1)$, resp. $\Crit(f_0)$. The entries of this matrix can be defined as signed counts of pseudo-gradient trajectories in $[0,1]\times [0,1] \times X$ for the function $(\tau,t,x)\mapsto g(\tau) + g(t) + f_{\tau,t}(x)$, where $g:[0,1]\to \R$ is smooth with a critical maximum at $0$ and a critical minimum at $1$.

\section{The DG Morse toolset}\label{sec:DG-Morse}

The DG Morse homology toolset consists of the same kind of data as the Morse homology one, i.e., the differential, continuation maps and homotopies, except that the coefficients are enriched in cubical chains $R_*=C_*(\Omega X)$, and more generally in a DG right $R_*$-module $\cF_*$.

{\bf Differential}. To a Morse-Smale pair $(f,\xi)$ consisting of a Morse function and a generic pseudo-gradient, we associate a Maurer-Cartan element $\mathfrak{m}\in\End_{-1}(R_*\otimes C_\bullet)$, where $C_\bullet=\langle \Crit(f)\rangle$ is endowed with the zero differential as in the previous section. 

We view $\mathfrak{m}$ as a square matrix $(m_{x,y})$ acting from the right with coefficients in $R_*$ and indexed by the set $\Crit(f)$ which constitutes the distinguished basis of $C_\bullet$; in other words we write   
\[\mathfrak{m}(a\otimes x)=\sum_{y\in\Crit(f)}(-1)^{|a|}am_{x,y}\otimes y .\]
Intuitively, the chain $m_{x,y}\in C_{|x|-|y|-1}(\Omega X)$ is obtained by evaluating into based loops a chain level representative of the compactified moduli space $\overline\cL(x,y)$ of connecting gradient trajectories \emph{of arbitrary index} which run from $x$ to $y$. We call $\mathfrak{m}$ \emph{the Barraud-Cornea cocycle}. See~\S\ref{sec:construction-enriched-morse} for the details of the construction. 

\begin{proposition} \label{prop:MC-formula-equation}
The Maurer-Cartan equation 
$$
D\mathfrak{m}+ \frac 1 2 [\mathfrak m,\mathfrak m] =0
$$
is equivalent to the relation 
\begin{equation} \label{eq:MC-first-section}
\p m_{x,y}=\sum_z (-1)^{|x|-|z|}m_{x,z}m_{z,y}.
\end{equation}
\end{proposition}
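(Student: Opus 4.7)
The plan is to verify the equivalence by an elementary computation on the distinguished basis, where all the signs are forced by the Koszul rule once we write out the action of $\mathfrak m$ and of $D$.

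First I would unpack $D$. Since $C_\bullet=\langle\Crit(f)\rangle$ is endowed with the zero differential, the differential on $R_*\otimes C_\bullet$ reduces to $\p_0(a\otimes x)=\p a\otimes x$, and accordingly $D=[\p_0,\cdot]$. Since $\mathfrak m$ has odd degree, $D\mathfrak m=\p_0\mathfrak m+\mathfrak m\p_0$. Applying this to a basis-like element $a\otimes x$ with $a\in R_*$ homogeneous, one computes
\[
\p_0\mathfrak m(a\otimes x)=\sum_y(-1)^{|a|}\p(am_{x,y})\otimes y=\sum_y\Bigl((-1)^{|a|}(\p a)m_{x,y}+a\,\p m_{x,y}\Bigr)\otimes y
\]
by the Leibniz rule for $\p$ on the Pontryagin DGA $R_*$, while
\[
\mathfrak m\p_0(a\otimes x)=\sum_y(-1)^{|a|-1}(\p a)m_{x,y}\otimes y.
\]
The two terms involving $\p a$ cancel, leaving $D\mathfrak m(a\otimes x)=\sum_y a\,\p m_{x,y}\otimes y$.

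Next I would compute $\tfrac12[\mathfrak m,\mathfrak m]$. Because $|\mathfrak m|=-1$ is odd, the graded commutator reduces to $[\mathfrak m,\mathfrak m]=2\mathfrak m^2$, so $\tfrac12[\mathfrak m,\mathfrak m]=\mathfrak m^2$. The key sign bookkeeping is here:
\[
\mathfrak m^2(a\otimes x)=\sum_{z,y}(-1)^{|a|}(-1)^{|am_{x,z}|}\,a\,m_{x,z}m_{z,y}\otimes y.
\]
Since $|am_{x,z}|=|a|+|x|-|z|-1$, the accumulated sign is $(-1)^{2|a|+|x|-|z|-1}=-(-1)^{|x|-|z|}$, so
\[
\mathfrak m^2(a\otimes x)=-\sum_{z,y}(-1)^{|x|-|z|}\,a\,m_{x,z}m_{z,y}\otimes y.
\]

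Putting the two computations together, the Maurer-Cartan equation $D\mathfrak m+\mathfrak m^2=0$ becomes
\[
\sum_y a\Bigl(\p m_{x,y}-\sum_z(-1)^{|x|-|z|}m_{x,z}m_{z,y}\Bigr)\otimes y=0
\]
for every homogeneous $a\in R_*$ and every $x$. Setting $a=1$ and reading off the coefficient of each basis element $y$ yields exactly \eqref{eq:MC-first-section}, and conversely if \eqref{eq:MC-first-section} holds then so does the displayed identity for all $a$, proving equivalence. The only real obstacle is sign tracking; in particular, one must carry the Koszul sign $(-1)^{|am_{x,z}|}$ through carefully to produce the $(-1)^{|x|-|z|}$ weighting in the right-hand side, and make sure the contributions of $\p a$ in $D\mathfrak m$ cancel as expected.
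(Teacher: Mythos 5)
Your proof is correct and follows essentially the same route as the paper: compute $D\mathfrak m(a\otimes x)$ and $\mathfrak m^2(a\otimes x)$ on a homogeneous element, observe the cancellation of the $\p a$ terms, track the Koszul sign to obtain the weight $(-1)^{|x|-|z|-1}$ in $\mathfrak m^2$, and then read off the equivalence by specializing $a$ to the unit. The only cosmetic difference is that you are more explicit than the paper in isolating the tensor differential $\p_0$ (the paper overloads the symbol $\p$), which arguably makes your version cleaner.
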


\begin{proof}
Let $a\in R_*$ be homogeneous of degree $|a|$ and $x\in\Crit(f)$. The twisted differential acts by $\p(a\otimes x)=\p a \otimes x + (-1)^{|a|}\sum_y a m_{x,y}\otimes y$. We have 
\begin{align*}
\p\mathfrak m(a\otimes x) & = (\p \mathfrak m + \mathfrak m \p)(a\otimes x) \\
& = \p\left( (-1)^{|a|} a \otimes \mathfrak m(x)\right) + \mathfrak m(\p a\otimes x) \\
& = (-1)^{|a|}\p( \sum_y a m_{x,y}\otimes y) + (-1)^{|a|-1}\sum_y (\p a) m_{x,y}\otimes y \\
& = \sum_ya \p m_{x,y}\otimes y. 
\end{align*}
On the other hand $\frac 1 2 [\mathfrak m,\mathfrak m]= \mathfrak m\circ \mathfrak m$ computes to 
\begin{align*}
(\mathfrak m\circ \mathfrak m)(a\otimes x) & = \mathfrak m ( (-1)^{|a|}\sum_z a m_{x,z}\otimes z) \\
& = \sum_z (-1)^{|x|-|z|-1}\sum_y a m_{x,z}m_{z,y}\otimes y \\
& = \sum_y  a (\sum_z (-1)^{|x|-|z|-1}m_{x,z}m_{z,y}) \otimes y.
\end{align*}
It is clear now that the relation $\p m_{x,y}=\sum_z (-1)^{|x|-|z|}m_{x,z}m_{z,y}$ implies the Maurer-Cartan equation. The converse is seen to be true by taking $a$ to be the unit (basepoint).  
\end{proof}

Given a right $R_*$-module $\cF_*$, we induce tautologically a Maurer-Cartan element 
$$
\mathfrak{m}^\cF\in \End_{-1}(\cF_*\otimes C_\bullet).
$$ 
The resulting twisted complex  
$$
C_*(f,\xi;\cF)=(\cF_*\otimes C_\bullet,\p),
$$ 
$$
\p(\alpha\otimes x)=\p \alpha \otimes x + (-1)^{|\alpha|}\sum_y \alpha m_{x,y}\otimes y
$$
is \emph{the Morse complex with coefficients in the DG local system $\cF_*$}. 

There are choices involved in the construction of the Barraud-Cornea cocycle, and we discuss the ambiguity in~\S\S\ref{sec:construction-enriched-morse}--\ref{sec:invariance-homology}. 

\begin{remark} An inspection of the proof of Proposition~\ref{prop:MC-formula-equation} shows that, in this context, the Maurer-Cartan relation~\eqref{eq:MC-first-section} can be replaced with the slightly weaker one 
\begin{equation*}\label{MC'} 
\gol  \forall \alpha\in \calf, \gol \alpha\cdot (\partial m_{x,y}\, -\, \sum_z(-1)^{|x|-|z|}m_{x,z}m_{z,y}) \, =\, 0.
\end{equation*}
\end{remark}  

{\bf  Continuation maps}. Let $(f_t,\xi_t)$, $t\in[0,1]$ be a regular homotopy interpolating between two regular pairs $(f_0,\xi_0)$ and $(f_1,\xi_1)$. Let $\mathfrak{m}^i$, $i=0,1$ be a choice of Barraud-Cornea cocycles. To this data we associate a \emph{continuation map}, also called \emph{continuation cocycle}, 
$$
\Psi:C_*(f_0,\xi_0;R_*)\to C_*(f_1,\xi_1;R_*).
$$
This is a degree $0$ cycle in $\Hom(C_*(f_0,\xi_0;R_*), C_*(f_1,\xi_1;R_*))$, i.e., a chain map. 

We view $\Psi$ as a matrix $(\nu_{x_0,y_1})$ acting from the right with columns indexed by $x_0\in\Crit(f_0)$ and rows indexed by $y_1\in\Crit(f_1)$. Intuitively, the chain $\nu_{x_0,y_1}\in C_{|x_0|-|y_1|}(\Omega X)$ is obtained by evaluating into based loops a chain level representative of the fundamental class of the compactified moduli space of pseudo-gradient trajectories \emph{of arbitrary index} in $[0,1]\times X$, running from $(0,x_0)$ to $(1,y_1)$, for the function $(t,x)\mapsto g(t) + f_t(x)$, where $g:[0,1]\to \R$ is smooth with a critical maximum at $0$ and a critical minimum at $1$. See~\S\ref{sec:invariance-enriched} for the details of the construction.

\begin{proposition} \label{prop:continuation-DG}
The relation 
$$
D\Psi = 0
$$
is equivalent to 
\begin{equation} \label{eq:DGcont}
\p \nu_{x_0,y_1}=\sum_{z_0} m^0_{x_0,z_0}\nu_{z_0,y_1}+ \sum_{z_1} (-1)^{|x_0|-|z_1|-1} \nu_{x_0,z_1}m^1_{z_1,y_1}.
\end{equation}
\end{proposition}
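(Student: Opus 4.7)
The plan is to perform a direct computation patterned on the proof of Proposition~\ref{prop:MC-formula-equation}. Take a homogeneous element $a\otimes x_0\in R_*\otimes C_\bullet^0$, and expand
$$
D\Psi(a\otimes x_0)=\p^1\Psi(a\otimes x_0)-\Psi\p^0(a\otimes x_0),
$$
where $\p^i=\p_{R_*}+\mathfrak{m}^i$ are the twisted differentials on source and target. The key point is that $\Psi$ has degree $0$, so the matrix convention from \S\ref{sec:DG-Morse} reads $\Psi(a\otimes x_0)=\sum_{y_1}a\nu_{x_0,y_1}\otimes y_1$ without an $|a|$-dependent sign.

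First I would compute the ``tensor'' part $\p_{R_*}\Psi-\Psi\p_{R_*}$. Since $\p_{R_*}$ satisfies a Leibniz rule with respect to the Pontryagin product, the $(\p a)\cdot\nu_{x_0,y_1}$ terms cancel between $\p_{R_*}\Psi(a\otimes x_0)$ and $\Psi\p_{R_*}(a\otimes x_0)$, leaving
$$
(\p_{R_*}\Psi-\Psi\p_{R_*})(a\otimes x_0)=\sum_{y_1}(-1)^{|a|}\, a\,\p\nu_{x_0,y_1}\otimes y_1.
$$
Next I compute the Maurer-Cartan contribution $\mathfrak{m}^1\Psi-\Psi\mathfrak{m}^0$. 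Applying the convention $\mathfrak{m}^1(b\otimes z_1)=(-1)^{|b|}\sum_{y_1}bm^1_{z_1,y_1}\otimes y_1$ to $b=a\nu_{x_0,z_1}$ of degree $|a|+|x_0|-|z_1|$ yields a sign $(-1)^{|a|+|x_0|-|z_1|}$, while $\Psi\mathfrak{m}^0(a\otimes x_0)$ picks up only $(-1)^{|a|}$ from the action of $\mathfrak{m}^0$. Collecting the three contributions and pulling out the common factor $(-1)^{|a|}$ gives
$$
D\Psi(a\otimes x_0)=(-1)^{|a|}\sum_{y_1}a\Bigl[\p\nu_{x_0,y_1}-\sum_{z_0}m^0_{x_0,z_0}\nu_{z_0,y_1}+\sum_{z_1}(-1)^{|x_0|-|z_1|}\nu_{x_0,z_1}m^1_{z_1,y_1}\Bigr]\otimes y_1.
$$

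From this identity both implications are immediate. The relation \eqref{eq:DGcont} makes every bracket vanish, hence $D\Psi=0$. Conversely, if $D\Psi=0$, then specializing $a$ to be the unit (basepoint) of $R_*$ forces the bracketed expression to vanish for all $x_0,y_1$, exactly as in the concluding step of the proof of Proposition~\ref{prop:MC-formula-equation}.

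There is no conceptual obstacle; the only delicate point is keeping track of Koszul signs. In particular, the sign $(-1)^{|x_0|-|z_1|-1}$ in \eqref{eq:DGcont} arises from combining the factor $(-1)^{|x_0|-|z_1|}$ produced by the right action of $\mathfrak{m}^1$ on an element of degree $|a|+|x_0|-|z_1|$ with the minus sign from $D\Psi=\p^1\Psi-\Psi\p^0$ when the $\mathfrak{m}^1$ contribution is moved to the right-hand side of the equation.
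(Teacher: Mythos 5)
Your proof is correct and follows essentially the same direct computation as the paper's proof, differing only in bookkeeping: you split $D\Psi$ into a Leibniz part and a Maurer–Cartan part before recombining, whereas the paper carries out a single expansion. The final identity and the concluding step (specialize $a$ to the unit for the converse) match the paper's exactly.
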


\begin{proof}
Note that $D\Psi= \p^1 \Psi - \Psi \p^0$ since $\Psi$ has degree $0$. The map $\Psi$ acts by 
$\Psi(a\otimes x_0)=\sum_{y_1}a \nu_{x_0,y_1}\otimes y_1$, and we have 
\begin{align*}
D& \Psi(a \otimes x_0)\\
&=((\p+\mathfrak{m}^1)\Psi - \Psi (\p + \mathfrak{m}^0))(a\otimes x_0) \\
& = (\p+\mathfrak{m}^1)(\sum _{y_1}a \nu_{x_0,y_1} \otimes y_1)  \\
& \qquad \qquad - \Psi (\p a\otimes x_0) - \Psi((-1)^{|a|}\sum_{z_0}a m^0_{x_0,z_0}\otimes z_0) \\
& = \sum _{y_1}(\p a) \nu_{x_0,y_1} \otimes y_1 + \sum _{y_1} (-1)^{|a|}a (\p \nu_{x_0,y_1}) \otimes y_1 \\
& \qquad + \sum_{y_1} \sum _{z_1} (-1)^{|a|}(-1)^{|x_0|-|z_1|} a \nu_{x_0,z_1} m^1_{z_1,y_1}\otimes y_1 \\
& \qquad - \sum _{y_1}(\p a) \nu_{x_0,y_1} \otimes y_1 - \sum_{y_1}\sum_{z_0} (-1)^{|a|} a m^0_{x_0,z_0} \nu_{z_0,y_1}\otimes y_1 \\
& = (-1)^{|a|}\sum _{y_1} a \big(\p \nu_{x_0,y_1}  - \sum_{z_0}m^0_{x_0,z_0} \nu_{z_0,y_1} \\
& \qquad \qquad \qquad \qquad + \sum _{z_1} (-1)^{|x_0|-|z_1|} \nu_{x_0,z_1} m^1_{z_1,y_1}\big) \otimes y_1.
\end{align*}
The conclusion follows.
\end{proof}

Given a right $R_*$-module $\cF_*$, we induce a continuation cycle $\Psi^\cF$ in the twisted $\Hom$ complex $\Hom(C_*(f_0,\xi_0;\cF_*),C_*(f_1,\xi_1;\cF_*))$. This in turn induces a continuation chain map 
$$
\Psi^\cF:C_*(f_0,\xi_0;\cF_*)\to C_*(f_1,\xi_1;\cF_*),
$$ 
which acts by
$$
\Psi^\cF(\alpha\otimes x_0)=\sum_{y_1}\alpha \nu_{x_0,y_1}\otimes y_1.
$$

There are choices involved in the construction of the continuation cocycle, and we discuss the ambiguity in~\S\ref{sec:invariance-continuation}.

{\bf Homotopies}. Let $\{(f_{\tau,t},\xi_{\tau,t})\}$, $\tau\in [0,1]$ be a regular homotopy of homotopies connecting two regular homotopies $(f_{0,t}, \xi_{0,t})$ and $(f_{1,t}, \xi_{1,t})$ which interpolate between $(f_0,\xi_0)$ and $(f_1,\xi_1)$. Let $\mathfrak{m}^i=(m^i_{x,y})$ be a choice of Barraud-Cornea cocycles for $(f_i,\xi_i)$, $i=0,1$, and let $\Psi_0=(\nu^0_{x_0,y_1})$, $\Psi_1=(\nu^1_{x_0,y_1})$ be a choice of continuation cocycles for $(f_{0,t}, \xi_{0,t})$ and $(f_{1,t}, \xi_{1,t})$ respectively. To this data we associate a degree $1$ element 
$$
\h\in \Hom_1(C_*(f_0,\xi_0;R_*),C_*(f_1,\xi_1;R_*))
$$ 
such that $\Psi_1 - \Psi_0 = D\h$. We refer to it as \emph{the homotopy cocycle}.

We view $\h$ as a matrix $(h_{x_0,y_1})$ acting from the right with columns indexed by $x_0\in\Crit(f_0)$ and rows indexed by $y_1\in\Crit(f_1)$. Intuitively, the chain $h_{x_0,y_1}\in C_{|x_0|-|y_1|+1}(\Omega X)$ is obtained by evaluating into based loops a chain level representative of the fundamental class of the compactified moduli space of pseudo-gradient trajectories \emph{of arbitrary index} in $[0,1]\times [0,1] \times X$, running from $(0,0,x_0)$ to $(1,1,y_1)$, for the function $(\tau,t,x)\mapsto g(\tau) + g(t) + f_{\tau,t}(x)$, where $g:[0,1]\to \R$ is smooth with a critical maximum at $0$ and a critical minimum at $1$. See~\S\ref{sec:invariance-continuation} for the details of the construction.

\begin{proposition}\label{prop:homotopy-cocycle} 
The relation 
$$
D\h = \Psi_1 - \Psi_0
$$
is equivalent to 
\begin{align} \label{eq:alg-homotopy-eqn}
\partial h_{x_0,y_1}\, =\,  \nu^1_{x_0,y_1} & - \nu^0_{x_0, y_1} + \sum_{z_0} (-1)^{|x_0|-|z_0|} m^0_{x_0,z_0} h_{z_0,y_1}\\
& + \sum_{z_1}(-1)^{|x_0|-|z_1|} h_{x_0,z_1} m^1_{z_1,y_1}. \nonumber
\end{align}
\end{proposition}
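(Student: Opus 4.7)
The proof will mirror the computation used for Proposition~\ref{prop:continuation-DG}, just one degree higher, and is essentially an exercise in Koszul sign bookkeeping.

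The plan is to expand $D\h = \p^1\h - (-1)^{|\h|}\h\p^0 = \p^1\h + \h\p^0$ applied to a homogeneous element $a\otimes x_0 \in R_*\otimes C_\bullet(f_0)$, and match the result with the action of $\Psi_1 - \Psi_0$ on the same element. First I would fix the sign convention: since $\h$ has degree $+1$, the Koszul rule gives
\[
\h(a\otimes x_0) \;=\; (-1)^{|a|}\sum_{y_1} a\, h_{x_0,y_1}\otimes y_1,
\]
analogous to the formula for $\mathfrak{m}$ in Proposition~\ref{prop:MC-formula-equation} (which also has odd degree), while $\Psi_i$ is even and carries no sign.

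Next I would split $\p^i = \p + \mathfrak{m}^i$ and compute the four resulting terms of $\p^1\h + \h\p^0$ separately. The two pure $\p$-contributions combine, via the Leibniz rule on $R_*$, into $(-1)^{|a|}\sum_{y_1} a\,(\partial h_{x_0,y_1})\otimes y_1$ plus cross terms $\sum_{y_1}(\p a)\,h_{x_0,y_1}\otimes y_1$ which cancel between $\p^1\h$ and $\h\p^0$, exactly as in the proof of Proposition~\ref{prop:continuation-DG}. The two $\mathfrak{m}$-contributions $\mathfrak{m}^1\circ\h$ and $\h\circ\mathfrak{m}^0$ give, after one iteration of the formulas for $\mathfrak{m}^0$ and $\mathfrak{m}^1$ and careful tracking of Koszul signs coming from $|a|$ and from $|x_0|-|z_i|$, the two multiplicative terms
\[
(-1)^{|a|}\sum_{y_1}a\Bigl(\sum_{z_1}(-1)^{|x_0|-|z_1|}h_{x_0,z_1}m^1_{z_1,y_1}+\sum_{z_0}(-1)^{|x_0|-|z_0|}m^0_{x_0,z_0}h_{z_0,y_1}\Bigr)\otimes y_1,
\]
with signs determined in the same manner as in the continuation calculation, up to the shift induced by $|\h|=1$.

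On the other hand $(\Psi_1-\Psi_0)(a\otimes x_0) = \sum_{y_1} a(\nu^1_{x_0,y_1}-\nu^0_{x_0,y_1})\otimes y_1$, carrying no extra sign. Comparing the coefficient of $a(\cdot)\otimes y_1$ in the two expansions, the abstract identity $D\h=\Psi_1-\Psi_0$ becomes equivalent to
\[
(-1)^{|a|} a\Bigl(\partial h_{x_0,y_1}-\nu^1_{x_0,y_1}+\nu^0_{x_0,y_1}-\sum_{z_0}(-1)^{|x_0|-|z_0|}m^0_{x_0,z_0}h_{z_0,y_1}-\sum_{z_1}(-1)^{|x_0|-|z_1|}h_{x_0,z_1}m^1_{z_1,y_1}\Bigr)=0
\]
for every homogeneous $a$. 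Specializing to $a$ equal to the unit (basepoint) in $R_*=C_*(\Omega X)$ yields the claimed formula~\eqref{eq:alg-homotopy-eqn}, and conversely~\eqref{eq:alg-homotopy-eqn} plainly implies the vanishing for all $a$. The only genuine difficulty is keeping the signs straight; beyond that, the computation is routine and entirely parallel to Propositions~\ref{prop:MC-formula-equation} and~\ref{prop:continuation-DG}.
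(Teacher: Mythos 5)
Your plan is the right one and is essentially the paper's own proof: expand $D\h = \p^1\h + \h\p^0$, split each $\p^i = \p + \mathfrak{m}^i$, push the Koszul signs through, compare with $\Psi_1-\Psi_0$, and specialize to $a$ equal to the unit for the converse. The structure of the argument is correct.

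However, the signs you display in the intermediate steps do not track correctly, and they are not even mutually consistent with the final equation you write down. Concretely: after the Koszul cancellation, the pure $\p$-contribution is
\[
(\p\h + \h\p)(a\otimes x_0)\;=\;\sum_{y_1} a\,(\p h_{x_0,y_1})\otimes y_1,
\]
with no overall $(-1)^{|a|}$, because the two $(-1)^{|a|}$ signs (one from the definition of $\h$, one from Leibniz on $a h_{x_0,y_1}$) multiply to $+1$. Similarly, the $\mathfrak{m}$-contributions carry an overall factor of $-1$, not $(-1)^{|a|}$: for instance
\[
\h\mathfrak{m}^0(a\otimes x_0)\;=\;-\sum_{z_0,y_1}(-1)^{|x_0|-|z_0|}\,a\,m^0_{x_0,z_0}\,h_{z_0,y_1}\otimes y_1,
\]
since $(-1)^{|a|}\cdot(-1)^{|a|+|x_0|-|z_0|-1}=(-1)^{|x_0|-|z_0|-1}=-(-1)^{|x_0|-|z_0|}$, and likewise for $\mathfrak{m}^1\h$. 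Your displayed $(-1)^{|a|}$ prefactors would make $D\h - (\Psi_1-\Psi_0)$ depend on the parity of $|a|$ in a way that is impossible; they are slips, not a different convention. With the correct signs one obtains
\[
(D\h - (\Psi_1-\Psi_0))(a\otimes x_0)=\sum_{y_1} a\Bigl(\p h_{x_0,y_1}-\nu^1_{x_0,y_1}+\nu^0_{x_0,y_1}-\sum_{z_0}(-1)^{|x_0|-|z_0|}m^0_{x_0,z_0}h_{z_0,y_1}-\sum_{z_1}(-1)^{|x_0|-|z_1|}h_{x_0,z_1}m^1_{z_1,y_1}\Bigr)\otimes y_1,
\]
which is the identity you were aiming for, without the spurious $(-1)^{|a|}$ in front. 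Since the equation is set to zero and $(-1)^{|a|}$ is a unit, your conclusion happens to be unaffected, but the intermediate displays as written would not assemble into the claimed final line; you should redo the bookkeeping rather than wave at it.
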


\begin{proof}
The map $\h$ acts by $\h(a\otimes x_0)=(-1)^{|a|}\sum_{y_1}a h_{x_0,y_1}\otimes y_1$. 
Assume first that~\eqref{eq:alg-homotopy-eqn} is satisfied. We then obtain
\begin{align*} 
(\h & \p^0 + \p^1 \h)(a\otimes x) \\
& = \h(\partial a \otimes x + (-1)^{|a|} a \sum_z m^0_{x,z}\otimes z)
+ (-1)^{|a|}\p^1 ( a \sum_y h_{x,y}\otimes y) \\
& =  (-1)^{|a|-1} \partial a \sum_y h_{x,y}\otimes y + (-1)^{|a|}\sum_{z, y}(-1)^{|a| +|x|-|z|-1}a  m^0_{x,z} h_{z,y}\otimes y \\
& \qquad + \, (-1)^{|a|}\, \partial a \sum_y h_{x,y}\otimes y\, +\,  (-1)^{|a|}\, (-1)^{|a|}\, a \sum_y \partial h_{x,y}\otimes y\, \\
& \qquad +\, (-1)^{|a|}a \sum_{y, w}(-1)^{|a|+|x|-|y|-1}h_{x,y}\, m^1_{y,w}\otimes w. 
\end{align*}
After simplifying and interchanging $y$ and $w$ in the last sum we get using~\eqref{eq:alg-homotopy-eqn} 
\begin{align*} 
(\h&  \p^0 +\p^1 \h)(a\otimes x) \\
& =\, a \sum_{y} (\partial h_{x,y} -\!\!\sum_z(-1)^{|x|-|z|}m^0_{x,z}\, h_{z,y}- \!\!\sum_w(-1)^{|x|-|w|}h_{x,w}\, m^1_{w,y})\otimes y \\
& =\, a\sum_y(\nu^1_{x,y}-\nu^0_{x,y})\otimes y \\
& =\, (\Psi_1-\Psi_0)(a\otimes x). 
\end{align*}
This proves the relation $D\h = \Psi_1 - \Psi_0$. 
The converse is proved using the same computation by taking $a$ to be the unit. 
\end{proof}

Given a right $R_*$-module $\cF_*$, we induce an element 
$$
\h^\cF\in 
\Hom_1(C_*(f_0,\xi_0;\cF_*),C_*(f_1,\xi_1;\cF_*))
$$ 
which gives rise to a chain homotopy between the continuation maps 
$\Psi^\cF_0$ and $\Psi^\cF_1$ 
determined respectively by $(f_{0,t}, \xi_{0,t})$ and $(f_{1,t}, \xi_{1,t})$, i.e., 
$\Psi^\cF_1 - \Psi^\cF_0 = D\h^\cF$. 
The map $\h^\cF$ acts by 
$$
\h^\cF(\alpha\otimes x_0)=(-1)^{|\alpha|}\sum_{y_1}\alpha h_{x_0,y_1}\otimes y_1.
$$

There are choices involved in the construction of the homotopy map $\h$. However, unlike for the Barraud-Cornea cocycles and for the continuation cocycles, we will not discuss the ambiguity because it would amount to developing a theory of higher homotopies. See the work of Mazuir~\cite{Mazuir2} for the non-enriched setting.

%%%%%%%%%%%%%%%%%%%%%%%%%%%%%%%%%%%
\section{Vista on the DG Floer toolset}
%%%%%%%%%%%%%%%%%%%%%%%%%%%%%%%%%%%

Although we do not address Floer theory in this book, it is instructive to spell out the analogue of the previous discussion in that context, with a view towards future applications. 

In its classical formulation, the Floer toolset at homology level consists of the following data.
\begin{itemize}

\item to a regular pair $(H,J)$ consisting of an admissible Hamiltonian and an admissible almost complex structure, associate the Floer differential $\p$ (degree $-1$), which is a square zero degree $-1$ map from $C_\bullet=\langle \Per_1(H)\rangle$ to itself. Here $\Per_1(H)$ is the set of $1$-periodic orbits of $H$.

The differential $\p$ can be seen as a perturbation of the zero differential $\p_0=0$ by a Maurer-Cartan element $\mathfrak{m}\in \End_{-1}(C_\bullet)$. The resulting twisted complex  $FC_*(H,J)=(\langle \Per_1(H)\rangle,\p)$ is \emph{the Floer complex}. 

\item to a regular homotopy $(H_s,J_s)$ interpolating between regular pairs $(H_\pm,J_\pm)$ at $\pm\infty$, associate a \emph{continuation map} $$\Psi:FC_*(H_+,J_+)\to FC_*(H_-,J_-)$$ of degree $0$. This is a chain map, i.e., $\p_-\Psi-\Psi\p_+=0$. 

This is equivalent to saying that  $\Psi$ is a cycle in the twisted $\Hom$ complex $\Hom(FC_*(H_+,J_+),FC_*(H_-;J_-))$. 

\item to a regular homotopy of homotopies $\{(H^\lambda,J^\lambda)\}$ connecting two regular homotopies $(H^i, J^i)$, $i=0,1$ which interpolate between $(H_\pm,J_\pm)$, associate a degree $1$ map $$h:FC_*(H_+,J_+)\to FC_{*+1}(H_-,J_-)$$ which is a chain homotopy between the continuation chain maps $\Psi_i$ determined by $(H^i, J^i)$ for $i=0,1$, i.e., $\Psi_1-\Psi_0=\p^-h + h \p^+$. 

This is equivalent to saying that $\Psi_1 - \Psi_0$ is a boundary in the twisted $\Hom$ complex $\Hom(FC_*(H_+,J_+),FC_*(H_-;J_-))$, and $h$ is a primitive of $\Psi_1 - \Psi_0$. 
\end{itemize}

The DG Floer toolset consists of the same kind of data, but enriched with coefficients in cubical chains $R_*=C_*(\Omega\cL X)$, where $\cL X$ stands for the free loop space of $X$, and more generally enriched with coefficients in any DG local system over $\cL X$, i.e., DG complex $\cF_*$ which is a right $R_*$-module. 

\begin{itemize}

\item to a regular pair $(H,J)$ consisting of an admissible Hamiltonian and an admissible almost complex structure, associate a Maurer-Cartan element $\mathfrak{m}$ in the $\End$ complex $\End(R_*\otimes C_\bullet)$ endowed with the differential inherited from $R_*$. 

Given a right $R_*$-module $\cF_*$, we induce a Maurer-Cartan element $\mathfrak{m}^\cF$ in the $\End$ complex $\End(\cF_*\otimes C_\bullet)$ endowed with the differential $\p$ inherited from $\cF_*$. The Floer differential $\p$ is the twist of this differential by the Maurer-Cartan element $\mathfrak{m}^\cF$. The resulting twisted complex  $FC_*(H,J;\cF)=(\cF_*\otimes C_\bullet,\p)$ is \emph{the Floer complex with coefficients in the DG local system $\cF_*$}. 

\item to a regular homotopy $(H_s,J_s)$ interpolating between regular pairs $(H_\pm,J_\pm)$ at $\pm\infty$, associate a degree $0$ continuation element $\Psi$. This is a cycle in  
$\Hom(FC_*(H_+,J_+;R_*), FC_*(H_-,J_-;R_*))$. 

Given a right $R_*$-module $\cF_*$, we induce tautologically a continuation cycle $\Psi^\cF$ in  
$\Hom(FC_*(H_+,J_+;\cF_*), FC_*(H_-;J_-;\cF_*))$. This %in turn 
induces a continuation map $\Psi^\cF:FC_*(H_+,J_+;\cF_*)\to FC_*(H_-;J_-;\cF_*)$ such that $\p^-\Psi^\cF-\Psi^\cF \p^+=0$.

\item to a regular homotopy of homotopies $\{(H^\lambda,J^\lambda)\}$ connecting two regular homotopies $(H^i, J^i)$, $i=0,1$ which interpolate between $(H_\pm,J_\pm)$, associate a degree $1$ parametrized continuation element 
$$\h\in  
\Hom_1(FC_*(H_+,J_+;R_*),FC_*(H_-;J_-;R_*))$$ 
which realizes a chain homotopy between the continuation maps $\Psi_i$ determined by $(H^i, J^i)$, i.e., $\Psi_1 - \Psi_0 = \p^-\h + \h \p^+$. 

Given a right $R_*$-module $\cF_*$, we induce tautologically an element $\h^\cF\in 
\Hom_1(FC_*(H_+,J_+;\cF_*),FC_*(H_-;J_-;\cF_*))$ which realizes a chain homotopy between the continuation maps $\Psi^\cF_i$ determined by $(H^i, J^i)$, i.e.,  $\Psi^\cF_1 - \Psi^\cF_0 = \p^-\h^\cF + \h^\cF \p^+$. 
\end{itemize}

\chapter[Barraud-Cornea and Brown cocycles]{Comparison between the Barraud-Cornea cocycle and the Brown cocycle} \label{sec:BC-Brown}

Understanding the homology of fibrations $F\hookrightarrow E\to X$ was historically a driving question in algebraic topology. The fundamental piece of structure is the Leray-Serre spectral sequence~\cite{Serre1951}, and we warmly recommend McCleary's book~\cite{McC} for a comprehensive panorama of the subject. For us, the case in point is Brown's chain level description of the total space $E$ in terms of chains on $X$ and chains on $F$, see~\cite{Brown1959}. The key ingredient is ``Brown's universal twisting cocycle" 
$$
\varphi\in C^*(X;C_*(\Omega X))
$$
which satisfies the Maurer-Cartan equation 
\begin{equation} \label{eq:MC-Brown}
[\p, \varphi] + \varphi\cup \varphi = 0.
\end{equation} 
Our purpose in this chapter is to explain the precise sense in which Brown's twisting cocycle is equivalent to the Barraud-Cornea twisting cocycle. In the sequel we fix an arbitrary ring of coefficients $A$. 

\section{The Barraud-Cornea cocycle as a Maurer-Cartan element} \label{sec:BC-twisting-MC}

Let $X$ be a closed manifold and $f:X\to \R$ a Morse function together with a choice of regular pseudo-gradient vector field and a choice of collapsing embedded tree as in the previous chapters. Denote $R_*=C_*(\Omega X)$ the dga of cubical chains,  denote $C_\bullet=\langle \Crit(f)\rangle$ the free $\mathbb{Z}$-module generated by the critical points of $f$, viewed as a complex with zero differential, and denote $C_*$ the Morse complex, with underlying $\mathbb{Z}$-module $C_\bullet$ and endowed with the Morse differential.

As explained in~\S\ref{sec:DG-Morse}, the Barraud-Cornea cocycle may be interpreted as a Maurer-Cartan element in $\mathfrak m\in\End_{-1}(R_*\otimes C_\bullet)$. We provide below a slight modification that allows to obtain a Maurer-Cartan element in $\End(R_*\otimes C_*)$. This will be useful to establish a relation with the Brown twisting cocycle.

Given $x,y\in\Crit(f)$ such that $|x|-|y|=1$ we denote $n_{x,y}$ the algebraic number of pseudo-gradient trajectories connecting $x$ to $y$, so that the differential in $C_*$ acts by $\p x = \sum_{|y|=|x|-1} n_{x,y} y$. Let $e$ be the constant loop at the basepoint. We modify the $\mathfrak{m}=(m_{x,y})\in \End_{-1}(R_*\otimes C_\bullet)$ to $\mathfrak{m}'=({m}'_{x,y})\in \End_{-1}(R_*\otimes C_*)$ given by 
$$
{m}'_{x,y}=\left\{\begin{array}{ll} m_{x,y}, & \mbox{ if } |x|-|y|\neq 1, \\
 m_{x,y} - n_{x,y}e, & \mbox{ if } |x|-|y|=1.
 \end{array}\right. 
$$

The following result is proved by a computation similar to that of Proposition~\ref{prop:MC-formula-equation}, which we omit. 
\begin{proposition}
The relation $$\p m_{x,y}=\sum_z (-1)^{|x|-|z|}m_{x,z}m_{z,y}$$ is equivalent to the Maurer-Cartan equation 
$$
D\mathfrak{m}'+ \frac 1 2 [\mathfrak{m}',\mathfrak{m}'] =0,
$$
i.e., $\mathfrak{m}'$ is a Maurer-Cartan element in the dgLa $\End(R_*\otimes C_*)$. \qed
\end{proposition}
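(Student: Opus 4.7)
The plan is to observe that the modification $\mathfrak{m}\rightsquigarrow\mathfrak{m}'$ is engineered precisely so that the Morse differential on $C_*$ is absorbed into the Maurer-Cartan element, after which the statement reduces to Proposition~\ref{prop:MC-formula-equation}. Writing $\mathfrak{n}\in\End_{-1}(R_*\otimes C_*)$ for the operator $(a\otimes x)\mapsto (-1)^{|a|}\,a\otimes\p^{\mathrm{Morse}}x$, and using that the constant loop $e$ is the unit of $R_*$, I would rewrite
$$
\mathfrak{n}(a\otimes x)=(-1)^{|a|}\sum_{|x|-|y|=1} a\, n_{x,y}\, e\otimes y,
$$
so that a direct comparison with the definition of $\mathfrak{m}$ yields the clean identity $\mathfrak{m}'=\mathfrak{m}-\mathfrak{n}$.

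Next I would compare the two total twisted differentials. The base differential on $R_*\otimes C_*$ is $(\p_{R_*}\otimes 1)+\mathfrak{n}$, since $C_*$ now carries the Morse differential, so the twisted differential associated to $\mathfrak{m}'$ is
$$
(\p_{R_*}\otimes 1)+\mathfrak{n}+\mathfrak{m}'=(\p_{R_*}\otimes 1)+\mathfrak{m},
$$
which is exactly the twisted differential on $R_*\otimes C_\bullet$ (with zero differential on $C_\bullet$) associated to the original Barraud-Cornea cocycle $\mathfrak{m}$. These two operators thus coincide as endomorphisms of the same underlying graded $\Z$-module, and consequently so do their squares.

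Recalling that for a degree $-1$ perturbation $\mu$ of a square-zero differential $\p_0$ the Maurer-Cartan equation is equivalent to $(\p_0+\mu)^2=0$, the above identification shows that $\mathfrak{m}'$ satisfies the Maurer-Cartan equation in $\End(R_*\otimes C_*)$ if and only if $\mathfrak{m}$ satisfies the Maurer-Cartan equation in $\End(R_*\otimes C_\bullet)$. Invoking Proposition~\ref{prop:MC-formula-equation}, the latter condition is equivalent to the structural relation $\p m_{x,y}=\sum_z(-1)^{|x|-|z|}m_{x,z}m_{z,y}$, which finishes the argument. No serious obstacle is expected: the only points requiring care are the sign convention in the definition of $\mathfrak{n}$ and the observation that the Morse differential is itself square-zero (so $(\p_{R_*}\otimes 1)+\mathfrak{n}$ is an honest differential before the perturbation is added), both of which are routine. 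As an aside, one could alternatively run a direct computation parallel to that of Proposition~\ref{prop:MC-formula-equation}, but the reduction through the identity $\mathfrak{m}'=\mathfrak{m}-\mathfrak{n}$ is more conceptual and avoids repeating the bookkeeping.
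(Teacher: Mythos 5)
Your proof is correct, and it takes a genuinely different route from the paper's. The paper declares that the result "is proved by a computation similar to that of Proposition~\ref{prop:MC-formula-equation}, which we omit," i.e.\ it envisions a second direct sign chase of the same flavour. You instead identify $\mathfrak{m}'=\mathfrak{m}-\mathfrak{n}$ (with $\mathfrak{n}$ the Koszul-signed Morse-differential operator) and observe that the two full twisted differentials $(\p_{R_*}\otimes 1)+\mathfrak{n}+\mathfrak{m}'$ and $(\p_{R_*}\otimes 1)+\mathfrak{m}$ are literally the same endomorphism of the same underlying graded module, so the two Maurer--Cartan conditions are both equivalent to that single operator squaring to zero. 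That reduction is more conceptual, it reuses Proposition~\ref{prop:MC-formula-equation} instead of repeating its bookkeeping, and it isolates exactly where the modification $m_{x,y}\rightsquigarrow m'_{x,y}$ comes from: it absorbs the tensor differential's Morse part into the cocycle. The one point that must be made explicit (and you do flag it) is that $(\p_{R_*}\otimes 1)+\mathfrak{n}$ is itself square-zero, i.e.\ the Morse differential squares to zero; without that, the passage from the Maurer--Cartan equation $D\mathfrak{m}'+\frac12[\mathfrak{m}',\mathfrak{m}']=0$ to $((\p_{R_*}\otimes 1)+\mathfrak{n}+\mathfrak{m}')^2=0$ would fail, since in general $D\mu+\frac12[\mu,\mu]=0$ is only equivalent to $(\p_0+\mu)^2=\p_0^2$. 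That fact is of course standard Morse theory and independent of the proposition, so there is no circularity. In short: correct, and a nicer proof than the one the paper gestures at.
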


From this perspective, the Morse complex enriched with coefficients in $R_*$ is 
$$
(R_*\otimes C_*,\p + \mathfrak{m}'), 
$$
where $\p$ is the tensor differential. %The Maurer-Cartan equation is equivalent to the fact that $\p + \mathfrak{m}'$ squares to zero. 

\section{The Brown cocycle as a Maurer-Cartan element} \label{sec:Brown-twisting-MC}

Brown's twisting cocycle has played a key role both in the development of the theory of the bar-cobar adjunction (see~\cite[\S10]{Brown1959}, as well as Felix-Halperin-Thomas~\cite{FelixHalperinThomas1995} or Loday-Vallette~\cite[\S2]{LodayVallette-AlgOp} for modern developments), and in the theory of homological perturbation (see Gugenheim~\cite{Gugenheim60} and McCleary~\cite[\S 6.4]{McC}, as well as the references therein). Our discussion will focus on the perturbative aspects.
  
Brown's twisting cocycle $\varphi\in C^*(X;C_*(\Omega X))$ acts as 
$$
\varphi:C_*(X)\to C_{*-1}(\Omega X). 
$$
Using the coalgebra structure on $C_*(X)$ with Alexander-Whitney coproduct $\lambda$, and the algebra structure on $C_*(\Omega X)$ with Pontryagin product $\mu$, Brown defines further~\cite{Brown1959,Gugenheim60}
$$
\mathfrak{m}_\varphi = (\mu\otimes 1)(1\otimes \varphi\otimes 1)(1\otimes \lambda)\in \End_{-1}(C_*(\Omega X)\otimes C_*(X)).
$$
The Maurer-Cartan relation~\eqref{eq:MC-Brown} for $\varphi$ turns out to be equivalent to
$$
(\p + \mathfrak{m}_\varphi)^2=0.
$$ 
This is in turn equivalent to saying that $\mathfrak{m}_\varphi$ is a Maurer-Cartan element in the dgLa $\End(C_*(\Omega X)\otimes C_*(X))$, i.e., 
$$
D\mathfrak{m}_\varphi + \frac 1 2 [\mathfrak{m}_\varphi, \mathfrak{m}_\varphi] =0. 
$$
We view therefore $\mathfrak{m}_\varphi$, and hence $\varphi$, as defining a perturbation of the tensor differential on $C_*(\Omega X)\otimes C_*(X)$.

\section{Semi-free resolutions}

The Barraud-Cornea cocycle and the Brown cocycle can be interpreted from a unifying perspective in the context of differential homological algebra. For this we need the following definition from~\cite[\S2]{FelixHalperinThomas1995}. 

\begin{definition} \label{defi:semi-free-resolution}
Let $R_*$ be a DGA. A DG module $T_*$ over $R_*$ is called \emph{semi-free} if it admits a filtration 
$$
0=T(-1)\subset T(0)\subset T(1)\subset \cdots \subset T(k-1)\subset T(k)\subset \cdots
$$  
such that, for all $k\ge 0$, the module $T(k)/T(k-1)$ is $R_*$-free on a basis of cycles. A \emph{semi-free $R_*$-resolution} of an $R_*$-module $M_*$ is a semi-free module $T_*$ with a quasi-isomorphism $T_*\stackrel\simeq\longrightarrow M_*$. 
\end{definition}

We refer to~\cite[\S2]{FelixHalperinThomas1995} for terminology and material regarding semi-free resolutions, which play in differential graded homological algebra the role played by projective resolutions in the study of modules over commutative rings. 
In particular, any two semi-free resolutions of the same $R_*$-module are chain homotopy equivalent.

\begin{example}
Let $R_*$ be a DGA over a ring $A$ and $C_*$ a chain complex which is free over $A$ with a distinguished basis $\cB$. Let $\mathfrak{m}\in\End_{-1}(R_*\otimes C_*)$ be a Maurer-Cartan element which is defined by a strictly lower triangular matrix with respect to the degree in the basis $\cB$, i.e., $\mathfrak{m}_{x, y}=0$ if $x,y\in \cB$ and $|x|\leq |y|$. Then the $R_*$-module $(R_*\otimes C_*,\p + \mathfrak{m})$ is semi-free. Indeed, the filtration $T(k)=R_*\otimes C_{*\le k}$ satisfies the requirements in the previous definition.\footnote{Note that this is precisely the filtration that will be used in Definition~\ref{defi:canonical_filtration} to define the spectral sequence for the twisted complex.} In particular, by Sections \ref{sec:BC-twisting-MC} and \ref{sec:Brown-twisting-MC}:
\begin{itemize}
\item Given a manifold $X$, the Barraud-Cornea twisted complex with coefficients in $C_*(\Omega X)$ is semi-free. 
\item Given a topological space $X$, the Brown cocycle $\mathfrak{m}_\varphi$  gives rise to a semi-free $C_*(\Omega X)$-module~\cite{Brown1959}. 
\end{itemize} 
\end{example} 

The key fact is that the Barraud-Cornea cocycle and the Brown cocycle both provide (semi-free) $C_*(\Omega X)$-resolutions of the trivial $C_*(\Omega X)$-module $\mathbb{Z}$. This is the content of the main theorems of Barraud and Cornea from~\cite[Theorem~1.1.d]{BC07} (enhanced to $\Z$-coefficients by our Theorem~\ref{thm:fibration}, see also Theorem~A) and of Brown from~\cite[Theorem~4.2]{Brown1959}, which provide quasi-isomorphisms\footnote{Brown actually proves a chain homotopy equivalence, and this can also be proved in the Morse setup with more work.} 
$$
(C_*(\Omega X)\otimes C_*(f),\p + \mathfrak{m}')\stackrel\simeq\longrightarrow C_*(\cP_{\star\to X}X)
$$
and
$$
(C_*(\Omega X)\otimes C_*(X),\p + \mathfrak{m}_\varphi)\stackrel\simeq\longrightarrow C_*(\cP_{\star\to X}X).
$$
Here $\cP_{\star\to X}X$ is the space of paths in $X$ starting at the basepoint. This space is contractible, so that $C_*(\cP_{\star\to X}X)$ is chain homotopy equivalent as a $C_*(\Omega X)$-module to the trivial $C_*(\Omega X)$-module $\mathbb{Z}$. 
%That these resolutions are semi-free follows from the fact that both complexes are of the form $(C_*(\Omega X)\otimes C_*, \p + \mathfrak{m})$, where $C_*$ is a free $\mathbb{Z}$-module in each degree and $\mathfrak{m}$ strictly decreases the filtration given by the degree on $C_*$. 

\begin{definition}
Let $C_*$ be a complex of free $\mathbb{Z}$-modules and consider a Maurer-Cartan element $\mathfrak{m}\in\End_{-1}(C_*(\Omega X)\otimes C_*)$. We say that $\mathfrak{m}$ is a \emph{universal cocycle} if $(C_*(\Omega X)\otimes C_*,\p + \mathfrak{m})$ is a semi-free resolution of %a homotopy model of 
the trivial $C_*(\Omega X)$-module $\mathbb{Z}$. 
\end{definition}

\begin{definition} Two Maurer-Cartan elements $\mathfrak{m}\in\End_{-1}(C_*(\Omega X)\otimes C_*)$ and $\mathfrak{m}'\in\End_{-1}(C_*(\Omega X)\otimes C'_*)$ are said to be \emph{equivalent} if the complexes $(C_*(\Omega X)\otimes C_*,\p + \mathfrak{m})$ and $(C_*(\Omega X)\otimes C'_*,\p + \mathfrak{m}')$ are chain homotopy equivalent. 
\end{definition}

We therefore have:

\begin{theorem} \label{thm:equivalence-BC-Brown}
The Barraud-Cornea cocycle and the Brown cocycle are universal, and therefore equivalent. 
\end{theorem}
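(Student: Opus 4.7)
The plan is to verify that each of the two twisted complexes is a semi-free resolution of the trivial $C_*(\Omega X)$-module $\Z$, and then to invoke the standard uniqueness result for semi-free resolutions to conclude equivalence. The text preceding the theorem essentially outlines this strategy; my proof would carry it out in three crisp steps, with most of the substance already being supplied by the preparatory results.

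First, I would establish semi-freeness of both complexes by exhibiting the correct filtration. For the Barraud--Cornea side, note that $m_{x,y}\in C_{|x|-|y|-1}(\Omega X)$ vanishes unless $|x|>|y|$, and the modification $m'_{x,y}=m_{x,y}-n_{x,y}e$ does not alter this strict lower-triangularity with respect to the Morse index. Therefore the filtration $T(k)=R_*\otimes C_{\leq k}(f)$, with $R_*=C_*(\Omega X)$, fits the hypothesis of the example immediately following Definition~\ref{defi:semi-free-resolution}, making $(R_*\otimes C_*(f),\p+\mathfrak{m}')$ a semi-free $R_*$-module. The Brown cocycle $\mathfrak{m}_\varphi$ is strictly lower triangular with respect to the simplicial degree on $C_*(X)$, so the filtration by skeleta $R_*\otimes C_{\leq k}(X)$ yields semi-freeness of $(R_*\otimes C_*(X),\p+\mathfrak{m}_\varphi)$ (this is essentially the content of Brown's construction).

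Second, I would verify the quasi-isomorphism with $\Z$ for each complex. For Barraud--Cornea, apply Theorem~A to the path--loop Hurewicz fibration $\Omega X\hookrightarrow \cP_{\star\to X}X\to X$, whose associated DG local system is precisely $\cF=C_*(\Omega X)$ (acting on itself by the Pontryagin product under the canonical lifting function). Theorem~A provides a chain map
\[
\Psi:\bigl(R_*\otimes C_*(f),\p+\mathfrak{m}'\bigr)\stackrel\simeq\longrightarrow C_*(\cP_{\star\to X}X)
\]
which is a quasi-isomorphism of $R_*$-modules. Since $\cP_{\star\to X}X$ is contractible, $C_*(\cP_{\star\to X}X)$ is chain homotopy equivalent as an $R_*$-module to the trivial $R_*$-module $\Z$, so $(R_*\otimes C_*(f),\p+\mathfrak{m}')$ is a semi-free resolution of $\Z$; thus $\mathfrak{m}'$ (and hence $\mathfrak{m}$) is a universal cocycle. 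The analogous statement for Brown is~\cite[Theorem~4.2]{Brown1959}, which exhibits the corresponding quasi-isomorphism, so $\mathfrak{m}_\varphi$ is also universal.

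Third, equivalence is a formal consequence of the standard uniqueness principle in differential graded homological algebra: any two semi-free resolutions of the same $R_*$-module are chain homotopy equivalent as $R_*$-modules (see~\cite[\S2]{FelixHalperinThomas1995}). Applied to the two semi-free resolutions of $\Z$ produced above, this yields a chain homotopy equivalence
\[
\bigl(R_*\otimes C_*(f),\p+\mathfrak{m}'\bigr)\simeq \bigl(R_*\otimes C_*(X),\p+\mathfrak{m}_\varphi\bigr),
\]
which is precisely the equivalence of Maurer--Cartan elements in the sense of the definition preceding the theorem. The only step that requires any genuine work beyond invoking named theorems is confirming that Theorem~A, as stated and proved in the main body, applies to the path--loop fibration with the expected holonomy action; this is a matter of checking that the canonical lifting function for $\cP_{\star\to X}X$ induces the right multiplication used to define the BC complex, which follows directly from the construction of the DG-module structure in~\S\ref{sec:fibrations}. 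The main conceptual point is that the universality property is rigid enough to characterize these cocycles up to chain homotopy, collapsing two a priori very different constructions—one simplicial and cup-product-theoretic, the other Morse-theoretic via moduli of gradient trajectories—into equivalent avatars of the same homotopical object.
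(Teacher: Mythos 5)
Your proof is correct and follows exactly the same route as the paper: semi-freeness of both twisted complexes via the index/degree filtration, quasi-isomorphism with $\Z$ via Theorem~A (for Barraud--Cornea, applied to the path--loop fibration) and Brown's Theorem~4.2 (for Brown), and then the standard uniqueness of semi-free resolutions over a DGA. The paper presents this argument by front-loading the first two steps into the discussion preceding the theorem and then stating the third step as the entire proof, but the mathematical content is the same as yours.
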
 

\begin{proof} We have already seen that these two cocycles are universal. 
On the other hand, any two universal cocycles are equivalent because any two semi-free resolutions of the trivial $C_*(\Omega X)$-module $\mathbb{Z}$ are chain homotopy equivalent.  
\end{proof}

We close this chapter with two questions for further study.

\begin{question}
Realize the above chain homotopy equivalence by a map that preserves the filtrations. 
\end{question} 

\begin{question}
The Brown cocycle $\varphi$ is a Maurer-Cartan element in the convolution algebra $\Hom(C,A)$, with $C=C_*(X)$ the DG coalgebra of chains on $X$ and $A=C_*(\Omega X)$ the DG algebra of chains on the space of based Moore loops. See~\cite{Brown1959,Gugenheim60,LodayVallette-AlgOp}. In the previous discussion we rephrased $\varphi$ as a Maurer-Cartan element $\mathfrak{m}_\varphi$ in the dgLa $\End(C_*(\Omega X)\otimes C_*(X))$, bypassing in this way the definition of the dgLa in terms of the coalgebra structure on $C_*(X)$ and allowing for a comparison with the Barraud-Cornea cocycle. We find it an interesting question to reinterpret the Barraud-Cornea cocycle as a Maurer-Cartan element in a convolution algebra $\Hom(C,A)$,  with $C$ the Morse complex endowed with a structure of coalgebra up to homotopy.%, and $A$ as above. 
\end{question}

\begin{remark} \label{rmk:remark_on_Tor}
From the perspective adopted in this section, the Morse chain complex $C_*(f;\cF)$ is a model for the derived tensor product with the trivial $C_*(\Omega M)$-module $\Z$: 
$$
C_*(f;\cF)\simeq \cF\stackrel{L}\otimes_{C_*(\Omega M)} \Z,
$$
or, in another notation,
$$
H_*(f;\cF)\simeq \mathrm{Tor}_*^{C_*(\Omega M)} (\cF,\Z).
$$
These identifications should be compared with~\cite[Definition~3.1.1]{Malm-thesis}.

\end{remark}

%%%%%%%%%%%%%%%%%%%%%%%%%%%%%%%%%%%%%%
\chapter[Properties of twisted complexes]{Algebraic properties of twisted complexes}
\label{sec:algebraicsetup}
%%%%%%%%%%%%%%%%%%%%%%%%%%%%%%%%%%%%%%

In this chapter we single out and discuss a number of algebraic properties of twisted complexes, as defined in the previous chapter.

\section{Algebraic setup}\label{sec:sub-algebraicsetup}
We recap the notation from the previous chapter in a slightly more abstract setting, which we adopt for the rest of~\S\ref{sec:algebraicsetup}. 

Our construction uses the following data. 
 
\textbf{(1) A differential graded algebra (DGA) $R=(R_*,\partial)$,} with unit $1\in R_0$ over an ungraded ring $A$. The degree of a homogeneous element $a\in R_*$ is denoted by $|a|$.  We work in homological convention and assume that the differential on $R_*$ has degree $-1$. 

Our main example is $R_*=C_*(\Omega X)$, the DGA of based Moore loops on a manifold $X$.

\textbf{(2) A differential graded (DG) right $R_*$-module $\cF=(\cF_*,\p)$.} This means that $(\cF_*,\p)$ is a chain complex endowed with a right $R_*$-module action 
$$
\cF_*\otimes R_*\to \cF_*, \qquad \alpha\otimes a\mapsto \alpha\cdot a
$$ 
which is a chain map of degree $0$. Having in mind the fundamental example $R_*=C_*(\Omega X)$, we call such an object  \emph{derived, or DG, local system}, and we refer to the Introduction for a discussion of this terminology. 

Our privileged source of examples is provided by fibrations over $X$, see~\S\ref{sec:fibrations}. Upon choosing a lifting function, the cubical chains on the fiber inherit the structure of a right $C_*(\Omega X)$-module.

\textbf{(3) A Maurer-Cartan element (or twisting cocycle) $\mathfrak{m}$.} Given $\cC_\bullet= (\cC_p)_{p\ge 0}$ a graded free $A$-module which is \emph{based}, i.e., endowed with a preferred basis $\calb$, we see $\mathfrak m$ as a matrix $(m_{x,y})$, $x,y\in\calb$ consisting of an element $m_{x,y}\in R_{|x|-|y|-1}$ for any two elements $x, y\in \calb$ with $|x|>|y|$ (and set $m_{x,y}=0$ if $|x|\le |y|$), and satisfying the {\it Maurer-Cartan (MC) relation}
\begin{equation}\label{MC} 
\partial m_{x,y}\, =\, \sum_{|x|>|z|>|y|}(-1)^{|x|-|z|}m_{x,z}\cdot m_{z,y}.
\end{equation} 
We implicitly assume that, for any $x,y\in\calb$ with $|x|>|y|$, the set $\{z\in\calb\, : \, m_{x,z} \mbox{ and } m_{z,y} \mbox{ are nonzero}\}$ is finite. 
 
Given this data, the \textbf{twisted chain complex on $\cC_\bullet$ with DG-coefficients in $\cF$} is  
$$
C_*=C_*(\cC_\bullet,\cF_*)=\cF_*\otimes_A \cC_\bullet,
$$
with differential (of degree $-1$)
\begin{equation}\label{diffDGmodule}
\partial (\alpha\otimes x) = \partial \alpha \otimes x + (-1)^{|\alpha|}\sum_y\alpha\cdot m_{x,y}\otimes y.
\end{equation} 
Here $\alpha\in\cF$, $x\in\calb$ and the differential is extended by $A$-linearity in the second variable.  
The homology of $C_*$ is called \textbf{homology with DG-coefficients in $\calf$} and is denoted $H_*(\cC_\bullet; \calf)$.

\section[Spectral sequence and lifted complex]{Canonical filtration, spectral sequence and lifted complex}\label{sec:spectral-sequence}

Unless otherwise mentioned we will suppose that the DGA $R$ and the DG-local systems $\cF$ considered in this book are supported in nonnegative degrees.  
This condition ensures that the spectral sequences that appear in the sequel converge.

\begin{definition} \label{defi:canonical_filtration} The \emph{canonical filtration} on the complex $(C_*,\partial)$ is given by 
$$
F_p(C_*)\, =\, \bigoplus_{i\leq p}\cF\otimes \cC_i,\qquad p\ge 0.
$$
\end{definition}

Since $\cF$ is supported in a range of degrees that is bounded from below,    
%filtration is then weakly convergent~\cite[\S3.1]{McC} 
the spectral sequence $(E_{p,q}^r, d^{r})$ associated to this filtration converges to $H_*(C_*)$.  We will now describe its first pages.

Its 0-th page is 
$$
E_{p,q}^{0}\, =\, F_{p}(C_{p+q})/F_{p-1}(C_{p+q})\,  = \, \cF_q\otimes \cC_p
$$
and $d^0:E_{p,q}^{0}\ri E_{p,q-1}^0$ is given by 
$d^0(\alpha\otimes x)\, =\, \partial \alpha \otimes x$.
As a consequence, the first page is  
$$ 
E_{p,q}^1\, =\, H_{q}(\cF_*\otimes \cC_p)\, =\, H_q(\cF_*)\otimes \cC_p.
$$
The differential $d^1:E_{p,q}^1\ri E_{p-1, q}^1$ is given by
$$
d^1(\hat{\alpha}\otimes x)\, =\, (-1)^q\sum_{|y|=|x|-1}\hat{\alpha} \cdot \hat m_{x,y}\otimes y,
$$ 
where $\hat{\alpha}\in H_q(\cF)$, $\hat{m}_{x,y}$ is the image of $m_{x,y}$ in $H_0(R_*)$ and $\hat{\alpha}\cdot \hat m_{x,y}$ is the class of the cycle $\alpha\cdot m_{x,y}$ in $H_q(\cF)$. 
% $\hat{m}_{x,y}$ is the image of $m_{x,y}\in R_0$ through the canonical projection $\pi : R_0\ri H_0(R_*).$

In order to describe the second page of the spectral sequence, we need to introduce the following complex.

\begin{definition}\label{def:algebraic_lifted_complex}
  The \emph{lifted complex of $\cC_\bullet$} is the free graded left $H_0(R_*)$-module 
  $$
  \widetilde{\cC}_p= H_0(R_*)\otimes_A \cC_p
  $$ 
  endowed with the differential  
      \[\delta(x) = \sum_{|y|=|x|-1}\hat{m}_{x,y}y.\]
\end{definition}

The terminology is motivated by the discussion in~\S\ref{sec:lifted-homology}: for the fundamental example where $R_*=C_*(\Omega X)$ and $\mathfrak{m}$ is the Barraud-Cornea cocycle, this is precisely the lifted Morse complex from~\cite{damian2012}. We may now describe the second page of the spectral sequence.

\begin{lemma}\label{spectral} 
  The  second page of the spectral sequence associated to $C_*=C_*(\cC_\bullet,\calf_*)$ is given by 
  \[E_{p,q}^2 = H_p(\widetilde{\cC}_*; H_q(\calf)),\] where
  $H_q(\calf)$ carries the induced right $H_0(R_*)$-module structure.
\end{lemma}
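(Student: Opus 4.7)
The plan is to identify the first page of the spectral sequence, together with $d^1$, with a chain complex canonically isomorphic to the one computing $H_*(\widetilde{\cC}_*; H_q(\cF))$, up to an inconsequential sign.

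First I would check that the two structures invoked in the statement make sense: (a) since the Pontryagin-type multiplication $R_*\otimes R_*\to R_*$ is a chain map, the image of a product of two cycles only depends on their classes, which gives $H_0(R_*)$ a well-defined ring structure and makes each $H_q(\cF)$ a right $H_0(R_*)$-module by $\hat\alpha\cdot \hat a=\widehat{\alpha\cdot a}$; (b) the Maurer-Cartan relation \eqref{MC} applied in degree $-1$ of $R_*$ only produces boundaries on the right-hand side when $|x|-|y|>1$ and exactly the relation $\delta^2=0$ in $\widetilde{\cC}_*$ when $|x|-|y|=2$ (after passing to $H_0(R_*)$). So $(\widetilde{\cC}_*,\delta)$ is an honest chain complex of free left $H_0(R_*)$-modules.

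Next I would make the identification of chain complexes. Since $\widetilde{\cC}_p=H_0(R_*)\otimes_A \cC_p$ is a free left $H_0(R_*)$-module on the basis $\calb\cap\cC_p$, the base change gives a natural isomorphism
\[
H_q(\cF)\otimes_{H_0(R_*)}\widetilde{\cC}_p\ \stackrel{\cong}{\longrightarrow}\ H_q(\cF)\otimes_A \cC_p,\qquad \hat\alpha\otimes(1\otimes x)\mapsto \hat\alpha\otimes x,
\]
under which the induced differential $1\otimes\delta$ becomes $\hat\alpha\otimes x\mapsto \sum_{|y|=|x|-1}\hat\alpha\cdot\hat m_{x,y}\otimes y$. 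Comparing with the formula for $d^1$ on $E^1_{p,q}=H_q(\cF)\otimes \cC_p$ recalled just above the statement, the two differentials differ by the overall sign $(-1)^q$, which is constant on each row $E^1_{*,q}$. Multiplying the generators of $E^1_{p,q}$ by $(-1)^q$ is an isomorphism of chain complexes, so the two homology groups agree.

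Finally I would collect the identifications: $E^2_{p,q}=H_p(E^1_{*,q},d^1)\cong H_p(H_q(\cF)\otimes_A \cC_*,\,1\otimes\delta)=H_p(\widetilde{\cC}_*;H_q(\cF))$, as claimed. No step is really an obstacle; the only point that requires mild care is bookkeeping the sign $(-1)^q$ in $d^1$ and keeping the $A$-linear versus $H_0(R_*)$-linear tensor products straight, which is handled cleanly by the freeness of $\widetilde{\cC}_p$ as a left $H_0(R_*)$-module.
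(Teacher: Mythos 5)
Your proposal is correct and follows essentially the same route as the paper's own proof: both identify the $E^1$-page (with its $d^1$ differential) with the complex $H_q(\cF)\otimes_{H_0(R_*)}\widetilde{\cC}_*$ via the base-change isomorphism, and both dispose of the overall sign $(-1)^q$ by noting that a constant sign on a row does not affect homology. You fill in more detail than the paper does (well-definedness of the $H_0(R_*)$-module structure, $\delta^2=0$, and the explicit base-change map), but the core argument is the same.
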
 

\begin{proof}
  Recall that $E_{p,q}^2$ is the homology of $E_{p,q}^1= H_q(\calf_*)\otimes_{A} \cC_p$, whose differential is given by:
  $$d^1(\hat{\alpha}\otimes x)\, =\, (-1)^q\sum_{|y|=|x|-1}\hat{\alpha} \cdot \hat{m}_{x,y}\otimes_{A} y.$$
  Up to sign change this is exactly the differential of the complex 
  $$
  H_q(\calf_*)\otimes_{H_0(R_*)}\left(H_0(R_*)\otimes_A \cC_p\right)\, =\, H_q(\calf_*)\otimes_{H_0(R_*)}\widetilde{\cC}_p,
  $$ 
  whose homology is the claimed one.
\end{proof}

\section{Functoriality with respect to the DG-module}\label{sec:alg-functoriality}

Our construction satisfies the following functoriality property.

\begin{proposition}\label{functoriality1bis} Let $\calf$ and $\calf'$ be two DG-modules over a DGA  $R_*$ endowed with a twisting cocycle  $(m_{x,y})$ associated to a free based complex $\cC_{\bullet}$. Let $\Gamma:\calf\ri\calf'$ be a morphism of DG-modules over $R_*$. Then $\Gamma$ yields a  functorial chain map 
$$
\widetilde{\Gamma}: C_*(\cC_\bullet,\calf) \ri C_*(\cC_\bullet,\calf').
$$ 
If $\Gamma$ is a quasi-isomorphism then so is $\widetilde\Gamma$.
\end{proposition}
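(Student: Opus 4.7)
The natural candidate is $\widetilde\Gamma = \Gamma\otimes \id_{\cC_\bullet}$, i.e.
\[
\widetilde\Gamma(\alpha\otimes x)\, =\, \Gamma(\alpha)\otimes x.
\]
First I would check that this is a chain map with respect to the twisted differentials on $C_*(\cC_\bullet,\cF)$ and $C_*(\cC_\bullet,\cF')$ defined by~\eqref{diffDGmodule}. Using that $\Gamma$ is $A$-linear, commutes with $\partial$ (as a chain map), and satisfies $\Gamma(\alpha\cdot a)=\Gamma(\alpha)\cdot a$ for $a\in R_*$ (as a morphism of right $R_*$-modules), a short direct calculation shows
\[
\widetilde\Gamma\circ \partial\, =\, \partial\circ \widetilde\Gamma.
\]
Functoriality $\widetilde{\Gamma'\circ \Gamma}=\widetilde{\Gamma'}\circ \widetilde\Gamma$ and $\widetilde{\id}=\id$ are immediate from the formula.

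To prove that $\widetilde\Gamma$ is a quasi-isomorphism when $\Gamma$ is, I would use the canonical filtration $F_p C_*=\bigoplus_{i\le p}\cF\otimes\cC_i$ introduced in Definition~\ref{defi:canonical_filtration}, and the analogous filtration on $C_*(\cC_\bullet,\cF')$. Since $\widetilde\Gamma=\Gamma\otimes \id$, it is filtration-preserving, hence induces a morphism of the associated spectral sequences. From the description of the $0$-th and first pages recalled in~\S\ref{sec:spectral-sequence}, the morphism on $E^0$ is $\Gamma\otimes \id :\cF_q\otimes \cC_p\to \cF'_q\otimes \cC_p$, which induces on $E^1$ the map
\[
H_q(\Gamma)\otimes \id : H_q(\cF)\otimes \cC_p\, \longrightarrow\, H_q(\cF')\otimes \cC_p.
\]
Since $\Gamma$ is a quasi-isomorphism, $H_q(\Gamma)$ is an isomorphism, hence so is the map on $E^1$, and therefore on every subsequent page.

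Under the standing assumption that $R_*$ and the DG local systems are supported in nonnegative degrees, both spectral sequences converge to $H_*(C_*(\cC_\bullet,\cF))$ and $H_*(C_*(\cC_\bullet,\cF'))$ respectively, so by the standard spectral sequence comparison theorem $\widetilde\Gamma$ induces an isomorphism on homology. The only point requiring care is the convergence of the spectral sequences, which is precisely why the nonnegativity hypothesis of~\S\ref{sec:spectral-sequence} was imposed; this is the step most prone to subtlety, but it is handled uniformly by the setup in Definition~\ref{defi:canonical_filtration}.
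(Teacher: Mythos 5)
Your proof is correct and follows essentially the same route as the paper: define $\widetilde\Gamma=\Gamma\otimes\id$, verify it is a filtration-preserving chain map, and compare the associated spectral sequences at the first page. The paper states this more briefly, but the argument is the same.
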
 

\begin{proof}
We define $\widetilde\Gamma$ by $ \widetilde{\Gamma}(a\otimes x) = \Gamma(a)\otimes x$. 
It is easy to check that $\widetilde{\Gamma}$ is a chain map which preserves the canonical filtrations, hence $\widetilde{\Gamma}$ induces a morphism between the corresponding spectral sequences. If the map $\Gamma$ is a quasi-isomorphism then $\widetilde{\Gamma}$ induces an isomorphism between the first pages of the spectral sequences, hence an isomorphism between the limits of the two spectral sequences. 
\end{proof}

%Proposition~\ref{functoriality1bis} applies in particular to the following situation.

\begin{example}
The degree 0 homology  
% Let $C_\bullet$, $R_*$ and $\calf_*$ as before and assume that $R_*$ and $\cF_*$ are supported in nonnegative degrees. Recall that $\widetilde{C}_\bullet = H_0(R_*)\otimes_A C_\bullet $ is a complex with the differential  defined by the elements $m_{x,y}$ with $|x|=|y|+1$. Then
$H_0(\calf)$ is a right $H_0(R)$-module 
and we can view $H_0(\calf)$ as a DG-module $\calf'$ over $R_*$  
by setting $\calf'_0=H_0(\calf)$, $\calf'_k=0$ for all $k\neq 0$ and using the canonical projection $R_*\to H_0(R_*)$, which is an algebra map. There is an obvious morphism $\calf\to \calf'$ defined by the canonical projection $\pi:\calf_0\ri \calf'_0=H_0(\calf)$ (and $0$ elsewhere). Proposition~\ref{functoriality1bis} 
yields a natural map
\begin{equation} \label{eq:twisted-to-lifted}
\pi_* : H_*(\cC_\bullet ; \calf) \ri H_*(\widetilde{\cC}_\bullet ; H_0(\calf)).
\end{equation}
The last assertion of Proposition~\ref{functoriality1bis} implies that,  
if $H_k(\calf)=0$ for $k\neq 0$, then $\pi_*$ is an isomorphism. In fact, we will see in the proof of Proposition~\ref{prop:H0} below (see~\eqref{eq:deg0}) that $\pi_*$ is always an isomorphism in degree 0.
\end{example}

In the previous statement we used the same DGA for both modules $\calf$ and $\calf'$, but sometimes we will have to deal with different modules over different DGAs. The following remark will be helpful.

\begin{remark}\label{rem:pullback} (push-forward of twisting cocycle = pullback of DG-module structure)
Let $R_*$ be a DGA endowed with $(m_{x,y})$ as above, $\Phi : R_*\ri R'_*$ a DGA-map and $\calf'$ a DG-module over $R'_*$. We may build twisted complexes from this data by either:
\begin{enumerate}[leftmargin=* ,parsep=0cm,itemsep=0cm,topsep=-2mm]
\item pushing forward the  twisting cocycle  into $R'_*$ by defining $m'_{x,y} = \Phi(m_{x,y})$ and using this new twisting cocycle to define $C_*(\cC_\bullet,\calf')$, or
\item pulling back via $\Phi$ the $R'_*$-module structure on $\calf'$ into an $R_*$-module structure denoted $\Phi^*\calf'$, and using it to define the twisted complex $C_*(\cC_\bullet,\Phi^*\calf')$ using the original cocycle $(m_{x,y})$. 
\end{enumerate}
As one can easily see these two twisted complexes coincide. We denote their common homology by $H_*(\cC_\bullet ; \Phi^*\calf')$.
\end{remark}

\begin{example}\label{ex:pullback}
%This remark applies to the following situation. 
Let $\phi:X\to Y$ be a continuous map between pointed topological spaces. It naturally induces a  map $\Omega\phi:\Omega X\to\Omega Y$ hence a DGA morphism between the cubical chain complexes  $R_*=C_*(\Omega X)$ and  $R_*'=C_*(\Omega Y)$.
\end{example}

 Using this remark we get the following generalization of Proposition~\ref{functoriality1bis}.
 
 \begin{proposition}\label{functoriality1tiers} Let $\calf$, resp. $\calf'$, be right DG-modules over the DGAs $R_*$, resp. $R'_*$, and let $$(\Gamma,\Phi): \calf\ri \calf'$$ be a morphism of DG-modules, i.e.,  $\Gamma: \calf\ri \calf'$ is a morphism of complexes, $\Phi:R_*\ri R'_*$ is a morphism of DGAs and 
   \begin{equation}
     \label{eq:Gamma-Phi}
     \Gamma(\alpha\cdot a)\,= \, \Gamma(\alpha)\cdot \Phi(a) \gol\gol \forall a\in R_*, \, \forall \alpha\in \calf.
   \end{equation}
  Let $(m_{x,y})$ be a twisting cocycle on $R_*$ and $(m'_{x,y})= \Phi(m_{x,y})$ be its push-forward in $R'_*$. Then $(\Gamma,\Phi)$ induces a functorial map
 $$
 \widetilde{\Gamma}: C_*(\cC_\bullet,\calf) \ri C_*(\cC_\bullet,\calf').
 $$ 
Morerover, if $\Gamma$ is a quasi-isomorphism then so is $\widetilde{\Gamma}$.
\end{proposition}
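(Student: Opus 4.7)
The plan is to reduce the statement to Proposition~\ref{functoriality1bis} by invoking Remark~\ref{rem:pullback}. By that remark, the twisted complex $C_*(\cC_\bullet,\cF')$ built using the pushed-forward cocycle $(m'_{x,y})=(\Phi(m_{x,y}))$ over $R'_*$ coincides tautologically with the twisted complex $C_*(\cC_\bullet,\Phi^*\cF')$ built using the original cocycle $(m_{x,y})$ over $R_*$ and the $R_*$-module structure on $\cF'$ pulled back along $\Phi$. So if one succeeds in viewing $\Gamma$ as a morphism of DG right $R_*$-modules $\Gamma:\cF\to\Phi^*\cF'$, then Proposition~\ref{functoriality1bis} applied to the common DGA $R_*$ yields the chain map $\widetilde{\Gamma}$ and the quasi-isomorphism claim.

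The key observation is that the compatibility condition~\eqref{eq:Gamma-Phi} is precisely the statement that $\Gamma$ is $R_*$-linear with respect to the pulled-back action on $\cF'$. Indeed, writing $\cdot_\Phi$ for the action on $\Phi^*\cF'$, we have by definition $\beta\cdot_\Phi a=\beta\cdot\Phi(a)$ for $\beta\in\cF'$ and $a\in R_*$, so~\eqref{eq:Gamma-Phi} reads $\Gamma(\alpha\cdot a)=\Gamma(\alpha)\cdot_\Phi a$. Combined with the fact that $\Gamma$ is a chain map, this says that $\Gamma:\cF\to\Phi^*\cF'$ is a morphism of DG right $R_*$-modules in the sense of Proposition~\ref{functoriality1bis}.

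From here Proposition~\ref{functoriality1bis} delivers directly the chain map $\widetilde{\Gamma}:C_*(\cC_\bullet,\cF)\to C_*(\cC_\bullet,\Phi^*\cF')=C_*(\cC_\bullet,\cF')$, acting by $\widetilde{\Gamma}(\alpha\otimes x)=\Gamma(\alpha)\otimes x$, and the fact that it is a quasi-isomorphism whenever $\Gamma$ is. Functoriality (composition of pairs $(\Gamma,\Phi)$ and compatibility with identities) is also inherited tautologically from the functoriality statement in Proposition~\ref{functoriality1bis}, since $(\Gamma_2\circ\Gamma_1,\Phi_2\circ\Phi_1)$ pulls back iteratively and the formula $\widetilde{\Gamma}(\alpha\otimes x)=\Gamma(\alpha)\otimes x$ is manifestly functorial.

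I do not expect any serious obstacle: the entire content is the reinterpretation of the compatibility~\eqref{eq:Gamma-Phi} as $R_*$-linearity after pulling back the module structure, and the mild bookkeeping needed to confirm that the two twisted differentials (one using $(m'_{x,y})$ and the module structure on $\cF'$, the other using $(m_{x,y})$ and the pulled-back structure on $\Phi^*\cF'$) literally coincide on the underlying graded group $\cF'\otimes_A\cC_\bullet$. If one prefers a hands-on verification without invoking Remark~\ref{rem:pullback}, a short direct computation suffices: applying the differential~\eqref{diffDGmodule} and using $\Gamma\partial=\partial\Gamma$ together with~\eqref{eq:Gamma-Phi} turns $\sum_y\Gamma(\alpha\cdot m_{x,y})\otimes y$ into $\sum_y\Gamma(\alpha)\cdot\Phi(m_{x,y})\otimes y=\sum_y\Gamma(\alpha)\cdot m'_{x,y}\otimes y$, whence $\widetilde{\Gamma}$ commutes with the differentials; the quasi-isomorphism statement then follows from the spectral sequence comparison, since on the $E^1$-page $\widetilde{\Gamma}$ reduces to $H_*(\Gamma)\otimes\mathrm{id}_{\cC_\bullet}$.
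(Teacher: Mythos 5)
Your proof is correct and follows essentially the same route as the paper: reinterpret condition~\eqref{eq:Gamma-Phi} as saying that $\Gamma:\cF\to\Phi^*\cF'$ is a morphism of DG right $R_*$-modules, apply Proposition~\ref{functoriality1bis}, and conclude via the identification $C_*(\cC_\bullet,\Phi^*\cF')\equiv C_*(\cC_\bullet,\cF')$ from Remark~\ref{rem:pullback}. The extra remarks on functoriality and the optional direct computation are sound but not needed beyond what the paper already records.
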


\begin{proof}
 Our assumptions imply that $\Gamma : \calf\ri \Phi^* \calf'$ is a morphism of DG-modules over $R_*$, which therefore induces a functorial map 
 $$
 \widetilde{\Gamma}: C_*(\cC_\bullet,\calf) \ri C_*(\cC_\bullet,\Phi^*\calf')
 $$
 by Proposition~\ref{functoriality1bis}. The identification 
 $ C_*(\cC_\bullet,\Phi^*\calf')\, \equiv\, C_*(\cC_\bullet,\calf')$ 
 discussed in Remark~\ref{rem:pullback} yields the result.
\end{proof}

\begin{example}%\label{functoriality} 
In the particular case where $\calf_*=R_*$,  $\calf'_*=R'_*$ and $\Gamma=\Phi$, condition (\ref{eq:Gamma-Phi})  in Proposition~\ref{functoriality1tiers} is automatically satisfied and we obtain a map $\widetilde{\Phi}:C_*(\cC_\bullet,R_*)\ri C_*(\cC_\bullet,R'_*)$ which is a quasi-isomorphism whenever $\Phi$ is a quasi-isomorphism. 
\end{example}

\section{A criterion for quasi-isomorphism between twisted complexes}

The goal of this section is to show that certain maps between twisted complexes are quasi-isomorphisms if the induced maps between the lifted complexes (in the sense of Definition~\ref{def:algebraic_lifted_complex}) are quasi-isomorphisms. To state this result, we consider two complexes $\cC_\bullet$, $\cD_\bullet$, with preferred bases $\calb^\cC$, $\calb^\cD$, twisted by a DG-module $\calf$ over a DGA $R_*$ using respective twisting cocyles $m^{\cC}_{(\cdot,\cdot)}$ and $m^{\cD}_{(\cdot,\cdot)}$. 

Recall from Proposition~\ref{prop:continuation-DG} that a collection of elements $\nu_{x,y}\in R_{|x|-|y|}$ for $x\in\calb^\cC$, $y\in\calb^\cD$ such that 
\begin{equation}\label{MC-invariance}
 \partial\nu_{x,y}= \sum_{z\in
    \calb^\cC}m^\cC_{x,z}\cdot\nu_{z,y} - \sum_{w\in
    \calb^\cD}(-1)^{|x|-|w|}\nu_{x,w}\cdot m^\cD_{w,y}\end{equation}
gives rise to a chain map $\Psi: C_*(\cC_\bullet,\calf)\ri C_*(\cD_\bullet,\calf)$ defined by $\Psi(\alpha\otimes x) = \alpha \sum_{y\in \calb^\cD}\nu_{x,y}\otimes y$.
Our result applies to such maps, which include the continuation maps of~\S\ref{sec:DG-Morse}.

\begin{proposition}\label{functoriality2}  Let $(\widetilde{\cC}_*,\delta^\cC)$ and $(\widetilde{\cD}_*,\delta^\cD)$ be the respective lifted complexes of $\cC_\bullet$ and $\cD_\bullet$ (See Definition~\ref{def:algebraic_lifted_complex}). Let  $\widetilde{\Psi} : \widetilde{\cC}_*\ri\widetilde{\cD}_*$ be the morphism of $H_0(R_*)$-complexes defined by $\widetilde{\Psi}(x)=\sum_{|y|=|x|}n_{x,y}y$, where $n_{x,y}$ denotes the class of $\nu_{x,y}$ in $H_0(R_*)$ for any $x\in \calb^\cC$ and $y\in \calb^\cD$ with $|x|=|y|$.  
Then, if $\widetilde{\Psi}$ is a quasi-isomorphism, so is $\Psi$.
\end{proposition}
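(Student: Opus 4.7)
The plan is to use the comparison theorem for spectral sequences associated to the canonical filtration from Definition~\ref{defi:canonical_filtration}.

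\textbf{Step 1: $\Psi$ preserves the canonical filtration.} Because $R_*$ is supported in nonnegative degrees, the element $\nu_{x,y} \in R_{|x|-|y|}$ vanishes unless $|y| \le |x|$. Hence the definition $\Psi(\alpha \otimes x) = \alpha \sum_y \nu_{x,y}\otimes y$ shows that $\Psi$ sends $F_p C_*(\cC_\bullet,\cF)$ into $F_p C_*(\cD_\bullet,\cF)$. It therefore induces a morphism of the associated spectral sequences, both of which converge (as explained in~\S\ref{sec:spectral-sequence}) to the homologies $H_*(\cC_\bullet;\cF)$ and $H_*(\cD_\bullet;\cF)$ respectively.

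\textbf{Step 2: identify the induced map on the $E^2$-page.} Only the summands with $|y|=|x|$, i.e.\ those with $\nu_{x,y}\in R_0$, contribute to the associated graded of $\Psi$. Hence on $E^0_{p,q}=\cF_q\otimes \cC_p$ the induced map is $\alpha\otimes x \mapsto \sum_{|y|=|x|}\alpha\cdot\nu_{x,y}\otimes y$, and on $E^1_{p,q}=H_q(\cF)\otimes \cC_p$ it is $\hat\alpha\otimes x\mapsto \sum_{|y|=|x|}\hat\alpha\cdot n_{x,y}\otimes y$. Under the identification of Lemma~\ref{spectral}, namely $E^2_{p,q}=H_p(H_q(\cF)\otimes_{H_0(R_*)}\widetilde\cC_*)$, this is exactly $H_p(\mathrm{id}_{H_q(\cF)}\otimes \widetilde\Psi)$.

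\textbf{Step 3: tensoring preserves the quasi-isomorphism.} This is where the main technical point lies. By Definition~\ref{def:algebraic_lifted_complex} both $\widetilde\cC_*$ and $\widetilde\cD_*$ are bounded-below chain complexes of \emph{free} $H_0(R_*)$-modules. The standard homological lemma then says that the quasi-isomorphism $\widetilde\Psi$ is in fact a chain homotopy equivalence of $H_0(R_*)$-complexes. Chain homotopy equivalences are preserved by any additive functor, in particular by $H_q(\cF)\otimes_{H_0(R_*)}(-)$, so $\mathrm{id}_{H_q(\cF)}\otimes \widetilde\Psi$ is a chain homotopy equivalence and hence induces an isomorphism on $E^2$.

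\textbf{Step 4: conclude.} Having an isomorphism on the $E^2$-page of two convergent spectral sequences, the classical comparison theorem yields that $\Psi_*:H_*(\cC_\bullet;\cF)\to H_*(\cD_\bullet;\cF)$ is an isomorphism, i.e.\ $\Psi$ is a quasi-isomorphism. The only nontrivial step in this plan is Step~3; the rest is bookkeeping about how $\Psi$ interacts with the canonical filtration.
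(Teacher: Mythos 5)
Your proof is correct and follows the same skeleton as the paper's: both identify the canonical filtration on the twisted complexes, note that $\Psi$ preserves it, compute the induced map on the first page as $\mathrm{Id}\otimes\widetilde\Psi$, and conclude by comparison of convergent spectral sequences. The genuine point of departure is your Step~3. The paper invokes its Lemma~\ref{change}, which proves $\mathrm{Id}\otimes\widetilde\Psi$ is a quasi-isomorphism via the change-of-coefficients spectral sequence with second page $\mathrm{Tor}_p^{H}(M;H_q(\widetilde\cC_*))$; that lemma is stated for arbitrary complexes of free left $H$-modules (without explicitly invoking boundedness, though convergence is tacitly needed). You instead use the stronger structural fact that a quasi-isomorphism between bounded-below complexes of projective modules is already a chain homotopy equivalence, and that homotopy equivalences are preserved by any additive functor. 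Your route is a bit more direct and avoids nesting a second spectral sequence inside the first, at the cost of leaning on the bounded-below hypothesis for the lifted complexes — which is indeed satisfied here because $\cC_\bullet = (\cC_p)_{p\ge 0}$ and hence $\widetilde\cC_p = H_0(R_*)\otimes_A\cC_p$ vanishes for $p<0$. Either argument is fine; the paper's keeps $\mathrm{Tor}$ in the foreground, which is thematically useful given Remark~\ref{rmk:remark_on_Tor} and the role of derived functors elsewhere in the text, while yours is a cleaner appeal to standard homotopy theory of chain complexes.
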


\begin{proof}  Note that $\Psi$ clearly preserves the filtrations of the twisted complexes. It induces therefore a  morphism between the two corresponding spectral sequences. In order to prove that it is an isomorphism in the limit we show that it induces an isomorphism on the second page.  
%On the second page of the spectral sequence  $E_{p,q}^2$  -- which we identified in Lemma~\ref{spectral} -- it acts as 
%$$
%\Psi^{(2)}: H_p(\widetilde{C}_*; H_q(\calf))\ri  H_p(\widetilde{D}_*; H_q(\calf)),
%$$
%and the morphism induced in homology acts by 
%$$\Id\otimes \widetilde{\Psi} : H_q(\calf)\otimes_{H_0(R_*)}\widetilde{C}_*\ri H_q(\calf)\otimes_{H_0(R_*)}\widetilde{D}_*.$$

We saw in~\S\ref{sec:spectral-sequence} that the first page of the spectral sequence for $C_*(\cC_\bullet,\calf)$ is 
$$
E_{p,q}^1\cong H_q(\calf)\otimes_{H_0(R_*)}\widetilde{\cC}_p.
$$ 
Since $n_{x,y}$ represents $\nu_{x,y}$ in $H_0(R_*)$, one can easily see by looking at page $0$ that $\Psi$ acts on the first page as
$$\Psi^{(1)}(\hat{\alpha}\otimes x)\, =\, \sum_{|x|=|y|}\hat{\alpha} \cdot n_{x,y}\otimes y.$$
Thus $\Psi^{(1)}= \Id\otimes \widetilde{\Psi}$ and we deduce that $\Id\otimes \widetilde{\Psi}$  induces in homology the map $\Psi^{(2)}$ from the second page. 
That $\Psi^{(2)}$ is an isomorphism then follows from Lemma~\ref{change} below,  applied with $H=H_0(R_*)$ to the free left $H$-modules $\widetilde \cC_*$ and $\widetilde \cD_*$.
\end{proof}

\begin{lemma}\label{change} Let $\widetilde \cC_*$ and $\widetilde \cD_*$ be complexes of free left modules over a ring 
$H$. Given a quasi-isomorphism $\widetilde{\Psi}: \widetilde \cC_*\ri \widetilde \cD_*$, the map  
$$\Id\otimes \widetilde{\Psi} :M\otimes_H \widetilde \cC_*\ri M\otimes_H\widetilde \cD_*$$ is a quasi-isomorphism for any right $H$-module $M$.
\end{lemma}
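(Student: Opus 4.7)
The plan is to pass to the mapping cone of $\widetilde{\Psi}$ and exploit that a bounded below acyclic complex of free modules is contractible.

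First, I would form the mapping cone $\mathrm{Cone}(\widetilde{\Psi})$, which fits in a short exact sequence $0\to \widetilde{\cD}_*\to \mathrm{Cone}(\widetilde{\Psi})\to \widetilde{\cC}_{*-1}\to 0$. Because $\widetilde{\Psi}$ is a quasi-isomorphism, the long exact sequence in homology shows that $\mathrm{Cone}(\widetilde{\Psi})$ is acyclic. Moreover, termwise $\mathrm{Cone}(\widetilde{\Psi})_n=\widetilde{\cD}_n\oplus \widetilde{\cC}_{n-1}$ is a free left $H$-module. Under the standing convention of~\S\ref{sec:spectral-sequence} that all complexes are supported in nonnegative degrees, $\mathrm{Cone}(\widetilde{\Psi})$ is bounded below.

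The key step is then a standard fact from homological algebra: a bounded below acyclic complex of free (hence projective) left $H$-modules is contractible, i.e., null-homotopic. This is proved by inductively constructing a contracting homotopy degree by degree, using projectivity to lift the identity along the surjections onto cycles; the bounded below hypothesis starts the induction. Hence there exists a degree $+1$ map $s$ on $\mathrm{Cone}(\widetilde{\Psi})$ with $ds+sd=\mathrm{Id}$.

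Tensoring from the left with the right $H$-module $M$ preserves this contracting homotopy: the map $\mathrm{Id}_M\otimes s$ is a contracting homotopy on $M\otimes_H \mathrm{Cone}(\widetilde{\Psi})$, which is therefore acyclic. Since tensor product commutes with taking mapping cones, one has a canonical identification
\[
M\otimes_H \mathrm{Cone}(\widetilde{\Psi})\;\cong\;\mathrm{Cone}\bigl(\mathrm{Id}_M\otimes \widetilde{\Psi}\bigr),
\]
so the cone of $\mathrm{Id}_M\otimes \widetilde{\Psi}$ is acyclic. By the long exact sequence in homology, $\mathrm{Id}_M\otimes \widetilde{\Psi}$ is a quasi-isomorphism, as required.

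The main subtle point is the bounded below hypothesis on the complexes, without which the induction building the contracting homotopy would not start; this is why the convention adopted at the beginning of~\S\ref{sec:spectral-sequence} (that both $R_*$ and all DG-local systems are supported in nonnegative degrees) is crucial here.
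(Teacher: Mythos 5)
Your proof is correct, but it follows a genuinely different route from the paper's. The paper passes to the change-of-coefficients (hyperhomology) spectral sequence associated to the bicomplex $P_*\otimes_H \widetilde\cC_*$, where $P_*$ is a free resolution of $M$; this spectral sequence has $E^2_{p,q}=\mathrm{Tor}_p^H(M;H_q(\widetilde\cC_*))$, converges to $H_*(M\otimes_H\widetilde\cC_*)$ (because the $\widetilde\cC_*$ are free, hence flat), and is functorial, so an isomorphism on the $E^2$ page induced by $\widetilde\Psi$ yields an isomorphism on the limit. You instead reduce to the mapping cone and use the classical fact that a bounded-below acyclic complex of projectives is contractible, then observe that a contracting homotopy survives the functor $M\otimes_H-$ and that this functor commutes with taking cones. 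Both arguments rely, implicitly or explicitly, on the convention that the complexes live in nonnegative degrees (the paper needs it for convergence of the spectral sequence; you need it to start the induction constructing the contraction). Your approach is more elementary, avoiding spectral sequences entirely, and has the merit of making the bounded-below hypothesis explicit, which the paper leaves implicit. The paper's approach is shorter to state given the spectral-sequence machinery already in play, and the change-of-coefficients spectral sequence is reused elsewhere (e.g.\ in the proof of Proposition~\ref{prop:H0}), so it earns its keep in context.
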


\begin{proof}
This is a consequence of the existence of the spectral sequence for a change of coefficients (see for instance \cite[Th\'eor\`eme~5.5.1]{Godement}).
In the case of $M\otimes_H \widetilde \cC_*$, the spectral sequence $(F_{p,q}^r)$ converges towards $H_*(M\otimes_H \widetilde \cC_*)$ and its second page is 
$$
F_{p,q}^2\, =\, \mbox{Tor}_p^H(M;H_q(\widetilde \cC_*)).
$$
The spectral sequence is associated to the bicomplex $P_*\otimes_H \widetilde \cC_*$, where $P_*$ is a free $H$-resolution of $M$, and it is functorial. As such, the morphism $\Id\otimes \widetilde{\Psi}$ naturally induces a morphism between the corresponding spectral sequences. By assumption it is an isomorphism at the second page and therefore it induces an isomorphism between the limits.
\end{proof}

\section{The degree $0$ homology of the twisted complex} \label{sec:degree-zero}

The next result shows that the degree $0$ homology of the twisted complex is the tensor product of the degree $0$ homologies of the DG-module $\calf$ and of the lifted complex $\widetilde \cC_\bullet$.

\begin{proposition}\label{prop:H0}
We have 
$$H_0(\cC_\bullet; \calf)  = H_0(\calf) \otimes_{H_0(R_*)} H_0(\widetilde{\cC}_\bullet),$$ where $\widetilde{\cC}_\bullet$ is the lifted complex of $\cC_\bullet$ from Definition~\ref{def:algebraic_lifted_complex}.
\end{proposition}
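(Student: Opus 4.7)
The plan is to compute $H_0$ directly, exploiting the fact that $\calf$ and $\cC_\bullet$ are both supported in nonnegative degrees. The key preliminary observation is that this support condition forces $C_0 = \calf_0\otimes_A\cC_0$ and $C_1 = (\calf_1\otimes_A\cC_0)\oplus(\calf_0\otimes_A\cC_1)$; moreover, on the first summand of $C_1$ there are no available coefficients $m_{x,y}$ (these would require $|y|\leq -1$), so the differential acts purely as $\partial\alpha\otimes x$, while on the second summand $\partial\alpha=0$ and the differential acts purely as $\sum_{|y|=0}\alpha\cdot m_{x,y}\otimes y$, with $m_{x,y}\in R_0$ a cycle.

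First I would quotient $C_0$ by the image of the $\calf_1\otimes_A\cC_0$ piece of $C_1$; using that $\cC_0$ is free (hence flat) over $A$, this yields $H_0(\calf)\otimes_A\cC_0$. The relations contributed by the remaining summand $\calf_0\otimes_A\cC_1$ then read
\begin{equation}\label{eq:deg0-sketch}
\sum_y [\alpha]\cdot \hat m_{x,y}\otimes y\ =\ 0\gol\gol\text{for }[\alpha]\in H_0(\calf),\ x\in\cC_1,
\end{equation}
where $\hat m_{x,y}\in H_0(R_*)$ is the class of $m_{x,y}$, and this expression is well-defined on $H_0(\calf)$ because $m_{x,y}\in R_0$ is a cycle. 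This presents $H_0(\cC_\bullet;\calf)$ as $H_0(\calf)\otimes_A\cC_0$ modulo the relations~\eqref{eq:deg0-sketch}. For the right-hand side, since $\widetilde{\cC}_\bullet$ is a complex of free left $H_0(R_*)$-modules, right-exactness of $H_0(\calf)\otimes_{H_0(R_*)}(-)$ identifies $H_0(\calf)\otimes_{H_0(R_*)}H_0(\widetilde{\cC}_\bullet)$ with the cokernel of the induced map $H_0(\calf)\otimes_{H_0(R_*)}\widetilde{\cC}_1\to H_0(\calf)\otimes_{H_0(R_*)}\widetilde{\cC}_0$. Using the associativity identification $H_0(\calf)\otimes_{H_0(R_*)}\widetilde{\cC}_p=H_0(\calf)\otimes_A\cC_p$, this induced map is precisely $[\alpha]\otimes x\mapsto \sum_y [\alpha]\cdot\hat m_{x,y}\otimes y$, so its cokernel presents the right-hand side with the same relations~\eqref{eq:deg0-sketch}. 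The tautological identity map therefore induces an isomorphism between the two groups.

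There is no serious obstacle here: the argument is essentially a low-degree bookkeeping exercise, with the only minor subtlety being the well-definedness of the action $[\alpha]\cdot\hat m_{x,y}$ used throughout. A more conceptual alternative would bypass this direct computation by invoking the spectral sequence of Lemma~\ref{spectral}: one has $E^2_{0,0}=H_0(\widetilde{\cC}_\bullet;H_0(\calf))=H_0(\calf)\otimes_{H_0(R_*)}H_0(\widetilde{\cC}_\bullet)$ by the same right-exactness argument, and degree reasons force every differential $d^r$ ($r\geq 2$) entering or leaving $E^r_{0,0}$ to land in a bidegree $(r,1-r)$ with negative $q$-component, hence to vanish. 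Consequently $E^\infty_{0,0}=E^2_{0,0}$ and since $(E^\infty_{p,q})_{p+q=0}$ reduces to the single term $E^\infty_{0,0}$, this equals $H_0(\cC_\bullet;\calf)$.
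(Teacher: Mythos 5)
Your proposal is correct, and your primary argument differs from the paper's: the paper computes $H_0$ by running two spectral sequences in succession (the canonical filtration spectral sequence of Lemma~\ref{spectral} to get $E^2_{0,0}=H_0(\widetilde{\cC}_\bullet;H_0(\calf))$, then the change-of-coefficients spectral sequence $F^r_{p,q}$ from the proof of Lemma~\ref{change} to identify this with $\mathrm{Tor}_0^{H_0(R_*)}(H_0(\calf),H_0(\widetilde{\cC}_\bullet))$), whereas you give a direct low-degree bookkeeping argument presenting both sides as the same explicit cokernel. Your direct argument is more elementary and self-contained: it avoids spectral sequences entirely and makes the identification visible at chain level, at the cost of having to track which summands of $C_1$ exist and how the differential acts on each. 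What the paper's approach buys is brevity and coherence with the rest of \S\ref{sec:algebraicsetup}, which has already set up both spectral sequences; it also makes the generalization to $\mathrm{Tor}_p$ transparent. Your alternative spectral-sequence argument is essentially the paper's first step, but you replace the second spectral sequence $F^r_{p,q}$ by a direct appeal to right-exactness of $H_0(\calf)\otimes_{H_0(R_*)}(-)$ on the free complex $\widetilde{\cC}_\bullet$, which is cleaner for degree $0$; and you supply the degree-reason justification for $E^\infty_{0,0}=E^2_{0,0}$ that the paper leaves implicit. One small point worth making explicit in the direct argument: on the summand $\calf_0\otimes_A\cC_1$ the coefficient $m_{x,y}\in R_0$ is a cycle not merely because the Maurer--Cartan sum is empty, but automatically, since $\partial m_{x,y}$ would lie in $R_{-1}=0$; this is what justifies that $\alpha\cdot m_{x,y}$ descends to $H_0(\calf)$ when $\alpha$ is replaced by $\alpha+\partial\beta$.
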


\begin{proof} Using the spectral sequence from~\S\ref{sec:spectral-sequence} we get 
  \begin{equation}
H_0(\cC_\bullet; \calf) = E_{0,0}^\infty = E_{0,0}^2 = H_{0}(\widetilde{\cC}_\bullet; H_0(\calf)).\label{eq:deg0}
\end{equation}

The term on the right hand side is computed by the change of coefficients spectral sequence $F_{p,q}^r$ from the proof of Lemma~\ref{change}:
\begin{align*}
  H_{0}(\widetilde{\cC}_\bullet; H_0(\calf)) &= F_{0,0}^\infty = F_{0,0}^2 = \mathrm{Tor}_0^{H_0(R_*)}(H_0(\calf),H_0(\widetilde{\cC}_\bullet)) \\
  &=  {H_0(\calf)}\otimes_{H_0(R_*)}H_0(\widetilde{\cC}_\bullet).
\end{align*}
\end{proof}

%%%%%%%%%%%%%%%%%%
\chapter[DG Morse homology: construction]{Morse homology with DG-coefficients: construction}\label{section:Morse} 
%%%%%%%%%%%%%%%%%%

%%%%%%%%%%%%%%%%%%
\section[Local coefficients as DG local coefficients]{(Non-DG) local coefficients as DG local coefficients}\label{sec:local-coeff}
%%%%%%%%%%%%%%%%%%

In this chapter we explain how the standard homology with (non-DG) local coefficients can be interpreted as a particular case of homology with DG coefficients.

As a first step towards this goal, we recall the definition of the lifted Morse complex from~\cite{Latour,damian2012}. This section is mainly of a motivational nature and for this reason we keep the technicalities to a minimum. These can be recovered from the detailed discussion in~\S\ref{sec:construction-enriched-morse}. 

\subsection{The lifted Morse complex}\label{sec:lifted-homology}

%This  is the complex from point 3 of the recipe we gave in~\S\ref{section:algebra-examples}. 

Let $X$ be a manifold endowed with a Morse function $f$ and a negative generic  pseudo-gradient $\xi$ (we use descending flows in this book meaning that $df(\xi) \leq 0$).  As usual in Morse theory ``generic'' means here that $\xi$ is Morse-Smale, i.e., its stable and unstable manifolds intersect transversely. For each $x\in \Crit(f)$ we fix a lift $\tilde{x}$ in the universal cover $\widetilde{X}$. 

We consider the complex of free $\Z[\pi_1(X)]$-modules $(C_*(\widetilde{X}),\delta)$ spanned by these lifts. Its differential $\delta$ is given by the lifts of the gradient trajectories. More precisely, after fixing an orientation of each unstable manifold, any Morse trajectory between critical points of consecutive indices $x$ and $y$ is naturally oriented.   It lifts  to a trajectory from $\tilde{x}$ to $g\tilde{y}$ in $\widetilde{X}$ for some $g\in \piu(X)$: we will count it  as $\pm g$ in the definition of $\delta$, depending on its orientation.

It is well-known that the definition of the usual Morse complex relies on the fact  that the broken trajectories $\lambda\#\lambda'$ between points of consecutive indices $|x|=|z|+1=|y|+2$ are the boundary points of a compact manifold of dimension $1$ whose interior is formed by the unbroken trajectories from $x$ to $y$; this is still valid for their lifts to the universal cover and therefore $\delta^2=0$. 

We will call $(C_*(\widetilde{X}),\delta)$  {\it lifted complex} and name its homology {\it lifted homology}. As for the usual Morse theory, this homology  does not depend on the choices of $f$, $\xi$, nor on the lifts of the critical points and on the orientations of the unstable manifolds~\cite[\S2.16]{Latour}: it is actually known to be isomorphic to the singular homology of the universal cover $H_*(\widetilde{X};\Z)$, and this is a Morse theoretic manifestation of the well-known fact that $H_*(\widetilde{X};\Z)\simeq H_*(X;\Z[\pi_1(X)])$. In fact the unstable manifolds of the critical points yield a CW-decomposition of the manifold $X$ (see for instance~\cite[Theorem~4.9.3]{Audin-Damian_English}) and it is easy to infer  that the complex defined by its lifting  to the universal cover $\widetilde{X}$ coincides with the lifted complex.

Note that this lifted Morse complex is an example of what we defined as  (algebraic) lifted complex in Definition~\ref{def:algebraic_lifted_complex}. To see this we set $R_*=C_*(\Omega X)$, so that $H_0(R_*)=\Z[\pi_1(X)]$. We fix a spanning tree from a basepoint in $X$ to the critical points of $f$ (such a tree will also play a key role in~\S\ref{section:Morse}), and we also fix a lift in $\tilde X$ of the basepoint. These two choices allow on the one hand to see Morse trajectories between critical points as based loops by collapsing the tree, and on the other hand they determine lifts to $\tilde X$ of all the critical points of $f$ by transporting along the branches of the tree the chosen lift of the basepoint. Given a critical point $x$, we denote $\tilde x$ its lift. If $m_{x,y}\in R_0$ is a Morse trajectory from $x$ to $y$ viewed as a $0$-chain on based loops, and if we denote its image in $H_0(R_0)=\Z[\pi_1(X)]$ by $\hat m_{x,y}$, then the endpoint of the lift of $m_{x,y}$ starting at $\tilde x$ is precisely $\hat m_{x,y}\tilde y$, so that the differential in the geometric lifted complex described in this section is identified with the differential in the algebraic lifted complex from Definition~\ref{def:algebraic_lifted_complex}.

\subsection{Homology with local coefficients}

Let $M$ be a right $\Z[\pi_1(X)]$-module. By definition, the homology with local coefficients in $M$ is the homology of the complex $$
(M\otimes_{{\Z}[\pi_1(X)]}C_*(\widetilde{X}), \Id\otimes\delta),
$$ 
where $(C_*(\widetilde{X}), \delta)$ is the lifted Morse complex. Our goal here is to reinterpret it as a twisted complex.

We consider the $\Z$-module $\cC_\bullet$ spanned by the critical points of $f$ and the DGA $R_*$ defined by the cubical chains on $\Omega X$ modulo the degenerate ones. We also let $\calf$ be the DG right $R_*$-module defined by $\calf_0=M$ and $\calf_k=0$ for $k\neq 0$. The action of $R_*$ in positive degree is trivial, and the action of $R_0=C_0(\Omega X)$ factors through that of $\Z[\pi_1(X)]$ via the canonical projection $C_0(\Omega X)\to \Z[\pi_1(X)]$. We will now define an appropriate twisting cocycle, and this construction can be seen as a baby case of the construction of the Barraud-Cornea cocycle discussed in~\S\ref{section:Morse}. 

Take a basepoint $\star$ on $X$ and fix an embedded rooted tree $\caly$ whose root is the basepoint $\star$ and such that  the  critical points of $f$ are external vertices. Choose a lift of the basepoint   to the universal cover $\widetilde{X}$ and use $\caly$ to fix a lift of each critical point.  Notice that   any trajectory $\lambda$ between critical points $x,y$ of $f$ yields a based  loop   $\gamma_x\#\lambda\# \gamma_y^{-1}$   by concatenation  with the paths to the basepoint along the tree  $\caly$.

If $x$ and $y$ have consecutive indices then we define $m_{x,y}\in R_0= C_0(\Omega X)$ as the algebraic sum of all loops associated to all the  Morse trajectories between $x$ and $y$ -- these trajectories are naturally oriented (see the  proof of Proposition~\ref{orientationsMorse}). It is easy to see that the projection of this sum in $H_0(R_*)= \Z[\pi_1(X)]$ is exactly the $(x,y)$ entry of the matrix given by the differential $\delta$ of the lifted complex $C_*(\widetilde{X})$ described above.

Now take a  broken trajectory $\lambda\#\lambda'$ between points of consecutive indices $|x|=|z|+1=|y|+2$. Connect it by a segment of paths from $x$ to $y$ to $\lambda\#\gamma_z^{-1}\#\gamma_z\#\lambda'$. Concatenating all these segments with the ones given by the  $1$-manifold of unbroken trajectories from $x$ to $y$ we get disjoint segments of paths from $x$ to $y$ connecting all the broken trajectories modified by inserting $\gamma^{-1}\#\gamma$ at their intermediate point. All these segments are oriented in the same way as the unbroken Morse trajectories between $x$ and $y$. Now using $\gamma_x$ and $\gamma_y$ we transform these segments of paths into segments of based loops in $\Omega X$.  Their sum defines a $1$-chain $m_{x,y}\in C_1(\Omega X)$ and one can easily see that by construction 
$$\partial m_{x,y}\, =\, -\sum_{z}m_{x,z}\cdot m_{z,y}.$$ 
The sign $``-"$  is given by the fact that with our  conventions the orientations of a broken trajectory $\lambda\#\lambda'$ as a product and as a boundary of the $1$-manifold above are opposite. These conventions are detailed in~\S\ref{subsec: orientation-conventions} and in the proof of Proposition~\ref{orientationsMorse} from~\S\ref{subsec:Latour-cells}. 

For $|x|-|y|\geq 3$ we can choose $m_{x,y}$ arbitrarily; since the module $\calf$ vanishes in nonzero degrees  the Maurer-Cartan relation of Remark~\ref{MC'} is  automatically satisfied and the differential $\partial$ of the twisted complex will be independent of these choices. Relation~(\ref{MC'}) was checked above for the case $|x|-|y|=2$, while the case $|x|-|y|=1$ is obvious. Therefore, we have defined a twisting cocycle.

By construction, the twisted complex $C_*(\cC_\bullet,R_*)$ associated to this data coincides with $(M\otimes_{{\Z}[\pi_1(X)]}C_*(\widetilde{X}), \Id\otimes\delta)$,
  where the lifted complex $C_*(\widetilde{X})$ is constructed using the lifts of the critical points given by $\caly$ and orientations  of the gradient lines between critical points of consecutive indices which are the same as those of the $0$-chains $m_{x,y}$. 
As a consequence, this twisted complex recovers the homology of $X$ with local coefficients in $M$.

% \bigskip 

% Another phrasing of the same construction is the following. Assume $R_*$ is supported in nonnegative degrees and let $M$ be a right $H_0(R_*)$-module. 
% We can view $M=\calf_0$ as a DG-module $\cF$ over $R_*$ supported in degree zero, with action $M\times R_0 \ri M$ defined by 
% $$(m\otimes \alpha)\mapsto m\cdot \pi(\alpha), $$
% where $\pi:R_*\to H_0(R)$ is the canonical projection. 
% In the particular case  $R_*=C_*(\Omega X)$ we have $H_0(R)=\Z[\pi_1(X)]$ and therefore $M$ is a $\Z[\pi_1 (X)]$-module. When $X$ is a manifold, the twisted complex recovers the homology of $X$ with coefficients in the local system $M$, see~\S\ref{section:lifted-homology}. 

% \bigskip 

% This example motivates the name ``homology with DG local coefficients" which we gave to its generalization. 

\section{The Morse complex with DG-coefficients}\label{sec:construction-enriched-morse}

In this section we explain the construction of the Morse complex with DG-coefficients. We essentially follow~\cite[\S2.4]{BC07}, with two additions: we construct the Barraud-Cornea twisting cocycle over $\Z$, and we discuss the uniqueness of representing chain systems (Proposition~\ref{prop:uniqueness}).  

The setting is that of~\S\ref{sec:DG-Morse}. We now recall it and give details. We let $X$ be a closed manifold with a fixed basepoint $\star$, we pick a Morse function $f:X\ri\R$ and a generic pseudo-gradient $\xi$ associated to $f$. We consider the free graded $\Z$-module $\cC_\bullet=\langle\Crit(f)\rangle$ and the DGA $R_*$ of cubical chains on the space of Moore loops $\Omega X$.

The space of Moore loops $\Omega X$ consists of pairs $(L,\gamma)$ with $L\ge 0$ and $\gamma:[0,L]\to X$ a loop based at $\star\in X$. This space has the same homotopy type as the space of based loops parametrized  by $I=[0,1]$.  For each degree $k$,  $R_k$ is by definition the quotient  $$R_k\, =\, \frac{C_k(\Omega X)}{C_{k}^{0}(\Omega X)} $$ where   $C_k(\Omega X)$ is spanned by the continuous maps $\sigma :I^k\ri \Omega X$ (``cubes") and $C^0_k(\Omega X)$ is spanned by the degenerate cubes, i.e., the cubes  that factor through a face $I^j$ for some $j<k$. The differential in the cubical complex is given by an alternating sum over the faces of a cube. It is known that the resulting \emph{cubical homology} is isomorphic to singular homology. \footnote{Note that without taking the quotient by the degenerate cubes the cubical homology of a point would be infinitely generated.} The algebra structure of   $R_*$ is given by  the canonical Pontryagin product induced by the concatenation of loops $\#$: 
 $$
 (\sigma\cdot\sigma')(x,y)\, =\, \sigma(x)\#\sigma'(y),\qquad (x,y)\in I^{k+\ell}=I^k\times I^\ell.
 $$
 It is easy to check that $R_*$ is a DGA. We refer to it as {\it the algebra of cubical chains}, or {\it cubical algebra}. We will often abuse notation and write $R_*=C_*(\Omega X)$ for short.

The definition of the twisting cocycle $m_{x,y}$ will take the rest of this section.

In order to define it denote by $\call(x,y)$ the space of (unparametrized) trajectories of $\xi$ between $x,y\in \Crit(f)$ endowed with the usual topology; it is a smooth manifold of dimension $|x|-|y|-1$. A choice of orientations on the unstable manifolds $W^u(x)$ associated to the critical points of $f$ naturally determines an orientation on $\call(x,y)$.

It is a classical fact in Morse theory that $\call(x,y)$ admits a compactification $\overline{\call}(x,y)$ given by the space of broken trajectories  
$$\overline{\call}(x,y)\, =\, \call(x,y) \cup \bigcup_{z_1,\cdots, z_{k-1}\in Crit(f)}\call(x,z_1)\times\call(z_1,z_2)\times\cdots\times\call(z_{k-1},y), $$ 
with 
$$
\p \overline{\call}(x,y) = \bigcup_{|x|>|z|>|y|} \, \overline{\call}(x,z) \times \overline{\call}(z,y).
$$

The following theorem was proved by various authors (Latour~\cite[Proposition~2.11]{Latour}, Qin~\cite[Theorem~3.4]{Qin-Lizhen}).

\begin{theorem}\label{corners1} The space $\overline{\call}(x,y)$ admits the structure of manifold with boundary and corners whose interior is $\call(x,y)$. Near a  boundary point $$\lambda=(\lambda_1,\lambda_2,\ldots\lambda_k)\in  \call(x,z_1)\times\call(z_1,z_2)\times\cdots\times\call(z_{k-1},y)$$ the space $\overline{\call}(x,y)$ is locally diffeomorphic to (a neighborhood of $(\lambda,0)$ in) $\call(x,z_1)\times\call(z_1,z_2)\times\cdots\times\call(z_{k-1},y)\times [0,\infty)^k$. \qed
\end{theorem}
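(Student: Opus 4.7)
The plan is to construct, near each broken trajectory in $\overline{\call}(x,y)$, an explicit smooth chart by gluing. I would organize the argument as follows.

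First, I would handle the interior $\call(x,y)$. By Morse--Smale transversality the intersection $W^u(x)\cap W^s(y)$ is a submanifold of dimension $|x|-|y|$, and $\call(x,y)$ is the smooth quotient by the free $\R$-action of reparametrization (translation in time), giving a smooth manifold of dimension $|x|-|y|-1$. This takes care of points in the top stratum.

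Second, I would build charts at a broken trajectory $\lambda=(\lambda_1,\dots,\lambda_k)$, with $\lambda_i\in\call(z_{i-1},z_i)$ where $z_0=x$, $z_k=y$. The main tool is a gluing construction. At each intermediate critical point $z_i$ I pick local Morse (or Morse--Smale linearizing) coordinates in which $\xi$ is conjugate to a standard hyperbolic linear vector field, so $\lambda_i$ enters along the unstable subspace and $\lambda_{i+1}$ exits along the stable subspace, both with exponential rate. For a gluing parameter $\rho_i\in(0,\infty)$, I define a pre-glued broken-free path by cutting $\lambda_i$ and $\lambda_{i+1}$ at ``time $\pm\rho_i$" inside the chart at $z_i$ and interpolating across $z_i$ by a short segment determined by the two endpoints. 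Allowing $\lambda_i$ to vary in a small neighborhood $V_i\subset\call(z_{i-1},z_i)$, this yields a pre-gluing map
\[
G_{\mathrm{pre}}:V_1\times\cdots\times V_k\times(R,\infty)^{k-1}\longrightarrow C^0([0,T],X),
\]
whose image consists of approximate $\xi$-trajectories with error exponentially small in the $\rho_i$. Using the Morse--Smale transversality at each $z_i$, I then apply a standard implicit function / Newton iteration argument (vastly simpler here than in Floer theory since we are in an ODE setting) to correct $G_{\mathrm{pre}}$ uniquely to an honest trajectory $G(\lambda'_1,\dots,\lambda'_k,\rho_1,\dots,\rho_{k-1})\in\call(x,y)$, with the correction decaying exponentially in the $\rho_i$.

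Third, I would perform the change of variables $t_i=e^{-c\rho_i}\in(0,1]$ (or any analogous substitution), and extend the map by sending $t_i=0$ to the corresponding broken trajectory. In these coordinates the gluing map becomes a homeomorphism from a neighborhood of $(\lambda,0)$ in $V_1\times\cdots\times V_k\times[0,\infty)^{k-1}$ onto a neighborhood of $\lambda$ in $\overline{\call}(x,y)$, and the exponential decay estimates from the contraction argument upgrade it to a smooth diffeomorphism on the interior with smooth boundary extension. Injectivity and surjectivity of the gluing map (that every nearby trajectory has a unique presentation as a glued one) follow from the standard compactness-and-uniqueness argument: any sequence of trajectories converging to $\lambda$ in $\overline{\call}(x,y)$ must eventually spend arbitrarily long time in each Morse chart at $z_i$, pinning down the gluing parameters and the pieces $\lambda'_i$.

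Finally, I would verify that charts constructed at different broken trajectories (or at different depths of breaking) are smoothly compatible. Compatibility on overlaps follows from the uniqueness in the implicit function step: two gluing constructions from different starting data must produce the same corrected trajectory, so the transition maps are compositions of the gluing/inverse-gluing diffeomorphisms and are smooth up to the boundary.

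The main obstacle I expect is the smoothness of $G$ up to and including the corner $\{t_1=\cdots=t_{k-1}=0\}$, and not merely its continuity. Continuity and the homeomorphism property are essentially compactness statements, but showing that all partial derivatives in the $t_i$ extend continuously to $t_i=0$ requires careful exponential-decay estimates (and the right choice of exponential change of variables) for the correction term produced by the contraction mapping. Everything else is bookkeeping around orientation conventions and the local form of Morse--Smale flows near critical points.
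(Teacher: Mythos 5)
The paper does not actually present a proof of this theorem: it states it and immediately closes with \verb|\qed|, attributing the result to Latour (Proposition~2.11) and Qin (Theorem~3.4). So there is no internal proof to compare against; what you are really offering is an alternative route to the cited results. Your Floer-style ``pre-glue and correct by Newton iteration'' argument is a valid strategy and captures the right structure, but it is genuinely different from the Latour--Qin approach, which does not set up an IFT/contraction scheme at all. They instead put the pseudo-gradient into a standard linear (Morse) local model near each critical point, parametrize trajectories that pass close to $z_i$ by their entry and exit points on a pair of hypersurfaces bounding the Morse box, write the transition map between entry and exit explicitly (in the linear model it is a scaling by $e^{-\rho A_{z_i}^{s/u}}$), and read off the smooth manifold-with-corners structure and the transition maps directly from these explicit formulas. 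That approach is more elementary and makes the smoothness-up-to-corner claim almost tautological, at the cost of requiring the pseudo-gradient to be \emph{standard} near the critical points; your approach imports more machinery but is the one that generalizes to the Floer setting.

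Two concrete points you should address. First, you assume that $\xi$ can be conjugated near each $z_i$ to a linear hyperbolic vector field by a \emph{smooth} change of coordinates. This is not automatic for an arbitrary Morse--Smale pseudo-gradient: Grobman--Hartman gives only $C^0$ conjugacy, and smooth linearization (Sternberg) needs non-resonance of the Hessian eigenvalues. In the literature this is handled either by imposing a non-resonance condition or, as in the Qin result the paper cites (and explicitly flags later in \S\ref{sec:beyond-manifolds} when discussing the Hilbert-manifold case), by building the standard-form hypothesis into the choice of $\xi$. You should state this hypothesis rather than treat linearization as free. Second, you are right to use $k-1$ gluing parameters: a length-$k$ broken trajectory has $k-1$ intermediate critical points $z_1,\dots,z_{k-1}$ and hence $k-1$ breaking points, and the dimension count
\[
\sum_{i=1}^k\bigl(|z_{i-1}|-|z_i|-1\bigr)+(k-1)=|x|-|y|-1=\dim\overline{\call}(x,y)
\]
confirms it. The exponent $[0,\infty)^k$ in the statement as printed is off by one; compare with Theorem~\ref{corners2} for $\overline{W}^u(x)$, where $[0,\infty)^k$ \emph{is} correct because the extra factor $W^u(y)$ introduces an additional gluing at $y$. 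Be explicit that you are proving the dimensionally correct version with $[0,\infty)^{k-1}$.

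Finally, your flagged obstacle --- smoothness of the glued family, including all $t_i$-derivatives, up to the corner $\{t_1=\cdots=t_{k-1}=0\}$ --- is indeed where the real work lies, and the choice of the exponent $c$ in $t_i=e^{-c\rho_i}$ must be tied to the spectral gap of the Hessian at $z_i$ for the correction term to be $O(t_i)$ with all derivatives. In the standard-form/linear-model setting this estimate is explicit, which is another argument in favor of the Latour--Qin route if you only care about the Morse case.
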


In addition, once we fix an orientation of the unstable manifolds,  $\overline{\call}(x,y)$ becomes oriented, cf.~\cite[2.15]{Latour}  (see also the proof of Proposition~\ref{orientationsMorse} below). In particular, $\overline{\call}(x,y)$ possesses a well-defined fundamental class $$
[\overline{\call}(x,y)]\in H_{|x|-|y|-1}(\overline{\call}(x,y),\partial \overline{\call}(x,y)).
$$

  Before going further we need to fix some orientation conventions.
  
  \subsection{Orientation conventions}\label{subsec: orientation-conventions} 

 We will denote by $\ori(V)$ the orientation of an oriented vector space $V$ and by $\coori(V)$ the co-orientation of a co-oriented vector space $V$. Given an oriented manifold $Y$, we will similarly denote by $\ori(Y)$ the orientation of its tangent space (which depends on the point). Similarly, we will denote by $\coori(Y)$ the coorientation of a cooriented submanifold $Y$.

We will use further similar notations whose meaning can also be inferred from the context. For instance, 
given oriented vector spaces $W$ and $Z$, let $(\ori(W), \ori(Z))$
be the orientation of $W\oplus Z$ represented by a basis obtained by concatenating a positively oriented basis of $W$ and a positively oriented basis of $Z$ in this order. Similarly, if $W\subset Z$ are subspaces of $V$ such that $W$ is oriented and $Z$ is cooriented, let $(\ori \, W, \coori \, Z)$ be the orientation of $W\oplus V/Z $ defined by a basis of $W$ followed by a basis of $V/Z$.

  If $W=\mathrm{span}(u)$ is one-dimensional, we will simply write $u$ instead of $\ori \, W$, the orientation defined by $u$.

  Given $y\in \Crit(f)$ we denote $W^u(y)$ its \emph{unstable manifold} with respect to $\xi$ and $W^s(y)$ its \emph{stable manifold} with respect to $\xi$. We denote $S^{s}(y)$ its \emph{stable sphere} defined by 
$$S^s(y)\, =\, W^s(y) \cap f^{-1}(f(y)+\epsilon)$$
for some small $\epsilon >0$. We fix an orientation for $W^u(y)$, and this yields a co-orientation for $W^s(y)$. We co-orient $S^s(y)\subset W^s(y)$ at a point $p$ using the exterior  normal which is the opposite of the vector field $\xi(p)$. 
This choice gives therefore 
\begin{equation}\label{eq:coorientation-sphere-stable} \coori(S^s(y))\, =\, -(\xi, \coori(W^s(y)). \end{equation} 

We also use the following conventions. We orient the boundary $\partial Y$ of some manifold $Y$ using ``its outward normal",  meaning any vector $n$ pointing towards the exterior of the boundary: 
\begin{equation}\label{eq:exterior-normal} 
(n, \ori(\partial Y))\, =\, \ori(Y).\end{equation} 
We orient a transverse intersection between an oriented manifold $X$ and a co-oriented manifold $Y$ as follows:
\begin{equation}\label{eq:transverse-orientation} 
(\ori(X\cap Y), \coori(Y))\,  =\, \ori(X).
\end{equation} 

We will also need  to consider the compactifications $\overline{W}^u(x)$ of the unstable manifolds. These appeared first in Latour \cite{Latour} and were also considered in Barraud-Cornea~\cite{BC07}, see also~\cite{Qin-Lizhen}. We will refer to them as {\it Latour cells} and discuss them in the following subsection. 

\subsection{Latour cells}\label{subsec:Latour-cells} 

\begin{definition} \label{Latour-cell} For $x\in \Crit(f)$ the \emph{unstable Latour cell} is defined by 
$$\overline{W}^u(x)\, =\, W^u(x)\cup\bigcup_{y\in {Crit}(f)}\overline{\call}(x,y)\times W^u(y).$$\end{definition} 

There is a natural topology on these spaces which makes them compact. Moreover Latour~\cite[Proposition~2.11]{Latour} and Qin~\cite[Theorem~3.4]{Qin-Lizhen} prove the following theorem (see also Barraud-Cornea~\cite{BC07} and Audin-Damian~\cite{Audin-Damian_English} for slightly weaker statements).

\begin{theorem}\label{corners2} The space $\overline{W}^u(x)$ is a  compact  manifold with boundary and corners of dimension $|x|$ whose interior is $W^u(x)$. Near a  boundary point $$(\lambda, p)= (\lambda_1,\lambda_2,\ldots\lambda_{k}, p)\in  \call(x,z_1)\times\call(z_1,z_2)\times\cdots\times\call(z_{k-1},y )\times W^u(y) $$ the space $\overline{W}^u(x)$ is locally diffeomorphic to (a neighborhood of $(\lambda,p)$ in) $\call(x,z_1)\times\call(z_1,z_2)\times\cdots\times\call(z_{k-1},y)\times W^u(y)\times [0,\infty)^k$. 

Moreover $\overline{W}^u(x)$ is homeomorphic to the closed disk $\overline{D}^{|x|}.$ \qed
\end{theorem}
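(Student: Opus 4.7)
The plan is to verify compactness of $\overline{W}^u(x)$ first, then produce smooth charts near the corner strata via a gluing construction, and finally identify the space topologically with the closed disk by induction on the Morse index.

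First, I would establish compactness of $\overline W^u(x)$. The Morse--Smale condition together with the existence of a Lyapunov function $f$ ensures that any sequence $(q_n)\subset W^u(x)$ either stays in a compact subset of $W^u(x)$, or else the negative pseudo-gradient trajectories through $q_n$ accumulate, after extracting a subsequence and reparametrizing, on a broken trajectory $(\lambda_1,\ldots,\lambda_k)\in \call(x,z_1)\times\cdots\times\call(z_{k-1},y)$ together with a limit point $p\in W^u(y)$. This matches exactly the stratification of $\overline W^u(x)$ given in Definition~\ref{Latour-cell}, so $\overline W^u(x)$ is sequentially compact for its natural topology. Compactness of $\overline{\call}(x,y)$ from Theorem~\ref{corners1} is used as a black box.

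Second, I would construct charts with corners. Near an interior point of $W^u(x)$ the standard manifold charts suffice. Near a boundary point $(\lambda,p)\in\call(x,z_1)\times\cdots\times\call(z_{k-1},y)\times W^u(y)$, I would build a \emph{gluing map}
\[
G\colon U\times [0,\rho_0)^k \longrightarrow \overline W^u(x),
\]
where $U$ is a small neighborhood of $(\lambda,p)$ in the boundary stratum. At each intermediate critical point $z_i$, the gluing parameter $\rho_i\ge 0$ measures how long the pre-glued trajectory lingers near $z_i$; one uses a local Morse chart at $z_i$ (where the flow of $\xi$ is, up to smooth conjugacy, a standard hyperbolic saddle) to splice the outgoing piece of $\lambda_i$ with the incoming piece of $\lambda_{i+1}$ at parameter $\rho_i$, and then, if necessary, projects onto the actual flow line through $p$ by flowing backward. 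The Morse--Smale transversality of $W^u(z_i)$ with $W^s(z_i)$ ensures that this pre-gluing produces a genuine trajectory of $\xi$ and that $G$ is smooth and injective for $\rho_0$ small enough. The parameter $\rho_i=0$ corresponds exactly to the broken configuration, so $G$ realizes the local model $U\times [0,\infty)^k$ claimed in the statement. Compatibility of these charts on overlaps follows from the uniqueness of the gluing up to reparametrization. I would note that this step is essentially \cite{Latour}, \cite{Qin-Lizhen}; the technical core is the gluing construction and it is where I expect most of the work to lie.

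Third, once the manifold-with-corners structure is in place, I would prove $\overline W^u(x)\cong \overline D^{|x|}$ by induction on $|x|$. The base case $|x|=0$ is trivial. For the inductive step, fix a regular value $c$ slightly below $f(x)$ and decompose $\overline W^u(x)$ as the union of the closed half-disk $\overline W^u(x)\cap f^{-1}([c,f(x)])$ (which is diffeomorphic to a closed disk via the flow and a local Morse chart at $x$) and the ``flowed down'' piece $\overline W^u(x)\cap f^{-1}((-\infty,c])$. The latter is the mapping cylinder (via the downward flow) of the compactified descending sphere $S:=\overline W^u(x)\cap f^{-1}(c)$. Identifying $S$ stratum by stratum with a broken-trajectory space one sees, by the previous step applied one index down combined with the boundary description $\partial \overline{\call}(x,y)=\bigcup_{|x|>|z|>|y|}\overline{\call}(x,z)\times\overline{\call}(z,y)$, that $S$ is a topological $(|x|-1)$-sphere. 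Coning off $S$ along the flow from $x$ then yields the desired homeomorphism with $\overline D^{|x|}$.

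The main obstacle is the gluing step: verifying that the naive pre-glued curves can be corrected to true $\xi$-trajectories and that the correction depends smoothly on the parameters $(\rho_1,\ldots,\rho_k)$ up to $\rho_i=0$, which is what produces the $[0,\infty)^k$-factor in the corner chart rather than a merely topological collar.
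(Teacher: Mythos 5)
The paper does not prove this theorem: the statement is immediately preceded by the sentence ``Latour~\cite[Proposition~2.11]{Latour} and Qin~\cite[Theorem~3.4]{Qin-Lizhen} prove the following theorem,'' and the end-of-proof marker appended to the statement signals that no proof is given in the text. There is therefore no argument in the paper to compare your proposal against; your review should rather be measured against the cited sources.

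On the substance of your proposal: Steps~1 (sequential compactness via convergence to broken configurations) and~2 (corner charts obtained from a gluing map with gluing parameters at the intermediate critical points) are a reasonable outline of the standard approach, and you correctly identify the gluing estimates as the technical core carried out in Latour and Qin.

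Your Step~3, however, has a genuine gap. With $c$ a regular value slightly below $f(x)$, the slice $S:=\overline{W}^u(x)\cap f^{-1}(c)$ coincides with the smooth descending sphere $W^u(x)\cap f^{-1}(c)$; it has no interesting strata, so the inductive identification ``stratum by stratum'' and the appeal to $\partial\overline{\call}(x,y)$ are vacuous at this level of $f$. More seriously, the lower piece $\overline{W}^u(x)\cap f^{-1}((-\infty,c])$ is \emph{not} a mapping cylinder or cone on $S$ via the flow: for a point $q\in S$ in direction $\theta$, the flow $\phi_t(q)$ accumulates, as $t\to\infty$, only on the boundary point $(\lambda_1,z_1)$ given by the first unbroken piece of the (possibly broken) trajectory in direction $\theta$ together with its terminal critical point $z_1\in \mathrm{Crit}(f)$. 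Boundary points of the form $(\lambda_1,\dots,\lambda_k,p)$ with $p\in W^u(y)\setminus\{y\}$ are never obtained as such a flow limit from $S$, so the image of $S\times[0,\infty]$ under the flow misses most of $\partial\overline{W}^u(x)$. In particular, the ``coning off along the flow'' step does not produce a homeomorphism onto $\overline{D}^{|x|}$ as stated. Establishing the disk structure requires a more delicate radial parametrization of $\overline{W}^u(x)$ that remains continuous across the corner strata; this is precisely the nontrivial content of the proofs of Latour and Qin to which the paper defers.
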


There is a natural extension 
$$
\phi_x: \overline{W}^u(x)\ri X
$$ 
of the inclusion $W^u(x) \hookrightarrow X$ defined by 
$$\phi_x(\lambda, p) = p.$$

\begin{figure}
  \centering
  \includegraphics{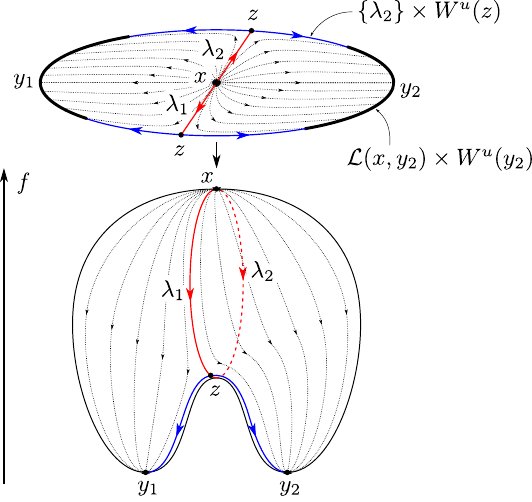} 
  \caption{The Latour cell  $\ol W^u (x)$ of the maximum
    of the height function on a deformed $2$-sphere. }
  \label{fig:MorseLatour}
\end{figure}
%\begin{center}
%\includegraphics[width=4in]{figure1.jpg} 
%
%The Latour cell  $\ol W^u (x)$ of the maximum of the height function on a deformed $2$-sphere. 
%\end{center}

This actually defines a CW-decomposition of $X$ and provides a proof of the fact that  (lifted) Morse homology is isomorphic to singular homology of $X$ (resp. of its universal cover $\widetilde{X}). $ In fact, with our orientation conventions the Morse differential is the opposite of the cellular differential: this is a consequence of the relation~\eqref{eq:sign-difference-bord-cellule} below by considering  the particular case $|x|-|z|=1$.

\begin{proposition}[{\cite[2.15]{Latour}}] \label{orientationsMorse} 
Let $x,z,y$ be three critical points of $f$ with $|x|>|z|>|y|$. The orientation induced from $\overline{\call}(x,y)$ by the outward normal vector on the $1$-codimensional stratum  $\call(x,z)\times\call(z,y)$ of the boundary differs from the product orientation by $(-1)^{|x|-|z|}$. 
\end{proposition}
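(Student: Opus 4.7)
My plan is to compare the two orientations at a point of the $1$-codimensional stratum $\call(x,z)\times\call(z,y)$ of $\overline{\call}(x,y)$ using a corner chart from Theorem~\ref{corners1}. As a preliminary step I would recast all three orientations uniformly: for $(a,b)\in\{(x,y),(x,z),(z,y)\}$ with $|a|>|b|$, set $\ori(\call(a,b))=\ori(W^u(a)\cap S^s(b))$ via the transverse-intersection rule \eqref{eq:transverse-orientation}, where $S^s(b)$ is cooriented by \eqref{eq:coorientation-sphere-stable}. This fixes the three relevant orientations from a single datum, the chosen orientations of the unstable manifolds, and puts them on equal footing.

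Fix then a broken trajectory $(\lambda_1,\lambda_2)$. By Theorem~\ref{corners1}, a neighborhood of it in $\overline{\call}(x,y)$ admits a corner chart $U\times V\times[0,\epsilon)$ with $U\subset\call(x,z)$, $V\subset\call(z,y)$, and $t=0$ the broken trajectory. Geometrically, $t$ is (a reparametrization of) the inverse gluing length, so $\partial_t$ points inward and the outward normal is $-\partial_t$. The geometric heart of the proof, and its main obstacle, is to verify that through this chart the orientation of $\overline{\call}(x,y)$ reads
$$\ori(\overline{\call}(x,y))=(\ori\call(x,z),\,\partial_t,\,\ori\call(z,y)),$$
i.e.\ the gluing direction is sandwiched between the two factors with no extra sign. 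The verification is a local orientation computation at a glued trajectory $\lambda_1\#_R\lambda_2$ (for large $R$): using a normal form for $\xi$ near $z$ and the exponential gluing model, one decomposes $T_{\lambda_1\#_R\lambda_2}(W^u(x)\cap S^s(y))$ as $T\call(x,z)\oplus\R\partial_t\oplus T\call(z,y)$, and tracks how \eqref{eq:transverse-orientation} and \eqref{eq:coorientation-sphere-stable} match on both sides. The key point is that $\partial_t$ corresponds to the flow vector $\xi$ acting at the passage near $z$, which is precisely the direction $-\coori(S^s(z))$ that the intersection convention inserts between the $(x,z)$ and $(z,y)$ factors.

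Granted this, the sign is pure bookkeeping. By \eqref{eq:exterior-normal}, the induced boundary orientation $\ori_{\text{bd}}$ on $\call(x,z)\times\call(z,y)$ satisfies
$$(-\partial_t,\,\ori_{\text{bd}})=(\ori\call(x,z),\,\partial_t,\,\ori\call(z,y)).$$
Moving $\partial_t$ to the first position on the right contributes $(-1)^{\dim\call(x,z)}=(-1)^{|x|-|z|-1}$, and flipping its sign contributes an extra $-1$. Hence $\ori_{\text{bd}}=(-1)^{|x|-|z|}(\ori\call(x,z),\ori\call(z,y))$, which is exactly the claim. Steps one and three are essentially orientation bookkeeping; the entire technical content is concentrated in the sandwich claim of the middle paragraph.
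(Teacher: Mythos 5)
Your bookkeeping from the sandwich identity is correct, and the sandwich identity itself is true; but, as you observe, all of the content sits there, and you leave it as a sketch. The issue is not only incompleteness: the heuristic you offer for it carries a sign error that would derail the computation if you pushed it through. You claim $\partial_t$ points in the direction of $\xi$, i.e.\ that $\partial_t=-\coori(S^s(z))$. The correct sign is the opposite: $\partial_t=\coori(S^s(z))=-\xi$, the \emph{exterior} normal of $S^s(z)$ inside $W^s(z)$; equivalently, the outward normal $-\partial_t$ coincides with the limiting tangent direction $\xi^-$ of $\lambda_1$ as it converges to $z$, which is exactly what the paper's proof records. You can see this in the model case $|x|=2,|z|=1,|y|=0$ in Morse coordinates $(u,v)$ near $z$, with $W^s(z)=\{v=0\}$, $W^u(z)=\{u=0\}$ and $\xi=(-u,v)$: the unbroken trajectories are the hyperbolas $uv=c$ and $c$ is (up to monotone reparametrization) the gluing parameter; on the transversal $\{v^2-u^2=2\delta\}$ the direction $\partial_c$ is $+\partial_u$, whereas the incoming $\xi$ points in $-\partial_u$. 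If you plug $\partial_t=\xi$ into your own sandwich identity and run your bookkeeping, you get $(-1)^{|x|-|z|-1}$ rather than $(-1)^{|x|-|z|}$.

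To close the gap you must actually prove the sandwich identity, which means controlling precisely this sign. The paper does something structurally close but avoids setting up a corner chart directly on $\ol\call(x,y)$: it realizes $\ol\call(x,y)$ as a boundary submanifold of the Latour cell $\ol W^u(x)$, so that one and the same outward normal $n$ serves simultaneously for $\ol W^u(x)$ and for $\ol\call(x,y)$; identifies $n$ at the corner point $(\lambda,z)$ with $\xi^-$; uses \eqref{eq:coorientation-sphere-stable} and \eqref{eq:transverse-orientation} to first derive \eqref{eq:sign-difference-bord-cellule} for the cell; and only then intersects with $S^s(y)$ to produce the statement. That intermediate identity is worth having anyway, since it is re-used later, for instance in the proof of Theorem~\ref{thm:fibration}. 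If you prefer to keep your streamlined route, you will need to carry out the local tangent-space computation in your middle paragraph in full, with the sign of $\partial_t$ as above.
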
 

The same result was reproved in~\cite[Theorem~3.6]{Qin-Lizhen} and~\cite[Proposition~5.1]{Zhou-MB}. Since Latour uses in~\cite{Latour} positive gradients and different orientation conventions we choose to write a self-contained proof for this statement. 

\begin{proof}
Let $x,y,z$ be critical points of $f$ as in the statement of the proposition. We will use the identification %write
\begin{equation}\label{eq:multiliaisons-intersection}
\overline{\call}(x,y) = \overline{W}^u(x)\cap S^s(y),
\end{equation} 
meaning that we see $\overline{\call}(x,y)\subset \overline{W}^u (x)$ as the preimage of $S^s(y)$ by the map $\phi_x$.
Now $\phi_x$ is transverse to $S^s(y)$ on the interior $W^u(x)$ and also along  any stratum of the boundary of $\overline{W}^u(x).$  In particular $\phi_x\pitchfork S^s(y)$ on the component  $\call(x,z)\times W^u(z)$ of the  stratum of maximal dimension. Therefore near a point $(\lambda, p)\in \call(x,z)\times W^u(z)$, we have that  $\overline{\call}(x,y)$ is a genuine boundary submanifold of $\overline{W}^u(x)$.\footnote{ By ``boundary submanifold" we mean a transverse intersection between a manifold with boundary and a manifold without boundary.} 
As such, it is clear that an outward normal vector $n$ of $\overline{W}^{u}(x)$ at $(\lambda,p)$  is also outward normal for $\overline{\call}(x,y)$. We thus have by~\eqref{eq:exterior-normal}: 
\begin{equation}\label{eq:normal-orientation-cellule} 
(n, \ori\, \partial\wu(x))\, =\, \ori \, \wu(x)
\end{equation}
and 
\begin{equation}\label{eq:normal-orientation-multiliaisons}
(n, \ori\,  \partial\linecall(x,y))\, =\, \ori\, \linecall(x,y).
\end{equation} 
 Let us now denote by $\xi^-$ the outward normal vector to $\overline{W}^u(x)$ at $(\lambda,z)$ obtained by continuously extending a tangent vector\footnote{For instance, one can take $\xi(\lambda(t))/\|\xi(\lambda(t))\|$ for a given norm.} to the curve $\lambda$ at the point $\lambda(t)$ as $t\to+\infty$.  
We have %On the other hand we have 
\begin{align*}
(n, \ori\, \call(x,z), \ori \, W^u(z)) \, & =\, (\xi^-, \ori\, \call(x,z), \ori\, W^u(z)) \\
& = \, (-1)^{|x|-|z|-1}(\ori\, \call(x,z), \xi^-, \ori\, W^u(z)).
\end{align*}
% We used here that the gradient $\xi(z)$ is an outward vector field for $\wu(x)$ at the point $(l,z)\in \call(x,z)\times W^u(z)$ so
The first equality above should be seen as an equivalence between orientations of the boundary at the points $(\lambda,p)$ and $(\lambda,z)$.

\begin{figure}
  \centering
  \includegraphics{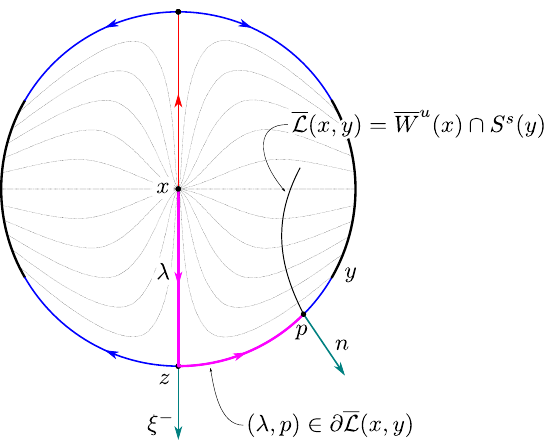}   
  \caption{Comparing orientations.}
  \label{fig:orientation}
\end{figure}
%\begin{center}
%\includegraphics[width=4in]{figure2.jpg} 
%\end{center}

Now $\ori\, W^u(z)= \coori \, W^s(z)$ and, using~\eqref{eq:coorientation-sphere-stable}, we obtain 
\begin{align*}
(n, \ori\, \call(x,z), \ori \, W^u(z)) & =\, (-1)^{|x|-|z|-1}(\ori\, \call(x,z), \xi^-, \coori\, W^s(z)) \\
& =\, (-1)^{|x|-|z|}(\ori\, \call(x,z), \coori\, S^s(z)).
\end{align*}
For the transverse intersection $\call(x,z)= W^u(x)\pitchfork S^s(z)$ we infer from our orientation convention~\eqref{eq:transverse-orientation} that
\begin{align*}
(n, \ori\, \call(x,z), \ori \, W^u(z))\, & =\, (-1)^{|x|-|z|}(\ori\, W^u (x))\, \\
& =\, (-1)^{|x|-|z|}(\ori\, \wu(x)).
\end{align*}
Combining this with~\eqref{eq:normal-orientation-cellule} we get 
\begin{equation}\label{eq:sign-difference-bord-cellule} 
\ori (\call(x,z)\times W^u(z))\, =\, (-1)^{|x|-|z|}(\ori \, \partial \wu(x)).
\end{equation} 
Let us prove that the orientations of $\partial \linecall(x,y)$ and $\call(x,z)\times\call(z,y)$ differ by the sign claimed in our statement. Recall that $\linecall(x,y)= \wu(x)\pitchfork S^s(y)$, so 
\begin{equation}\label{eq:or-coor} 
(\ori \, \linecall(x,y), \coori \, S^s(y)) \, =\, \ori \, \wu(x), 
\end{equation} 
and using~\eqref{eq:normal-orientation-multiliaisons}
\begin{equation}\label{eq:final1} 
(n, \ori \, \partial\linecall(x,y), \coori \, S^s(y)) \, =\, \ori\,  \wu(x).
\end{equation}

On the other hand $\call(y,z)=W^u (z)\pitchfork S^s(y)$ and therefore 
\begin{align}
(n, \ori\, \call(x,z), & \ori\, \call(z,y), \coori\, S^s(y)) \nonumber \\
 & =\, (n, \ori\, \call(x,z), \ori \, W^u(z)) \nonumber \\
 & =\, (-1)^{|x|-|z|}(n, \ori \,  \partial \wu(x)) \nonumber \\
 & =\, (-1)^{|x|-|z|}\ori \,  \wu(x), \label{eq:final2}
 \end{align}
using~\eqref{eq:sign-difference-bord-cellule} and~\eqref{eq:normal-orientation-cellule}. Finally,~\eqref{eq:final1} and~\eqref{eq:final2} yield 
$$\ori\, \partial \linecall(x,z)\, =\, (-1)^{|x|-|z|}\ori(\call(x,z)\times\call(z,y))$$
as claimed. 
\end{proof}

\begin{remark}\label{orientation-rule} The above proof yields the following orientation rule  derived from~\eqref{eq:coorientation-sphere-stable} and~\eqref{eq:or-coor} 
\begin{equation}\label{eq:orientation-rule} 
\left(\ori \, \overline{\call}(x,y), -\xi(p), \ori\, W^u(y)\right) \, =\, \ori\, \wu(x),
\end{equation}
where $\xi(p)$ is the gradient vector field at some point $p\in \wu(x)$ lying on a gradient line $\lambda\in \overline{\call}(x,y)$. 
\end{remark}

We will use this result to prove the following (see \cite[Lemma~2.2]{BC07}). 

\begin{proposition}[Existence] \label{representingchain} 
Let $C_*(\overline{\call}(x,y))$, $x,y\in\Crit(f)$ be the complex of cubical chains on the space $\overline{\call}(x,y)$ of broken trajectories. There exists a collection 
$\{s_{x,y}\}$ with $s_{x,y}\in C_{|x|-|y|-1}(\overline{\call}(x,y))$ satisfying the following properties:  
\begin{enumerate}
\item $s_{x,y}$ is a cycle relative to the boundary and represents the fundamental class $[\overline{\call}(x,y)]$, 
\item  the following equation holds
\begin{equation}\label{eq:representingchain}
\partial s_{x,y}\, =\, \sum_z(-1)^{|x|-|z|}s_{x,z}\times s_{z,y},
\end{equation}
where the product of chains is defined via the inclusions $\overline{\call}(x,z)\times\overline{\call}(z,y)\, \subset \, \partial \overline{\call}(x,y)\, \subset\, \overline{\call}(x,y)$.
\end{enumerate}
\end{proposition}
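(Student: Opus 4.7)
The plan is to argue by induction on $d := |x|-|y|$. For $d=1$ the space $\overline{\call}(x,y)$ is a finite oriented set of points; define $s_{x,y}\in C_0(\overline{\call}(x,y))$ to be the corresponding signed $0$-chain, which automatically represents the fundamental class and for which relation~\eqref{eq:representingchain} is vacuous.

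For the inductive step with $d\ge 2$, assume $s_{x',y'}$ has been built for all pairs with $|x'|-|y'|<d$, and set
$$
c_{x,y} := \sum_z (-1)^{|x|-|z|}\, s_{x,z}\times s_{z,y}\ \in\ C_{d-2}(\partial\overline{\call}(x,y)).
$$
The first step is to check that $\partial c_{x,y}=0$: applying the Koszul rule $\partial(a\times b)=\partial a\times b + (-1)^{|a|}a\times \partial b$ with $|s_{x,z}|=|x|-|z|-1$ and substituting the inductive relations for $\partial s_{x,z}$ and $\partial s_{z,y}$ yields two triple sums indexed by chains $x>z_1>z_2>y$ whose coefficients of $s_{x,z_1}\times s_{z_1,z_2}\times s_{z_2,y}$ are $(-1)^{|z_1|+|z_2|}$ and $(-1)^{|z_1|-|z_2|+1}$, which are opposite and cancel. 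The second step is to identify the homology class of $c_{x,y}$: by Proposition~\ref{orientationsMorse} the outward boundary orientation on the codimension-$1$ stratum $\overline{\call}(x,z)\times\overline{\call}(z,y)$ differs from the product orientation by $(-1)^{|x|-|z|}$, so the prefactor in the definition of $c_{x,y}$ is exactly what converts the inductively-represented product fundamental classes $[\overline{\call}(x,z)]\times[\overline{\call}(z,y)]$ into a representative of $[\partial\overline{\call}(x,y)]$.

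To produce $s_{x,y}$ itself, choose any cubical chain $s^{(0)}_{x,y}\in C_{d-1}(\overline{\call}(x,y))$ representing the relative fundamental class $[\overline{\call}(x,y)]\in H_{d-1}(\overline{\call}(x,y),\partial\overline{\call}(x,y))$. Then $\partial s^{(0)}_{x,y}$ and $c_{x,y}$ both represent the image of $[\overline{\call}(x,y)]$ under the connecting homomorphism $H_{d-1}(\overline{\call}(x,y),\partial)\to H_{d-2}(\partial\overline{\call}(x,y))$, so they are homologous in $C_*(\partial\overline{\call}(x,y))$; write $c_{x,y}-\partial s^{(0)}_{x,y}=\partial\alpha$ with $\alpha\in C_{d-1}(\partial\overline{\call}(x,y))$ and set $s_{x,y}:=s^{(0)}_{x,y}+\alpha$. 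Since $\alpha$ lies entirely in the boundary, $s_{x,y}$ still represents the relative fundamental class, and by construction $\partial s_{x,y}=c_{x,y}$, which closes the induction. The main obstacle is the sign bookkeeping: matching the Koszul sign in $\partial(a\times b)$, the prefactor $(-1)^{|x|-|z|}$ in $c_{x,y}$, and the boundary-versus-product orientation comparison of Proposition~\ref{orientationsMorse}, so that the cycle condition and the fundamental-class identification close consistently at every stage.
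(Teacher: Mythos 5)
Your proof is correct and follows essentially the same inductive scheme as the paper's: induct on $|x|-|y|$, verify that the candidate boundary chain $c_{x,y}$ is a genuine cycle via the Koszul rule and the inductive Maurer--Cartan identities, identify its class with $[\partial\overline{\call}(x,y)]$ using Proposition~\ref{orientationsMorse}, and correct an arbitrary relative fundamental-class representative by a chain supported on the boundary so that its boundary becomes exactly $c_{x,y}$. The paper merely asserts the cycle check ("one can easily check") where you spell out the cancellation, and writes the correction as $s_{x,y}=s'_{x,y}-p_{x,y}$ rather than $s^{(0)}_{x,y}+\alpha$, but these are the same argument.
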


\begin{proof} Barraud and Cornea prove this result in~\cite{BC07} over $\mathbb{Z}/2$. For arbitrary coefficients the proof follows exactly the same lines, taking in addition into account orientations. The key idea is to build the chain representatives $s_{x,y}$ inductively over $\ell=|x|-|y|-1\ge 0$: at the induction step, an arbitrary representative of the fundamental class is modified by a chain supported on the boundary so that it satisfies the Maurer-Cartan equation~\eqref{eq:representingchain}.

For $\ell=0$ the moduli spaces $\overline{\call}(x,y)$ are compact, $0$-dimensional, and oriented. We define $s_{x,y}$ to be the unique $0$-chain which represents the fundamental class. 

Let $\ell\ge 1$. Assuming that we have constructed $\{s_{x,y}\}$ for $|x|-|y|-1\le \ell-1$, we construct $\{s_{x,y}\}$ for $|x|-|y|-1=\ell$ as follows. 

The space $\p \overline{\call}(x,y)$, oriented as the boundary of $\overline{\call}(x,y)$, carries a fundamental class $[\p \overline{\call}(x,y)]$. Orient each top-dimensional stratum $\overline{\call}(x,z)\times \overline{\call}(z,y)\subset \p \overline{\call}(x,y)$ as the boundary of $\overline{\call}(x,y)$. As such, it carries a fundamental class 
$$
[\overline{\call}(x,z)\times \overline{\call}(z,y)]\in H_{|x|-|y|-2}\left(\overline{\call}(x,z)\times \overline{\call}(z,y),\p \big( \overline{\call}(x,z)\times \overline{\call}(z,y)\big)\right).
$$ The chain $(-1)^{|x|-|z|}s_{x,z}\times s_{z,y}\in C_{|x|-|y|-2}(\overline{\call}(x,z)\times \overline{\call}(z,y))$ is a cycle rel boundary and, by the induction assumption and Proposition~\ref{orientationsMorse}, it represents the fundamental class $[\overline{\call}(x,z)\times \overline{\call}(z,y)]$  seen as a class rel boundary. 

On the other hand, by the induction assumption using~\eqref{eq:representingchain} one can easily check that the chain 
$$\sum_z (-1)^{|x|-|z|}s_{x,z}\times s_{z,y}\in  C_{|x|-|y|-2}(\p \overline{\call}(x,y))$$ is a genuine cycle (not only a cycle rel boundary).
Therefore  the previous paragraph implies that it represents the fundamental class $[\p \overline{\call}(x,y)]$. 

We now pick any cubical chain representative $s'_{x,y}$ of the fundamental class $[\overline{\call}(x,y)]$. In particular, $s'_{x,y}$ is a cycle rel boundary. Since the fundamental class $[\p \overline{\call}(x,y)]$ is the image of $[\overline{\call}(x,y)]$ under the boundary homomorphism $H_{|x|-|y|-1}(\overline{\call}(x,y), \partial  \overline{\call}(x,y))\ri H_{|x|-|y|-2}( \partial  \overline{\call}(x,y))$, we infer that it is represented by $\p s'_{x,y}$. Thus $\p s'_{x,y}$ and $\sum_z (-1)^{|x|-|z|}s_{x,z}\times s_{z,y}$ are homologous as representatives of $[\p \overline{\call}(x,y)]$, i.e., there exists $p_{x,y}\in C_{|x|-|y|-1}(\p \overline{\call}(x,y))$ such that 
$$
\p s'_{x,y} - \sum_z (-1)^{|x|-|z|}s_{x,z}\times s_{z,y} =\p p_{x,y}. 
$$
We then set $s_{x,y}=s'_{x,y}-p_{x,y}$. 
\end{proof}

%\begin{lemma}\label{fundamental} Let $\overline{\call}$ be a connected component of $\overline{\call}(x,y)$ with non-empty boundary. Then the fundamental class $\mu\in H_{|x|-|y|-1}(\overline{\call}, \partial  \overline{\call})$ satisfies 
%$$\partial\mu\, =\, \sum_{\left(\overline{\call}(x,z)\times\overline{\call}(z,y)\right)\subset\overline{\call}}(-1)^{|x|-|z|}s_{x,z}\times s_{z,y}$$
%where $$\partial : H_{|x|-|y|-1}(\overline{\call}, \partial  \overline{\call})\ri H_{|x|-|y|-2}( \partial  \overline{\call})$$ is the canonical connecting morphism 
%\end{lemma}

\begin{definition}\label{chainsystem} Following Barraud and Cornea we will call the family $(s_{x,y})$ a \emph{representing chain system} for  the Morse moduli spaces.
\end{definition} 

\begin{proposition}[Uniqueness] \label{prop:uniqueness} 
Any two representing chain systems $(s'_{x,y})$ and $(s_{x,y})$ are homologous in the following sense: there exists a family $(\kappa_{x,y})$ of chains $\kappa_{x,y}\in C_{|x|-|y|}(\overline{\call}(x,y))$ such that:
\begin{enumerate}%[(a)]
\item $\kappa_{x,x}$ is the constant $0$-chain for all $x$ and $\kappa_{x,y} = 0$ for $|x] =|y|$, $x\neq y$. 
\item for all $x,y$ we have 
\begin{equation}\label{eq:sprime-s-homologous1}
\p \kappa_{x,y} = \sum_z s'_{x,z}\times \kappa_{z,y} - (-1)^{|x|-|z|}\kappa_{x,z}\times s_{z,y}.
\end{equation}
\end{enumerate}
\end{proposition}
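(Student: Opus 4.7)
The plan is to mirror the inductive construction of Proposition~\ref{representingchain}, proceeding by induction on $\ell = |x|-|y| \ge 0$. To make sense of the sum in~\eqref{eq:sprime-s-homologous1} at the diagonal, I adopt the convention that $z$ ranges over all critical points, with $\kappa_{x,x}$ the canonical $0$-chain at the constant trajectory (the unit of the Pontryagin algebra upon evaluation) and with $s_{x,x} = s'_{x,x} = 0$ (formal dimension $-1$); under these conventions the $z=x$ and $z=y$ terms produce exactly $s'_{x,y} - s_{x,y}$, and~\eqref{eq:sprime-s-homologous1} reads $\partial \kappa_{x,y} = s'_{x,y} - s_{x,y} + T_{x,y}$, where $T_{x,y} := \sum_{|x|>|z|>|y|} \bigl(s'_{x,z}\times \kappa_{z,y} - (-1)^{|x|-|z|}\kappa_{x,z}\times s_{z,y}\bigr)$. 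For the base case $\ell=0$ the definitions in (i) satisfy the equation directly. For $\ell=1$ the moduli space $\overline{\call}(x,y)$ is $0$-dimensional, both $s_{x,y}$ and $s'_{x,y}$ equal the unique signed count of points representing $[\overline{\call}(x,y)]$, so $s'_{x,y} = s_{x,y}$ and $\kappa_{x,y} = 0$ works.

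For the inductive step with $\ell \ge 2$, assume $\kappa_{x',y'}$ is defined for $|x'|-|y'| < \ell$, and set $R_{x,y} := s'_{x,y} - s_{x,y} + T_{x,y} \in C_{\ell-1}(\overline{\call}(x,y))$. The goal is to exhibit $\kappa_{x,y} \in C_\ell(\overline{\call}(x,y))$ with $\partial \kappa_{x,y} = R_{x,y}$; for this it suffices to show that $R_{x,y}$ is a cycle and that its class vanishes in $H_{\ell-1}(\overline{\call}(x,y))$. The cycle property is a direct sign-tracking calculation: apply $\partial$ to each summand, substitute the Maurer-Cartan relations~\eqref{eq:representingchain} for both $s$ and $s'$ and the inductive formulas for $\partial \kappa_{x,z}$ and $\partial \kappa_{z,y}$, then verify pairwise cancellation. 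The bookkeeping is formally the same as in the verification that the twisted differential squares to zero (Proposition~\ref{prop:MC-formula-equation}), combined with the homotopy identity of Proposition~\ref{prop:homotopy-cocycle}.

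For the vanishing of $[R_{x,y}]$, observe first that the image of $R_{x,y}$ in the relative complex $C_*(\overline{\call}(x,y), \partial \overline{\call}(x,y))$ is $[s'_{x,y}] - [s_{x,y}] = 0$ in $H_{\ell-1}(\overline{\call}(x,y), \partial \overline{\call}(x,y))$, since both $s_{x,y}$ and $s'_{x,y}$ represent $[\overline{\call}(x,y)]$ and $T_{x,y}$ is supported on the boundary. The long exact sequence of the pair locates $[R_{x,y}]$ in the image of $H_{\ell-1}(\partial \overline{\call}(x,y)) \to H_{\ell-1}(\overline{\call}(x,y))$. Decomposing $\overline{\call}(x,y)$ into connected components: every component with non-empty corner boundary has vanishing top homology, while any closed component contains no broken trajectory, so $T_{x,y}$ vanishes there and $R_{x,y}$ restricts on it to a difference of two representatives of the fundamental class of that closed component, hence a boundary. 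Thus $[R_{x,y}]=0$ and any primitive $\kappa_{x,y}$ completes the induction.

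The main expected obstacle is the combinatorial sign bookkeeping in verifying that $R_{x,y}$ is a cycle, paralleling (and combining) the computations of Propositions~\ref{prop:MC-formula-equation} and~\ref{prop:homotopy-cocycle}; the topological argument in the second step is comparatively routine once one recognizes the dichotomy between components of $\overline{\call}(x,y)$ with and without corner boundary.
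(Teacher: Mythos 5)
Your proof is correct and matches the paper's argument essentially step for step: induction on $\ell=|x|-|y|$, a direct verification that the right-hand side of~\eqref{eq:sprime-s-homologous1} is a cycle using the Maurer-Cartan relations and the inductive hypothesis, and then a component-wise dichotomy between closed components (where the cycle is a difference of two representatives of the fundamental class, hence a boundary) and components with non-empty corner boundary (where the top-degree homology vanishes). The detour through the long exact sequence of the pair is harmless but unnecessary, since the component-wise argument already settles the vanishing directly, exactly as in the paper.
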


In~\eqref{eq:sprime-s-homologous1} the product of chains should be understood via the inclusion $\overline{\call}(x,z)\times\overline{\call}(z,y)\, \subset \, \partial \overline{\call}(x,y)\, \subset\, \overline{\call}(x,y)$ for $|x|>|z|>|y|$, while $s'_{x,y}\times \mathit{point}$ and $\mathit{point} \times s_{x,y}$ are identified with $s'_{x,y}$ and $s_{x,y}$. Also, the right hand side does not involve $\kappa_{x,y}$ since $s'_{x,x}=0$ and $s_{y,y}=0$. Thus Equation~\eqref{eq:sprime-s-homologous1} reads equivalently 
$$
\p \kappa_{x,y} = s'_{x,y}-s_{x,y} + \sum_{|x|>|z|>|y|} s'_{x,z}\times \kappa_{z,y} - (-1)^{|x|-|z|}\kappa_{x,z}\times s_{z,y}.
$$

\begin{proof}[Proof of Proposition~\ref{prop:uniqueness}] We construct the chains $\kappa_{x,y}$ inductively over $\ell=|x|-|y|\ge 0$. For $\ell=0$ we set $\kappa_{x,x}$ to be the constant $0$-chain at $x$ and $\kappa_{x,y}= 0$ for $x\neq y$. 

  Let now $\ell\ge 1$. Assuming that $\{\kappa_{z,w}\}$ have been constructed for $|z|-|w|\le \ell -1$, we will construct $\{\kappa_{x,y}\}$ for $|x|-|y|=\ell$. A direct computation shows that the right hand side of (\ref{eq:sprime-s-homologous1}), % the defining relation for the family $\{h_{x,y}\}$
  denoted 
 $$
 r_{x,y}=\sum_z s'_{x,z}\times \kappa_{z,y} - (-1)^{|x|-|z|}\kappa_{x,z}\times s_{z,y},
 $$ 
 is a cycle in $C_{|x|-|y|-1}(\overline{\call}(x,y))$, i.e., 
$$
\p r_{x,y}=0. 
$$
Indeed, by the induction assumption we have
\begin{align*}
\p (\sum_z & s'_{x,z}\times \kappa_{z,y} - (-1)^{|x|-|z|}\kappa_{x,z}\times s_{z,y}) \\
= \, & \sum_{z,u} (-1)^{|x|-|u|} s'_{x,u}\times s'_{u,z} \times \kappa_{z,y} \\
& + (-1)^{|x|-|z|-1} \sum_{z,u}s'_{x,z}\times s'_{z,u}\times \kappa_{u,y} - (-1)^{|z|-|u|} s'_{x,z}\times \kappa_{z,u}\times s_{u,y} \\
& - (-1)^{|x|-|z|} \sum_{z,u} s'_{x,u}\times \kappa_{u,z}\times s_{z,y} - (-1)^{|x|-|u|}\kappa_{x,u}\times s_{u,z}\times s_{z,y} \\
& - (-1)^{|x|-|z|} (-1)^{|x|-|z|} \sum_{z,u} (-1)^{|z|-|u|} \kappa_{x,z}\times s_{z,u} \times s_{u,y} \\
= \, & 0.
\end{align*}
We now write this cycle as a sum $r_{x,y}=\sum_\alpha r_{x,y}^\alpha$ over connected components $\overline{\call}^\alpha(x,y)$ of $\overline{\call}(x,y)$, so that each $r_{x,y}^\alpha\in C_{|x|-|y|-1}(\overline{\call}^\alpha(x,y))$ is a cycle. We construct $\kappa_{x,y}=\sum_\alpha \kappa_{x,y}^\alpha$ component-wise, with $\kappa_{x,y}^\alpha\in C_{|x|-|y|}(\overline{\call}^\alpha(x,y))$. 
\begin{itemize} 
\item For a connected component $\overline{\call}^\alpha(x,y)$ with empty boundary we have $r_{x,y}^\alpha = {s'}_{x,y}^\alpha - s_{x,y}^\alpha$. Since ${s'}_{x,y}^\alpha$ and $s_{x,y}^\alpha$ both represent the fundamental class of $\overline{\call}^\alpha(x,y)$ they are necessarily homologous and we choose $\kappa_{x,y}^\alpha$ such that $\p \kappa_{x,y}^\alpha=r_{x,y}^\alpha$. 
\item For a connected component of $\overline{\call}^\alpha(x,y)$ with non-empty boundary we have $H_{|x|-|y|-1}(\overline{\call}^\alpha(x,y))=0$. Since $r_{x,y}^\alpha$ is a cycle we infer the existence of $\kappa_{x,y}^\alpha$ such that $\p \kappa_{x,y}^\alpha=r_{x,y}^\alpha$. 
\end{itemize}
\end{proof}

\begin{remark}
Proposition~\ref{prop:uniqueness} is a particular case of the comprehensive Invariance Theorem~\ref{independence} proved in~\S\ref{sec:invariance-continuation}. Although it deviates slightly from the overall philosophy of the book, which would require to construct the continuation cocycle from a Morse problem on $X\times [0,1]$, the direct proof that we presented above has the merit of showing explicitly how the continuation cocycle is built up in a concrete example.
\end{remark}

In order to define a twisting cocycle in $R_* = C_*(\Omega X)$ we need the following lemma (see also~\cite[\S2.2.1]{BC07}).  

\begin{lemma}\label{lecture}  There exists a family of continuous maps
$$q_{x,y} : \overline{\call}(x,y)\ri \Omega X$$ such that:
\begin{enumerate}%[(i)]
\item If $|x|-|y|=1$, then for any $\lambda\in \call(x,y)= \overline{\call}(x,y)$ the homotopy class $[q_{x,y}(\lambda)]\in\pi_{1}(X)$ coincides with the one that is assigned to $\lambda$ in the lifted Morse complex, constructed using a fixed lift of the tree $\caly$ to the universal cover $\widetilde{X}$ %, which in turn yields a lift for each critical point 
(see~\S\ref{sec:lifted-homology}). 
\item For  any $(\lambda, \lambda')\in \overline{\call}(x,z)\times\overline{\call}(z,y)$ we have 
  $$q_{x,y}(\lambda,\lambda')\, =\, q_{x,z}(\lambda)\# q_{z,y}(\lambda'),$$
  where $\#$ stands for the concatenation of paths. 
\end{enumerate}
\end{lemma}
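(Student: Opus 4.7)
The strategy is to exploit the fact that all critical points of $f$ lie in the tree $\caly$, so that in the quotient $X/\caly$ every (possibly broken) Morse trajectory becomes a loop based at the collapse point $[\caly]$, for which concatenation of broken trajectories coincides tautologically with concatenation of loops. Since $\caly$ is a contractible subspace and a neighborhood deformation retract of $X$, the collapse map $p:X\to X/\caly$ is a based homotopy equivalence; I would fix once and for all a basepoint-preserving homotopy inverse $j:X/\caly\to X$ (so $j([\caly])=\star$), obtained from any deformation retraction of a tubular neighborhood of $\caly$ onto the basepoint.

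The first concrete step is to realize each element of $\overline{\call}(x,y)$ as a Moore path from $x$ to $y$ in a continuous way. For $\lambda\in\overline{\call}(x,y)$ I would define $\bar\lambda:[0,f(x)-f(y)]\to X$ by parametrization along $f$-values: on an unbroken trajectory, $f(\bar\lambda(s))=f(x)-s$; on a broken configuration $(\lambda_1,\ldots,\lambda_k)\in\call(x,z_1)\times\cdots\times\call(z_{k-1},y)$, $\bar\lambda$ is the concatenation $\bar\lambda_1\#\cdots\#\bar\lambda_k$, again of total length $f(x)-f(y)$. Then set
$$
q_{x,y}(\lambda)\,=\,j\circ p\circ\bar\lambda\;\in\;\Omega X.
$$
Since the endpoints and all intermediate critical values of $\bar\lambda$ lie in $\caly$, the composition $p\circ\bar\lambda$ is a Moore loop in $X/\caly$ based at $[\caly]$, and postcomposing with $j$ yields a Moore loop in $X$ based at $\star$.

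Both properties then follow formally. Property (2) is automatic: on the stratum $\overline{\call}(x,z)\times\overline{\call}(z,y)\subset\partial\overline{\call}(x,y)$, the identity $\overline{(\lambda,\lambda')}=\bar\lambda\#\overline{\lambda'}$ holds by construction, and pointwise postcomposition by $j\circ p$ commutes with Moore concatenation, so $q_{x,y}(\lambda,\lambda')=q_{x,z}(\lambda)\# q_{z,y}(\lambda')$. For property (1), when $|x|-|y|=1$, since $\gamma_x$ and $\gamma_y$ lie in $\caly$ they become constant loops at $[\caly]$ after applying $p$; hence $[p\circ\bar\lambda]=p_*[\gamma_x\#\lambda\#\gamma_y^{-1}]$ in $\pi_1(X/\caly,[\caly])$. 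Applying $j_*$ and using $j_*\circ p_*=\mathrm{id}$ on $\pi_1(X,\star)$, we conclude $[q_{x,y}(\lambda)]=[\gamma_x\#\lambda\#\gamma_y^{-1}]$, matching the class assigned in the lifted Morse complex.

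The main technical obstacle is the continuity of $\lambda\mapsto\bar\lambda$ across the boundary strata of $\overline{\call}(x,y)$: when a sequence of unbroken trajectories converges to a broken configuration in the manifold-with-corners sense of Theorem~\ref{corners1}, the $f$-value parametrizations must converge uniformly on $[0,f(x)-f(y)]$ to the concatenation of the pieces. This hinges on the standard uniform estimates for gradient trajectories passing close to a non-degenerate critical point; the choice of $f$-value parametrization is essential precisely because it absorbs the diverging ``dwelling time'' near intermediate critical points into the position of the path rather than into its parametrization speed, and so extends continuously to the boundary.
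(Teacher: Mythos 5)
Your proof is correct and takes essentially the same route as the paper: both define the evaluation by first parametrizing broken trajectories by $f$-values and then postcomposing with $\theta\circ p$ (your $j\circ p$), and both verify property (1) via the fact that $\theta\circ p$ (resp.\ $j\circ p$) acts as the identity on $\pi_1(X,\star)$. Your additional remarks on the continuity of $\lambda\mapsto\bar\lambda$ across boundary strata are a welcome point of diligence that the paper passes over silently, but they do not constitute a different argument.
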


\begin{proof} For $x,y\in \Crit(f)$ consider the natural map
$$\Gamma: \overline{\call}(x,y)\ri \calp_{x\ri y}X $$ which sends each broken orbit $\lambda$ to the path from $x$ to $y$ parametrised by the values of $f$. More precisely $\Gamma(\lambda) =\gamma :[0,f(x)-f(y)]\ri X$ is defined by 
$$\gamma(t) \, =\, \lambda \cap f^{-1}(f(x)-t).$$
We clearly have 
$$\Gamma(\lambda,\lambda')\, =\, \Gamma(\lambda)\#\Gamma(\lambda').$$
Consider now the projection 
$$
p:X\ri X/\caly,
$$ 
where $\caly$ is the chosen tree that connects the basepoint to the critical points.  We use this projection in order to avoid the conjugation with the paths $\gamma_x$ from~\S\ref{sec:lifted-homology}. % the preceding subsection
Since $\caly$ is contractible the projection $p$ is a homotopy equivalence; pick a homotopy inverse 
$$
\theta : (X/\caly,\star)\ri (X,\star).
$$ 
The homotopy between $\theta \circ p$ and $\Id$ can be assumed without loss of generality to preserve the basepoint $\star$.  We define $$q_{x,y} \, =\, \theta\circ p\circ \Gamma.$$
These maps clearly satisfy condition 2 since $\Gamma$ does and $\theta$ and $p$ act point-wise. Condition 1 is also fulfilled by construction, since $\theta\circ p$ is homotopic to the identity. Indeed, given $\lambda \in \overline{\call}(x,y)=\call(x,y)$ we denote $\gamma_x$, $\gamma_y$ the branches of $\caly$ between the root $\star$ and $x$, resp. $y$, and consider $\lambda_\star= \gamma_x\#\Gamma(\lambda)\#\gamma_y^{-1}\in \Omega X$. We then have 
$$
[q_{x,y}(\lambda)]\, =\,  [\theta \circ p\circ \Gamma(\lambda)] \, =\, [\theta \circ p (\lambda_\star)]\, =\, [\lambda_\star],  
$$ 
and $[\lambda_\star]$  is by  definition the homotopy class assigned to $\lambda$ in the lifted complex. 
\end{proof}

We are now in position to define the twisting cocycle. We denote 
$$
\Xi=(f,\xi,o, s_{x,y},\cY,\theta)
$$ 
the data consisting of the following objects, as above: 
\begin{itemize}
\item the Morse function $f$. 
\item the Morse-Smale negative pseudo-gradient vector field $\xi$. 
 \item the orientation $o = (o_x)_{x\in \crit(f)}$ of the unstable manifolds of the critical points of $f$. 
\item the representing chain system $(s_{x,y})$, depending in particular on the choice   of the orientation $o$. 
%of orientation of the unstable manifolds. 
\item the tree $\caly$ with root at the basepoint $\star$.
\item the map $\theta :(X/\caly, \star) \ri (X, \star)$ homotopy inverse to the projection $p:X\ri X/\caly$.  
\end{itemize}

\begin{definition}[twisting cocycle and twisted Morse complex] \qquad 

\emph{The Barraud-Cornea twisting cocycle $(m_{x,y})$ associated to $\Xi$} is defined by    
\begin{equation}\label{eq:twistingcocycleMorse} 
m_{x,y}\, =\, (q_{x,y})_*s_{x,y} \in C_{|x|-|y|-1}(\Omega X).
\end{equation} 
\emph{The twisted Morse complex with coefficients in a right DG-module $\calf$ over $C_*(\Omega X)$} is defined using equation~\eqref{diffDGmodule} and is denoted 
$$
C_*(X,\Xi;\calf). 
$$
\end{definition} 

The definition is sustained by the fact that the family $(m_{x,y})$ satisfies the MC equation~\eqref{MC}, cf.  Proposition~\ref{representingchain}~(2) and Lemma~\ref{lecture}~(2): 
\begin{align*}
  \partial m_{x,y} & = \partial (q_{x,y})_*s_{x,y}  =  (q_{x,y})_* \partial s_{x,y} = (q_{x,y})_*\left(\sum_{z} (-1)^{|x|-|z|}s_{x,z}\times s_{z,y}\right) \\
                   &= \sum_{z} (-1)^{|x|-|z|}(q_{x,y})_*(s_{x,z}\times s_{z,y}) \\
                   &= \sum_{z} (-1)^{|x|-|z|}(q_{x,z})_*s_{x,z}\cdot (q_{z,y})_*s_{z,y} \\
                   &= \sum_{z} (-1)^{|x|-|z|}m_{x,z}\cdot m_{z,y}. 
\end{align*}

We include in the notation of the twisted Morse complex the auxiliary data $\Xi$ in order to stress the dependence at chain level on all the choices constituting~$\Xi$. 

%We do not specifically include the choice of orientations of the unstable manifolds in the tuple $\Xi$ since it is implicit in the construction of the representing chain system $s_{x,y}$. 

Using Lemma~\ref{lecture}~(1) we immediately infer that the cocycle $(m_{x,y})$ is compatible with the lifted complex,  meaning that for $|x|=|y|+1$ the homology classes $\hat m_{x,y}$ of $m_{x,y}$ in $H_0(\Omega X)= \Z[\pi_1X]$ are the entries of the matrix of the lifted differential. 

\begin{remark}\label{rem:spectral-sequence-Morse}  Once the twisted Morse complex has been defined, we can realize it as the limit of a spectral sequence as in the algebraic section~\S\ref{sec:spectral-sequence}. According to Lemma~\ref{spectral} the second page of this spectral sequence is 
$$
E_{pq}^2\, =\, H_{p}(C_*(\widetilde{X}); H_q(\calf))\,:= \, H_p(X; H_q(\calf))
$$
i.e., the homology of $X$ with local coefficients in $H_q(\calf)$ as defined in~\S\ref{sec:local-coeff}.
\end{remark} 

\section[Manifolds with boundary]{DG-Morse homology for manifolds with boundary}\label{sec:DG-for-boundary}  In the case of a manifold with boundary $(X,\p X)$ we proceed as follows. We start by fixing   in a collar  neighborhood $V= [-\delta, 0]\times \p X$ of the boundary $\p X\equiv \{0\}\times \p X$ a vector field $\xi$  which is transverse to $\{0\}\times \p X$. We choose $f$ on $V$ to be strictly decreasing along the flow lines  of $\xi$ and then we extend it to a Morse function on $X$. Finally we extend $\xi$ on $X$ to a (negative) pseudo-gradient for $f$. See \cite[\S3.5]{Audin-Damian_English}  for a similar discussion on the definition of usual Morse homology for manifolds with boundary.  We choose the other elements needed to construct the enriched complex in a similar way and get a set of data $\Xi$ as in the closed case.  There is no  difference in the construction of the representing chain system: the gradient lines between critical points stay away from the boundary and thus the structure of the trajectory spaces $\overline\call(x,y)$ is the same.  Therefore we may  define the  enriched complex  $C_*(X;\Xi;\calf)$ analogously. However its homology will depend on the choice of  the direction of $\xi$ (inwards or outwards) along the   boundary: 

It is convenient to write $\p X$ as the disjoint union $\p_{-}X\sqcup \p_+ X$ of its components along which $\xi$ respectively points inwards and outwards; we may think of $X$ as a cobordism.  We use the notation 
$$H_* (X, \p_+ X; \calf)$$
for the homology of $C_*(X,\Xi;\calf)$, 
motivated by the fact that, in the case where $\calf$ is the trivial local system (viewed as DG-module), we get the usual singular homology $H_*(X,\p_+X)$ with integer coefficients, as shown in  \cite{Audin-Damian_English}. However, to justify the notation above, we still have to prove   that this homology only depends on the cobordism $(X,\p_-X,\p_+X)$ and on the DG-module $\calf$ over $C_*(\Omega X)$. This is the purpose of the next chapter \S\ref{sec:invariance-homology}. 

Finally note that, as in the case of closed manifolds (Remark \ref{rem:spectral-sequence-Morse}), we have a spectral sequence which converges to $H_*(X,\p_+ X;\calf)$ and whose second page is in this case $$E_{pq}^2\, =\, H_p(X, \p_+X; H_q(\calf)).$$

%%%%%%%%%%%%%%%%%%%%
\chapter[DG Morse homology: invariance]{Morse homology with DG-coefficients: invariance}\label{sec:invariance-homology} 
%%%%%%%%%%%%%%%%%%%%

The previous construction of the enriched Morse complex with coefficients in a $C_*(\Omega X)$-module $\calf$ depend on all the choices involved in the set of auxiliary data $\Xi=(f,\xi, o, s_{x,y},\cY,\theta)$. 
%They are the following (in their order of appearance): 
%\begin{itemize}
%\item the Morse function $f$. 
%\item the Morse-Smale negative pseudo-gradient vector field $\xi$. 
%\item the representing chain system $(s_{x,y})$, depending in particular on the choice of orientation of the unstable manifolds. 
%\item the tree $\caly$ with root at the basepoint $\star$.
%\item the map $\theta :(X/\caly, \star) \ri (X, \star)$ homotopy inverse to the projection $p:X\ri X/\caly$.  
%\end{itemize}
The goal of this chapter is to show that the homology of the enriched complex does not depend on any of these choices,  see Theorem~\ref{independence} below.  There is no difference between the closed case and the boundary case in the proofs below. %For readability we will write $\Xi$ for the tuple of auxiliary data $(f,\xi, s_{x,y}, \caly, \theta)$, and will denote accordingly $C_*(X,\Xi;\calf)$ the enriched Morse complex constructed using the auxiliary data $\Xi$. We do not specifically include the choice of orientations of the unstable manifolds in the tuple $\Xi$ since it is implicit in the construction of the representing chain system $s_{x,y}$. 

\section[Invariance for (lifted) Morse homology]{Invariance for the usual and for the lifted Morse homology} \label{sec:invariance-for-usual}

We recall in this section the proof of invariance in the classical setting of Morse homology, where the auxiliary data consists of $f$, $\xi$, the orientation of the unstable manifolds $o$, and --- in the case of lifted Morse homology --- the choice of lifts of the critical points of $f$ to the universal cover $\widetilde{X}$.

 Let $(f_0, \xi_0)$, $(f_1,\xi_1)$ be two Morse-Smale pairs on $X$  and fix orientations $o_0$ respectively $o_1$ of the corresponding unstable manifolds. The proof of the invariance of usual Morse homology, as discussed for example in~\cite[\S3.4]{Audin-Damian_English}, involves the choice of a homotopy $(f_t)_{t\in [-\epsilon, 1+\epsilon] }$ between $f_0$ and $f_1$, supposed to be stationary for $|t|\leq \epsilon$ and for $|t-1|\leq \epsilon$. Here $\epsilon>0$ is some small positive real number. One then defines a function $$F:[-\epsilon, 1+\epsilon] \times X\ri \R$$ by the formula $$F(t,x)\, = \, f_t(x)+g(t), $$
 where $g:[-\epsilon, 1+\epsilon] \ri \R$ has exactly two critical points (a maximum at $t=0$ and a minimum at $t=1$) and satisfies $g'(t)\ll 0$ for $t\in [\epsilon, 1-\epsilon]$, so that 
 $$
 \Crit(F)\, =\, \left(\{0\}\times \Crit(f_0)\right) \cup \left(\{1\}\times\Crit(f_1)\right).
 $$
We define a negative pseudo-gradient $\xi$ for $F$ by setting it to be equal to $\xi_0-\nabla g$ on $[-\epsilon, \epsilon]\times X$, equal to $\xi_1-\nabla g$ on $[1-\epsilon, 1+\epsilon]\times X$, and by interpolating between these two vector fields on the remaining part of the product space, where we have in particular $dF(\xi) <0$. By construction the Morse complex associated to $(F,\xi)$  satisfies  
$$
C_{k}(F)\, =\, C_{k-1}(f_0)\oplus C_k(f_1).
$$ 
 In order to define the differential we first have to orient the unstable manifolds of $(F,\xi)$. If $x\in \{0\}\times \Crit(f_0)$ then notice that at a point $q$ of a Morse neighborhood of $x$ in $[-\epsilon, 1+\epsilon]\times  X$ the tangent space $T_qW_F^{u}(x)$ is the product  $\R \times T_qW^{u}_{ f_0}(x)$ where the factor $\R$ stands for the tangent space to the interval $[-\epsilon, 1+\epsilon]$. We choose the natural orientation 
\begin{equation}\label{eq:orientation-produit}\ori\, W^{u}_{F}(x)\, =\, \left(\tfrac\partial{\partial t}, \ori \, W^{u}_{f_0}(x)\right).\end{equation}
For $x\in \{1\}\times \Crit(f_1)$ we have $W^u_F(x)=W^u_{f_1}(x)$ and we keep the same orientation  
\begin{equation}\label{eq:orientation-produit2}\ori\, W^{u}_{F}(x)\, =\,  \ori \, W^{u}_{f_1}(x).\end{equation}
It is easy to check (one may also apply the more general Lemma~\ref{orientation-difference} below) that  
  the Morse differential $\partial: C_{k+1}(F)\ri C_{k}(F)$ has the form
 $$\partial\, =\, \left( \begin{array}{rl} -\partial_{f_0}&0\\
 \Psi^F&\partial_{f_1}\end{array}\right).$$

The relation $\partial^2=0$ implies that $\Psi^F: C_*(f_0,\xi_0,o_0)\ri C_*(f_1,\xi_1,o_1)$ is a % degree 0
chain map. It has the following two properties: 
\begin{enumerate}
\item For $(f_0,\xi_0,o_0)=(f_1,\xi_1,o_1)$,  if we
  take $F(t,x)= f_0(x)+g(t)$ and $\xi(t,x)=\xi_0(x)- g'(t)\frac{\partial}{\partial t}$, we get
\begin{equation}
  \label{eq:usual-Id}\Psi^{F} =-\mathrm{Id}.
\end{equation}
Indeed one easily notices that for this choice of the pair $(F,\xi)$ the  gradient lines joining critical points of consecutive indices and lying on the slices $\{0\}\times X$ or $\{1\}\times X$  only depend on $t$: there is one of them from each critical point  $x_0=(0,x)\in \{0\}\times \Crit(f_0)$ to its correspondent $x_1=(1,x)\in \{1\}\times \Crit(f_0)$; we will denote it by  $c_x$.  With our  conventions $c_x$ is equipped with the negative sign orientation. Indeed, the orientation rule (\ref{eq:orientation-rule}) implies  here that  
  $$
  \left(\ori\, c_x, -\frac{\partial}{\partial t }, \ori\, W^u_F(x_1)\right)\, =\, \ori\, W^u_F(x_0),
  $$ 
  and using~\eqref{eq:orientation-produit} and~\eqref{eq:orientation-produit2} we get 
  $$
  \left(\ori\, c_x, -\frac{\partial}{\partial t }, \ori\, W^u_{f_0}(x)\right)\, =\, \left(\frac{\partial}{\partial t}, \ori\, W^u_{f_0}(x)\right),
  $$ which yields a negative sign for $c_x$. 
  
  We denote  the function $F$  above by $\Id $
  to emphasize that it corresponds to the trivial homotopy from $f_0$
  to itself. The discrepancy between the notation $\Id$ and the map $-\Id$ above will be resolved in~\S\ref{sec:invariance-enriched}, where the continuation cocycle will be defined with a minus sign (see~\eqref{eq:enriched-morphism}). 
 
 \item (See the third step in the proof of~\cite[Theorem~3.4.2 ]{Audin-Damian_English}) The chain maps $\Psi^F$, $\Psi^G$ and
  $\Psi^H$ produced by {\it arbitrary} homotopies from $f_0$ to $f_1$,
  from $f_1$ to $f_2$ respectively from $f_0$ to $f_2$ have the
  property that $\Psi^G\circ\Psi^F$ and $\Psi^H$ are chain homotopic, and in
  particular they are equal in homology.
\end{enumerate}
From these two properties it is obvious that the particular case $f_0=f_2$ and $H=\Id $ gives the invariance. 

This proof adapts to the case of lifted homology, showing that the latter is also invariant with respect to the choice of the pair $(f, \xi,o)$. Actually, the lifted complex also depends on a lift $l$ of the critical points of the Morse function to the universal cover, therefore one also needs to prove the independence of the homology with respect to $l$. To this end one considers the lifted complex on the universal cover of $[-\epsilon, 1+\epsilon]\times X$, denoted $[-\epsilon, 1+\epsilon]\times \widetilde{X}$, associated to $(F,\xi,o_F, l_0\cup l_1)$,  where $o_F$ is the orientation of the unstable manifolds given by~\eqref{eq:orientation-produit} and~\eqref{eq:orientation-produit2}, and $l_0\cup l_1$ are the obvious lifts of the critical  points of $F$ to the universal cover given by the lifts $l_i$ for $f_i$.  This lifted complex gives rise to a morphism of complexes 
$$\widetilde{\Psi}^F: \widetilde{C}_*(f_0,\xi_0,o_0,l_0)\ri \widetilde{C}_*(f_1,\xi_1,o_1,l_1)$$
which also satisfies  properties 1 and 2 above by an analogous argument.   Based on these two properties, the invariance of  lifted homology is straightforward. 

 \rmk\label{rmk:op1} If we change all the orientations of the unstable manifolds into the opposite ones, the orientation rule \eqref{eq:orientation-rule} implies that  the (lifted)  Morse complex stays unchanged $C_*(f,\xi,o,l)=C_*(f,\xi,-o,l)$. However,    the invariance morphism 
$$\widetilde{\Psi}^{F^{\mathrm{op}}}: \widetilde{C}_*(f_0,\xi_0,o_0,l_0)\ri \widetilde{C}_*(f_1,\xi_1,-o_1,l_1)$$
has an opposite sign 

$$\widetilde{\Psi}^{F^{\mathrm{op}}}\, =\, -\widetilde{\Psi}^{F}.$$
Indeed, the orientation of the unstable manifold $W^u_F$ changes  in~\eqref{eq:orientation-produit2}, but remains the same  in \eqref{eq:orientation-produit}, so that the rule  \eqref{eq:orientation-rule} yields  opposite orientations for the trajectories which define $\widetilde{\Psi}^{F^\mathrm{op}}$. 

We get the same result when we change $o_0$ into $-o_0$ while leaving $o_1$ unchanged. 
\kmr 

\section{Invariance for DG  Morse homology}\label{sec:invariance-enriched} 

%We now turn to the case of enriched homology. 

Take two sets of data $\Xi_0=(f_0, \xi_0,  o_0,   s_{x,y}^0, \caly_0, \theta_0)$, $\Xi_1=(f_1, \xi_1,  o_1, s_{x,y}^1, \caly_1, \theta_1)$, and 
define the associated 
twisted complexes with coefficients in the same DG-module $\calf$ over $C_*(\Omega X)$.  We assume that the root of both trees is the basepoint $\star$.  Take a homotopy from $f_0$ to $f_1$ and define a function $$F: [-\epsilon, 1+\epsilon]\times X\ri \R$$ and a pseudo-gradient $\xi$ as in \S\ref{sec:invariance-for-usual}. Then pick a homotopy  $(\caly_t)_{t\in [0, 1]}$ between $\caly_0$ and $\caly_1$ consisting of trees with common root $\star$ and put 
$$\caly =\bigcup_{t\in [0, 1]}(\{t\}\times \caly_t)\, \subset \, [0, 1]\times X.$$  
Next, choose a homotopy inverse $\Theta$   of the canonical projection  $$p: [0, 1]\times X\ri \bigcup_{t\in [0,1]} \{t\}\times X/\caly_t,$$
that coincides with $\theta_0$ on $\{0\}\times X$ and with $\theta_1$ on $\{1\}\times X$. 

 Finally, orient the Latour cells $\overline{W}^u_F(x)$ using~(\ref{eq:orientation-produit}--\ref{eq:orientation-produit2}) and choose a representing chain system  $(s_{x,y}^F)$ compatible with (the fundamental classes given by)  these orientations as indicated in the following lemma. 
 
 \begin{lemma}\label{orientation-difference} (i) With our orientation conventions we have 
 $$\ori\, \overline{\call}_F(x,y) \, =\, (-1)^{|x|-|y|} \ori\, \overline{\call}_{f_0}(x,y)$$
 for any  $x,y\in\{0\}\times\Crit(f_0)$, and 
 $$\ori\, \overline{\call}_F(x,y) \, =\,  \ori\, \overline{\call}_{f_1}(x,y)$$
 for any  $x,y\in\{1\}\times\Crit(f_1)$.\\
(ii) There exists a representing chain system of $F$ associated to these orientations such that 
$$s_{x,y}^{F,0} \, =\, (-1)^{|x|-|y|} s_{x,y}^{0}\qquad\mbox{and} \qquad s_{x,y}^{F,1} \, =\,  s_{x,y}^{1},$$
where $s_{x,y}^{F,i}$ is the chain  associated to the critical points $x,y \in \{i\}\times \Crit(f_i)$ for $i=0,1$. 
\end{lemma}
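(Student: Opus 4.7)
The plan is to deduce (i) from the orientation rule~\eqref{eq:orientation-rule} applied on each pure slice. Since $g'(0)=g'(1)=0$, the pseudo-gradient $\xi$ of $F$ restricts to $\xi_i$ on $\{i\}\times X$; both slices are therefore flow-invariant and, as smooth manifolds, $\overline{\call}_F(x_i,y_i)=\overline{\call}_{f_i}(x,y)$ for any $x,y\in\Crit(f_i)$. At a point $p$ on such a trajectory, the $F$-version of~\eqref{eq:orientation-rule} reads
\[
\left(\ori\,\overline{\call}_F(x_i,y_i),\,-\xi_i(p),\,\ori\,W^u_F(y_i)\right)=\ori\,W^u_F(x_i).
\]
For $i=1$, the convention~\eqref{eq:orientation-produit2} makes this identity coincide with the $f_1$-version of the rule, so $\ori\,\overline{\call}_F(x_1,y_1)=\ori\,\overline{\call}_{f_1}(x,y)$. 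For $i=0$, the convention~\eqref{eq:orientation-produit} inserts a leading $\tfrac{\partial}{\partial t}$ factor in both $\ori\,W^u_F(x_0)$ and $\ori\,W^u_F(y_0)$; after substitution, moving the $\tfrac{\partial}{\partial t}$ arising from $\ori\,W^u_F(y_0)$ past $\ori\,\overline{\call}_F$ (of dimension $d=|x|-|y|-1$) and $-\xi_0(p)$ costs a cyclic permutation of $d+2$ vectors of sign $(-1)^{d+1}$; comparing with the $f_0$-version of the rule yields $\ori\,\overline{\call}_F(x_0,y_0) = (-1)^{|x|-|y|}\,\ori\,\overline{\call}_{f_0}(x,y)$.

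For (ii), run the inductive construction of Proposition~\ref{representingchain} on $\ell=|x|_F-|y|_F-1$, prescribing at each step the values on pure slices,
\[
s^{F,0}_{x,y} := (-1)^{|x|-|y|}\,s^{0}_{x,y},\qquad s^{F,1}_{x,y}:=s^{1}_{x,y},
\]
and constructing $s^F$ freely for mixed pairs $(x_0,y_1)$ as in \emph{loc.\,cit.} By (i), the prescribed chains represent the correct $F$-fundamental classes. Moreover, the MC relation~\eqref{eq:representingchain} for $F$ restricted to a pure slice is \emph{equivalent} to the MC relation for $f_i$: by flow-invariance, every intermediate critical point in a pure-slice decomposition must itself lie on the same slice; the index shift $|x_0|_F-|z_0|_F=|x|-|z|$ is uniform; substituting the prescription and factoring $(-1)^{|x|-|y|}$ through the $F$-MC relation reproduces the $f_0$-MC relation verified by $s^0$ (the $i=1$ case is identical, without signs). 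Hence the prescribed chains satisfy the required compatibility automatically.

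Only the mixed case requires genuine inductive work: for $(x_0,y_1)$, the prescribed boundary
\[
\sum_z (-1)^{|x_0|_F-|z|_F}\,s^F_{x_0,z}\times s^F_{z,y_1}\;\in\; C_{|x_0|_F-|y_1|_F-2}(\partial\overline{\call}_F(x_0,y_1)),
\]
where $z$ ranges over intermediate critical points at $t=0$ or $t=1$, involves only chains already defined by induction; the same Leibniz computation as in the proof of Proposition~\ref{representingchain}, combined with the MC relations at lower levels of both pure and mixed types, shows that it is a cycle representing the fundamental class of $\partial\overline{\call}_F(x_0,y_1)$, and a fundamental class representative of $\overline{\call}_F(x_0,y_1)$ can therefore be corrected by a boundary to produce $s^F_{x_0,y_1}$. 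The main obstacle is the sign bookkeeping in (i); once it is in place, the prescription in (ii) is essentially forced, being the unique rescaling of $s^0$ that both represents the $F$-fundamental class and turns the $f_0$-MC equation into the $F$-MC equation on the pure $t=0$ slice.
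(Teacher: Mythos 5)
Your proof is correct and follows essentially the same route as the paper's: part (i) is verified by inserting the $\tfrac{\partial}{\partial t}$ factor from~\eqref{eq:orientation-produit} into the $F$-version of the orientation rule~\eqref{eq:orientation-rule} and commuting it past $|x|-|y|$ vectors, and part (ii) runs the induction from Proposition~\ref{representingchain} with the pure-slice chains prescribed in advance and the mixed chains constructed freely. The only difference is that you spell out the verification that the prescribed pure-slice chains satisfy the $F$-Maurer-Cartan relation (via the index shift $|x|_F=|x|+1$ on the $t=0$ slice and the flow-confinement of broken pure-slice trajectories), a step the paper leaves implicit with "the first part of the lemma ensures that we may choose..."; this added detail is correct and harmless.
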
 

\begin{proof}(i)  Consider first the case $x,y\in\{0\}\times\Crit(f_0)$. Recall that  $\overline{\call}_F(x,y)$ is  oriented as the intersection between the oriented manifold $\overline{W}_F^{u}(x)$ and the co-oriented manifold $S_F^{s}(y)$ (see~\eqref{eq:multiliaisons-intersection} in~\S\ref{subsec:Latour-cells}). 
Let us compare the orientations of  $\overline{\call}_F(x,y)$  and  $\overline{\call}_{f_0}(x,y)$ at some point $p\in \call(x,y)$ lying on a gradient line with gradient vector $\xi(p)$. Following our orientation convention~\eqref{eq:transverse-orientation} from~\S\ref{subsec: orientation-conventions} we have
$$\left(\ori \,  \overline{\call}_F(x,y), \coori \, S^{s}_{F}(y)\right)\, =\, \ori\,  \overline{W}^{u}_F(x),$$
which becomes using~\eqref{eq:coorientation-sphere-stable} 
\begin{equation*}
 \left(\ori \,  \overline{\call}_F(x,y), -\xi(p), \coori \, \overline{W}^{s}_{F}(y)\right)\, =\, \ori\,  \overline{W}^{u}_F(x)\end{equation*}
and then 
\begin{equation}
\left(\ori \,  \overline{\call}_F(x,y), -\xi(p), \ori \, \overline{W}^{u}_{F}(y)\right)\, =\, \ori\,  \overline{W}^{u}_F(x),\label{eq:adhoc}
\end{equation}

as stated in the orientation rule~\eqref{eq:orientation-rule} 

Now using~\eqref{eq:orientation-produit} we infer 
$$\left(\ori \,  \overline{\call}_F(x,y), -\xi(p), \tfrac\partial{\partial t}, \ori \, \overline{W}^{u}_{f_0}(y)\right)\, =\, \left(\tfrac\partial{\partial t}, \ori\,  \overline{W}^{u}_{f_0}(x)\right),$$
which implies 
$$(-1)^{|x|-|y|} \left(\tfrac\partial{\partial t}, \ori \,  \overline{\call}_F(x,y), -\xi(p), \ori \, \overline{W}^{u}_{f_0}(y)\right)\, =\, \left(\tfrac\partial{\partial t}, \ori\,  \overline{W}^{u}_{f_0}(x)\right)$$
and therefore 
$$(-1)^{|x|-|y|} \left( \ori \,  \overline{\call}_F(x,y), -\xi(p), \ori \, \overline{W}^{u}_{f_0}(y)\right)\, =\,  \ori\,  \overline{W}^{u}_{f_0}(x).$$
On the other hand the relation~\eqref{eq:orientation-rule} for $f_0$ instead of $F$ writes 
$$\left( \ori \,  \overline{\call}_{f_0}(x,y), -\xi(p), \ori \, \overline{W}^{u}_{f_0}(y)\right)\, =\,  \ori\,  \overline{W}^{u}_{f_0}(x).$$
Comparing with the above we get 
$$(-1)^{|x|-|y|} \ori \,  \overline{\call}_F(x,y)\, =\, \ori \,  \overline{\call}_{f_0}(x,y),$$ which proves the first relation between the orientations.% since the chain $s_{x,y}$ was chosen as a representative of the fundamental class of $(\overline{\call}(x,y), \partial(\overline{\call}(x,y)$). 

For the proof of the second relation consider two critical points $x,y\in \{1\}\times \Crit(f_1)$ and use an analogous argument. The difference is that in this case 
$$\overline{W}_F^{u}(x)\, =\, \overline{W}_{f_1}^{u}(x)$$ as oriented manifolds (and the same for $y$), so the vector $\tfrac\partial{\partial t}$ does not show up in the orientation computation which therefore leads to 
$$ \ori \,  \overline{\call}_F(x,y)\, =\, \ori \,  \overline{\call}_{f_1}(x,y).$$
This finishes the proof of (i).\\
(ii) Let us point out that  if  $X$ has a non-empty boundary, $(F,\xi)$ is defined on the manifold with boundary and corners   $[-\epsilon,1+\epsilon]\times X$. However  the gradient lines between critical points are far from its boundary $\p([-\epsilon, 1+\epsilon]\times X)$ and therefore we have the same structure for the trajectory spaces $\overline{\call}_F(x,y)$.  We may therefore construct  a representing chain system $(s_{x,y}^F)$ for the Morse moduli spaces of $(F,\xi)$ using the inductive method  of  Proposition~\ref{representingchain}. We do it  in a slightly different way:  we construct it first  for any pair of  critical points $x,y$ belonging to the same slice $\{i\}\times X$ for $i=0,1$. The first part of the lemma ensures that we may choose 
$$s_{x,y}^{F,0} \, =\, (-1)^{|x|-|y|} s_{x,y}^{0}\qquad\mbox{and}\qquad s_{x,y}^{F,1} \, =\,  s_{x,y}^{1}$$
for $x,y \in \{0\}\times \Crit(f_0)$, respectively $x,y \in \{1\}\times \Crit(f_1)$, in the construction from Proposition~\ref{representingchain} which produces the representing chain system. Then, for $x\in \{0\} \times X$ and $y\in \{1\}\times X$ critical points of $F$, the same inductive method enables us to complete the construction of the representing chain system $(s^F_{x,y})$. The proof of our lemma is now complete.
\end{proof}

%Denote by $\Xi= (F,\xi, s_{x,y}^F,\caly,\Theta)$. We prove 

\begin{proposition}\label{invariance}
The continuation data $\Xi= (F,\xi,  o_F, s_{x,y}^F,\caly,\Theta)$ defines a quasi-isomorphism of complexes 
$$\Psi^{\Xi}: C_*(X,\Xi_0;\calf)\ri C_*(X, \Xi_1;\calf).$$

 %(ii) Moreover when $(f_0,\xi_0, \caly_0,\theta_0)= (f_1,\xi_1, \caly_1,\theta_1)$ for $\Xi=\Id$ (meaning that $F=\Id$, $\caly_t$  and the homotopies $\theta_t=\Theta|_{\{t\}\times X}$ %are constant) we have that $\Psi^\Xi$ is the identity.
\end{proposition}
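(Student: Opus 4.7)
The plan is to apply the construction of Chapter~\ref{section:Morse} directly to the Morse data $(F,\xi)$ on the manifold with boundary $[-\epsilon,1+\epsilon]\times X$, using the tree $\cY$ and the homotopy inverse $\Theta$ as collapsing data. This yields a representing chain system $(s^F_{x,y})$ (chosen as in Lemma~\ref{orientation-difference}) and hence a Barraud--Cornea twisting cocycle $\mathfrak{m}^F=(m^F_{x,y})$ for the critical points of $F$, with values in $C_*(\Omega X)$ after composing the evaluation into $\Omega([-\epsilon,1+\epsilon]\times X)$ with the (homotopy equivalent) projection to $X$. The continuation chain map $\Psi^\Xi$ will be extracted from the ``off-diagonal'' part of this single, larger twisting cocycle.

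First, I will decompose $\mathfrak{m}^F$ according to the block structure of $\crit(F)=(\{0\}\times\crit(f_0))\sqcup(\{1\}\times\crit(f_1))$. Because $g'(t)\ll 0$ on $[\epsilon,1-\epsilon]$, no trajectory of $\xi$ can reach a critical point in $\{0\}\times\crit(f_0)$ from one in $\{1\}\times\crit(f_1)$, so $m^F_{y_1,x_0}=0$ for any such pair. For the diagonal blocks, Lemma~\ref{orientation-difference} together with the inductive construction in Proposition~\ref{representingchain} lets us arrange $m^F_{x_0,y_0}=(-1)^{|x|-|y|}m^0_{x,y}$ and $m^F_{x_1,y_1}=m^1_{x,y}$, so that the diagonal blocks recover (up to an overall index shift and a sign) the Barraud--Cornea cocycles of $\Xi_0$ and $\Xi_1$. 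The remaining off-diagonal entries $\nu_{x_0,y_1}:=m^F_{x_0,y_1}\in C_{|x|-|y|}(\Omega X)$ give the continuation cocycle.

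Second, I will read off the Maurer--Cartan equation~\eqref{MC} satisfied by $\mathfrak{m}^F$ block by block. The diagonal parts reduce (after matching signs using the $(-1)^{|x|-|y|}$ from Lemma~\ref{orientation-difference}) to the already known MC equations for $\mathfrak{m}^0$ and $\mathfrak{m}^1$. The mixed part, coming from broken $F$-trajectories that pass through an intermediate critical point lying either on $\{0\}\times X$ or on $\{1\}\times X$, yields exactly
\[
\partial\nu_{x_0,y_1}\,=\,\sum_{z_0}m^0_{x_0,z_0}\cdot\nu_{z_0,y_1}+\sum_{z_1}(-1)^{|x_0|-|z_1|-1}\nu_{x_0,z_1}\cdot m^1_{z_1,y_1},
\]
which is Equation~\eqref{eq:DGcont}. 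By Proposition~\ref{prop:continuation-DG}, the associated map
\[
\Psi^\Xi(\alpha\otimes x_0)\,=\,-\sum_{y_1}\alpha\cdot\nu_{x_0,y_1}\otimes y_1
\]
(the overall sign chosen to match the convention~\eqref{eq:usual-Id}) is then a chain map $C_*(X,\Xi_0;\cF)\to C_*(X,\Xi_1;\cF)$.

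Finally, to promote $\Psi^\Xi$ to a quasi-isomorphism, I will invoke the criterion of Proposition~\ref{functoriality2}: it suffices to check that the induced morphism $\widetilde{\Psi^\Xi}$ between the lifted complexes (Definition~\ref{def:algebraic_lifted_complex}) is a quasi-isomorphism. The entries of $\widetilde{\Psi^\Xi}$ in $H_0(C_*(\Omega X))=\Z[\pi_1(X)]$ are, by construction, the algebraic counts of $F$-trajectories of relative Morse index zero between a critical point in $\{0\}\times\crit(f_0)$ and one in $\{1\}\times\crit(f_1)$, read through the tree $\cY$; as explained in \S\ref{sec:lifted-homology} this is precisely the classical continuation map on the lifted Morse complex, which is a quasi-isomorphism by the argument recalled in \S\ref{sec:invariance-for-usual}. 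The bulk of the technical work — and the main obstacle — is the sign bookkeeping: keeping consistent track of the $(-1)^{|x|-|y|}$ shift between $F$-orientations and $f_0$-orientations from Lemma~\ref{orientation-difference}, of the degree shift due to the extra $[-\epsilon,1+\epsilon]$-factor in the unstable manifolds of critical points of $F$ lying over $\{0\}$, and of the overall sign in the definition of $\Psi^\Xi$, so that the block decomposition of the MC equation for $\mathfrak{m}^F$ exactly matches the continuation equation~\eqref{eq:DGcont}.
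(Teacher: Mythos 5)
Your proposal is correct and follows essentially the same route as the paper: both rely on Lemma~\ref{orientation-difference} to normalize the representing chain system on the diagonal blocks of $\crit(F)$, then extract from the resulting relation an equation of type~\eqref{eq:DGcont} (via the evaluation $\pi\circ\Theta\circ p\circ\Gamma^F$ into $\Omega X$), and finally invoke Proposition~\ref{functoriality2} together with the invariance of the lifted Morse complex from \S\ref{sec:invariance-for-usual} to promote the chain map to a quasi-isomorphism. The only difference is expository: you package the continuation cocycle as the off-diagonal block of a single Barraud--Cornea cocycle $\mathfrak{m}^F$ for $(F,\xi)$, whereas the paper derives the relation~\eqref{eq:enriched-morphism} directly from the chain-level identity~\eqref{eq:newinvarianceforchains}, but the sign bookkeeping (the $(-1)^{|x|-|y|}$ from the orientation comparison and the $+1$ index shift on the $\{0\}$-slice cancelling to give $(-1)^{|x_0|-|z_1|-1}$) and the role of Lemma~\ref{lectureforinvariance} in matching against $-\widetilde\Psi^F$ are identical.
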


\begin{remark} \label{rmk:on_invariance}
Proposition~\ref{invariance} will be superseded by the Invariance Theorem~\ref{independence} from the next section, which states in particular that $\Psi^\Xi$ is a chain homotopy equivalence whose chain homotopy type is determined by $\Xi_0$ and $\Xi_1$. We find it worthwhile to state and prove Proposition~\ref{invariance} separately for two reasons: it gives an immediate flavor of invariance, and its proof is considerably simpler than that of Theorem~\ref{independence}. This simplification is made possible by the use of invariance for lifted Morse homology. The proof of Theorem~\ref{independence} circumvents the use of lifted Morse homology, it follows a more standard pattern in Morse-Floer theory, but it is also more involved.   
\end{remark}

\begin{proof}[Proof of Proposition~\ref{invariance}] We use the algebraic recipe from Proposition~\ref{functoriality2} to define the morphism $\Psi^{\Xi}$. Recall that 
$$\Crit(F)\, =\, \left(\{0\}\times\Crit(f_0)\right) \cup \left(\{1\}\times\Crit(f_1)\right).$$
For a point $x\in\{i\}\times\Crit(f_i)$, $i=0,1$, we denote by $|x|$ the Morse index of $x$ as a critical point of $f_i$. We will use the following notation:
\begin{itemize}
\item For $x\in \{0\}\times\Crit(f_0)$ and
  $y\in \{1\}\times \Crit(f_1)$, the chain
  $s^F_{x,y}\in C_{|x|-|y|}( \overline \call_F(x,y))$ will be denoted $\sigma_{x,y}$,
\item For $x, y\in \{i\}\times \Crit(f_i)$ for the same $i=0,1$, the chain  $s^{F}_{x,y}\in C_{|x|-|y|-1}( \overline \call_F(x,y))$ will be denoted $s^{F,i}_{x,y}$, as in the previous lemma.
\end{itemize}
With this notation, 
% and denote by $\sigma_{x,y}$ the chain $$s^F_{x,y}\in C_{|x|-|y|}( \overline \call_F(x,y))$$ in the case where $x\in \{0\}\times\Crit(f_0)$ and $y\in \{1\}\times  \Crit(f_1)$. We use here the Morse indices of $f_0$ and $f_1$ for $x$ resp. $y$; this is the reason why this chain is in degree $|x|-|y|$ instead of $|x|-|y|-1$. When $x,y\in \{i\} \times \Crit(f_i)$ for the same  $i=0$ or $i=1$ we use the notation $s^{F,i}_{x,y}\in C_{|x|-|y|-1}( \overline \call_F(x,y))$ for $s^F_{x,y}$. The relation
(\ref{eq:representingchain})  gives in particular %for $\sigma_{x,y}$ writes as follows: 
\begin{equation}\label{eq:invarianceforchains} 
\partial \sigma_{x,y}=\sum _{z\in \Crit(f_0)}(-1)^{|x|-|z|}s^{F,0}_{x,z}\times\sigma_{z,y}-\sum _{w\in \Crit(f_1)}(-1)^{|x|-|w|}\sigma_{x,w} \times s^{F,1}_{w,y}.
\end{equation} 
The negative sign between the sums in the right hand side comes from the fact that the indices  of $x$ and $z$ as critical points of $F$ are respectively $|x|+1$ and $|z|+1$, whereas the index of $w$ is $|w|$. 
Now we apply item (ii) of Lemma~\ref{orientation-difference}  to ~\eqref{eq:invarianceforchains} which becomes
\begin{equation}\label{eq:newinvarianceforchains} 
\partial \sigma_{x,y}=\sum _{z\in \Crit(f_0)}s^{0}_{x,z}\times\sigma_{z,y}-\sum _{w\in \Crit(f_1)}(-1)^{|x|-|w|}\sigma_{x,w} \times s^{1}_{w,y}.
\end{equation} 

This relation looks very much like relation~\eqref{eq:DGcont} in Proposition~\ref{prop:continuation-DG} (or, equivalently, relation~\eqref{MC-invariance} from Proposition~\ref{functoriality2}) --  the algebraic Maurer-Cartan equation which enables one to define a morphism between twisted complexes. In order to precisely get this relation we need to convert~\eqref{eq:newinvarianceforchains} into a relation on $\Omega X$. To this end, we follow the line of proof of Lemma~\ref{lecture} and construct a continuous map $q_{x,y} : \overline\call_F(x,y)\ri \Omega X$ as follows. 

We first define a map 
$$\Gamma^F:  \overline\call_F(x,y)\ri \calp_{x,y}([0, 1]\times X)$$ as in the proof of Lemma~\ref{lecture}, by parametrizing each gradient trajectory as a Moore path by the values of $F$. We then recall the projection 
$$p: [0, 1] \times X\ri \bigcup_{t\in [0, 1]}\{t\} \times X/\caly_t$$
which collapses the trees $\caly_t$ to the basepoint in their respective slices $\{t\}\times X$, and the choice of a homotopy inverse for $p$ denoted 
$$\Theta : \bigcup_{t\in [0, 1]}\{t\} \times X/\caly_t\ri [0, 1] \times X.$$
The map $\Theta$ can --- and will --- be chosen such that it sends every slice $\left(\{t\}\times X/\caly_t, \{t\}\times  \{\star\}\right)$ onto $\left(\{t\}\times X, \{t\}\times \{\star\}\right)$
(all the trees   $\caly_t$, $t\in [0,1]$ are assumed to have the same root $\star$ in $X$). Finally, we consider the projection 
$$\pi : [0, 1] \times X\ri X, $$
and define the map  $q_{x,y}: \overline\call_F(x,y)\ri \Omega X $ by 
\begin{equation}\label{eq:invariancelecture}
q_{x,y} \, =\, \pi\circ\Theta\circ p\circ \Gamma^F.\end{equation} 

Note that $\caly$ is contractible and therefore admits a lift to the universal cover $[0, 1] \times \widetilde{X}$. A fixed lift defines in particular a lift for the critical points of $F$ and therefore a lifted complex $\widetilde{C}_*(F,\xi)$. By construction we have (compare to Lemma~\ref{lecture}):

\begin{lemma}\label{lectureforinvariance} The maps $q_{x,y}$ satisfy  the following properties:
  \begin{enumerate}%[(i)]
  \item If $x\in  \mbox{ Crit}(f_0)$ and $y\in \Crit(f_1)$ have the same index then, for any $\lambda\in \call_F(x,y)$, the homotopy class $g=[q_{x,y}(\lambda)]$ in $\pi_1(X)$ is exactly the one assigned to $\lambda$ in the lifted Morse complex $\widetilde{C}_*(F,\xi)$. 
  \item For any $(\lambda,\lambda')\in \overline{\call}_F(x,z)\times \overline\call_F(z,y)$ we have 
    $$q_{x,y}(\lambda,\lambda')\, =\, q_{x,z}(\lambda)\# q_{z,y}(\lambda').$$
    
\item  If $x,y\in \{i\} \times \Crit(f_i)$ for the same  $i=0$ or $i=1$, then $q_{x,y}$ is exactly the map constructed in Lemma~\ref{lecture} for $(f_i,\xi_i)$. \qed
\end{enumerate}
\end{lemma}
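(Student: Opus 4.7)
I would prove the three items in increasing order of difficulty: (2), (3), (1). Item (2) is essentially formal. By construction, $\Gamma^F$ parametrizes a (broken) trajectory by its $F$-values, so reading off the definition gives $\Gamma^F(\lambda,\lambda')=\Gamma^F(\lambda)\#\Gamma^F(\lambda')$ for $(\lambda,\lambda')\in\overline\call_F(x,z)\times\overline\call_F(z,y)$. The maps $p$, $\Theta$, $\pi$ are all applied pointwise (they do not reparametrize), and therefore respect concatenation of paths. Applying them in turn to the identity above yields the required multiplicativity of $q_{x,y}=\pi\circ\Theta\circ p\circ\Gamma^F$.

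For item (3), the key geometric observation is that if $x,y\in\{i\}\times\Crit(f_i)$ for a fixed $i\in\{0,1\}$, then every trajectory $\lambda\in\overline\call_F(x,y)$ lies entirely in the slice $\{i\}\times X$. Indeed, writing $\lambda(s)=(t(s),w(s))$, the component $t(s)$ satisfies $t'(s)=-g'(t(s))$; since $g$ has an isolated maximum at $0$ and isolated minimum at $1$ and $g'\ll0$ on $[\epsilon,1-\epsilon]$, any excursion of $t$ away from $i$ is strictly monotone and cannot return. Hence $t\equiv i$, and $\lambda$ is a $\xi_i$-trajectory in the slice. On this slice $F=f_i+g(i)$, so $\Gamma^F(\lambda)(t)=(i,\Gamma^{f_i}(\lambda)(t))$; the slice-preserving property of $\Theta$ (with $\Theta|_{\{i\}\times X/\caly_i}=(i,\theta_i)$) together with $\pi$ dropping the first coordinate gives $q_{x,y}(\lambda)=\theta_i\circ p_i\circ\Gamma^{f_i}(\lambda)$, which is precisely the map from Lemma~\ref{lecture} for $(f_i,\xi_i)$. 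The case of broken trajectories follows from (2).

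For item (1), fix $\lambda\in\call_F(x,y)$ with $x\in\{0\}\times\Crit(f_0)$, $y\in\{1\}\times\Crit(f_1)$ of equal index. Choose a path $\gamma_x$ from $(0,\star)$ to $x$ in $\{0\}\times\caly_0$ and a path $\gamma_y$ from $(0,\star)$ to $y$ obtained by concatenating the basepoint section $t\mapsto(t,\star)$ with a path in $\{1\}\times\caly_1$. Then $L:=\gamma_x\#\Gamma^F(\lambda)\#\gamma_y^{-1}$ is a based loop in $[0,1]\times X$, and by definition of the lifted Morse complex (applied to $F$ on $[0,1]\times X$ with the fixed lift of $\caly$), the homotopy class assigned to $\lambda$ is the image of $[L]\in\pi_1([0,1]\times X,(0,\star))$ under the canonical isomorphism $\pi_*\colon \pi_1([0,1]\times X)\xrightarrow{\cong}\pi_1(X)$, i.e.\ $[\pi\circ L]$. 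Now $\Theta$ can be, and is, chosen so that the homotopy from $\Theta\circ p$ to $\id_{[0,1]\times X}$ preserves the basepoint section $[0,1]\times\{\star\}$; postcomposing with $\pi$, we deduce $[\pi\circ L]=[\pi\circ\Theta\circ p\circ L]$ in $\pi_1(X,\star)$. Using (2) to expand the concatenation, the first and third factors $\pi\circ\Theta\circ p\circ\gamma_x$ and $\pi\circ\Theta\circ p\circ\gamma_y^{-1}$ are both constant at $\star$: $p$ collapses $\{0\}\times\caly_0$ to the single point $(0,[\star]_0)$ and sends $\gamma_y$ to a path that lies pointwise in the basepoint section, which $\Theta$ then sends to the basepoint section and $\pi$ projects to $\star$. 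The surviving middle factor is exactly $q_{x,y}(\lambda)$, giving $[q_{x,y}(\lambda)]=[\pi\circ L]$, as required.

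The main obstacle will be the careful bookkeeping in item (1): arranging once and for all that the homotopy inverse $\Theta$ and the homotopy $\Theta\circ p\simeq\id$ are compatible with the basepoint section $[0,1]\times\{\star\}$ (which is possible because each $\caly_t$ is contractible and contains $\star$), and verifying that under $\pi\circ\Theta\circ p$ the tree portions $\gamma_x,\gamma_y$ genuinely collapse to the constant loop at $\star$ so that no spurious homotopy class survives.
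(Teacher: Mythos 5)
Your proof is correct and fills in exactly the details the paper treats as immediate ("By construction we have..."); item (2) is the pointwise-concatenation argument, item (3) uses the observation that trajectories between two critical points in the same slice stay in that slice (forced by the sign of $-g'$ and the fact that $t=0$ is a repeller, $t=1$ an attractor for $t'=-g'(t)$), and item (1) carries out the bookkeeping with the tree $\caly$ and the slice-preserving choice of $\Theta$. One small remark: in item (1) you cite "(2)" to justify that $\pi\circ\Theta\circ p$ distributes over the concatenation $\gamma_x\#\Gamma^F(\lambda)\#\gamma_y^{-1}$; strictly speaking item (2) is a statement about $q_{x,y}$ on broken trajectories, whereas what you need is the more elementary fact that $p$, $\Theta$, $\pi$ act pointwise and hence respect arbitrary Moore-path concatenations — the same mechanism, but not literally item (2).
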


  Now denote $\nu_{x,y}= -q_{x,y,*}(\sigma_{x,y})$ and $m_{x,y}^{(i)}=(q_{x,y})_*s^{i}_{x,y}$ for $i=0,1$. Remark that item 3 of the  above statement  implies that $m_{x,y}^{(i)}$ is the twisting cocycle associated to $(f_i, \xi)$. We infer
  %\eqref{eq:invarianceforchains} 
from~\eqref{eq:newinvarianceforchains} using item 2:
    \begin{equation}\label{eq:enriched-morphism} \partial \nu_{x,y}=\sum _{z\in \Crit(f_0)}m^{(0)}_{x,z}\cdot\nu_{z,y}-\sum _{w\in \Crit(f_1)}(-1)^{|x|-|w|}\nu_{x,w} \cdot m^{(1)}_{w,y}.\end{equation} 
  This is exactly the relation~\eqref{MC-invariance} corresponding to~\eqref{eq:DGcont} in Proposition~\ref{prop:continuation-DG}. We  deduce the existence of a canonical morphism $${\Psi}^{\Xi}: C_*(X,\Xi_0;\calf)\ri C_*(X,\Xi_1;\calf)$$
  defined by 
  \begin{equation}\label{eq:definition-continuation}
  \Psi^{\Xi}(\alpha\otimes x)\, =\, \alpha\sum_{y\in \Crit(f_1)}\nu_{x,y}\otimes y.
  \end{equation} 
    Item 1 of the previous lemma shows that it is compatible with $\widetilde{\Psi}=-\widetilde\Psi^F$  from~\S\ref{sec:invariance-for-usual} in the sense of Proposition~\ref{functoriality2}. Moreover, since we know that $-\widetilde\Psi^F$ is a quasi-isomorphism  by the invariance of the lifted homology, we may apply Proposition~\ref{functoriality2}  and infer that ${\Psi}^{\Xi}$ is a quasi-isomorphism.  This finishes the proof of Proposition~\ref{invariance}. 
\end{proof}

  \begin{remark}\label{change-of-sign}
   The reason why we chose the minus sign in front of $q_*$ to define $\nu_{x,y}$   above will be clear in~\S\ref{sec:invariance-continuation}: we want $\Psi^{\Xi}$ to induce the identity in homology for the constant homotopy; recall from~\S\ref{sec:invariance-for-usual} that at the level of usual and lifted homology our orientation conventions gave $\widetilde{\Psi}^{\Id} =-\Id.$ \end{remark} 
  
 % Let us now check (ii).  Let $\Xi = \Id$ be the data given by the constant homotopies between $(f_0,\xi_0, \caly_0, \theta_0)$ and $(f_1,\xi_1, \caly_1, \theta_1)=(f_0,\xi_0, \caly_0, \theta_0)$. We proof that $\Psi^\xi =\Id$. Recall that in this case we use the gradient vector field $\xi_0-\nabla g$ where $g:[-\epsilon, 1+\epsilon]$ has only one maximum in $t=0$ and one minimum in $t=1$ as critical points. Parametrizing the only gradient line of $-\nabla g$ between $t=0$ and $t=1$ on the interval $[0,1]$ we get a homeomorphism for $x\in \{0\}\times \Crit(f_0)$ and $y\in \{1\}\times \Crit(f_0)$ such that   $|x|>|y|$: 
  %$$\overline{\call}_F(x,y)\, \simeq\, [0,1] \times \overline{\call}_{f_0}(x,y)$$
  %and using this identification 
  %$$\sigma_{x,y} \, =\, [0,1]\times s^{f_0}_{x,y}.$$

\smallskip

We may conclude therefore that the enriched Morse homology only depends on the manifold $X$ and the DG-module $\calf$, which justifies the use of the notation $H_*(X;\calf)$ (resp. $H_*(X,\p_+X; \calf)$ in the boundary case): given two different sets of auxiliary data $\Xi_0$ and $\Xi_1$, we constructed a morphism  of complexes $\Psi^{\Xi}$ which induces an isomorphism in homology. In the next section we show that the  ``continuation" morphism is a homotopy equivalence whose chain homotopy type only depends on $\Xi_0$ and $\Xi_1$. % in homology. 

\section{Invariance of the continuation morphism} \label{sec:invariance-continuation}

The goal of this section is to prove the following 
\begin{theorem}[{\bf Invariance}] \label{independence} 
1)  Let $\Xi_0$, $\Xi_1$ be two sets of data for the construction of the enriched Morse complex with coefficients in a given DG-module $\calf$ over $C_*(\Omega X)$. The continuation morphism 
$$
\Psi^\Xi: C_*(X,\Xi_0;\calf)\ri C_*(X, \Xi_1;\calf)
$$ 
from Proposition~\ref{invariance} is a homotopy equivalence, and its chain homotopy type does not depend on the choice of continuation data $\Xi$ between $\Xi_0$ and $\Xi_1$. The map $\Psi^\Xi$ induces in particular an isomorphism in homology.

2) Given another set of data $\Xi_2$ and denoting by $\Psi_{ij}$, $i,j\in\{0,1,2\}$ the continuation maps determined by continuation data $\Xi_{ij}$ between $\Xi_i$ and $\Xi_j$, we have that $\Psi_{00}$ is homotopic to the identity and $\Psi_{12}\circ\Psi_{01}$ is homotopic to $\Psi_{02}$. In particular we have in homology
\[\Psi_{00} =\Id\quad\text{ and }\quad \Psi_{12}\circ\Psi_{01}\, =\, \Psi_{02}.\]
\end{theorem}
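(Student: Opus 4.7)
The plan is to upgrade the one-parameter invariance argument of Proposition~\ref{invariance} to a chain-homotopical statement by running the Barraud--Cornea construction in two parameters. Given a regular homotopy of continuation data $\{\Xi_\tau\}_{\tau\in[0,1]}$ between two sets $\Xi$ and $\Xi'$ both going from $\Xi_0$ to $\Xi_1$, I set up the Morse problem on $[0,1]^2\times X$ associated with a function of the form $G(\tau,t,x)=g(\tau)+g(t)+f_{\tau,t}(x)$ and a suitable pseudo-gradient. By the inductive procedure of Proposition~\ref{representingchain}, the compactified moduli spaces of trajectories from $(0,0,x_0)$ to $(1,1,y_1)$ admit a representing chain system $\sigma^h_{x_0,y_1}$ whose boundary is prescribed by the four ``faces'' corresponding to $\Xi_0$, $\Xi_1$, $\Xi$ and $\Xi'$. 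Evaluating these chains into $\Omega X$ via the analogue of Lemma~\ref{lecture} applied to a homotopy of trees $\caly_{\tau,t}$ produces chains $h_{x_0,y_1}\in C_{|x_0|-|y_1|+1}(\Omega X)$ satisfying equation~\eqref{eq:alg-homotopy-eqn} of Proposition~\ref{prop:homotopy-cocycle}, which supplies a chain homotopy $\h:\Psi^{\Xi'}\simeq\Psi^{\Xi}$. Since the space of continuation data between two fixed endpoints is path-connected, this already proves that the chain homotopy type of $\Psi^{\Xi}$ depends only on $\Xi_0$ and $\Xi_1$.

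Next I prove part (2) directly. For $\Psi_{00}\simeq\Id$: take the tautological continuation datum $\Xi_{00}$ given by $F(t,x)=f_0(x)+g(t)$, constant tree, and constant $\theta$. The analysis of~\S\ref{sec:invariance-for-usual} together with Lemma~\ref{orientation-difference} and the minus sign built into the definition of $\nu$ (Remark~\ref{change-of-sign}) shows that the induced map on lifted complexes is the identity; then the algebraic criterion of Proposition~\ref{functoriality2} yields $\Psi^{\Xi_{00}}\simeq\Id$. For $\Psi_{12}\circ\Psi_{01}\simeq\Psi_{02}$: build a concatenated datum $\Xi'_{02}$ on $X\times[0,2]$ by placing $\Xi_{01}$ and $\Xi_{12}$ in series with a sufficiently long stationary neck around an intermediate slice, and prove via a gluing/compactification argument that one can choose the representing chain system so that, on each breaking stratum over an intermediate critical point $z_1$ of $f_1$,
\[
\sigma^{\Xi'_{02}}_{x_0,y_2}=\sum_{z_1}(-1)^{\varepsilon}\sigma^{\Xi_{01}}_{x_0,z_1}\times\sigma^{\Xi_{12}}_{z_1,y_2},
\]
with the sign $\varepsilon$ dictated by Lemma~\ref{orientation-difference}. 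Applying the evaluation map $q$ and using the multiplicativity of the Pontryagin product gives $\Psi^{\Xi'_{02}}=\Psi^{\Xi_{12}}\circ\Psi^{\Xi_{01}}$ on the nose, and the first paragraph provides a chain homotopy $\Psi^{\Xi'_{02}}\simeq\Psi^{\Xi_{02}}$.

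Part (1) then follows formally. The chain-homotopy-type invariance was already established in the first paragraph. To see that each $\Psi^\Xi$ is a chain homotopy equivalence, apply (2) with $\Xi_2=\Xi_0$ and a ``time-reversed'' continuation $\Xi_{10}$: this yields $\Psi^{\Xi_{10}}\circ\Psi^{\Xi_{01}}\simeq\Psi^{\Xi_{00}}\simeq\Id$ and symmetrically $\Psi^{\Xi_{01}}\circ\Psi^{\Xi_{10}}\simeq\Id$, so $\Psi^{\Xi_{01}}$ is a chain homotopy equivalence with homotopy inverse $\Psi^{\Xi_{10}}$.

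The main technical obstacle lies in the gluing/composition identity in the proof of $\Psi_{12}\circ\Psi_{01}\simeq\Psi_{02}$. Choosing a representing chain system on the concatenated moduli space that is \emph{exactly} multiplicative on the breaking stratum (rather than merely homologous to a product) requires a stratum-by-stratum induction analogous to Proposition~\ref{representingchain} but performed relative to a prescribed product boundary condition, and the sign bookkeeping rests on the orientation conventions of~\S\ref{subsec: orientation-conventions} combined with Lemma~\ref{orientation-difference}. Alternatively, one may exhibit $\Psi^{\Xi_{12}}\circ\Psi^{\Xi_{01}}$ as itself induced by a degenerate (broken) two-parameter moduli space on $[0,1]^2\times X$ and then relate it to $\Psi^{\Xi'_{02}}$ by a further homotopy cocycle, invoking the construction of the first paragraph once more; either route reduces the argument to organizing carefully the compactifications of spaces of concatenated pseudo-gradient trajectories.
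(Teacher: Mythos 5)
Your global strategy — a two-parameter moduli space on $[0,1]^2\times X$ to produce a homotopy cocycle, combined with a tautological continuation to recover the identity and a composition argument — is close in spirit to the paper's two-step proof (Proposition~\ref{Id=id} and Proposition~\ref{end-independence}), but two points need scrutiny.

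The serious gap is in your proof that $\Psi_{00}\simeq\Id$. You argue that the induced map on lifted complexes is the identity and then invoke Proposition~\ref{functoriality2} to conclude $\Psi^{\Xi_{00}}\simeq\Id$. That proposition only yields that $\Psi^{\Xi_{00}}$ is a \emph{quasi-isomorphism}, not that it is chain homotopic to the identity: the continuation cocycle $(\nu_{x_0,y_1})$ is lower-triangular for the index filtration, and having the identity on the diagonal (page $1$) says nothing about the strictly lower-order entries $\nu_{x_0,y_1}$ for $|x_0|>|y_1|$. Killing those requires a genuinely new idea, which is what the bulk of the paper's Proposition~\ref{Id=id} does: one first constructs a \emph{special} representing chain system $\sigma_{x_0,y_1}$ with $\pi_*\sigma_{x_0,y_1}=0$ under the forgetful projection $\pi:\ol\cL_{\Id}(x_0,y_1)\to\ol\cL_{f_0}(x,y)$ (Lemma~\ref{special-rcs}, using the fact that a $(|x|-|y|)$-chain pushed to a $(|x|-|y|-1)$-dimensional space can be chosen degenerate); for this choice the cocycle $\ol\nu$ obtained from the degenerate evaluation $\ol q = q_{x,y}\circ\pi$ is literally the identity. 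But $\ol q$ is \emph{not} the evaluation $q$ prescribed by the construction (parametrizing by the values of $F=f_0+g$ rather than by $f_0$ alone), and one must also exhibit a concatenation-compatible homotopy between $q$ and $\ol q$ (Lemma~\ref{evaluations-homotopic}) and turn it into a homotopy cocycle (Lemma~\ref{homotopic-lectures}, and more generally Lemma~\ref{lem:homotopic-evaluations}). Your proposal skips all of this and as written does not establish the chain homotopy.

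On the composition step, your route differs from the paper's. You propose a long-neck gluing, exhibiting the concatenated continuation datum so that on the breaking stratum the representing chain system is \emph{exactly} a sum of products $\sigma^{\Xi_{01}}_{x_0,z_1}\times\sigma^{\Xi_{12}}_{z_1,y_2}$, then relating the concatenated datum to $\Xi_{02}$ by a further two-parameter homotopy. This is a legitimate alternative: it trades the paper's single square $K(\tau,t,x)=k_{\tau,t}(x)+g(\tau)+g(t)$ on $[0,1]^2\times X$ — whose boundary strata are chosen so that the homotopy cocycle interpolates directly between $\Psi^{\Xi'}\circ\Psi^\Xi$ (breaking at the $(1,0)$-corner) and $\Psi^{\Id}\circ\Psi^{\Xi''}$ (breaking at the $(0,1)$-corner), with the signs tracked in Lemma~\ref{change-of-sign-bis} — for a two-stage argument (gluing + homotopy). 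The paper's route avoids the delicate "exactly multiplicative representing chain system" requirement you flag as your main obstacle, because in the two-parameter picture the composition appears only as one face of the boundary of $\ol\cL_K$, where the product structure of the stratum is built into Theorem~\ref{corners1} and only a chain-level \emph{homology} to the product is needed (supplied automatically by the inductive construction of Proposition~\ref{representingchain}). If you insist on the gluing route you would also need to check that your neck-stretched moduli space and the standard continuation moduli space for $\Xi_{02}$ have compatible trees and expanding maps, and to produce the evaluation homotopy feeding into Lemma~\ref{lem:homotopic-evaluations} — this is precisely the kind of bookkeeping the single-square argument packages once and for all.

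The final formal deduction of Part (1) from Part (2) — choosing $\Xi_2=\Xi_0$, using a reversed continuation, and composing — is correct and matches the paper.
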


As noted in Remark~\ref{rmk:on_invariance}, this statement implies that of Proposition~\ref{invariance}. 

\begin{proof}[Proof of Theorem~\ref{independence}] We will break the proof into two steps which are the counterparts of properties 1 and 2 from~\S\ref{sec:invariance-for-usual}.

\noindent\underline{Step 1. The case $\Xi=\Id$.}

Consider a set of data  $\Xi_0$ and  the constant homotopy  $$(f_t, \caly_t, \theta_t=\Theta|_{\{t\}\times X/\caly_t})= (f_0, \caly_0, \theta_0).$$
  We consider on $[0,1] \times X$ the Morse function given by $$F(t,x)=f_0(x) +g(t)$$ and the pseudo-gradient 
  $$
  \xi(t,x)=\xi_0(x)-g'(t)\frac{\partial}{\partial t}.
  $$ 
  We orient the unstable manifolds of $F$ according to~\eqref{eq:orientation-produit} and~\eqref{eq:orientation-produit2} and denote $o_F$ this set of orientations. 
   
  \begin{proposition}\label{Id=id} There exists a representing  chain system  $(s_{x,y}^F)$  for the triple $(F,\xi, o_F)$  satisfying condition (ii) of Lemma~\ref{orientation-difference} and such that, for $$\Xi= (F, \xi,  o_F, s_{x,y}^F, \caly,\Theta),$$ the chain map $\Psi^\Xi$ constructed in Proposition~\ref{invariance} is chain homotopic to the identity. We will denote $\Xi= \Id$ (with an abuse of notation since $\Xi$ is not necessarily unique). 
\end{proposition}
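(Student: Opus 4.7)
The plan is to exploit the factorization of the evaluation map through the projection to the $X$-component, and then build a representing chain system $(s^F_{x_0, y_1})$ that is structurally degenerate under this projection, forcing $\Psi^\Xi$ to literally equal the identity chain map.

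For the constant homotopy $F(t, x) = f_0(x) + g(t)$ with pseudo-gradient $\xi = \xi_0 - g'(t) \partial_t$ and constant tree $\caly_t \equiv \caly_0$, one may take $\Theta(t, z) = (t, \theta_0(z))$; then $q_{x_0, y_1}$ factors as $q^0_{x, y} \circ \rho$, where $\rho : \overline{\call}_F(x_0, y_1) \to \overline{\call}_{f_0}(x, y)$ projects a trajectory of $\xi$ to its $X$-component. On interiors, $\rho$ is a trivial $(0, 1)$-bundle whose fiber records the ``time-lag'' of the $t$-component, and its two boundary degenerations produce the broken trajectories at $y_0 = (0, y)$ through $c_y$ (time-lag $\to 0$) and at $x_1 = (1, x)$ through $c_x$ (time-lag $\to 1$). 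I would set $s^{F, 0}_{x, y} = (-1)^{|x|-|y|} s^0_{x, y}$ and $s^{F, 1}_{x, y} = s^0_{x, y}$ as in Lemma~\ref{orientation-difference}(ii). The orientation rule~\eqref{eq:orientation-rule} applied with the Latour cell orientations~\eqref{eq:orientation-produit}-\eqref{eq:orientation-produit2} forces $s^F_{x_0, x_1} = -\{c_x\}$ on the constant-trajectory $0$-dimensional moduli, and since $q_{x_0, x_1}(c_x) = e$ (the $X$-projection is constant at $x$, which collapses to the basepoint under $\theta_0 \circ p$), this yields $\nu_{x, x} = -(q_{x_0, x_1})_*(-\{c_x\}) = e$ and $\nu_{x, y} = 0$ for $x \neq y$ with $|x| = |y|$.

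For $|x| > |y|$, I would construct $s^F_{x_0, y_1}$ inductively over $|x| - |y|$ following Proposition~\ref{representingchain}, taking as initial candidate the product chain $(-1)^{|x|-|y|+1} s^0_{x, y} \times I$ on the interior $\overline{\call}_F(x_0, y_1) \supset \call_{f_0}(x, y) \times (0, 1)$. This candidate already matches the $z_0' = y_0$, $z_1' = x_1$ and $\{0\}$-side intermediate terms of the Maurer-Cartan relation~\eqref{eq:representingchain}, but fails to account for the $\{1\}$-side intermediate-break terms $(s^0_{x, z'} \times I) \times s^0_{z', y}$. I would correct the candidate by adding chains supported on the $\{1\}$-break strata $\overline{\call}_F(x_0, z_1') \times \overline{\call}_{f_0}(z', y)$ whose boundary equals the missing contribution; by the induction hypothesis, the factor $s^F_{x_0, z_1'}$ entering such a correction is itself of product type with an $I$-direction, so the entire correction preserves the property that each of its constituent cubes contains a distinguished $I$-factor in its first coordinate.

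Pushing forward by $\rho$ then gives $\rho_*(s^F_{x_0, y_1}) = 0$: each cube contributing to the interior part or a $\{1\}$-break correction contains an $I$-direction along which $\rho$ is constant, producing a degenerate cube in $\overline{\call}_{f_0}(x, y)$, which vanishes in the reduced cubical complex. Consequently $\nu_{x, y} = -(q^0_{x, y})_* \rho_*(s^F_{x_0, y_1}) = 0$ for all $|x| > |y|$, so the matrix $(\nu_{x, y})$ reduces to $\delta_{x, y} e$ and $\Psi^\Xi = \Id$ as chain maps --- in particular chain homotopic to the identity. The main obstacle is the inductive construction of the $\{1\}$-side boundary corrections while preserving the distinguished $I$-factor structure: each such correction must be chosen compatibly with the sign conventions of Proposition~\ref{orientationsMorse} and Lemma~\ref{orientation-difference}, and one must verify that the Maurer-Cartan discrepancy at the intermediate-break strata is actually the boundary of a chain of the required product form. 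This is a sign and homological bookkeeping task that parallels the proof of Proposition~\ref{representingchain} under the extra constraint on the structure of the chains used.
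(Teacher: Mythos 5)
Your overall strategy --- choose a representing chain system whose pushforward under the projection $\rho:\overline{\call}_\Id(x_0,y_1)\to\overline{\call}_{f_0}(x,y)$ vanishes, so that the evaluated cocycle reduces to $\nu_{x,x}=e$ and $\nu_{x,y}=0$ for $x\neq y$ --- is the same as the paper's. But there is a real gap in the step where you claim that ``$q_{x_0,y_1}$ factors as $q^0_{x,y}\circ\rho$.'' This factorization is false. The evaluation $q_{x_0,y_1}=\pi\circ\Theta\circ p\circ\Gamma^\Id_{x_0,y_1}$ parametrizes the trajectory $\tilde\gamma(s)=(a(s),\gamma(s))$ in $[0,1]\times X$ by the values of $F=f_0+g$, whereas $q^0_{x,y}\circ\rho$ parametrizes the projected trajectory $\gamma$ by the values of $f_0$ alone. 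These are different Moore paths --- their domains have different lengths, and the Moore loop space $\Omega X$ distinguishes reparametrizations. This matters for your degeneracy argument: even if a cube in $s^F_{x_0,y_1}$ has an $I$-direction along which $\rho$ is constant, the composite $q_{x_0,y_1}$ is generally \emph{not} constant along that direction, because the $t$-component $a(s)$ varies and changes the $F$-parametrization. So you cannot deduce that $q_{x_0,y_1,*}(s^F_{x_0,y_1})=0$ from $\rho_*(s^F_{x_0,y_1})=0$; you only get $\overline{q}_{x_0,y_1,*}(s^F_{x_0,y_1})=0$ where $\overline{q}=q^0_{x,y}\circ\rho$. The conclusion ``$\Psi^\Xi=\Id$ as chain maps'' is therefore not reached by your argument.

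The paper closes exactly this gap by showing that $q_{x_0,y_1}$ and $\overline q_{x_0,y_1}$ are \emph{homotopic} through a family of evaluation maps $q^s_{x_0,y_1}$ (parametrizing by $f_0+sg$ for $s\in(0,1]$ and taking a continuous limit at $s=0$) that is compatible with the concatenation relations at broken trajectories (Lemma~\ref{evaluations-homotopic}), and then invoking a general principle (Lemma~\ref{homotopic-lectures}, generalized in Lemma~\ref{lem:homotopic-evaluations}) that two concatenation-compatible homotopic evaluation maps applied to the same representing chain system produce algebraically homotopic continuation cocycles. That argument is precisely what converts ``the map defined by $\overline\nu$ is the identity'' into ``the map $\Psi^\Xi$ defined by $\nu$ is chain homotopic to the identity''---which is all the proposition claims. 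Without some version of this homotopy argument, your proof is incomplete.

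A secondary remark: your inductive construction of $s^F_{x_0,y_1}$ as a corrected product chain $(-1)^{|x|-|y|+1}s^0_{x,y}\times I$ plus $\{1\}$-side corrections is substantially more delicate than needed, since you must preserve the ``distinguished $I$-factor'' structure through every correction. The paper instead constructs an arbitrary $\sigma'_{x_0,y_1}$ satisfying the Maurer--Cartan relation, notes by induction that $\pi_*(\sigma'_{x_0,y_1})$ is a cycle of degree $|x|-|y|$ living on a space of dimension $|x|-|y|-1$ and hence is a boundary $\partial b_{x,y}$, and sets $\sigma_{x_0,y_1}=\sigma'_{x_0,y_1}-\partial i_*(b_{x,y})$ using the section $i(\lambda)=(\lambda,c_y)$. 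This achieves $\pi_*\sigma_{x_0,y_1}=0$ with no structural control on the individual cubes, which is both simpler and all that the subsequent argument requires.
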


\begin{proof}[Proof of Proposition~\ref{Id=id}]
The morphism $\Psi$ was defined previously using the chains $\nu_{x,y}\, =\, - q_{x,y,*}(\sigma_{x,y})$, where  $$\sigma_{x,y}\, =\, s_{x,y}^F$$ for $x\in \{0\} \times X$ and $y\in \{1\}\times X$ critical points of $F$. Since both $x$ and $y$ are also critical points of $f_0$   we denote by $x_i$ the critical points of  $F$ on $\{i\}\times X$ for $i\in \{0,1\}$ in order to avoid any confusion.  We denote by $\overline{\call}_\Id(x_0,y_1)$ the spaces of trajectories in this particular case, motivated by the notation $F=\Id$ from~\S\ref{sec:invariance-for-usual}. Let us describe next  the  choice of the representing chain system $(\sigma_{x_0,y_1}) = (s_{x_0,y_1}^F)$. (For critical points lying on the same slice $\{i\}\times X$ the choice of the representing chain system is imposed by Lemma~\ref{orientation-difference}.) 

 If $|x|=|y|$ then $$ \overline{\call}_\Id(x_0,y_1) = \emptyset$$ except for the case $x=y$, when $ \overline{\call}_\Id(x_0,x_1)$ consists of a single gradient line that only depends on $t$ and connects the maximum of $g$ to its minimum. In~\S\ref{sec:invariance-for-usual} we denoted this gradient line by $c_x$ and we proved the following:  

\begin{lemma}\label{constant-chain} With our orientation conventions the chain $$\sigma_{x_0,x_1}\in C_{0}(\overline{\call}_\Id(x_0,x_1))$$ is the opposite of the constant chain. Therefore $\nu_{x_0,x_1}=-q_{x_0,x_1,*}(\sigma_{x_0,x_1})$ is a constant chain in $C_{0}(\Omega X)$. \qed
\end{lemma}

 If $|x|>|y|$ there is a natural projection 
$$\pi: \overline{\call}_\Id(x_0,y_1) \ri  \overline{\call}_{f_0}(x,y),$$ and also an inclusion 
$$i :  \overline{\call}_{f_0}(x,y)\ri  \overline{\call}_\Id(x_0,y_1),$$ 
the latter being defined by $$i(\lambda)\, =\, (\lambda, c_y).$$
We therefore have
$$\pi\circ i \, =\, \Id.$$

\begin{lemma}\label{special-rcs}  There exists a representing chain system $(\sigma_{x_0,y_1})$  on \break $ \overline{\call}_\Id(x_0,y_1)$ such that, for $|x|>|y|$, we have
$$\pi_*(\sigma_{x_0,y_1})\, =\, 0. $$
\end{lemma}

The heuristic explanation for this statement is the following: in the cubical complex the degenerate cubes are by definition equal to zero. Whenever $|x|>|y|$, the cubical chain $\pi_*(\sigma_{x_0,y_1})$ has dimension $|x|-|y|$ and lives on a space of dimension $|x|-|y|-1$. Thus it ``wants'' to be degenerate, and the proof consists in showing that we can indeed choose $\sigma_{x_0,y_1}$ such that $\pi_*(\sigma_{x_0,y_1})$ is degenerate, and therefore zero.

\begin{proof}[Proof of Lemma~\ref{special-rcs}] We proceed by induction on $|x|-|y|$. For $|x|-|y|=1$ the trajectories in $\overline{\call}_{f_0}(x,y)$ are isolated points and therefore $$\pi_*(\sigma_{x_0,y_1}) \in C_1(\overline{\call}_{f_0}(x,y))$$ is formed by (constant) degenerate $1$-chains and therefore it equals $0$.

Suppose that we constructed $(\sigma_{x_0,y_1})$ for $|x|-|y|\leq k-1$ and consider the case $|x|-|y|=k$. As in the proof of Proposition~\ref{invariance} we find  a chain 
$$\sigma'_{x_0,y_1} \, \in \, C_{|x|-|y|}(  \overline{\call}_\Id(x_0,y_1))$$
 which satisfies relation~\eqref{eq:newinvarianceforchains}. In the current setting this writes
\begin{equation}\label{eq:newinvarianceforchains-bis} 
\partial \sigma'_{x_0,y_1}=\sum _{z\in \Crit(f_0)}s^0_{x_0,z_0}\times\sigma_{z_0,y_1}-\sum _{w\in \Crit(f_0)}(-1)^{|x|-|w|}\sigma_{x_0,w_1} \times s^0_{w_1,y_1}.
\end{equation} 
By the induction hypothesis we have 
$$\partial (\pi_*(\sigma'_{x_0,y_1})) \, =\, \pi_*(\partial \sigma'_{x_0,y_1})\, =\, 0.$$
We use here the fact that, in the above sum, the terms 
$s^{0}_{x_0,y_0}\times\sigma_{y_0,y_1}$ and $\sigma_{x_0,x_1} \times s^{0}_{x_1,y_1}$ occur with opposite signs and have the same image $-s_{x,y}^{0}$ through the projection $\pi_*$. 

We infer that $\pi_*(\sigma'_{x_0,y_1})$ is a cycle in  $C_{|x|-|y|}(  \overline{\call}_{f_0}(x,y))$. Since the dimension of $\overline{\call}_{f_0}(x,y)$ is $|x|-|y|-1$, it follows that there is some 
$$b_{x,y}\in C_{|x|-|y|+1}(  \overline{\call}_{f_0}(x,y))$$
such that 
$$\partial b_{x,y}\, =\, \pi_*(\sigma'_{x_0,y_1}).$$
We then define $$\sigma_{x_0,y_1}\, =\, \sigma'_{x_0,y_1}-\partial i_*(b_{x,y}).$$
Since we only added a boundary to $\sigma'_{x_0,y_1}$  we infer that $\sigma_{x_0,y_1}$ still satisfies~\eqref{eq:newinvarianceforchains-bis} and represents the fundamental class of $\overline{\call}_\Id(x_0,y_1)$. Moreover 
$$\pi_*(\sigma_{x_0,x_1})\, =\, \pi_*(\sigma'_{x_0,x_1})- \pi_*i_*(\partial b_{x,y})\, =\, 0$$
as claimed.
\end{proof}  

We now continue the proof of Proposition~\ref{Id=id} and show that, for this choice of representing chain system, we obtain a chain homotopy to the identity. 
  
 Following our usual pattern, we evaluate the representing chain system in $\Omega X$. Define continuous maps
  $\overline{q}_{x_0,y_1}: \overline{\call}_\Id(x_0,y_1)\ri \Omega X$ by
   $$\ol q_{x_0,y_1}(\lambda)\, =\,
   \begin{cases}\star  &\mbox{if}\quad x= y, \\
     q_{x,y} \circ \pi(\lambda) &\mbox{otherwise},
   \end{cases}
   $$
 where $q_{x,y}: \overline{\call}_{f_0}(x,y) \ri \Omega X$ is the evaluation defined in Lemma~\ref{lecture} and $\star$ is the constant Moore  loop parametrized by a single point $\{0\}$. Set 
 $$\overline{\nu}_{x_0,y_1}\, =\, - \overline{q}_{x_0,y_1,*}(\sigma_{x_0,y_1}).$$
 It is clear from the definition that the evaluations $\overline{q}_{x_0,y_1}$ satisfy the concatenation relation 
 \begin{equation}\label{eq:concatenation-relation} \overline{q}(\lambda, \lambda') \, =\, \ol q (\lambda)\# \ol q (\lambda'),
 \end{equation} 
 which together with~\eqref{eq:newinvarianceforchains-bis}  implies that the family $(\overline{\nu}_{x_0,y_1})$ satisfies~\eqref{eq:enriched-morphism} and therefore defines an automorphism of $C_*(X, \Xi_0;\calf)$. On the other hand Lemmas~\ref{constant-chain} and~\ref{special-rcs} imply that this morphism is nothing but the identity. 
 
 This however does not yet conclude the proof of Proposition~\ref{Id=id}, since the evaluation maps $\overline{q}_{x_0,y_1}$ do not coincide with the evaluation maps $q_{x_0,y_1}$ defined by \eqref{eq:invariancelecture}, that we used  to define the continuation cocycle and then the  continuation morphism by  \eqref{eq:definition-continuation}.  The next lemma shows that they are actually homotopic.
 
 \begin{lemma}\label{evaluations-homotopic} There is a homotopy $q^s_{x_0,y_1}: \overline{\call}_\Id(x_0,y_1)\ri \Omega X$ between  $\overline{q}$ and $q$ such that, for each $s\in [0,1]$, we have 
 \begin{equation}\label{eq:concatenation1} 
 q_{x_0,y_1}^s(\lambda,\lambda') \, =\, q_{x,z}(\lambda) \#q^s_{z_0,y_1}(\lambda')
 \end{equation} for any $(\lambda,\lambda')\in \overline{\call}_\Id(x_0,z_0)\times \overline{\call}_\Id(z_0,y_1)=\overline{\call}_{f_0}(x,z)\times \overline{\call}_\Id(z_0,y_1)$, and 
 \begin{equation}\label{eq:concatenation2} 
 q_{x_0,y_1}^s(\lambda,\lambda') \, =\, q^s_{x_0,w_1}(\lambda) \#q_{w,y}(\lambda')
 \end{equation} for any $(\lambda,\lambda')\in \overline{\call}_\Id(x_0,w_1)\times \overline{\call}_\Id(w_1,y_1)=\overline{\call}_\Id(x_0,w_1)\times \overline{\call}_{f_0}(w,y)$
 \end{lemma} 
 
 \begin{proof}[Proof of Lemma~\ref{evaluations-homotopic}]
 First note that the evaluation $\overline{q}$ satisfies the relations~\eqref{eq:concatenation1} and~\eqref{eq:concatenation2} as a consequence of the fact that the evaluations $q_{x,y}$ defined  in Lemma~\ref{lecture} on $\overline{\call}_{f_0}(x,y)$ do satisfy a concatenation relation. 
 
 Then recall that  the evaluations $q_{x_0,y_1}: \overline{\call}_\Id(x_0,y_1)\ri \Omega X  $ were defined in (\ref{eq:invariancelecture}) % before the statement of Lemma~\ref{lectureforinvariance}
 by the formula 
 $$
 q_{x_0,y_1}\, =\, \pi\circ \Theta\circ p \circ \Gamma_{x_0,y_1}^{\Id},
 $$ where $\pi: [0,1]\times X\ri X$ is the projection, $p:[0,1]\times X\ri \bigcup_{t\in [0,1]} \{t\} \times X/\caly_t$ is the projection on the quotient space and $\Theta: \bigcup_{t\in[0,1]} \{t\} \times X/\caly_t \ri [0,1]\times X$ is a homotopy inverse for $p$ which is the same on every slice $\{t\} \times X$ (recall that the homotopy of trees $(\caly_t)$ is constant). Therefore we have
 $$\pi\circ\Theta\circ p \, =\, \theta_0\circ p_0\circ \pi$$
 where $p_0: X\ri X/\caly_0$ is the projection  and $\theta_0: X/\caly_0\ri X$ is its chosen homotopy inverse. 
 
 Recall also that 
 $$\Gamma^{\Id}_{x_0,y_1}:  \overline{\call}_\Id(x_0,y_1)\ri \calp_{x_0\ri y_1}([0,1]\times X)$$
 was defined by parametrizing the broken orbits using the Morse function $F(t,x) =f_0(x) +g(t)$. More precisely, assuming w.l.o.g. that $g(0)= A>0$ and $g(1)=0$, the map 
 $$\Gamma^{\Id}_{x_0,y_1}(\lambda) : [0, f_0(x)-f_0(y) +A] \ri [0,1]\times X$$ was defined by 
 $$\Gamma^{\Id}_{x_0,y_1}(\lambda)(r) \, =\, \lambda\cap (f_0+g)^{-1}(-r +f_0(x) +A).$$
 For $s\in (0,1]$ we define a homotopy $\Gamma^s : \overline{\call}_\Id(x_0,y_1)\ri \calp_{x_0\ri y_1}([0,1]\times X)$ by parametrizing the broken orbits using the values of $f_0+sg$, namely 
 $$\Gamma^s(\lambda) : [0, f_0(x)-f_0(y) +sA] \ri [0,1]\times X$$ is given  by 
 $$\Gamma^s(\lambda)(r) \, =\, \lambda\cap(f_0+sg)^{-1}(-r +f_0(x) +sA).$$
 This formula is not valid for $s=0$ since the values of $f_0$ do not strictly decrease along the constant orbits $c_z\in \call_\Id(z_0,z_1)$ which may appear in the broken orbits of $\overline{\call}_\Id(x_0,y_1)$. However, we may extend the composition $\pi\circ \Gamma^s : \overline{\call}_\Id(x_0,y_1)\ri \calp_{x\ri y} X$ to $s=0$ as we will now show. Denote $$\Gamma^s(\lambda)(r) \, =\, (\delta_s(r), \gamma_s(r))\, \in \, [0,1]\times X.$$
 We therefore have $$f_0(\gamma_s(r))+ sg(\delta_s(r)) \, =\, -r +f_0(x) +sA.$$
 Since $\gamma_s(r) \in \pi(\lambda)\in \overline{\call}_{f_0}(x,y)$ we can write 
 $$\pi\circ\Gamma^s(\lambda)(r)\, =\, \gamma_s(r)\, =\, \pi(\lambda) \cap f_{0}^{-1}\left( -r +f_0(x) + sA-sg(\delta_s(r))\right)$$
 for $r\in [0, f_0(x)-f_0(y)+sA]$. When $s$ tends to $0$ the right hand term converges towards 
$$  \pi(\lambda) \cap f_{0}^{-1}( -r +f_0(x)),$$
 which is exactly the parametrization of $\pi(\lambda)$ via the values of $f_0$, i.e.,  
 $$\Gamma_{x,y}(\pi(\lambda)): [0, f_0(x)-f_0(y)]\ri \calp_{x\ri y}X.$$
 We may therefore define a continuous map $$Q: [0,1]\times  \overline{\call}_\Id(x_0,y_1)\ri  \Omega X$$ by the formulas
 \[Q(s,\cdot)= q^s=
 \begin{cases}
    \theta_0\circ p\circ \pi \circ \Gamma^s \, =\, \pi\circ \Theta\circ p\circ \Gamma^s \quad & \text{for } s\in (0,1],\\
 \theta_0\circ p \circ \Gamma_{x,y}\circ \pi \, =\, q_{x,y}\circ\pi =\overline{q} \quad & \text{for } s=0.            
 \end{cases}\]
 By construction $q^0=\overline{q}$ and $q^1=q$. Moreover, the maps $q^s$ satisfy the relations~\eqref{eq:concatenation1} and~\eqref{eq:concatenation2}; we have already noticed it for $s=1$, whereas the case $s>0$ is analogous to the case $s=1$. The proof of the lemma is now complete. 
 \end{proof} 
 
 To finish the proof of Proposition~\ref{Id=id} it suffices now to prove 
 
 \begin{lemma}\label{homotopic-lectures}  The chains 
 $$
 \nu_{x_0,y_1}= -q_{x_0,y_1,*}(\sigma_{x_0,y_1}) \qquad \mbox{and} \qquad \overline{\nu}_{x_0,y_1}= -\overline{q}_{x_0,y_1,*}(\sigma_{x_0,y_1})
 $$ 
 satisfy the homotopy relation~\eqref{eq:alg-homotopy-eqn}. Therefore the morphism $\Psi^\Id$ defined by $\nu_{x_0,y_1}$ is chain homotopic to the one defined by $\overline{\nu}_{x_0,y_1}$ (which was proved to be the identity). 
\end{lemma}

\begin{proof}[Proof of Lemma~\ref{homotopic-lectures}] Let $(\sigma_{x_0,y_1})$ be the 
representing chain system from Lemma~\ref{special-rcs} with $\sigma_{x_0,y_1}\in C_{|x|-|y|}(\overline{\call}_\Id(x_0,y_1))$, and denote by $$\cals_{x_0,y_1} \in C_{|x|-|y|+1}([0,1]\times \overline{\call}_\Id(x_0,y_1))$$ the chain defined by 
$$\cals_{x_0,y_1}\, =\, \id_{[0,1]}\times\sigma_{x_0,y_1}.$$
We therefore have 
$$\p \cals_{x_0,y_1}\, =\, \{1\}\times \sigma_{x_0,y_1}-\{0\}\times \sigma_{x_0,y_1}-\id\times \p\sigma_{x_0,y_1},$$
which using~\eqref{eq:newinvarianceforchains} becomes
\begin{align*} 
\p & \cals_{x_0,y_1} \\
& = \{1\}\times \sigma_{x_0,y_1}-\{0\}\times \sigma_{x_0,y_1}\\
&\quad -\id\times \left(  \sum _{z\in \Crit(f_0)}s^{f_0}_{x_0,z_0}\times\sigma_{z_0,y_1}-\!\!\!\sum _{w\in \Crit(f_0)}(-1)^{|x|-|w|}\sigma_{x_0,w_1} \times s^{f_0}_{w_1,y_1}  \right)\\
&= \{1\}\times \sigma_{x_0,y_1}-\{0\}\times \sigma_{x_0,y_1}\\
                               &\quad - \sum _{z\in \Crit(f_0)}\id\times s^{f_0}_{x_0,z_0}\times\sigma_{z_0,y_1}+\!\!\!\sum _{w\in \Crit(f_0)}(-1)^{|x|-|w|}\cals_{x_0,w_1} \times s^{f_0}_{w_1,y_1}.\end{align*}
                             
We denote by $I$ the map 
\begin{align*}
\overline{\call}_{f_0}(x_0,z_0)\times [0,1]& \times \overline{\call}_{\Id} (z_0,y_1)\\
& \ri [0,1] \times   \overline{\call}_{f_0}(x_0,z_0)\times \overline{\call}_{\Id} (z_0,y_1)\subset \p\overline{\call}_{\Id}(x_0,y_1)
\end{align*} 
which switches the first two variables. We get 
\begin{align*}\p \cals_{x_0,y_1} = & \{1\}\times \sigma_{x_0,y_1}-\{0\}\times \sigma_{x_0,y_1}\\
& - \sum _{z\in \Crit(f_0)}\!\!\!\!(-1)^{|x|-|z|-1} I_* (s^{f_0}_{x_0,z_0}\times\id\times\sigma_{z_0,y_1})\\
& +\sum _{w\in \Crit(f_0)} \!\!\!\!(-1)^{|x|-|w|}\cals_{x_0,w_1} \times s^{f_0}_{w_1,y_1},
\end{align*} 
and therefore 
\begin{align*}
\p \cals_{x_0,y_1} = & \{1\}\times \sigma_{x_0,y_1}-\{0\}\times \sigma_{x_0,y_1}\\
&+ \sum _{z\in \Crit(f_0)}(-1)^{|x|-|z|} I_* (s^{f_0}_{x_0,z_0}\times\cals_{z_0,y_1})\\
& +\sum _{w\in \Crit(f_0)}(-1)^{|x|-|w|}\cals_{x_0,w_1} \times s^{f_0}_{w_1,y_1}.
\end{align*}
Now consider the homotopy $$Q: [0,1]\times  \overline{\call}_\Id(x_0,y_1)\ri  \Omega X$$  between  $\overline{q}$ and $q$  from Lemma~\ref{evaluations-homotopic} 
and apply $Q_*$ to the relation above. Setting $$h_{x_0,y_1}=Q_*(\cals_{x_0,y_1})\in C_{|x|-|y|+1}(\Omega X), $$ using~\eqref{eq:concatenation2} and taking into account that  $Q\circ I\, =\, (q,Q)$ by~\eqref{eq:concatenation1}, 
we infer 
\begin{align*}
\p h_{x_0,y_1}= \overline{\nu}_{x_0,y_1}-\nu_{x_0,y_1}
 & + \sum _{z\in \Crit(f_0)}(-1)^{|x|-|z|} m^{f_0}_{x_0,z_0} h _{z_0,y_1} \\ 
 & + \sum _{w\in \Crit(f_0)}(-1)^{|x|-|w|} h_{x_0,w_1}  m^{f_0}_{w_1,y_1},
\end{align*}
which is relation~\eqref{eq:alg-homotopy-eqn} as claimed.
\end{proof} 
The proof of Proposition~\ref{Id=id} is now complete. 
\end{proof}

Lemma~\ref{homotopic-lectures} is valid in the following more general framework with a similar proof. Let $(\ol \calm(x_0,y_1))$ be a family of moduli spaces indexed by the critical points of two Morse functions $f_0$ and $f_1$, and assume given a representing chain system $(\sigma_{x_0,y_1})$ which satisfies equation~\eqref{eq:newinvarianceforchains}. Let $q_{x_0,y_1}, \overline{q}_{x_0,y_1} : \ol\calm(x_0,y_1)\ri \Omega Y$ be two collections of evaluation maps which satisfy the concatenation relations~\eqref{eq:concatenation1} and~\eqref{eq:concatenation2} above, so that $\nu_{x_0,y_1}= -q_{x_0,y_1,*}(\sigma_{x_0,y_1})$ and $\overline{\nu}_{x_0,y_1} = -\overline{q}_{x_0,y_1,*}(\sigma_{x_0,y_1})$ are continuation cocycles that satisfy~\eqref{eq:DGcont}. 

\begin{lemma} \label{lem:homotopic-evaluations} Assume that the evaluation maps $q$ and $\overline{q}$ are homotopic in the following sense: there exists a collection of homotopies $(q^\tau_{x_0,y_1})_{\tau\in [0,1]}$ such that $q^0_{x_0,y_1}=\ol q_{x_0,y_1}$, $q^1_{x_0,y_1}=q_{x_0,y_1}$, and for each $\tau\in[0,1]$ the family of maps $(q^\tau_{x_0,y_1})$ satisfies the concatenation relations~\eqref{eq:concatenation1} and~\eqref{eq:concatenation2}. Then the cocycles $(\nu_{x_0,y_1})$ and $(\overline{\nu}_{x_0,y_1})$ satisfy the algebraic homotopy equation~\eqref{eq:alg-homotopy-eqn}, and therefore yield homotopic continuation maps. \qed
\end{lemma}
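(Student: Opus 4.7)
The plan is to adapt the argument of Lemma~\ref{homotopic-lectures} essentially verbatim to the abstract setting, since that proof uses only three ingredients, all available here by hypothesis: (a) the boundary identity~\eqref{eq:newinvarianceforchains} for the representing chain system $(\sigma_{x_0,y_1})$, (b) the concatenation properties~\eqref{eq:concatenation1}--\eqref{eq:concatenation2} of the evaluations (from which the push-forwards $m^0_{x,z}$ and $m^1_{w,y}$ emerge on the codimension-one strata), and (c) the existence of a homotopy $(q^\tau_{x_0,y_1})$ of evaluations satisfying the same concatenation relations for every $\tau\in[0,1]$.

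Concretely, I would first introduce the prism chain
\[
\cals_{x_0,y_1} := \id_{[0,1]}\times\sigma_{x_0,y_1} \in C_{|x|-|y|+1}\bigl([0,1]\times\ol\calm(x_0,y_1)\bigr),
\]
and compute its boundary using~\eqref{eq:newinvarianceforchains}. After applying the interchange map $I$ that swaps the $[0,1]$-factor with the leftmost $\ol\calm(x,z)$-factor, this boundary takes exactly the form appearing in the proof of Lemma~\ref{homotopic-lectures}: the two endpoint slices $\{0\}\times\sigma_{x_0,y_1}$ and $\{1\}\times\sigma_{x_0,y_1}$, the cross terms $(-1)^{|x|-|z|}I_*(s^0_{x,z}\times\cals_{z_0,y_1})$, and the cross terms $(-1)^{|x|-|w|}\cals_{x_0,w_1}\times s^1_{w,y}$. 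I would then assemble the family $(q^\tau_{x_0,y_1})$ into a single continuous map $Q:[0,1]\times\ol\calm(x_0,y_1)\to\Omega Y$ by $Q(\tau,\mu):=q^\tau_{x_0,y_1}(\mu)$ and set $h_{x_0,y_1}:=-Q_*(\cals_{x_0,y_1})\in C_{|x|-|y|+1}(\Omega Y)$ (with the same sign convention used to define $\nu=-q_*\sigma$).

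The key step is to apply $Q_*$ to the boundary relation. The endpoint slices evaluate to $\ol\nu_{x_0,y_1}-\nu_{x_0,y_1}$. On each cross stratum the hypothesis that~\eqref{eq:concatenation1}--\eqref{eq:concatenation2} hold for every $\tau$ means that $Q\circ I$ factors as $(q_{x,z},Q)$ on the first family of strata and $Q$ factors as $(Q,q_{w,y})$ on the second; under the Pontryagin product in $C_*(\Omega Y)$, the corresponding push-forwards become $m^0_{x,z}\cdot h_{z_0,y_1}$ and $h_{x_0,w_1}\cdot m^1_{w,y}$ respectively. Collecting the terms yields~\eqref{eq:alg-homotopy-eqn} exactly. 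I expect the only real subtlety to lie in the bookkeeping of the sign $(-1)^{|x|-|z|-1}$ introduced by the interchange map $I$ when it crosses the prism factor, but that computation is identical to the one already performed in Lemma~\ref{homotopic-lectures} and requires no new input; the remainder of the argument is a mechanical transcription with $\ol\calm(x_0,y_1)$ replacing $\ol\call_\Id(x_0,y_1)$ throughout.
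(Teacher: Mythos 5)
Your proposal is correct and is exactly the argument the paper has in mind: the paper states before the lemma that ``Lemma~\ref{homotopic-lectures} is valid in the following more general framework with a similar proof'' and then gives none, leaving the transcription to the reader, which is what you have carried out. The only cosmetic deviation is the overall minus sign you insert in $h_{x_0,y_1}:=-Q_*(\cals_{x_0,y_1})$ (the proof of Lemma~\ref{homotopic-lectures} uses $h=Q_*(\cals)$); this is harmless, as it merely swaps the roles of $\nu^0$ and $\nu^1$ in~\eqref{eq:alg-homotopy-eqn}, which is symmetric under $\nu^0\leftrightarrow\nu^1$, $h\mapsto -h$.
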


\noindent\underline{Proof of Theorem~\ref{independence}, Step 2: composition of continuation maps.} 

We now prove that continuation maps are compatible with composition. Take three sets of data $\Xi_0$, $\Xi_1$ and $\Xi_2$ which correspond to three Morse functions $f_0$, $f_1$ and $f_2$. For a given DG-module $\calf$ and some arbitrary choices of continuation data $\Xi$, $\Xi'$ and $\Xi''$  consider the continuation morphisms between the corresponding enriched complexes 
$$\Psi^\Xi: C_*(X, \Xi_0; \calf)\ri C_*(X, \Xi_1; \calf), $$
$$\Psi^{\Xi'}: C_*(X, \Xi_1; \calf)\ri C_*(X, \Xi_2; \calf), $$
and $$\Psi^{\Xi''}: C_*(X, \Xi_0; \calf)\ri C_*(X, \Xi_2; \calf). $$
\begin{proposition}\label{end-independence} 
The maps $\Psi^{\Xi'}\circ \Psi^\Xi$ and $\Psi^{\Xi''}$ are chain homotopic. In particular, the following relation holds in homology:
$$\Psi^{\Xi'}\circ \Psi^\Xi\, =\, \Psi^{\Xi''}.$$
\end{proposition}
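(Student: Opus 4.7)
The plan is to realize the composition $\Psi^{\Xi'}\circ\Psi^{\Xi}$ on the nose as the continuation morphism of a third, ``concatenated'' continuation datum $\widetilde\Xi = \Xi\#\Xi'$ from $\Xi_0$ to $\Xi_2$, and then to identify $\Psi^{\widetilde\Xi}$ with $\Psi^{\Xi''}$ by a homotopy-of-continuations argument. This mirrors property 2 recalled in~\S\ref{sec:invariance-for-usual}, but lifts it from the classical Morse complex to DG-coefficients.

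\emph{Construction of $\widetilde\Xi$.} On $[0,1]\times X$ I would build a Morse function $\widetilde F$ by reparametrizing $F$ onto $[0,1/2]$ and $F'$ onto $[1/2,1]$, glued smoothly at $t=1/2$ (where both equal $f_1$ up to an additive constant), with a slope function $g$ strictly decreasing at $t=1/2$; this ensures $\Crit(\widetilde F)=\{0\}\times\Crit(f_0)\cup\{1\}\times\Crit(f_2)$. The pseudo-gradient is the analogous concatenation of $\xi$ and $\xi'$. The key geometric point is that by inserting a long ``neck'' near $t=1/2$ (taking $|g'|$ very small on a long subinterval around $1/2$ while keeping $g$ strictly decreasing), a standard Morse-theoretic stretching/gluing argument yields a diffeomorphism of compactified moduli spaces
\[
\ol{\call}_{\widetilde\Xi}(x_0,y_2)\;\cong\;\bigsqcup_{z_1\in\Crit(f_1)}\ol{\call}_\Xi(x_0,z_1)\times\ol{\call}_{\Xi'}(z_1,y_2).
\]

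\emph{Chain-level composition.} Using this product decomposition, I would choose inductively (as in Proposition~\ref{representingchain}) a representing chain system of the factorized form
\[
s^{\widetilde\Xi}_{x_0,y_2}=\sum_{z_1}\pm\, s^\Xi_{x_0,z_1}\times s^{\Xi'}_{z_1,y_2},
\]
with signs dictated by Lemma~\ref{orientation-difference} and Proposition~\ref{orientationsMorse}. Taking the spanning tree $\widetilde\caly$ on $[0,1]\times X$ and the homotopy inverse $\widetilde\Theta$ to be compatible with those of $\Xi$ and $\Xi'$ forces the evaluation maps to concatenate,
\[
q^{\widetilde\Xi}_{x_0,y_2}(\lambda,\lambda')=q^\Xi_{x_0,z_1}(\lambda)\,\#\,q^{\Xi'}_{z_1,y_2}(\lambda'),
\]
as in the analogous calculation in the proof of Proposition~\ref{invariance}. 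Pushing $s^{\widetilde\Xi}$ forward and tracking the minus signs in $\nu=-q_*\sigma$, the continuation cocycle of $\widetilde\Xi$ reads
\[
\widetilde\nu_{x_0,y_2}=\sum_{z_1}\nu^\Xi_{x_0,z_1}\cdot\nu^{\Xi'}_{z_1,y_2},
\]
which by~\eqref{eq:definition-continuation} is exactly the matrix of $\Psi^{\Xi'}\circ\Psi^\Xi$; hence $\Psi^{\widetilde\Xi}=\Psi^{\Xi'}\circ\Psi^\Xi$ on the nose.

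\emph{Conclusion via a homotopy of continuations.} Both $\widetilde\Xi$ and $\Xi''$ are continuation data from $\Xi_0$ to $\Xi_2$, and the space of such continuations is path-connected. I would pick a path $(\widetilde\Xi_s)_{s\in[0,1]}$ with endpoints $\widetilde\Xi$ and $\Xi''$, realized as Morse data on $[0,1]_s\times[0,1]_t\times X$. Applying the Morse-geometric counterpart of Proposition~\ref{prop:homotopy-cocycle} (a representing chain system on the two-parameter trajectory spaces, evaluated via a spanning tree on the product) then produces a degree one element $\h$ with $D\h=\Psi^{\widetilde\Xi}-\Psi^{\Xi''}$, yielding the desired chain homotopy. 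The main obstacle is the Morse-theoretic stretching argument behind the product decomposition of $\ol{\call}_{\widetilde\Xi}$, and the inductive construction of a representing chain system compatible with that product structure while keeping orientations, tree-based evaluations, and chain-level representatives in mutual agreement; once the moduli spaces split cleanly and the evaluations concatenate, the matrix identity is forced, but arranging all these compatibilities simultaneously is the technical heart of the proof.
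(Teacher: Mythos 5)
Your strategy breaks at the first step, at the claimed diffeomorphism
\[
\ol{\call}_{\widetilde\Xi}(x_0,y_2)\;\cong\;\bigsqcup_{z_1\in\Crit(f_1)}\ol{\call}_\Xi(x_0,z_1)\times\ol{\call}_{\Xi'}(z_1,y_2).
\]
With a strictly decreasing slope function $g$ on the interior of $[0,1]$, the points $\{1/2\}\times\Crit(f_1)$ are \emph{not} critical points of $\widetilde F=\Xi\#\Xi'$: the critical set is still $\{0\}\times\Crit(f_0)\sqcup\{1\}\times\Crit(f_2)$. Consequently a $\widetilde\Xi$-trajectory from $(0,x_0)$ to $(1,y_2)$ typically crosses the slice $\{1/2\}\times X$ at a regular point of $f_1$, and the left-hand side contains open strata of genuinely unbroken trajectories, while every stratum of the right-hand side is a product broken at some $z_1\in\Crit(f_1)$. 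The boundary stratifications are also incompatible: breakings in $\ol{\call}_{\widetilde\Xi}(x_0,y_2)$ occur at $(0,x_0')$ or $(1,y_2')$ only, never at $\{1/2\}\times\Crit(f_1)$. The long-neck (``stretching'') argument you invoke yields, for fixed moduli dimension, a \emph{cobordism} parametrized by the neck length $R\in[R_0,\infty]$, with the fiber-product-type limit at $R=\infty$; it does not yield a diffeomorphism at finite $R$, and the compactified moduli spaces it relates are not products. So your claim that ``the matrix identity is forced'' and that $\Psi^{\widetilde\Xi}=\Psi^{\Xi'}\circ\Psi^\Xi$ on the nose is not true in this framework.

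The rest of your plan could in principle be repaired: one would prove two separate chain homotopies, $\Psi^{\widetilde\Xi}\simeq\Psi^{\Xi'}\circ\Psi^{\Xi}$ via a one-parameter family indexed by the neck length $R$, and $\Psi^{\widetilde\Xi}\simeq\Psi^{\Xi''}$ via a path of continuation data on $[0,1]^2\times X$ as you sketch. But this doubles the work and still requires the gluing analysis you were trying to avoid. The paper instead does both at once by setting up a single Morse function $K$ on $[-\epsilon,1+\epsilon]^2\times X$ whose critical set sits at the four corners, namely $\{(0,0)\}\times\Crit(f_0)$, $\{(1,0)\}\times\Crit(f_1)$, $\{(0,1)\}\times\Crit(f_2)$ and $\{(1,1)\}\times\Crit(f_2)$, and whose boundary strata realize $F$, $G$, $H$, and the trivial homotopy $\Id$ on the four edges of the square. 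The representing chain for $\ol\call_K(x,y)$ with $x$ at the $(0,0)$-corner and $y$ at the $(1,1)$-corner then has boundary contributions from breaking at $(1,0)$ giving $\nu^\Xi\cdot\nu^{\Xi'}$ and from breaking at $(0,1)$ giving $\nu^{\Xi''}\cdot\nu^{\Id}$, so after evaluating into $\Omega X$ one reads off exactly a homotopy cocycle between $\Psi^{\Xi'}\circ\Psi^\Xi$ and $\Psi^\Id\circ\Psi^{\Xi''}$, and the identification with $\Psi^{\Xi''}$ follows because $\Psi^\Id\simeq\Id$ was established beforehand. That is why the paper's proof is a single two-parameter moduli space argument (precisely as in Lemma~\ref{change-of-sign-bis} and the relation for $\p S^K_{x,y}$) rather than the concatenation-then-homotopy factorization you propose.
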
 

\begin{proof} First we construct a representing chain system adapted to this situation. We proceed as in  \cite{Audin-Damian_English} (third step of Theorem~3.4.2). Take $g: [-\epsilon, 1+\epsilon] \ri \R $ as before with a maximum at $0$   and a minimum at $1$. Then pick a Morse function $$K : [-\epsilon, 1+\epsilon]\times[-\epsilon, 1+\epsilon] \times X \ri \R$$ of the form  $$K(\tau,t, x)\, =\, k_{\tau,t}(x) +g(\tau)+g(t),$$ and a negative generic pseudo-gradient $\xi _K$,  where $k_{\tau,t}$ interpolates between $f_0$, $f_1$ and $f_2$ (as explained below),  such that the following conditions are satisfied:
\begin{itemize} 
\item for $t\in [-\epsilon, \epsilon] $ we have  
$$K(\tau,t,x)\, =\, F(\tau, x) + g(t),$$
where $F= f_\tau+g(\tau)$ is the Morse function on $[-\epsilon, 1+\epsilon]\times X$ which was used to define $\Psi^\Xi$. We define the pseudo-gradient by $\xi_K(\tau,t,x) = \xi_F(\tau,x)-g'(t)\frac{\partial}{\partial t}$ in this region. 
\item for $t\in [1-\epsilon, 1+\epsilon] $ we have
$$K(\tau,t,x)\, =\, f_2(x)+g(\tau)+g(t).$$
We  take $\xi_K(\tau,t,x)= \xi_{f_2}(x)-g'(\tau)\frac{\partial}{\partial \tau} -g'(t)\frac{\partial}{\partial t} $ in this region. 
\item for $\tau\in [-\epsilon, \epsilon]$ we have 
$$K(\tau,t, x)\, =\, H(t,x)+g(\tau),$$ 
where $H: [-\epsilon, 1+\epsilon]\times X\ri \R$ is the Morse function interpolating between $f_0$ and $f_2$ which was used to define $\Psi^{\Xi''}$. We define  $\xi_K(\tau,t,x) =\xi_H(t,x) - g'(\tau)\frac{\partial}{\partial \tau} $ in this region. 
\item for $\tau\in [1-\epsilon, 1+\epsilon]$ we have
$$K(\tau,t, x)\, =\, G(t,x)+g(\tau),$$ 
where $G: [-\epsilon, 1+\epsilon]\times X\ri \R$ is the Morse function interpolating between $f_1$ and $f_2$ which was used to define $\Psi^{\Xi'}$. Finally, we take  the pseudo-gradient $\xi_K(\tau,t,x)=\xi_G(t,x)- g'(\tau)\frac{\partial}{\partial \tau} $ in this region. 
\end{itemize} 

The function $g$ is chosen to have negative enough slope on $[\epsilon,1-\epsilon]$ so that the only critical points of $K$ are the critical points of $f_0$, $f_1$ or $f_2$ on $\{(i,j)\} \times X$ for $i, j \in \{0, 1\}$. More precisely
\begin{align*}
\Crit(K)=\{(0,0)\}\times & \Crit(f_0) \ \sqcup \ \{(1,0)\}\times \Crit(f_1) \\
& \sqcup \ \{(0,1)\}\times\Crit(f_2)\ \sqcup \ \{(1,1)\}\times\Crit(f_2).
\end{align*}
The index of a critical point of $K$ of the form $(i,j,x)$ is equal to $i+j$ plus the index of $x$ as a critical point of the corresponding Morse function $f_0$, $f_1$ or $f_2$.  

 We orient the unstable manifolds of the critical points of $K$ in a manner analogous to the one described in~\eqref{eq:orientation-produit} and~\eqref{eq:orientation-produit2}, namely
\begin{itemize}
\item for $x\in \{(0,0)\}\times \Crit(f_0)$, 
$$\ori \, W^u_K (x)\, =\, \left(\tfrac{\partial}{\partial \tau}, \tfrac{\partial}{\partial t}, \ori\,   W^u_{f_0}(x)\right).$$
\item for $x\in \{(1,0)\}\times \Crit(f_1)$, 
$$\ori \, W^u_K (x)\, =\, \left( \tfrac{\partial}{\partial t}, \ori\,   W^u_{f_1}(x)\right).$$
\item for $x\in \{(0,1)\}\times \Crit(f_0)$, 
$$\ori \, W^u_K (x)\, =\, \left(\tfrac{\partial}{\partial \tau},  \ori\,   W^u_{f_0}(x)\right).$$
\item for $x\in \{(1,1)\}\times \Crit(f_2)$, 
$$\ori \, W^u_K (x)\, =\, \ori\, W^u_{f_2}(x).$$
\end{itemize} 
 Denote this set of orientations by $o_K$. We will choose a representing chain system $(s_{x,y}^K)$ for $(K,\xi, o_K)$ associated with this orientation in  Lemma~\ref{change-of-sign-bis} below. The fact that the manifold $[-\epsilon, 1+\epsilon]\times[-\epsilon, 1+\epsilon]\times X$  has boundary and corners is again not an issue since all the gradient lines between critical points stay away from the boundary. We will use different notations for the chains of the system $(s_{x,y}^K)$ depending on the type of critical points which are involved. As previously, we will use the Morse indices of the functions $f_0$, $f_1$ and $f_2$ for the critical points of $K$. 
\begin{itemize}
\item for $x,y \in \{(0,0)\}\times X$ critical points of $f_0$ we will  use the notation 
$$s_{x,y}^{K,0}\in C_{|x|-|y|-1}(\overline{\call}_{K}(x,y))=C_{|x|-|y|-1}(\overline{\call}_{f_0}(x,y))$$
\item for $x,y \in \{(1,1)\}\times X$ critical points of $f_2$ we will  use the notation 
$$s_{x,y}^{K,2}\in C_{|x|-|y|-1}(\overline{\call}_{K}(x,y))=C_{|x|-|y|-1}(\overline{\call}_{f_2}(x,y))$$
\item for $x\in \{(0,0)\} \times X$ critical point of $f_0$ and $y\in \{(1,0)\} \times X$ critical point of $f_1$ we will  use the notation 
$$\sigma_{x,y}^{K,0,1}\in C_{|x|-|y|}(\overline{\call}_{K}(x,y))=C_{|x|-|y|}(\overline{\call}_{F}(x,y))$$
\item for $x\in \{(1,0)\} \times X$ critical point of $f_1$ and $y\in \{(1,1)\} \times X$ critical point of $f_2$ we will  use the notation 
$$\sigma_{x,y}^{K,1,2}\in C_{|x|-|y|}(\overline{\call}_{K}(x,y))=C_{|x|-|y|}(\overline{\call}_{G}(x,y))$$
\item for $x\in \{(0,0)\} \times X$ critical point of $f_0$ and $y\in \{(0,1)\} \times X$ critical point of $f_2$ we will use the notation
$$\sigma_{x,y}^{K,0,2}\in C_{|x|-|y|}(\overline{\call}_{K}(x,y))=C_{|x|-|y|}(\overline{\call}_{H}(x,y))$$
\item for $x\in \{(0,1)\} \times X$ critical point of $f_2$ and $y\in \{(1,1)\} \times X$ critical point of $f_2$ we will use the notation 
$$\sigma_{x,y}^{K,2,2}\in C_{|x|-|y|}(\overline{\call}_{K}(x,y))=C_{|x|-|y|}(\overline{\call}_{Id}(x,y))$$
\item finally,  for $x\in \{(0,0)\} \times X$ critical point of $f_0$ and $y\in \{(1,1)\} \times X$ critical point of $f_2$ we will denote 
$$S_{x,y}^{K}\in C_{|x|-|y|+1}(\overline{\call}_{K}(x,y))$$
the corresponding chain.

\end{itemize} 

With this notation and taking into account the differences between the Morse indices of the critical  points of $K$ considered as critical points of $f_i$, the defining relation of the representing chain system~\eqref{eq:representingchain} writes here for $S_{x,y}^K$: 
\begin{align*} 
& \p S_{x,y}^K  \\
& = \!\!\!\sum_{z\in \Crit(f_0)} (-1)^{|x|-|z|} s_{x,z}^{K,0}\times S_{z,y}^K+  \!\!\!\sum_{w\in \Crit(f_2)} (-1)^{|x|-|w|+2} S_{x,w}^{K}\times s_{w,y}^{K,2}\label{eq:rcs-homotopies}\\
&+\!\!\! \!\!\! \sum_{u\in \Crit(f_1)} (-1)^{|x|-|u|+1} \sigma_{x,u}^{K,0,1}\times \sigma_{u,y}^{K,1,2}+\!\!\!\!\!\!\sum_{v\in \Crit(f_2)} (-1)^{|x|-|v|+1} \sigma_{x,v}^{K,0,2}\times \sigma_{v,y}^{K,2,2}.
\end{align*}
We now make precise the choice of the representing chain system $(s_{x,y}^K)$.
We proceed as  in Lemma~\ref{orientation-difference}: we extend the representing chain systems of $f_0$, $f_2$, $F$ etc., with   some sign changes due to different orientations.  

\begin{lemma}\label{change-of-sign-bis} Denote by $\sigma^{F}_{x,y}$,  $\sigma^G_{x,y}$ and $\sigma^H_{x,y}$ the representing chain systems for the functions $F$, $G$ and respectively $H$. Denote $\sigma^\Id_{x,y}$ the representing chain system constructed in the proof of Proposition~\ref{Id=id} for  the constant homotopy from $f_2$ to $f_2$.
 
There exists a representing chain system $(s_{x,y}^K)$ for $(K,\xi, o_K)$ such that: 
$$ s_{x,z}^{K,0}= s_{x,z}^{f_0},\gol  s_{w,y}^{K,2} =  s_{w,y}^{f_2},\gol  \sigma_{x,u}^{K,0,1}= (-1)^{|x|-|u|}\sigma_{x,u}^{F},$$
$$ \sigma_{u,y}^{K,1,2} =\sigma_{u,y}^{G},\gol\sigma_{x,v}^{K,0,2}=(-1)^{|x|-|v|+1} \sigma_{x,v}^{H},\gol \sigma_{v,y}^{K,2,2}= \sigma_{v,y}^{\Id}.
$$
\end{lemma}
\begin{proof}  We proceed as in the proof of Lemma~\ref{orientation-difference}. By construction many of the trajectory spaces $\overline{\call}_K(x,y)$ coincide with those of $f_i$, $F$, $G$ or $H$.  But their orientations may differ and the sign difference corresponds exactly to the one between the corresponding chains in the statement of our lemma. Let us check this in each case. 

 Recall the orientation rule~\eqref{eq:orientation-rule} for the spaces of broken orbits of $K$: 
\begin{equation}
\left(\ori \,  \overline{\call}_K(x,y), -\xi(p), \ori \, W^{u}_{K}(y)\right)\, =\, \ori\,  \overline{W}^{u}_K(x),\label{eq:adhoc-bis}
\end{equation}
where $\xi(p)$ is the gradient vector at a point $p$ on a broken orbit in $\overline{\call}_K(x,y)$. We apply it for each situation to deduce the sign differences between the orientations of the representing chains: 

1. If $x,z\in \{(0,0)\}\times X$ are both critical points of $f_0$ note that analogously to~\eqref{eq:orientation-produit} we have 
$$\ori \, \overline{W}_K^u(x)\, =\, \left(\tfrac\partial{\partial \tau}, \tfrac\partial{\partial t}, \ori \, \overline{W}^{u} _{f_0}(x)\right),$$
the same being true for $W^u_K(z)$ and $W^{u}_ {f_0}(z)$. Inserting this in~\eqref{eq:adhoc-bis} we get 
$$\left(\ori \,  \overline{\call}_K(x,z), -\xi(p), \, \tfrac\partial{\partial \tau}, \tfrac\partial{\partial t}, \ori\, W^{u}_{f_0} (z)\right)\, =\, \left( \tfrac\partial{\partial \tau},\tfrac\partial{\partial t}, \ori\,  \overline{W}^{u}_{f_0}(x)\right),$$
which, using again the orientation rule for $\overline{\call}_{f_0}(x,z)$, yields 
$$\ori \,  \overline{\call}_K(x,z)\, =\, \ori \,  \overline{\call}_{f_0} (x,z).$$
One may therefore use the construction  procedure from Lemma~\ref{representingchain} to choose $$s^{K,0}_{x,z} \, =\, s^{f_0}_{x,z}.$$

\noindent 2. If $w,y\in \{(1,1)\}\times X$ are both critical points of $f_2$ then $$\ori\, \overline{W}_K^u(w)= \ori\, \overline{W}_{f_2}^u(w),$$ and the same is true  for $y$. By~\eqref{eq:adhoc-bis} we obtain 
$$\ori \,  \overline{\call}_K(w,y)\, =\, \ori \,  \overline{\call}_{f_2} (w,y)$$ and we may therefore choose 
$$s^{K,2}_{w,y} \, =\, s^{f_2}_{w,y}.$$

\noindent 3. If $x\in \{(0,0)\}\times X$ is a critical  point of $f_0$ and $u\in \{(1,0)\}\times X$ is a critical point  of $f_1$ we have 
\begin{align*} \ori \, \overline{W}_K^u(x)& =\left(\tfrac\partial{\partial \tau}, \tfrac\partial{\partial t}, \ori \, \overline{W}^{u} _{f_0}(x)\right)\, =\, -  \left(\tfrac\partial{\partial t}, \tfrac\partial{\partial \tau}, \ori \, \overline{W}^{u} _{f_0}(x)\right)\\
&=-\left( \tfrac\partial{\partial t}, \ori \, \overline{W}^{u} _{F}(x)\right)\end{align*}
and  $$\ori\, W^u_{K}(u)\, =\, \left(  \tfrac\partial{\partial t}, \ori\, W^u_{f_1}(u) \right) \, =\, \left( \tfrac\partial{\partial t}, \ori\, W^u_{F}(u) \right).$$
By~\eqref{eq:adhoc-bis} this yields 
$$\ori \,  \overline{\call}_K(x,u)\, =\, (-1)^{|x|-|u|+2}\, \ori \,  \overline{\call}_{F} (x,u),$$ and we may therefore choose 
$$\sigma^{K,0,1}_{x,u}\, =\, (-1)^{|x|-|u|}\sigma^F_{x,u}.$$

\noindent 4. If $u\in \{(1,0)\}\times X$ is a critical  point of $f_1$ and $y\in \{(1,1)\}\times X$ is a critical point  of $f_2$  we have
$$ \ori \, \overline{W}_K^u(u)\,  =\, \left(\tfrac\partial{\partial t}, \ori \, \overline{W}^{u} _{f_1}(x)\right)\, =\,  \ori \, \overline{W}_G^u(u)$$
and $$\ori \, W^u_K(y)\, =\, \ori \, W^u_{f_2}(y)\, =\, \ori\, W^u_G(y).$$
By~\eqref{eq:adhoc-bis} we get
$$\ori \,  \overline{\call}_K(u,y)\, =\, \ori \,  \overline{\call}_{G} (u,y),$$
so we may choose 
$$\sigma^{K,1,2}_{u,y}\, =\, \sigma^G_{u,y}.$$

\noindent 5. If $x\in \{(0,0)\}\times X$ is a critical  point of $f_0$ and $v\in \{(0,1)\}\times X$ is a critical point  of $f_2$  we have
\begin{align*} \ori \, \overline{W}_K^u(x)& =\left(\tfrac\partial{\partial \tau}, \tfrac\partial{\partial t}, \ori \, \overline{W}^{u} _{f_0}(x)\right)\\
&=\left( \tfrac\partial{\partial t}, \ori \, \overline{W}^{u} _{H}(x)\right)\end{align*}
and  $$\ori\, W^u_{K}(v)\, =\, \left(  \tfrac\partial{\partial \tau}, \ori\, W^u_{f_2}(v) \right) \, =\, \left( \tfrac\partial{\partial \tau}, \ori\, W^u_{H}(v) \right).$$
By~\eqref{eq:adhoc-bis} this implies again  
$$\ori \,  \overline{\call}_K(x,v)\, =\, (-1)^{|x|-|v|+1}\,  \ori \,  \overline{\call}_{f_0} (x,z),$$ hence we may choose 
$$\sigma^{K,0,2}_{x,v} \, =\, (-1)^{|x|-|v|+1}\sigma^H_{x,v}.$$

\noindent 6. Finally, if $v\in \{(0,1)\}\times X$  and $y\in \{(1,1)\}\times X$ are both critical points  of $f_2$ we get 
$$ \ori \, \overline{W}_K^u(v)\,  =\,   \left(  \tfrac\partial{\partial \tau}, \ori\, \overline{W}^u_{f_2}(v) \right)\, =\,  \ori \, \overline{W}_\Id^u(v)$$
and $$\ori\,  W^u_K(y)\, =\, \ori\, W^u_{f_2}(y) \, =\, \ori \, W^u_\Id(y).$$ Once again by~\eqref{eq:adhoc-bis} we obtain
$$\ori \,  \overline{\call}_K(v,y)\, =\, \ori \,  \overline{\call}_{\Id} (v,y),$$
which enables us to choose 
$$\sigma^{K,1,2}_{v,y}\, =\, \sigma^{\Id}_{v,y}$$
using the recipe of Lemma~\ref{representingchain}. 

To finish the proof of Lemma~\ref{change-of-sign-bis} it remains now to construct $S_{x,y}^K$. This is done inductively as in the proof of Lemma~\ref{representingchain}. 
\end{proof}

We now get back to the proof of Proposition~\ref{end-independence}. From Lemma~\ref{change-of-sign-bis} we get the relation
\begin{align}
  \begin{split}
\p S_{x,y}^K & = \sum_{z\in \Crit(f_0)} (-1)^{|x|-|z|} s_{x,z}^{f_0}\times S_{z,y}^K+  \sum_{w\in \Crit(f_2)} (-1)^{|x|-|w|} S_{x,w}^{K}\times s_{w,y}^{f_2}\label{eq:rcs-homotopies}\\
&\, \, \, \, \, \,  -  \sum_{u\in \Crit(f_1)} \sigma_{x,u}^{F}\times \sigma_{u,y}^{G}+\sum_{v\in \Crit(f_2)}  \sigma_{x,v}^{H}\times \sigma_{v,y}^{\Id}.
\end{split}
\end{align}
As previously we convert this relation into one in $C_*(\Omega X)$. To this purpose we choose a family of trees $(\caly_{\tau,t})_{\tau,t\in [0,1]^2}$  with fixed root $\star$,  which is constant for $t=1$ (where our homotopy of functions is constant between $f_2$ and itself) and which extends  the homotopies of trees already defined  by $\Xi$, $\Xi'$ and $\Xi''$  on $ (\tau,t)\in \p ([0,1]^2)$. Then we consider the projection 
$$p: [0,1]^2\times X \ri \bigcup_{(\tau, t) \in [0,1]^2} \{(\tau,t)\} \times X/\caly_{\tau, t} = [0,1]^2\times X/\caly,$$ where $\caly= \bigcup_{\tau,t}\caly_{\tau,t}$, and finally choose a homotopy inverse for $p$ denoted
$$\Theta :  \bigcup_{(\tau, t) \in [0,1]^2} \{(\tau,t)\} \times X/\caly_{\tau, t} \ri [0,1]^2\times X$$ and formed by a family  $\theta_{\tau,t} :\{(\tau,t)\}\times X/\caly_{\tau,t}\ri \{(\tau,t)\}\times X$ which  is constant for $t=1$ and extends what is already defined by $\Xi$, $\Xi'$ and $\Xi''$ on $\p([0,1]^2)$. 

Now, as in the  initial construction of the enriched complex, we parametrize the broken orbits of $\overline{\call}_K(x,y)$ by the values of $K$ and get continuous maps
$$\Gamma_{x,y} : \overline{\call}_K(x,y)\ri \calp_{x\ri y}([0,1]^2\times X)$$ and   evaluations
$$q_{x,y}: \overline{\call}_K(x,y)\ri \Omega  X$$ defined by 
$$q_{x,y}\, =\, \pi\circ\Theta\circ p  \circ \Gamma_{x,y}$$
where $\pi: [0,1]^2\times X\ri X$ is the projection. These evaluations satisfy as usual 
$$q_{x,y}(\lambda,\lambda') \, =\, q_{x,z}(\lambda)\#q_{z,y}(\lambda').$$ By applying $q_{x,y,*}$ to~\eqref{eq:rcs-homotopies} and denoting
$$h_{x,y}\, =\, q_*(S^K_{x,y})\, \in \, C_{|x|-|y|+1}(\Omega X)$$
we therefore get 
\begin{align*} \p h_{x,y} & = \sum_{z\in \Crit(f_0)} (-1)^{|x|-|z|} m_{x,z}^{f_0}\cdot h_{z,y}+  \sum_{w\in \Crit(f_2)} (-1)^{|x|-|w|} h_{x,w}\cdot m_{w,y}^{f_2}\\
&\, \, \, \, \, \,  -  \sum_{u\in \Crit(f_1)} (-\nu^\Xi_{x,u})\cdot  (-\nu_{u,y}^{\Xi'})+\sum_{v\in \Crit(f_2)}  (-\nu_{x,v}^{\Xi''})\cdot (-\nu_{v,y}^{\Id}),
\end{align*}
which simplifies into 
\begin{align*} \p h_{x,y} & = \sum_{z\in \Crit(f_0)} (-1)^{|x|-|z|} m_{x,z}^{f_0}\cdot h_{z,y}+  \sum_{w\in \Crit(f_2)} (-1)^{|x|-|w|} h_{x,w}\cdot m_{w,y}^{f_2}\\
&\, \, \, \, \, \,  -  \sum_{u\in \Crit(f_1)} \nu^\Xi_{x,u}\cdot  \nu_{u,y}^{\Xi'}+\sum_{v\in \Crit(f_2)}  \nu_{x,v}^{\Xi''}\cdot \nu_{v,y}^{\Id}.
\end{align*}

Here  $\nu_{x,y}^\Xi$, $\nu_{x,y}^{\Xi'}$ and $\nu_{x,y}^{\Xi''}$ are the chains in $C_{|x|-|y|}(\Omega X)$ which we used to define the continuation morphisms $\Psi^{\Xi}$, $\Psi^{\Xi'}$ respectively $\Psi^{\Xi''}$ by~\eqref{eq:definition-continuation}; remember that they are obtained by evaluating the corresponding representing chain systems $(\sigma_{x,y})$ by the map $-q_{x,y}$. Remember also that the morphism  $\Psi^\Id$ defined by $\nu^\Id$ is homotopic to the identity at chain level and therefore induces the identity in homology by Proposition~\ref{Id=id} in Step 1; this was the reason of choosing the  ``$-$'' sign in front of the evaluation. 

Remark also that the above relation is quite similar to~\eqref{eq:alg-homotopy-eqn} -- the relation which defines the algebraic homotopy between enriched complexes. Taking all this into account and using the definition of the continuation morphism~\eqref{eq:definition-continuation}, we infer that $\Psi^{\Xi'}\circ \Psi^{\Xi}$ and $\Psi^{\Id} \circ\Psi^{\Xi''}$ are homotopic. Taking into account that $\Psi^{\Id}$ is homotopic to the identity, we infer that $\Psi^{\Xi'}\circ \Psi^{\Xi}$ and $\Psi^{\Xi''}$ are homotopic, and in particular they induce the same maps in homology. This finishes the proof of Proposition~\ref{end-independence}.
\end{proof}

\underline{End of the proof of Theorem~\ref{independence}}. 
We prove item 1. The independence of the chain homotopy type of the continuation map $\Psi^\Xi$ with respect to the choice of $\Xi$ follows by applying Proposition~\ref{end-independence} with $\Xi_0$ arbitrary, $\Xi_1=\Xi_2$, $\Xi$ and $\Xi''$ arbitrary, and $\Xi'$ given by Proposition~\ref{Id=id}. That $\Psi^\Xi$ is a chain homotopy equivalence follows by applying Proposition~\ref{end-independence} with $\Xi_2=\Xi_0$ and $\Xi''$ the continuation data given by Proposition~\ref{Id=id}. 

We prove item 2. The first half of item 2) follows by applying Proposition~\ref{end-independence} with $\Xi_0=\Xi_1=\Xi_2$, $\Xi$ arbitrary and $\Xi'$, $\Xi''$ given by Proposition~\ref{Id=id}. The second half of item 2) follows directly from Proposition~\ref{end-independence}. 
%take $\Xi_1=\Xi_2$ and $\Xi'$ the particular continuation data  of Proposition~\ref{Id=id} for which    $\Psi^{\Xi'}$ is the identity in homology. We get 
%$$\Psi^{\Xi}\, =\, \Psi^{\Xi''}$$ in homology for two arbitrary continuation morphisms between  the enriched complexes associated to $\Xi_0$ and $\Xi_1$. The relations $\Psi_{00}\, =\, \Id$ and $\Psi_{12}\circ\Psi_{01}\, =\, \Psi_{02}$
%are direct consequences of Propositions~\ref{Id=id} and~\ref{end-independence}. The proof of the theorem is now complete. 
\end{proof} 

This theorem enables us to define Morse homology with DG-coefficients  more accurately than just up to a isomorphism: there is a precise identification $\Psi_{01}$ between the homologies defined with two sets of data $\Xi_0$ and $\Xi_1$. This will be particularly useful in~\S\ref{sec:functoriality} in order to define morphisms between the enriched Morse homologies of two different manifolds $X$ and $Y$ which are associated to continuous maps $\varphi:X\ri Y$. 

 \rmk\label{rmk:op2} The statement of Remark \ref{rmk:op1} is still valid in the DG-setting. The same arguments show that if we replace the set of data $\Xi$ by $\Xi^{\mathrm{op}}$, which coincides with $\Xi$ except for the fact that the orientations of the unstable manifolds are chosen to be opposite, then $C_*(X,\Xi^{\mathrm{op}}; \cF)=C_*(X,\Xi;\cF)$ 

but the identification morphism $\Psi_{0,1}^{\mathrm{op}}: C_*(X, \Xi_{0};\calf)\ri C_*(X, \Xi_1^{\mathrm{op}};\calf)$ differs from $\Psi_{0,1}: C_*(X, \Xi_{0};\calf)\ri C_*(X, \Xi_1;\calf)$ by a sign, consequence  of the change of  orientation in all the trajectory spaces which define this morphism: we therefore have $\Psi_{01}^{\mathrm{op}}=-\Psi_{01}$. 

We get the same effect when we change $\Xi_0$ into $\Xi_0^{\mathrm{op}}$ with  $\Xi_1$ left unchanged. 
\kmr

\section[Independence from the choice of basepoint]{Identification of twisted complexes defined at different basepoints}\label{sec:identification-basepoints} 

Let $\star_0$ and $\star_1$ be two basepoints in $X$ and denote $\Omega_0 X$ and $\Omega_1 X$ the loop spaces based at $\star_0$, respectively at $\star_1$.  Consider two sets of data $\Xi_0$ and $\Xi_1$ defined respectively at these two basepoints. The problem one faces when trying to compare  twisted complexes defined with these two sets of data is that --- unlike in the case of usual local systems --- there is no canonical ring morphism between $C_*(\Omega_0X)$ and $C_*(\Omega_1 X)$ associated to a path $\gamma$ between $\star_0$ and $\star_1$, and therefore we have a priori no means to identify the DG-modules $\calf_0$ and $\calf_1$ over these rings.    To  overcome this difficulty we will consider a situation where $\calf_0$ and $\calf_1$ are related. Namely suppose that there is a continuous map $\eta :X\ri Y$ with values in some topological space $Y$  with  basepoint $\star_Y$ such that 
$$\eta(\star_0)\, =\, \eta(\star_1)\, =\, \star_Y.$$
This defines $\eta_0:\Omega_0 X\ri \Omega Y$ and $\eta_1:\Omega_1 X\ri \Omega Y$. We will work under  the assumption that
\begin{equation}\label{eq:identification-basepoints}
\calf_i\, =\, \eta_i^*\calf \, \, \, \,  \mathrm{for} \, \, \, i=0,1,
\end{equation} 
where $\calf$ is a DG-module over $C_*(\Omega Y)$. 

\begin{proposition}\label{identification} (compare with Theorem~\ref{independence}) Let $\Xi_0$ and $\Xi_1$ be as above and let $\calf_i$ be two DG-modules over $\Omega_iX$ for $i=0,1$ which satisfy~\eqref{eq:identification-basepoints}.  We consider continuation data $\Xi$ from $\Xi_0$ to $\Xi_1$ as before (note that the homotopy of trees moves their respective roots from $\star_0$ to $\star_1$). Each such choice determines a 
continuation map which is a chain homotopy equivalence 
%quasi-isomorphism 
$$\Psi_{01} : C_*(X, \Xi_0; \calf_0) \ri C_*(X,\Xi_1;\calf_1)$$
whose chain homotopy type only depends on $\Xi_i$, $\calf$ and $\eta$.  Moreover:\\
a) If $\star_0=\star_1$  then $\Psi_{01}$ coincides with the continuation map defined in Proposition~\ref{invariance} (for $\calf_0=\calf_1=\eta^*\calf$).\\
b) Given three basepoints $\star_0$, $\star_1$ and $\star_2$ such that $\eta(\star_i)=\star_Y$, three sets of data $\Xi_i$ respectively corresponding to these  points, and three sets of continuation data from $\Xi_0$ to $\Xi_1$, from $\Xi_1$ to $\Xi_2$, and from $\Xi_0$ to $\Xi_2$, with associated continuation maps $\Psi_{01}$, $\Psi_{12}$, respectively $\Psi_{02}$, the composition  
$\Psi_{12}\circ\Psi_{01}$ is chain homotopic to $\Psi_{02}$. In particular we have in homology 
$$\Psi_{12}\circ\Psi_{01}\, =\, \Psi_{02}.$$
\end{proposition}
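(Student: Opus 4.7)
The plan is to repeat the construction of Theorem~\ref{independence} but with all evaluations landing in $\Omega Y$ (based at $\star_Y$) via post-composition with $\eta$, rather than in $\Omega X$. This is forced by the fact that when the root of the tree homotopy $\caly_\tau$ moves from $\star_0$ to $\star_1$, the naive composition $\pi\circ\Theta\circ p\circ\Gamma^F$ produces a path in $X$ from $\star_0$ to $\star_1$, which is not a loop; composing with $\eta$ makes it a genuine loop at $\star_Y=\eta(\star_0)=\eta(\star_1)$. Thanks to Remark~\ref{rem:pullback} and the identification $\calf_i=\eta_i^*\calf$, the resulting $\Omega Y$-valued cocycle then acts on the original $\Omega_iX$-twisted complexes via the push-forward twisting cocycles $\widetilde m^i_{x,y}=(\eta_i)_*(m^i_{x,y})\in C_*(\Omega Y)$, under which $C_*(X,\Xi_i;\calf_i)$ is identified with the complex built from $\widetilde m^i$ and the $C_*(\Omega Y)$-module $\calf$.

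Concretely, I would choose continuation data $\Xi=(F,\xi,o_F,s^F_{x,y},\caly,\Theta)$ exactly as in~\S\ref{sec:invariance-enriched}, the only novelty being that the homotopy of rooted trees $\caly_\tau$ has root tracing a path from $\star_0$ to $\star_1$, and $\Theta$ is a homotopy inverse to the collapse projection $p:[0,1]\times X\to\bigcup_\tau\{\tau\}\times X/\caly_\tau$. Then I define
\[
q_{x,y}\, =\, \eta\circ\pi\circ\Theta\circ p\circ\Gamma^F:\overline{\call}_F(x,y)\ri\Omega Y,
\]
which satisfies the concatenation relation $q_{x,y}(\lambda,\lambda')=q_{x,z}(\lambda)\#q_{z,y}(\lambda')$, and set $\nu_{x,y}=-q_{x,y,*}(\sigma_{x,y})\in C_*(\Omega Y)$. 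The algebraic manipulation of~\eqref{eq:newinvarianceforchains}--\eqref{eq:enriched-morphism} carries over verbatim to show that $(\nu_{x,y})$ satisfies the continuation relation~\eqref{eq:DGcont} with respect to the push-forwards $\widetilde m^i_{x,y}$; by Proposition~\ref{prop:continuation-DG} and Remark~\ref{rem:pullback} this yields the desired chain map $\Psi_{01}:C_*(X,\Xi_0;\calf_0)\ri C_*(X,\Xi_1;\calf_1)$.

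For part~(a), when $\star_0=\star_1$ one chooses $\caly_\tau$ with constant root $\star_0$, so that $\pi\circ\Theta\circ p\circ\Gamma^F$ already lands in $\Omega X$ at $\star_0$ and equals the map $q^X_{x,y}$ of Proposition~\ref{invariance}; then $q_{x,y}=\eta\circ q^X_{x,y}$ and the push-forward identification immediately gives agreement with the continuation map of Proposition~\ref{invariance}. For the chain homotopy invariance with respect to the choice of $\Xi$ and for part~(b), I would adapt the proof of Proposition~\ref{end-independence} using a Morse function $K$ on $[-\epsilon,1+\epsilon]^2\times X$, a homotopy-of-homotopies of trees $\caly_{\tau,t}$ whose roots at the four corners are $\star_0,\star_1,\star_2,\star_2$ and whose root paths along the four edges match the prescribed edge data coming from $\Xi,\Xi',\Xi''$ (and a constant homotopy at $\star_2$ on the remaining edge), a compatible homotopy inverse $\Theta$, and the same formula $\eta\circ\pi\circ\Theta\circ p\circ\Gamma^K$. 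The resulting chains $h_{x,y}\in C_*(\Omega Y)$ produced from the higher representing system $S^K_{x,y}$ then provide the required chain homotopy in the push-forward complexes, which transfers back via Remark~\ref{rem:pullback}.

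The main obstacle will be topological rather than algebraic: for part~(b) one must fill in a triangle of prescribed boundary tree homotopies (whose roots trace three paths in $X$ between the pairs of basepoints) by a $2$-parameter family $\caly_{\tau,t}$, together with a homotopy inverse $\Theta$ matching the given one on $\p([0,1]^2)$. The existence of such an extension follows from contractibility of the relevant spaces of trees and of homotopy inverses to a fixed collapse map, together with the homotopy extension property, but some bookkeeping is required to match the three given edge homotopies; once the extension is in hand, the algebraic manipulations on $S^K_{x,y}$ are identical to those in~\S\ref{sec:invariance-continuation}.
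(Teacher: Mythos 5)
Your approach is essentially the same as the paper's: construct the continuation cocycle as in Proposition~\ref{invariance} (now landing in $\calp_{\star_0\to\star_1}X$), post-compose with $\eta$ to obtain a $C_*(\Omega Y)$-valued cocycle, and identify the twisted complexes $C_*(X,\Xi_i;\calf_i)$ with the push-forward complexes via Remark~\ref{rem:pullback}; the only cosmetic difference is that the paper first forms $\nu_{x,y}\in C_*(\calp_{\star_0\to\star_1}X)$ and then sets $\mu_{x,y}=\eta_*(\nu_{x,y})$, whereas you build $\eta$ directly into the evaluation map $q_{x,y}$ — these produce the same chain. Your remarks on the filling problem in part~(b) are a reasonable point of care, though the paper's own proof of Proposition~\ref{end-independence} already silently assumes exactly such an extension of tree families, so this is not a gap specific to the basepoint-varying case.
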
 

\begin{proof} Proceeding exactly as in the proof of Proposition~\ref{invariance} we get a cocycle $(\nu_{x,y})$ which satisfies equation~\eqref{eq:enriched-morphism}:
 $$\partial \nu_{x,y}=\sum _{z\in \Crit(f_0)}m^{(0)}_{x,z}\cdot\nu_{z,y}-\sum _{w\in \Crit(f_1)}(-1)^{|x|-|w|}\nu_{x,w} \cdot m^{(1)}_{w,y}.$$
 The difference here  is that, since $\star_0$ and $\star_1$ may not coincide,  this cocycle is a chain on the space of paths from $\star_0$ to $\star_1$, namely 
 $$\nu_{x,y}\in C_{|x|-|y|}(\calp_{\star_0\ri\star_1}X).$$
 We  use $\eta$ to transform it into a chain over a space of loops and write 
 $$\mu_{x,y} \, =\, \eta_*(\nu_{x,y})\, \in\, C_{|x|-|y|}(\Omega Y).$$
 The equation above then becomes 
 \begin{equation}\label{eq:identificationcocycle} 
\partial \mu_{x,y}=\sum _{z\in \Crit(f_0)}\eta_*(m^{(0)}_{x,z})\cdot\mu_{z,y}-\sum _{w\in \Crit(f_1)}(-1)^{|x|-|w|}\mu_{x,w} \cdot \eta_*(m^{(1)}_{w,y}).
\end{equation} 
To get the desired morphism $\Psi_{01}$  we use the canonical  identification from Remark~\ref{rem:pullback} for $i=0,1$, namely $C_*(X,\Xi_i;\calf_i)\, \equiv\, C_*(X, \eta_*(m^{(i)}_{x,y}); \calf)$, and also the morphism of complexes 
$C_*(X, \eta_*(m^{(0)}_{x,y}); \calf)\ri C_*(X, \eta_*(m^{(1)}_{x,y}); \calf)$ defined by  the cocycle $(\mu_{x,y})$ via Proposition~\ref{prop:continuation-DG} (equation~\eqref{eq:identificationcocycle} is of type~\eqref{eq:DGcont}).

It is obvious from the construction that for $\star_0=\star_1$ we get the usual continuation morphism. In particular $\Psi_{00}$ is homotopic to the identity. The fact that $\Psi_{01}$ only depends on $\Xi_i$, $\eta$ and $\calf$ is proved exactly in the same way as in Theorem~\ref{independence}, using an  analogue of Proposition~\ref{end-independence} in the proof of which we need to convert chains on paths in $X$ into chains on the loop space of $Y$ using $\eta$.  Item b) and the fact that $\Psi_{01}$ is a chain homotopy equivalence are proved as in Theorem~\ref{independence}.  
\end{proof}

\begin{remark}  \label{rmk:F01}
The following particular case of the previous general result is instructive: we consider an embedded path $\gamma : [0,1] \ri X$ between $\star_0$ and $\star_1$ and let $\eta$ be the projection $\eta : X\ri X/\gamma$.  
Denote $\star$ the basepoint of $X/\gamma$ and consider a DG-module $\calf$ over the loop space $\Omega(X/\gamma)$ based at this point. Then, for $\calf_i= \eta_i^*\calf$ as in~\eqref{eq:identification-basepoints}, we get a chain homotopy equivalence 
$\Psi_{01} : C_*(X, \Xi_0; \calf_0) \ri C_*(X,\Xi_1;\calf_1)$
whose chain homotopy type only depends on $\Xi_i$, $\gamma$ and $\calf$. 
\end{remark}

\begin{remark} Consider a non-DG local system $M$ on a connected manifold $X$, viewed as a representation of the fundamental groupoid of $X$. For a choice of basepoint $x$, the local system can be equivalently described as the $\Z[\pi_1(X,x)]$-module given by the fiber at $x$, denoted $M_x$. While this module structure on any fiber $M_x$ is canonical, the identification between the modules corresponding to two different basepoints $\star_0$ and $\star_1$ is not: it depends on the choice of a (homotopy class of) path $\gamma$ from $\star_0$ to $\star_1$. Subsequently, the same is true for the corresponding homologies with local (non-DG) coefficients.

This type of identification can be retrieved as a particular case of the construction from Proposition~\ref{identification}. Assume that $\gamma$ is embedded. Following up on the notation from Remark~\ref{rmk:F01}, choose a homotopy inverse $\chi : (X/\gamma, \star)\ri (X, \star_0)$ for the projection $\eta : X\ri X/\gamma$ and take for $\calf$ the pullback $\chi^*M_0$ of  $M_0=M_{\star_0}$ -- the fiber at $\star_0$ of our local system. Seeing $\calf$ as a DG-module over $C_*(\Omega_0X)$ supported in degree $0$ we claim that  the identification $\Psi_{01}$ given by the previous remark is the same as the one given by the path $\gamma$ between the homologies with local coefficients at $\star_0$ and $\star_1$. Since $\chi$ is a homotopy inverse for $\eta$, the $\Z[\pi_{1}(X,\star_0)]$-action on $\calf_0=\eta_0^*\calf$ is the same as the one on $M_0$, and it is easy to check that the $\Z[\pi_1(X,\star_1)]$-action on $\calf_1=\eta_1^*\calf$ is the same as the one on $M_1=\Phi_\gamma(M_0)$, where $\Phi_\gamma:M_0\stackrel\simeq\longrightarrow M_1$ is the 
identification given by the monodromy of the local system along the path $\gamma$ and $M_1$ is the fiber of that same local system at $\star_1$. 
Then, taking the same Morse function, pseudo-gradient and tree in the data $\Xi_0$ and $\Xi_1$, it is quite straightforward that the two identifications coincide. \end{remark}

\section[Identification of complexes from isotopies]{Canonical identification between twisted complexes defined by an isotopy}\label{sec:identification-isotopy}

We discuss in this section a particular case of continuation morphism in the context of isotopies.

We first need a piece of preliminary notation. Let $\Xi= (f, \xi, o,  s_{x,y},\caly,\theta)$ be a set of data constructed at a basepoint $\star\in X$. Let $\phi$ be a diffeomorphism of $X$ which is isotopic to the identity; we do not require $\phi(\star)=\star$. The critical points of $f\circ\phi^{-1}$ are the images through $\phi$ of the critical points of $f$, and if we choose as a pseudo-gradient for $f\circ \phi^{-1}$ the vector field $\phi_*\xi$ we obviously obtain $\overline{\call}_{f\circ \phi^{-1}}(\phi(x),\phi(y)) \, =\, \phi (\overline{\call}_f(x,y))$. We denote  
$$\Xi^\phi=\left(f\circ\phi^{-1},  \phi_*(\xi),  \phi(o), \phi_*(s_{x,y}), \phi(\caly), \theta^\phi=\phi\circ\theta\circ\phi^{-1}\right).$$
Since $\theta^\phi$ is a homotopy inverse of the projection $\phi\circ p\circ\phi^{-1}\ : X\ri X/\phi(\caly)$, we conclude that $\Xi^\phi$ is a set of Morse data associated to the basepoint $\phi(\star)$.

%\begin{remark}\label{rem} 
Let now $\Xi_0$ and $\Xi_1$ be two sets of data constructed at the basepoint $\star\in X$. Let also $(\phi_t)$ be an isotopy on $X$ starting at the identity, and denote $\phi=\phi_1$. Our goal is to describe a continuation cocycle from $\Xi_0$ to $\Xi_1^\phi$. 

We denote $\Phi: [0,1]\times X \ri [0,1]\times X$ the diffeomorphism defined by $\Phi(t,x)\, =\, (t, \phi_t(x))$.
Given $\Xi=(F, \xi,  o_F,  s_{x,y}^F, \caly, \Theta)$ Morse continuation data  from $\Xi_0$ to $\Xi_1$ on $[0,1]\times X$, let  
$\Xi^\Phi\, =\, \left(F\circ\Phi^{-1}, \Phi_*(\xi),  o_{F\circ\Phi}, \Phi_*(s_{x,y}^F), \Phi(\caly), \Phi\circ\Theta\circ\Phi^{-1}\right)$ 
be the induced continuation data from $\Xi_0$ to $\Xi_1^\phi$. 
The following proposition is proved by inspection of the definitions. 

\begin{proposition} 
Given $x\in \Crit(f_0)$ and $y\in \Crit(f_1)$, the continuation data $\Xi$ and $\Xi^\phi$ determine as in Proposition~\ref{invariance} (see~\eqref{eq:invariancelecture}) chains 
$$
\overline{\nu}_{x,y}\, =\, -(\Theta\circ p\circ\Gamma^F)_*(s_{x,y}^F)\, \in \, C_{|x|-|y|}\left(\calp_{(0,\star)\ri(1,\star)} [0,1]\times X\right)$$ 
and 
$$
\overline{\nu}_{x,\phi(y)}\,   \in \, C_{|x|-|y|}\left(\calp_{(0,\star)\ri(1,\phi(\star))} [0,1]\times X\right),
$$
which give rise in turn to continuation cocycles $\nu_{x,y}\, =\, \pi_*(\overline{\nu}_{x,y})$ and $\nu_{x,\phi(y)}\, =\, \pi_*(\overline{\nu}_{x,\phi(y)})$. These chains are related by the formula  
\begin{equation}\label{eq:identification-isotopy}
\overline{\nu}_{x,\phi(y)}\, =\, \Phi_*(\overline{\nu}_{x,y}).
\end{equation} 
\qed
\end{proposition}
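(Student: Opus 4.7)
The statement is a naturality assertion: every piece of data entering the construction of $\overline{\nu}_{x,\phi(y)}$ from $\Xi^\Phi$ is the pushforward under $\Phi$ of the corresponding piece of data for $\overline{\nu}_{x,y}$, and the construction is functorial. My plan is to verify this compatibility step by step, following the chain of maps that defines $\overline{\nu}$.

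First I would unpack the pieces. Since $\Phi$ is a diffeomorphism of $[0,1]\times X$ intertwining $\xi$ with $\Phi_*\xi$, it induces a diffeomorphism, still denoted $\Phi$,
$$\overline{\call}_F(x,y) \stackrel{\simeq}{\longrightarrow} \overline{\call}_{F\circ\Phi^{-1}}(x,\phi(y)),$$
and the chosen representing chain system for $\Xi^\Phi$ is by definition $\Phi_*(s^F_{x,y})$. Orientations are transported consistently because $o_{F\circ\Phi^{-1}}$ is the pushforward of $o_F$.

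Next I would check the three maps that build the evaluation $\Theta^\Phi\circ p^\Phi\circ \Gamma^{F\circ\Phi^{-1}}$ intertwine with $\Phi$. For the parametrization by values: given $\lambda\in\overline{\call}_F(x,y)$, if $\tilde\lambda=\Phi(\lambda)$ then at the point $\tilde\lambda(r)=\Phi(\lambda(r))$ we have $(F\circ\Phi^{-1})(\tilde\lambda(r))=F(\lambda(r))$, so that
$$\Gamma^{F\circ\Phi^{-1}}(\Phi(\lambda))\, =\, \Phi\circ\Gamma^F(\lambda)$$
as paths in $[0,1]\times X$. For the projection onto the quotient: by construction of $\Xi^\Phi$, the tree at time $t$ is $\phi_t(\caly_t)$, and $\Phi$ descends to a fiber-preserving homeomorphism $\ol\Phi$ between the two quotient spaces such that $p^\Phi\circ\Phi=\ol\Phi\circ p$. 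For the homotopy inverse: $\Theta^\Phi=\Phi\circ\Theta\circ\Phi^{-1}$ by definition of $\Xi^\Phi$, so that $\Theta^\Phi\circ\ol\Phi=\Phi\circ\Theta$. Stringing these three identities together yields the key intertwining
$$(\Theta^\Phi\circ p^\Phi\circ\Gamma^{F\circ\Phi^{-1}})\circ\Phi\, =\, \Phi\circ(\Theta\circ p\circ\Gamma^F).$$

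Finally, applying this identity to the pushforward $\Phi_*(s^F_{x,y})$ and using functoriality of $(\cdot)_*$ on cubical chains gives
\begin{align*}
\overline{\nu}_{x,\phi(y)} &=\, -(\Theta^\Phi\circ p^\Phi\circ\Gamma^{F\circ\Phi^{-1}})_*\bigl(\Phi_*(s^F_{x,y})\bigr)\\
&=\, -\Phi_*\bigl((\Theta\circ p\circ\Gamma^F)_*(s^F_{x,y})\bigr)\\
&=\, \Phi_*(\overline{\nu}_{x,y}),
\end{align*}
which is~\eqref{eq:identification-isotopy}. The only mildly delicate point will be the quotient-level step: one has to be a bit careful in choosing $p^\Phi$ and $\Theta^\Phi$ so that $\Phi$ genuinely descends to a homeomorphism between the collapsed spaces and so that $\Phi\circ\Theta\circ\Phi^{-1}$ is an honest homotopy inverse of $p^\Phi$; both facts are immediate from the explicit formulas, but they are the heart of the argument.
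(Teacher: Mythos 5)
Your proof is correct and is exactly the "inspection of the definitions" the paper alludes to (recall the paper states the proposition with only a terminal \qed and the preceding sentence "proved by inspection of the definitions"). You spell out the three intertwining identities — for $\Gamma$, for the collapsing projection, and for $\Theta$ — that make the pushforward commute with the construction, which is precisely the content of the definitional check; there is no divergence in approach.
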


%Let $\Xi=(F, \xi, s_{x,y}^F, \caly, \Theta)$ be Morse data on $[0,1]\times X$ which we use to define the continuation morphism $\Psi^\Xi$ from $\Xi_0$ to $\Xi_1$. For critical points $x\in \Crit(f_0)$ and $y\in \Crit(f_1)$ we denote 
%$$\overline{\nu}_{x,y}\, =\, -(\Theta\circ p\circ\Gamma^F)_*(s_{x,y}^F)\, \in \, C_{|x|-|y|}\left(\calp_{(0,\star)\ri(1,\star)} [0,1]\times X\right)$$ the chain produced as in the proof of Proposition~\ref{invariance} (see~\eqref{eq:invariancelecture}): the continuation cocycle $\nu_{x,y}\in C_{|x|-|y|}(\Omega X)$ defined in that proof is therefore 
%$$\nu_{x,y}\, =\, \pi_*(\overline{\nu}_{x,y}),$$ 
%where $\pi :[0,1]\times X\ri X$ is the projection.  
%
%%Let now $(\phi_t)$ be an isotopy on $X$ starting at the identity, denote $\phi=\phi_1$ and let 
%Denote now $\Phi: [0,1]\times X \ri [0,1]\times X$ the diffeomorphism defined by 
%$$\Phi(t,x)\, =\, (t, \phi_t(x)).$$
%Then $$\Xi^\Phi\, =\, \left(F\circ\Phi^{-1}, \Phi_*(\xi), \Phi_*(s_{x,y}^F), \Phi(\caly), \Phi\circ\Theta\circ\Phi^{-1}\right)$$ can be used as continuation data on $[0,1]\times X$ from $\Xi_0$ to $\Xi_1^\phi$. Moreover, denoting as above $$\overline{\nu}_{x,\phi(y)}\,   \in \, C_{|x|-|y|}\left(\calp_{(0,\star)\ri(1,\phi(\star))} [0,1]\times X\right)$$ the chain produced by $\Xi^\Phi$, we have 
%\begin{equation}\label{eq:identification-isotopy}
%\overline{\nu}_{x,\phi(y)}\, =\, \Phi_*(\overline{\nu}_{x,y}).
%\end{equation} 

\begin{remark} Note that we could have chosen $\Xi_0$ and $\Xi_1$ to be associated to different basepoints $\star_i$. This would have given rise to a cocycle
$$\overline{\nu}_{x,y}\,  \in \, C_{|x|-|y|}\left(\calp_{(0,\star_0)\ri(1,\star_1)} [0,1]\times X\right)$$
satisfying an analogous conclusion. 
\end{remark}

%%%%%%%%%%%%%%%%%%%%%%%%%%%%%%%%%%%%
%%%%%%%%%%%%%%%%%%%%%%%%%%%%%%%%%%%%
\chapter{Fibrations}\label{sec:fibrations} 
%%%%%%%%%%%%%%%%%%%%%%%%%%%%%%%%%%%%
%%%%%%%%%%%%%%%%%%%%%%%%%%%%%%%%%%%%

 In this chapter we analyze the important example of Hurewicz fibrations and prove Theorem~A from the Introduction. 

\section{Lifting functions}\label{sec:lifting-functions}

Given a topological space $X$ we denote as above $\cP X$ the space of Moore paths 
with free endpoints in $X$, and $\ev_0,\ev_1:\cP X\to X$ the evaluation maps at the initial point, respectively at the endpoint of the path. Recall that by definition, the elements of $\cP X$ are continuous paths $\gamma:[0,a]\to X$ defined on intervals of arbitrary length $a\ge 0$ (see for example~\cite[\S5.1]{CarlssonMilgram}). 
Given a basepoint $\star\in X$  we denote $\cP_{\star \to X} X = \ev_0^{-1}(\star)$ the space of Moore paths starting at $\star$, and $\Omega X=\Omega_\star X$ the space of Moore loops based at $\star$. The latter is a topological monoid with respect to concatenation.

%A continuous map $p:E\to X$ is said to have the \emph{homotopy lifting property with
%respect to a space $X$} if, for any homotopy $g:X\times [0,1]\to X$ and any map $\bar g_0:X\to E$ such that $p\bar g_0=g(\cdot,0)$, i.e., $\bar g_0$ lifts $g_0=g(\cdot,0)$, there exists a homotopy $\bar g: X\times [0,1]\to E$ such that $p\bar g = g$ and $\bar g(\cdot,0)=\bar g_0$.
%
%$$
%\xymatrix
%@R=40pt
%@C=40pt
%{
%X \ar[r]^-{\bar g_0} \ar[d]_{(\cdot,0)} & E \ar[d]^p \\
%X\times [0,1] \ar@{.>}[ur]^-{\bar g} \ar[r]_-g & X.
%}
%$$
A \emph{fibration}, or \emph{Hurewicz fibration}, is a continuous map that is surjective and has the homotopy lifting property with
respect to all spaces. If the homotopy
lifting property only holds for finite-dimensional cells (or, equivalently, for all 
CW-complexes), we speak of a \emph{Serre fibration}. The notion of a Serre fibration is
strictly weaker than that of a Hurewicz fibration~\cite{RBrown1966}. However, any Serre fibration is homotopy equivalent and fiber weakly homotopy equivalent to a Hurewicz fibration~\cite{StromGoodwillieMO}. 
%See also Jeffrey Strom, Modern Classical Homotopy Theory, section on ``Comparing Serre and Hurewicz fibrations"
This can be seen by applying the general procedure of turning a map $\pi:E\to X$ into a fibration $\overline \pi = \pr_2\circ \ev_1:E\, _\pi\!\times_{\ev_0} \cP X\to X$. Seen through this equivalence, our discussion for fibrations can be adapted to Serre fibrations. 

Following Hurewicz~\cite{Hurewicz1955}, the homotopy lifting property with respect to all spaces for a map $\pi:E\to X$ is equivalent to the existence of a \emph{lifting function}, i.e., a map 
$$
\Phi : E \, {_\pi}\!\!\times_{\ev_{0}} \cP X\to \cP E
$$
such that $\ev_0\circ \Phi = \mathrm{pr}_1$ and $\pi\circ \Phi = \mathrm{pr}_2$. Moreover, any two lifting functions are homotopic through lifting functions (Fadell~\cite[Proposition~1]{Fadell1959}). 

Brown~\cite{Brown1959} uses a stronger form of lifting function which he calls ``(weakly) transitive" (see also~\cite{Fadell-review-Brown}). Let $\pi:E\to X$ be a fibration and fix a basepoint $\star\in X$. A \emph{ transitive}  (resp. \emph{weakly transitive}) \emph{lifting function} for $\pi$ is a map
$$
\Phi: E \, {_\pi}\!\!\times_{\ev_{0}} \cP X\to E
$$
such that $\pi\circ \Phi = \ev_1\circ \mathrm{pr}_2$, such that $\Phi(e,b)=e$ for any constant path $b\in X$ and any $e\in E$, and such that for any $e\in E$ (resp. for any $e\in \pi^{-1}(\star)$) and any two Moore paths $\gamma,\delta\in \cP X$ such that the starting point of $\gamma$ is $\pi(e)$ (resp. is $\pi(e)=\star$) and the endpoint of $\gamma$ equals the starting point of $\delta$, denoting $\gamma\#\delta$ their concatenation we have 
\begin{equation} \label{eq:liftingfunction}
\Phi(\Phi(e,\gamma),\delta) = \Phi(e,\gamma\#\delta). 
\end{equation}
It is proved in~\cite[Proposition~5.5]{DyerKahn1969} that any fibration is fiber homotopy equivalent to a fibration that admits a  transitive lifting function. For the purpose of studying chains on the total space of a fibration we can therefore assume w.l.o.g. that there exists a transitive lifting function. 
%It is proved in~\cite[Theorem~1.5]{Brown1959} that locally trivial fibrations over paracompact bases admit such weakly transitive lifting functions, cf. also~\cite{Fadell-review-Brown}. 

A transitive lifting function gives rise to a map 
$$
\Phi:F\times \Omega X\to F,\qquad F=\pi^{-1}(\star),
$$
such that 
$$
\Phi(\Phi(e,\gamma),\delta)=\Phi(e,\gamma\# \delta)
$$ 
for all $e\in F$ and $\gamma,\delta\in \Omega X$. In other words, the lifting function exhibits $F$ as a topological right module over the topological monoid $\Omega X$. The composition
$$
C_p(F) \otimes C_q(\Omega X)\stackrel{EZ}\longrightarrow C_{p+q}(F\times \Omega X)\stackrel{\Phi_*}\longrightarrow C_{p+q}(F),
$$
with $EZ$ the Eilenberg-Zilber map~\cite[\S VIII.8]{MacLane-Homology} defined on the canonical basis  by $(\sigma\otimes \tau)\mapsto (\sigma, \tau)$ defines a right $C_*(\Omega X)$-module structure on $C_*(F)$. Here we work with non-degenerate cubical chains and arbitrary coefficients. We will sometimes abuse notation and denote this composition $\Phi_*$. 
More generally, the lifting function $\Phi:F\times \cP_{\star\to X} X\to E$ determines the composition $C_p(F)\otimes C_q(\cP_{\star\to X} X) \stackrel{EZ}\longrightarrow C_{p+q}(F\times \cP_{\star\to X} X) \stackrel{\Phi_*}\longrightarrow C_{p+q}(E)$, which we will also denote simply by $\Phi_*$.

We interpret the above $C_*(\Omega X)$-module structure on $C_*(F)$ as a DG-local system on $X$, which we denote $\cF$. Since any two lifting functions are homotopic through lifting functions, we infer that the DG-local system $\cF$ is canonically defined up to homotopy on any path-connected component of $X$.
To summarize: 

\begin{proposition}
	A Hurewicz fibration $\pi:E\to X$ with path-connected base determines a DG-local system on $X$ which is canonically defined up to homotopy. The fiber of the local system is given by cubical chains on the fiber of $\pi$, and the $C_*(\Omega X)$-right module structure is determined by the choice of a lifting function.  \qed
\end{proposition}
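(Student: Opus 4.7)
The plan is to verify two items: first, that for a fixed transitive lifting function $\Phi$, the composition $\Phi_*\circ\mathrm{EZ}$ endows $C_*(F)$ with the structure of a DG right $C_*(\Omega X)$-module; and second, that different choices of lifting function give rise to canonically equivalent such structures.

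For the first claim, compatibility with differentials is automatic, since both the Eilenberg-Zilber map and the pushforward by $\Phi$ are chain maps. The unit axiom follows from $\Phi(e,b)=e$ for constant paths: the unit $1\in C_0(\Omega X)$ is represented (modulo degenerate cubes) by the constant loop at $\star$, so acting by it leaves chains on $F$ unchanged. For associativity, first observe that the cubical Eilenberg-Zilber map is strictly associative, as $(I^p\times I^q)\times I^r$ is canonically identified with $I^p\times(I^q\times I^r)$. Combined with the transitivity relation~\eqref{eq:liftingfunction} and with the strict associativity of concatenation of Moore loops, one verifies on cubical generators that
\[ \Phi_*\bigl(\Phi_*(\sigma\otimes\tau)\otimes\rho\bigr) \;=\; \Phi_*\bigl(\sigma\otimes(\tau\cdot\rho)\bigr). \]

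For the second claim, given two transitive lifting functions $\Phi_0,\Phi_1$, Fadell's result yields a homotopy $H: F \times \Omega X \times I \to F$ (through lifting functions, not necessarily transitive in the interior) with $H|_{t=i}=\Phi_i$ for $i=0,1$. Define a degree $+1$ operator $K: C_*(F)\otimes C_*(\Omega X) \to C_{*+1}(F)$ on cubical generators by
\[ K(\sigma\otimes\tau) \;=\; \pm\, H_*(\sigma\times\tau\times\mathrm{id}_I). \]
Computing the cubical boundary of $\sigma\times\tau\times\mathrm{id}_I$ and pushing forward by $H$ yields, with appropriate sign conventions, the chain homotopy relation $\partial K+K\partial=\Phi_{1,*}-\Phi_{0,*}$ between the two DG action maps.

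The main obstacle is that the intermediate lifting functions in the homotopy are in general not transitive, so $K$ does not a priori extend to a strict morphism of DG-modules between $(C_*(F),\Phi_{0,*})$ and $(C_*(F),\Phi_{1,*})$. The appropriate framework is that of the derived (or $A_\infty$) category of DG right $C_*(\Omega X)$-modules, where the homotopy $H$ together with its iterates on higher products of cubes with intervals exhibits the identity of $C_*(F)$ as a quasi-isomorphism between the two structures. For the purposes of this book --- constructing homology with DG coefficients and proving Theorem~A --- the existence of the single chain homotopy $K$ already suffices, as the derived constructions depend only on the chain homotopy class of the module structure.
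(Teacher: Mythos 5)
Your first paragraph (verification of the module axioms for a fixed transitive lifting function) is correct and fills in details that the paper's own discussion in \S\ref{sec:lifting-functions} leaves implicit; the approach is the same.

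The second part is where the proposal runs into trouble, and you have correctly put your finger on the subtle point: Fadell's homotopy need not pass through \emph{transitive} lifting functions, so the intermediate stages do not define strict $C_*(\Omega X)$-module structures, and a single degree-$(+1)$ chain homotopy $K$ between the two action maps does \emph{not} exhibit $(C_*(F),\Phi_{0,*})$ and $(C_*(F),\Phi_{1,*})$ as quasi-isomorphic objects in the (derived) category of DG right $C_*(\Omega X)$-modules. Your closing sentence, asserting that ``the derived constructions depend only on the chain homotopy class of the module structure,'' is not a justified claim: the derived tensor product $\cF\stackrel{L}\otimes_{C_*(\Omega X)}\Z$ sees the full $A_\infty$-module structure, and the data of $K$ alone is not enough --- one needs the tower of higher coherences that makes the identity map of $C_*(F)$ into an $A_\infty$-morphism between the two strict structures. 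You name the $A_\infty$ framework as ``appropriate'' and then set it aside, but that is precisely where the content of ``canonically defined up to homotopy'' lives. Two honest ways to close the gap are: (a) construct the higher components of the $A_\infty$-equivalence from $H$ (the iterated products with intervals you allude to), or (b) sidestep the issue, as the paper effectively does, by observing that Theorem~\ref{thm:fibration} identifies $H_*(X;\cF)$ with $H_*(E)$ independently of the lifting function, so that all downstream invariants are manifestly unchanged. As stated, the proposal names the right difficulty and then under-argues the resolution.
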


\section{Homology with DG-coefficients for fibrations}

We now assume that $X$ is a closed manifold and we choose a Morse function $f:X\to\R$ with a unique minimum at the basepoint $\star$. We also choose regular auxiliary data $\Xi$ for the definition of the Morse chain complex with DG-local coefficients, consisting of a Morse-Smale pseudo-gradient vector field and of a particular  embedded tree  $\caly$  whose branches are gradient trajectories joining the critical points of $f$ to the minimum $\star$ (one for each critical point). The next result is a restatement of Theorem~A from the Introduction. 

\begin{theorem} \label{thm:fibration} Let $E\to X$ be a Hurewicz fibration and let $\cF$ be the associated DG-local system on $X$. For any  Morse function $f:X\to \R$ with a unique minimum at the basepoint $\star$ and for a choice of the tree $\caly$ as above, there is a chain map 
	$$
\Psi:	C_*(X, \Xi;\cF)\longrightarrow C_*(E)
	$$
 which induces an isomorphism between the spectral sequence of the enriched Morse complex and the Leray-Serre spectral sequence of the fibration $E$. In particular $\Psi$ induces an isomorphism in homology 
 $$
 \Psi_* :H_*(X;\cF)\stackrel\simeq\longrightarrow H_*(E). 
 $$
\end{theorem}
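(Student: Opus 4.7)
The plan is to build $\Psi$ geometrically by evaluating the Latour cells into $E$ via the lifting function, then identify both spectral sequences with the same cellular complex.

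First I would pick a representing chain system $(\tau_x)_{x\in\Crit(f)}$ with $\tau_x\in C_{|x|}(\overline{W}^u(x))$, each $\tau_x$ being a chain representing the fundamental class relative to the boundary and compatible with the Barraud-Cornea system $(s_{x,y})$ in the sense that
\[
\partial\tau_x=\sum_{y}(-1)^{|x|-|y|}s_{x,y}\times\tau_y,
\]
where the product uses the inclusion $\overline{\cL}(x,y)\times\overline{W}^u(y)\subset\partial\overline{W}^u(x)$. Such a system is constructed inductively exactly as in Proposition~\ref{representingchain}, since $H_{|x|}(\overline{W}^u(x),\partial \overline{W}^u(x))=\Z$ and the obstructions vanish by induction on $|x|$. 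Next, using that each critical point $x$ is joined to $\star$ by a branch $\gamma_x$ of $\cY$ which is itself a gradient trajectory, define a continuous map
\[
\rho_x:\overline{W}^u(x)\longrightarrow \cP_{\star\to X}X,\qquad p\longmapsto\gamma_x\#\big(\text{portion of the $\xi$-orbit from $x$ to }\phi_x(p)\big),
\]
parametrized by values of $f$, so that $\ev_1\circ\rho_x=\phi_x$. The map $\Psi$ is then defined on generators by
\[
\Psi(\alpha\otimes x)\;=\;\Phi_*\bigl(\alpha\otimes (\rho_x)_*\tau_x\bigr)\;\in\;C_{|\alpha|+|x|}(E),
\]
where $\Phi$ is a (weakly) transitive lifting function.

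Second I would verify that $\Psi$ is a chain map. Applying $\partial$ and using that $\Phi_*$ is a chain map of degree zero, one term gives $\Phi_*(\partial\alpha\otimes(\rho_x)_*\tau_x)$, which is $\Psi(\partial\alpha\otimes x)$. The remaining term comes from $\partial\tau_x$ and, after applying $(\rho_x)_*$, splits as a sum over $y$ of chains of the form $(\rho_x)_*(s_{x,y}\times \tau_y)$. The key geometric point is that, for the tree $\cY$ composed of gradient branches, the restriction of $\rho_x$ to the boundary stratum $\overline{\cL}(x,y)\times\overline{W}^u(y)$ factors as concatenation
\[
\rho_x(\lambda,p)\;=\;q_{x,y}(\lambda)\,\#\,\rho_y(p),
\]
since following the gradient from $x$ through $y$ to $\phi_y(p)$ and then collapsing the tree produces exactly the loop $q_{x,y}(\lambda)$ followed by the path $\rho_y(p)$. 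Combining this with the transitivity \eqref{eq:liftingfunction} of $\Phi$, one obtains $\Phi_*(\alpha\otimes(\rho_x)_*(s_{x,y}\times\tau_y))=\Phi_*(\alpha\cdot m_{x,y}\otimes(\rho_y)_*\tau_y)$, which is exactly the second contribution to the enriched Morse differential.

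Third I would compare spectral sequences. The canonical filtration $F_p C_*(X;\cF)=\cF\otimes\langle\Crit(f)_{\le p}\rangle$ maps under $\Psi$ into the filtration $F_pC_*(E)=C_*(\pi^{-1}(X^{(p)}))$, where $X^{(p)}$ is the union of the images of the Latour cells of index $\le p$. The latter filtration is (up to quasi-isomorphism) the Leray--Serre filtration associated to the CW-structure on $X$ given by the Latour cells (Theorem~\ref{corners2}). On the $E^1$-page the Morse side reads $H_q(\cF)\otimes\langle\Crit(f)\rangle$ with $d^1$ induced by $\hat m_{x,y}$, while the Leray--Serre side is cellular homology of $X$ with local coefficients $\{H_q(\pi^{-1}(\text{pt}))\}=H_q(F)$. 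The restriction of $\Psi$ over a single cell $\overline{W}^u(x)$ is, up to contraction, the map $\alpha\mapsto\Phi_*(\alpha\otimes\gamma_x)$ from $C_*(F)$ into chains on the preimage of the cell, which is a chain homotopy equivalence because the cell is contractible and the lifting function realizes the holonomy isomorphism. Moreover the identification of the coefficient system is tautological: the $H_0(R_*)=\Z[\pi_1(X)]$-action on $H_q(F)$ built from the Barraud-Cornea cocycle is by construction the holonomy action of $\pi_1(X)$ on $H_q(F)$ used in the Leray--Serre $E^2$. Therefore $\Psi$ is an isomorphism on $E^1$, hence on all subsequent pages, and by convergence an isomorphism in homology.

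The main obstacle I anticipate is not the chain-map property or the spectral sequence identification per se, but rather assembling the compatible chain systems $(\tau_x)$ and controlling signs so that the factorization $\rho_x(\lambda,p)=q_{x,y}(\lambda)\#\rho_y(p)$ holds strictly (not merely up to homotopy). This is the reason for assuming $f$ has a unique minimum at $\star$ and that the branches of $\cY$ are gradient trajectories: these hypotheses make the parametrization of $\rho_x$ by values of $f$ compatible with concatenation on the nose. A secondary technical point is replacing the given Hurewicz fibration by one admitting a transitive lifting function in order to apply \eqref{eq:liftingfunction}; this is standard (Dyer--Kahn) but must be stated, together with the fact that the induced DG-local system is independent of this replacement up to quasi-isomorphism, so that $H_*(X;\cF)$ is well-defined.
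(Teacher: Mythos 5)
There is a genuine gap. Your claimed factorization $\rho_x(\lambda,p) = q_{x,y}(\lambda)\,\#\,\rho_y(p)$ as paths in $X$ does \emph{not} hold on the nose, and this failure is precisely what forces the paper to take a detour through $X/\cY$. With $\rho_x(p)=\gamma_x\#(\text{orbit from }x\text{ to }\phi_x(p))$ as you define it, the restriction to the boundary stratum $\overline{\cL}(x,y)\times\overline{W}^u(y)$ gives $\rho_x(\lambda,p)=\gamma_x\#\Gamma(\lambda)\#(\text{orbit from }y\text{ to }\phi_y(p))$, which passes through $y$ without any visit to the basepoint. On the other hand $q_{x,y}(\lambda)\#\rho_y(p)$ passes through $\star$ between the two concatenands. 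If one tries to write the first expression as $(\gamma_x\#\Gamma(\lambda)\#\gamma_y^{-1})\#(\gamma_y\#\text{orbit})$, the middle insertion of $\gamma_y^{-1}\#\gamma_y$ changes the parametrized Moore path strictly; and in any case $q_{x,y}$ from Lemma~\ref{lecture} is $\theta\circ p\circ\Gamma$, which is only \emph{homotopic} to $\gamma_x\#\Gamma(\lambda)\#\gamma_y^{-1}$, not equal to it. So the required transitivity identity
\[
\Phi_*\bigl(\alpha\otimes(\rho_x)_*(s_{x,y}\times\tau_y)\bigr)=\Phi_*\bigl(\alpha\cdot m_{x,y}\otimes(\rho_y)_*\tau_y\bigr)
\]
cannot be obtained at chain level by the argument you give. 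Without this, $\Psi$ is not a chain map.

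The paper's proof fixes exactly this: it defines all evaluations into $X/\cY$ rather than $X$, setting $q_x=p\circ\Gamma$ (Lemma~\ref{lecture-Latourcells}), where $p$ collapses $\cY$ to a point. Since $p(y)=\star$ in $X/\cY$, the path $p\circ\Gamma(\lambda)$ \emph{is} a genuine Moore loop and the strict concatenation $q_x(\lambda,a)=q'_{x,y}(\lambda)\#q_y(a)$ holds pointwise, with no homotopy inverse $\theta$ or conjugating path $\gamma_y$ involved. The map $\Psi'$ is then constructed into $C_*(E')$, the total space of the pullback fibration over $X/\cY$, and the comparison with $C_*(E)$ is handled separately via the homotopy equivalence $\theta$ (Steps~1 and~2 of the paper's proof). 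Your approach could be repaired by either reinstating this detour through $X/\cY$, or by replacing the strict concatenation identity with a coherent system of higher homotopies; but as written the key identity fails and the chain-map verification does not go through. Your spectral-sequence comparison in the third step is otherwise aligned with the paper's (modulo replacing $E$ by $E'$), and your hypothesis that the tree branches are gradient lines is used in the paper in the same way, namely to make $p\circ i_x$ a genuine CW-cell of $X/\cY$.
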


\begin{remark}
With a little bit more work one can prove that $\Psi$ is a chain homotopy equivalence, and even a deformation retract. This is essentially equivalent to proving that the Morse complex is a deformation retract of the cubical or singular chain complex. See~\cite{Hutchings-families} for a relevant construction.  
\end{remark}

Theorem~\ref{thm:fibration} was proved with $\Z/2$-coefficients by Charette~\cite{Charette2017}. It is a generalization of the seminal result of Barraud-Cornea~\cite[Theorem~2.14]{BC07} which deals with the path-loop fibration $\mbox{ev}:\cP_{\star \to X} X\to X$. We now provide a few examples to which it applies. 

\begin{example}\label{ex:fibration}
(1) 
The fundamental example $\calf_*=R_*=C_*(\Omega X)$ endowed with right multiplication corresponds to the path-loop fibration 
$$
\Omega X\hookrightarrow \calp_\star X\stackrel{\mathrm{ev}_1}{-\!\!\!\longrightarrow} X
$$
whose total space $\calp_\star X=\calp_{\star\ri X} X$ is the space of paths in $X$ starting at the basepoint $\star$. Since $\calp_\star$ is contractible we obtain 
$$ H_*(X;C_*(\Omega X))\simeq H_*(point).
$$

(2)
Given a subset $C\subset X$, the fibration 
$$
\calp_{C\ri \star}X \hookrightarrow \calp_{C\ri X} X\stackrel{\mathrm{ev}_1}{-\!\!\!\longrightarrow} X
$$ 
whose total space $\calp_{C\ri X} X$ is the space of paths with starting point on $C$ defines on $C_*(\calp_{C\ri \star}X)$ --- the cubical chains on Moore paths starting on $C$ and ending at the basepoint $\star$ --- the right $C_*(\Omega X)$-module structure determined by concatenation. We have 
$$
H_*(X;C_*(\calp_{C\ri \star}X))\simeq H_*(\mathcal{P}_{C\to X}X). 
$$

(3) The loop-loop fibration $$\Omega X\hookrightarrow \call X \ri X$$ gives rise to the $C_*(\Omega X)$-module structure on $C_*(\Omega X)$ induced by the adjoint action of $\Omega X$ onto itself, $\Omega X\times  \Omega X \ri \Omega X$, $\gamma\otimes g \mapsto g^{-1}\gamma g$. Denoting the resulting local coefficients $C_*(\Omega X)^{\mathrm{ad}}$, we have 
$$
H_*(X;C_*(\Omega X)^{\mathrm{ad}})\simeq H_*(\call X).
$$

(4)
Let $X\subset Y$ and $\mathcal{F}=C_*(\Omega Y)$ seen as a right $C_*(\Omega X)$-module via multiplication and the embedding $C_*(\Omega X)\subset C_*(\Omega Y)$, where the basepoint $\star$ belongs to $X$. Then 
$$
H_*(X;\mathcal{F})\simeq H_*(\mathcal{P}_{\star\to X}Y),
$$
where $\mathcal{P}_{\star\to X}Y$ is the space of paths in $Y$ starting at the basepoint $\star$ and ending in $X$. 
\end{example}

\section{Proof of the main theorem on fibrations}\label{sec:proof-of-fibration}

This section is devoted to the proof of Theorem~\ref{thm:fibration}, i.e. Theorem~A in the Introduction. We will need a preliminary result regarding the Latour cells $\overline{W}^u(x)$, $x\in \Crit(f)$ from~\S\ref{subsec:Latour-cells}. We denote as usual by $C_*$ complexes of cubical chains. 

\begin{definition}
  Let $(s_{x,y})$ be a representing chain system for the Morse moduli spaces (see Definition~\ref{chainsystem}). A \emph{compatible representing chain system} for the Latour cells is a collection $\{s_x\in C_{|x|}(\overline{W}^u(x))\, : \, x\in \Crit(f)\}$ satisfying the following properties:
  \begin{enumerate}
  \item each $s_x$ is a cycle rel boundary and represents the fundamental class $[\overline{W}^u(x)]$;
  \item  each $s_x$ satisfies 
	\begin{equation}\label{eq:representingchain-Latourcells}
	\p s_x = \sum_y s_{x,y}\times s_y,
\end{equation} 
	with the product of chains defined via the inclusions $\overline\cL(x,y)\times \overline{W}^u(y)\subset \p \overline{W}^u(x)\subset \overline{W}^u(x)$. 
  \end{enumerate}
\end{definition}

The next result is an analogue of Proposition~\ref{representingchain}. 

\begin{lemma}
	Given a representing chain system $(s_{x,y})$ for the Morse moduli spaces, there exists a compatible representing chain system $(s_x)$ for the Latour cells. 
\end{lemma}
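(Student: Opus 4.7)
The plan is to construct the chains $s_x$ inductively on the Morse index $|x|$, closely mimicking the inductive scheme used in the proof of Proposition~\ref{representingchain} for the moduli spaces. I initialize at $|x|=0$: then $\overline{W}^u(x)=\{x\}$ is a single point, and I define $s_x$ to be the canonical (suitably signed) $0$-chain at $x$. Relation~\eqref{eq:representingchain-Latourcells} reduces to $\partial s_x=0$, which holds trivially since $\partial\overline{W}^u(x)=\emptyset$ and the right-hand sum is empty.

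For the inductive step, assume $\{s_y\}$ has been constructed for all $y$ with $|y|<|x|$ and form the candidate boundary chain
\[
c_x \,=\, \sum_y s_{x,y}\times s_y \,\in\, C_{|x|-1}(\partial \overline{W}^u(x)),
\]
viewed inside $C_{|x|-1}(\overline{W}^u(x))$ via the natural inclusion. The first task is to verify that $c_x$ is a cycle. Applying the Koszul sign rule $\partial(s_{x,y}\times s_y)=(\partial s_{x,y})\times s_y+(-1)^{|x|-|y|-1}s_{x,y}\times(\partial s_y)$, substituting~\eqref{eq:representingchain} for $\partial s_{x,y}$ and the induction hypothesis $\partial s_y=\sum_w s_{y,w}\times s_w$, and reindexing $(z,y)\mapsto(y,w)$ in the first double sum, I find that the two resulting triple sums indexed by $|x|>|y|>|w|$ carry opposite signs $(-1)^{|x|-|y|}$ and $(-1)^{|x|-|y|-1}$ and cancel termwise, so $\partial c_x=0$.

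The second task is to check that $c_x$ represents the fundamental class $[\partial\overline{W}^u(x)]$. The boundary of the Latour cell decomposes along the top-dimensional strata $\overline{\cL}(x,y)\times\overline{W}^u(y)$, and each summand $s_{x,y}\times s_y$ restricts to a representative of the product-oriented fundamental class of the corresponding stratum by the induction hypothesis and the defining property of $s_{x,y}$. The discrepancy between the product and the boundary orientations encoded in~\eqref{eq:sign-difference-bord-cellule} has to be tracked carefully here; this sign bookkeeping, together with the analogous identifications on the interior strata, is exactly what pins down the coefficients in~\eqref{eq:representingchain-Latourcells} and is the main technical obstacle of the argument.

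Once $c_x$ is known to be a cycle representing $[\partial\overline{W}^u(x)]$, the construction of $s_x$ follows a standard pattern, as in the last step of the proof of Proposition~\ref{representingchain}. By Theorem~\ref{corners2}, $\overline{W}^u(x)$ is homeomorphic to the closed disk $\overline{D}^{|x|}$, so the boundary connecting homomorphism $\partial\colon H_{|x|}(\overline{W}^u(x),\partial\overline{W}^u(x))\to H_{|x|-1}(\partial\overline{W}^u(x))$ is an isomorphism that sends $[\overline{W}^u(x)]$ to $[\partial\overline{W}^u(x)]$. I pick any cubical chain $s'_x\in C_{|x|}(\overline{W}^u(x))$ representing $[\overline{W}^u(x)]$; then $\partial s'_x$ and $c_x$ are homologous cycles in $C_{|x|-1}(\partial\overline{W}^u(x))$, so there exists $p_x\in C_{|x|}(\partial\overline{W}^u(x))$ with $\partial p_x=\partial s'_x-c_x$. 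Setting $s_x=s'_x-p_x$ then yields a chain representing $[\overline{W}^u(x)]$ whose boundary is exactly $c_x$, completing the induction.
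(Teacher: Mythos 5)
Your inductive scheme is the right shape, and your verification that $c_x := \sum_y s_{x,y}\times s_y$ is a cycle is correct. However, the step you yourself flag as ``the main technical obstacle'' is not a matter of careful bookkeeping that you can defer — it is a genuine gap, and in fact the claim as you state it is false. With the orientation on $\overline{W}^u(x)$ inherited from $W^u(x)$, the product orientation on the boundary stratum $\overline{\cL}(x,y)\times \overline{W}^u(y)$ differs from the boundary orientation by $(-1)^{|x|-|y|}$ (equation~\eqref{eq:sign-difference-bord-cellule}). Consequently, the cycle that represents $[\partial\overline{W}^u(x)]$ is $\sum_y (-1)^{|x|-|y|} s_{x,y}\times s_y$, \emph{not} your $c_x$. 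In particular already for $|x|=1$: the single boundary stratum $\cL(x,y)\times\{y\}$ carries product orientation equal to \emph{minus} the boundary orientation, so $c_x$ represents $-[\partial\overline{W}^u(x)]$. There is then no chain $s_x$ representing $[\overline{W}^u(x)]$ (with the orientation inherited from $W^u(x)$) whose boundary equals $c_x$, so your homotopy argument with $s'_x$ and $p_x$ cannot close the induction.

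The fix, which is how the paper proceeds, is to first run the induction honestly with the signs in place: the construction of Proposition~\ref{representingchain}, applied to the Latour cells, produces chains $s'_x$ representing $[\overline{W}^u(x)]$ and satisfying
$$
\partial s'_x = \sum_y (-1)^{|x|-|y|}\, s_{x,y}\times s'_y,
$$
and one then \emph{re-signs} by setting $s_x := (-1)^{|x|} s'_x$, which absorbs the factor $(-1)^{|x|-|y|}$ and yields the unsigned relation~\eqref{eq:representingchain-Latourcells}. Note that your ``sign bookkeeping'' remark hints in this direction, but because you then immediately attempt the standard homotopy argument on $c_x$ as if the signs already came out to $+1$, the argument as written does not go through.
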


\begin{proof} 	Recall that  our recipe for orienting the moduli spaces of connecting Morse trajectories $\cL(x,y)$ takes as input a choice of orientation for the unstable manifolds $W^u(x)$. (The latter clearly also determines orientations of the Latour cells $\overline{W}^u(x)$.) With these choices, the product orientation on $\overline{\cL}(x,y) \times \overline{W}^u(y)\subset \p \overline{W}^u(x)$ differs from the boundary orientation by the sign $(-1)^{|x|-|y|}$, see~\eqref{eq:sign-difference-bord-cellule}. 
	
	From this point on, the construction proceeds inductively much like in Proposition~\ref{representingchain}. The resulting representing chains $(s'_x)$ will satisfy 
	$$
	\p s'_x = \sum_y (-1)^{|x|-|y|} s_{x,y}\times s'_y.
	$$ 
	To conclude, we set $s_x=(-1)^{|x|} s'_x$.
\end{proof}

\begin{proof}[Proof of Theorem~\ref{thm:fibration}]
 Denote $p:X\to X/\caly$ the canonical projection and let $\theta: X/\caly\ri X$ be a homotopy inverse for $p$.  Consider the  pullback fibration  
$$
\xymatrix{
 F' \ar[r]\ar[d]& F \ar[d]\\
 E' \ar[r]\ar[d]& E \ar[d]\\
 X/\caly \ar[r]^-\theta & X.
}
$$
 %$F'\ri E'\ri X/\caly$ the pullback fibration $\theta^* E$ and  $F''\ri E''\ri X$ the pullback fibration $p^* E'=p^* \theta^* E$. 
 The desired isomorphism between $H_*(X;\calf)$ and $H_*(E)$ will be induced by a composition of several chain maps which are quasi-isomorphisms: 
 $$
 C_*(f,\cF) \stackrel\simeq\longrightarrow C_*(f,\cF') \stackrel\simeq\longrightarrow C_*(E') \stackrel\simeq\longrightarrow C_*(E). 
 $$
Our proof will be decomposed in three steps, each step consisting in showing that one of the above maps is a quasi-isomorphism. The DG local system $\cF'$ is the pullback $\theta^*\calf$ over $C_*(\Omega(X/\caly))$ and the complex $C_*(f,\calf')$ will be made explicit in Step 2 below. We actually define a 
quasi-isomorphism between the   enriched complex and the total space of the  pullback fibration over $X/\caly$ (Step 3) and  convert it into the desired quasi-isomorphism using the map $\theta: X/\caly \ri X$ (Step 1 and 2). 
 
\medskip  
\noindent {\it Step~1. Chain homotopy equivalence $C_*(E')\stackrel\simeq\longrightarrow C_*(E)$.} The homotopy equivalence $\theta$ induces a homotopy equivalence between the total space of the pullback fibration $E'=\theta^* E$ and the total space of the original fibration $E$, hence a quasi-isomorphism $C_*(E')\stackrel\simeq\longrightarrow C_*(E)$. 
% It suffices therefore to find a quasi-isomorphism between $H_*(X,\calf)$ and $H_*(E')$. 

\medskip 
\noindent {\it Step~2. Identification $C_*(f, \calf')\stackrel\cong\longleftrightarrow C_*(f, \calf)$.}

We first define the complex $C_*(f,\calf')$.
 Let  $q'_{x,y}: \overline{\call}(x,y)\ri \Omega (X/\caly)$ be a family of maps defined  for $x, y \in \Crit(f)$ similarly to the maps $q_{x,y}$ from Lemma~\ref{lecture}, namely 
 $$
 q'_{x,y}\, =\, p\circ \Gamma, 
 $$
 where $\Gamma: \overline{\call}(x,y) \ri \calp_{x\ri y} X$ is defined in Lemma~\ref{lecture}. Let 
 $$
 m'_{x,y}\, =\, q'_{x, y,*}(s_{x,y}) \in C_{|x|-|y|-1}(\Omega(X/\caly)).
 $$
 Obviously the family $(m'_{x,y})$ satisfies the Maurer-Cartan equation
 \begin{equation}\label{eqn:MCprime} 
 \partial m'_{x,y}\, =\, \sum_{z\in Crit(f)} (-1)^{|x|-|z|} m'_{x,z}\cdot m'_{z,y}.
 \end{equation}
 Let $\calf'$ be the DG-module defined by the action of $\Omega (X/\caly)$ on the fiber $F'$ of $ E'$; via the natural identification between $F'$ and $F$ the module $\calf'$ is the pullback $\theta^*\calf$.  Denote by $C_*(f, \calf')$ the enriched complex corresponding to the twisting cocycle $(m'_{x,y})$.

We remark that there is an obvious identification $C_*(f,\calf)\cong C_*(f,\calf')$. Indeed, by definition, the twisting cocycles which define these complexes satisfy the relation 
$$ 
m_{x,y}\, =\, \theta_*(m'_{x,y}).
$$
If $\alpha\in \calf'=\theta^*\calf$ then  $\alpha\cdot m'_{x,y} = \alpha\cdot\theta_*(m'_{x,y})=\alpha\cdot m_{x,y}$, which implies the claimed identification (see Remark \ref{rem:pullback}).

 Step~2 is proved and we now turn to the third and last step of the proof of Theorem~\ref{thm:fibration}.

 \medskip
\noindent {\it Step~3. Quasi-isomorphism $C_*(f,\cF') \stackrel\simeq\longrightarrow C_*(E')$.} % Let  $q'_{x,y}: \overline{\call}(x,y)\ri \Omega (X/\caly)$ be a family of maps defined  for $x, y \in \Crit(f)$ similarly to the maps $q_{x,y}$ from Lemma~\ref{lecture}, namely 
 % $$
 % q'_{x,y}\, =\, p\circ \Gamma, 
 % $$
 % where $\Gamma: \overline{\call}(x,y) \ri \calp_{x\ri y} X$ is defined in Lemma~\ref{lecture}. Let 
 % $$
 % m'_{x,y}\, =\, q'_{x, y,*}(s_{x,y}) \in C_{|x|-|y|-1}(\Omega(X/\caly)).
 % $$
 % Obviously the family $(m'_{x,y})$ satisfies the Maurer-Cartan equation
 % \begin{equation}\label{eqn:MCprime} 
 % \partial m'_{x,y}\, =\, \sum_{z\in Crit(f)} (-1)^{|x|-|z|} m'_{x,z}\cdot m'_{z,y}.
 % \end{equation}
 % Denote $\calf'$ the DG-module defined by the action of $\Omega (X/\caly)$ on the fiber $F'$ of $ E'$, and denote $C_*(f, \calf')$ the enriched complex corresponding to the twisting cocycle $(m'_{x,y})$.
 % We prove: 
 
 % \begin{proposition}\label{quasi-iso} There is a quasi-isomorphism 
 % $$\Psi': C_*(f; \calf')\ri C_*(E').$$
 % \end{proposition} 
 
 % \begin{proof}[Proof of Proposition~\ref{quasi-iso}]

We will construct a chain map $\Psi': C_*(f; \calf')\ri C_*(E')$ and show that it is a quasi-isomorphism.

   Consider a weakly transitive lifting function $\Phi$ for 
 the fibration $F'\ri E' \ri X/\caly$. 
 Also remark that since the tree $\caly$ has its  root $\star$ at the unique minimum of $f$ and its branches formed by gradient lines, with one gradient line $l_x$  between $\star$ and each $x\in \Crit(f)$ (considered with its endpoints included), we have that $\overline{W}^u(x)/l_x$ is homeomorphic to a closed disk of dimension $|x|$ and  moreover  the canonical maps $i_x:\overline{W}^u(x)\ri X$ induce a CW-decomposition of $X/\caly$ with cells
   $$j_x: \overline{W}^u(x)/\l_x \ri X/\caly.$$ The map $p: X\ri X/\caly$ is then cellular. 
 Now pick a representing chain system $(s_{x,y})$ for the Morse moduli spaces, and a compatible representing chain system $(s_x)$ for the Latour cells. Again we will use evaluation maps in order to transform the relation~\eqref{eq:representingchain-Latourcells}  into one  in cubic chains. The next statement is a counterpart to Lemma~\ref{lecture}. 
  
  \begin{lemma}\label{lecture-Latourcells} There exists a family of continuous maps 
 $$q_x: \overline{W}^u(x)\ri \calp_{\star\ri X/\caly}(X/\caly)$$
 such that: 

 a) For any  $(\lambda, a) \in \overline{\call}(x,y)\times \overline{W}^u(y)\subset \p \overline{W}^u(x)$ we have 
 \[q_x(\lambda, a)\, =\,  q'_{x,y}(\lambda)\#q_y(a).\]
  
b) The image $\ev_1\circ q_x (\overline{W}^u(x)) $ coincides with the image of the corresponding cell $j_x(\overline{W}^u(x)/l_x)$ in $X/\caly$.   
\end{lemma}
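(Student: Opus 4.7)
The plan is to construct $q_x$ by mimicking the construction of $q'_{x,y}$ from Step~2, that is, by parametrizing the descending gradient trajectory through a point of $\overline{W}^u(x)$ via the values of $f$, and then projecting to $X/\caly$. Concretely, first I would define a continuous map
\[
\widetilde{\Gamma}_x : \overline{W}^u(x) \ri \calp_{x\ri X} X
\]
as follows. For $a\in W^u(x)$, let $\gamma_a:[0,f(x)-f(a)]\ri X$ be the unique descending trajectory issued from $x$ and passing through $a$, reparametrized so that $f(\gamma_a(r))=f(x)-r$; set $\widetilde{\Gamma}_x(a)=\gamma_a$. For a boundary point $(\lambda_1,\ldots,\lambda_k,p)\in \overline{\call}(x,z_1)\times\cdots\times\overline{\call}(z_{k-1},z_k)\times W^u(z_k)$, set
\[
\widetilde{\Gamma}_x(\lambda_1,\ldots,\lambda_k,p)\, =\, \Gamma(\lambda_1)\# \cdots \#\Gamma(\lambda_k) \# \widetilde{\Gamma}_{z_k}(p),
\]
where $\Gamma$ is the $f$-value parametrization from the proof of Lemma~\ref{lecture}. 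Continuity of $\widetilde{\Gamma}_x$ near a boundary point follows from the corner chart description of $\overline{W}^u(x)$ provided by Theorem~\ref{corners2}, together with the standard fact that the $f$-value parametrization of trajectories varies continuously as one limits to a broken trajectory (this is exactly what makes $\Gamma$ in Lemma~\ref{lecture} continuous).

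Second, I would set
\[
q_x\, :=\, p\circ \widetilde{\Gamma}_x : \overline{W}^u(x)\ri \calp (X/\caly).
\]
Since $x\in \caly$, we have $p(x)=\star$, and therefore $q_x$ lands in $\calp_{\star\ri X/\caly}(X/\caly)$. The concatenation relation in property~(a) is then immediate from the definition: for $(\lambda,a)\in \overline{\call}(x,y)\times \overline{W}^u(y)$,
\[
q_x(\lambda,a)\, =\, p\circ\Gamma(\lambda)\, \#\, p\circ\widetilde{\Gamma}_y(a)\, =\, q'_{x,y}(\lambda)\# q_y(a),
\]
since $q'_{x,y}= p\circ \Gamma$ by the definition adopted in Step~2. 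Property~(b) follows by noting that $\ev_1\circ\widetilde{\Gamma}_x=\phi_x$, the canonical extension of the inclusion $W^u(x)\hookrightarrow X$; hence $\ev_1\circ q_x = p\circ\phi_x$, whose image coincides with $j_x(\overline{W}^u(x)/l_x)$ by the very description of the cellular decomposition of $X/\caly$ recalled just before the lemma.

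The only nontrivial point is the continuity of $\widetilde{\Gamma}_x$ near broken trajectories, and more specifically near a boundary stratum where the last factor $p\in W^u(z_k)$ may approach $z_k$ itself (so that the final piece $\widetilde{\Gamma}_{z_k}(p)$ degenerates to the constant path at $z_k$). This is exactly the kind of phenomenon already handled by the use of Moore paths: as $p\to z_k$, the length $f(z_k)-f(p)$ of $\widetilde{\Gamma}_{z_k}(p)$ tends to $0$, so the Moore path degenerates continuously to the constant path. Combined with the corner structure near higher-codimension strata, this gives continuity on all of $\overline{W}^u(x)$. I expect this to be the main (and essentially only) technical step; everything else is a formal reformulation of Lemma~\ref{lecture} in the Latour-cell setting.
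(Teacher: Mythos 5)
Your proposal matches the paper's proof: the paper also defines a map $\Gamma:\overline{W}^u(x)\to\calp_{x\to X}X$ by $f$-value reparametrization of (possibly broken) gradient lines, sets $q_x=p\circ\Gamma$, verifies (a) by the concatenation property of $\Gamma$, and verifies (b) by noting $\ev_1\circ\Gamma=\phi_x$ and invoking the cell description of $X/\caly$. The only difference is that you spell out the continuity argument near broken and degenerate trajectories in more detail, whereas the paper treats it as implicit by referring back to the proof of Lemma~\ref{lecture}.
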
 
 
 \begin{proof}[Proof of Lemma~\ref{lecture-Latourcells}] 
Any element of  $\overline{W}^u(x)$ can be identified with a (possibly broken) gradient line emanating from $x$. Indeed, by Definition \ref{Latour-cell}, such an element is either a point $a\in W^u(x)$ or a pair $(\lambda, a)\in  \overline{\call}(x,y)\times W^u(y)$. 
The corresponding broken gradient line is then either the gradient line  from $x$ to $a$, or the broken orbit $(\lambda,l(a))$ with $l(a)$ the gradient line between $y$ and $a$ in $W^u(y)$.
   As in the proof of Lemma~\ref{lecture} we parametrize this orbit by the values of $f$ and define $\Gamma(\lambda, l(a))\in \calp_{x\ri a} X$ by the same formula.  We see it as a map 
 $\Gamma: \overline{W}^u(x)\ri \calp_{x\ri X}(X)$
 and we obviously have 
 $$\Gamma(\lambda,l(a) ) \, =\, \Gamma(\lambda)\#\Gamma(l(a))$$
 for any  $(\lambda, a)\in  \overline{\call}(x,y)\times W^u(y)$. 
We then define the family of continuous maps 
 $$
 q_x\, =\,  p\circ \Gamma,
 $$ 
where $p:X\ri X/\caly$ is the projection. These maps satisfy Property a) by construction.

\medskip
We now prove property b).  Denote $p_x: \overline{W}^u(x) \ri \overline{W}^u(x)/l_x$
  the projection and remark that we have by definition 
   $
   \mbox{ev}_1\circ \Gamma \, =\, i_x,
   $ 
   where $i_x:\overline{W}^u(x)\ri X$ was defined above.  Therefore, 
  $$
  \mbox{ev}_1\circ q_x\, =\, \mbox{ev}_1\circ p \circ \Gamma\, =\, p\circ \mbox{ev}_1\circ \Gamma\, =\, p\circ i_x\, =\, j_x\circ p_x,
  $$
which implies the desired statement and finishes the proof. 
\end{proof} 
 
 \noindent {\it Proof of Theorem~\ref{thm:fibration} - Step 3 continued.} Define  $$
 m_x\, =\, q_{x*}(s_x) \in C_{|x|}(\cP_{\star\to X/\caly}X/\caly)
 $$ 
 for each $x\in \Crit(f)$. The previous lemma together with~\eqref{eq:representingchain-Latourcells} immediately imply
\begin{equation}\label{eq:MC-Latourcells}
\p m_x = \sum_y m'_{x, y}\cdot m_y.
\end{equation} 
Here the multiplication is determined by the left $C_*(\Omega (X/\caly))$-module structure on $C_*(\cP_{\star \to X/\caly}X/\caly)$ given by the concatenation $$\Omega( X/\caly)\times \cP_{\star \to X/\caly}X/\caly\to \cP_{\star \to X/\caly}X/\caly.$$ 
Using the lifting map $
\Phi_*: C_*(F')\otimes C_*(\cP_{\star \to X/\caly}X/\caly) \to C_*(E')
$ for the fibration $F'\ri E'\ri X/\caly$, we define 
$$
\Psi' : C_*(f;\cF')\to C_*(E'),\qquad \Psi'(\sigma \otimes x)=\Phi_*(\sigma\otimes m_x). $$
We check that $\Psi'$ is a chain map: 
\begin{align*}
	\p\Psi'(\sigma\otimes x) & = \p \Phi_*(\sigma\otimes m_x)\\
	& = \Phi_*(\p(\sigma\otimes m_x))\\
	& = \Phi_*(\p \sigma \otimes m_x + (-1)^{|\sigma|}\sigma\otimes \p m_x)\\
	& = \Phi_*(\p \sigma \otimes m_x + (-1)^{|\sigma|}\sigma\otimes \textstyle\sum_y m'_{x,y}\cdot m_y )\\
	& = \Phi_*(\p \sigma\otimes m_x) + (-1)^{|\sigma|}\textstyle\sum_y \Phi_*(\Phi_*(\sigma\otimes m'_{x,y})\otimes m_y)\\
	& = \Phi_*(\p \sigma\otimes m_x) + (-1)^{|\sigma|}\textstyle\sum_y \Phi_*(\sigma\cdot m'_{x,y}\otimes m_y)\\
	& = \Psi'(\p \sigma \otimes x + (-1)^{|\sigma|}\textstyle\sum_y\sigma\cdot m'_{x,y}\otimes y)\\
	& = \Psi' \p(\sigma\otimes x).
\end{align*}
The 5th equality makes use of the concatenation property~\eqref{eq:liftingfunction} for the lifting function.  The 6th equality follows from the fact that the module structure satisfies by definition $\sigma\cdot m'_{x,y}=\Phi_*(\sigma\otimes m'_{x, y})$.

Now let us prove that $\Psi'$ is a quasi-isomorphism. To this end we consider two spectral sequences which respectively converge to the homologies of our complexes: on the domain we have the spectral sequence $E_{pq}^r$ associated to the twisted complex (see~\S\ref{sec:spectral-sequence}), and on the target we have the Leray-Serre spectral sequence $\cE_{pq}^r$ associated to the filtration $C_*(\pi^{-1}(\mbox{Sk}_p(X/\caly))\subset C_*(E')$ by the pre-images of the skeleta $\mbox{Sk}_p(X/\caly)\subset X/\caly$ defined as the union of the cells $j_x(\overline{W}^u(x)/l_x)$ with $|x|\le p$.  

We are now close to conclude the proof of Theorem~\ref{thm:fibration}. Indeed, we 
 defined a chain morphism  $\Psi': C_*(f,\calf')\ri C_*(E')$ and by Lemma~\ref{lem:Psiprime} below it induces a morphism of spectral sequences which is an isomorphism on the first page. Therefore $\Psi'$ is a quasi-isomorphism, which finishes the proof of Step~3 in the proof of Theorem~\ref{thm:fibration}. 
\end{proof}

\begin{lemma} \label{lem:Psiprime} The map $\Psi'$ preserves the filtrations which define the two spectral sequences. Moreover, it induces an isomorphism at the first page $$E_{pq}^1 \stackrel\simeq\longrightarrow \cE_{pq}^1.$$
\end{lemma}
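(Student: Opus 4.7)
The plan is to dispatch the two claims in sequence: filtration preservation as an immediate geometric check, and identification of first pages by matching generators indexed by critical points.

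\emph{First}, I will verify filtration preservation. By Lemma~\ref{lecture-Latourcells}(b), the endpoint map $\ev_1\circ q_x$ lands in the closed cell $j_x(\overline{W}^u(x)/l_x)\subset \mathrm{Sk}_{|x|}(X/\cY)$. The chain $m_x=q_{x,*}(s_x)$ is therefore a chain of paths whose $\ev_1$-images lie in the $|x|$-skeleton, so the lifted chain $\Phi_*(\sigma\otimes m_x)$ projects under $\pi$ into $\mathrm{Sk}_{|x|}(X/\cY)$ and hence sits in $C_*(\pi^{-1}(\mathrm{Sk}_p(X/\cY)))$ whenever $|x|\le p$. This step is essentially a one-line appeal to property (b) of Lemma~\ref{lecture-Latourcells}.

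\emph{Next}, I will identify both first pages with direct sums indexed by critical points. The source first page is $E^1_{p,q}=H_q(\cF')\otimes \cC_p\cong \bigoplus_{|x|=p}H_q(\cF')$ by~\S\ref{sec:spectral-sequence}. For the target, I will first observe that the Latour cells $\{j_x(\overline{W}^u(x)/l_x)\}_{x\in\Crit(f)}$ form a CW-decomposition of $X/\cY$, so the Leray-Serre $\cE^1_{\bullet,q}$ is the cellular chain complex with coefficients in the local system $H_q(F')$. Each cell being contractible, the fibration trivializes fiberwise over it, yielding a canonical identification $\cE^1_{p,q}\cong \bigoplus_{|x|=p} H_q(F')$ after choices of basepoints.

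\emph{Then} I will show that $\Psi'^{(1)}$ matches the obvious bijection between these direct sums. Fix $x$ of index $p$ and a $q$-cycle $\sigma\in C_q(F')$. The class $\Psi'^{(1)}([\sigma]\otimes x)$ is represented by $\Phi_*(\sigma\otimes m_x)$ modulo $C_*(\pi^{-1}(\mathrm{Sk}_{p-1}))$. Choosing a fiber-homotopy trivialization of $E'$ over the contractible cell $\overline{W}^u(x)/l_x$ reduces the computation to a product fibration $F'\times D^p$. Using Fadell's homotopy uniqueness of lifting functions together with naturality of the Eilenberg-Zilber map, one identifies $\Phi_*(\sigma\otimes m_x)$, up to chain homotopy in the appropriate relative complex, with the exterior product $\sigma\times (j_x)_*(p_x)_*(s_x)$. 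Since $(p_x)_*(s_x)$ represents the fundamental class of the cell rel boundary, this is precisely the generator $[\sigma]\otimes x$ in the summand of $\cE^1_{p,q}$ indexed by $x$. Consequently $\Psi'^{(1)}$ is the identity on the matched bases, and therefore an isomorphism.

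\emph{The main obstacle} is this last chain-level identification: comparing $\Phi_*(\sigma\otimes m_x)$ with an explicit Künneth exterior product in relative homology. One must carefully exploit contractibility of the cell and (weak) transitivity of $\Phi$ to trivialize the fibration over the cell and produce the chain homotopy to a product chain. Everything preceding this step is either standard or a direct consequence of the Barraud--Cornea setup and the choice of tree $\cY$ aligned with the gradient flow.
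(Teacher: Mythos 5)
Your proposal is correct in overall strategy and matches the paper in the filtration-preservation step (both boil down to $\pi_*\Phi_*(\sigma\otimes m_x)=\ev_{1,*}(m_x)$ and Lemma~\ref{lecture-Latourcells}.b). Where you diverge is in the identification of first pages. You propose to choose an abstract fiber-homotopy trivialization of $E'$ over each closed cell, identify $\cE^1_{p,q}$ with $\bigoplus_{|x|=p}H_q(F')$ via that trivialization, and then compare $\Phi_*(\sigma\otimes m_x)$ with the product chain $\sigma\times(j_x)_*(p_x)_*(s_x)$ \emph{up to chain homotopy}, invoking Fadell's homotopy uniqueness of lifting functions and Eilenberg--Zilber naturality. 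This can be made to work, but it is harder than necessary, and it leaves a subtlety you only half-address (your phrase ``after choices of basepoints''): one must check that the trivialization you choose over the cell induces the \emph{same} identification $\cE^1_{p,q}\cong\bigoplus_x H_q(F')$ as the one you implicitly compare $\Psi'$ against, since different fiber-homotopy trivializations differ by holonomy. The paper sidesteps all of this: it observes that the standard identification $\cE^1_{p,q}\cong\bigoplus_{|x|=p}H_q(F')\otimes H_p(\overline W^u(x)/l_x,\p)$ can itself be realized concretely by the maps $\chi_x(f',\hat a)=\Phi(f',j_x(\gamma_{\hat a}))$, where $\gamma_{\hat a}$ is a family of paths from $\star$ to the points of the cell. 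Choosing $\gamma_{\hat a}$ to be exactly the gradient-flow paths used to build $q_x$ and hence $m_x$, one gets an on-the-nose chain-level equality $\chi_{x,*}(\sigma\otimes p_{x,*}(s_x))=\Phi_*(\sigma\otimes m_x)=\Psi'(\sigma\otimes x)$, with no chain homotopy, no appeal to Fadell's theorem, and no ambiguity about which trivialization is meant. The lesson is that since the lifting function $\Phi$ is already part of the data, you should use $\Phi$ itself to realize the cellular trivialization rather than reaching for an abstract one; then the ``main obstacle'' you identified evaporates.
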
 

\begin{proof} Let $x\in \Crit(f)$ and $\sigma\in C_*(F')$. We prove that the projection of $\Psi'(\sigma\otimes x)$ is contained in the singular complex of the corresponding cell $j_x(\overline{W}^u(x)/l_x)$. Using the properties of the lifting function we obtain 
$$\pi_*(\Psi'(\sigma\otimes x)) \, =\, \pi_*(\Phi_*(\sigma\otimes m_x)\, =\, (\pi\circ\Phi)_*(\sigma\otimes m_x)\, =\, \mbox{ev}_{1, *}( m_x),$$
hence 
$$
\pi_*(\Psi'(\sigma\otimes x)) \, =\,  \mbox{ev}_{1, *}( m_x)\, =\, (\mbox{ev}_1\circ q_x)_*( s_x).
$$
Now by Lemma~\ref{lecture-Latourcells}.b we know that the image of $\mbox{ev}_1\circ q_x$ coincides with the image of the cell $\overline{W}^u(x)/l_x$ in $X/\caly$, so our first claim is proved. 

In order to prove the second claim recall that the first pages of our spectral sequences are given by 
$$E_{pq}^1\, =\, H_q(F')\otimes C_p(f)$$ and 
$$\cE_{pq}^1\, =\, H_{p+q}(\pi^{-1}(\mbox{Sk}_p), \pi^{-1}(\mbox{Sk}_{p-1})).$$
It is known  that $\cE_{pq}^1$ is isomorphic to $ H_q(F')\otimes H_{p}(\mbox{Sk}_p, \mbox{Sk}_{p-1})$, and therefore by pullback through the embeddings of the cells $j_x:\overline{W}^u(x)/l_x\ri X/\caly$ it is isomorphic to $$\bigoplus_{x\in \Crit(f), \, |x|=p} H_q(F') \otimes H_p(\overline{W}^u(x)/l_x ,\partial (\overline{W}^u(x)/l_x)).$$
This isomorphism can be described using the lifting function $\Phi$ in the following way: if $\gamma_{\hat{a}}$ is a continuous  family of paths from the basepoint $\star$ to the points $\hat{a} \in \overline{W}^u(x)/l_x$, the maps $\chi_x : F'\times \overline{W}^u(x)/l_x\ri \pi^{-1}(\mbox{Sk}_p)\subset E'$
defined by $$\chi_x (f', \hat{a}) \, =\, \Phi(f', j_x(\gamma_{\hat{a}}))$$ induce the above isomorphism in homology.  On $\overline{W}^u(x)$ we already considered such a family of paths $\gamma_a$ defined by the broken gradient lines $l(a)$ from $\star=x$ to $a$ parametrized by the values of the function $f$. We define for $\hat{a}=p_x(a)$ the path $\gamma_{\hat{a}} = p_x(\gamma_a)$ and we infer 
$$
\chi_x (f', \hat{a}) \, =\, \Phi(f', j_x (\gamma_{\hat{a}}) )\, =\, \Phi(f', j_x\circ p_x(\gamma_a))\, =\, \Phi(f', p\circ i_x(\gamma_a)),
$$
where $i_x: \overline{W}^u(x)\ri X$ is the embedding of the corresponding cell and $p:X\ri X/\caly$ is the canonical projection. Now remark that $i_x(\gamma(a))= \Gamma(a)$, where 
$$
\Gamma :\overline{W}^u(x)\ri \calp_{\star\ri X}X
$$ 
was defined in the proof of Lemma~\ref{lecture-Latourcells} above. We therefore get 
$$
\chi_x (f', p_x(a)) \, =\, \Phi(f', q_x(a)),
$$
 which at chain level gives 
 $$
 \chi_{x,*}(\sigma\otimes p_{x,*}(s_x))\, =\, \Phi_* (\sigma\otimes m_x)\, =\, \Psi'(\sigma\otimes x).
 $$
 In other words, the map induced by $\Psi'$ on the first page is the same as the one induced by $\chi$ in homology via the obvious bijection $$H_p(\overline{W}^u(x)/l_x,\partial (\overline{W}^u(x)/l_x))\, \approx\,  \Z\langle x\rangle.$$ As a consequence, it is an isomorphism. The proof of Lemma~\ref{lem:Psiprime} is complete. 
\end{proof}

\rmk\label{rmk:fibration-boundary}  
There is a version of Theorem \ref{thm:fibration} for manifolds with boundary. 
Using the notation from~\S\ref{sec:DG-for-boundary} and Theorem~\ref{thm:fibration}, if $\p X= \p_-X  \cup \p_+X$ and $E_+=\pi^{-1}(X_+)$ is the total space of the restriction of the fibration to the part of the boundary along which the  negative pseudo-gradient points outwards, we have
$$
H_*(X,\p_+X;\calf)\, \, \simeq H_*(E,E_+).
$$
The proof is analogous to the closed case and is based on the fact that $X$ retracts onto $\p_+ X\cup \bigcup_{x\in\crit(f)}\ol W^u(x)$, with the canonical maps $i_x: \ol W^u(x) \ri X$ defining a CW-decomposition of $\p_+ X\cup \bigcup_{x\in\crit(f)}\ol W^u(x)$ relative to $\p_+X$. 
\kmr

 \section{Pullback of a Hurewicz fibration}\label{sec:pullback-fibration}
 We showed in Theorem~\ref{thm:fibration}
 that, given a Hurewicz fibration $\mathcal{E}$ :  $F\ri E_Y\ri Y$ over a closed manifold $Y$,  there is an isomorphism $\Psi_{\mathcal{E}}: H_*(Y;C_*(F))\ri H_*(E_Y)$ between the enriched homology with coefficients in the chains on the fiber and the singular homology of the total space. Let now $\varphi:X\ri Y$ be a continuous map which preserves the basepoints and consider the pullback fibration $\varphi^*{\mathcal{E}} : F\ri E_X\ri X$. We prove the following straightforward consequence of Theorem~\ref{thm:fibration}:
 \begin{corollary}\label{cor:pullback} There is an isomorphism $\Psi_{\varphi^*\mathcal{E}}: H_*(Y;\varphi^*C_*(F))\stackrel{\simeq}{\longrightarrow} H_*(E_X)$, where $\varphi^*C_*(F)$ denotes $C_*(F)$ as a DG-module over $C_*(\Omega X)$ via the induced map of DG-algebras $\Omega\varphi_*: C_*(\Omega X)\ri C_*(\Omega Y)$. In particular, if $X\hookrightarrow Y$ is an embedding, then $H_*(X, \varphi^* C_*(F))\simeq H_*(E_Y|_X).$  
 \end{corollary}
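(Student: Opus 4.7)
The plan is to view this corollary as a direct application of Theorem~\ref{thm:fibration} to the pullback fibration $\varphi^*\mathcal{E}$ (where the statement should read $H_*(X;\varphi^*C_*(F))$ on the left-hand side, since $\varphi^*C_*(F)$ is a $C_*(\Omega X)$-module). The only genuine work is to verify the naturality of the DG-local system under pullback of fibrations, i.e.\ to check that the $C_*(\Omega X)$-module structure on $C_*(F)$ obtained from a lifting function of $\varphi^*\mathcal{E}$ coincides (up to homotopy) with the pullback via $\Omega\varphi_*: C_*(\Omega X)\to C_*(\Omega Y)$ of the $C_*(\Omega Y)$-module structure coming from a lifting function of $\mathcal{E}$.

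First I would fix a lifting function $\Phi$ for $\mathcal{E}$, so that the holonomy map $\Phi:F\times\Omega Y\to F$ encodes the $C_*(\Omega Y)$-action on $C_*(F)$ discussed in~\S\ref{sec:lifting-functions}. Recall that the total space of the pullback is the fibered product $E_X=X\,{}_{\varphi}\!\!\times_\pi E_Y$, with projection $\pi_X(x,e)=x$ and fiber over the basepoint $x_0\in X$ canonically identified with $F=\pi^{-1}(\varphi(x_0))$. Next I would define a lifting function for $\varphi^*\mathcal{E}$ by
\[
\Phi^*\bigl((x,e),\gamma\bigr)\,=\,\bigl(\gamma(a),\,\Phi(e,\varphi\circ\gamma)\bigr),
\]
for $\gamma\in\cP X$ with $\gamma(0)=x$ and endpoint at time $a$. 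One immediately checks the properties $\ev_0\circ\Phi^*=\mathrm{pr}_1$ and $\pi_X\circ\Phi^*=\ev_1\circ\mathrm{pr}_2$, as well as the (weak) transitivity property~\eqref{eq:liftingfunction} whenever $\Phi$ is (weakly) transitive, because $\varphi\circ(\gamma\#\delta)=(\varphi\circ\gamma)\#(\varphi\circ\delta)$.

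The decisive step is to read off the holonomy of $\varphi^*\mathcal{E}$ on the fiber $F$ from $\Phi^*$. By construction, for $e\in F$ and $\gamma\in\Omega X$, we obtain $\Phi^*\bigl((x_0,e),\gamma\bigr)=(x_0,\Phi(e,\varphi\circ\gamma))$, so the induced map $F\times\Omega X\to F$ factors as
\[
F\times\Omega X\xrightarrow{\,\mathrm{id}\times\Omega\varphi\,}F\times\Omega Y\xrightarrow{\ \Phi\ }F.
\]
Passing to non-degenerate cubical chains and post-composing with the Eilenberg--Zilber map, the corresponding $C_*(\Omega X)$-action on $C_*(F)$ is the composition of the $C_*(\Omega Y)$-action with $\Omega\varphi_*$. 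By definition of the pullback of a DG-module (Remark~\ref{rem:pullback}), this is precisely $\varphi^*C_*(F)$.

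With this naturality in hand, applying Theorem~\ref{thm:fibration} to the Hurewicz fibration $\varphi^*\mathcal{E}$ over the closed manifold $X$ yields the desired isomorphism $\Psi_{\varphi^*\mathcal{E}}:H_*(X;\varphi^*C_*(F))\stackrel{\simeq}{\longrightarrow}H_*(E_X)$. For the last assertion, when $\varphi:X\hookrightarrow Y$ is an embedding the total space of the pullback is tautologically $E_X=\pi^{-1}(X)=E_Y|_X$, giving $H_*(X;\varphi^*C_*(F))\simeq H_*(E_Y|_X)$. The main (and only) subtle point is the chain-level identification of the two $C_*(\Omega X)$-module structures above; this is essentially formal once lifting functions are chosen, and relies on the fact that any two lifting functions are homotopic through lifting functions, so the DG-local system depends only on $\varphi^*\mathcal{E}$ up to canonical homotopy.
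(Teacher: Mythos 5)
Your proof follows essentially the same route as the paper's: you construct the induced lifting function $\Phi^\varphi[(\gamma(0),e),\gamma]=[\mathrm{ev}(\gamma),\Phi(e,\varphi(\gamma))]$, observe it inherits transitivity, restrict to based loops to see that the resulting $C_*(\Omega X)$-action on $C_*(F)$ factors through $\Omega\varphi_*$, and then invoke Theorem~\ref{thm:fibration} for the pullback fibration. You also correctly spotted that the left-hand side of the statement should read $H_*(X;\varphi^*C_*(F))$ rather than $H_*(Y;\varphi^*C_*(F))$, consistent with the paper's own argument.
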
 
 
 \begin{proof} It suffices to show that the module structure of $C_*(F)$ over $C_*(\Omega X)$ defined (as in \S\ref{sec:lifting-functions}) by some transitive lifting function of the pullback fibration $\varphi^*\cale$ is exactly the induced structure $\varphi^*C_*(F)$ described in the statement of the corollary. We then apply Theorem~\ref{thm:fibration} to conclude. 
 
 Writing $E_X=X \, _\varphi\!\times_{\pi_Y}E_Y$  (where $\pi_Y:E_Y\ri Y$ is the projection), notice that any lifting function $\Phi: E_Y\times  \calp_{\star \ri Y}Y\ri E_Y$ for the fibration $\cale$ defines  a lifting function $\Phi^\varphi:E_X \times  \calp_{\star \ri X}X\ri E_X$ by the following formula: 
$$\Phi^\varphi[(\gamma(0),e),\gamma]\, =\, [\mathrm{ev}(\gamma),\Phi(e,\varphi(\gamma)) ],$$
where $\mathrm{ev}$ is the evaluation at the endpoint of $\gamma$. Moreover it is easy to check that, if $\Phi$ is transitive, then $\Phi^\varphi$ has the same property. A choice of such a lifting function $\Phi$ for $\cale$ determines the DG-module structure of $C_*(F)$ over $C_*(\Omega Y)$. If $\gamma \in \Omega X$ then the relation above writes for $e$ in the fiber  $F_{\star_Y}$: 
$$\Phi^\varphi[(\star_X,e),\gamma]\, =\, [\star_X,\Phi(e,\varphi(\gamma)) ],$$
which means that the module structure of $C_*(F)$ over $C_*(\Omega X)$ defined by $\Phi^\varphi$ is precisely the one induced by $\Omega\varphi_*$. 
 \end{proof}

%%%%%%%%%%%%%%%%%%%%%%%%%%%%%%%%%%%%
%%%%%%%%%%%%%%%%%%%%%%%%%%%%%%%%%%%%
\chapter{Functoriality: general properties}\label{sec:functoriality}
%%%%%%%%%%%%%%%%%%%%%%%%%%%%%%%%%%%%
%%%%%%%%%%%%%%%%%%%%%%%%%%%%%%%%%%%%

We spell out in this chapter various functoriality properties of Morse homology with DG-coefficients, involving both direct and shriek maps. In the subsequent chapters~\S\S\ref{sec:functoriality-first-definition}-\ref{sec:second-definition} we will give two different (but equivalent) constructions for these maps, and prove their properties.

%\section{Preliminaries} 
The following definition was already mentioned in~\S\ref{sec:alg-functoriality}; see Remark~\ref{rem:pullback} and Example \ref{ex:pullback}. 
%We give it here in more detail. 
\begin{definition}[pullback of a local system] \label{defi:inducedF} 
Let $Y$ be a based topological space and $\cF$ a DG local system on $Y$, i.e., $\cF$ is a right $C_*(\Omega Y)$-module. Given a continuous map $\varphi:X\to Y$ of based topological spaces, we induce a DG local system $\varphi^*\cF$ on $X$ by viewing $\cF$ as a right $C_*(\Omega X)$-module via the induced map of DG algebras $(\Omega \varphi)_*:C_*(\Omega X)\to C_*(\Omega Y)$. We call $\varphi^*\cF$ \emph{the pullback of $\cF$ via $\varphi$}.  
\end{definition}

Let $\varphi:X\ri Y$ be a continuous map between manifolds and $\calf$ a right $C_*(\Omega Y)$-module as above. We will suppose unless otherwise mentioned  that 
\begin{equation}\label{eq:preserving-basepoint}
\star_Y\, =\,  \varphi(\star_X),
\end{equation}
with $\star_X$, $\star_Y$ the basepoints of $X$ and $Y$ respectively. Our goal in this chapter is to define the following two functorial maps induced by $\varphi$ between the enriched homologies:
\begin{enumerate}
\item the \emph{direct morphism} defined for compact manifolds $X$ and $Y$
$$
\varphi_*:H_*(X;\varphi^*\cF)\to H_*(Y;\cF).
$$
\item the \emph{shriek morphism} defined under the assumption that $X$ and $Y$ are oriented and closed
$$
\varphi_!:H_*(Y;\cF)\to H_{*+\mathrm{dim}(X)-\mathrm{dim}(Y)} (X;\varphi^*\cF).
$$
The definition of the shriek morphism adapts to the case where $X$ and/or $Y$ have boundary, with the target space of $\varphi_!$ being replaced by the relative homology $H_{*+\mathrm{dim}(X)-\mathrm{dim}(Y)} (X,\p X;\varphi^*\cF)$. We explain this  in Remark \ref{rem:shriek-for-boundary}. 
\end{enumerate}
These maps will be defined at the level of the enriched Morse complexes built from some auxiliary data consisting of a Morse function, a negative pseudo-gradient, a collapsing tree etc. 

We denote $\Xi^X$, $\Xi^Y$ the auxiliary data on $X$, respectively $Y$. In order to get well-defined maps, the definitions have to match with the identifications from Theorem~\ref{independence}, meaning that the following diagrams have to commute up to homotopy:
\begin{equation}\label{welldefined-direct} 
  \xymatrix{
     C_*(X, \Xi_0^X; \varphi^*\calf)\ar[r]^-{\varphi_*}  \ar[d]^{\Psi_{01}^X}& C_*(Y, \Xi_0^Y; \calf)\ar[d]^{\Psi^Y_{01} }
     \\
     C_*(X, \Xi_1^X; \varphi^*\calf) \ar[r]^-{\varphi_*} & C_*(Y, \Xi_1^Y; \calf) }
\end{equation}
for the direct morphism, and 
\begin{equation}\label{welldefined-shriek} 
  \xymatrix{
     C_*(Y, \Xi_0^Y; \calf)\ar[r]^-{\varphi_!}  \ar[d]^{\Psi_{01}^Y}& C_{*+\mathrm{dim}(X)-\mathrm{dim}(Y)}(X, \Xi_0^X; \varphi^*\calf)\ar[d]^{\Psi^X_{01} }
     \\
     C_*(Y, \Xi_1^Y; \calf) \ar[r]^-{\varphi_!} & C_{*+\mathrm{dim}(X)-\mathrm{dim}(Y)}(X, \Xi_1^X; \varphi^*\calf) }
\end{equation}
for the shriek morphism.

\xmpl\label{ex:fibrations} {\bf The case of Hurewicz fibrations.}  

Let $\varphi:X\ri Y$ a continuous map between closed manifolds. Given a Hurewicz fibration $F\ri E_Y\stackrel{\pi_Y}{\longrightarrow} Y$, we consider its pullback $F\ri E_X\ri X$ and the map $\widetilde{\varphi}:E_X\ri E_Y$ induced canonically by $\varphi$. By the isomorphisms from Theorem~\ref{thm:fibration} and Corollary~\ref{cor:pullback} we may see the direct map $\varphi_*$ as a map from  $H_*(E_X)$ to $H_*(E_Y)$.  We will show in \S \ref{sec:direct-for-fibrations} that this map is identified with $\widetilde{\varphi}_*$. 

As for the shriek map, in general there is no
topological map $\widetilde{\phi}_!$ between the singular homologies $H_*(E_Y)$ and $H_*(E_X)$ that we could compare to our map $\phi_!$. However, such a map does exist in the particular case where $X\subset Y$ is a closed submanifold, with $X$ and $Y$ oriented (or, more generally, with $X$ cooriented in $Y$). We describe this map in~\S\ref{sec:shriek-for-fibration}. Riegel proves in~\cite{Riegel} that it coincides with the shriek map with DG coefficients. In~\S\ref{sec:shriek-for-fibration} we state his result and we prove a particular case that is used in~\cite{BDHO-cotangent}. 
\lpmx

The rest of this chapteris organized as follows. In~\S\ref{sec:functoriality-properties} we state the expected properties of direct and shriek maps. In~\S\ref{sec:comments-homotopy} we further explain the meaning of the {\sc (homotopy)} property. In~\S\ref{sec:sketch-two-constructions} we sketch our two equivalent constructions for both the direct and for the shriek maps. The details are given in~\S\ref{sec:functoriality-first-definition} for the first construction, and in~\S\ref{sec:second-definition} for the second one.

\section{Properties}\label{sec:functoriality-properties} 

We state the expected properties of direct and shriek maps in the form of a self-contained theorem.

\begin{theorem} \label{thm:f*!}
A continuous map between smooth closed manifolds $\varphi:X\to Y$ induces in homology a canonical map 
$$
\varphi_*:H_*(X;\varphi^*\cF)\to H_*(Y;\cF),
$$
and also, under the assumption that $X$ and $Y$ are oriented, a canonical map 
$$
\varphi_!:H_*(Y;\cF)\to H_{*+\dim(X)-\dim(Y)}(X;\varphi^*\cF), 
$$
with the following properties: 
\begin{enumerate} 
\item {\sc (Identity)} We have $\mathrm{Id}_*=\mathrm{Id}$ and $\mathrm{Id}_!=\mathrm{Id}$. 
\item {\sc (Composition)} Given maps $X\stackrel{\varphi}\longrightarrow Y \stackrel{\psi}\longrightarrow Z$ and a DG local system $\cF$ on $Z$, noting that       
$
(\psi\varphi)^*\cF=\varphi^*\psi^*\cF
$
we have 
$$
(\psi\varphi)_* = \psi_*\varphi_* : H_*(X;\varphi^*\psi^*\cF)\to H_*(Z;\cF)
$$
and 
$$
(\psi\varphi)_! = \varphi_!\psi_! :  H_*(Z ;\cF)\to H_{*+\mathrm{dim}(X)-\mathrm{dim}(Z)} (X;\varphi^*\psi^*\cF).
$$ 
%\item {\sc (Homeomorphism)} If $\varphi$ is a homeomorphism, then $(\varphi^{-1})_*=\varphi_*^{-1}$. 
%\item {\sc (Homotopy)} A homotopy of maps $\varphi_{[0,1]} =(\varphi_t)$, $t\in [0,1]$ induces a 
%canonical isomorphism 
%$$
%\varphi_{[0,1]*}:H_*(X;\varphi_0^*\cF)\stackrel\simeq\longrightarrow H_*(X;\varphi_1^*\cF)
%$$
%which fits into a commutative diagram 
%$$
%\xymatrix{
%H_*(X;\varphi_0^*\cF)\ar[rr]^{\varphi_{0*}} \ar[dr]_{\varphi_{[0,1]*}}^\simeq & & H_*(Y;\cF) \\
%& H_*(X;\varphi_1^*\cF)\ar[ur]_{\varphi_{1*}} &  
%}
%$$
\item {\sc (Homotopy)} Two homotopic maps induce the same direct and shriek morphisms. 
\item {\sc (Spectral sequence)} The morphisms $\varphi_*$ and $\varphi_!$ are limits of morphisms between the spectral sequences associated to the corresponding enriched complexes, given at the second page by 
$$ \varphi_{p,*}: H_{p}( X ; \varphi^*H_q(\calf) )\ri H_p( Y; H_q(\calf))$$ and
$$ \varphi_{p,!}:H_p(Y; H_q(\calf))\ri H_{p+\mathrm{dim}(X)-\mathrm{dim}(Y)}(X; \varphi^*H_q(\calf)),$$
i.e., the usual direct and shriek maps induced by $\varphi$ in (Morse)  homology with coefficients in $H_q(\calf)$. 
\end{enumerate}
\end{theorem}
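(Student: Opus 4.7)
The plan is to construct both $\varphi_*$ and $\varphi_!$ at chain level from hybrid moduli spaces interpolating the flow on $X$ with the flow on $Y$ through $\varphi$, then verify their properties by the same inductive scheme used in the invariance Theorem~\ref{independence}. For the direct map, given Morse data $\Xi^X$ and $\Xi^Y$, I consider for each pair $(x,y)\in\Crit(f_X)\times\Crit(f_Y)$ the moduli space
$$
\overline{\cM}_\varphi(x,y) \, = \, (\varphi\circ\phi_x)^{-1}(\overline{W}^s_Y(y)) \, \subset \, \overline{W}^u_X(x),
$$
which, for $\varphi$ (or $\xi_Y$) perturbed generically, is a compact manifold with corners of dimension $|x|_X-|y|_Y$, whose codimension-one boundary strata have the form $\overline{\cL}_X(x,x')\times\overline{\cM}_\varphi(x',y)$ and $\overline{\cM}_\varphi(x,y')\times\overline{\cL}_Y(y',y)$. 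Building a representing chain system inductively on $|x|-|y|$ as in Proposition~\ref{representingchain}, then evaluating it into $\Omega Y$ via a map combining flow parametrization, the trees from $\Xi^X$ and $\Xi^Y$, the projection-inverses of the tree collapses, and $\varphi$, yields chains $\nu_{x,y}\in C_{|x|-|y|}(\Omega Y)$ satisfying an equation of the type~\eqref{MC-invariance} with $m^0=(\Omega\varphi)_*(m^X)$ and $m^1=m^Y$. Via the pullback identification of Remark~\ref{rem:pullback}, this defines a chain map $\varphi_*:C_*(X,\Xi^X;\varphi^*\cF)\to C_*(Y,\Xi^Y;\cF)$.

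Under the orientability assumption for $\varphi_!$, I coorient $\overline{W}^s_X(x)$ using the orientations on $X$ and on $\overline{W}^u_X(x)$, and consider the fiber-product moduli space
$$
\overline{\cM}^!_\varphi(y,x) \, = \, \overline{W}^u_Y(y)\,{}_\varphi\!\times_{\phi_x}\overline{W}^s_X(x),
$$
of dimension $|y|_Y-|x|_X+\dim X-\dim Y$. The same procedure of building representing chain systems and evaluating into $\Omega Y$ (through concatenation with $\varphi$ applied to the Morse trajectory in $X$ and with the appropriate tree branches on both sides), together with the pullback identification, produces a chain map of the right degree. That these maps descend to well-defined homological maps that are compatible with the canonical identifications $\Psi_{01}^X$, $\Psi_{01}^Y$ of Theorem~\ref{independence} --- i.e.\ that the diagrams~\eqref{welldefined-direct} and~\eqref{welldefined-shriek} commute up to chain homotopy --- will be proved by continuation arguments as in~\S\ref{sec:invariance-enriched}, using one-parameter families of hybrid moduli spaces over $[0,1]\times X\times Y$ that interpolate between different choices of auxiliary data.

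The four properties then follow from standard moduli-cobordism arguments modeled on Proposition~\ref{end-independence}. For \textsc{(Identity)}, when $\varphi=\id$ the hybrid moduli spaces coincide, after a controlled perturbation, with the continuation moduli spaces of Proposition~\ref{Id=id}, yielding $\id_*=\id$ and $\id_!=\id$ in homology. For \textsc{(Composition)}, given $X\xrightarrow{\varphi}Y\xrightarrow{\psi}Z$, one constructs a $[0,1]$-parameterized moduli space whose boundary degenerates on one side to the concatenation $\overline{\cM}_\psi\circ\overline{\cM}_\varphi$ and on the other to $\overline{\cM}_{\psi\varphi}$, giving the chain homotopy $\psi_*\varphi_*\simeq(\psi\varphi)_*$, and symmetrically for shriek maps. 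For \textsc{(Homotopy)}, a homotopy $(\varphi_s)_{s\in[0,1]}$ produces parametrized hybrid moduli spaces $\bigsqcup_s\{s\}\times\overline{\cM}_{\varphi_s}(x,y)$ of dimension one higher, whose boundary supplies the required chain homotopy between $(\varphi_0)_*$ and $(\varphi_1)_*$, and likewise for shriek. For \textsc{(Spectral sequence)}, both chain maps manifestly preserve the index filtration of Definition~\ref{defi:canonical_filtration} (up to the appropriate shift in the shriek case), hence induce morphisms of the associated spectral sequences; by Lemma~\ref{spectral} the $E^2$ page is Morse homology with coefficients in $H_q(\cF)$, and a direct inspection of the constructions above shows that the induced maps at $E^2$ are precisely the classical direct and shriek maps $\varphi_{p,*}$ and $\varphi_{p,!}$ in Morse homology with (non-DG) local coefficients. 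The hardest technical point will be the coherent construction, over $\Z$, of oriented representing chain systems for the entire hierarchy of hybrid and parametrized moduli spaces, together with the sign bookkeeping required to make all the compatibility diagrams commute at chain-homotopy level and to match the sign conventions already fixed in Theorem~\ref{independence}.
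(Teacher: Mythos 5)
Your proposal follows what the paper calls the ``second definition'' of direct and shriek maps (\S\ref{sec:funct-direct-alex} and \S\ref{sec:second-definition-shriek}): the hybrid moduli spaces $\cM^\varphi(x,y') = W^u(x)\cap\varphi^{-1}(W^s(y'))$ and $\cM^{\varphi_!}(x',y) = W^s(y)\cap\varphi^{-1}(W^u(x'))$ you introduce are, up to notation, exactly the ones the paper defines in~\eqref{eq:direct-second-definition} and~\eqref{eq:shriek-second-definition}, and the cobordism arguments you sketch for Identity, Composition and Homotopy are the same as the ones outlined in \S\ref{sec:properties-f*-revisited}. Where your route diverges from the paper's primary proof is in how rigor is achieved: the paper gives full details instead for a ``first definition'' that proceeds by reduction --- first codimension-0 submanifolds via inclusion of subcomplexes (\S\ref{sec:funct-0-codim-manifolds}), then closed submanifolds via tubular neighborhoods (\S\ref{sec:funct-closed-submanifolds}), then embeddings (\S\ref{sec:funct-embeddings}), then arbitrary smooth maps by factoring $\varphi$ through $(\chi,\varphi):X\hookrightarrow D^m\times Y$ (\S\ref{sec:funct-direct-general}), then continuous maps by smooth approximation (\S\ref{sec:funct-for-continuous}) --- and the properties in the second approach are established by proving it coincides with the first (Propositions~\ref{alex=mihai}, \ref{alex=mihai-shriek}). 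What the paper's embedding-factorization route buys is that well-definedness and sign bookkeeping localize to the elementary cases, where the chain maps are essentially inclusions or quotients; what your moduli-space route buys is uniformity across all smooth maps and a template that carries over directly to Floer theory, at the price of the coherent-orientation bookkeeping you yourself flag as the hardest point.

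Two concrete gaps in your outline. First, you construct the maps only for smooth $\varphi$, but Theorem~\ref{thm:f*!} is about continuous maps between closed manifolds; you need to add the approximation step: choose a smooth $\bar\varphi$ close to $\varphi$, set $\varphi_*=\bar\varphi_*\circ\Psi$ with $\Psi$ the identification isomorphism of Proposition~\ref{prop:homotopy-identification} for the short homotopy from $\varphi$ to $\bar\varphi$, and verify independence of the approximation (this is \S\ref{sec:funct-for-continuous}). Second, and more substantively, you claim that well-definedness with respect to the auxiliary data ``will be proved by continuation arguments'' over parametrized hybrid moduli spaces. This is in principle the right idea, but making the signs and the homotopy-of-homotopies cocycle come out correctly over~$\Z$ --- for instance the keystone statement that $\Id_*$ with two different sets of data $\Xi_0$, $\Xi_1$ is chain-homotopic to the continuation morphism $\Psi_{01}$, which the paper proves as Proposition~\ref{id=continuation} via a delicate identification $\cL_F(x,y)\simeq\cM^{\Id}(x,y)$ together with Lemma~\ref{lem:R-orientation} on orientation discrepancies and Lemma~\ref{lem:homotopic-evaluations-bisbis} on reparametrized evaluation maps --- is far from automatic. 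If you take the direct route you must either carry out those comparisons in full or, as the paper does, establish equivalence with a construction whose well-definedness is already known; either way, your proposal currently leaves this as a promise rather than a proof.
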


\begin{corollary} \label{cor:homotopy_equivalence}
Let $\varphi:X\to Y$ be a homotopy equivalence and let $\cF$ be a DG local system on $Y$. 
The canonical maps
$$
\varphi_*:H_*(X;\varphi^*\cF)\to H_*(Y;\cF) \quad \mbox{and} \quad \varphi_!:H_*(Y;\cF)\to H_*(X;\varphi^*\cF)
$$
are isomorphisms. 
\end{corollary}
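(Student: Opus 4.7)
The plan is to deduce the corollary as a purely formal consequence of the functorial properties (Identity), (Composition), and (Homotopy) from Theorem~\ref{thm:f*!}. Pick $\psi:Y\to X$ a homotopy inverse of $\varphi$, so that $\psi\varphi\simeq\mathrm{Id}_X$ and $\varphi\psi\simeq\mathrm{Id}_Y$. Without loss of generality we may arrange $\psi$ to preserve basepoints (after a small homotopy, which does not affect induced morphisms by (Homotopy)).

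For the direct morphism, I would first apply (Composition) to $X\xrightarrow{\varphi}Y\xrightarrow{\psi}X$ with a DG local system $\cG$ on the target $X$. Using $(\psi\varphi)^*\cG=\varphi^*\psi^*\cG$, this yields
$$\psi_*\circ\varphi_*=(\psi\varphi)_*\colon H_*(X;(\psi\varphi)^*\cG)\to H_*(X;\cG).$$
By (Homotopy) applied to $\psi\varphi\simeq\mathrm{Id}_X$, the map $(\psi\varphi)_*$ coincides with $(\mathrm{Id}_X)_*$ up to the canonical identification of source homologies, and by (Identity) the latter is the identity. Taking $\cG=\varphi^*\cF$ it follows that $\psi_*\circ\varphi_*$ is an isomorphism. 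A symmetric argument applied to $Y\xrightarrow{\psi}X\xrightarrow{\varphi}Y$ with DG local system $\cF$ shows that $\varphi_*\circ\psi_*$ is also an isomorphism. A standard algebraic argument (one composition being an isomorphism yields injectivity of the right factor and surjectivity of the left factor; together the two compositions force both factors to be isomorphisms) then gives that $\varphi_*$ is an isomorphism.

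For the shriek morphism the argument is entirely parallel, but using the contravariant composition law $(\psi\varphi)_!=\varphi_!\circ\psi_!$ provided by (Composition). Applied to $X\xrightarrow{\varphi}Y\xrightarrow{\psi}X$ and combined with (Homotopy) and (Identity), this gives that $\varphi_!\circ\psi_!$ is an isomorphism; symmetrically, $\psi_!\circ\varphi_!$ is an isomorphism; and we conclude as before that $\varphi_!$ is an isomorphism.

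The proof is essentially formal and I do not anticipate any serious obstacle; the only point requiring care is the bookkeeping of the canonical identifications of the pullback DG local systems $(\psi\varphi)^*\varphi^*\cF$ and $\varphi^*\cF$, and of $(\varphi\psi)^*\cF$ and $\cF$, that arise from the (Homotopy) property. As a sanity check, one could alternatively argue via the (Spectral sequence) property: the induced maps on the $E^2$-pages are the classical (Morse) homology maps with coefficients in the local systems $H_q(\cF)$, which are isomorphisms for a homotopy equivalence; together with the convergence of the spectral sequences established in~\S\ref{sec:spectral-sequence} this would also yield the result, providing an independent verification.
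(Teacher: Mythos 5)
Your argument is correct and matches the paper's approach: the paper's proof consists of the single line that this is a "straightforward consequence of conditions 1., 2. and 3. from Theorem~\ref{thm:f*!}," and your proposal simply spells out that standard two-sided-inverse argument using (Identity), (Composition), and (Homotopy). The spectral sequence alternative you sketch is also sound (under the standing assumption that $\cF$ is bounded below, which guarantees convergence), but it is not needed and is not the route the paper takes.
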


\begin{proof}
This is a straightforward consequence of conditions 1., 2. and 3. from Theorem~\ref{thm:f*!}. 
\end{proof}

As a matter of fact, much more is true: 

\begin{corollary} \label{cor:homotopy_equivalence_shriek_announcement}
Let $\varphi:X\to Y$ be an orientation preserving homotopy equivalence between closed oriented manifolds and let $\cF$ be a DG local system on $Y$. The canonical maps $\varphi_!:H_*(Y;\cF)\to H_*(X;\varphi^*\cF)$ and $\varphi_*:H_*(X;\varphi^*\cF)\to H_*(Y;\cF)$ are isomorphisms inverse to each other.  \qed
\end{corollary}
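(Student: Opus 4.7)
Let $\psi:Y\to X$ be a homotopy inverse of $\varphi$. Examining the induced maps on the top-dimensional fundamental classes and using that $\psi\varphi\simeq\mathrm{Id}_X$ preserves orientation forces $\psi$ to be orientation-preserving as well. Corollary~\ref{cor:homotopy_equivalence} already guarantees that both $\varphi_*$ and $\varphi_!$ are isomorphisms, so the only remaining task is to verify that they are two-sided inverses of each other; one-sided invertibility is enough, so I would concentrate on establishing $\varphi_!\circ \varphi_* = \mathrm{Id}_{H_*(X;\varphi^*\cF)}$.

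The first step is formal. Applying the axioms (Composition), (Homotopy) and (Identity) of Theorem~\ref{thm:f*!} to the compositions $\psi\varphi\simeq \mathrm{Id}_X$ and $\varphi\psi\simeq \mathrm{Id}_Y$, and using the canonical homotopy-induced identifications of the pulled-back local systems, one obtains that $\psi_*$ is a two-sided inverse of $\varphi_*$ and that $\psi_!$ is a two-sided inverse of $\varphi_!$. In particular, the target identity $\varphi_!\varphi_* = \mathrm{Id}$ is equivalent to the single equality
\[
\varphi_! \;=\; \psi_* \qquad (\text{equivalently } \psi_! = \varphi_*)
\]
viewed as maps $H_*(Y;\cF) \to H_*(X;\varphi^*\cF)$.

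To establish $\varphi_! = \psi_*$ I would appeal to Poincar\'e duality (Theorem~C), together with the compatibility of $PD$ with the direct and shriek operations developed in \S\ref{sec:cohomology-PD} and \S\ref{sec:non-orientable}. Through the $PD$-isomorphisms on $X$ and $Y$, the homological shriek $\varphi_!$ is intertwined with the cohomological pullback $\varphi^*$, while the homological direct map $\varphi_*$ is intertwined with the cohomological shriek $\varphi^{!}_{\mathrm{coh}}$. For an orientation-preserving homotopy equivalence between closed oriented manifolds of the same dimension, the cohomological pullback $\psi^*$ by the homotopy inverse is a two-sided inverse to $\varphi^*$, and moreover coincides with $\varphi^{!}_{\mathrm{coh}}$ (this is the familiar fact that, in this setting, the cohomological shriek of $\varphi$ is realized by pullback along any homotopy inverse $\psi$). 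Transporting the identity $\varphi^{!}_{\mathrm{coh}}\circ \varphi^* = \psi^*\circ \varphi^* = \mathrm{Id}$ through $PD_X$ and $PD_Y$ then yields $\varphi_!\varphi_* = \mathrm{Id}$, as required.

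The principal difficulty is precisely the equality $\varphi_! = \psi_*$: the four formal axioms of Theorem~\ref{thm:f*!} pin down $\varphi_*$ only up to the $\psi_*$-ambiguity and $\varphi_!$ only up to the $\psi_!$-ambiguity, but a priori supply no link between the direct and shriek constructions. Bridging them is not formal. The Poincar\'e-duality route above is one way to do this; an alternative, more hands-on approach is a direct chain-level comparison of the explicit Morse-theoretic models for $\varphi_!$ and for $\psi_*$ constructed in \S\ref{sec:functoriality-first-definition}--\S\ref{sec:second-definition}, exhibiting a canonical chain homotopy between them for any choice of homotopy-inverse pair $(\varphi,\psi)$.
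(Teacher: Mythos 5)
Your strategy differs from the paper's, and it leaves the central step unproved. The paper establishes Corollary~\ref{cor:homotopy_equivalence_shriek_announcement} (as Corollary~\ref{cor:homotopy_equivalence_shriek}) by first proving the stronger Proposition~\ref{prop:phi*phi!}: for \emph{any} degree-$d$ map $\varphi$ between closed oriented manifolds of the same dimension, $\varphi_*\varphi_!=d\cdot\Id$. This is a genuine geometric/chain-level theorem: it is proved by building a moduli-space homotopy $\cH(x',z')=\bigcup_{t>0}\phi^t_\eta\varphi^{-1}(W^u(x'))\cap\varphi^{-1}(W^s(z'))$ whose $t\to 0$ boundary stratum computes $d\cdot\Id$ and whose $t\to\infty$ stratum computes $\varphi_*\varphi_!$, with a careful sign analysis. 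Once this proposition is in hand, the corollary is immediate: an orientation-preserving homotopy equivalence has degree one, so $\varphi_*\varphi_!=\Id$; combined with the fact that $\varphi_*$ is already known to be invertible by Corollary~\ref{cor:homotopy_equivalence}, one concludes $\varphi_!=\varphi_*^{-1}$.

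Your proposal correctly identifies that the formal axioms of Theorem~\ref{thm:f*!} alone cannot link the direct and shriek constructions and that some substantive input is needed. But the input you reach for --- that after transporting through $PD$, the ``cohomological shriek'' $\varphi^!_{\mathrm{coh}}$ equals the pullback $\psi^*$ along a homotopy inverse --- is precisely a restatement of the gap, not a resolution of it. In the classical singular-cohomology setting that identity follows from cap product with fundamental classes, but no such identification is available off the shelf in the DG-coefficient framework of the paper; establishing it here would require a chain-level or moduli-space argument of essentially the same nature and difficulty as the paper's Proposition~\ref{prop:phi*phi!}. You acknowledge this in your final paragraph, pointing to a ``direct chain-level comparison'' as the alternative, but you do not carry out either route. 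In short: your reduction $\varphi_!\varphi_*=\Id\ \Longleftarrow\ \varphi_!=\psi_*\ \Longleftarrow\ \varphi^!_{\mathrm{coh}}=\psi^*$ is valid, but its last link is exactly the nontrivial content and remains unproved. One additional minor remark: the paper proves $\varphi_*\varphi_!=\Id$ whereas you aim for $\varphi_!\varphi_*=\Id$; both suffice once invertibility is known, but it is the former that Proposition~\ref{prop:phi*phi!} delivers, and the moduli-space construction there is adapted to that ordering of the composition.
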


We will prove this result as Corollary~\ref{cor:homotopy_equivalence_shriek}. 

\begin{remark}
1. If $\varphi$ is a homeomorphism then $(\varphi^{-1})_*=\varphi_*^{-1}$ and $(\varphi^{-1})_!=\varphi_!^{-1}$. This is an immediate consequence of properties 1. and 2. above. Of course, if $\varphi$ is orientation preserving then it follows from the previous result that we actually  have $\varphi_!=\varphi_*^{-1}$. 

2. The shriek map can also be constructed if the manifolds $X$ and $Y$ have boundary (and are oriented). In this case it takes the form 
$$
\varphi_!:H_*(Y,\p Y;\cF)\to H_{*+\dim(X)-\dim(Y)}(X,\p X;\varphi^*\cF). 
$$
The functoriality properties are directly similar to those for the closed case.  
\end{remark}

%\begin{enumerate} 
%\item {\it Identity.} We have $\mathrm{Id}_*=\mathrm{Id}$ and $\mathrm{Id}_!=\mathrm{Id}$. 
%\item  {\it Composition.} Given maps $X\stackrel{\varphi}\longrightarrow Y \stackrel{\psi}\longrightarrow Z$ and a DG local system $\cF$ on $Z$, noting that       
%$
%(\psi\varphi)^*\cF=\varphi^*\psi^*\cF
%$
%we have 
% 
%$$
%(\psi\varphi)_* = \psi_*\varphi_* : H_*(X;\varphi^*\psi^*\cF)\to H_*(Z;\cF)
%$$
%and 
%$$
%(\psi\varphi)_! = \varphi_!\psi_! :  H_*(Z ;\cF)\to H_{*+\mathrm{dim}(X)-\mathrm{dim}(Z)} (X;\varphi^*\psi^*\cF).
%$$ 
%\item {\it Homotopy.} Two homotopic maps induce the same direct and shriek morphisms. 
%% $\varphi_*$ and $\varphi_!$ depend only on the homotopy class of the map $\varphi$. 
%\item {\it Spectral sequence.} The morphisms $\varphi_*$ and $\varphi_!$ are limits of morphisms between the spectral sequences associated to the corresponding enriched complexes, given at the second page by 
%$$ \varphi_{p,*}: H_{p}( X ; \varphi^*H_q(\calf) )\ri H_p( Y; H_q(\calf))$$ and
%$$ \varphi_{p,!}:H_p(Y; H_q(\calf))\ri H_{p+\mathrm{dim}(X)-\mathrm{dim}(Y)}(X; \varphi^*H_q(\calf)),$$
%i.e., the usual direct and shriek maps induced by $\varphi$ in (Morse)  homology with coefficients in $H_q(\calf)$. 
%\end{enumerate}

\section{The meaning of the homotopy property} \label{sec:comments-homotopy}  Let us comment on the homotopy invariance property above. As stated it means that for two homotopic maps $\varphi_0$ and $\varphi_1$ the morphisms $$(\varphi_i)_*:H_*(X;\varphi_i^*\cF)\to H_*(Y;\cF)
$$
and  $$(\varphi_i)_!:H_*(Y;\cF)\to H_{*+\mathrm{dim}(X)-\mathrm{dim}(Y)} (X;\varphi_i^*\cF)
$$
are the same for $i=0$ and $i=1$.
For this to make sense we need a canonical identification between the enriched homologies $H_*(X;\varphi_0^*\cF)$ and $H_*(X;\varphi_1^*\cF)$. By Remark \ref{rem:pullback}, this is equivalent to an identification between the homologies of the complexes $C_*(X, \varphi_{i*}(m_{x,y}) ; \cF)$ for $i=0, 1$,  i.e., the enriched complexes on $X$ with coefficients in the  $C_*(\Omega Y)$-module  $\calf$ built using the twisted cocycles $(\varphi_{i*}(m_{x,y}))$.

In this section, we build such an identification $\Psi^\varphi$ from a homotopy $\varphi$ between $\varphi_0$ and $\varphi_1$ (Proposition~\ref{prop:homotopy-identification}). We then prove that $\Psi^\varphi$ does not depend on the choice of homotopy $\varphi$ (Proposition~\ref{prop-indep-homotopy-identifications}).

With these identifications in hand, the property of homotopy invariance should be understood as % the equality of the two maps {\it after} the identification by $\Psi^\varphi$ (and the one of Remark~\ref{rem:pullback}), which means
the fact that the following diagrams should be commutative {\it in homology}:

\begin{equation}
\xymatrix
@C=20pt
{
C_*(X, \varphi_{0*}(m_{x,y}) ; \cF) \ar[d]_{\Psi^\varphi} \ar[r]^-{=} &C_*(X, m_{x,y}; \varphi_{0}^*\cF) \ar[l] \ar[r]^-{\varphi_{0*}}& C_*(Y;\cF) \\
C_*(X, \varphi_{1*}(m_{x,y});\cF)\ar[r]^-{=}&C_*(X, m_{x,y}; \varphi_{1}^*\cF) \ar[l]  \ar[ur]_{\varphi_{1*}} &  
}\label{diag:homotopy-invariance}
\end{equation}
 
for the direct maps, 
and 
%$$
%\xymatrix{
%T_{*+\mathrm{dim}(X)-\mathrm{dim}(Y)}(X, \varphi_{0*}(m_{x,y}) ; \cF) \ar[d]_{\Psi^\varphi} \ar[r]^-{=} &T_{*+\mathrm{dim}(X)-\mathrm{dim}(Y)}(X, m_{x,y}; \varphi_{0}^*\cF)\ar[l]& C_*(Y;\cF)  \ar[l]^-{\varphi_{0!}} \ar[dl]_{\varphi_{1!}}\\
%T_{*+\mathrm{dim}(X)-\mathrm{dim}(Y)} (X, \varphi_{1*}(m_{x,y});\cF)\ar[r]^-{=}&T_{*+\mathrm{dim}(X)-\mathrm{dim}(Y)}(X, m_{x,y}; \varphi_{1}^*\cF) \ar[l]  &  
%}
%$$ 
$$
\xymatrix
@C=18pt
{
C_*(X, \varphi_{0*}(m_{x,y}) ; \cF) \ar[d]_{\Psi^\varphi} \ar[r]^-{=} &C_*(X, m_{x,y}; \varphi_{0}^*\cF)\ar[l]& T_{*-\mathrm{dim}(X)+\mathrm{dim}(Y)}(Y;\cF)  \ar[l]^-{\varphi_{0!}} \ar[dl]_{\varphi_{1!}}\\
C_* (X, \varphi_{1*}(m_{x,y});\cF)\ar[r]^-{=}&C_*(X, m_{x,y}; \varphi_{1}^*\cF) \ar[l]  &  
}
$$ 
for the shriek maps.

% This supposes some identification between the enriched homologies with coefficients in two different DG-modules $\varphi_0^*\calf$ and $\varphi_1^*\calf$. In order to avoid it we use the identification from Remark~\ref{rem:pullback}: the enriched complex on $X$ with coefficients  in the $C_*(\Omega X)$-module  $\varphi^*\calf$  built using the twisted cocycle $(m_{x,y})$ is canonically identified to the enriched complex on $X$ with coefficients in the  $C_*(\Omega Y)$-module  $\calf$ built using the twisted cocycle $(\varphi_*(m_{x,y}))$. Denote this complex by $C_*(X, \varphi_*(m_{x,y}); \calf)$.

A  homotopy $(\varphi_t)_{t\in [0,1]}$ yields an identification between the chain complexes $C_*(X, (\varphi_{0*}(m_{x,y}); \calf)$ and $C_*(X, (\varphi_{1*}(m_{x,y}); \calf)$ as follows:

\begin{proposition}\label{prop:homotopy-identification} Let $\varphi =(\varphi_t)_{t\in [0,1]}$ be a homotopy which satisfies condition~\eqref{eq:preserving-basepoint} at the endpoints, i.e.,  
$$\varphi_i(\star_X)\, =\, \star_Y$$ for $i=0,1$. Then there exists a chain homotopy equivalence %quasi-isomorphism  
$$\Psi^\varphi : C_*(X, \varphi_{0*}(m_{x,y}); \calf)\ri C_*(X, \varphi_{1*}(m_{x,y}); \calf)$$ such that:\\
(i) If $\varphi = \Id$ is the constant homotopy at $\varphi_0$ then $\Psi^\Id$ is homotopic to the identity and in particular $\Psi^\varphi=\Id$ in homology. \\
(ii) If two homotopies $\varphi$ and $\varphi'$ are homotopic with fixed endpoints then $\Psi^\varphi$ and $\Psi^{\varphi'}$ are homotopic.\\
(iii) If $\varphi_{01}$ is a homotopy between $\varphi_0$ and $\varphi_1$ and $\varphi_{12}$ is a homotopy between $\varphi_1$ and $\varphi_2$, then denoting $\varphi_{02}$ the concatenation of $\varphi_{01}$ and $\varphi_{02}$ we have that $\Psi^{\varphi_{12}}\circ \Psi^{\varphi_{01}}$ and $\Psi^{\varphi_{02}}$ are homotopic. Therefore 
$$\Psi^{\varphi_{12}}\circ \Psi^{\varphi_{01}}\, =\, \Psi^{\varphi_{02}}$$ in homology. 
 In particular $\Psi^\varphi$ is always an homotopy equivalence and thus a quasi-isomorphism. 
\end{proposition}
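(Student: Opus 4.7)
The plan is to construct $\Psi^\varphi$ at chain level by mimicking the continuation-map construction of~\S\ref{sec:invariance-enriched}, with the evaluation into $\Omega X$ replaced by an evaluation into $\Omega Y$ that uses the homotopy. First I would work on the cylinder $W = [-\epsilon, 1+\epsilon] \times X$, extend $\varphi$ to a map $\tilde\varphi : W \to Y$ stationary on the collars (equal to $\varphi_0$ for $t \le \epsilon$ and to $\varphi_1$ for $t \ge 1 - \epsilon$), and equip $W$ with the standard continuation data $(F, \xi_F, o_F)$ from~\S\ref{sec:invariance-enriched}: the function $F(t,x) = f(x) + g(t)$, a compatible pseudo-gradient, and the orientations~\eqref{eq:orientation-produit}-\eqref{eq:orientation-produit2}. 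A representing chain system $\{s_{x,y}^F\}$ is then chosen as in Lemma~\ref{orientation-difference}, so that its restrictions to the end-slices recover (up to the prescribed sign) the chain system $\{s_{x,y}\}$ of $\Xi$.

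The key object is the mixed part $\sigma_{x,y} := s^F_{x_0, y_1}$ for $x_0 \in \{0\} \times \Crit(f)$, $y_1 \in \{1\} \times \Crit(f)$. I would define evaluation maps $q_{x_0,y_1} : \overline{\call}_F(x_0, y_1) \to \Omega Y$ as the composition
$$q_{x_0, y_1} \, =\, \tilde\varphi \circ \Theta \circ p \circ \Gamma^F,$$
where $\Gamma^F$ parametrizes broken trajectories by the values of $F$, $p: W \to W/\widetilde\caly$ collapses the constant family of trees $\widetilde\caly = [-\epsilon, 1+\epsilon] \times \caly$, and $\Theta$ is a homotopy inverse of $p$. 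Since $\tilde\varphi(\star_X) = \star_Y$ on both collars, this produces based loops in $\Omega Y$. Setting $\nu_{x,y} = -q_{x_0, y_1, *}(\sigma_{x,y})$ and combining equation~\eqref{eq:newinvarianceforchains} (satisfied by $\sigma$ thanks to Lemma~\ref{orientation-difference}) with the factorizations $q_{x_0, z_0} = \varphi_0 \circ q^X_{x,z}$ and $q_{w_1, y_1} = \varphi_1 \circ q^X_{w,y}$ on the boundary strata (where $q^X$ denotes the evaluation maps on $X$ defining $m_{x,y}$) together with the concatenation property of the evaluations, one obtains the cocycle relation
$$\partial \nu_{x,y} \, =\, \sum_z \varphi_{0*}(m_{x,z}) \cdot \nu_{z,y} \, -\, \sum_w (-1)^{|x|-|w|} \nu_{x,w} \cdot \varphi_{1*}(m_{w,y}).$$
By Proposition~\ref{prop:continuation-DG} this defines the desired chain map $\Psi^\varphi$.

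For property~(i), when $\varphi_t \equiv \varphi_0$ the map $\tilde\varphi$ factors through the projection $W \to X$ followed by $\varphi_0$, and the argument of Proposition~\ref{Id=id} applies verbatim: an appropriate choice of representing chain system via Lemma~\ref{special-rcs} makes an auxiliary evaluation $\overline q$ yield the identity cocycle, while Lemma~\ref{lem:homotopic-evaluations} shows that the actual cocycle obtained from $q$ is chain-homotopic to it. Properties~(ii) and~(iii) are obtained by mirroring Step~2 of Theorem~\ref{independence} on the square $W' = [-\epsilon, 1+\epsilon]^2 \times X$ with the Morse function $K(\tau, t, x) = f(x) + g(\tau) + g(t)$: a homotopy of homotopies (respectively, a deformation between the concatenation $\varphi_{02}$ and the broken path made of $\varphi_{01}$ and $\varphi_{12}$) extends to a map $\tilde\Phi : W' \to Y$, and evaluating the top-dimensional chains $S^K_{x,y}$ via $\tilde\Phi \circ \Theta \circ p \circ \Gamma^K$ yields the required chain homotopies as in the proof of Proposition~\ref{end-independence}.

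The main obstacle is purely bookkeeping: one must carefully track the orientation signs from~\eqref{eq:orientation-produit}-\eqref{eq:orientation-produit2} through the identifications of Lemmas~\ref{orientation-difference} and~\ref{change-of-sign-bis}, and confirm the concatenation relations of the evaluation maps into $\Omega Y$ on every stratum, which hold because $\tilde\varphi$ and $\tilde\Phi$ respect path concatenation. No new conceptual ingredient is required beyond those of~\S\S\ref{sec:invariance-enriched}-\ref{sec:invariance-continuation}; the whole construction is obtained by pushing those sections' constructions through $\tilde\varphi$.
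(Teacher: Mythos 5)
Your proposal follows essentially the same route as the paper's proof: build $\Psi^\varphi$ from the trivial continuation data $(F=f+g,\xi_F)$ on the cylinder, modify the evaluation map to land in $\Omega Y$ via the homotopy $\varphi$ rather than projecting to $X$, and then derive (i) from Proposition~\ref{Id=id} and (ii)-(iii) from two-parameter families as in Proposition~\ref{end-independence}. The only detail you leave implicit which the paper makes explicit is the independence of $\Psi^\varphi$ from the choice of representing chain system $\sigma_{x,y}$ (handled via Proposition~\ref{prop:uniqueness} and the homotopy equation~\eqref{eq:alg-homotopy-eqn}), but this fits under your ``bookkeeping'' caveat.
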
 
\begin{proof} Take $\Xi$ to be the data on $X$ which produces $(m_{x,y})$ and proceed as in~\S\ref{sec:invariance-enriched} where we defined  the continuation morphism; here we consider  $\Psi^\Id$ corresponding to the trivial homotopy $\Id$ between $\Xi$ and $\Xi$.  The construction started with a representing chain system: if $f$  is the Morse function of $\Xi$ we defined a chain $$\sigma_{x,y} \in C_{|x|-|y|} (\overline{\call}_\Id(x,y))$$ for $x\in \{0\}\times \Crit(f)$ and $y\in \{1\}\times \Crit(f)$ which satisfies  equation~\eqref{eq:newinvarianceforchains}, i.e.,  
\begin{equation}\label{eq:newinvarianceforchains-2} 
\partial \sigma_{x,y}=\sum _{z\in \Crit(f)}s_{x,z}\times\sigma_{z,y}-\sum _{w\in \Crit(f)}(-1)^{|x|-|w|}\sigma_{x,w} \times s_{w,y},
\end{equation} 
where $(s_{x,y})$ is the representing chain system defined by $\Xi$. The evaluation we use now is different from~\eqref{eq:invariancelecture} of~\S\ref{sec:invariance-enriched}: instead of projecting $[0,1]\times X$ onto $X$ we send it to $Y$ via the homotopy $\varphi: [0,1]\times X\ri Y$; more precisely define 
$$Q_{x,y} : \overline{\call}_{\Id}(x,y) \ri \Omega Y$$ by 
$$Q_{x,y}\, =\, \varphi \circ \Theta\circ p \circ \Gamma_{x,y}^\Id$$
using the notation of~\eqref{eq:invariancelecture}; the condition~\eqref{eq:preserving-basepoint} guarantees that $Q_{x,y}$ has values in $\Omega Y$. Denoting $\nu^\varphi_{x,y} = -Q_{x,y,*}(\sigma_{x,y})\in C_{|x|-|y|}(\Omega Y)$ we infer from~\eqref{eq:newinvarianceforchains-2} that 
\begin{equation}\label{eq:homotopy-invariance-cocycle} 
\partial \nu^\varphi_{x,y}=\sum _{z\in \Crit(f)}\varphi_{0_*}(m_{x,z})\times\nu^\varphi_{z,y}-\sum _{w\in \Crit(f)}(-1)^{|x|-|w|}\nu^\varphi_{x,w} \times \varphi_{1_*}(m_{w,y}).
\end{equation} 

This is exactly the algebraic equation~\eqref{eq:DGcont} which gives rise to a chain map 
$$\Psi^\varphi : C_*(X, (\varphi_{0*}(m_{x,y}); \calf)\ri C_*(X, (\varphi_{1*}(m_{x,y}); \calf)$$
by setting 
$$\Psi^\varphi(\alpha\otimes x) \, =\, \sum_{y\in\Crit(f)}\alpha\nu^\varphi_{x,y}\otimes y.$$
Let us now check properties (i), (ii)  and (iii). 

As a preliminary remark, note that the algebraic homotopy class of the identification morphism $\Psi^\varphi$ does not depend on the choice of the representing chain system $\sigma_{x,y} \in C_{|x|-|y|} (\overline{\call}_\Id(x,y))$. Indeed, Proposition~\ref{prop:uniqueness} yields a homotopy cocycle $\kappa_{x,y} \in  C_{|x|-|y|+1} (\overline{\call}_\Id(x,y))$ between two representing chain systems $(\sigma_{x,y})$ and $(\sigma'_{x,y})$  which satisfies equation~\eqref{eq:sprime-s-homologous1} (one has to choose  $\kappa_{x,y}=0$ for $x,y$ critical points belonging to the same slice $\{i\}\times X$ in order to get this equation).  Applying the evaluation $Q$ above to~\eqref{eq:sprime-s-homologous1}, we get the algebraic homotopy equation~\eqref{eq:alg-homotopy-eqn} which implies that the morphisms $\Psi^\varphi$ and $\Psi^{\varphi'}$ are homotopic. 

 (i)  If $\varphi=\Id $ is the constant homotopy then $\varphi:[0,1]\times X\ri Y$ satisfies 
$\varphi\, =\, \varphi_0\circ \pi$, 
where $\pi:[0,1]\times X\ri X$ is the projection, and therefore $Q_{x,y}\, =\, \varphi_0\circ q_{x,y}$, 
where $q_{x,y}$ is the usual evaluation we used for the continuation morphism corresponding to the trivial homotopy between $\Xi$ and itself. We get 
$\nu^\varphi_{x,y}\, =\, \varphi_{0*}(\nu^\Id_{x,y})$ and, after the identification  
$C_*(X, \varphi_{0*}(m_{x,y}); \calf)\, =\, C_*(X, m_{x,y}; \varphi_{0}^*\calf)$ from Remark~\ref{rem:pullback}, we see that $\Psi^\Id$ corresponds to the continuation morphism between $C_*(X, m_{x,y}; \varphi_{0}^*\calf)$ and itself, which was proved to be the identity in Proposition~\ref{Id=id}. 

(ii) Consider two homotopies $\varphi$ and $\varphi'$ which are homotopic with fixed endpoints via a family $(\varphi_{\tau,t})_{(\tau,t)\in [0,1]^2}$.  Take $\Xi$ to be a set of data of the enriched complex on $X$ and construct as in the proof of Proposition \ref{end-independence} (the second step of Theorem~\ref{independence}), a representing chain system $S^\Id_{x,y} \in C_{|x|-|y|+1}\overline{\call}(x,y)$ corresponding to the trivial homotopy  of homotopies $\Xi_{\tau,t}=\Xi$.  We keep the same notation $S_{x,y}$ as in Proposition~\ref{end-independence}, with the superscript $\Id$ emphasising that we use a trivial homotopy of homotopies for $\Xi$ over $[0,1]^2\times X$.

As in the first part of the proof we use a different evaluation which maps this chain into $C_{|x|-|y|+1}(\Omega Y)$. More precisely, if $\Phi:[0,1]^2\times X\ri Y$ is obtained from $(\varphi_{\tau,t})$ in the  obvious way, we define $$Q_{x,y}: \overline{\call}(x,y) \ri \Omega Y$$ by 
$$
Q_{x,y} \, =\, \Phi\circ\Theta\circ p \circ \Gamma_{x,y}.
$$
Arguing as in the proof of~\ref{independence} we get that the cocycle $h_{x,y}^\Phi= Q_{x,y,*}(S^\Id_{x,y})$ thus obtained defines a chain homotopy  between $\Psi^\Id\circ\Psi^\varphi$ and $\Psi^{\varphi'}\circ\Psi^\Id$.  Since by point (i) the morphism $\Psi^\Id$ is homotopic to the identity we get the desired result. 

(iii) The argument is quite similar to the one of (ii) above. Start with a homotopy of homotopies connecting the concatenations $\Id\#\varphi_{02}$ and $\varphi_{12}\#\varphi_{01}$. We get a continuous map $\Phi:[0,1]^2\times X\ri Y$ such that $\Phi(\cdot,0,\cdot) = \varphi_{01}$, $\Phi(1,\cdot,\cdot)= \varphi_{12}$, $\Phi(0,\cdot,\cdot)=\Id$ (i.e. the constant homotopy, $\Phi(0,t,x)=\varphi_0(x)$) and $\Phi(\cdot, 1,\cdot)=\varphi_{02}$. The cocycle $h^\Phi_{x,y}$ defined by the same formula as above yields now a homotopy between $\Psi^{\varphi_{12}}\circ\Psi^{\varphi_{01}}$ and $\Psi^{\varphi_{02}}\circ\Psi^\Id$ and the latter is homotopic to $\Psi^{\varphi_{02}}$ by (i).

Finally remark that (i), (ii)  and (iii) imply that $\Psi^\varphi$ is a homotopy equivalence, its homotopy inverse being defined by $\bar{\varphi}=(\varphi_{1-t})$, the reverse homotopy of $\varphi$.
\end{proof}

\rmk The identification morphism $\Psi^\varphi$ was defined at the level of complexes and therefore depends a priori on the chosen   set of data $\Xi$. In order to have a well-defined map induced in homology  we still have to check the compatibility with the continuation morphisms. Namely,  if $\Xi_0$ and $\Xi_1$  are two sets of data we have to prove that the following diagram is commutative in homology : 
\begin{equation}\label{eq:identification-compatibility} 
  \xymatrix{
     C_*(X, \Xi_0 ; \varphi_0^*\calf)\ar[r]^-{\Psi_0^\varphi}  \ar[d]^{\Psi_{01}^0}& C_*(X, \Xi_0; \varphi_1^*\calf)\ar[d]^{\Psi_{01}^1 }
     \\
     C_*(X, \Xi_1; \varphi_0^*\calf) \ar[r]^-{\Psi_1^\varphi} & C_*(X, \Xi_1; \varphi_1^*\calf) }
\end{equation}
where $\Psi_{01}^i$ are the continuation morphisms between the sets of data $\Xi_0$ and $\Xi_1$ for the DG-module $\varphi_i^*\calf$ and $\Psi^\varphi_i$ are the two identification morphisms of the homotopy $\varphi$ respectively defined for the sets of data $\Xi_i$. This is done as usual by considering a set of data  $\Xi$ on  $[0,1]^2\times X$ which will produce a chain homotopy between  $\Psi_{01}^1\circ \Psi_0^\varphi$ and $\Psi_1^\varphi\circ\Psi_{01}^0$. To construct $\Xi$, start with a set of data $\Xi_t$ on $[0,1]\times X$ which defines the continuation morphisms $\Psi_{01}^i$ and extend it on $[0,1]^2\times X$ constantly: $\Xi_{(\tau, t)}=\Xi_t$.   As in the proof of (ii) above (and  in the second step of the proof of Theorem~\ref{independence}) construct a representing chain system $S_{x,y}^\Xi$ from this data. We evaluate it using the application $\Phi:[0,1]^2\times X\ri Y$ defined by $\Phi(\tau, t, x) = \varphi(\tau, x)$. More precisely, the evaluation analogous to the one in the proof of (ii)  $$
Q_{x,y} \, =\, \Phi\circ\Theta\circ p \circ \Gamma_{x,y}
$$
  produces now a cocycle $h^\Phi(x,y)= Q_{x,y,*}(S_{x,y}^{\Xi})\in C_{|x|-|y|+1}(\Omega Y)$ which defines the desired chain homotopy and implies the commutativity of the above diagram in homology. 
\kmr

% The property of homotopy invariance should be understood as the equality of the two maps {\it after} the identification by $\Psi^\varphi$ (and the one of Remark~\ref{rem:pullback}), which means that the following diagrams should be commutative {\it in homology}:
% $$
% \xymatrix{
% C_*(X, \varphi_{0*}(m_{x,y}) ; \cF) \ar[d]_{\Psi^\varphi} \ar[r]^-{=} &C_*(X, m_{x,y}; \varphi_{0}^*\cF) \ar[l] \ar[r]^-{\varphi_{0*}}& C_*(Y;\cF) \\
% C_*(X, \varphi_{1*}(m_{x,y});\cF)\ar[r]^-{=}&C_*(X, m_{x,y}; \varphi_{1}^*\cF) \ar[l]  \ar[ur]_{\varphi_{1*}} &  
% }
% $$ 
% for the direct maps, 
% and 
% %$$
% %\xymatrix{
% %T_{*+\mathrm{dim}(X)-\mathrm{dim}(Y)}(X, \varphi_{0*}(m_{x,y}) ; \cF) \ar[d]_{\Psi^\varphi} \ar[r]^-{=} &T_{*+\mathrm{dim}(X)-\mathrm{dim}(Y)}(X, m_{x,y}; \varphi_{0}^*\cF)\ar[l]& C_*(Y;\cF)  \ar[l]^-{\varphi_{0!}} \ar[dl]_{\varphi_{1!}}\\
% %T_{*+\mathrm{dim}(X)-\mathrm{dim}(Y)} (X, \varphi_{1*}(m_{x,y});\cF)\ar[r]^-{=}&T_{*+\mathrm{dim}(X)-\mathrm{dim}(Y)}(X, m_{x,y}; \varphi_{1}^*\cF) \ar[l]  &  
% %}
% %$$ 
% $$
% \xymatrix{
% C_*(X, \varphi_{0*}(m_{x,y}) ; \cF) \ar[d]_{\Psi^\varphi} \ar[r]^-{=} &C_*(X, m_{x,y}; \varphi_{0}^*\cF)\ar[l]& T_{*-\mathrm{dim}(X)+\mathrm{dim}(Y)}(Y;\cF)  \ar[l]^-{\varphi_{0!}} \ar[dl]_{\varphi_{1!}}\\
% C_* (X, \varphi_{1*}(m_{x,y});\cF)\ar[r]^-{=}&C_*(X, m_{x,y}; \varphi_{1}^*\cF) \ar[l]  &  
% }
% $$ 
% for the shriek maps. 

Let us emphasize that  the morphism of complexes  $\Psi^\varphi$ depends on the homotopy $\varphi$. At this stage we 
only proved that  when two homotopies are homotopic with fixed endpoints,  then the associated morphisms coincide in homology (they are actually homotopic). However, it turns out that in homology  the identification $\Psi^\varphi$ is actually independent on the choice of the homotopy $\varphi$ between $\varphi_0$ and $\varphi_1$. Remarkably, we are only able to prove this statement {\it a posteriori}, i.e., using the fact that direct maps satisfy all the properties  listed in Theorem~\ref{thm:f*!},
% the previous subsection,
including homotopy invariance. Here is the statement : 

\begin{proposition}\label{prop-indep-homotopy-identifications}
Suppose that the direct  maps are well defined and satisfy Properties 1-3 from Theorem~\ref{thm:f*!}. Then, given two homotopic maps $\varphi_0, \varphi_1: (X,\star_X)\ri (Y, \star_Y)$, the map induced in homology by the identification map  $\Psi^\varphi$ defined above is independent  of the choice of the homotopy $\varphi$.  \end{proposition}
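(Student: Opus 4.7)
The plan is to reduce the problem to showing $\Psi^\eta = \Id$ in homology for any based loop $\eta:\varphi_0\simeq\varphi_0$ in the mapping space $\Map_*(X,Y)$, and then to deduce this triviality from Properties 1--3 of the direct map via a graph lifting trick.

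First, given two homotopies $\varphi^{(1)},\varphi^{(2)}:\varphi_0\simeq\varphi_1$, part (iii) of Proposition~\ref{prop:homotopy-identification} shows that $\Psi^{\varphi^{(1)}}$ and $\Psi^{\varphi^{(2)}}$ coincide in homology if and only if $\Psi^\eta=\Id$ in homology for $\eta:=\varphi^{(1)}\#\overline{\varphi^{(2)}}$, a based loop at $\varphi_0$. Applying Property (3) directly to $\eta$ then gives the constraint $\varphi_{0*}\circ\Psi^\eta=\varphi_{0*}$, which is insufficient on its own since $\varphi_{0*}$ need not be injective.

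To strengthen this, I would pass to the graph map $\bar\varphi_0=(\id_X,\varphi_0):X\to X\times Y$ and lift $\eta$ to the loop $\tilde\eta:[0,1]\times X\to X\times Y$ defined by $\tilde\eta(t,x)=(x,\eta(t,x))$, based at $\bar\varphi_0$. Noting that $\pi_Y\circ\tilde\eta=\eta$ while $\pi_X\circ\tilde\eta$ is the constant homotopy at $\id_X$, inspection of the chain-level construction of $\Psi$ from Proposition~\ref{prop:homotopy-identification} shows that $\Psi^{\tilde\eta}$, computed with DG coefficients in $\pi_Y^*\calf$ on $X\times Y$, coincides with $\Psi^\eta$ under the natural identification $\bar\varphi_0^*\pi_Y^*\calf=\varphi_0^*\calf$ (both constructions use the same representing chain system on $X$ and the evaluations differ only by post-composition with $\pi_Y$, which is absorbed by the DG action of $\pi_Y^*\calf$). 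Property (3) applied to $\bar\varphi_0$ with the loop $\tilde\eta$ then yields $\bar\varphi_{0*}\circ\Psi^\eta=\bar\varphi_{0*}$. Since $\pi_X\circ\bar\varphi_0=\id_X$, Properties (1) and (2) formally provide a left inverse $\pi_{X*}$ of $\bar\varphi_{0*}$, which forces $\Psi^\eta=\Id$.

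The main obstacle will be this last injectivity step: the composition $\pi_{X*}\circ\bar\varphi_{0*}$ requires the DG coefficient system on $X\times Y$ to be simultaneously compatible with both direct maps, which is delicate because $\pi_Y^*\calf$ is not a $\pi_X$-pullback in general. A full proof will require either a treatment using K\"unneth-type decompositions combined with naturality in $\calf$ (reducing to the case $\calf=\Z$, where both local systems are trivial), a parallel argument via shriek maps exploiting the fact that $\bar\varphi_0^!\circ\pi_X^!=\Id$ does not suffer from the same local-system mismatch (at the cost of orientability hypotheses), or invoking the fibration-theoretic identification of Theorem~\ref{thm:fibration} to reduce to a purely topological injectivity statement for an embedded section of a trivial fiber bundle.
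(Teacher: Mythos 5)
Your proof plan parallels the paper's own argument quite closely: the paper likewise passes to the graph map $\tilde\varphi_t(x)=(x,\varphi_t(x)):X\to X\times Y$ with the coefficient module $\pi_Y^*\calf$ on $X\times Y$, and asserts that $\tilde\varphi_{1*}$ is injective because ``the composition and identity properties imply $\pi_{X*}\circ\tilde\varphi_{1*}=\Id$''. The obstacle you flag at the end is exactly right, and the paper does not resolve it. The relation $\pi_{X*}\circ\tilde\varphi_{1*}=\Id$ makes sense only when the coefficient system on $X\times Y$ has the form $\pi_X^*\cG$, in which case it indeed gives injectivity of $\tilde\varphi_{1*}:H_*(X;\cG)\to H_*(X\times Y;\pi_X^*\cG)$; but what the argument needs is injectivity of $\tilde\varphi_{1*}:H_*(X;\varphi_1^*\calf)\to H_*(X\times Y;\pi_Y^*\calf)$ with the $\pi_Y$-pullback module, and since $\pi_Y^*\calf$ is not a $\pi_X$-pullback, this is a genuinely different map.

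This is not a technicality: the required injectivity fails, and none of your three suggested repairs can circumvent that. Via Theorem~\ref{thm:fibration} and Corollary~\ref{cor:pullback}, when $\calf=C_*(F)$ comes from a Hurewicz fibration $F\to E\to Y$, the map $\tilde\varphi_{1*}$ with $\pi_Y^*\calf$-coefficients is identified with the inclusion-induced map $H_*(\varphi_1^*E)\to H_*(X\times E)$. Take $X$ a point, $Y=S^1$, $\varphi_0=\varphi_1$ the basepoint inclusion, and $\calf=C_*(\Omega S^1)$: this becomes the fiber inclusion $H_*(\Omega S^1)\to H_*(\cP_\star S^1)$, which is far from injective. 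A direct chain-level check in this example shows more: for the homotopy $\eta$ from $\varphi_0$ to itself given by a based loop of winding number $n$, one has $\nu^\eta_{\star,\star}=[\eta]$ (from Lemma~\ref{constant-chain} and the definition of $Q_{x,y}$), so $\Psi^\eta$ is right Pontryagin multiplication by $t^n$ on $H_*(\Omega S^1)\cong\Z[t,t^{-1}]$, which differs from $\Psi^{\mathrm{const}}=\Id$ whenever $n\neq 0$, even though the hypothesis $\varphi_{0*}\circ\Psi^\eta=\varphi_{0*}$ holds because $\varphi_{0*}$ factors through $H_*(\cP_\star S^1)\cong\Z$. So the assertion appears to fail outright for constant $\varphi_0$ and nontrivial $\pi_1(Y)$-monodromy on $\calf$, and the statement likely needs an additional hypothesis (e.g.\ restricting to homotopies through based maps, or to $\varphi_1$ for which the graph inclusion is $\pi_Y^*\calf$-injective). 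In the place where this independence is actually invoked in the paper --- the well-definedness of $\varphi_*$ for continuous $\varphi$ in \S\ref{sec:funct-for-continuous} --- the relevant homotopies are all $C^0$-small, hence lie in a single homotopy class, and Proposition~\ref{prop:homotopy-identification}(ii) already suffices there.
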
 

\begin{proof} Suppose first that $\varphi_{1*}$ is a monomorphism in homology. Take two homotopies $\varphi$ and $\varphi'$ between $\varphi_0$ and $\varphi_1$. Then the % diagram above
  homotopy property (see diagram (\ref{diag:homotopy-invariance}))
  implies $$\varphi_{1*}\circ \Psi^\varphi\, =\,  \varphi_{1*}\circ\Psi^{\varphi'}$$ and since $\varphi_{1*}$ is injective in homology this gives the equality $\Psi^{\varphi}= \Psi^{\varphi'}$,  also in homology.

Let us now turn to the general case. Let $\varphi=(\varphi_t)$ be a homotopy between the given maps $\varphi_0$ and $\varphi_1$.  Denote by $\tilde{\varphi}= (\tilde{\varphi_t})$ the homotopy of maps $X\ri X\times Y$ given by the graphs $$\tilde{\varphi}_t(x) = (x,\varphi_t(x))$$
Remark that $\tilde{\varphi}_{1*}$ is injective in homology. Indeed, denoting  $\pi_X: X\times Y \ri X$, the composition and identity properties imply that $\pi_{X*}\circ\tilde{\varphi}_{1*}=\Id$. Denote $\pi^Y:X\times Y\ri Y$ and consider the DG-module $\pi_{Y}^*\calf$.  According to the homotopy property there is  a map $\Psi^{\tilde{\varphi}}: C_*(X, \tilde{\varphi}_{0*}(m_{x,y}); \pi_Y^*\calf)\ri C_*(X, \tilde{\varphi}_{1*}(m_{x,y});\pi_Y^*\calf)$ such that  $\tilde{\varphi}_{0*}=\tilde{\varphi}_{1*}\circ \Psi^{\tilde{\varphi}} $ in homology; this map was explicitly constructed above. 

Then,  our particular case above tells us that  $\Psi^{\tilde{\varphi}} $ is independent of $\tilde{\varphi}$ (and a fortiori of $\varphi$) in homology since $\tilde{\varphi}_{1*}$ is mono. On the other hand  by Remark~\ref{rem:pullback} (which we have also used in the diagram~\eqref{diag:homotopy-invariance}), as $\pi_Y\circ\tilde{\varphi} =\varphi$,  there are obvious identifications $$C_*(X, \tilde{\varphi}_{i*}(m_{x,y});\pi_Y^*\calf)=C_*(X, \varphi_{i*}(m_{x,y});\calf)$$  for $i=0,1$ and the maps $\Psi^{\tilde{\varphi}} $ and $\Psi^\varphi$ are identical via these identifications. 
Indeed by definition,  using the same notation we have 
$$\Psi^\varphi(\alpha\otimes x)\, =\, \sum_y\alpha \nu_{x,y}^\varphi\otimes y,$$
where 
$$\nu^{\varphi}_{x,y} \, =\, -Q_{x,y,*}(\sigma_{x,y})$$ and $Q_{x,y}=\varphi\circ \Theta\circ p \circ \Gamma^\Id_{x,y}$. On the other hand  $$\Psi^{\tilde\varphi}(\alpha\otimes x)\, =\, \sum_y\alpha \nu^{\tilde{\varphi}}_{x,y}\otimes y$$
with 
$$\nu^{\tilde{\varphi}}_{x,y} \, =\, -\widetilde{Q}_{x,y,*}(\sigma_{x,y}),$$ where $Q_{x,y}=\tilde{\varphi}\circ \Theta\circ p \circ \Gamma^\Id_{x,y}$. Therefore $\pi_Y\circ \widetilde{Q}_{x,y}=Q_{x,y}$ which implies $\pi_{Y*}(\nu_{x,y}^{\tilde{\varphi}}) =\nu_{x,y}^\varphi$. Now in the formula for $\Psi^{\tilde\varphi}$ the product $\alpha\cdot \nu^{\tilde{\varphi}}_{x,y}$ is defined by the module structure of $\pi_Y^*\calf$, and so it equals $\alpha\cdot \pi_{Y*}(\nu^{\tilde{\varphi}}_{x,y})=\alpha \cdot\nu^{\varphi}_{x,y}$.  This finishes the proof of our proposition. \end{proof} 

Note that we could have used shriek maps instead of direct maps with epimorphisms instead of monomorphisms in the argument above.

\rmk\label{identical-identifications}  Let  $\varphi_0, \varphi_1: X\ri Y$ be homotopic continuous  maps  and $\chi:Y\ri Z$ another continuous map. Consider a DG-module $\calf$ over $C_*(\Omega Z)$. If we denote by $\varphi$  the homotopy between $\varphi_0$ and $\varphi_1$ and $\chi\circ \varphi$ the composed homotopy between $\chi\circ \varphi_0$ and $\chi\circ \varphi_1$, we have a priori two identification isomorphisms between $H_*(X;\varphi_0^*\chi^*\calf)$ and $H_*(X;\varphi_1^*\chi^*\calf)$: the one defined by homotopy $\varphi$ for the DG-module $\chi^*\calf$ and the one defined by the homotopy $\chi\circ \varphi$ for the DG-module $\calf$. We remark that by construction,  $\Psi^\varphi, \Psi^{\chi\circ \varphi}: H_*(X;\varphi_0^*\chi^*\calf)\ri H_*(X;\varphi_1^*\chi^*\calf)$ are equal. Indeed, by definition,  if  $\nu_{x,y}^\varphi\in C_{|x|-|y|}(\Omega Y)$ is the cocycle which defines $\Psi^\varphi$ then $\nu_{x,y}^{\chi\circ \varphi}= \chi_*(\nu_{x,y}^\varphi)\in C_{|x|-|y|}(\Omega Z)$. 
But on the other hand when we write the formula of $\Psi^\varphi$
$$\Psi^\varphi(\alpha\otimes x)\, =\, \sum_y\alpha \nu_{x,y}^\varphi\otimes y, $$
the product $\alpha\cdot \nu_{x,y}^\varphi$ is defined by the module structure of $\chi^*\calf$ so it actually equals $\alpha\cdot \chi_*(\nu_{x,y}^\varphi)=\alpha\cdot \nu_{x,y}^{\chi\circ \varphi}$, which implies the claimed equality. 
\kmr

\begin{remark}\label{rmk:identification-different-basepoints}
One can easily get rid of condition~\eqref{eq:preserving-basepoint} for the endpoints of the homotopy,  which is  impossible to satisfy when $\varphi_0$ and $\varphi_1$ have disjoint images. For that we need to identify the twisted complexes constructed using  $\varphi_0(\star_X)$ and $\varphi_1(\star_X)$ as basepoints. This is done like in~\S\ref{sec:identification-basepoints}, using a map  $\eta: Y\ri Z$ such that $\eta(\varphi_0(\star_X))\, =\, \eta(\varphi_1(\star_X))=\star_Z$ and working under the assumption~\eqref{eq:identification-basepoints}, i.e., $\calf_i= \eta^*\calf$ for $i=0,1$, where $\calf$ is a DG-module over $C_*(\Omega Z)$. 
\end{remark}

\section{Sketch of the two equivalent constructions} \label{sec:sketch-two-constructions}

We will present two constructions both for the direct and for the shriek maps. The details will be given in~\S\ref{sec:functoriality-first-definition} and~\S\ref{sec:second-definition}, and the equivalence of the two constructions will be proved in~\S\ref{sec:two-defi-equivalent}. 
Each of the two approaches has its own advantages: the first one allows for an easier comparison of signs and is suitable for explicit computations (for example, the map induced by an embedding is directly seen to be an inclusion at chain level); the second one is uniform for all maps and prone to generalizations in Floer theory. In order to immediately provide the reader with an intuition, we give in this section a sketch of these constructions which we intend as a reading guide for the subsequent chapters. 

A common point of both constructions is that it is enough to treat smooth maps. Indeed, any continuous map can be approximated by a smooth map, and two such approximations which are close enough in $C^0$-norm are necessarily homotopic.

The first approach proceeds by constructing the direct and the shriek maps in the following steps: 
\renewcommand{\theenumi}{\roman{enumi}}
\begin{enumerate}
\item {\it $X\subset Y$ is codimension $0$ submanifold (with boundary)}. In this case the direct map is constructed by starting with a negative pseudo-gradient on $X$ which points inwards along the boundary, and then extending it to $Y$. The shriek map is constructed by starting with a negative pseudo-gradient on $X$ which points outwards along the boundary, and then extending it to $Y$. 
\item {\it $X\subset Y$ is a closed submanifold}. In this case one starts with Morse data on $X$, extends it to Morse data on a tubular neighborhood of $X$ in $Y$, and then proceeds as in the previous step.  
\item {\it $\varphi:X\to Y$ is an embedding}. We view $\varphi$ as the composition between the diffeomorphism $\varphi:X\to \varphi(X)$ and the embedding $\varphi(X)\to Y$, we define direct and shriek maps for diffeomorphisms and then refer to the previous step. 
\item {\it $\varphi:X\to Y$ is a general smooth map}. We pick an embedding $\chi:X\to D^m$ into an $m$-disc and further consider the embedding $(\chi,\varphi):X\to D^m\times Y$. Since the Morse data on $D^m\times Y$ can be chosen such that the Morse complex is identified to the Morse complex for $Y$, we reduce in this way to the previous case.  
\end{enumerate}
While building from one step to the other we need to check that properties 1-4 from Theorem~\ref{thm:f*!} are indeed satisfied, and in particular that the definitions do not depend on the choice of auxiliary data. The details of the first approach are contained in~\S\ref{sec:functoriality-first-definition}.

The second approach takes as input a smooth map $\varphi:X\to Y$ and proceeds to define $\varphi_*$ and $\varphi_!$ at chain level directly in terms of suitable moduli spaces. 

For the definition of $\varphi_*$ we use the moduli spaces 
$$
\cM^\varphi(x,y') = W^u(x)\cap \varphi^{-1}(W^s(y')) \, \cong W^u(x) \, _\varphi\!\times  W^s(y'),
$$
where $x$ is a critical point of the chosen Morse function on $X$, where $y'$ is a critical point of the chosen Morse function on $Y$, and we choose the negative pseudo-gradients generically such that the intersection is transverse (of dimension $\dim\, \cM^\varphi(x,y')=|x|-|y'|$). The rightmost term denotes  the fiber product of $W^u(x)$ and $W^s(y')$.

For the definition of $\varphi_!$ we use the moduli spaces 
$$
\cM^{\varphi_!}(x',y) = W^s(y)\cap \varphi^{-1}(W^u(x')) \, \cong W^s(y) \, _\varphi\!\times  W^u(x'),
$$
where $x'$ is a critical point of the chosen Morse function on $Y$ and $y$ is a critical point of the chosen Morse function on $X$, and we again work with generic Morse data such that the intersection is transverse (of dimension $\dim\, \cM(x',y)=|x'|-|y|+\dim(X)-\dim(Y)$). The details of this second approach are contained in~\S\ref{sec:second-definition}.

The verification of signs is delicate and at times tedious. We give the full details for the first approach in~\S\ref{sec:functoriality-first-definition}. For the second approach we explain the orientation conventions and we compare them with those of the first approach in~\S\ref{sec:two-defi-equivalent}, where we show that the two definitions are equivalent. Some other details for the second approach are omitted.

Similarly, we give full details for the proof of Theorem~\ref{thm:f*!} in the first approach, and we remain much more sketchy in the second approach (see~\S\ref{sec:properties-f*-revisited}).

%%%%%%%%%%%%%%%
\chapter{Functoriality: first definition}\label{sec:functoriality-first-definition}
%%%%%%%%%%%%%%%

We develop in this chapter our first approach for the construction of direct and shriek maps, as outlined in~\S\ref{sec:sketch-two-constructions}.

\section{Functoriality for $0$-codimensional submanifolds }\label{sec:funct-0-codim-manifolds} 
Suppose $U\subset Y$ is a codimension $0$ submanifold with boundary $\p U\subset \mathring{Y}$. Consider a DG-module $\calf$ over $C_*(\Omega Y)$. We define $i_*$ and $i_!$ for the inclusion $$i: U\hookrightarrow Y.$$

\emph{1. The direct map}. We start with a Morse-Smale pair $(f_U, \xi_U)$ on $U$ with $\xi_U$ pointing inwards along the boundary (which means $\p_+U=\emptyset$). We then extend it to $f:Y\ri \R$ (if $Y$ has a boundary we have to proceed as in the previous chapter).    We orient the unstable manifolds of the critical points  of $f_U$ in the same way as the ones of $f$.  Suppose that the basepoint $\star$  is in $U$ and take a tree $\caly_U\subset U$ with root in $\star$ and extend it to a tree $\caly\subset Y$. Finally consider $\theta_U: U/\caly_U\ri U$ a homotopy inverse for the projection and extend it to $\theta : Y/\caly \ri Y$ homotopy inverse for $p:Y\ri Y/\caly$. Denote by $\Xi^U$ and $\Xi^Y$ these two sets of data and define a morphism of enriched Morse complexes 
$$i_*:C_*(U, \Xi^U; i^*\calf)\ri C_*(Y,\Xi^Y; \calf) $$
by the formula 
$$i_* (\alpha\otimes x)\, =\, \alpha\otimes x 
$$
for $\alpha\in \calf$ (viewed as a $C_*(\Omega U)$-module on the left hand side and as a $C_*(\Omega Y)$-module on the right hand side) and $x\in \Crit(f_U)\subset \Crit(f)$. Since there is no flow line starting  from a critical point in $U$ and ending in $Y\setminus U$ it is clear that the first complex is a subcomplex of the last one and the inclusion $i_*$ is a morphism of complexes. Moreover it satisfies~\eqref{welldefined-direct}; indeed a homotopy of  Morse-Smale pairs on $U$ with pseudo-gradients pointing inward can be extended to a homotopy on $Y$ making the diagram~\eqref{welldefined-direct} obviously commutative.  We therefore get a morphism in homology
$$
i_*: H_*(U; i^*\calf)\ri H_*(Y ; \calf),
$$
where the target space could be a relative enriched homology if $Y$ has boundary and the pseudo-gradient $\xi$ is chosen accordingly. 

\emph{2. The shriek map}. We repeat the entire  construction except for the fact that the gradient $\xi_U$ is chosen to point outwards along the boundary of $U$. We define 
$$i_! : C_*(Y, \Xi^Y; \calf)\ri C_*(U,\partial U, \Xi^U; i^*\calf)$$ by 
$$i_!(\alpha\otimes x) \, =\,
\begin{cases}
  \alpha \otimes x& \text{ if } x\in U,\\
  0&\text{ if }x\not\in U.
\end{cases}$$
Again, since $\overline{\call}(x,y)= \emptyset$ for $x\not\in U$ and $y \in U$ it is easy to see that $i_!$ is a morphism of complexes (as a matter of fact $C_*(U,\partial U, \Xi^U; i^*\calf)$ can be interpreted as a quotient complex). This morphism fits into the commutative diagram~\eqref{welldefined-shriek} by an argument similar to the one above, and therefore yields a morphism in homology 
$$i_!: H_*(Y; \calf)\ri H_*(U, \p U ; i^*\calf).$$
Denote $i : U\hookrightarrow Y$ and $j: \overline{Y\setminus U}\hookrightarrow Y$ the inclusions. By construction we have an exact sequence of enriched complexes given by 
\begin{equation}\label{eq:exact-sequence1}
\xymatrix
@C=10pt
{0\ar[r] &C_*(U, \Xi^U; i^*\calf)\ar[r]^-{i_*} &C_*(Y, \Xi^Y;\calf)\ar[r]^-{j_!}& C_*(\overline{Y\setminus U},\p U, \Xi^{Y\setminus U}, j^*\calf)\ar[r]&
0.}
\end{equation}
This gives rise to a long exact sequence
\begin{equation}\label{eq:long-exact-sequence1}
\xymatrix
@C=9pt
{\dots H_*(U; i^*\calf)\ar[r]^-{i_*} &H_*(Y;\calf)\ar[r]^-{j_!}& H_*(\overline{Y\setminus U},\p U; j^*\calf)\ar[r]^-\delta&
H_{*-1}(U;i^*\calf)\dots}
\end{equation}
The basepoint should belong to the boundary $\partial U$ in order for all the above homologies to be well defined. 

\section{Functoriality for closed submanifolds}\label{sec:funct-closed-submanifolds} 

Let $X^m$ be a closed submanifold of a compact manifold  $Y^n$. We define the direct and shriek maps for the inclusion $$i:X\hookrightarrow Y.$$  

\emph{1. The direct map}. Consider a tubular neighborhood $U$ of $X$ and denote by $i_U:U\to Y$ the inclusion.  Fix a basepoint $\star\in X$ and consider a DG-module $\calf$ over $C_*(\Omega Y)$. Choose a set of Morse data $\Xi^X$ on $X$ with Morse function $f$. We extend $f$ to $U$ as $\chi\cdot (f\circ \pi)$,  
where $\chi$ is some cutoff function which equals $1$ near $X$ and $0$ near $\p U$ and $\pi:U\to X$ is the projection. Consider a metric on the normal bundle of $X$ in $Y$, assumed to be Euclidean in the Morse neighborhoods of the critical points of $f$, and  denote $h:U\ri \R$ the function which corresponds to $\|\cdot\|^2$. For $A>0$ large enough the function $F= \chi\cdot (f\circ \pi)+A \cdot h$ is Morse, and its critical points are those of $f$ with the same Morse indices. We extend the negative pseudo-gradient $\xi$ of $f$ to $\xi_F$ on $U$ such that $\xi_F=-A\cdot\nabla h$ near $\p U$, so that $\xi_F$ is in particular pointing inwards along $\p U$. We clearly have 
$$\overline{\call}_F(x,y)\, =\, \overline{\call}_f(x,y)$$ for all critical points $x,y$ of $f$.  We also have  $\overline{W}^u_F(x)= \overline{W}^u_f(x) $ and we choose the same  orientation for $\ol W^u_F(x)$ for any $x\in \crit(f)$
\begin{equation}\label{extended-orientation-unstable}
\ori\, \ol W_F^u(x)\, =\, \ori\, \ol W^u_f(x),
\end{equation} so that the orientation rule~\eqref{eq:orientation-rule}   implies that  the orientations of the above trajectory spaces also coincide. Taking the same tree $\caly$ and the same map $\theta$ on $U$ and on $X$ we obtain twisting cocycles $m_{x,y}^F$ and $m_{x,y}^f$ which satisfy 
$$
(i_X)_*(m_{x,y}^f) \, =\, m_{x,y}^F,
$$ 
where $i_X :\Omega X \hookrightarrow \Omega U$  is the inclusion. Therefore the associated enriched Morse complexes coincide, 
$$C_*(X, \Xi ; i^*\calf) \, =\, C_*(U, \Xi^U ; i_U^*\calf).$$
This means that the morphism $$(i_U)_*: H_*(U;i_U^*\calf)\ri H_*(Y;\calf)$$ defined in~\S\ref{sec:funct-0-codim-manifolds} yields a map 
$$i_*: H_*(X;i^*\calf)\ri H_*(Y;\calf),$$
which is by definition the morphism induced by the embedding $i:X\hookrightarrow Y$ in enriched Morse homology. 

\begin{remark}\label{rem:spectral-direct} The map $i_*: C_*(X,\Xi^X;i^*\calf)\ri C_*(Y,\Xi^Y;\calf )$ obviously preserves the filtration given by the Morse indices and therefore induces a morphism between the spectral sequences associated to this filtration. Thus the map $i_* :H_*(X;i^*\calf)\ri H_*(Y;\calf)$ induced in homology can be seen as the limit of the map between the corresponding spectral sequences. One can easily see that on the second page the morphism 
$$i_{pq,*}^2 : H_{p}( X ; i^*H_q(\calf) )\ri H_p( Y; H_q(\calf))$$ is the natural one induced by the inclusion $i$ between the homologies with coefficients in the local system $H_q(\calf)$ (which induces $i^*H_q(\calf)$ on $X$)
\end{remark} 

\rmk\label{rmk:long-exact}The long exact sequence for enriched Morse homology of a manifold with boundary admits the following description. Let $(Y,\p Y)$ be a compact manifold and consider $U= [-\epsilon, 0]\times \p Y$ a collar neighborhood of the boundary $\p Y\equiv \{0\}\times\p Y$ in $Y$.  Take a basepoint $\star_Y= (-\epsilon,\star) \in U$ for $Y$ and $\calf$ a DG-module over $C_*(\Omega_{\star_Y} Y)$. Denote $V= Y\setminus (-\epsilon,0]\times \p Y$ with inclusion $j:V\hookrightarrow Y$, and write the long exact sequence~\eqref{eq:long-exact-sequence1} for a pseudo-gradient pointing outwards $V$ and inwards $Y$,
\begin{equation*}\label{eq:long-exact-sequence2}
\xymatrix
@C=15pt
{
\dots H_*(U; i_U^*\calf)\ar[r]^-{(i_{U})_*} &H_*(Y;\calf)\ar[r]^-{j_!}& H_*(V,\p V; j^*\calf)\ar[r]^-\delta&
H_{*-1}(U;i_U^*\calf) \dots
}
\end{equation*}

The inclusion $j$ defines an isomorphism $$j_!:H_*(V,\p V; j^*\calf)\stackrel{\simeq}{\to} H_*(Y,\p Y; \calf)$$ since a Morse-Smale 
pair on $(V, \p V)$ (with outward gradient) can be extended to one on $(Y,\p Y)$ without adding any critical point. 
Considering the inclusion $i_0:\p Y\ri U$ defined by $y\mapsto (0,y)$ we have  
$$
H_*(U; i_U^*\calf)\simeq H_*(\p Y; i_0^*i_U^*\calf)= H_*(\partial Y;i^*\calf).
$$
Finally we get the long exact sequence of the pair 
\begin{equation}\label{eq:long-exact-sequence2bis}
\xymatrix
@C=10pt
{
\dots H_*(\p Y; i^*\calf)\ar[r]^-{i_*} &H_*(Y;\calf)\ar[r]& H_*(Y,\p Y; \calf)\ar[r]^-\delta&
H_{*-1}(\p Y;i^*\calf)\dots 
}
\end{equation}

When $F\ri E\ri Y$ is a Hurewicz fibration and $\cF=C_*(F)$ with its $C_*(\Omega Y)$-module structure defined using a lifting function as in~\S\ref{sec:fibrations}, we retrieve the long sequence of the pair $(E, E|_{\p Y})$, according to Remark~\ref{rmk:fibration-boundary}. 
\kmr 
%Also considering the inclusion $i_\epsilon : \p Y\ri U$ defined by $y\mapsto (y,\epsilon)$  we have by the above $$H_*(U; i_U^*\calf)\simeq H_*(\p Y; i_\epsilon^*i_U^*\calf)= H_*(\partial Y;i_\epsilon^*\calf)$$  Finally we get the long exact sequence of the pair 
%\begin{equation}\label{eq:long-exact-sequence2}
%\xymatrix{\,  \ar[r] &H_*(\p Y; i_\epsilon^*\calf)\ar[r]^-{i_{\epsilon,*}} &H_*(Y;\calf)\ar[r]& H_*(Y,\p Y; \calf)\ar[r]^-\delta&
%H_{*-1}(\p Y;i_\epsilon^*\calf)}
%\end{equation}

\emph{2. The shriek map}. Suppose that the submanifold $X^m\stackrel{i}{\hookrightarrow} Y^n$ is closed and {\it co-oriented}. Starting from a set of Morse data $\Xi$ on $X$ we construct $\Xi^U$ on a tubular neighborhood $U$ in the same way as above except for the fact that the function $h$ corresponds to $-\| \cdot\|^2$. We still have 
$$
\Crit(F)\, = \, \Crit(f),
$$ 
but now the Morse indices differ 
$$|x|_F\, =\, |x|_f+n-m$$
and the gradient $\xi_F$ points outwards $U$. The trajectory spaces are identical 
$$
\overline{\call}_F(x,y)\, =\, \overline{\call}_f(x,y),
$$
but the unstable manifolds are not:  one can identify $W^u_F(x)$ with $W^u_f(x) \times T_xN$ where $N\ri X$ is the normal bundle.  We choose to orient them by 
\begin{equation}\label{eq:orientation-N} \ori \, W^u_F(x) \, =\, \left(\ori\,  W^u_f(x), \ori \, N \right)\end{equation}  and applying the orientation rule~\eqref{eq:orientation-rule}, we obtain that the identification $\overline{\call}_F=\overline{\call}_f$ preserves the orientation. Therefore $s_{x,y}^F=s_{x,y}^f$, hence  $$m^F_{x,y}= m^{f}_{x,y}.$$
Due to the difference of Morse indices we therefore have 
\begin{equation}\label{eq:adhoc-Thom} C_*(X, \Xi ; i^*\calf) \, =\, C_{*+n-m}(U, \Xi^U ; i_U^*\calf),\end{equation}
and so  applying the construction in~\S\ref{sec:funct-0-codim-manifolds} we finally get a chain map 
$$i_! : C_*(Y, \Xi^Y; \calf)\ri C_{*+n-m} (X, \Xi^X; i^*\calf)$$
which yields the shriek morphism in homology
$$i_!:H_*(Y;\calf)\ri H_{*+n-m}(X; i^*\calf).$$

\begin{remark}\label{rem:spectral-shriek} Here again the shriek map $i_!$ at the level of complexes preserves filtrations: it sends the filtration associated to $Y$ to the one associated to $X$ shifted by $n-m$. Therefore $i_!$ can also be seen as the limit of a morphism of spectral sequences which on the second page writes 
$$
i_{pq, !}^2:H_p(Y; H_q(\calf))\ri H_{p+n-m}(X; i^*H_q(\calf)).
$$
This is the usual shriek morphism with local coefficients in $H_q(\calf)$ induced by the inclusion $i:X\hookrightarrow Y$.

\end{remark}

\begin{remark}\label{lem:both-oriented} A particular situation of a co-oriented inclusion $i:X\hookrightarrow Y$ is  the one when $X$ and $Y$ are oriented. By convention we choose to orient the normal bundle  by 
\begin{equation}\label{eq:both-oriented}
\left(\ori\, X, \ori\, N\right) \, =\, \ori\, Y
\end{equation} \end{remark}

\rmk\label{Thom} Assume that $Y\ri X$ is a disk bundle and $X, Y$ are oriented manifolds. Then we may take $U=Y$ in the above construction and write the equality \eqref{eq:adhoc-Thom} as
$$C_*(X, \Xi ; i^*\calf) \, =\, C_{*+n-m}(Y, \Xi^Y ; \calf),$$
where $i:X\hookrightarrow Y$ is the inclusion. One may think of the  shriek map $$i_!: H_{*+n-m}(Y,\p Y; \calf)\cong H_*(X; i^*\calf)$$ induced by $i$,  as the Thom isomorphism for DG coefficients. 
\kmr

\section{Functoriality for embeddings}\label{sec:funct-embeddings}

An embedding $\varphi:X\ri Y$ is the composition between the diffeomorphism $\varphi_X=\varphi : X\ri \varphi(X)$ and the inclusion $i: \varphi(X)\hookrightarrow Y$. It suffices therefore to define the direct and shriek maps for diffeomorphisms and then apply the composition rules
$$\varphi_*\, =\, i_*\circ(\varphi_X)_*$$ and 
$$ \varphi_!\, =\, (\varphi_X)_!\circ i_!$$
in order to define the desired maps. The latter will be defined under the assumption that  $X$ and $Y$ are oriented.

\emph{1. The direct map for diffeomorphisms}. Let $\varphi : (X,\star)\ri (Y,\star) $ be a diffeomorphism. Given a set of data $\Xi^X$ for the enriched Morse complex on $X$, we can transform it through $\varphi$ into a set of data $\varphi_*(\Xi^X)$ on $Y$ by taking $f\circ\phi^{-1}$ as Morse function, $\varphi_*(\xi)$ as gradient, the orientations $\varphi(o)$, the tree $\varphi(\caly)$ etc.  For these choices we obviously have  $$m_{\varphi(x),\varphi(y)}^Y\, =\, \varphi_*(m_{x,y}^X)$$ and therefore $$\alpha\otimes x \mapsto \alpha\otimes \varphi(x) $$
defines a morphism 
$$
\varphi_*: C_*(X,\Xi^X; \varphi^*\calf)\ri C_*(Y,\varphi_*(\Xi^X); \calf)
$$ 
which, {\it by definition}, is the direct map associated to the diffeomorphism $\varphi$ for enriched complexes. Note that it clearly satisfies the compatibility condition~\eqref{welldefined-direct} and therefore it defines an induced map in homology 
$$
\varphi_*: H_*(X;\varphi^*\calf)\ri H_*(Y;\calf).
$$

\emph{2. The shriek map for diffeomorphisms}. Let $X$ and $Y$ be  oriented manifolds and $\varphi:X\ri Y$ a diffeormorphism.  The shriek map $$\varphi_!:H_*(Y;\calf)\ri H_*(X;\varphi^*\calf)$$ is defined by: $$
\mathrm{deg}(\varphi)\cdot\varphi^{-1}_*: H_*(Y;\calf)=H_*(Y;(\varphi^{-1})^*\varphi^*\calf)\ri H_*(X; \varphi^*\calf),
$$
i.e. $\pm\varphi^{-1}_*$ depending on whether $\varphi$ preserves orientations or not. At the level of complexes, with the following appropriate choice of data on $Y$, we have that
$$
\varphi_!:=\mathrm{deg}(\varphi)\cdot\varphi^{-1}_*: C_*(Y,\varphi_*(\Xi^X);\calf)\ri C_*(X,\Xi^X; \varphi^*\calf)
$$
writes 
$$\varphi_!(\alpha\otimes x)\, =\, \mathrm{deg}(\varphi)\cdot\alpha\otimes \varphi^{-1}(x).$$

 \rmk We could have gotten rid of the above sign $\mathrm{deg}(\varphi)$ if we chose the orientation of $Y$ to be the one induced by $\varphi$. However, when $\varphi:X\ri X$ is an orientation reversing diffeomorphism it is unnatural to choose  different orientations for the same underlying manifold $X$. 
\kmr

\rmk An orientation reversing diffeomorphism $\varphi:X\to X$ can be interpreted as an embedding 
between oriented manifolds in two ways: either as the composition $i\circ \varphi$ between the orientation preserving inclusion $i=\Id:X\hookrightarrow X$ and the orientation reversing diffeomorphism $\varphi: X\ri X$, or as the composition $i_X\circ\varphi_X$ between the orientation reversing inclusion $i_X=i:\ol X\hookrightarrow X$ and the orientation preserving diffeomorphism $\varphi_X=\varphi:X\to \ol X$. Here we fix an orientation of $X$ and denote $\ol X$ the manifold $X$ endowed with the opposite orientation. 

Our definition is consistent with these two descriptions: 
\begin{itemize}
\item Writing $\varphi=i\circ\varphi$, the shriek map $\varphi_!$ is defined by the composition 
$$
\xymatrix@C+1pc{ C_*(X,\Xi_X;\calf)\ar[r]^-{i_!=\Id}&C_*(X, \Xi_X;\calf)\ar[r]^-{\varphi_!=-\varphi_*^{-1}}& C_*(X,\varphi^{-1}_*(\Xi_X);\varphi^*\calf).}
$$
\item Writing $\varphi=i_X\circ\varphi_X$, the shriek map $\varphi_!$ is defined as the composition 
$$
\xymatrix@C+.6pc{ C_*(X,i_{X*}(\Xi_X);\calf)\ar[r]^-{i_{X!}=\Id}&C_*(\ol X, \Xi_X;\calf)\ar[r]^-{\varphi_{X!}=\varphi_*^{-1}}& C_*(X,\varphi^{-1}_*(\Xi_X);\varphi^*\calf).}
$$
\end{itemize}
That the two compositions are equivalent means that the diagram
$$
\xymatrix@C+.8pc{ C_*(X,i_{X*}(\Xi_X);\calf)\ar[r]^-{i_{X!}=\Id}&C_*(\ol X, \Xi_X;\calf)\ar[r]^-{\varphi_{X!}=\varphi_*^{-1}}& C_*(X,\varphi^{-1}_*(\Xi_X);\varphi^*\calf)\\
C_*(X,\Xi_X;\calf)\ar[u]_-{\Psi}\ar[rru]_-{-\varphi_*^{-1}}&\, &\, 
  }
$$
is commutative in homology, where $\Psi$ is the invariance morphism on $X$ between the data $\Xi_X$ and $i_{X*}(\Xi_X)$. Now, because the (rank $0$) normal bundle of $\ol X\subset X$ is oriented by the sign $``-"$, so that, by \eqref{eq:orientation-N}, the unstable manifolds have opposite orientations on the domain and on the target, we find that $i_{X*}(\Xi_X)=\Xi_X^{\mathrm{op}}$, where $\Xi_X^{\mathrm{op}}$ differs from $\Xi_X$ only by the orientation of the unstable manifolds, which is chosen to be opposite. 
Therefore $\Psi=\Psi^{\Id^{\mathrm{op}}}$, the continuation morphism associated to the identity map on $X$, where the data at the endpoints differs only by the orientations of the unstable manifolds, which is chosen to be opposite. It then follows from Proposition~\ref{Id=id} and Remark~ \ref{rmk:op2} that $\Psi^{\Id^{\mathrm{op}}}= -\Psi^{\Id}= -\Id$ in homology, i.e., the above diagram is commutative in homology. 
\kmr
\rmk\label{later}
The previous definition implies $\varphi_*\varphi_!=\mathrm{deg}(\varphi)\cdot \Id$ ($=\pm\Id$ since $\varphi:X\ri Y$ is a diffeomorphism). This statement will be generalized for arbitrary maps between oriented manifolds of the same dimension in Proposition~\ref{prop:phi*phi!}. 
\kmr
\section{Properties of $\varphi_*$ and $\varphi_!$ for embeddings}\label{sec:properties-functoriality-embeddings} 

In this section we check that the direct and shriek maps defined for embeddings satisfy conditions 1-4 from Theorem~\ref{thm:f*!}.

{\it 1. Identity.} It is obvious by definition that $\mathrm{Id}_*=\mathrm{Id}$ and that $\mathrm{Id}_!=\mathrm{Id}$. 

{\it 2. Composition.} This is straightforward too. Given two embeddings  $$X\stackrel{\varphi}\longrightarrow Y \stackrel{\psi}\longrightarrow Z,$$
the maps $(\psi\varphi)_*$ and $\psi_*\varphi_*$ take by definition the same value on the generators of the enriched complex of $X$: 
$$\alpha \otimes x \, \mapsto\, \alpha\otimes \psi(\varphi(x)).$$
 
 The same is true for the shriek maps (recall that we only defined them for $X$ and $Y$ oriented): if we choose the orientations induced by $\varphi$ and by $\psi\circ\varphi$ respectively on the submanifolds $\varphi(X)\subset Y$ and $\psi(Y)\subset Z$ then on the generators $\alpha\otimes x$ of the enriched complex of $Z$ the maps $(\psi\varphi)_! $ and $\varphi_!\psi_!$ are both defined as 
 $$\alpha\otimes  x \, \mapsto \, \left\{\begin{array}{ccc} \alpha\otimes\varphi^{-1}(\psi^{-1} (x)), & \mathrm{if}& x\in \psi(\varphi(X)),\\
 \\
 0,&\mathrm{if}& x\not\in \psi(\varphi(X)).\end{array}\right.$$
 
 \noindent{\it 3. Homotopy.}   Let $(\varphi_t)$ be a homotopy   between two embeddings $\varphi_0,\varphi_1: X\ri Y$ which satisfy
 $$\varphi_{i}(\star_X)\, =\, \star_Y.$$ The meaning of the homotopy property was explained in~\S\ref{sec:comments-homotopy}. Denote as in~\S\ref{sec:comments-homotopy} by  $\varphi : [0,1]\times X\ri Y$ the map given by this homotopy and extend it to $[-\epsilon, 1+\epsilon] \times X$ by $\varphi_0$ for $t\leq 0$ and by $\varphi_1$ for $t\geq 1$. We may assume w.l.o.g. that $\varphi$ is smooth. The map $$\Phi: [-\epsilon, 1+\epsilon] \times X \ri [-\epsilon, 1+\epsilon] \times Y$$ defined by $\Phi(t,x)=(t,\varphi_t(x))$ is an embedding and fits into the following commutative diagrams
 $$\xymatrix
 @C=10pt
{
X \ar[d]^-{i_0} \ar[r]^-{\varphi_0} & Y\ar[d]^-{i_0 }\\
[-\epsilon,1+\epsilon]\times X\ar[r]^-{\Phi}&[-\epsilon, 1+\epsilon]\times Y, }
\  \xymatrix 
@C=10pt
{
X \ar[d]^-{i_1} \ar[r]^-{\varphi_1} & Y\ar[d]^-{i_1 }\\
[-\epsilon,1+\epsilon]\times X\ar[r]^-{\Phi}&[-\epsilon, 1+\epsilon]\times Y }
$$ 
where $i_0(x)= (0,x)$ and $i_1(x)= (1,x)$ are the inclusions. This means that if we are able to prove that $i_{0,*}= i_{1,*}$ as well as $i_{0,!}= i_{1,!}$, and moreover that all these maps are quasi-isomorphisms, %morphisms are isomorphisms in homology,
then the composition property we have just proved above implies the claimed homotopy property. 
 We need  to be more precise here since $i_0$ and $i_1$ cannot preserve a basepoint simultaneously.   Starting from the  DG-module $\cF$ over $C_*(\Omega_{\star_Y}Y)$ and denoting  $\pi:[-\epsilon, 1+\epsilon] \times Y \ri Y$ the projection,  the maps on the right hand side of the above diagrams are 
$$ i_{j,*} : H_*(Y;\calf)= H_*(Y;i_j^*\pi_j^*\calf)\ri  H_* ([-\epsilon, 1+\epsilon]\times Y; \pi_j^*\calf)$$ 
and 
$$ i_{j,!} : H_*([-\epsilon, 1+\epsilon] \times Y;\pi_j^*\calf)\ri   H_{*-1} (Y; i_j^*\pi_j^*\calf)= H_{*-1}(Y;\calf)$$
for $j=0,1$,  where we denoted as in~\S\ref{sec:identification-basepoints}  by $\pi_j: \Omega_{(j\,,\,\star_Y)} [-\epsilon,1+\epsilon]\times Y\ri \Omega_{\star_Y}Y$ the maps induced by $\pi$ on the loop spaces. As in \emph{loc. cit.} we identify the twisted complexes based at $i_0(x)= (0,x)$ and $i_{1}(x)=(1,x)$ in order to give a meaning to the equalities between the two direct maps and two shriek maps above. We will make this more precise in the following lemma which proves the equalities between the direct and shriek maps induced by $i_0$ and $i_1$ in a slightly more general setting. The discussion about the equalities $i_{0,*}=i_{1,*}$ and $i_{0,!}= i_{1,!}$ for $i_j:X\ri [-\epsilon, 1+\epsilon]\times X$ is analogous, except for the fact that we replace $\pi_j^*\calf$ by $\Phi^*\pi_j^* \calf$ 
 and therefore we have 
 $$ i_{j,*} : H_*(X;\varphi_j^*\calf)= H_*(X;i_j^*\Phi^*\pi_j^*\calf)\ri  H_* ([-\epsilon, 1+\epsilon]\times X; \Phi^*\pi_j^*\calf)$$ 
and 
$$ i_{j,!} : H_*([-\epsilon, 1+\epsilon] \times X;\Phi^*\pi_j^*\calf)\ri   H_{*-1} (X; i_j^*\Phi^*\pi_j^*\calf)= H_{*-1}(X;\varphi_j^*\calf).$$
 We also need to identify the twisted complexes on $X$ corresponding to the two different DG-modules $\varphi_j^*\calf= i_j^*\Phi^*\pi_j^*\calf$ (both over $C_*(\Omega_{\star_X}X)$). This is done using the homotopy identification from~\S\ref{sec:comments-homotopy} via the homotopy $(\varphi_t)$ or, alternatively, using the homotopy $i_t(x)= (t,x)$ combined with the identification of basepoints from~\S\ref{sec:identification-basepoints} with $\eta=\pi\circ\Phi$. This will also be explained in the proof of the following lemma.

 \begin{lemma}\label{ia=ib} Let $X^n$ be a closed manifold with a basepoint $\star_X$ and denote $D^m$ the closed $m$-disk. 
 
 (i) Given   $a \in \mathring{D}$ we denote $i_a(x)=(a,x)$ the corresponding inclusion of $X$ into $D\times X$.  The maps $i_{a,*}$ and $i_{a!}$ are quasi-isomorphisms.
 
 (ii)  Given $a, b\in \mathring{D}$, let $\eta : D\times X\ri Y$ be a continuous map with values in some based topological space $Y$ such that $$\eta(a,\star_X)=\eta(b,\star_X)=\star_Y.$$ %Given any DG-module $\calf$ over $C_*(\Omega Y)$ we have 
 Then $(i_a)_*= (i_b)_*$ and $(i_a)_!=(i_b)_!$.
\end{lemma}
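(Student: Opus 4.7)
The plan is to prove (i) by exploiting that, since $a$ is interior to $D$, the tubular neighborhood of $X=\{a\}\times X$ in $D\times X$ may be taken to be all of $D\times X$ itself. Following the construction of~\S\ref{sec:funct-closed-submanifolds}, I would take $F = f_X\circ\pi_X + A h$ for the direct map, with $h$ the squared distance to $a$ in $D$, so that the pseudo-gradient points inward on $\p D\times X$; and $F = f_X\circ\pi_X - A h$ for the shriek map, with the pseudo-gradient pointing outward. In both cases the critical set of $F$ is exactly $\{a\}\times\Crit(f_X)$, the unstable manifolds split as products following~\eqref{eq:orientation-N}, and the moduli spaces of connecting trajectories in $D\times X$ are canonically identified, with orientations, with those in $X$. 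Hence the chain maps $i_{a,*}$ and $i_{a,!}$ are isomorphisms of enriched Morse complexes, up to the index shift by $m$ in the shriek case, and in particular quasi-isomorphisms.

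For (ii), the strategy is an ambient isotopy argument. First I would choose a smooth isotopy $\phi_t:D\to D$, stationary near $\p D$, with $\phi_0=\Id_D$ and $\phi_1(a)=b$, and set $\Phi_t = \phi_t\times\Id_X:D\times X\to D\times X$, so that $\Phi_1\circ i_a = i_b$. Starting from Morse data $\Xi_a$ on $D\times X$ based at $(a,\star_X)$ of the form used in (i), I transport it by $\Phi_1$ to $\Xi_b := \Phi_{1*}(\Xi_a)$, which is data based at $(b,\star_X)$ of the same kind and hence usable to compute $i_{b,*}$. The isotopy identification from~\S\ref{sec:identification-isotopy} then produces the continuation cocycle from $\Xi_a$ to $\Xi_b$ explicitly: each underlying chain on paths in $D\times X$ is the image under $\Phi$ of the corresponding trivial continuation cocycle, cf.\ equation~\eqref{eq:identification-isotopy}.

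To interpret the equality $(i_a)_* = (i_b)_*$, the two twisted complexes on $D\times X$ based at $(a,\star_X)$ and $(b,\star_X)$ must be identified. The assumption $\eta(a,\star_X)=\eta(b,\star_X)=\star_Y$ is exactly what is needed to apply the basepoint identification from~\S\ref{sec:identification-basepoints}: the $\eta$-push-forward of the isotopy continuation cocycle above lands in $C_*(\Omega_{\star_Y}Y)$ and provides the canonical identification between the two target complexes. On the source side, the homotopy $\eta\circ\Phi_t\circ i_a$ from $\eta\circ i_a$ to $\eta\circ i_b$ induces via Proposition~\ref{prop:homotopy-identification} the corresponding identification of $C_*(X;(\eta\circ i_a)^*\calf)$ with $C_*(X;(\eta\circ i_b)^*\calf)$. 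The equality $(i_a)_*=(i_b)_*$ in homology then reduces to the commutativity, up to chain homotopy, of the resulting square. The argument for $(i_a)_! = (i_b)_!$ is analogous, with the shriek version of (i) and the appropriate relative enriched homology.

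The main obstacle I expect is bookkeeping: the delicate point is the verification that the two ``evaluation" maps from the moduli spaces on $[0,1]\times D\times X$ to $\Omega_{\star_Y} Y$---one factoring through $\eta\circ\Phi$ and yielding the isotopy continuation cocycle, the other factoring through the basepoint identification construction of~\S\ref{sec:identification-basepoints}---produce chain-homotopic cocycles. This is a direct application of Lemma~\ref{lem:homotopic-evaluations}, but assembling the required homotopy of evaluations from the combined tree-collapse $\Theta$ and the isotopy $\Phi$, while respecting the concatenation relations, requires care.
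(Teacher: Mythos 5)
Your proposal is correct and follows essentially the same route as the paper: for (i), choosing a Morse function $f_X\circ\pi_X\pm Ah$ on all of $D\times X$ so that the enriched complex is identified with that of $X$ (up to the $m$-shift in the shriek case), giving isomorphisms at the chain level; for (ii), combining an ambient isotopy of $D$ moving $a$ to $b$, the isotopy identification of~\S\ref{sec:identification-isotopy}, the basepoint identification via $\eta$ from~\S\ref{sec:identification-basepoints}, and the homotopy identification of Proposition~\ref{prop:homotopy-identification} into a commutative square. The one place where your outline is slightly weaker than what the paper achieves: you anticipate verifying commutativity only up to chain homotopy via Lemma~\ref{lem:homotopic-evaluations}, whereas the paper observes that, once the Morse data on $D\times X$ is transported by the isotopy and the trivial continuation data is used throughout, the two relevant cocycles coincide on the nose ($\mu_{(a,x),(b,y)}=\nu_{x,y}$, Eq.~\eqref{eq:mu=nu}), so the square commutes at the chain level and no chain-homotopy step is needed; either version suffices for the statement in homology.
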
 

\begin{proof} \qquad 

(i) We give explicit formulas for $i_{a,*}$ and $i_{a,!}$.  Let $\Xi = (f, \xi, o, s_{x,y}, \caly, \theta)$ be a set of data on $X$.  Take a  function $h: D\ri \R$ with a unique minimum at $a$. Consider the function $(f+h): D\times X\ri \R$ seen as an extension of $f:i_a(X)\ri \R$. According to~\S\ref{sec:funct-closed-submanifolds}, the direct map is defined by completing $f+h$ to an appropriate set of data $\Xi_a$ on $D\times X$ with basepoint $(a,\star_X)$ and taking a DG-module $\calf$ over the loop space of $D\times X$ based at this point. Then $i_{a,*} : C_*(X,\Xi; i_a^*\cF)\ri C_*(D\times X, \Xi_a; \cF)$ is given by the formula  
$$i_{a,*}(\alpha\otimes x)\, =\, \alpha\otimes (a,x).$$
This is obviously a bijection between the two complexes. 

For the shriek map the proof is analogous except for the fact that we take $h$ to have a unique maximum at $a$, we choose an appropriate set of data $\Xi_a$ on $D\times X$ with $f+h$ as Morse function as in \S\ref{sec:funct-closed-submanifolds}  and we get the bijective map $i_{a,!} : C_*(D\times X, \Xi_a;\cF)\ri C_{*-m}(X,\Theta; i_a^*\cF)$ defined by 
$$i_{a,!}(\alpha\otimes (a,x))\, =\, \alpha\otimes x.$$

(ii)  The maps $i_a$ and $i_b$ are homotopic through $ i(t,x)= (\gamma(t),x)$, where $\gamma$ is a path that connects $a$ and $b$ in $D$, and any two homotopies of this type are homotopic with fixed endpoints.  The  claimed equalities of  maps should be understood after, on the one hand the homotopy  identification from~\S\ref{sec:comments-homotopy}, and on the other hand the identification for twisted complexes with different basepoints described in~\S\ref{sec:identification-basepoints}. More precisely, the proof consists  in  checking the commutativity of the following diagrams: 

$$
\xymatrix{
H_*(X ; i_a^*\eta_a^* \cF) \ar[d]_{H_{ab}} \ar[r]^-{i_{a,*}} & H_*(D\times X;\eta_a^*\cF)\ar[d]_{I_{ab} }\\
H_*(X, i_b^*\eta_b^*\cF)\ar[r]^-{i_{b,*}}&H_*(D\times X; \eta_b^*\cF)  }
$$ 
for the direct maps, and 
$$
\xymatrix{
H_{*}(D\times X ; \eta_a^* \cF) \ar[d]_{H_{ab}} \ar[r]^-{i_{a,!}} & H_{*-m}(X;i_a^*\eta_a^*\cF)\ar[d]_{I_{ab} }\\
H_{*}(D\times X ;  \eta_b^*\cF)\ar[r]^-{i_{b,!}}&H_{*-m}(X; i_b^*\eta_b^*\cF)  }
$$ 
for shriek maps. Here $H_{ab}$ represents the homotopy identification and $I_{ab}$ the identification for different basepoints. Following~\S\ref{sec:identification-basepoints} we denoted $\eta_a:\Omega_{a,\star_X}(D\times X)\ri \Omega_{\star_Y} Y$ and $\eta_b:\Omega_{b,\star_X}(D\times X)\ri \Omega_{\star_Y} Y$ the maps defined by $\eta$ on the respective loop spaces. 

We begin with the diagram for direct maps. Let $\Xi = (f, \xi,o, s_{x,y}, \caly, \theta)$ be a set of data on $X$. As above we take a function $h: D\ri \R$ with a unique minimum in $a$ and the function $(f+h): D\times X\ri \R$ seen as an extension of $f:i_a(X)\ri \R$ completed to an appropriate set of data $\Xi_a$ on $D\times X$ with basepoint $(a,\star_X)$.  We thus get that $i_{a,*} : C_*(X,\Theta; i_a^*\eta_a^*\cF)\ri C_*(D\times X, \Xi_a; \eta_a^*\cF)$ is defined by 
$$i_{a}(\alpha\otimes x)\, =\, \alpha\otimes (a,x)$$
for any $\alpha\in \cF$ and $x\in \Crit(f)$. 

Consider now an isotopy $(\phi_t)$ on $D$ such that $\phi=\phi_1$ satisfies $\phi(a)=b$, and consider the function $f+h\circ\phi^{-1}: D\times X\ri \R$ seen as an extension of $f:i_b(X)\ri \R$. Modifying the data $\Xi_a$ by $\phi\times \id_X$ yields a set of data $$\Xi_b\, =\, \Xi_a^{\id_X\times \phi}$$ associated to the basepoint $(b, \star_X)$
which completes $f+h\circ\phi^{-1}$. As above, we get that the direct morphism $i_{b,*} : C_*(X,\Theta; i_b^*\eta_b^*\cF)\ri C_*(D\times X, \Xi_b; \eta_b^*\cF)$ is given by  
$$i_{b}(\alpha\otimes x)\, =\, \alpha\otimes (b,x).$$
We check the commutativity of the first diagram at the level of complexes: 
$$
\xymatrix{
C_*(X ,\Xi; i_a^*\eta_a^* \cF) \ar[d]_{H_{ab}} \ar[r]^-{i_{a,*}} & C_*(D\times X,\Xi_a;\eta_a^*\cF)\ar[d]_{I_{ab} }\\
C_*(X, \Xi; i_b^*\eta_b^*\cF)\ar[r]^-{i_{b,*}}&C_*(D\times X, \Xi_b; \eta_b^*\cF)  }
$$
Let us describe explicitly $H_{ab}$ and $I_{ab}$. To define $H_{ab}$ the recipe given in~\S\ref{sec:comments-homotopy} goes as follows: take the trivial continuation data $\Id$ between $\Xi$ and $\Xi$ and define as in~\S\ref{sec:identification-isotopy} 
\begin{equation}\label{eq:useful-for-continuous}\overline{\nu}_{x,y}\, =\, - (\Theta\circ p\circ \Gamma^\Id)_*(s^\Id_{x,y}) \, \in  \, C_{|x|-|y|}(\calp_{(0,\star_X)\ri (1,\star_X)}[0,1]\times X),\end{equation}
then use the homotopy between $i_a$ and $i_b$ as a map $[0,1]\times X\ri D\times X$ together with $\eta:D\times X\ri Y$ to transform this chain into one in $\Omega Y$. Note that we may use the homotopy $$ \left[(\phi\times\id_X)\circ(\id_{[0,1]}\times i_a)\right](t,x)\, = \, (\phi_t(a), x),$$
where $\phi:[0,1]\times D\ri D$ is the isotopy on $D$ that we considered above.  We therefore get the cocycle
$$\nu_{x,y} \, =\, \left[\eta\circ(\phi\times\id_X)\circ(\id_{[0,1]}\times i_a)\right]_*\overline{\nu}_{x,y}\, \in \, C_{|x|-|y|}(\Omega Y)$$
for any $x,y\in \Crit(f)$ and by definition, we have that  
$$H_{ab}(\alpha\otimes x)\, =\, \sum_y \alpha\nu_{x,y}\otimes y$$
 for $\alpha \in \calf$ and $x\in \Crit(f)$. 
 
Let us now give a formula for  $I_{ab}$. We follow the procedure of~\S\ref{sec:identification-basepoints}. We need a continuation cocycle between a set of data based at $(a,\star_X)$ and one based at $(b, \star_X)$: we pick $\Xi_a$ and $\Xi_b= \Xi_{a}^{\id_X\times\phi}$ as above.  Since these are related through the diffeomorphism $\id_X\times \phi$ (which is isotopic to the identity), the discussion from %Remark~\ref{rem:identification-isotopy} 
from~\S\ref{sec:identification-isotopy} provides a definition for this cocycle. We start by a continuation procedure using the constant homotopy between $\Xi_a$ and itself. All the critical points of $f+h$ are of the form $(a,x)$ for $x\in \Crit(f)$. As above we  first get a chain 
$$ \overline{\nu}_{(a,x),(a,y)}\, \in\, C_{|x|-|y|}\left(\calp_{(0,a,\star_X)\ri(1,a,\star_X)}[0,1]\times D\times X\right).$$
Since $a$ is the unique critical point of $h$ and we took the constant homotopy for the continuation, the gradient lines from the slice $\{0\}\times D\times X$ to $\{1\}\times D\times X$ which define $\overline{\nu}_{(a,x),(a,y)}$ are constant equal to $a$ on the $D$ component and so are the paths which we obtain after evaluating them through the composition $\Theta\circ p\circ \Gamma$. One can easily check that actually 
$$ \overline{\nu}_{(a,x),(a,y)}\, =\, (\id_{[0,1]}\times i_{a})_*( \overline{\nu}_{x,y}),$$
where $\overline{\nu}_{x,y}$ is the cocycle from the description of $H_{ab}$. Now we use the discussion from~\S\ref{sec:identification-isotopy} %Remark~\ref{rem:identification-isotopy} 
to see that, according to~\eqref{eq:identification-isotopy}, we have 
$$ \overline{\nu}_{(a,x),(b,y)} \, =\, (\Phi\times \id_X)_*( \overline{\nu}_{(a,x),(a,y)}),$$
where $\Phi: [0,1]\times D\ri [0,1]\times D$ is defined by 
$$\Phi(t,r)\, =\, (t, \phi_t(r)).$$
Then, following the recipe from~\S\ref{sec:identification-basepoints}, we get first the identification cocycle between $\Xi_a$ and $\Xi_b$: 
$$\nu_{(a,x), (b,y)}  = (\pi_{D\times X})_* \overline{\nu}_{(a,x),(b,y)} = (\phi\times \id_X)_*(\overline{\nu}_{(a,x),(a,y)}),$$
which combined with the above yields 
\begin{align*}
\nu_{(a,x), (b,y)}  = (\phi\times \id_X)_* & (\id_{[0,1]}\times i_{a})_*( \overline{\nu}_{x,y}) \\
& \in C_{|x|-|y|}(\calp_{(a,\star_X)\ri(b,\star_X)}(D\times X)).
\end{align*}
Finally we use $\eta$ to transform this chain into one on $\Omega Y$. With the notation of~\S\ref{sec:identification-basepoints} we get 
\begin{align*}
\mu_{(a,x), (b,y)} & = \eta_*(\nu_{(a,x), (b,y)})  \\
& = \eta_*(\phi\times \id_X)_*(\id_{[0,1]}\times i_{a})_*( \overline{\nu}_{x,y})\, \in\, C_{|x|-|y|}(\Omega Y),
\end{align*}
and therefore the basepoint identification morphism $I_{ab}$ is given by 
$$I_{ab}(\alpha\otimes(a,x))\, =\, \sum_{y\in \Crit(f)}\alpha\mu_{(a,x),(b,y)}\otimes (b,x).$$
According to the formulas above \begin{equation}\label{eq:mu=nu}\mu_{(a,x), (b,y)}\, =\, \nu_{x,y},\end{equation}
and therefore the diagram for the direct maps commutes at the level of complexes. This implies that $i_{a,*}=i_{b,*}$.

For the equality of the shriek maps $i_{a,!}=i_{b,!}$ the proof is quite similar. The only difference is that we choose the function $h:D\ri \R$ to have a unique maximum at $a$. We thus have 
$$i_{a,!}(\alpha \otimes (a,x))\, =\, \alpha\otimes x$$
and 
$$i_{b,!}(\alpha \otimes (b,x))\, =\, \alpha\otimes x.$$
The formulas of $H_{ab}$ and $I_{ab}$ are the same and therefore using~\eqref{eq:mu=nu} the diagram 
$$
\xymatrix{
C_*(D\times X, \Xi_a ; \eta^*_a \cF) \ar[d]_{H_{ab}} \ar[r]^-{i_{a,!}} & C_{*-m}(X, \Xi;i_a^*\eta^*_a\cF)\ar[d]_{I_{ab} }\\
C_*(D\times X, \Xi_b ; \eta^*_b\cF)\ar[r]^-{i_{b,!}}&C_{*-m}(X, \Xi; i_b^*\eta^*_b\cF)  }
$$ 
commutes, which leads to the desired conclusion. 
\end{proof}

The homotopy property for embeddings is now established. \\
\\
{\it 4. Spectral sequence.} We have already proved in Remarks~\ref{rem:spectral-direct} and~\ref{rem:spectral-shriek} that this property is satisfied for submanifolds. The generalization to the case of embeddings is straightforward. 

\section{Functoriality for  general smooth maps}\label{sec:funct-direct-general} 

We define in this section direct and shriek maps associated to a smooth map $\varphi : (X,\star_X)\ri (Y,\star_Y)$  between based compact manifolds and a DG-module $\calf$ over $C_*(\Omega_{\star_Y}Y)$. 

\emph{1. The direct map $\varphi_*: H_*(X;\varphi^*\calf)\ri H_*(Y;\calf)$}. We consider an embedding $\chi: X\ri D^m$ into a $m$-disk such that $\chi(\star_X)=0$. Obviously $\varphi^\chi= (\chi,\varphi): X\ri D^m\times Y$ is also an embedding. Denote $D=D^m$, denote $i= i_0:Y\hookrightarrow D\times Y$ the inclusion $y\mapsto(0,y)$, and denote $\pi: D\times Y\ri Y$ the projection. The maps $$i_{*}: H_*(Y;\calf)= H_*(Y; i^*\pi^*\calf)\ri H_*(D\times Y;\pi^*\calf)$$
and $$\varphi^\chi_*: H_*(X;\varphi^*\calf)=H_*(X; (\varphi^\chi)^*\pi^*\calf)\ri H_*(D\times Y; \pi^*\calf)$$ are well defined and the former is an isomorphism by Lemma~\ref{ia=ib}. We define the direct map $\varphi_*$ by the formula \begin{equation}\label{eq:direct-map-general} 
\varphi_*\, =\, (i_{*})^{-1}\circ\varphi^\chi_*,
\end{equation}
and we check that this definition is independent of $\chi$. 

\begin{lemma}\label{chi=rho}
Let $\chi: X\ri D^m$ and $\rho:X\ri D^{m'}$ be two embeddings which map the basepoint $\star_X$ to the centers of the  respective disks. Denote $D'= D^{m'}$ and denote $j=i_0: Y\hookrightarrow D'\times Y$ the inclusion of  $Y\times \{0\}$.   Then 
$$(i_{*})^{-1}\circ\varphi^\chi_*\, =\, (j_{*})^{-1}\circ\varphi^{\rho}_*.$$
\end{lemma}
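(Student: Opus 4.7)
The plan is to reduce both sides to a common expression by embedding $X$ into the larger product $D\times D'\times Y$. First, I introduce the inclusions
\[
k_{D'}:D\times Y\hookrightarrow D\times D'\times Y,\ (r,y)\mapsto(r,0,y), \quad k_D:D'\times Y\hookrightarrow D\times D'\times Y,\ (r',y)\mapsto(0,r',y),
\]
together with $I=k_{D'}\circ i=k_D\circ j:Y\hookrightarrow D\times D'\times Y$, $y\mapsto(0,0,y)$, and the projection $\Pi_Y:D\times D'\times Y\to Y$. By Lemma~\ref{ia=ib}(i) applied to the disc $D\times D'$, the map $I_*$ is an isomorphism. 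Combining this with the composition property from~\S\ref{sec:properties-functoriality-embeddings} yields
\[
(i_*)^{-1}\circ\varphi^\chi_*=(I_*)^{-1}\circ(k_{D'}\circ\varphi^\chi)_*,\qquad (j_*)^{-1}\circ\varphi^\rho_*=(I_*)^{-1}\circ(k_D\circ\varphi^\rho)_*.
\]
It will thus suffice to prove that $(k_{D'}\circ\varphi^\chi)_*=(k_D\circ\varphi^\rho)_*$ as maps $H_*(X;\calf)\to H_*(D\times D'\times Y;\Pi_Y^*\calf)$, where I write $\calf$ for what was previously denoted $\varphi^*\calf$.

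The next step is to join the two composite embeddings $x\mapsto(\chi(x),0,\varphi(x))$ and $x\mapsto(0,\rho(x),\varphi(x))$ by the linear homotopy
\[
\Phi_t(x)=\bigl((1-t)\chi(x),\,t\rho(x),\,\varphi(x)\bigr),\qquad t\in[0,1].
\]
Each $\Phi_t$ sends $\star_X$ to the basepoint $(0,0,\star_Y)$ since $\chi(\star_X)=\rho(\star_X)=0$, and each $\Phi_t$ is itself an embedding: injectivity and the immersion property follow from those of $\chi$ for $t\in[0,1)$ and of $\rho$ at $t=1$. Invoking the homotopy invariance established in~\S\ref{sec:properties-functoriality-embeddings}, item~3, for embeddings, interpreted via the diagram~\eqref{diag:homotopy-invariance}, then provides
\[
(k_{D'}\circ\varphi^\chi)_*=(k_D\circ\varphi^\rho)_*\circ\Psi^\Phi
\]
in homology, where $\Psi^\Phi$ is the identification morphism from Proposition~\ref{prop:homotopy-identification} acting on $H_*(X;\varphi^*\calf)$; note that the pullbacks $\Phi_t^*\Pi_Y^*\calf$ all coincide with $\varphi^*\calf$.

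The main obstacle is showing that $\Psi^\Phi=\Id$, and the key observation is that the DG-module $\Pi_Y^*\calf$ factors through $\Pi_Y$. This allows Remark~\ref{identical-identifications}, applied with the composition $\Pi_Y:D\times D'\times Y\to Y$, to yield $\Psi^\Phi=\Psi^{\Pi_Y\circ\Phi}$ in homology. Since $\Pi_Y\circ\Phi_t=\varphi$ for every $t$, the composed homotopy $\Pi_Y\circ\Phi$ is the constant homotopy at $\varphi$, so Proposition~\ref{prop:homotopy-identification}(i) identifies $\Psi^{\Pi_Y\circ\Phi}$ with the identity. Assembling the three steps completes the proof.
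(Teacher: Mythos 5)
Your proof is correct and follows essentially the same strategy as the paper's: embed $X$ into the larger product $D\times D'\times Y$, use the homotopy property for embeddings, and exploit that the DG-module is pulled back from $Y$ so the identification isomorphism is trivial. The only organizational difference is that the paper pivots through the intermediate embedding $\varphi^{\chi,\rho}(x)=(\chi(x),\rho(x),\varphi(x))$ via two homotopies $H_t=(\chi,t\rho,\varphi)$ and its counterpart, whereas you use a single linear homotopy $\Phi_t=((1-t)\chi,t\rho,\varphi)$ directly between the two composites; you are also somewhat more explicit than the paper in justifying $\Psi^\Phi=\Id$ by invoking Remark~\ref{identical-identifications} together with Proposition~\ref{prop:homotopy-identification}(i), which is a useful clarification of a step the paper dispatches with "the identification is not needed."
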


\begin{proof}
Denote by $\varphi_*$ the left-hand side and by $\overline{\varphi}_*$ the right-hand side of the equality above; we thus claim that $\varphi_*=\overline{\varphi}_*$. Also denote by $\varphi^{\chi,\rho}: X\ri D\times D'\times Y$ the map $x\mapsto (\chi(x),\rho(x), \varphi(x))$.  Consider the inclusions $k : D\times Y\hookrightarrow D\times D'\times X$ defined by $(r, y) \mapsto (r,0,y)$ and $l:D'\times Y\ri D\times D'\times Y$ defined by $(r',y)\mapsto (0,r',y)$. We consider the following diagram, in which we use  the  notation $\pi_{(\cdot)}$ for the different  projections $(\cdot)\times Y\ri Y$: 
$$
\xymatrix{\, & H_*(D\times Y; \pi_D^*\calf)\ar[dr]^{k_*}  & \, \\
H_*(Y;\calf)\ar[ur]^-{i_*} \ar[dr]_-{j_*}&H_*(X; \varphi^*\calf) \ar[u]^{\varphi^\chi_*}\ar[d]_(.4){\varphi^{\rho}_*}\ar@<.5ex>[l]^{\overline{\varphi}_*}\ar[r]^(0.4){\varphi^{\chi,\rho}_*}\ar@<-.5ex>[l]_{\varphi_{*}}& H_*(D\times D'\times Y;\pi_{D\times D'}^*\calf) \\
\, &H_*(D'\times Y;\pi_{D'}^*\calf)\ar[ur]_{l_*}&\, }
$$ 

Note that the upper left and lower left triangles are commutative by the definitions of $\varphi_*$ and $\overline{\varphi}_*$. Also remark that, by the composition property and the homotopy invariance, the two triangles on the right side also commute. Indeed, the embeddings $k\circ \varphi^\chi(x)=(\chi(x), 0,\varphi(x))$ and $\varphi^{\chi,\rho}(x)= (\chi(x), \rho(x), \varphi(x))$ are homotopic through embeddings by $$H_{t}(x)\, =\, \left[(\chi(x), t\rho(x), \varphi(x))\right]_{t\in [0,1]}.$$ 
Note that  $H_{t}^*\pi^*_{D\times D'} \calf= \varphi^*\calf$ is constant and therefore the identification from~\S\ref{sec:comments-homotopy} is not needed. A similar argument works for the lower right triangle. 

From all this we infer that 
$$k_*\circ i_*\circ \varphi_*\, =\, k_*\circ \varphi^{\chi}_*\, =\, \varphi^{\chi,\rho}_*\, =\, l_*\circ \varphi^\rho_*\, =\, l_*\circ j_*\circ \overline{\varphi}_*.$$ On the other hand  we obviously have  $k\circ i=l\circ j$ and by Lemma~\ref{ia=ib}, these maps are isomorphisms in homology. This implies the desired conclusion $\varphi_*=\overline{\varphi}_*$. 
\end{proof} 

We also need to verify that the definition~\eqref{eq:direct-map-general}  matches  with the one that we have already given  in the case where $\varphi$ is an embedding. 

\begin{lemma}\label{verif-embedding} Let $\varphi:(X,\star_X)\ri (Y, \star_Y)$  and $\chi:(X, \star_X)\ri (D,0)$ be embeddings. Then, with the above notation, we have 
$$\varphi_*\, =\,  (i_*)^{-1}\varphi^\chi_*,$$ 
where $\varphi_*$ is defined as in~\S\ref{sec:funct-embeddings}.
\end{lemma}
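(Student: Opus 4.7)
The plan is to reduce everything to the already-established properties of direct maps for embeddings, namely the composition rule, the homotopy invariance, and the fact that $i_*$ is an isomorphism (Lemma~\ref{ia=ib}).

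First, since $\varphi:X\hookrightarrow Y$ is an embedding and $i:Y\hookrightarrow D\times Y$, $y\mapsto(0,y)$, is an embedding, the composition $i\circ\varphi:X\to D\times Y$, $x\mapsto(0,\varphi(x))$, is an embedding. By the composition property for direct maps associated to embeddings (established in~\S\ref{sec:properties-functoriality-embeddings}), we have $(i\circ\varphi)_* = i_*\circ\varphi_*$, where on the right-hand side $\varphi_*$ is the embedding direct map of~\S\ref{sec:funct-embeddings}.

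Next, I compare $(i\circ\varphi)_*$ with $\varphi^\chi_* = (\chi,\varphi)_*$. Consider the family $H_t:X\to D\times Y$ defined by $H_t(x)=(t\chi(x),\varphi(x))$ for $t\in[0,1]$. Since $\chi$ is an embedding and $\varphi$ is an embedding, each $H_t$ is an embedding (injectivity is ensured by $\varphi$ alone, and the differential is injective because so is $d\varphi$). Thus $H_t$ is a homotopy through embeddings between $H_0 = i\circ\varphi$ and $H_1 = \varphi^\chi$. Moreover, $\pi\circ H_t = \varphi$ for every $t$, so the pulled-back DG local system $H_t^*\pi^*\calf = \varphi^*\calf$ is literally constant along the homotopy and no extra identification of twisted complexes (in the sense of~\S\ref{sec:comments-homotopy}) is needed. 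The homotopy invariance for embeddings, established in~\S\ref{sec:properties-functoriality-embeddings}, then yields $(i\circ\varphi)_* = \varphi^\chi_*$ in homology.

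Combining the two equalities gives $i_*\circ\varphi_* = \varphi^\chi_*$. Since $i_*$ is an isomorphism by Lemma~\ref{ia=ib}, we conclude $\varphi_* = (i_*)^{-1}\varphi^\chi_*$, which is exactly the definition~\eqref{eq:direct-map-general}. The argument is essentially a routine diagram chase; the only mild subtlety to check (and it is the main point where one could stumble) is that the homotopy $H_t$ remains an embedding for all $t$ and that the pullback local system is constant along it, so that the homotopy property applies directly in the embedding framework without invoking the more elaborate identification machinery of~\S\ref{sec:comments-homotopy}.
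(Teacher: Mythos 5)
Your argument is correct and matches the paper's proof essentially verbatim: the paper also uses the homotopy $h_t(x)=(t\chi(x),\varphi(x))$ of embeddings between $i\circ\varphi$ and $\varphi^\chi$, notes that $h_t^*\pi_D^*\calf=\varphi^*\calf$ is constant so the identification machinery of \S\ref{sec:comments-homotopy} is not needed, and invokes homotopy invariance and the composition property for embeddings. The only difference is that you spell out the intermediate equality $(i\circ\varphi)_*=i_*\circ\varphi_*$ explicitly, whereas the paper absorbs this step silently into the statement ``$i_*\circ\varphi_*=\varphi^\chi_*$''.
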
 

\begin{proof} We have a homotopy of embeddings $(X,\star_X)\ri (D\times X, (0,\star_X))$ between $i\circ\varphi$ and $\varphi^\chi$ given by 
$$h_t(x)\, =\, (t\chi(x), \varphi(x)).$$

By the homotopy property we get that $i_*\circ\varphi_*\, =\, \varphi^\chi_*$. Note that, because $h_t^*\pi_D^*\calf= \varphi^*\calf$ is constant, the homotopy identification from~\S\ref{sec:comments-homotopy} for $(h_t)_*: H_*(X;\varphi^*\calf)\ri H_*(D\times X; \pi_D^*\calf)$ is not needed. This completes the proof of the lemma. 
\end{proof} 

Finally, for $\varphi_*$ to be well defined we need to prove  that its  definition satisfies the compatibility with the continuation maps~\eqref{welldefined-direct}. For this purpose we give a description of $\varphi_*$ at the level of twisted complexes. We start with sets of data $\Xi^X$  and $\Xi^Y$ on $X$,  respectively $Y$. We complete $\Xi^X$  to a set of data on $D\times Y$ adapted to the embedding $\varphi^\chi=(\chi,\varphi)$ as in~\S\ref{sec:funct-closed-submanifolds} and~\S\ref{sec:funct-embeddings}: we first extend to a tubular neighborhood $U$ of $\varphi^\chi(X)$ (with gradient pointing inwards), and then to the whole of $D\times Y$. Denote by $\Xi^{D\times Y}_{(\chi,\varphi)}$ this set of data. If $f$ is the Morse function on $X$, then for any $x\in \crit(f)$ and $\alpha \in \calf$ we have 
\begin{equation}\label{eq:phi-chi-direct} \varphi^\chi_*(\alpha \otimes x) \, =\, \alpha\otimes \varphi^\chi(x)\end{equation} 
by the definition of 
$$ 
\varphi^\chi_*: C_*(X,\Xi^X;\varphi^*\calf={\varphi^\chi}^* \pi^*\calf)\ri C_*(D\times Y, \Xi^{D\times Y}_{(\chi,\varphi)}; \pi^*\calf).
$$

For the inclusion $i: Y\ri D\times Y$, $y\mapsto (0,y)$, the map $i_*: C_*(Y, \Xi^Y;\calf)\ri C_*(D\times Y, \Xi^D\times \Xi^Y; \pi^*\calf)$ is defined by 
$$i_*(\alpha \otimes y)\, =\, \alpha \otimes (0,y),$$ 
where $y$ is critical point of the Morse function in $\Xi^Y$ and $\alpha \in \calf$. Here $\Xi^D=(h, -\nabla h, o_{\{0\}}=+, \caly=\{0\}, \theta=\id)$ is the set of Morse data on the disc with Morse function $h(x)=\|x\|^2$, and $\Xi^D\times \Xi^Y$ is the set of data obtained canonically on $D\times Y$ from  $\Xi^D$ and $\Xi^Y$. We have already observed in Lemma~\ref{ia=ib} that $i_*$ is an isomorphism at the level of complexes. However, in order to write the composition $(i_*)^{-1}\circ \varphi^\chi_*$ at the level of complexes we need to identify the twisted complexes on $D\times Y$ corresponding to the sets of data $\Xi^{D\times Y}_{\chi,\varphi}$ and $\Xi^{D}\times \Xi^Y$. Denote by $\Psi^{D\times Y} : C_*(D\times Y, \Xi_{(\chi,\varphi)}; \pi^*\calf)\ri C_*(D\times Y, \Xi^D\times \Xi^Y;\pi^*\calf)$ a continuation morphism. Then a formula for $\varphi_*: C_*(X, \Xi^X; \varphi_*\calf)\ri C_*(Y,\Xi^Y;\calf)$ is 
\begin{equation}\label{eq:formula-direct-complexes} 
\varphi_* \, =\,( i_*)^{-1}\circ\Psi^{D\times Y}\circ\varphi^\chi_*.
\end{equation} 

The compatibility diagram~\eqref{welldefined-direct} writes here as follows, with the notation from above and from~\eqref{welldefined-direct}:
$$
{\scriptsize 
 \xymatrix{
     C_*(X, \Xi^X_0; \varphi^*\calf)\ar@/^2pc/[rrr]_{\varphi_*} \ar[r]_-{\varphi^\chi_*}  \ar[d]^{\Psi_{01}^X} &C_*(D\times Y, \Xi^{D\times Y}_{\chi,\varphi,0}; \pi^*\calf)\ar[r]_-{\Psi^{D\times Y}}  \ar[d]^{\Psi_{01}^{D\times Y}}&C_*(D\times Y, \Xi^D\times\Xi^Y_0; \pi^*\calf)\ar[r]_-{(i_*)^{-1}}  \ar[d]^{\Psi_{01}^{D\times Y}}&C_*(Y, \Xi_0^Y; \calf)\ar[d]^{\Psi^Y_{01}}\\
     C_*(X, \Xi_1^X; \varphi^*\calf)\ar@/^-2pc/[rrr]^{\varphi_*} \ar[r]^-{\varphi^\chi_*} & C_*(D\times Y, \Xi^{D\times Y}_{\chi,\varphi,1}; \pi^*\calf) \ar[r]^-{\Psi^{D\times Y}}&C_*(X, \Xi^D\times\Xi_1^Y; \pi^*\calf) \ar[r]^-{(i_*)^{-1}}&C_*(Y, \Xi_1^Y; \calf) }
     }
     $$

This diagram is commutative: the left and right rectangles are commutative in homology since the compatibility with the continuation maps was already checked  in~\S\ref{sec:funct-embeddings} for $\varphi^\chi_*$ and $i_*$, which are embeddings. The middle rectangle is also commutative since it involves continuation maps and we proved in~\S\ref{sec:invariance-continuation}  that these only depend in homology on the domain and on the target. 

This finishes the verification of the fact that the direct map $\varphi_*$ is well defined. 

\medskip 

\emph{2. The shriek map $\varphi_!: H_*(Y;\calf)\ri H_{*+\mathrm{dim}(X)-\mathrm{dim}(Y)}(X;\varphi^*\calf)$}. We proceed in an analogous way under the assumption that $X$ and $Y$ are {\it oriented} manifolds.  By definition 
\begin{equation}\label{eq:shriek-general} 
\varphi_!\, =\, \varphi^\chi_!\circ (i_!)^{-1}.
\end{equation} 
 Note that,  according to~\S\ref{sec:funct-embeddings},  the assumption that is needed in order for the right-hand side to be  well defined is that the embedding $\varphi^\chi:X\ri D\times Y$ is co-oriented (the inclusion $i:Y\ri D\times Y$ is obviously cooriented). This is the reason why we supposed that $X$ and $Y$ are oriented. 
 
 The proofs of the shriek versions of Lemmas~\ref{chi=rho} and~\ref{verif-embedding}, as well as the verification of the compatibility with the continuation maps~\eqref{welldefined-shriek}, are completely analogous to the case of direct maps. We omit the details and conclude that the map $\varphi_!$ is well defined. 
 
 \section[The maps $\varphi_*$ and $\varphi_!$ in the general smooth case]{Properties of $\varphi_*$ and $\varphi_!$ in the general smooth case} 
 
 In this section we check that the direct and the shriek maps defined for general smooth maps satisfy conditions 1-4 from~\S\ref{sec:functoriality-properties}.

 \noindent {\it 1. Identity.} This is obvious and was already remarked in~\S\ref{sec:properties-functoriality-embeddings}. 
 
 {\it 2. Composition.} We only treat the case of direct maps, as the argument for shriek maps is analogous. 
Let $\varphi:(X,\star_X)\ri (Y,\star_Y)$ and $\psi: (Y, \star_Y)\ri (Z, \star_Z)$ be smooth maps. Let $\calf $ be a DG-module over $C_*(\Omega_{\star_Z})$.   In order to define $\varphi_*$ and $\psi_*$ we choose as in the previous section embeddings $\chi: (X,\star_X)\ri(D,0)$ and $\rho: (Y, \star_Y)\ri (D',0)$. Consider then  the following diagram of direct maps in twisted homology:  
 
$$
\xymatrix
 @C=15pt
{\, & \, &H_*(D\times D'\times Z;\pi_{D\times D'}^*\calf)  \\
\, &H_*(D\times Y;\pi_D^*\psi^*\calf)\ar[ur]^-(0.4){(\Id\times\psi^\rho)_*}&H_*(D'\times Z;\pi_{D'}^*\calf)\ar[u]^-{k_*}\\
H_*(X; \psi^*\phi^*\calf)\ar[ur]^-{\varphi^\chi_*}\ar[r]^-{\varphi_*}\ar@/^3pc/[urur]^{(\psi\circ\varphi)^{\chi,\rho}_*}&H_*(Y;\psi^*\calf)\ar[u]^-{i_*}\ar[ur]^-{\psi^\rho_*}\ar[r]^-{\psi_*}&H_*(Z;\calf)\ar[u]^-{j_*}\ar@/_5pc/ [uu]_{l_*}
}$$ 
Most of the notations are similar to those of the preceding section: the maps $i$, $j$, $k$ and $l$ are inclusions defined by $x\mapsto (0,x)$; the maps $\pi_{(\cdot)}$ are projections  $(\cdot)\times Y\ri Y$ or $(\cdot)\times Z\ri Z$. As above we denote $\varphi^\chi(x)= (\chi(x), \varphi(x))$,  then $\psi^\rho(y)=(\rho(y),\psi(y))$, and finally $$(\psi\circ\varphi)^{(\chi,\rho)}(x)=(\chi(x), \rho(x), \psi(\varphi(x)))$$

This diagram is commutative. Indeed, the two lower triangles are commutative by definition of the direct maps $\varphi_*$ and $\psi_*$. The other sub-diagrams only contain embeddings and they are commutative at the level of maps (as the reader may immediately check), and therefore also commutative in homology using the composition property for embeddings. Let us now check the claimed formula $(\psi\circ\varphi)_*=\psi_*\circ\varphi_*$. By commutativity of the diagram we have
$$
\psi_*\circ\varphi_*\, =\, j^{-1}_*\circ \psi^\rho_*\circ i^{-1}_*\circ \varphi^\chi_*\, =\,  j^{-1}_*\circ k^{-1}_*\circ(\Id\times\psi^\rho)_*\circ \varphi^\chi_*\, =\, l^{-1}_*\circ(\psi\circ\varphi)_*^{(\chi,\rho)}.
$$
The last expression is exactly the definition of $(\psi\circ\varphi)_*$ corresponding to the embedding $(\chi,\rho):X\ri D\times D'$. Note that the product of $D=D^m$ and $D'=D^{m'}$ is not exactly  the disc $D^{m+m'}$ as required by the definition of $(\psi\circ\varphi)_*$, but the two are homeomorphic with diffeomorphic interiors and this is sufficient. (We could also have defined direct maps by replacing $D^m$ with $[-1,1]^m$ endowed with a vector field which points outwards.) The composition property for direct maps is therefore established.

To prove the same property for shriek maps we proceed in a similar manner by changing the direction of the arrows in the diagram above. The details are straighforward and left to the reader. 

{\it 3. Homotopy.} Using the work we have done so far, the homotopy property is quite easy to establish. Take a homotopy $(\varphi_t)$ between two smooth maps $\varphi_0, \varphi_1: (X, \star_X)\ri (Y, \star_Y)$. Take the map $\varphi : [-\epsilon, 1+\epsilon]\times X\ri Y$ obtained from this homotopy by extending it with $\varphi_0$ for $t\in [-\epsilon, 0)$ and with $\varphi_1$ for $t\in (1, 1+\epsilon]$. Then, using the notation of~\S\ref{sec:properties-functoriality-embeddings}, we have $\varphi_0= \varphi\circ i_0$ and $\varphi_1= \varphi\circ i_1$. This implies the homotopy property as a consequence of the composition property that we have just established  and  of the equalities $i_{0,*}=i_{1,*}$ and $i_{0,!}=i_{1,!}$, which we proved in Lemma~\ref{ia=ib}, item (ii)  (in our case the map $\eta$ from that lemma is $\varphi$).

{\it 4. Spectral sequence.} Consider a smooth map $\varphi: (X, \star_X)\ri (Y,\star_Y)$ together with a DG-module $\calf$ over $C_*(\Omega_{\star_Y}Y)$ and then take an embedding $\chi : (X,\star_X)\ri (D,0)$  defining an embedding $\varphi^\chi: X\ri D\times Y$ as above. By definition we have $\varphi_*\, =\, i^{-1}_*\circ \varphi^\chi_*$ and $\varphi_!= \varphi^\chi_! \circ i^{-1}_!$. Now, both the direct and shriek maps of $i^{-1}$ and $\varphi^\chi$ are defined on the level of (twisted) complexes and they preserve the filtrations of these, yielding thus morphisms between the corresponding spectral sequences. At the second page $E_{p,q}^2$ these morphisms are given by 
$$(i_*)^{-1}\circ \varphi_*: H_p(X; \varphi^*H_q(\calf))\ri H_p(Y; H_q(\calf))$$
and $$\varphi^\chi_!\circ( i_!)^{-1}: H_p(Y; H_q(\calf))\ri H_{p+\mathrm{dim}(X)-\mathrm{dim}(Y)}(X; \varphi^*H_q(\calf)).$$
These morphisms are the Morse versions of the direct and shriek maps between homologies with local coefficients defined by some smooth map $\varphi:X\ri Y$. This is done for example in~\cite[\S4.6]{Audin-Damian_English} for direct maps in homology  with integer coefficients;  the generalization to local coefficients, as well as to shriek maps, is immediate. 

\section{Functoriality for continuous maps}\label{sec:funct-for-continuous}

Any continuous map $\varphi: (X, \star_X) \ri (Y,\star_Y)$ can be approximated by a smooth map $\overline{\varphi}$ which preserves the basepoints, and any two such approximations are homotopic by a homotopy which also preserves the basepoints.  Let $\ol \varphi$ be such a map and choose a homotopy between $\ol\varphi$ and $\phi$ which is $\calc^0$- close to the constant homotopy. We define $\varphi_* : H_*(X; \varphi_*\calf)\ri H_*(Y;\calf)$ by the composition 
\begin{equation}\label{def:direct-for-continuous}
\xymatrix{  H_*(X; \ol\varphi^*\calf) \ar[dr]^-{\ol\varphi_*}&\, \\
H_*(X;\varphi^*\calf)\ar[u]_{\Psi}\ar[r]_-{\varphi_*}&H_*(Y;\calf)}\end{equation} 
and similarly 
\begin{equation}\label{def:shriek-for-continuous}
\xymatrix{  H_{*+\mathrm{dim}(X)-\mathrm{dim}(Y)}(X; \ol\varphi^*\calf)\ar[d]_{\Psi^{-1}} &H_*(Y;\calf)\ar[l]^-{\ol\varphi_!}\ar[dl]^-{\varphi_!}\\
H_{*+\mathrm{dim}(X)-\mathrm{dim}(Y)}(X;\varphi^*\calf)&\, }\end{equation} 
where $\Psi$ is the identification isomorphism associated to the homotopy between $\varphi$ and $\ol\varphi$. Recall that the isomorphism $\Psi$ was defined in Proposition~\ref{prop:homotopy-identification} of \S\ref{sec:comments-homotopy}. In the statement of   Proposition~\ref{prop:homotopy-identification} the applications were smooth, but actually it is valid with exactly the same proof for continuous ones.  

Let us prove that the definition is independent of the choices of $\ol\varphi$ and the homotopy connecting $\varphi$ to $\ol\varphi$. For direct maps we use  the following diagram 
$$\xymatrix{  H_*(X; \ol\varphi^*\calf) \ar[dr]^-{\ol\varphi_*}\ar@/_4pc/ [dd]_{\Psi''}&\, \\
H_*(X;\varphi^*\calf)\ar[u]_{\Psi}\ar[d]{\Psi'}&H_*(Y;\calf)\\
H_*(X; \ol\varphi'^*\calf)\ar[ur]_{\ol\varphi'_*}&\, 
}$$
The isomorphism $\Psi''$ is the identification isomorphism associated to the homotopy defined by the concatenation of the reversed homotopy between $\varphi$ and $\ol\varphi$ with the homotopy between $\varphi$ and $\ol\varphi'$. Let us show that the diagram above is commutative. The relation $\Psi'=\Psi''\circ\Psi$ is the property (iii) of Proposition~\ref{prop:homotopy-identification}. We also have  $\ol\varphi_*= \ol\varphi'_*$; this is the homotopy property satisfied by the smooth maps $\ol\varphi$ and $\ol\varphi'$. We infer that $\ol\varphi_*\circ \Psi =\ol\varphi'_*\circ\Psi'$ which means that $\varphi_*$ is well defined. 
The proof for the shriek map is analogous. 

Let us now check that the properties 1-4 from~\S\ref{sec:functoriality-properties} are satisfied. We will only do it for direct maps, the proofs for shriek maps being similar. \\
\\
{\it 1. Identity: $Id_*=Id$}.  Obvious. 

{\it 2. Composition: $\varphi_*\circ\zeta_*=(\varphi\circ\zeta)_*$}. We need the following naturality property for identification morphisms: 
\begin{lemma}\label{lem:naturality-for-identification} Let $\varphi_0, \varphi_1: Y\ri Z$ two continuous maps homotopic through a homotopy $\varphi$, and $\zeta:X\ri Y$ another continuous map. Denote by $\varphi\circ\zeta$ the homotopy between $\varphi_0\circ\zeta$ and $\varphi_1\circ\zeta$ obtained by composition. Let $\calf$ be a DG-module over $C_*(\Omega Z)$. Then the following diagram is commutative : 
$$\xymatrix{H_*(X;\zeta^*\varphi_0^*\calf)\ar[r]^-{\zeta_*}\ar[d]^{\Psi^{\varphi\circ\zeta}}&H_*(Y; \varphi_0^*\calf)\ar[d]^{\Psi^\varphi}\\
H_*(X;\zeta^*\varphi_1^*\calf)\ar[r]^-{\zeta_*}&H_*(Y; \varphi_1^*\calf)}$$
\end{lemma}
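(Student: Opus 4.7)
The plan is to reduce the statement to the case where $\zeta$ is an embedding, and then to verify naturality directly at the chain level by choosing compatible Morse data on $X$ and $Y$.

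\emph{Step 1 (Reduction to smooth $\zeta$).} First I would approximate $\zeta$ by a smooth map $\ol\zeta:X\to Y$ (preserving basepoints) and pick a small homotopy between $\zeta$ and $\ol\zeta$. By the very definition~\eqref{def:direct-for-continuous}, $\zeta_*=\ol\zeta_*\circ\Psi^{\zeta\sim\ol\zeta}$ in homology. Composing $\varphi$ with this small homotopy on the $Y$-side yields a homotopy of homotopies between $\varphi\circ\zeta$ and $\varphi\circ\ol\zeta$, and Proposition~\ref{prop:homotopy-identification}(iii) together with Remark~\ref{identical-identifications} lets one rewrite both $\Psi^\varphi\circ\zeta_*$ and $\zeta_*\circ\Psi^{\varphi\circ\zeta}$ in terms of $\ol\zeta_*$, $\Psi^\varphi$, $\Psi^{\varphi\circ\ol\zeta}$ and the identification $\Psi^{\zeta\sim\ol\zeta}$. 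Thus it suffices to prove the lemma for smooth $\zeta$.

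\emph{Step 2 (Reduction to the case of an embedding).} For smooth $\zeta$, pick an embedding $\chi:X\to D^m$ sending $\star_X$ to $0$ and consider the embedding $\zeta^\chi=(\chi,\zeta):X\to D\times Y$. Denoting by $i:Y\hookrightarrow D\times Y$ the zero-section and by $\pi:D\times Y\to Y$ the projection, the definition~\eqref{eq:direct-map-general} gives $\zeta_*=(i_*)^{-1}\circ\zeta^\chi_*$ in homology, while $\pi\circ i=\Id_Y$ together with composition for smooth maps yields $\pi_*=(i_*)^{-1}$. Hence $\zeta_*=\pi_*\circ\zeta^\chi_*$ and it suffices to establish naturality separately for the embedding $\zeta^\chi$ and for the projection $\pi$. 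Naturality for $\pi$ reduces to naturality for the embedding $i$: applying naturality for $i$ to the homotopy $\varphi\circ\pi$ gives $\Psi^{\varphi\circ\pi}\circ i_*=i_*\circ\Psi^{\varphi\circ\pi\circ i}=i_*\circ\Psi^\varphi$, and inverting $i_*$ produces the desired identity $\Psi^\varphi\circ\pi_*=\pi_*\circ\Psi^{\varphi\circ\pi}$. Thus everything reduces to the case where $\zeta$ is an embedding.

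\emph{Step 3 (The embedding case).} Assume $\zeta:X\hookrightarrow Y$ is a smooth embedding. Following~\S\ref{sec:funct-closed-submanifolds}, choose a tubular neighborhood $U$ of $\zeta(X)$ and a set of Morse data $\Xi^X=(f^X,\xi^X,o^X,s^X_{x,y},\caly^X,\theta^X)$ on $X$, then extend it to a set of Morse data $\Xi^Y$ on $Y$ whose restriction to $X$ (through the tubular-neighborhood construction, with gradient pointing inwards along $\partial U$) agrees with $\Xi^X$: in particular $\Crit(f^X)\subset\Crit(f^Y)$, and for any $x,y\in\Crit(f^X)$ the trajectory spaces satisfy $\overline\cL_{f^Y}(x,y)=\overline\cL_{f^X}(x,y)$, so that one may take $s^Y_{x,y}=s^X_{x,y}$. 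The trees and collapsing maps are chosen so that $\caly^Y\supset\caly^X$ and $\theta^Y$ restricts to $\zeta\circ\theta^X$ on $X/\caly^X\subset Y/\caly^Y$. With these compatible choices, $\zeta_*$ is the canonical inclusion of subcomplexes $\alpha\otimes x\mapsto\alpha\otimes x$.

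Now apply the same extension procedure to the trivial-homotopy data on $[0,1]\times X$ and $[0,1]\times Y$, which are used to build $\Psi^{\varphi\circ\zeta}$ and $\Psi^\varphi$ respectively. The representing chain systems $\sigma^X_{x,y}\in C_{|x|-|y|}(\overline\cL_{\Id_X}(x,y))$ and $\sigma^Y_{x,y}\in C_{|x|-|y|}(\overline\cL_{\Id_Y}(x,y))$ can then be chosen, using the inductive construction of Proposition~\ref{representingchain}, so that $\sigma^Y_{x,y}=\sigma^X_{x,y}$ for all $x,y\in\Crit(f^X)$. The evaluation maps $Q^X_{x,y}=(\varphi\circ\zeta)\circ\Theta^X\circ p^X\circ\Gamma^\Id$ and $Q^Y_{x,y}=\varphi\circ\Theta^Y\circ p^Y\circ\Gamma^\Id$ then coincide on $\overline\cL_{\Id_X}(x,y)=\overline\cL_{\Id_Y}(x,y)$, because $\Theta^Y$ restricts to $\zeta\circ\Theta^X$ and $\varphi\circ\zeta=\varphi\circ(\Id_{[0,1]}\times\zeta)$ as maps $[0,1]\times X\to Z$. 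Consequently $\nu^{\varphi\circ\zeta}_{x,y}=\nu^\varphi_{x,y}$ in $C_{|x|-|y|}(\Omega Z)$ for all $x,y\in\Crit(f^X)$. Moreover, for $x\in\Crit(f^X)$ and $y\in\Crit(f^Y)\setminus\Crit(f^X)$, the moduli space $\overline\cL_{\Id_Y}(x,y)$ is empty because the gradient is tangent to $X$ on $X$ and points strictly inwards along $\partial U$, so $\nu^\varphi_{x,y}=0$. Comparing the two chain-level compositions
\[ \Psi^\varphi\circ\zeta_*(\alpha\otimes x)=\!\!\!\sum_{y\in\Crit(f^Y)}\!\!\alpha\,\nu^\varphi_{x,y}\otimes y \quad\text{and}\quad \zeta_*\circ\Psi^{\varphi\circ\zeta}(\alpha\otimes x)=\!\!\!\sum_{y\in\Crit(f^X)}\!\!\alpha\,\nu^{\varphi\circ\zeta}_{x,y}\otimes y, \]
they coincide term by term.

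\emph{Main obstacle.} The delicate point is the simultaneous compatibility of all the geometric choices in Step~3: the Morse functions, pseudo-gradients, orientations, trees, homotopy inverses $\theta$, and most importantly the representing chain systems for the trivial homotopies on $[0,1]\times X$ and $[0,1]\times Y$. The existence of such compatible choices relies on the fact that the inductive construction of Proposition~\ref{representingchain} can be carried out first on $X$ (and its extension to $[0,1]\times X$) and then extended freely to the remaining critical points on $Y\setminus X$; one must also arrange the trees so that $\caly^Y\supset\caly^X$ and the homotopy inverse $\theta^Y$ restricts to $\theta^X$. All this is standard but requires careful bookkeeping.
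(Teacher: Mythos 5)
Your proposal is correct and follows essentially the same route as the paper: prove the embedding case at chain level by choosing Morse data on $Y$ extending that on $X$ so that the representing chain systems and evaluation maps coincide on the subcomplex, reduce general smooth maps to embeddings via $\zeta^\chi=(\chi,\zeta):X\to D\times Y$ and the zero-section, and handle continuous maps via smooth approximation together with Proposition~\ref{prop:homotopy-identification}(iii). The only cosmetic difference is that the paper keeps the arrows $i_*$ pointing into $D\times Y$ (so the relevant square in the smooth case commutes directly by the embedding case applied to $i$ and $\zeta^\chi$), whereas you invert $i_*$ to phrase the step as naturality for $\pi=i_*^{-1}$; these are equivalent.
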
 

\begin{proof} Suppose first that $\zeta$ is a smooth embedding. We will prove the commutativity of the diagram at the level of complexes. Recall the definition of the direct maps for embeddings from Section \S\ref{sec:funct-embeddings}. We start with a set of data $\Xi^X=(f,\ldots)$ on $X$ and we choose an adapted set of data $\Xi^Y_\zeta$ on $Y$, at first on $\zeta(X)$ via $\zeta$ (with Morse function $f\circ\zeta^{-1}$), then on a tubular neighbourhood of $\zeta(X)$ with inward pointing gradient and finally on the whole $Y$. For these choices, if ${\mathcal{G}}$ is a DG-module over $C_*(\Omega Y)$ then the map $\zeta_*:C_*(X, \Xi^X;\zeta^*{\mathcal{G}})\ri C_*(Y, \Xi^Y_\zeta; {\mathcal{G}})$ is given by the formula 
$$\zeta_*(\alpha\otimes x)\, =\, \alpha \otimes \zeta(x)$$
for any $\alpha \in {\mathcal{G}}$ and $x\in \crit(f)$. 
Now recall the definition of $\Psi^\varphi$ from the proof of Proposition~\ref{prop:homotopy-identification}: at the level of complexes we have 
$$\Psi(\alpha\otimes x)\, =\, \sum_y\alpha\nu^\varphi_{x,y}\otimes y.$$
The cocycle $\nu^{\varphi}_{x,y}$ was obtained as follows: given a set of data $\Xi^Y$ on $Y$ consider the set of data $\Xi^{Y,\Id}$ on $[0,1]\times Y$ which defines the trivial continuation map between $C_*(Y,\Xi^Y)$ and itself. It yields as in the relation \eqref{eq:useful-for-continuous} of \S\ref{sec:properties-functoriality-embeddings} a cocycle 
$$\overline{\nu}^Y_{x,y}\, =\, - (\Theta\circ p\circ \Gamma^\Id)_*(s^\Id_{x,y}) \, \in  \, C_{|x|-|y|}(\calp_{(0,\star_Y)\ri (1,\star_Y)}[0,1]\times Y)$$
which we transform into a cocycle on $\Omega Z$ by 
$$\nu^\varphi_{x,y}\, =\, \varphi_*(\ol\nu^Y_{x,y}).$$
In a similar way the cocycle $\nu_{x,y}^{\zeta\circ\varphi}$ which defines the identification map $\Psi^{\varphi\circ\zeta}$ is given by 
$$\nu_{x,y}^{\zeta\circ\varphi}\, =\, \varphi_*\circ(\id_{[0,1]}\times \zeta)(\ol \nu_{x,y}^X),$$
where $\ol \nu_{x,y}^X\in  \, C_{|x|-|y|}(\calp_{(0,\star_X)\ri (1,\star_X)}[0,1]\times X)$ is obtained from the trivial continuation data $\Xi^{X,\Id}$ as above. This implies immediately 
$$\ol\nu^Y_{\zeta(x),\zeta(y)}= (\Id_{[0,1]}\times\zeta)_*(\ol\nu^X_{x,y})$$ and also $\ol\nu^Y_{\zeta(x),z}=0$ if $z$ is a critical point of the Morse function on $Y$ which is outside of $ \zeta(X)$. 

Now remember that in order to express $\zeta_*$, we chose the adapted set of data $\Xi^Y_\zeta$ on $Y$ and one can easily see that for this choice the set of data $\Xi^{Y,\Id}$  on $[0,1]\times Y$ which corresponds to the trivial continuation is adapted to $\Xi^{X,\Id}$ for the embedding $\Id_{[0,1]}\times \zeta:[0,1]\times X\ri [0,1]\times Y.$ We infer that 
$$\nu_{x,y}^{\varphi\circ\zeta}\, =\, \nu_{\zeta(x),\zeta(y)}^\varphi$$
and $\nu^\varphi_{\zeta( x), z}=0$ for the other critical points $z$ not belonging to $\zeta(X)$. This is equivalent to $\Psi^\varphi\circ\zeta_*=\zeta_*\circ\Psi^{\varphi\circ\zeta}$, i.e. the fact that the diagram in our statement is commutative. \\
\\
Consider now the case where $\zeta$ is a general smooth map. We will use the definition of $\zeta_*$ given in \S\ref{sec:funct-direct-general}: We take an embedding $\chi:X\ri D$ into some disc of large dimension and set $\zeta_*=(i_*)^{-1}\circ\zeta^\chi_*$, where $\zeta^\chi=(\chi,\zeta):X\ri D\times Y$ and $i:Y\hookrightarrow D\times Y$ is the inclusion $y\mapsto (0,y).$ We get the following diagram: 
$$\xymatrix{H_*(X;\zeta^*\varphi_0^*\calf)\ar@/^2pc/ [rr]^{\zeta_*}\ar[r]^-{\zeta^\chi_*}\ar[d]^{\Psi^{\varphi\circ\zeta}=\Psi^{\varphi\circ\pi\circ\zeta^\chi}}&H_*(D\times Y;\pi^*\varphi_0^*\calf )\ar[d]^-{\Psi^{\varphi\circ \pi}}&H_*(Y; \varphi_0^*\calf)\ar[l]_-{i_*}\ar[d]^{\Psi^\varphi=\Psi^{\varphi\circ\pi\circ i}}\\
H_*(X;\zeta^*\varphi_1^*\calf)\ar@/_2pc/ [rr]_{\zeta_*}\ar[r]^-{\zeta^\chi_*}&H_*(D\times Y;\pi^*\varphi_1^*\calf)&H_*(Y; \varphi_1^*\calf)\ar[l]_-{i_*}}$$
which is commutative. 
Indeed the  upper and lower part are commutative by definition of $\zeta_*$ and the left and right square also commute by the particular case of embeddings we have just proved. It follows that $\Psi^\varphi\circ\zeta_*=\zeta_*\circ\Psi^{\varphi\circ\zeta}$ as claimed. \\
\\
Finally suppose that $\zeta$ is  continuous. Choose a smooth approximation $\ol\zeta$. Denote by $\Upsilon$ the homotopy between $\zeta$ and $\ol\zeta$.   Once again we will use a commutative diagram: 
$$\xymatrix{   H_*(X;\zeta^*\varphi_0^*\calf)\ar@/_4pc/ [ddd]_{\Psi^{\varphi\circ\zeta}}\ar[d]^-{\Psi^{\varphi_0\circ\Upsilon}}\ar[dr]^-{\zeta_*}&\, \\
H_*(X;\ol\zeta^*\varphi_0^*\calf)\ar[r]^-{\ol\zeta_*}\ar[d]^{\Psi^{\varphi\circ\ol\zeta}}&H_*(Y; \varphi_0^*\calf)\ar[d]^{\Psi^\varphi}\\
H_*(X;\ol\zeta^*\varphi_1^*\calf)\ar[r]^-{\ol\zeta_*}&H_*(Y; \varphi_1^*\calf)\\
H_*(X;\zeta^*\varphi_1^*\calf)\ar[u]_-{\Psi^{\varphi_1\circ\Upsilon}}\ar[ur]_-{\zeta_*}&\, }$$

To show that it is commutative remark first that the lower and upper triangles are commutative by definition of $\zeta_*$ and then that the right square is also commutative by the previous proof of the naturality for smooth maps. Finally, for the commutativity of the left part, remark that the two-parameter family $\varphi_\tau\circ\Upsilon_t$ yields a homotopy of homotopies with fixed endpoints between the concatenation of the homotopies $\varphi_0\circ\Upsilon_t$ and $\varphi_t\circ\ol\zeta$ and the concatenation of the homotopies   $\varphi_t\circ\zeta$  and $\varphi_1\circ\Upsilon_t$. We may therefore apply 
Proposition~\ref{prop:homotopy-identification} parts (ii) and (iii). 

We infer that the whole diagram above is commutative and in particular we have the claimed relation $\Psi^\varphi\circ\zeta_*=\zeta_*\circ\Psi^{\varphi\circ\zeta}$, which finishes the proof of the lemma. 
\end{proof}

We are now ready to prove the composition property. Let $\zeta:X\ri Y$ and $\varphi:Y\ri Z$ be continuous maps and $\calf$ a DG-module over $C_*(\Omega Z)$. Let $\ol\zeta$, $\ol\varphi$ be smooth approximations of $\zeta$ respectively $\varphi$; obviously $\ol\varphi\circ\ol\zeta$ is a smooth approximation of $\varphi\circ\zeta$. Denote by $\Phi$ the homotopy between $\varphi$ and $\ol\varphi$,  by $\Upsilon$ the homotopy between $\zeta$ and $\ol\zeta$ and by $\Phi\circ\Upsilon$ the homotopy $\Phi_t\circ\Upsilon_t$. Consider the following diagram:
$$\xymatrix{ H_*(X;\ol\zeta^*\ol\varphi^*\calf) \ar[dr]^-{\ol\zeta_*}\ar@/^3pc/ [ddrr]^-{(\ol\varphi\circ\ol\zeta)_*}&\, &\, \\
H_*(X; \ol\zeta^*\varphi^*\calf)\ar[u]^-{\Psi^{\Phi\circ\ol\zeta}}\ar[dr]^-{\ol\zeta^*}&H_*(Y;\ol\varphi^*\calf)\ar[dr]^-{\ol\varphi_*} &\, \\
H_*(X;\zeta^*\varphi^*\calf)\ar[u]^-{\Psi^{\varphi\circ\Upsilon}}\ar[r]^-{\zeta_*}\ar@/^4pc/ [uu]^-{\Psi^{\Phi\circ\Upsilon}}&H_*(Y;\varphi_*\calf)\ar[r]^-{\varphi_*} \ar[u]^-{\Psi^\Phi}&H_*(Z;\calf)}$$

Again this diagram is commutative: The two lower triangles are commutative by definition of $\zeta_*$ resp. $\varphi_*$. The parallelogram above them is commutative by the previous lemma. The commutativity of the right upper part is the composition property for smooth maps. Finally, the two parameter family $\Phi_\tau\Upsilon_t$ provides a homotopy of homotopies with fixed end points between $\Phi\circ\Upsilon$ and the concatenation of $\varphi\circ\Upsilon$ with $\Phi\circ\ol\zeta$ and we apply Proposition~\ref{prop:homotopy-identification}, items (ii) and (iii) to get the commutativity of the left part of the diagram. We infer that 
$$(\ol\varphi\circ\ol\zeta)_*\circ\Psi^{\Phi\circ\Upsilon}\,=\, \varphi_*\circ\zeta_*$$
and the right hand side is by definition $(\varphi\circ\zeta)_*$, so the composition property is proved. 
 
{\it 3. Homotopy: $\varphi_{0,*}=\varphi_{1,*}\circ\Psi^\varphi$.} The argument  is quite similar to  the one which proves  that $\varphi_*$ is well defined. To a homotopy $\varphi$ between continuous maps $\varphi_0, \varphi_1:X\ri Y$  we associate the following diagram  : 
$$\xymatrix{ H_*(X; \ol\varphi_0^*\calf) \ar[ddr]^-{\ol\varphi_{0,*}}\ar@/_4pc/ [dddd]_{\Psi}&\, \\
H_*(X;\varphi_0^*\calf)\ar[dr]_{\varphi_{0,*}}\ar[u]_{\Psi_0}\ar[dd]_{\Psi^\varphi}&\, \\
\, &H_*(Y;\calf)\\
H_*(X;\varphi_1^*\calf)\ar[ur]^{\varphi_{1,*}}\ar[d]_{\Psi_1}&\, \\
H_*(X; \ol\varphi_1^*\calf)\ar[uur]_{\ol\varphi_{1,*}}&\, 
}$$
Here $\Psi$ is the identification isomorphism associated to the homotopy between $\ol\varphi_0$ and $\ol\varphi_1$ obtained by concatenation. The desired homotopy property is 
$$\varphi_{0,*}\, =\,  \varphi_{1,*}\circ \Psi^\varphi.$$

To show it, note that the upper and lower right triangles are commutative by definition of the maps $\varphi_{0,*}$ and $\varphi_{1,*}$ and that $\Psi_1\circ\Psi^\varphi=\Psi\circ\Psi_0$, by the property (iii) of 
Proposition~\ref{prop:homotopy-identification}. Note also that $\ol\varphi_{0,*}=\varphi_{1,*}\circ\Psi$ by the homotopy property for smooth maps. The conclusion is now straightforward.

{4. \it Spectral sequence.} By definition $\varphi_*=\ol\varphi_*\circ\Psi$ and both maps $\ol\varphi_*$ and $\Psi$ are defined at the level of complexes and they preserve the filtrations given by the index of critical points. Therefore we may also define $\varphi_*: C_*(X,\Xi^X;\varphi^*\calf)\ri C_*(Y,\Xi^Y;\calf)$ by the composition 
$$\xymatrix{  C_*(X, \Xi^X; \ol\varphi^*\calf) \ar[dr]^-{\ol\varphi_*}&\, \\
C_*(X, \Xi^X;\varphi^*\calf)\ar[u]_{\Psi}\ar[r]_-{\varphi_*}&C_*(Y, \Xi^Y;\calf)}$$
and it obviously also preserves the filtration.  The  morphisms between  the corresponding spectral sequences write as follows at the second page : 

$$\xymatrix{  H_p(X,  \ol\varphi^*H_q(\calf)) \ar[dr]^-{\ol\varphi^2_{p,q}=\ol\varphi_*}&\, \\
H_{p}(X, \varphi^*H_q(\calf))\ar[u]_{\Psi^{2}_{p,q}}\ar[r]_-{\varphi_{p,q}^2}&H_p(Y, H_{q}(\calf))}$$
We know that $\ol\varphi_{p,q}^2$ equals the direct map $\ol\varphi_*$ in homology since  smooth maps satisfy the spectral sequence property.  Then, by construction, $\Psi^2_{p,q}$ is the identification map for the usual (Morse and therefore singular) homologies with local coefficients in $\varphi^*H_q(\calf)$ resp. $\ol\varphi^*H_q(\calf)$. The homotopy invariance for continuous maps in singular homology with local coefficients implies that $\varphi_*=\ol\varphi_*\circ \Psi^{2}_{p,q}$ and therefore $\varphi_*=\varphi^{2}_{p,q}$, as claimed. 

\rmk\label{indep-homotopy-identifications-for-continuous} With all these properties satisfied, the proof of Proposition~\ref{prop-indep-homotopy-identifications} is the same for continuous maps. This means that for homotopic continuous maps $\varphi_0,\varphi_1:X\ri Y$ the identification isomorphism $$\Psi: H_*(X;\varphi_0^*\calf)\ri H_*(Y;\varphi_1^*\calf)$$ does not depend on the homotopy between $\varphi_0$ and $\varphi_1$. 
\kmr

 \section{The direct map for Hurewicz fibrations}\label{sec:direct-for-fibrations}

Let $\varphi:X\ri Y$ be a continuous map between closed manifolds. Consider a Hurewicz fibration $\mathcal{E}$ :  $F\ri E_Y\ri Y$ and the pullback fibration $\varphi^*{\mathcal{E}} : F\ri E_X\ri X$. We know by Theorem~\ref{thm:fibration} that $H_*(Y;C_*(F))\simeq H_*(E_Y)$ and by Corollary~\ref{cor:pullback} that $H_*(X;\varphi^*C_*(F))\simeq H_*(E_X)$. 

Via these isomorphisms the direct map $\varphi_*:H_*(X;\varphi^*C_*(F))\ri H_*(Y;C_*(F))$ induces a map  between $H_*(E_X)$ and $H_*(E_Y)$. The following result shows that this map is the  expected one: 
\begin{proposition}\label{prop:direct-for-fibrations} Let $\widetilde{\varphi}: E_X\ri E_Y$ be the map canonically  induced by~$\varphi$. Then the map defined by $\varphi_*$ coincides with $\widetilde{\varphi}_*$, the map induced by $\widetilde{\varphi}$ in singular homology, via the isomorphisms above. 
\end{proposition}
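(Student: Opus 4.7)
The plan is to reduce the statement to two easy special cases by functoriality on both sides, then verify chain-level compatibility for those cases.

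First I would reduce to the case of a smooth $\varphi$ by approximation, using homotopy invariance of $\varphi_*$ (Theorem~\ref{thm:f*!}) on the one side and of $\widetilde{\varphi}_*$ in singular homology on the other (the pullback of $\mathcal{E}$ along a homotopy $H\colon[0,1]\times X\to Y$ is a fibration over $[0,1]\times X$, so the two identifications of $H_*(X;\varphi^*C_*(F))$ resp. $H_*(E_X)$ at $t=0,1$ match up under the respective deformation retracts). Then I would use the factorization $\varphi=\pi\circ(\chi,\varphi)$ with $\chi\colon X\hookrightarrow D^m$ an embedding and $\pi\colon D^m\times Y\to Y$ the projection. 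Since both $\varphi_*$ and $\widetilde{\varphi}_*$ satisfy the composition property (Theorem~\ref{thm:f*!} and standard singular homology), it suffices to prove the proposition for (a) the inclusion of a closed submanifold $i\colon X\hookrightarrow Z$ and (b) the projection $\pi\colon D^m\times Y\to Y$.

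For case (a), I would work directly at the chain level with the construction of~\S\ref{sec:funct-closed-submanifolds}. Starting from Morse data $\Xi^X$ on $X$, extend it as in~\S\ref{sec:funct-closed-submanifolds} to Morse data $\Xi^Z$ on $Z$ via a tubular neighborhood $U$ with inward-pointing pseudo-gradient, so that the Latour cells satisfy $\overline{W}^u_F(x)=\overline{W}^u_f(x)$ for every $x\in\Crit(f)$, and the representing chain systems $(s_x^F)$ may be chosen to coincide with $(s_x^f)$. Choose the tree $\cY_X\subset\cY_Z$ and the homotopy inverse of the collapse map to restrict compatibly, so that the evaluation chains $m_x\in C_*(\cP_{\star\to X/\cY_X}X/\cY_X)$ push forward to the corresponding chains on $Y$ under the induced map of path spaces. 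Finally, choose the transitive lifting function $\Phi_X$ for $\varphi^*\mathcal{E}$ as the one obtained from $\Phi_Z$ via the recipe of Corollary~\ref{cor:pullback} (so that $\widetilde{i}\circ\Phi_X=\Phi_Z\circ(\widetilde{i}\times i)$ on $F\times\cP_{\star\to X}X$). Under these compatible choices, the chain maps $\Psi_X$ and $\Psi_Z$ from the proof of Theorem~\ref{thm:fibration} satisfy
\[
\widetilde{i}_*\,\Psi_X(\sigma\otimes x)\,=\,\widetilde{i}_*\Phi_X(\sigma\otimes m_x)\,=\,\Phi_Z(\sigma\otimes m_x)\,=\,\Psi_Z(\sigma\otimes x)\,=\,\Psi_Z\,i_*(\sigma\otimes x),
\]
since $i_*$ acts as $\alpha\otimes x\mapsto\alpha\otimes x$ by~\S\ref{sec:funct-closed-submanifolds}. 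This proves case (a).

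For case (b), note that the inclusion $j\colon Y\hookrightarrow D^m\times Y$, $y\mapsto(0,y)$, satisfies $\pi\circ j=\mathrm{Id}_Y$, and on the topological side $\widetilde{\pi}\circ\widetilde{j}=\mathrm{Id}_{E_Y}$ where $\widetilde{j}\colon E_Y\hookrightarrow D^m\times E_Y$, $e\mapsto(0,e)$. By Lemma~\ref{ia=ib}, $j_*$ is an isomorphism, and $\widetilde{j}_*$ is an isomorphism in singular homology because $\widetilde{j}$ is a deformation retract. Hence $\pi_*=j_*^{-1}$ and $\widetilde{\pi}_*=\widetilde{j}_*^{-1}$; applying case (a) to the closed submanifold inclusion $j$ and inverting the resulting commutative square yields case (b). Combining (a), (b) and the factorization gives the result for all smooth $\varphi$, and Step~1 extends it to all continuous $\varphi$.

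The main obstacle lies in case (a), where one must carefully orchestrate the compatible choice of Morse data, trees, collapse homotopy inverses and lifting functions so that the two chain maps $\Psi_X$ and $\Psi_Z$ intertwine strictly with the inclusions $i_*$ and $\widetilde{i}_*$. The key simplification, as noted in~\S\ref{sec:funct-closed-submanifolds}, is that the pseudo-gradient extension pointing inwards along $\partial U$ makes the Latour cells and representing chain systems of critical points in $X$ identical to their counterparts in $Z$, so no invariance argument on the chain maps is needed at that point.
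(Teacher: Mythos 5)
Your proposal follows essentially the same decomposition as the paper (embeddings of closed submanifolds, projections $D^m\times Y\to Y$, smooth maps via $\varphi=\pi\circ(\chi,\varphi)$, continuous maps via smooth approximation), with the chain-level compatibility in case (a) captured correctly up to notational looseness about the $\theta$-pullback stages of $\Psi_\cale$ and about which $m_x$ lives where. Your case (b) — inverting case (a) applied to $j:Y\hookrightarrow D^m\times Y$ — is a mild repackaging of the paper's step~2 and is fine.

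The one real issue is the ordering. You propose to do the reduction from continuous to smooth $\varphi$ \emph{first}, citing homotopy invariance on both sides and waving at the fibration over $[0,1]\times X$. But homotopy invariance of $\varphi_*$ from Theorem~\ref{thm:f*!} is not enough here: you also need to know that the identification isomorphism $\Psi:H_*(X;\varphi^*\cF)\to H_*(X;\ol\varphi^*\cF)$ is compatible, under the fibration isomorphisms $\Psi_{\varphi^*\cE}$ and $\Psi_{\ol\varphi^*\cE}$, with the map induced in singular homology by the two deformation retracts $E^0_X\hookrightarrow E^H\hookleftarrow E^1_X$. That is precisely the paper's Lemma~\ref{lem:homotopy-invariance-fibrations}, and its proof applies the smooth case of the proposition to the smooth submanifold inclusions $i_0,i_1:X\to[0,1]\times X$. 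So your step~1, as written, secretly invokes your case~(a) — the argument is circular in the order you present it. Reversing the order (do the embedding case first, then the projection, then the factorization for smooth maps, and only then the continuous reduction) repairs the logic; the ingredients you list are otherwise the right ones. One smaller point worth being careful about: in case (a), after choosing compatible trees and collapse inverses, the chain $m_x$ on the $X$-side and the chain denoted $m_x$ on the $Z$-side differ by a push-forward, so the middle equality $\widetilde i_*\Phi_X(\sigma\otimes m_x)=\Phi_Z(\sigma\otimes m_x)$ should really read $\Phi_Z(\sigma\otimes\varphi_*(m_x))$ before applying equation~\eqref{eq:lifting-functions}.
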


\begin{corollary}\label{cor:direct-map-induced-fibrations}  Assume $X\subset Y$ is a submanifold and let $\varphi:X\hookrightarrow Y$, $\widetilde{\varphi}:E_{Y}|_X\hookrightarrow E_Y$ be the inclusions. The direct map $\varphi_*:H_*(X;\varphi^*C_*(F))\ri H_*(Y;C_*(F))$ is identified with the direct map $\widetilde{\varphi}_*:H_*(E_{Y}|_X)\to H_*(E_Y)$. 
\end{corollary}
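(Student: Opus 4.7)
The plan is to derive this corollary as an immediate specialization of Proposition~\ref{prop:direct-for-fibrations}, where the only work is to unpack what the pullback construction and the canonically induced map on total spaces become when $\varphi$ is an inclusion.

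First I would identify the total space $E_X$ of the pullback fibration $\varphi^*\cale$ when $\varphi:X\hookrightarrow Y$ is the inclusion of a submanifold. By definition, $E_X = X\, _\varphi\!\times_{\pi_Y} E_Y = \{(x,e)\in X\times E_Y : \pi_Y(e)=x\}$, and the second projection yields a canonical homeomorphism
\[
E_X \stackrel\simeq\longrightarrow E_Y|_X = \pi_Y^{-1}(X), \qquad (x,e)\mapsto e,
\]
which is compatible with the fiber identifications over each $x\in X$. Through this homeomorphism, the canonically induced map $\widetilde{\varphi}:E_X\to E_Y$, which is defined by $(x,e)\mapsto e$, corresponds precisely to the inclusion $E_Y|_X\hookrightarrow E_Y$.

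Next, I would check that the isomorphism $\Psi_{\varphi^*\cale}:H_*(X;\varphi^*C_*(F))\stackrel\simeq\longrightarrow H_*(E_X)$ of Corollary~\ref{cor:pullback} is compatible with this identification $E_X \cong E_Y|_X$. This is where the proof of Corollary~\ref{cor:pullback} does the real work for us: the construction there shows that a transitive lifting function $\Phi$ for $\cale$ induces a transitive lifting function $\Phi^\varphi$ for $\varphi^*\cale$ which, when viewed through the homeomorphism $E_X\cong E_Y|_X$, is simply the restriction of $\Phi$ to $E_Y|_X\times\calp_{\star\to X}X$. Consequently, the $C_*(\Omega X)$-module structure used to define $H_*(X;\varphi^*C_*(F))$ is exactly the pullback by $(\Omega\varphi)_*$ of the $C_*(\Omega Y)$-module structure on $C_*(F)$, so everything is consistent.

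With these identifications in place, Proposition~\ref{prop:direct-for-fibrations} applied to the inclusion $\varphi:X\hookrightarrow Y$ yields the commutative diagram
\[
\xymatrix{
H_*(X;\varphi^*C_*(F)) \ar[r]^-{\varphi_*} \ar[d]_-{\Psi_{\varphi^*\cale}}^-{\simeq} & H_*(Y;C_*(F)) \ar[d]^-{\Psi_{\cale}}_-{\simeq} \\
H_*(E_Y|_X) \ar[r]^-{\widetilde{\varphi}_*} & H_*(E_Y),
}
\]
and the bottom map $\widetilde{\varphi}_*$ is, by the identification made in the first step, precisely the map induced in singular homology by the inclusion $E_Y|_X\hookrightarrow E_Y$. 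No step here poses a real obstacle: the content of the corollary is essentially tautological once the pullback construction is unwound and Proposition~\ref{prop:direct-for-fibrations} is invoked.
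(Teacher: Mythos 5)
Your proof is correct and elaborates the same derivation the paper invokes in its opening line, namely specializing Proposition~\ref{prop:direct-for-fibrations} via Corollary~\ref{cor:pullback} and identifying $E_X$ with $E_Y|_X$. The paper also records an alternative, self-contained chain-level argument using a Morse function on $Y$ extending one on $X$, but that is an optional second proof; your route matches the paper's primary one.
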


\begin{proof} This follows directly from Proposition~\ref{prop:direct-for-fibrations} and Corollary~\ref{cor:pullback}. 

The corollary also admits a direct proof that relies exclusively on Corollary~\ref{cor:pullback}. Construct on $Y$ a Morse function $f$ as follows: first choose a Morse function on $X$, then extend it to $Y$ such that in a tubular neighborhood of $X$ the extension is given by adding a fiberwise positive definite quadratic form. The critical points of the restriction $f|_X$ form a subcomplex, and the direct map in DG Morse homology is induced by the inclusion of that subcomplex. The Latour cells of critical points of $f|_X$, seen in $X$ or in $Y$, coincide, and we can choose chain representatives for these Latour cells so that they also coincide when seen in $X$ or in $Y$. Working with compatible transitive lifting functions for the fibration $E_Y$ and $E_{Y}|_X$ as in Corollary~\ref{cor:pullback}, we obtain a chain level commutative diagram 
$$
\xymatrix{
C_*(f|_X;\varphi^*C_*(F))\ar[r]^-{\varphi_*} \ar[d]_\simeq & C_*(f;C_*(F))\ar[d]^\simeq \\
C_*(E|_X)\ar[r]_-{\tilde\varphi_*} & C_*(E),
}
$$ 
in which the vertical maps are the quasi-isomorphisms constructed in the proof of the Fibration Theorem~\ref{thm:fibration}. The conclusion follows by passing to homology.  
\end{proof}

\begin{proof}[Proof of Proposition~\ref{prop:direct-for-fibrations}] Denote by $\calf$ the DG-module $C_*(F)$ over $C_*(\Omega Y)$. Our goal is to show that the following diagram is commutative: \begin{equation}\label{eq:direct-for-fibrations} \xymatrix{H_*(X; \varphi^*\calf)\ar[r]^{\varphi_*}\ar[d]_-{\simeq}^{\Psi_{\varphi^*\cale}}& H_*(Y;\calf)\ar[d]^-{\Psi_{\cale}}_-{\simeq}\\
H_*(E_X)\ar[r]^-{\widetilde{\varphi}_*}   &H_*(E_Y)}\end{equation} 

The isomorphisms $\Psi_\cale$ and $\Psi_{\varphi^*\cale}$ involve lifting functions for the respective fibrations. More precisely, if $\Phi: F\times  \calp_{\star \ri Y}Y\ri E_Y$  is the transitive lifting function for  $\cale$ that defines the module structure of $\calf$ and the isomorphism $\Psi_\cale$, then,  as pointed out in the proof of Cor.\ref{cor:pullback},   the isomorphism $\Psi_{\varphi^*\cale}$ is defined by the induced lifted function $\Phi^\varphi: F\times  \calp_{\star \ri X}X\ri E_X$ given by the following formula: 
$$\Phi^\varphi(f,\gamma)\, =\, [\mathrm{ev}(\gamma),\Phi(f,\varphi(\gamma)) ],$$
where $\mathrm{ev}$ is the evaluation at the endpoint of $\gamma$ and we wrote  $E_X$ as $X \, _\varphi\!\times_{\pi_Y}E_Y$  (where $\pi_Y:E_Y\ri Y$ is the projection).

%XXX[ Writing $E_X=X \, _\varphi\!\times_{\pi_Y}E_Y$  (where $\pi_Y:E_Y\ri Y$ is the projection), remark that any lifting function $\Phi: F\times  \calp_{\star \ri Y}Y\ri E_Y$ for the fibration $\cale$ defines  a lifting function $\Phi^\varphi:F\times  \calp_{\star \ri X}X\ri E_X$ by the formula: 
%$$\Phi^\varphi(f,\gamma)\, =\, [\mathrm{ev}(\gamma),\Phi(f,\varphi(\gamma)) ],$$
%where $\mathrm{ev}$ is the evaluation at the endpoint of $\gamma$.] 

In the writing of $E_X$ above, the map $\widetilde{\varphi}:E_X\ri E_Y$ is the projection on the second factor, so that we have 
\begin{equation}\label{eq:lifting-functions} 
\widetilde{\varphi}\circ\Phi^\varphi (f,\gamma)\, =\, \Phi(f,\varphi(\gamma)).
\end{equation}

  We decompose our proof into three steps: \\
 \\
 {\it Step 1: $\varphi : X\ri Y$ is an embedding}. We may suppose w.l.o.g. that $X\subset Y$ is a submanifold and $\varphi$ is the inclusion. Take a set of data $\Xi^X=(f, \xi, \ldots, )$ on $X$ and extend it to $\Xi^Y=(F,\ldots,)$ as in the definition of the direct map for submanifolds in~\S\ref{sec:funct-closed-submanifolds}. We therefore have $$\varphi_*(\alpha\otimes x)\, =\, \alpha\otimes x$$
for all $\alpha\in C_*(F)$ and $x\in \Crit(f)$. We also may suppose that $\Xi^X$ and $\Xi^Y$ are as in Theorem \ref{thm:fibration} (Morse function with a unique minimum at the basepoint, trees formed by gradient lines). For this particular choice the tree $\caly^X$ is contained in $\caly^Y$ and the homotopy inverse of the projection $\theta^Y:Y/\caly^Y\ri Y$ is an extension of $\theta^X:X/\caly^X\ri X$. We therefore have
\begin{equation}\label{eq:commutation-trees}
\xymatrix{X/\caly^X\ar[d]^-{\theta^X}\ar@{^{(}->}[r]^-{\varphi}&Y/\caly^Y\ar[d]^-{\theta^Y}\\
X\ar@{^{(}->}[r]^-{\varphi}&Y}
\end{equation} 

Denote  by  $E_Y^\theta$, $E_X^\theta$ the total spaces of the pullbacks by $\theta^Y$ resp. $\theta^X$ of the fibrations $\cale$ and $\varphi^*\cale$. We remind that the isomorphism $\Psi_\cale: H_*(Y;\calf)\ri H_*(E_Y)$  in the proof of Theorem \ref{thm:fibration} is induced by a composition of quasi-isomorphisms: 
\begin{equation}\label{eq:composition-quasi-iso}\xymatrix{C_*(Y, \Xi^Y; \calf)\ar[r]^-{\cong} &C_*(Y/\caly^Y ; \theta^{Y*}\calf)\ar[r]^-{\simeq} &C_*(E_Y^\theta)\ar[r]^-{\simeq}_-{\widetilde{\theta}^Y_*} &C_*(E_Y)}\end{equation}
(we will remind the definition of the second complex below) and the same for $X$. We will show that the diagram \eqref{eq:direct-for-fibrations} is commutative at the level of complexes for each of the three horizontal arrows above.  More precisely we claim that the following diagram is commutative
$$\xymatrix{C_*(X, \Xi^X; \varphi^*\calf)\ar[r]^-{\cong}\ar[d]^-{\varphi_*} &C_*(X/\caly^X ; \varphi^*\theta^{Y*}\calf)\ar[r]^-{\simeq} \ar[d]^-{\varphi_*}&C_*(E_X^\theta)\ar[r]^-{\simeq}_-{\widetilde{\theta}^X_*}\ar[d]^{\widetilde{\varphi}_*} &C_*(E_X)\ar[d]^-{\widetilde{\varphi}_*}\\
C_*(Y, \Xi^Y; \calf)\ar[r]^-{\cong} &C_*(Y/\caly^Y ; \theta^{Y*}\calf)\ar[r]^-{\simeq} &C_*(E_Y^\theta)\ar[r]^-{\simeq}_-{\widetilde{\theta}^Y_*} &C_*(E_Y)} $$ We start with rightmost square  whose commutativity is an immediate consequence of the diagram \eqref{eq:commutation-trees}; from it we get a commutative diagram for the total spaces of the corresponding pullback fibrations 
$$\xymatrix{E_X^\theta\ar[d]^-{\widetilde{\theta}^X}\ar[r]^-{\widetilde{\varphi}}& E^\theta_Y\ar[d]^-{\widetilde{\theta}^Y}\\
E_X\ar[r]^-{\widetilde{\varphi}}&E_Y}$$
which induces the desired diagram for direct maps in singular homology. 

Let us now analyse the leftmost square: The complex $C_*(Y/\caly^Y; \theta^{Y*}\calf)$ is by definition the enriched complex defined using the critical points of the Morse function  $F$  of the data $\Xi^Y$ on $Y$ and the twisting cocycle $m'_{x,y}\in C_*(\Omega(Y/\caly^Y))$. We recall that the cocycle $(m'_{x,y})$ was defined in the proof of Th.\ref{thm:fibration} in \S\ref{sec:proof-of-fibration} in the same way as the Barraud-Cornea cocycle $(m_{x,y})$ which defines $C_*(Y,\Xi^Y;\calf)$ except for the fact that we omit $\theta^Y:Y/\caly^Y\ri Y$ from the evaluation map; therefore the relation between the two is 
\begin{equation}\label{eq:two-cocycles}
m_{x,y}\, =\, \theta^Y_*(m'_{x,y})
\end{equation} 

 The same is valid for the corresponding complex $C_*(X/\caly^X; \theta^{X*}\varphi^*\calf)$ and the Morse function $f$ of the data $\Xi^X$. Given the compatibility between $\Xi^ X$ and $\Xi^Y$ we have $\Crit(f)\subset \Crit(F)$  and moreover $\varphi_*(m'_{x,y}(f))=m'_{x,y}(F)$ for $x,y\in \crit(F)$, i.e. the second complex is a subcomplex of the first via the inclusion $\varphi$. 
 
 The first arrow of \eqref{eq:composition-quasi-iso} is the usual identification of Rem.\ref{rem:pullback} (due to \eqref{eq:two-cocycles} here) between the enriched complex on $Y$ with cocycle pushed forward by $\theta_*^Y$ and the complex on $Y/\caly^Y$ with module pulled back by $\theta^{Y*}$. In view of the above, using \eqref{eq:commutation-trees}, it is clear that the diagram $$\xymatrix {
 C_*(X, \Xi^X; \varphi^*\calf)\ar[r]^- {\varphi_*}\ar[d]^-{\cong}  & C_*(Y, \Xi^Y; \calf)\ar[d]^-{\cong} \\
 C_*(X/\caly^X; \theta^{X*}\varphi^*\calf)\ar[r]^-{\varphi_*}&C_*(Y/\caly^Y ; \theta^{Y*}\calf)}
 $$
is commutative. 

We turn now our attention to the middle arrow of \eqref{eq:composition-quasi-iso} which corresponds to the third and most important step of the proof of Th.\ref{thm:fibration}: the quasi-isomorphism between the enriched complex $C_*(Y/\caly^Y;\theta^{Y*}\calf)$ defined above and singular (cubic) homology of the total space $E_Y^\theta$ of the fibration $\theta^{Y*}\cale$. Let us show that $\varphi$ induces a commutative diagram in this case too, i.e. that the middle square above is commutative. Denote by $\Psi^Y$ this quasi-isomorphism and by $\Psi^X: C_*(X/\caly^X; \theta^{X*}\varphi^*\calf)\ri C_*(E^\theta_X)$ its analogue for $X$.  Recall the definition of $\Psi^Y$ from \S\ref{sec:proof-of-fibration} (it was denoted by $\Psi'$ there): 
$$\Psi^Y(\alpha\otimes x)\, =\, \Phi_*(\alpha\otimes m^Y_x)$$
for any $\alpha \in \theta^{Y*}\calf$ and $x\in \Crit(F)$. Here $\Phi: F\times \calp_{\star\ri Y/\caly^Y}Y/\caly^Y\ri E_Y^\theta$ is a lifting function for $\theta^{Y*}\cale: F\ri E_Y^\theta\ri Y/\caly^Y$. The family of chains $m^Y_x\in C_{|x|}( \calp_{\star\ri Y/\caly^Y}Y/\caly^Y)$ is the one constructed in \S\ref{sec:proof-of-fibration}. 

Consider now the analogous  quasi-isomorphism for the pullback fibration $\varphi^*\theta^{Y*}\cale= \theta^{X*} \varphi^*\cale$.  Taking the lifted function $\Phi^\varphi$ which is derived from $\Phi$ as explained in the beginning  of the present proof, we have $$\Psi^X(\alpha\otimes x)\, =\,  \, \Phi^\varphi_*(\alpha\otimes m^X_x)$$
for $\alpha \in \varphi^*\theta^{Y*}\calf$ and $x\in \Crit(f)$. Again, the compatibility between the data $\Xi^X$ and $\Xi^Y$ implies that $\Crit(f)\subset \Crit(F)$ and, by construction, $m_x^Y=\varphi_*(m_x^X)$. Now we are ready to prove the commutativity of the diagram
$$\xymatrix {
 C_*(X/\caly^X; \theta^{X*}\varphi^*\calf)\ar[r]^-{\varphi_*}\ar[d]^-{\Psi^X}_-{\simeq}&C_*(Y/\caly^Y ; \theta^{Y*}\calf)\ar[d]^-{\Psi^Y}_-{\simeq}\\
 E_X^\theta\ar[r]^{\widetilde{\varphi}_*}&E_Y^\theta}
 $$
 where $\varphi_*(\alpha\otimes x) =\alpha\otimes x$. Indeed, using~\eqref{eq:lifting-functions} we have
 \begin{align*}
 \Psi^Y\circ\varphi_*(\alpha\otimes x)=\Phi_*(\alpha\otimes m_x^Y) & =\Phi_*(\alpha\otimes\varphi_*(m_x^X)) \\
 & =\widetilde{\varphi}_*(\Phi^\varphi_*(\alpha\otimes m_x^X))=\widetilde{\varphi}_*\circ\Psi^X(\alpha\otimes x).
 \end{align*}
 This finishes the proof of Step 1.

 {\it Step 2:  $\varphi:X\ri Y$ is a general smooth map}. By definition (see~\S\ref{sec:funct-direct-general}), we have $\varphi_*=(i_*)^{-1}\varphi^\chi_*$ where $\chi: X\ri D$ is an embedding into a disk, $\varphi^\chi=(\chi,\varphi):X\ri D\times Y$, and $i:Y\hookrightarrow D\times Y$ is the inclusion $y\mapsto(0,y)$.  On the other hand considering  the pullback of the projection $\pi:D\times Y\ri Y$ which is  $$\pi^*\cale: F\ri D\times E_Y \stackrel{\Id\times \pi}{\longrightarrow}D\times Y$$ we have two homotopic maps $i\circ \varphi$ and $\varphi^\chi: X\ri D\times Y$ which yield the same map in singular homology
$$\widetilde{\varphi^\chi}_*\, =\,  (\widetilde{i\circ\varphi})_*: H_*(E_X)\ri H_*(D\times E_Y)$$
(we have $\varphi^{\chi *}\pi^*\cale= (i\circ \varphi)^*\pi^*\cale=\varphi^*\cale$.) Noting that $\widetilde{i}_*: H_*(E_Y)\ri H_*(D\times E_Y)$ is an isomorphism we get $\widetilde{\varphi}_*=(\widetilde{i}_*)^{-1}\widetilde{\varphi^\chi}_*$.  Since  the diagram \eqref{eq:direct-for-fibrations} is commutative for the embeddings $\varphi^\chi$ and $i$ by Step 1, it is also commutative for  $\varphi$:
$$ \xymatrix{H_*(X; \varphi^*(C_*(F)))\ar[r]^{\varphi^\chi_*}\ar[d]_-{\simeq}^{\Psi_{\varphi^*\cale}}\ar@/^2pc/[rr]^{\varphi_*}&  H_*(D\times Y;\pi^*C_*(F)) \ar[d]_-{\simeq}^{\Psi_{\pi^*\cale}} &H_*(Y;C_*(F))\ar[l]_-{i_*} \ar[d]^-{\Psi_{\cale}}_-{\simeq}\\
H_*(E_X)\ar[r]^-{\widetilde{\varphi^\chi}_*} \ar@/_2pc/[rr]^{\widetilde{\varphi}_*}&H_*(D\times E_Y)  &H_*(E_Y)\ar[l]_-{\widetilde{i}_*}}$$

{\it Step 3: $\varphi:X\ri Y$ is continuous}. We will use the following lemma: 
\begin{lemma}\label{lem:homotopy-invariance-fibrations} Let $\varphi_0, \varphi_1: X\ri Y$ two homotopic continuous maps and $\cale: F\ri E_Y\ri Y$ a Hurewicz fibration over $Y$. Denote by $\calf=C_*(F)$ the associated DG-module over $C_*(\Omega Y)$ and by $\Psi$ the identification isomorphism $\Psi:H_*(X; \varphi_0^*\calf)\ri H_*(X;\varphi_1^*\calf)$. Also denote by $E_X^i$ the total spaces of the pullback fibrations $\varphi_i^*\cale$ for $i=0,1$ and by $\widetilde{\varphi}^i : E_X^i \ri E_Y$ the maps induced by $\varphi^i$. Then the following diagram is commutative: 
$$\xymatrix{H_*(E_X^0)\ar[r]^-{\widetilde{\varphi}_{0*}}&H_*(E_Y)&H_*(E_X^1)\ar[l]_-{\widetilde{\varphi}_{1*}}\\
H_*(X;\varphi_0^*\calf)\ar[rr]^-{\Psi}_-{\simeq}\ar[u]^-{\Psi_{\varphi_0^*\cale}}_-{\simeq}&\, & H_*(X;\varphi_1^*\calf)\ar[u]^-{\Psi_{\varphi_1^*\cale}}_-{\simeq}}$$

\end{lemma}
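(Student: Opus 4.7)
The plan is to interpolate through the pullback fibration over $[0,1]\times X$ associated to the homotopy. Denote by $\varphi:[0,1]\times X \to Y$ the given homotopy and consider the pullback Hurewicz fibration $\varphi^*\cale: F\to E_X^{[0,1]}\to [0,1]\times X$, together with the induced map $\widetilde{\varphi}:E_X^{[0,1]} \to E_Y$ covering $\varphi$. The two endpoint inclusions $i_j:X \hookrightarrow [0,1]\times X$, $x\mapsto (j,x)$, $j=0,1$, are smooth embeddings satisfying $\varphi \circ i_j = \varphi_j$, and $E_X^j$ is precisely the pullback of $\varphi^*\cale$ along $i_j$, so $i_j$ lifts canonically to a map $\widetilde{i}_j: E_X^j \to E_X^{[0,1]}$ with $\widetilde{\varphi} \circ \widetilde{i}_j = \widetilde{\varphi}_j$.

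First I would apply Step~1 of the proof of Proposition~\ref{prop:direct-for-fibrations}, which is already established for embeddings, to $i_0$ and $i_1$ viewed as embeddings into the base of the fibration $\varphi^*\cale$. This yields, for $j=0,1$, commutative squares
$$
\xymatrix{
H_*(X;\varphi_j^*\calf)\ar[r]^-{(i_j)_*}\ar[d]_-{\Psi_{\varphi_j^*\cale}}^-\simeq & H_*([0,1]\times X;\varphi^*\calf)\ar[d]^-{\Psi_{\varphi^*\cale}}_-\simeq \\
H_*(E_X^j)\ar[r]^-{(\widetilde{i}_j)_*} & H_*(E_X^{[0,1]}).
}
$$
Combining these two squares with the factorization $\widetilde{\varphi}_{j*}=\widetilde{\varphi}_*\circ (\widetilde{i}_j)_*$, the equality asserted by the lemma reduces to establishing the identity
$$
(i_0)_* = (i_1)_*\circ \Psi : H_*(X;\varphi_0^*\calf)\to H_*([0,1]\times X;\varphi^*\calf).
$$

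The last step is to recognize this as the homotopy invariance of direct maps (item~3 of Theorem~\ref{thm:f*!}), already verified in~\S\ref{sec:properties-functoriality-embeddings} for embeddings, applied to the homotopic embeddings $i_0,i_1$ via the straight-line homotopy $i_t(x)=(t,x)$. That statement gives $(i_0)_* = (i_1)_*\circ \Psi^{(i_t)}$, where $\Psi^{(i_t)}$ is the identification isomorphism associated to $(i_t)$ with DG-coefficients $\varphi^*\calf$. The critical remaining point is that this $\Psi^{(i_t)}$ coincides with the $\Psi=\Psi^{(\varphi_t)}$ of the statement, which is the identification associated to the homotopy $(\varphi_t)$ with DG-coefficients $\calf$: since $\varphi\circ i_t=\varphi_t$, this is exactly the content of Remark~\ref{identical-identifications}. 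I expect the verification of this compatibility of identification morphisms to be the only genuine subtlety; once in place, the rest is a formal diagram chase.
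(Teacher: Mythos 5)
Your proposal follows essentially the same route as the paper's proof, which also reduces the general case to the endpoint inclusions $i_j$ into a cylinder over $X$ and then uses Remark~\ref{identical-identifications} to compare the identification morphism for $(i_t)$ with that for $(\varphi_t)$. Two technical points should be folded in: the cylinder must be $[-\epsilon,1+\epsilon]\times X$ rather than $[0,1]\times X$, so that $i_0,i_1$ are interior embeddings and the tubular-neighborhood constructions underlying $(i_j)_*$ and Step~1 of Proposition~\ref{prop:direct-for-fibrations} apply directly; and since $i_0(\star_X)$ and $i_1(\star_X)$ are distinct basepoints, the homotopy-invariance step for $i_0,i_1$ has to be interpreted in the different-basepoint framework of Remark~\ref{rmk:identification-different-basepoints} and Lemma~\ref{ia=ib}, as the paper explicitly flags.
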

\begin{proof} Remark first that the proof is immediate if $\varphi_i$ are smooth. Indeed, in this case we may complete our diagram as follows: 

\begin{equation}\label{eq:particular-case}\xymatrix{H_*(E_X^0)\ar[r]^-{\widetilde{\varphi}_{0*}}&H_*(E_Y)&H_*(E_X^1)\ar[l]_-{\widetilde{\varphi}_{1*}}\\
H_*(X;\varphi_0^*\calf)\ar[r]^-{\varphi_{0*}}\ar@/_2pc/[rr]^-{\Psi}_-{\simeq}\ar[u]^-{\Psi_{\varphi_0^*\cale}}_-{\simeq}&H_*(Y;\calf)\ar[u]^-{\Psi_\cale}_-{\simeq}& H_*(X;\varphi_1^*\calf)\ar[u]^-{\Psi_{\varphi_1^*\cale}}_-{\simeq}\ar[l]_-{\varphi_{1*}}}\end{equation} 

The two squares are commutative by Step 2, the lower part is also commutative by homotopy invariance, so the whole diagram is commutative and this particular case is proved. . 

To prove the general case, denote by $D$ the interval $[-\epsilon, 1+\epsilon]$, by $\varphi :D\times X\ri Y$ the homotopy between $\varphi_0$ and $\varphi_1$ (we may suppose that this homotopy is stationary on $[-\epsilon, 0]$ and on $[1, 1+\epsilon]$) and by $i_j: X\hookrightarrow D\times X$ the inclusions $i_j(x)=(j,x)$ for $j=0,1$. We write the above diagram for the homotopic maps $i_0$ and $i_1$ and for the pullback fibration $\varphi^*\cale:F\ri E_{D\times X}\ri D\times X$ instead of $\cale$, which we complete as follows: 
$$\xymatrix{\, &E_Y&\, \\
H_*(E_X^0)\ar[r]^-{\widetilde{i}_{0*}}\ar[ur]^-{\widetilde{\varphi}_{0*}}&H_*(E_{D\times X})\ar[u]^-{\widetilde{\varphi}_*}&H_*(E_X^1)\ar[l]_-{\widetilde{i}_{1*}}\ar[ul]_-{\widetilde{\varphi}_{1*}}\\
H_*(X;i_0^* \varphi^*\calf)\ar[rr]^-{\Psi}_-{\simeq}\ar[u]^-{\Psi_{i_0^*\varphi^*\cale}}_-{\simeq}&\, & H_*(X;i_1^*\varphi^*\calf)\ar[u]^-{\Psi_{i_1^*\varphi^*\cale}}_-{\simeq}}$$
Noting that $\varphi\circ i_j=\varphi_j$ we infer that the two upper triangles are commutative. The lower rectangle is also commutative since it concerns the  smooth homotopic maps $i_0$ and $i_1$. We point out that  here we are in the framework of Remark \ref{rmk:identification-different-basepoints}, the identification isomorphism being defined for maps which do not preserve the basepoint. Also, we used  Remark \ref{identical-identifications} which asserts that the isomorphism $\Psi$ above  is the same as the one defined for the homotopic maps $\varphi_0$ and $\varphi_1$.
Therefore the exterior part of the diagram  diagram is also commutative, and the lemma is proved.

\end{proof} 

We complete now the proof of Step 3. Let $\varphi:X\ri Y$ be a continuous map. The direct map $\varphi_*$ is defined by \eqref{def:direct-for-continuous}  using $\ol\varphi:X\ri Y$,  a smooth approximation of $\varphi$. Denote by $E_X$ resp $\ol E_X$ the total spaces of the pullback fibrations $\varphi^*\cale$ resp. $\ol\varphi^*\cale$. Consider the following diagram: 
$$\xymatrix{H_*(E_X)\ar[r]^-{\widetilde{\varphi}_{*}}&H_*(E_Y)&H_*(\ol E_X)\ar[l]_-{\widetilde{\ol\varphi}_{*}}\\
H_*(X;\varphi^*\calf)\ar[r]^-{\varphi_{*}}\ar@/_2pc/[rr]^-{\Psi}_-{\simeq}\ar[u]^-{\Psi_{\varphi^*\cale}}_-{\simeq}&H_*(Y;\calf)\ar[u]^-{\Psi_\cale}_-{\simeq}& H_*(X;\ol\varphi^*\calf)\ar[u]^-{\Psi_{\ol\varphi^*\cale}}_-{\simeq}\ar[l]_-{\ol\varphi_{*}}}$$
This diagram looks like  \eqref{eq:particular-case} for the homotopic maps $\varphi$ and $\ol\varphi$ but now our goal is to prove that the left square is commutative. We know that the right square is commutative by Step 2, that the lower part of the diagram is commutative by  the definition \eqref{def:direct-for-continuous} of $\varphi_*$, and finally that the outer part of the diagram is commutative, by the previous lemma. We infer thus the commutativity of the left square which was our purpose \eqref{eq:direct-for-fibrations} and the proof of Proposition~\ref{prop:direct-for-fibrations} is now complete. 
\end{proof} 

 \section{The shriek map for Hurewicz fibrations}\label{sec:shriek-for-fibration}

Let $Y$ be a closed manifold and $X\subset Y$ a closed submanifold, let $\varphi:X\hookrightarrow Y$ be the inclusion, and assume that both $X$ and $Y$ are oriented (or, more generally, that $X$ is cooriented in $Y$). Let $F\to E\stackrel\pi\to Y$ be a Hurewicz fibration and denote $\tilde\varphi:E|_X\hookrightarrow E$ the inclusion. 

In this situation there is a ``topological" shriek map 
$$
\tilde\varphi_!:H_*(E)\to H_{*+\dim(X) -\dim(Y)}(E|_X)
$$
defined as follows: choose a tubular neighborhood $U$ of $X$ in $Y$ and consider the Thom isomorphism $\cap\, \tau : H_*(U,\p U)\stackrel{\simeq}{\longrightarrow} H_{*+\mathrm{dim}(X)-\mathrm{dim}(Y)}(X)$ defined by the cap product with the Thom class. Define $\tilde\varphi_!$ as the composition
$$\xymatrix{H_*(E)\ar[r]& H_* (E, E|_{Y\setminus U})\ar[r]^-{\mathrm{exc}}& H_*(E|_U,E|_{\p U})\ar[r]^-{\cap\, \pi^*\tau}& H_{*+\mathrm{dim}(X)-\mathrm{dim}(Y)}(E_X),}$$
where $\mathrm{exc}$ is the excision isomorphism. 

The following result is proved by Riegel~\cite{Riegel} and plays an important role in his approach to string topology operations in the framework of Morse homology with DG coefficients. 

\begin{proposition}[{Riegel~\cite{Riegel}}] \label{prop:shriek-for-fibrations} The shriek maps 
$$
\varphi_!:H_*(Y;C_*(F))\to H_{*+\dim(X)-\dim(Y)}(X;\varphi^*C_*(F))
$$
and 
$$
\tilde\varphi_!:H_*(E)\to H_{*+\dim(X) -\dim(Y)}(E|_X)
$$
are identified via the isomorphisms from Theorem~\ref{thm:fibration} and Corollary~\ref{cor:pullback}. \qed
\end{proposition}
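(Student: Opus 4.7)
The plan is to reduce the statement to two sub-assertions via a tubular neighborhood $U$ of $X$ in $Y$: (i) the shriek map for a codimension $0$ inclusion $i_U: U \hookrightarrow Y$ corresponds to the relative-quotient map $H_*(E) \to H_*(E, E|_{Y\setminus U}) \stackrel{\mathrm{exc}}{\cong} H_*(E|_U, E|_{\p U})$, and (ii) the shriek map for the zero-section $\varphi_X: X \hookrightarrow U$ of the normal disk bundle $U$ corresponds to the Thom isomorphism $\cap\, \pi^*\tau: H_*(E|_U, E|_{\p U}) \to H_{*-(n-m)}(E|_X)$. By the composition property in Theorem~\ref{thm:f*!} we have $\varphi_! = \varphi_{X!} \circ (i_U)_!$, while $\tilde\varphi_!$ is defined precisely as the composition $H_*(E) \to H_*(E|_U, E|_{\p U}) \xrightarrow{\cap\, \pi^*\tau} H_{*-(n-m)}(E|_X)$, so the factorization matches on both sides.

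For assertion (i), the chain-level shriek $(i_U)_!$ was computed in~\S\ref{sec:funct-0-codim-manifolds} as the quotient by the subcomplex generated by $\Crit(f) \cap \overline{Y \setminus U}$. The argument is then a direct adaptation of Corollary~\ref{cor:direct-map-induced-fibrations} to the manifold-with-boundary setting of Remark~\ref{rmk:fibration-boundary}. One chooses Morse data on $Y$ extending data on $U$ (with $\xi_U$ pointing outward along $\p U$) so that the additional critical points lie in $\overline{Y \setminus U}$, and compatible transitive lifting functions for $E \to Y$ and $E|_U \to U$. The quasi-isomorphisms of Theorem~\ref{thm:fibration}, in their relative form, then produce a commutative diagram
$$
\xymatrix{
C_*(Y, \Xi^Y; \cF) \ar[r]^-{(i_U)_!} \ar[d]_\simeq & C_*(U, \p U, \Xi^U; i_U^*\cF) \ar[d]^\simeq \\
C_*(E) \ar[r] & C_*(E|_U, E|_{\p U}),
}
$$
where the lower map is the composition of the quotient $C_*(E) \to C_*(E)/C_*(E|_{Y\setminus U})$ and the excision quasi-isomorphism. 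Passing to homology gives assertion (i).

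Assertion (ii) is the heart of the argument. Choose Morse data on $U$ of the form $F = f \circ \pi + A h$ with $h$ a fiberwise negative-definite quadratic form, as in~\S\ref{sec:funct-closed-submanifolds}. The critical points of $F$ coincide with those of $f$ with indices shifted by $n-m$, and the Latour cell $\overline W^u_F(x)$ in $U$ is a disk bundle over $\overline W^u_f(x)$ with fiber $D^{n-m}$. One can choose representing chain systems $(s_x^U)$, $(s_x^X)$ and collapsing trees $\caly_X \subset \caly_U$, together with homotopy inverses $\theta_X, \theta_U$, so that $s_x^U$ decomposes under the Eilenberg-Zilber map as $s_x^X \times \tau_x$, where $\tau_x$ represents the fundamental class of the fiber disk $D^{n-m}$ over the endpoint of the Latour cell. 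Using a transitive lifting function for $E|_U \to U$ pulled back from one for $E|_X \to X$ via $\pi$, the evaluation $\Psi^U(\alpha \otimes x) = \Phi_*(\alpha \otimes m_x^U)$ from the proof of Theorem~\ref{thm:fibration} realizes, at chain level, the product of $\Psi^X(\alpha \otimes x)$ with a singular cochain-dual representative of the Thom class. Since capping with $\pi^*\tau$ is represented at chain level by slicing off the fiber factor $\tau_x$, one recovers exactly $\Psi^X(\alpha \otimes x)$, which is the image of $\Psi^U(\alpha \otimes x)$ under $\varphi_{X!}$ (the identity on generators, with the degree shift of $-(n-m)$).

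The main obstacle is the construction of compatible representing chain systems and the verification that the product-type decomposition $s_x^U \simeq s_x^X \times \tau_x$ can be achieved within the Maurer-Cartan constraints~\eqref{eq:representingchain} and~\eqref{eq:representingchain-Latourcells}. This requires an inductive construction paralleling Proposition~\ref{representingchain} but carried out in the fiber-bundle setting, and the compatibility of the trees and homotopy inverses $\theta_U, \theta_X$ with the bundle projection $\pi$ so that the evaluations $q_x^U$ into $\cP U/\caly_U$ factor through bundle-like products over $q_x^X$ into $\cP X/\caly_X$. An alternative route, potentially avoiding this explicit chain-level construction, is to characterize both shriek maps axiomatically: both satisfy functoriality under bundle maps, reduce to $\varphi_*$ in degree zero via Proposition~\ref{prop:H0}, and are natural with respect to the Leray-Serre filtration (by the spectral sequence property of Theorem~\ref{thm:f*!}); a uniqueness argument based on the classical Thom-isomorphism characterization may then force the two maps to coincide on each page of the spectral sequence and hence in the limit.
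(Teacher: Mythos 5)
This statement is cited in the paper as a result of Riegel~\cite{Riegel}, and the paper itself gives no proof (the \verb|\qed| terminates the statement, and the surrounding text explicitly says ``The following result is proved by Riegel~\cite{Riegel}''). So there is no in-paper argument to compare against; the closest relatives in the paper are Corollary~\ref{cor:direct-map-induced-fibrations} (the direct-map analogue) and Lemma~\ref{lem:shriek-to-point} (a special case proved independently). Your decomposition $\varphi_! = \varphi_{X!}\circ(i_U)_!$ and the plan of matching $(i_U)_!$ to quotient-plus-excision, $\varphi_{X!}$ to $\cap\,\pi^*\tau$, is exactly the natural route and it is consistent with the way the paper handles those two relatives.

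That said, the proposal as written has a genuine unfinished step, which you correctly flag but do not close. In assertion (ii) the claim that one can choose representing chain systems with $s^U_x \simeq s^X_x\times\tau_x$ under the Eilenberg--Zilber map, compatibly with the Maurer--Cartan constraints~\eqref{eq:representingchain-Latourcells} \emph{and} with the boundary stratification of the Latour cell $\overline{W}^u_F(x)$, is exactly where the work lies: the corners of $\overline{W}^u_F(x)$ do not respect the naive product splitting (the broken-trajectory strata mix base and fiber directions), so the inductive construction of Proposition~\ref{representingchain} does not simply restrict to a product form. Moreover $\cap\,\pi^*\tau$ is a singular-chain-level operation, and the identification with the Morse side passes through the quasi-isomorphism $\Psi'$ of Step~3 in the proof of Theorem~\ref{thm:fibration}, which depends on a choice of transitive lifting function; one has to verify compatibility of the lifting functions for $E|_U\to U$ and $E|_X\to X$ under $\pi$, which you mention but do not establish. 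The alternative ``axiomatic characterization'' route at the end is only a heuristic --- the spectral-sequence property of Theorem~\ref{thm:f*!} constrains the map page by page but by itself does not force agreement of the abutments without an additional argument about convergence and absence of extension ambiguity. In short: the plan is right, the two sub-assertions are the right ones, and assertion (i) is within reach by adapting Corollary~\ref{cor:direct-map-induced-fibrations}; but assertion (ii) is where a careful chain-level construction is still required, and as written the proposal signals the obstacle rather than overcoming it.
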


The next result can be understood as a corollary of Proposition~\ref{prop:shriek-for-fibrations}. We use it in~\cite{BDHO-cotangent} and give here an independent proof. 

\begin{lemma} \label{lem:shriek-to-point} Let $F\hookrightarrow E\to Y$ be a fibration with base a connected and oriented $n$-manifold with boundary. Let $Y_{n-1}$ be the $(n-1)$-skeleton of $Y$ for some CW-decomposition relative to the boundary $\p Y$, with a single $n$-dimensional cell. We have a canonical isomorphism $H_*(E,E|_{Y_{n-1}})\simeq H_{*-n}(F)$, and the composition
$$
H_*(E)\to H_*(E,E|_{Y_{n-1}})\to H_{*-n}(F)
$$
is the shriek map induced by the inclusion $\star \hookrightarrow Y$. 
\end{lemma}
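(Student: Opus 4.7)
The plan is to exploit a carefully chosen Morse function on $Y$ so that the Latour cells refine the given CW-structure and the fibration theorem identifies the relevant maps cellularly.

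First I would choose a Morse function $f:Y\to\R$ together with a negative pseudo-gradient $\xi$ having the following properties: $\xi$ points inward along $\partial Y$; $\star$ is the unique critical point of Morse index $n$ (a local maximum); and all other critical points have index strictly less than $n$. Such data exists since $\star$ lies in the interior of $Y$. Write $Y^f_{n-1}$ for the union of $\partial Y$ with all Latour cells $\overline{W}^u(x)$ for $|x|<n$; this is the $(n-1)$-skeleton of a CW-decomposition of $Y$ relative to $\partial Y$ whose unique $n$-cell is $\overline{W}^u(\star)$. Since the composition in the statement depends only on the $(n-1)$-skeleton and the orientation of $Y$, and since any two CW-decompositions with a single $n$-cell at $\star$ are related by a deformation rel $\partial Y$, it suffices to verify the lemma for this particular CW-decomposition.

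Next I would apply the Fibration Theorem~\ref{thm:fibration} in the version for manifolds with boundary (Remark~\ref{rmk:fibration-boundary}). Since $\xi$ points inward along $\partial Y$, i.e. $\p_+Y=\emptyset$, it produces a quasi-isomorphism $\Psi:C_*(f;C_*(F))\stackrel{\simeq}{\longrightarrow} C_*(E)$. As shown in the proof of that theorem (see Lemma~\ref{lem:Psiprime}), $\Psi$ respects the Morse-index filtration on the source and the skeletal filtration $C_*(E|_{Y^f_p})$ on the target; in particular it maps the subcomplex generated by critical points of index $<n$ quasi-isomorphically onto $C_*(E|_{Y^f_{n-1}})$. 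Passing to quotients yields a commutative square in homology
\[
\begin{array}{ccc}
H_*(Y;C_*(F)) & \stackrel{\Psi_*}{\longrightarrow} & H_*(E) \\[1mm]
\downarrow & & \downarrow \\[1mm]
C_{*-n}(F) & \stackrel{\bar\Psi_*}{\longrightarrow} & H_*(E,E|_{Y^f_{n-1}})
\end{array}
\]
in which the left vertical arrow is the chain-level shriek map $\star_!$ from~\S\ref{sec:funct-closed-submanifolds}, realized as the projection onto the free $C_*(F)$-summand generated by $\star$ (which has degree $n$), and the right vertical arrow is the natural map in the long exact sequence of the pair $(E,E|_{Y^f_{n-1}})$. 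Commutativity of the square, combined with invertibility of $\Psi_*$ and $\bar\Psi_*$, will immediately give the factorization claimed in the lemma, provided we identify $\bar\Psi_*$ with the canonical isomorphism.

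It remains to identify $\bar\Psi_*$ with the canonical isomorphism $C_{*-n}(F)\cong H_*(E,E|_{Y^f_{n-1}})$. Unwinding the construction from~\S\ref{sec:proof-of-fibration}, one has $\Psi(\sigma\otimes\star)=\Phi_*(\sigma\otimes m_\star)$, where $m_\star\in C_n(\cP_{\star\to Y/\caly}(Y/\caly))$ is a chain whose endpoint evaluation represents the fundamental class of $\overline{W}^u(\star)/l_\star\cong D^n$, oriented compatibly with the orientation of $Y$. After excising $Y\setminus\overline{W}^u(\star)$ and using the deformation retraction of $Y\setminus\mathring{D}^n$ onto $Y^f_{n-1}$, the analysis reduces to the map $C_*(F)\to C_{*+n}(E|_{D^n},E|_{\p D^n})$, $\sigma\mapsto\Phi_*(\sigma\otimes m_\star)$. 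Since $D^n$ is contractible, $E|_{D^n}$ admits a trivialization, under which this map becomes chain homotopic to the cross product $\sigma\mapsto\sigma\times[D^n]$; by the K\"unneth/suspension isomorphism this is precisely the canonical isomorphism, with the sign dictated by the orientation of $Y$.

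The technically delicate step will be the last one: verifying on the chain level that the evaluation of $m_\star$ through the lifting function represents the cross product with $[D^n,\p D^n]$. This reduces to showing that, after trivializing the fibration over the contractible $n$-cell, the chain $m_\star$ represents the oriented fundamental class of $(D^n,\p D^n)$ with the expected sign. Everything else (the cellular identification provided by the fibration theorem, and the explicit chain-level formula for $\star_!$) is already packaged in earlier sections.
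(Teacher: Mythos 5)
Your overall strategy matches the paper's: pick a Morse function on $Y$ with a unique maximum at $\star$, read off the chain-level shriek map as the projection onto the summand $C_*(F)\otimes\langle\star\rangle$, and identify this with the stated composition via the filtration compatibility built into the Fibration Theorem. However, you have the pseudo-gradient pointing the wrong way along $\p Y$, and this creates a genuine gap.

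With $\xi$ pointing \emph{inward} along $\p Y$, the Latour cells $\ol W^u(x)$ cover only a deformation retract $Z=\bigcup_x\ol W^u(x)\subset\mathring{Y}$ of $Y$ (cf.\ Remark~\ref{rmk:fibration-boundary}): there is an open collar of $\p Y$ whose points flow backward out through the boundary and so belong to no $\ol W^u(x)$. Hence $Y^f_{n-1}=\p Y\cup\bigcup_{|x|<n}\ol W^u(x)$ is \emph{not} the $(n-1)$-skeleton of a CW-decomposition of $Y$ relative to $\p Y$: its complement $Y\setminus Y^f_{n-1}$ contains both $W^u(\star)$ \emph{and} the open collar, so it is not a single $n$-cell. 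What the Fibration Theorem's filtration actually matches on the Morse side is $C_*(E|_{\mathrm{Sk}_p})$ with $\mathrm{Sk}_p\subset Z$, and $\mathrm{Sk}_{n-1}$ does not contain $\p Y$, so it is not a $Y_{n-1}$ in the sense of the lemma. You would then need an explicit comparison between the composite through $\mathrm{Sk}_{n-1}$ and the composite through a genuine relative $Y_{n-1}$; your invariance claim (``any two CW-decompositions with a single $n$-cell at $\star$ are related by a deformation rel $\p Y$'') does not cover this mismatch, and is not a standard fact in any case.

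The fix is to take $\xi$ pointing \emph{outward} along $\p Y$, so that $\p_+Y=\p Y$. Then $\mathring{Y}=\bigcup_x W^u(x)$, the Latour cells give a bona fide CW-decomposition of $Y$ relative to $\p Y$ with unique $n$-cell $\ol W^u(\star)$, and the boundary version of the Fibration Theorem yields a filtration-compatible quasi-isomorphism onto $C_*(E,E|_{\p Y})$. Passing to quotients by the index-$(<n)$ subcomplex identifies the chain-level shriek (projection onto $\langle\star\rangle$) with $H_*(E,E|_{\p Y})\to H_*(E,E|_{Y_{n-1}})\stackrel{\simeq}{\longrightarrow}H_{*-n}(F)$, and precomposing with the natural map $H_*(E)\to H_*(E,E|_{\p Y})$ (i.e.\ $H_*(Y;\cF)\to H_*(Y,\p Y;\cF)$ as in Remark~\ref{rem:shriek-for-boundary}) gives exactly the statement. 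Your final ``delicate step'' --- identifying the evaluation of $m_\star$ through the lifting function with the cross product against the oriented top cell --- is correctly located and is precisely what the filtration analysis in Lemma~\ref{lem:Psiprime} provides.
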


\begin{proof}
Denote $e^n$ the unique $n$-cell in the CW-decomposition, which possesses a canonical orientation determined by the orientation of $Y$. Then 
$$
H_*(E,E|_{Y_{n-1}})\simeq H_*(E|_{e^n},E|_{\p e^n})\simeq H_*(F\times (e^n,\p e^n))\simeq H_{*-n}(F).
$$ 
The first isomorphism holds by excision, the second isomorphism holds because $E$ is trivial over $e^n$, and the third one is the K\"unneth isomorphism. 

The composition $H_*(E)\to H_*(E,E|_{Y_{n-1}})\to H_{*-n}(F)$ clearly does not depend on the choice of CW-decomposition as in the statement. We choose it to be given by a Morse function $f$ with a unique maximum $M$ at the basepoint. It follows from the definition that, given any $\alpha\in C_*(F)$, the chain level shriek map for the inclusion $\star\hookrightarrow Y$ sends $\alpha\otimes M$ to $\alpha$, and all the other DG Morse chains $\alpha\otimes p$, $p\in\Crit(f)$, $p\neq M$, to $0$. In other words the chain level shriek map can be described as the following composition: first project $C_*(F)\otimes\langle\Crit(f)\rangle$ onto $C_*(F)\otimes \langle M\rangle$, then send $\alpha\otimes M$ to $\alpha$. Analyzing the proof of the Fibration Theorem~\ref{thm:fibration}, one sees that this composition is identified in homology with $H_*(E)\to H_*(E,E|_{Y_{n-1}})\stackrel\simeq\longrightarrow H_{*-n}(F)$. 
\end{proof}

\rmk\label{rmk:shriek-fundamental-class} The proof of Lemma~\ref{lem:shriek-to-point} shows that, if $E$ is smooth finite dimensional and orientable, then the DG shriek map of the inclusion $\star\hookrightarrow Y$ is identified with the usual shriek map of the inclusion $F\hookrightarrow E$. In particular it maps the fundamental class $[E,\p E]$ to the fundamental class $[F]$. \kmr

%%%%%%%%%%% 
\chapter{Functoriality: second definition} \label{sec:second-definition}
%%%%%%%%%%%

We develop in this chapter our second approach for the construction of direct and shriek maps, as outlined in~\S\ref{sec:sketch-two-constructions}.

\section{Direct map induced in homology}\label{sec:funct-direct-alex}

We construct in this section the direct map $\varphi_*$ associated to a smooth map $\varphi:X\to Y$. The generalization to the case of continuous maps works as in~\S\ref{sec:funct-for-continuous}.

%\begin{proposition} \label{prop:f*}
%A continuous map between smooth closed manifolds $\varphi:X\to Y$ induces in homology a canonical map 
%$$
%\varphi_*:H_*(X;\varphi^*\cF)\to H_*(Y;\cF)
%$$
%with the following properties: 
%\begin{enumerate} 
%\item {\sc (Identity)} We have $\mathrm{Id}_*=\mathrm{Id}$. 
%\item {\sc (Composition)} Given maps $X\stackrel{\varphi}\longrightarrow Y \stackrel{\psi}\longrightarrow Z$ and a DG local system $\cF$ on $Z$, 
%we have 
%$$
%(\psi\varphi)^*\cF=\varphi^*\psi^*\cF
%$$
%and
%$$
%(\psi\varphi)_* = \psi_*\varphi_* : H_*(X;\varphi^*\psi^*\cF)\to H_*(Z;\cF).
%$$
%\item {\sc (Homeomorphism)} If $\varphi$ is a homeomorphism, then $(\varphi^{-1})_*=\varphi_*^{-1}$. 
%\item {\sc (Homotopy)} A homotopy of maps $\varphi_{[0,1]} =(\varphi_t)$, $t\in [0,1]$ induces a 
%canonical isomorphism 
%$$
%\varphi_{[0,1]*}:H_*(X;\varphi_0^*\cF)\stackrel\simeq\longrightarrow H_*(X;\varphi_1^*\cF)
%$$
%which fits into a commutative diagram 
%$$
%\xymatrix{
%H_*(X;\varphi_0^*\cF)\ar[rr]^{\varphi_{0*}} \ar[dr]_{\varphi_{[0,1]*}}^\simeq & & H_*(Y;\cF) \\
%& H_*(X;\varphi_1^*\cF)\ar[ur]_{\varphi_{1*}} &  
%}
%$$
%\end{enumerate}
%\end{proposition} 

%\begin{proof} 
%We break down the proof into several steps. 

%\smallskip 
%{\it Reduction to the smooth case.} Any continuous map admits a smooth perturbation which is arbitrarily small in $C^0$-norm, and any two $C^0$-close smooth perturbations are smoothly homotopic. It is therefore enough to construct the maps $\varphi_*$ and prove their functoriality properties in the smooth category. 

%\smallskip 
%{\it Construction of $\varphi_*$.} 
Denote by $\star_X\in X$, $\star_Y\in Y$ the basepoints.  We work under the standing assumption $\varphi(\star_X)=\star_Y$. Let $\Xi^X$ and $\Xi^Y$ be sets of data on $X$ and $Y$ in these basepoints.   We construct the direct map at the level of complexes $\varphi_*: C_*(X, \Xi^X;\varphi^*\calf)\ri C_*(Y,\Xi^Y;\calf)$ under the following transversality hypothesis: 
%(i)  We pick the pseudo-gradient vector field $\xi$ on $Y$ \footnote{j'ai éliminé ici le choix du gradient sur $X$} such that, 

Denote $f$ and $g$ the chosen Morse functions on $X$, respectively $Y$. For all $x\in \Crit(f)$ and $y'\in \Crit(g)$ we have 
$$
\varphi|_{W^u(x)} \pitchfork W^s(y').
$$
This can be achieved  by perturbing the pseudo-gradient vector field on $Y$, as in the proof of the classical theorem of Smale (which corresponds to the case $\varphi=\Id$).

Under this assumption we define 
$$
\cM^\varphi(x,y') = W^u(x)\cap \varphi^{-1}(W^s(y')) \cong W^u(x) \, _\varphi\!\times  W^s(y'),
$$
where the second notation stands for the fiber product of $W^u(x)$ and $W^s(y')$.  
Because the intersection is transverse this is a smooth manifold of dimension 
$$
\dim\, \cM^\varphi(x,y')=|x|-|y'|. 
$$
We consider its  compactification $\ol \cM^\varphi(x,y')\cong \overline{W}^u(x) \, _\varphi\!\times \overline{W}^s(y')$. In this notation for the fiber product we omit the inclusions of the stable and unstable manifolds into $X$ for the sake of simplicity.  This is a manifold with boundary and corners such that 
$$
\p \ol \cM^\varphi(x,y') = \bigcup_y \ol \cL(x,y)\times \ol\cM^\varphi(y,y') \cup \bigcup_{x'} \ol\cM^\varphi(x,x')\times\ol \cL(x',y').
$$

The manifold $\ol \calm^\varphi(x,y')$ is also oriented. Using our convention \eqref{eq:transverse-orientation} 
from \S\ref{subsec: orientation-conventions} the orientation of its interior  is given by 
\begin{equation}\label{eq:orientation-direct-first} \left(\ori\, \calm^\varphi(x,y'), \coori \,  W_g^s(y')\right)\, =\, \ori \,  W_f^u(x)\end{equation} 
and we extend this orientation to the boundary. 
One may also write 
\begin{equation}\label{eq:orientation-direct-second} \left(\ori\, \calm^\varphi(x,y'), \ori\,   W_g^u(y')\right)\, =\, \ori \,  W_f^u(x)\end{equation} 

In order to construct  a representing chain system from this data we need to investigate the difference between the product orientation and the boundary orientation of each of these two types of boundary strata. We respectively have: 
\begin{lemma}\label{lem:orientation-difference-direct} The orientation differences are given by the following signs: 
$$
\ori \, \, \partial\ol\calm^\varphi(x,y') \, =\, (-1)^{|x|-|y|} \left(\ori\, \ol\call_f(x,y), \ori\, \ol \calm^\varphi(y,y')\right)
$$
and 
$$
\ori\, \, \partial\ol\calm^\varphi(x,y')\, =\, (-1)^{|x|-|y'|-1}\left(\ori\, \ol\calm^\varphi(x,x'), \ori\, \ol\call_g(x',y')\right).
$$
\end{lemma}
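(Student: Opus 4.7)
The proof will follow the blueprint of Proposition~\ref{orientationsMorse} (which corresponds to the special case $\varphi=\mathrm{Id}$), handling the two boundary types via the two compatible descriptions
\[
\ol\calm^\varphi(x,y')\;\cong\;\overline{W}^u_f(x)\,_\varphi\!\times \overline{W}^s_g(y')
\]
and switching between the unstable-side and the stable-side compactifications depending on which boundary stratum we are analyzing.

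For the first identity, I would work near a boundary point $(\lambda,p)\in \cL_f(x,y)\times \calm^\varphi(y,y')$ viewed inside $\overline{W}^u_f(x)\cap \varphi^{-1}(W^s_g(y'))$. The transversality assumption $\varphi|_{W^u(y)}\pitchfork W^s(y')$ at every intermediate critical point guarantees that $\varphi^{-1}(W^s_g(y'))$ is transverse to the boundary stratum $\cL_f(x,y)\times W^u_f(y)$, so an outward normal $n$ to $\overline{W}^u_f(x)$ is also an outward normal to $\ol\calm^\varphi(x,y')$. I would then combine equation~\eqref{eq:sign-difference-bord-cellule},
\[
(n,\,\ori\,\cL_f(x,y),\,\ori\,W^u_f(y))=(-1)^{|x|-|y|}\,\ori\,W^u_f(x),
\]
with the orientation rule~\eqref{eq:orientation-direct-first} applied to $\calm^\varphi(y,y')$ and $\calm^\varphi(x,y')$, and cancel the common coorientation of $\varphi^{-1}(W^s_g(y'))\subset X$, exactly as in the last step of Proposition~\ref{orientationsMorse}. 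This yields the sign $(-1)^{|x|-|y|}$.

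For the second identity, I would pass to the complementary description $\ol\calm^\varphi(x,y')=W^u_f(x)\cap\varphi^{-1}(\overline{W}^s_g(y'))$, where the boundary now originates from the stable Latour compactification on the $Y$-side. The key preliminary is a stable-side analogue of Proposition~\ref{orientationsMorse}, obtained either by applying Proposition~\ref{orientationsMorse} to $-g$ (swapping stable and unstable and tracking how orientations of unstable manifolds for $-g$ relate to coorientations of stable manifolds for $g$), or by a direct coorientation computation using~\eqref{eq:coorientation-sphere-stable}. This gives a sign rule for the boundary of $\overline{W}^s_g(y')$ along $\cL_g(x',y')\times W^s_g(x')$. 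Pulling back through $\varphi$ (which preserves the relevant coorientations) and combining with~\eqref{eq:orientation-direct-first} for $\calm^\varphi(x,x')$ and $\calm^\varphi(x,y')$ produces the sign of the second identity; the exponent $(-1)^{|x|-|y'|-1}$ is what comes out after taking into account the codimension $\dim(Y)-|x'|$ of $W^s_g(x')$, the shift from switching the outward normal, and the dimension $|x|-|x'|$ of the factor $\calm^\varphi(x,x')$ that must be commuted past the coorientation directions to reassemble the product orientation.

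The main obstacle will be the bookkeeping of coorientation conventions in the second identity, since the stable manifolds are only cooriented and not canonically oriented. Once the stable-side analogue of~\eqref{eq:sign-difference-bord-cellule} is written down carefully (this is the core calculation, modeled line-for-line on the proof of Proposition~\ref{orientationsMorse}), both identities follow from the same formal scheme as in the $\varphi=\mathrm{Id}$ case, with the only novelty being that some orientation identities are pulled back through $\varphi$ using transversality.
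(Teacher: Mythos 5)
Your approach is essentially the one the paper takes. For the first stratum your argument matches line-for-line: view $\ol\calm^\varphi(x,y')$ locally inside $\ol W^u_f(x)$ near $(\lambda,p)\in\ol\call_f(x,y)\times\ol\calm^\varphi(y,y')$, transport the outward normal, combine~\eqref{eq:sign-difference-bord-cellule} with~\eqref{eq:orientation-direct-first}, and cancel the common coorientation of $W^s_g(y')$.

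For the second stratum you correctly diagnose that the boundary comes from the stable Latour compactification on the $Y$-side and that one must pull a stable-boundary orientation rule back through $\varphi$. The paper does not package this as a stand-alone ``stable-side Proposition~\ref{orientationsMorse}''; instead it uses a collar path $\gamma=(\gamma_X,\gamma_Y)$ exiting $\ol\calm^\varphi(x,y')$, writes the orientation identity at an interior point $\gamma(t)$ (where~\eqref{eq:orientation-direct-first} applies), replaces $\gamma_X'(t)$ by $\gamma_Y'(t)=\varphi_*(\gamma_X'(t))$ via the normal-space isomorphism underlying the transversality, and identifies the limit $\ol n=\gamma_Y'(0)$ with the exterior normal $-\xi$ to $\ol W^s_g(y')$ at $(\lambda'_{\varphi(a)},x')$. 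This keeps everything inside the transverse-intersection picture and never states a boundary lemma for $\ol W^s$. Both routes yield the same computation, but the paper's avoids the extra lemma and the ambiguity about which orientations to put on the stable Latour cells. Two small cautions on your sketch: you write the ``codimension $\dim(Y)-|x'|$ of $W^s_g(x')$'', but that quantity is its dimension (the codimension is $|x'|$); and you do not actually carry out the bookkeeping you describe. In the paper the exponent in the second identity arises cleanly from a single transposition: commuting the collar direction $\p_t$ past $\p\ol\calm^\varphi(x,y')$ produces $(-1)^{\dim\p\ol\calm^\varphi(x,y')}=(-1)^{|x|-|y'|-1}$, and all remaining coorientation factors cancel. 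Making that one transposition explicit is the cleanest way to nail the sign.
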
 
\begin{proof}
We prove the first relation. 
%Let $n$ be the outward normal at some

Consider a boundary  point of the form  $(\lambda, \ol\lambda)\in \ol\call_f(x,y) \times\ol \calm^\varphi(y,y')$. Near this point we might view $\ol\calm^\varphi(x,y')$ as a subset of $\ol W^u_f(x)$ via the transverse intersection $\varphi|_{\ol W^u_f(x)}\pitchfork W^s(y')$ (we omit again the inclusions of the Latour cells in $X$ for simplicity). Writing $\ol\lambda= (\lambda_a,\lambda'_{\varphi(a)})\in \ol\calm^\varphi(y,y')=\ol W_f^u(y) \, _\varphi\!\times \ol W_g^s(y')$ (where the half-infinite broken trajectory $\lambda_a$ ends at $a\in X$ and the half-infinite broken trajectory $\lambda'_{\varphi(a)}$ starts at $\varphi(a)\in Y$), the point $(\lambda,\ol\lambda)$ corresponds thus to $(\lambda,\lambda_a)\in \ol W^u_f(x)$ in this identification. 

Now let $n$ be the outward normal vector at this point. Our orientation convention \eqref{eq:exterior-normal} yields 
\begin{equation}\label{eq:normal-orientation-direct} 
\left( n, \ori \, (\partial\ol\calm^\varphi(x,y'))\right)\, =\, \ori\, \ol\calm^{\varphi}(x,y').
\end{equation}
Obviously, we may also see $n$ as the outward normal of $\ol W^u_f(x)$ at $(\lambda, \lambda_a)$. We therefore  have  
$$
\left(n,\ori\, \partial \ol W_f^u(x) \right)\, =\,  \ori\, \ol W_f^u(x).
$$

\begin{figure}
  \centering
  \includegraphics{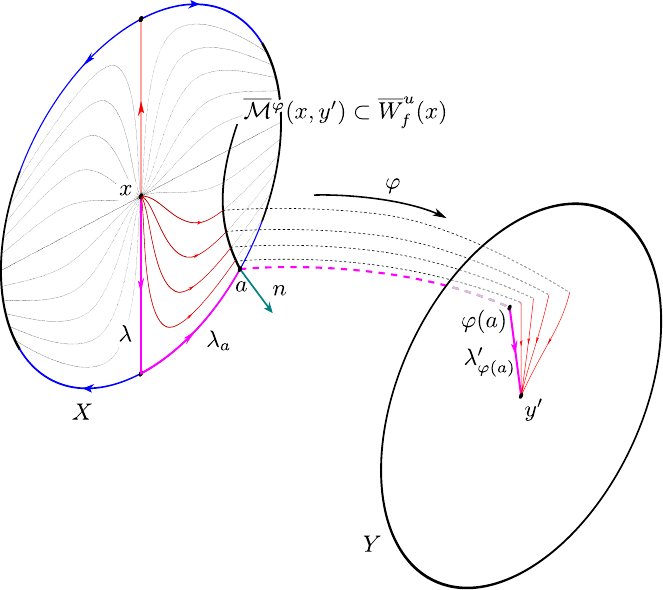}
  \caption{
    The exterior normal of $\ol\calm^\varphi(x,y')$ at
    $(\lambda, (\lambda_a,\lambda'_{\varphi(a)}))$, seen in $\ol W_f^u(x)$.
  }
  \label{fig:directmap}
\end{figure}
%\begin{center}
%\includegraphics[width=4in]{figure3.jpg} 
%
%The exterior normal of $\ol\calm^\varphi(x,y')$ at $(\lambda, (\lambda_a,\lambda'_{\varphi(a)}))$, seen in $\ol W_f^u(x)$.
%\end{center}

Now recall the relation \eqref{eq:sign-difference-bord-cellule}   from \S\ref{subsec:Latour-cells} 
$$\ori\, \, \partial\ol W^u_f(x)\, =\, (-1)^{|x|-|y|}\left(\ori\, \ol\call_f(x,y),\ori\, \ol W^u_f(y)\right),$$
which implies here
$$\left(n, \ori\, \ol\call_f(x,y),\ori\, \ol W^u_f(y)\right)\, = (-1)^{|x|-|y|}\ori\, \ol W^u_f(x).$$
Using the extension to the boundary of \eqref{eq:orientation-direct-first} in  both sides we infer 
\begin{align*}\big(n, \ori\, \ol\call_f(x,y),\ori \, & \ol \calm^\varphi (y, y'), \coori\,  W^u_g(y')\big)\\
&= \, (-1)^{|x|-|y|}\left(\ori\, \ol\calm^\varphi(x,y'), \coori\, W^u_g(y')\right)\end{align*} 
which by removing   $\coori \,  W^s_g(y')$ leads to 
$$\left( n, \ori \, \ol\call_f(x,y),\ori\, \ol\calm^\varphi(y,y')\right)\, =\, (-1)^{|x|-|y|}\ori\, \ol\calm^{\varphi}(x,y').$$
Combined with~\eqref{eq:normal-orientation-direct} this gives the first relation of our statement. 

To prove the second relation, denote again by $n$  the outward normal at some point $(\ol\lambda, \lambda')\in \ol\calm^\varphi(x,x')\times \ol\call_g(x',y')$. Relation~\eqref{eq:normal-orientation-direct} is of course still valid. However since  we cannot consider $\ol\calm^\varphi(x,y')$ as a subset of $\ol W^u_f(x)$ near this boundary point we have to describe $n$ more  precisely.

With the same notation $\ol\lambda=(\lambda_a,\lambda'_{\varphi(a)})$, the point $(\lambda,\ol\lambda)$ corresponds to $(a,\varphi(a))\in \ol\calm^\varphi(x',y)$ seen as $ \ol W_f^u(x) \, _\varphi\!\times \ol W_g^s(y')$. Therefore we may consider $n=\gamma'(0)$ where $\gamma=(\gamma_X,\gamma_Y): (-\epsilon,0]\ri  \ol W_f^u(x) \, _\varphi\!\times \ol W_g^s(y')$ is a path pointing outwards the boundary such that $\gamma(0)=(a,\varphi(a))$ ; note that for $t<0$ we have $\varphi(\gamma_X(t))=\gamma_Y(t)$.  Since only the orientation of the interior of  $\ol\calm^\varphi(x',y)$ was defined explicitly defined (by \eqref{eq:orientation-direct-first})  we prefer to write \eqref{eq:normal-orientation-direct} at a interior point $\gamma(t)$ using a diffeomorphic copy of $\partial \ol\calm^\varphi(x,y')$ in a collar neighbourhood. Near such a point we may view $\ol\calm^\varphi(x,y')$ as a subset of $\ol W_f^u(x)$ and  use $\gamma_X'(t)$ instead of $n$. Therefore \eqref{eq:normal-orientation-direct} combined with \eqref{eq:orientation-direct-first} yields in this point 
\begin{align*}\big(\gamma_X'(t), \ori  \, \partial \ol\calm^\varphi(x,y'),& \coori\, W^s_g(y')\big)\\
&=\left(\ori\, \calm^\varphi(x,y'), \coori \, \ol W_g^s(y')\right)=\ori \,  W^u_f(x)
\end{align*}
which  we write equivalently 
\begin{align*}\big( \ori  \, \partial \ol\calm^\varphi(x,y'), & \gamma_X'(t),\coori\, W^s_g(y')\big)\\
&=(-1)^{|x|-|y'|-1}\left(\ori\, \calm^\varphi(x,y'), \coori \, \ol W_g^s(y')\right)\\
& =(-1)^{|x|-|y'|-1} \ori \,  W^u_f(x).
\end{align*}

Now we may replace $\gamma_X'(t)$ by $\gamma_Y'(t)=\varphi_*(\gamma_X'(t))$ in the relation above ; this is because  in the transverse intersection $W^u_f(x)\pitchfork W^s(y')$ we make use of the isomorphism $\varphi_*: TW^u_f(x)/ T\calm^\varphi(x,y')\cong TY/ TW_g^s (y')$ to write  the orientation rule \eqref{eq:orientation-direct-first}.  
 Remark that by construction the  vector $\ol n=\gamma_Y'(0)$ can be viewed as the exterior normal to the stable manifold $\ol W_g^s(y')$ at $(\lambda'_{\varphi(a)}, \lambda')$. Remark also that arguing as in the proof of Proposition~\ref{orientationsMorse}  at the boundary point $(\lambda'_{\varphi'(a)}, x')$ of $\ol W^s_g(y')$, the normalized gradient vector $\frac{\xi}{\|\xi \|}$ (extended by continuity at that point) is directed {\it inwards}. Therefore, we may take $\ol n  =-\frac{\xi}{\|\xi\|}$ at this point. Now on the one hand we have by the above 
 \begin{align*}\big( \ori  \, \partial \ol\calm^\varphi(x,y'), & \gamma_Y'(t),\coori\, W^s_g(y')\big)\\
&=(-1)^{|x|-|y'|-1}\left(\ori\, \calm^\varphi(x,y'), \coori \, \ol W_g^s(y')\right)\\
& =(-1)^{|x|-|y'|-1} \ori \,  W^u_f(x).
\end{align*} 
and on the other hand, by putting $t=0$ we get 
\begin{align*}
\big(\ori\, & \calm^{\varphi}(x,x'), \ori\, \call_g(x',y'), \gamma_Y'(t), \coori \, W^s_g(y')\big) \\
& =\,  \left( \ori\, \calm^{\varphi}(x,x'), \ori\, \call_g(x',y'), \ol n,  \coori \, W^s_g(y')\right) \\
& =\,  \left( \ori\, \calm^{\varphi}(x,x'), \ori\, \call_g(x',y'), -\xi, \coori \,  W^s_g(y')\right) \\
& =\, \left( \ori\, \calm^{\varphi}(x,x'), \ori\, \call_g(x',y'), \coori \, S^s_g(y')\right)\\
& =\,  \left( \ori\, \calm^{\varphi}(x,x'), \ori\, \ W_g^u(y')\right) \\
& =\,  \ori\, \ol W^u_f(x).
\end{align*}
In the second equality we changed the point in $\ol W^s(y')$ from  $(\lambda'_{\varphi(a)}, \lambda')$ to $(\lambda'_{\varphi'(a)}, x')$ so that the exterior normal becomes    $\ol n=-\xi$ (we may omit the norm which has no effect on the orientations). The third equality uses our coorientation  convention for stable spheres~\eqref{eq:coorientation-sphere-stable} and the relation  \eqref{eq:multiliaisons-intersection}. The last equality uses~\eqref{eq:orientation-direct-second}.

We get a sign difference of $(-1)^{|x|-|y'|-1}$ between the two computations,  which gives exactly the second relation of Lemma~\ref{lem:orientation-difference-direct}. 
\end{proof}

%Arguing with orientations in a manner similar to that of~\S\ref{sec:invariance-homology} we see that the boundary orientation for a stratum $\ol \cL(x,y)\times \ol\cM^\varphi(y,y')$ coincides with the product orientation, whereas the boundary orientation for a stratum $\ol\cM^\varphi(x,x')\times\ol \cL(x',y')$ differs from the product orientation by $(-1)^{|x|-|x'|+1}$. 
Proceeding inductively as  in Proposition~\ref{representingchain} we get a representing chain system $(\tau_{x,y'})$ for $\ol\calm^\varphi(x,y')$ which satisfies the following equation : 
$$
\p\tau_{x,y'}\, =\, (-1)^{|x|-|y|}\sum_{y\in \crit(f)}s_{x,y}\times \tau_{y,y'}-(-1)^{|x|-|y'|}\sum_{x'\in \crit(g)} \tau_{x,x'}\times s_{x',y'}.
$$ Instead of it we would have liked to have an  equation which is similar to~\eqref{eq:newinvarianceforchains}, in order to define a morphism between twisted complexes. For this purpose we set
\begin{equation}\label{eq:correction-rcs}
\sigma_{x,y'}\, =\, (-1)^{|x|-|y'|-1}\tau_{x,y'}.
\end{equation} 

It is then easy to check that $(\sigma_{x,y'})$ indeed satisfies an equation of the form \eqref{eq:newinvarianceforchains}, i.e., 
$$
\p\sigma_{x,y'}\, =\, \sum_{y\in \crit(f)}s_{x,y}\times \sigma_{y,y'}-\sum_{x'\in \crit(g)} (-1)^{|x|-|x'|}\sigma_{x,x'}\times s_{x',y'}.
$$
In the sequel we will refer to $(\sigma_{x,y'})$ as a \emph{corrected representing chain system on $\ol\calm^\varphi(x,y')$}.

 Now we construct continuous evaluation  maps 
 $$
 q^\varphi_{x,y'}: \ol \cM^\varphi(x,y')\to \Omega Y.
 $$ 
 For this purpose we take two homotopies $H^X: [0,1]\times X\ri X$ and $H^Y:[0,1]\times Y\ri Y$ which respectively join $\Id_X$ and $\Id_Y$ to the compositions $\theta^X\circ p^X$ and $\theta^Y\circ p^Y$ (recall that $\theta$ is a homotopy inverse of the projection $p$ in a set of data $\Xi$).  We may and  will construct these homotopies using a similar procedure on $X$ and on $Y$ : take a homotopy of trees $\caly_t$ between the base point $\star$ and the given tree $\caly$,   
 let   $p_t$ be the projection which collapses the tree  $\caly_t$  and choose homotopy inverses $\theta_t$ of $p_t$  for $t\in [0,1]$ such that $\theta_0=\Id$ and $\theta_1 =\theta$. The homotopy $H_t$ is then defined as $\theta_t \circ p_t$. We will use this definition in the proof of Proposition~\ref{id=continuation} below. 
 
 We define the evaluation maps $q^\varphi_{x,y'}$ as follows. We view an element of $\ol \cM^\varphi(x,y')$ as consisting of a half-infinite broken  trajectory $\lambda$  in $X$ flowing from the critical point $x$ to a certain point $a$, coupled with a half-infinite  broken trajectory $\lambda'$ in $Y$ starting at the point $\varphi(a)$ and flowing into $y'$. We start from the evaluations $q^X$  and $q^Y$ defined in Lemma~\ref{lecture}  (see also Lemma~\ref{lecture-Latourcells} in~\S\ref{sec:proof-of-fibration}).  Note that, by definition,   $q^X(\lambda)\in \calp_{\star\ri \theta^X\circ p^X(a)}X$ and $q^Y(\lambda')\in \calp_{\theta^Y\circ p^Y(\varphi(a))\ri \star}Y$. We define 
\begin{equation} \label{eq:qx, yprimephi}
q_{x,y'}^\varphi:\ol \calm^\varphi(x,y')\ri \Omega Y
\end{equation}
by 
\begin{equation}\label{eq:formula-evaluation-direct}
q_{x,y'}^\varphi(\lambda, \lambda')\, =\,  \varphi(q^X(\lambda)) \#\varphi(H^X(1-t, a))\#H^Y(t,\varphi(a))\#q^Y(\lambda').\end{equation} 
%$$\Gamma :   \ol \cM(x,y') \cong \overline{W}^u(x) \, _\varphi\!\times \overline{W}^s(y') \ri \calp_{\varphi(x)\ri y'}Y$$ defined by $\Gamma(\lambda, \lambda')= \varphi(\Gamma_X(\lambda))\# \Gamma_Y(\lambda')$ 

%(where as previously we replaced the points in the stable and unstable manifolds by the trajectories which join them to the respective critical points).  The map $q^\varphi_{x,y'}$ is defined as usual by collapsing by $p:Y\ri Y/\cY$ and then applying a homotopy inverse for this projection $\theta^Y:Y/\cY\ri Y$ : 
%$$q^\varphi_{x,y'} \, =\, \theta^Y\circ p\circ \Gamma$$

In other words we use the homotopies $\ol{H}^X=H^X(1-\cdot,\cdot)$ and $H^Y$ to connect the endpoint of $\varphi(q^X(\lambda))$ with the starting point of $q^Y(\lambda')$, getting thus a loop in $Y$. See Figure~\ref{fig:Mphixyprime} below. 

\begin{figure}
  \centering
  \includegraphics[width=\textwidth]{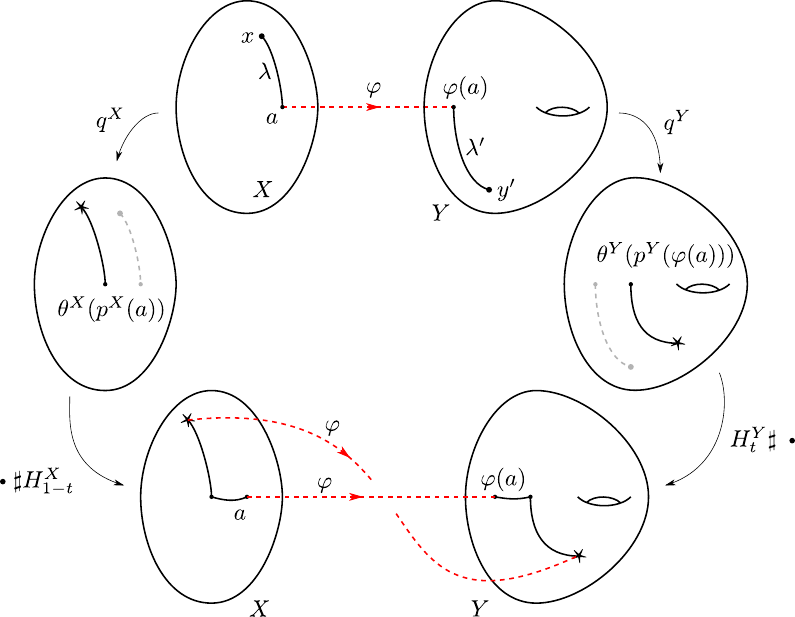}   
  \caption{Definition of the evaluation map $q_{x,y'}:\ol\calm^\varphi(x,y')\ri \Omega Y$.}
  \label{fig:Mphixyprime}
\end{figure}
%\begin{center}
%\includegraphics[width=4in]{figure4.jpg} 
%
%Definition of the evaluation map $q_{x,y'}:\ol\calm^\varphi(x,y')\ri \Omega Y$.
%\end{center}

Note that on the boundary $\partial\ol\calm^\varphi(x,y')$ the maps $q_{x,y'}^\varphi$ satisfy
$$q_{x,y'}^\varphi(\lambda,\ol\lambda) \, =\, \varphi(q_{x,y}^X(\lambda))\# q_{y,y'}^\varphi(\ol\lambda)$$
for all $(\lambda,\ol\lambda) \in \ol \call(x,y)\times \ol \calm^\varphi(y,y')$ and
$$q_{x,y'}^\varphi(\ol\lambda,\lambda') \, =\, (q_{x,x'}^\varphi(\ol\lambda))\# q^Y_{x',y'}(\lambda')$$ 
for all $(\ol\lambda,\lambda') \in \ol \calm^\varphi(x,x')\times \ol \call(x',y')$.
Taking into account these properties, the construction of a representing chain system for the compactified moduli spaces $\ol\cM^\varphi(x,y')$ leads via the evaluations $q^\varphi_{x,y'}$  to a collection of chains 
$$
\nu_{x,y'}=-q^\varphi_{x,y',*}(\sigma_{x,y'})\in C_{|x|-|y'|}(\Omega Y)
$$
such that 
$$
\p \nu_{x,y'}= \sum_y (\Omega \varphi)_*(m_{x,y})\nu_{y,y'} - \sum_{x'}(-1)^{|x|-|x'|}\nu_{x,x'}m_{x',y'}.
$$
The reason for inserting a minus sign in the definition of $\nu_{x,y'}$ lies in our orientation conventions and will be clear in the proof of Proposition~\ref{id=continuation}. At this stage one may notice that when $\varphi=\Id$,  the $0$-chain  $\tau_{x,x}$ of the initial  representing chain system has positive sign  since the orientation rule \eqref{eq:orientation-direct-second} writes 
$$\left(\ori \, \ol \calm^\Id(x,x), \ori \, \ol W^u_f(x)\right)\, =\, \ori \, \ol W^u_f(x)$$ Therefore the corrected $0$-chain $\sigma_{x,x}$  has negative sign and as for the continuation maps (see  Remark~\ref {change-of-sign}) we use the evaluations to turn this minus sign into plus.   

We now define 
\begin{equation}\label{eq:direct-second-definition}
\varphi_* : \cF\otimes \Crit(f)\to \cF\otimes \Crit(g), \qquad \varphi_*(\alpha\otimes x) = \sum_{y'} \alpha\cdot \nu_{x, y'}\otimes y'.
\end{equation}

A direct verification shows that this is a chain map, see also~\S\ref{sec:invariance-homology}. This map depends \emph{a priori} on the representing chain system for $\ol \calm^\varphi(x,y')$ and on the homotopies $H^X$ and $H^Y$. We will prove independence in homology with respect to these choices, as well as the compatibility with the continuation maps~\eqref{welldefined-direct}, in Corollary~\ref{cor:alex=mihai} below. Our strategy is to show that the direct map defined above is the same as the one defined in~\S\ref{sec:functoriality-first-definition}, so that it inherits all the properties of the latter.

\rmk\label{rmk:direct-for-boundary} The definition of $\varphi_*$ generalises to the case when $X$ and $Y$ have boundary. We need to take a (negative) gradient on $X$ pointing inwards in order to prevent $\calm^\varphi(x,y')$ to contain boundary points of $X$, which would change  the compactification of this moduli space. Therefore $\varphi_*$ is defined on the absolute homology $H_*(X;\varphi^*\calf)$. We also suppose $\varphi(\mathrm{int}\, X)\subset \mathrm{int}\, Y$, so that $\varphi(W^u(x))\cap \p Y=\emptyset$   and in particular $\calm^\varphi(x,y')$ does not intersect the preimage of a neighbourhood of $\p Y$, which again implies that $\ol\calm^\varphi(x,y')$ is the same as in the case without boundary. One may also use the first definition of  the direct map given in \S\ref{sec:functoriality-first-definition} to  generalise it under the same hypothesis. The  map $\varphi_*$ may take values in $H_*(Y,\p Y;\calf)$ but it factors as $H_*(X;\varphi^*\calf)\ri H_*(Y;\calf)\ri H_*(Y, \p Y;\calf)$. 
\kmr

\section{The two definitions of direct maps coincide} \label{sec:two-defi-equivalent}

We prove in this section that the two definitions of the direct map from~\S\ref{sec:functoriality-first-definition} and~\S\ref{sec:funct-direct-alex} coincide, i.e., the maps induced in homology are the same, see Proposition~\ref{alex=mihai}. Throughout this section we will denote the first map by  $\ol\varphi_*$ and the second one  by $\varphi_*$. 

The key result towards the proof is the following:

\begin{proposition} \label{id=continuation}  
Let  $\Xi_0$ and $\Xi_1$ be two sets of data defined on $(X,\star)$ and $\calf$ a DG-module over $C_*(\Omega X)$. The map $\Id_*: C_*(X,\Xi_0;\calf)\ri C_*(X,\Xi_1;\calf)$ (defined by the second definition) 
does not depend on choices at homology level, and it coincides with the continuation map $\Psi_{01}$.
\end{proposition}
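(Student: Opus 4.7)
The plan is to realize both maps starting from a single geometric object on $[0,1]\times X$, so that the comparison reduces to identifying moduli spaces together with their orientations and evaluation maps.

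First I would establish independence of $\Id_*$ with respect to the auxiliary choices (the representing chain system $\sigma_{x,y'}$ on $\ol\cM^\Id(x,y')$, the homotopies $H^X,H^Y$, and any generic perturbation needed to achieve $W^u_{f_0}(x)\pitchfork W^s_{f_1}(y')$). As in Proposition~\ref{prop:uniqueness} and in~\S\ref{sec:invariance-continuation}, two representing chain systems differ by a family of chains $(\kappa_{x,y'})$ satisfying a homotopy equation analogous to~\eqref{eq:sprime-s-homologous1}; pushing this down to $\Omega X$ through the evaluations $q^\Id_{x,y'}$ yields an element $\h\in\Hom_1$ satisfying~\eqref{eq:alg-homotopy-eqn}, hence a chain homotopy between the two resulting direct maps. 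The independence from the homotopies $H^X, H^Y$ is established by the direct analogue of Lemma~\ref{lem:homotopic-evaluations}: concatenation-compatible homotopies of evaluations produce chain homotopic cocycles.

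Once independence is known, I would pick the following convenient data. Take a homotopy $(f_t,\xi_t)$ from $(f_0,\xi_0)$ to $(f_1,\xi_1)$ that is stationary outside a small interval around $t=1/2$, and build $F(t,x)=g(t)+f_t(x)$, $\xi_F$, on $[0,1]\times X$ as in~\S\ref{sec:invariance-for-usual}, with $|g'(1/2)|$ chosen so small that no trajectory from $(0,x)$ to $(1,y')$ degenerates. A trajectory of $\xi_F$ from $(0,x)\in\{0\}\times\Crit(f_0)$ to $(1,y')\in\{1\}\times\Crit(f_1)$ then consists of a half-orbit of $\xi_0$ ending at some $a\in X$ at time $1/2$, followed by a half-orbit of $\xi_1$ starting at $a$. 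Restriction of the evaluation at $t=1/2$ thus produces a canonical diffeomorphism of manifolds with boundary and corners
\[
\ol\cL_F\bigl((0,x),(1,y')\bigr)\;\stackrel{\cong}{\longrightarrow}\;\ol\cM^{\Id}(x,y')\,=\,\ol W^u_{f_0}(x)\,\cap\,\ol W^s_{f_1}(y'),
\]
sending the stratum $\ol\cL_{f_0}(x,z)\times\ol\cL_F\bigl((0,z),(1,y')\bigr)$ to $\ol\cL_{f_0}(x,z)\times\ol\cM^\Id(z,y')$ and the stratum $\ol\cL_F\bigl((0,x),(1,z')\bigr)\times\ol\cL_{f_1}(z',y')$ to $\ol\cM^\Id(x,z')\times\ol\cL_{f_1}(z',y')$.

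The core of the proof is the comparison of orientations. Using~\eqref{eq:orientation-produit}, \eqref{eq:orientation-produit2} one computes, exactly as in the proof of Lemma~\ref{orientation-difference}, that under the diffeomorphism above the boundary orientation on $\ol\cL_F((0,x),(1,y'))$ differs from the one induced on $\ol\cM^\Id(x,y')$ via~\eqref{eq:orientation-direct-first} by the sign $(-1)^{|x|-|y'|-1}$: the extra factor $\partial/\partial t$ in the orientation of $W^u_F(x)$ versus $W^u_{f_0}(x)$ must be moved across $\cM^\Id(x,y')$ to land next to $W^u_{f_1}(y')=W^u_F(y')$. This is precisely the sign that the correction~\eqref{eq:correction-rcs} is designed to absorb. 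Consequently a representing chain system $(s^F_{x,y})$ on the continuation moduli spaces given by Lemma~\ref{orientation-difference} corresponds under the diffeomorphism to a corrected representing chain system $(\sigma_{x,y'})$ on $\ol\cM^\Id(x,y')$, and the $``-"$ sign in the definition of $\nu_{x,y'}=-q^\Id_{x,y',*}(\sigma_{x,y'})$ matches the $``-"$ sign in~\eqref{eq:enriched-morphism}.

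It remains to identify the evaluation maps. The continuation evaluation $q^F_{x,y'}=\pi\circ\Theta\circ p\circ\Gamma^F$ of~\eqref{eq:invariancelecture}, written along a broken trajectory $(\lambda,\lambda')$ with breaking at $a\in X$, produces (after collapsing the tree in the slice $\{t\}\times X$) a loop in $X$ of the form $(\text{collapsed image of }\lambda)\# \gamma_{\varphi=\Id}(a) \# (\text{collapsed image of }\lambda')$, where $\gamma(a)$ is a path coming from the chosen homotopy inverse $\Theta$ of the tree-collapsing projection. Comparing with~\eqref{eq:formula-evaluation-direct} for $\varphi=\Id$, this is exactly $q^X(\lambda)\# \ol H^X(\cdot,a)\# H^Y(\cdot,a)\# q^Y(\lambda')$ provided $H^X, H^Y$ are chosen to be built from the same homotopy of trees $(\caly_t)$ through $\theta_t\circ p_t$ as described before~\eqref{eq:qx, yprimephi}. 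By the homotopy-invariance step already established, any other choice of $H^X,H^Y$ yields a chain-homotopic map. Putting these three identifications together gives an equality of twisting cocycles $\nu^{\Id}_{x,y'}=\nu^{\Xi}_{x,y'}$, hence an equality of chain maps $\Id_*=\Psi_{01}$ for this particular choice of data, and thus equality in homology for any choice of data by the independence statement proved in the first step.

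The main obstacle is the orientation computation in the second paragraph: one has to be careful that the ``$\partial/\partial t$" factor in the orientation of the continuation unstable manifolds is transported through the transverse intersection defining $\cM^\Id(x,y')$ in a way that is consistent with~\eqref{eq:orientation-direct-first}, and that the resulting sign $(-1)^{|x|-|y'|-1}$ is exactly the one inserted in~\eqref{eq:correction-rcs}. Once this bookkeeping is settled, the remaining points are routine.
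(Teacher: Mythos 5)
Your overall strategy is the same as the paper's: build a specific continuation pair $(F,\xi_F)$ on $[0,1]\times X$ such that its trajectories are canonically identified with matching pairs in $\cM^{\Id}(x,y)$, transport the orientations and representing chain systems through this identification, and compare the evaluation maps. Several details, however, contain real gaps.

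First, your specification of the continuation data is not quite enough to yield the clean decomposition you claim. If $(f_t,\xi_t)$ is merely stationary outside a small interval around $t=1/2$, the $X$-component of a trajectory in the middle region follows the interpolating vector field and is not simply ``a half-orbit of $\xi_0$ followed by a half-orbit of $\xi_1$''. What the paper needs, and carefully arranges, is a cutoff that makes $\xi_t$ \emph{vanish identically} on $[\tfrac12-\epsilon,\tfrac12+\epsilon]$. The $X$-component is then constant there, and even on the rest of the trajectory it is only a \emph{reparametrization} of a genuine $\xi_0$- or $\xi_1$-trajectory, with the reparametrization determined by the cutoff (the diffeomorphisms $h_0,h_1$ in the paper). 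Your ``restriction of the evaluation at $t=1/2$'' does not quite produce the desired homeomorphism as stated.

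Second, and this is the more substantive gap: you claim that for a judicious choice of $H^X,H^Y$ (built from a homotopy of trees), the evaluation $q^F$ on the continuation moduli spaces is \emph{equal} to $q^\Id\circ R$, giving an equality of twisting cocycles $\nu^{\Id}_{x,y'}=\nu^{\Xi}_{x,y'}$. This is false. Even with the best possible choices the two loops are built by reparametrizing the same geometric trajectory by the values of two different functions ($F=f_0+sg$ on $[0,1]\times X$ versus $f_0$ on $X$), and therefore they agree only up to reparametrization, not on the nose. The paper spends Lemma~\ref{lem:homotopic-evaluations-bisbis} on precisely this point: producing a concatenation-compatible homotopy $q^s$ between the two evaluations (using the auxiliary parameter $s$ in $f_0+sg$), and then invoking Lemma~\ref{lem:homotopic-evaluations} to convert that into a chain homotopy. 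Without this step your final identity of cocycles is simply not true; the best you can get is chain-homotopic maps, which is what the statement requires but not what your argument delivers.

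Finally, a remark on economy rather than correctness: your plan is to first prove that $\Id_*$ is independent of all auxiliary choices and only then identify it with $\Psi_{01}$. This forces you to re-establish independence from the pseudo-gradient and transversality perturbations, which is not obviously covered by the analogue of Proposition~\ref{prop:uniqueness}. The paper circumvents this by the opposite order: $\Psi_{01}$ is already known to be well-defined by Theorem~\ref{independence}, so it suffices to show that $\Id_*$, for arbitrary choices of its own auxiliary data, is chain homotopic to $\Psi_{01}$ defined using one specific choice of continuation data. Independence of $\Id_*$ then comes for free.
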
 

\begin{proof} We have proved in~\S\ref{sec:invariance-continuation} that the continuation map $\Psi_{01}$ does not depend in homology on the choice of continuation data. It is therefore enough to show that, for any choice of the construction data (representing chain system on $\ol\calm^\Id(x,y')$ and homotopies), the map $\Id_*: C_*(X,\Xi_0;\calf)\ri C_*(X,\Xi_1;\calf)$ is chain homotopic to the continuation map $\Psi_{01}$ defined using a specific choice of continuation data. 

Let us describe this choice of continuation data. We begin the description with the choice of the Morse function and negative gradient on $[-\epsilon,1+\epsilon]\times X$. Let $\rho:\R\to [0,1]$ be a smooth increasing cutoff function such that $\rho(s)=0$ for $s\le \frac12 + \eps$ and $\rho(s)=1$ for $s\ge 1-\eps$ for some small enough $\eps>0$. We moreover suppose that $\rho$ is strictly increasing on $[\frac{1}{2}+\eps, 1-\eps]$. Denote by $(f_i,\xi_i)$ the Morse-Smale pairs corresponding to $\Xi_i$, $i=0,1$, and consider the homotopies 
$$
f_t = \left\{\begin{array}{ll} \rho(1-t) f_0, & t\in [-\eps,\frac12],\\
\rho(t)f_1, & t\in[\frac12,1+\eps],
\end{array}\right.
$$
$$
\xi_t=\left\{\begin{array}{ll} \rho(1-t) \xi_0, & t\in [-\eps,\frac12],\\
\rho(t)\xi_1, & t\in[\frac12,1+\eps].
\end{array}\right.
$$

Note that both $f_t$ and $\xi_t$ vanish for $t\in [\frac{1}{2}-\epsilon, \frac{1}{2}+\epsilon]$.
As in~\S\ref{sec:invariance-homology}, let $g:[-\eps,1+\eps]\to [0,1]$ be a smooth Morse function which has exactly two critical points, a nondegenerate maximum at $0$ and a nondegenerate minimum at $1$, and which is therefore strictly decreasing on $(0,1)$. We consider on $[-\eps,1+\eps]\times X$ the Morse function $F(t,x)=f_t(x)+g(t)$ and the negative pseudo-gradient $\xi(t,x)=\xi_t(x)-g'(t)\p_t$. We define the map $\Psi_{01}$ using the pair $(F,\xi)$. 

We claim that, for all $x\in\Crit(f_0)$, $y\in\Crit(f_1)$, we have a homeomorphism of moduli spaces 
\begin{equation} \label{eq:LF-MId}
\cL_F(x,y)\simeq  \cM^{\Id}(x,y). 
\end{equation}

As a preparation for the proof of this claim, recall the following general fact: let $v$ be a vector field on some manifold, $h$ be a diffeomorphism of intervals on the real line, and $\alpha,\beta$ be parametrized curves in that manifold related by $\alpha(s)=\beta(h(s))$. Then $\beta$ solves $\dot \beta(s)=v(\beta(s))$ if and only if $\alpha$ solves $\dot\alpha(s)=h'(s)v(\alpha(s))$. In other words, $\beta$ is an integral curve of $v$ if and only if $\alpha$ is an integral curve  of $h'v$. 

We now construct the homeomorphism~\eqref{eq:LF-MId}. Let $\tilde \gamma=(a,\gamma):\R\to [-\eps,1+\eps]\times X$ be a representative of an equivalence class in $\cL_F(x,y)$. 
\begin{itemize} 
\item The component $a$ solves $a'(s)=-g'(a(s))$ and $\lim_{s\to -\infty} a(s)=0$, $\lim_{s\to+\infty} a(s)=1$, hence $a(s)\in (0,1)$ for all $s\in\R$ and $a$ is strictly increasing. As a consequence, there exist unique real numbers  $\bar s_-<\bar s< \bar s_+$ such that $a(\bar s)=\frac 1 2$ and $a(\bar s_{\pm})= \frac 1 2 \pm \eps$. \item The component $\gamma$ solves 
$$
\dot\gamma(s)=\left\{\begin{array}{ll} \rho(1-a(s))\xi_0(\gamma(s)), & s\le \bar s,\\
\rho(a(s))\xi_1(\gamma(s)),& s\ge \bar s.
\end{array}\right.
$$
\end{itemize}

Note that $\gamma$ is constant on the interval $[\bar s_-,\bar s_+]$ and denote this constant by $c\in X$.  Let $h_0:(-\infty,\bar s_-]\to (-\infty,0]$ be the diffeomorphism defined as the unique primitive of the (nonzero)  function $s\mapsto \rho(1-a(s))$ such that $h_0(\bar s_-)=0$. Let $h_1:[\bar s_+,\infty)\to [0,\infty)$ be the diffeomorphism defined as the unique primitive of the function $s\mapsto \rho(a(s))$ such that $h_1(\bar s_+)=0$. 
Then 
\begin{equation}\label{eq:gamma}
\gamma(s)=\left\{\begin{array}{ll} \alpha_0(h_0(s)), & s\le \bar s_-,\\
c, & s\in [\bar s_-,\bar s _+],\\
\alpha_1(h_1(s)),& s\ge \bar s_+
\end{array}\right.
\end{equation} 
for some uniquely determined parametrized curves $\alpha_0:(-\infty,0]\to X$, $\alpha_1:[0,\infty)\to X$ such that $\dot\alpha_0=\xi_0(\alpha_0)$, $\dot\alpha_1=\xi_1(\alpha_1)$, $\lim_{s\to-\infty}\alpha_0(s)=x$, $\lim_{s\to\infty}\alpha_1(s)=y$. By construction we have $\alpha_0(0)=\alpha_1(0)=c$, so that $(\alpha_0,\alpha_1)$ defines an element in $\cM^{\Id}(x,y)$. 

The map~\eqref{eq:LF-MId} associates to $[\tilde \gamma]$ the pair $(\alpha_0,\alpha_1)$. It is easy to check that the map is independent of the choice of representative $\tilde\gamma$. The inverse map associates to a pair $(\alpha_0,\alpha_1)$ the class $[\tilde\gamma]$ with $\tilde\gamma$ defined as above, and both maps are continuous.

Moreover, the homeomorphism~\eqref{eq:LF-MId} extends to a homeomorphism between the compactified moduli spaces 
\begin{equation} \label{eq:LF-MId-bar}
R:\ol{\cL}_F(x,y)\simeq  \ol{\cM}^{\Id}(x,y)
\end{equation}
such that the following compatibility conditions are satisfied:
\begin{equation}\label{eq:R1} 
R(\lambda ,\bar \lambda)\, =\, (\lambda, R(\bar \lambda))
\end{equation} 
 for any $(\lambda, \bar\lambda )\in \ol \call_{f_0}(x,z)\times \ol\call_F(z,y)$, and 

\begin{equation}\label{eq:R2} 
R(\bar \lambda,  \lambda)\, =\, (R(\bar \lambda), \lambda)
\end{equation} 
 for any $( \bar\lambda, \lambda )\in  \ol\call_F(x,w)\times \call_{f_1}(w,y)$. 
 
 Starting with a representing chain on $\ol\calm^\Id(x,y)$ we want to use $R^{-1}$ to transport it into one on $\ol\call_F(x,y)$. We have to take into account the fact that in general $R$ does not preserve the orientations. More precisely : 
 
 \begin{lemma}\label{lem:R-orientation} The homeomorphism $R:\ol\call_F(x,y)\ri \ol \calm^\Id(x,y)$ changes the orientation by the sign $(-1)^{|x|-|y|-1}$.
 \end{lemma} 
 
 \begin{proof} According to \eqref{eq:orientation-direct-second} the orientation of $\ol\calm^\Id(x,y)$   is given by 
 $$\left(\ori\, \ol\calm^\Id(x,y), \ori \, \ol W^u_{f_1}(y)\right)\, =\, \ori\, \ol W^u_{f_0}(x).$$
 
 Given $x\in \crit(f_0)$ and $y\in \crit(f_1)$, we apply~\eqref{eq:multiliaisons-intersection} to get  
 $$\left(\ori\, \ol \call_F(x,y), \coori \, S^s_F(y)\right) \, =\, \ori \, \ol W^u_F(x).$$
 Denoting as before by $t$ the variable on $[0,1]$, recall the orientation rule~\eqref{eq:orientation-produit} for $x\in \crit(f_0)$: 
$$\ori\, \ol W^u_F(x)\, =\, \left(\tfrac\partial{\partial t}, \ori \, W^u_{f_0}(x)\right).$$ For $y\in \crit(f_1)$ the vector 
 $\tfrac\partial{\partial t}$ points inwards the stable manifold $\ol W^s_F(y)$ along a level of $F$ by construction. Therefore, by our convention 
 \eqref{eq:coorientation-sphere-stable}, we have $$\coori\, S^s_F(y)\, =\, \left(-\tfrac\partial{\partial t}, \coori\, \ol W_F^s(y)\right)$$
 and we infer 
 \begin{align*} 
 \ori\, \ol W^u_F(x) & =\, \left(\ori\, \ol \call_F(x,y), \coori \, S^s_F(y)\right) \\
& = \, \left(\ori\, \ol \call_F(x,y), -\tfrac\partial{\partial t}, \coori \, \ol W^s_F(y) \right) \\
& =\, (-1)^{|x|-|y|-1}\left(\tfrac\partial{\partial t}, \ori\, \ol \call_F(x,y), \coori \, \ol W^s_F(y) \right) \\
& =\, (-1)^{|x|-|y|-1}\left(\tfrac\partial{\partial t}, \ori\, \ol \call_{F}(x,y), \ori \, \ol W^u_{F}(y)\right) \\
& =\, (-1)^{|x|-|y|-1}\left(\tfrac\partial{\partial t}, \ori\, \ol \call_{F}(x,y), \ori \, \ol W^u_{f_1}(y)\right).
\end{align*}

This implies 
$$\left(\ori\, \ol\call_F(x,y), \ori\, \ol W^u_{f_1}(y) \right) \, =\, (-1)^{|x|-|y|-1}\ori\, \ol W^u_{f_0}(x).$$
 The map  $R$ can be viewed as applying the projection  $[0,1]\times X\to X$ to the trajectories of $\ol\call_F(x,y)$. It therefore preserves the vertical tangent  vectors, in particular those of $W^u_{f_0}(x)$ and $W^u_{f_1}(y)$. Denoting by  $\ori\, R(\ol\call_F(x,y))$ the orientation induced by $R$ on the target, we obtain  
 $$\left(\ori\, R(\ol\call_F(x,y)), \ori\, \ol W^u_{f_1}(y) \right) \, =\, (-1)^{|x|-|y|-1}\ori\, \ol W^u_{f_0}(x).$$
Comparing this orientation relation with the above orientation relation for $\ol\calm^\Id(x,y)$ we obtain
 $$\ori\, R(\ol\call_F(x,y))\, =\, (-1)^{|x|-|y|-1}\ori\, \ol\calm^\Id(x,y).$$
This proves the lemma. 
 \end{proof}

If $(\tau_{x,y})$ is the chosen  representing chain system on $\ol \calm^\Id(x,y)$, the previous lemma allows us to choose $(-1)^{|x|-|y|-1}R^{-1}_*(\tau_{x,y})$ as representing chain system on $\ol \call_F(x,y)$. So, if $\sigma_{x,y}=(-1)^{|x|-|y|-1}\tau_{x,y}$ is the corrected representing chain system (see \eqref{eq:correction-rcs}) then $ R^{-1}_*(\sigma_{x,y})$ is a representing chain system on $\ol\call_F(x,y)$ and it is this one that we will use to define the continuation map. 

To this purpose we have to evaluate it by   $q_{x,y}: \overline\call_F(x,y)\ri \Omega X $ from~\eqref{eq:invariancelecture}. To define this map we need a family of trees $\caly_t$ and a family of expanding maps $\theta_t: X/\caly_t\ri X$, i.e., homotopy inverses of the collapsing maps $p:X\ri X/\caly_t$.    Recall that  $q_{x,y}^\Id:\ol \calm^\Id(x,y)\ri \Omega X$,  the evaluation map from~\eqref{eq:qx, yprimephi}, was defined using homotopies  $H^X_0$ and $H^X_1$ between $\Id$ and $\theta_0\circ p_0$, resp. between $\Id$ and $\theta_1\circ p_1$, and these homotopies were constructed by collapsing and expanding paths  of trees joining $\star$ and $\caly_0$, resp. $\star$ and $\caly_1$. Denote by $(\caly^{\tau}_0)$ the homotopy between $\star$ and $\caly_0$ and by $(\theta^\tau_0): X/\caly^\tau_0\ri X$ the expanding maps which yield  the homotopy $H_0^X=(\theta^\tau_0\circ p^\tau_0)$ between $\Id$ and $\theta_0\circ p_0$. We may suppose  that this homotopy is parametrized by $\tau\in [\frac{1}{2}, \frac{1}{2}+\eps]$.  Use the analogous notation $\caly^\tau_{1}$, $p^\tau_1$ and $\theta^\tau_1$ for the maps defining the homotopy $H^X_1$, also parametrized by the interval $\tau \in [\frac{1}{2},\frac{1}{2}+\eps]$. We choose the following family of trees $(\caly_t)$ on $X$:
 $$\caly_t :\, =\, \left\{\begin{array}{ll}
 \caly_0, &t\in [0,\frac{1}{2}-\eps],\\
 \\
 \caly_0^{1- t}, &t\in[\frac{1}{2}-\eps, \frac{1}{2}],\\
 \\
 \caly_1^{t}, &t\in [\frac{1}{2},\frac{1}{2}+\eps],\\
 \\
 \caly_1, & t\in [\frac{1}{2}+\epsilon, 1],
 \end{array}\right.$$
  and accordingly the expanding maps 
 $$\theta_t :\, =\, \left\{\begin{array}{ll}
 \theta_0, &t\in [0,\frac{1}{2}-\eps],\\
 \\
 \theta_0^{1- t}, &t\in[\frac{1}{2}-\eps, \frac{1}{2}],\\
 \\
 \theta_1^{t}, &t\in [\frac{1}{2},\frac{1}{2}+\eps],\\
 \\
 \theta_1, & t\in [\frac{1}{2}+\epsilon, 1].\end{array}\right.$$ 
 By definition (see~\eqref{eq:invariancelecture}) the evaluation map  $q_{x,y} :\ol\call_F(x,y)\ri \Omega X$ is given by 
 $$
 q_{x,y} \, =\, \pi\circ \Theta\circ p \circ \Gamma,
 $$ 
 where  $\Gamma$ is the reparametrization of a trajectory by the values of $F$, the map $p:[0,1]\times X\ri \bigcup_{t\in [0,1]}\{t\}\times X/\caly_t$ is the collapsing map, the map $\Theta$ is its homotopy inverse and $\pi :[0,1]\times X\ri X$ is the canonical projection.

We now compare the maps  $q_{x,y}^\Id\circ R$ and $q_{x,y}$. If they were equal, then the continuation cocycle 
$$\nu_{x,y} \, =\, -q_{x,y,*}(R^{-1}_*(\sigma_{x,y}))\, =\, -q^\Id_{x,y,*}(\sigma_{x,y})$$ would be equal to the cocycle which defines the direct map $\Id_*$ and therefore we would have $\Psi_{01}=\Id_*$ at chain level and the proof would be finished. But in fact these maps are only  homotopic.  More precisely: 

\begin{lemma}\label{lem:homotopic-evaluations-bisbis} (compare to Lemma~\ref{lem:homotopic-evaluations-bis}) There is a homotopy $$q^s:\ol\call_F(x,y)\ri \Omega X$$ between the maps 
$q_{x,y}^\Id\circ R$ and $q_{x,y}$ which satisfies 
 \begin{equation}\label{eq:first-homotopic-condition-bisbis}
q^s(\lambda ,\bar \lambda)\, =\, q^{f_0}_{x,z}( \lambda)\#q^s(\bar \lambda)
\end{equation} 
 for any $(\lambda, \bar\lambda )\in \ol \call_{f_0}(x,z)\times \ol\call_F(z,y)$,  
  and 
\begin{equation}\label{eq:second-homotopic-condition-bisbis}
q^s(\bar \lambda,  \lambda)\, =\, q^s(\bar \lambda)\#q_{w,y}^{f_1}(\lambda)
\end{equation} 
 for any $( \bar\lambda, \lambda )\in  \ol\call_F(x,w)\times \call_{f_1}(w,y)$. 
\end{lemma}

Having only a homotopy instead of an equality suffices however for our purposes. Indeed, we can apply  Lemma~\ref{lem:homotopic-evaluations} to the representing chain system $R_*^{-1}(\sigma_{x,y})$ --- which satisfies an equation of the form~\eqref{eq:newinvarianceforchains} ---, and  to the  evaluation maps $\pi\circ q^{\varphi^\chi}$ and $q^\varphi\circ\Pi: \ol\calm^{\varphi^\chi}\ri \Omega Y$ which are homotopic through a homotopy satisfying~\eqref{eq:concatenation1} and~\eqref{eq:concatenation2} (these correspond to equations~\eqref{eq:first-homotopic-condition-bisbis} and~\eqref{eq:second-homotopic-condition-bisbis}  in the statement above). Lemma~\ref{lem:homotopic-evaluations} implies that the two cocycles obtained from these evaluations define maps which coincide in homology, which means that $\Psi_{01}=\Id_*$ as claimed. 

 \begin{proof}[Proof of Lemma~\ref{lem:homotopic-evaluations-bisbis}]

We actually show that for any $ \lambda\in \ol \call_F(x,y)$ the loops $q_{x,y}^\Id\circ R(\lambda)$ and $q_{x,y}(\lambda)$ are equal modulo reparametrization, and then use  the fact that any two positive reparametrizations of an interval of the real line are homotopic.  Let us get into more detail. Take as previously $\tilde{\gamma}=(a,\gamma)$ a representative for $\lambda$ and write $R(\lambda)=(\alpha_0,\alpha_1)\in \ol\calm^\Id(x,y)$. By definition we have
$$q_{x,y}^\Id\circ R (\lambda)\, =\, q_{x,y}^\Id(\alpha_0, \alpha_1)\, =\, q^{f_0}(\alpha_0)\#H_0^X(1-t, c)\#H_1^X(t,c)\#q^{f_1}(\alpha_1)$$ and 
$$q_{x,y}(\lambda) \, =\, q_{x,y}(\tilde{\gamma})\, =\, \pi\circ \Theta\circ p\circ \Gamma(\tilde{\gamma}).$$
The map $q_{x,y}$ commutes with concatenations, so 
$$q_{x,y}(\tilde{\gamma})\, =\, q_{x,y}(\tilde{\gamma}|_{(-\infty,\bar s_-]})\#q_{x,y}(\tilde{\gamma}|_{[\bar s_-,\bar s_+]})\#q_{x,y}(\tilde{\gamma}|_{[\bar s_+,+\infty)}),$$
and upon using the formula~\eqref{eq:gamma} for $\tilde{\gamma}$ this relation becomes 
$$q_{x,y}(\tilde{\gamma})\, =\, q_{x,y}(a(s), \alpha_0(h_0(s)))\#q_{x,y}(a(s),p)\#q_{x,y}(\alpha_1(h_1(s))),$$
where $s$ belongs respectively to $(-\infty, \bar s_-]$, $[\bar s_-,\bar s_+]$ and $[\bar s_+, +\infty)$ in the three paths above. Let us analyse them separately. We use the sign $\sim$  to indicate that two paths are equivalent modulo reparametrization. 

For $s\in (-\infty, \bar s_-]$ the path $\tilde{\gamma}$ stays in $[0, \frac{1}{2}-\eps]\times X$, where the trees $\caly_t$ and the expanding maps $\theta_t$ are constant equal to $\caly_0$ resp. $\theta_0$. We therefore  get 
\begin{align*}
q_{x,y}(a(s), & \, \alpha_0(h_0(s)))  \\  
& =\pi\circ \Theta\circ p \circ \Gamma(a(s), \alpha_0(h_0(s))) \\
& = \theta_0\circ p_0\circ \pi\circ \Gamma(a(s), \alpha_0(h_0(s)))\sim \theta_0\circ p_0\circ \Gamma(\alpha_0)= q^{f_0}(\alpha_0)
\end{align*}
since $\pi(\Gamma(a, \alpha_0))$ is a reparametrization of $\Gamma(\alpha_0)$ (we use the values of different Morse functions to parametrize the two of them). 
Similarly, for $s\in [\bar s_+, +\infty)$ we are in $[\frac{1}{2}+\epsilon] \times X$ and we get 
$$
q_{x,y}(a(s), \alpha_1(h_1(s)))\, \sim\, q^{f_1}(\alpha_1).
$$
In the intermediate interval $[\bar s_-, \bar s_+]$ we analyze first the case $s\in [\bar s_-, \bar s]$  for which $\tilde \gamma$ takes values in the slice $[\frac{1}{2}-\eps, \frac{1}{2}]\times X$ and has its second coordinate  $\gamma$ constant equal to $c$. On this interval we have 
$$
q_{x,y}(a(s),c))\, =\, \pi\circ \Theta \circ p\circ \Gamma(a(s),c).
$$
Now $\Gamma(a(s), c)$ is the parametrization of $(a(s),c)$ using the values of the Morse function $F$ on the product (which equals $g$ on this slice); since $a$ is strictly decreasing we may parametrize the same path using its projection on $[0,1]$, which means that $(\Gamma(a(s)), c)\sim (t,c)$, the latter being defined for $t\in [\frac{1}{2}-\eps,\frac{1}{2}]$. Our choice of trees and expanding maps therefore yields 
\begin{align*}
q_{x,y}(a(s),c))\, \sim\, \pi\circ \Theta \circ p (t,c) & =\pi (t, \theta_0^{1-t}\circ p_0^{1-t}(c))\\
& =\theta_0^{1-t}\circ p_0^{1-t}(c)=H^X_0(1-t,c).
\end{align*}
Analogously, on the interval $s\in [\bar s, \bar s _+]$ we find that 
$$q_{x,y}(a(s),c))\, \sim\, H^X_1(t,c),$$ where $t\in [\frac{1}{2}, \frac{1}{2}+\eps]$. 

Putting all this together we finally infer that 
$$q_{x,y}^\Id\circ R(\lambda)\, \sim\, q_{x,y}(\lambda)$$
for any $\lambda$. Moreover, by construction the reparametrizations vary continuously with respect to $\lambda$, which implies that the maps $q_{x,y}^\Id\circ R$ and $q_{x,y}$ are homotopic as claimed. 

We still have to check that the homotopy may be chosen to satisfy the conditions~\eqref{eq:first-homotopic-condition-bisbis} and~\eqref{eq:second-homotopic-condition-bisbis} on the boundary of $\ol \call_F(x,y)$. This is straightforward. Indeed, for $(\lambda,\bar\lambda)\in \ol\call_{f_0}(x,z)\times \ol\call_F(z,y)$ we have 
$$q_{x,y} (\lambda,\bar \lambda)\, =\, q_{x,z}^{f_0}(\lambda)\#q_{z,y}(\bar \lambda) $$
and using~\eqref{eq:R1} we also have 
$$q_{x,y}^\Id (R (\lambda,\bar\lambda)) \, =\, q_{x,z}^{f_0}(\lambda) \#q_{z,y}^\Id (R (\bar\lambda)).$$
Moreover, the equivalence by reparametrization  $\sim$ from above  equals the identity on the first concatenation factor $q_{x,z}^{f_0}(\lambda)$. Therefore, in order to get the desired property~\eqref{eq:first-homotopic-condition-bisbis} we have to insure that the homotopy between these two equivalent loops is also the identity on this factor, which is easy. 

The same applies to $(\bar\lambda, \lambda)\in \ol\call_F(x,w)\times \ol \call_{f_1}(w,y)$, which proves~\eqref{eq:second-homotopic-condition-bisbis}. As a matter of fact, interpolating between  two equivalent loops of the form $\gamma\#\gamma_i$  or $\gamma_i\#\gamma$ (for $i=0,1$) with homotopies which are the identity on the first, resp. second factor, reduces to the following: given a strictly increasing  reparametrization of the source $\chi:[0,a]\ri [0,b] $ for $b<a$, we deform it into $\Id_{[0,a]}$ by $\chi_s$,  keeping $\chi'_s(\tau)=1$ on the intervals where $\chi'(\tau)=1$. This can be achieved by linear interpolation and this recipe depends continuously on some parameter $\lambda$ if $\chi$ does so (in our situation $\lambda\in \ol\call_F(x,y)$). 
 
 This finishes the proof of Lemma~\ref{lem:homotopic-evaluations-bisbis} ... 
\end{proof} 

...as well as the proof of Proposition~\ref{id=continuation}. 
\end{proof}

\smallskip 

We are now in position to prove that the two definitions we gave for the direct maps coincide. 

\begin{proposition}\label{alex=mihai} Let $\varphi:(X, \star_X)\ri (Y, \star_Y)$ be a smooth map,  $\Xi^X$ and $\Xi^Y$ two sets of data defined respectively on $(X,\star_X)$ and $(Y, \star_Y)$, and $\calf$ a DG-module over $C_*(\Omega_{\star_Y}Y)$. Then the  map $\varphi_* : C_*(X,\Xi^X; \varphi^*\calf)\ri C_*(Y,\Xi^Y;\calf)$ defined by~\eqref{eq:direct-second-definition}  coincides in homology with the one defined in~\S\ref{sec:funct-direct-general}, denoted here $\ol\varphi_*$.

\end{proposition}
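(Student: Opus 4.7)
The plan is to reduce the comparison to the case of embeddings, where both definitions can be made to agree on the nose by a careful choice of auxiliary data, and then use composition and homotopy invariance of the second definition to recover the general case via the factorization $\varphi = \pi \circ \varphi^\chi$ that underlies the first definition.

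First I would verify that the second definition $\varphi_*$ enjoys the composition property $(\psi \circ \varphi)_* = \psi_* \circ \varphi_*$ and the homotopy invariance property. Both follow the usual broken-trajectory scheme: for composition, after perturbing the pseudo-gradient on $Y$ to ensure triple transversality, the compactified moduli space $\ol\cM^{\psi\varphi}(x,z')$ acquires additional boundary strata of the form $\cM^\varphi(x,y') \times \cM^\psi(y',z')$ for $y' \in \Crit(g)$, from which one builds a representing chain system whose evaluation under $q^{\psi\varphi}$ matches the product of evaluations up to the homotopies $H^X, H^Y, H^Z$; for homotopy invariance one works with a parameterized moduli space $\cM^{(\varphi_t)}(x,y')$ and reproduces the mechanism of Proposition~\ref{prop:homotopy-identification}. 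The sign analysis parallels Lemma~\ref{lem:orientation-difference-direct}, with the corrective sign~\eqref{eq:correction-rcs} yielding the clean Maurer--Cartan style relation~\eqref{eq:DGcont}.

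Second I would verify agreement of the two definitions for the embedding of a closed submanifold $\varphi: X \hookrightarrow Y$. Choose the auxiliary data on $Y$ exactly as in Section~\ref{sec:funct-closed-submanifolds}: extend a set of Morse data $\Xi^X$ with Morse function $f$ via a tubular neighborhood to a Morse function $F = \chi \cdot (f\circ\pi) + A h$ with negative pseudo-gradient pointing inwards along $\p U$ and extended to all of $Y$. Then $\Crit(F) \supset \Crit(f)$ and the flow preserves $X$ in a neighborhood of the critical points of $f$. Consequently, for $x \in \Crit(f)$ and $y' \in \Crit(F)$, the moduli space $\cM^\varphi(x,y') = W^u_f(x) \cap \varphi^{-1}(W^s_F(y'))$ is empty unless $y' \in \Crit(f)$, in which case it is canonically identified with $\cM^{\Id}(x,y')$ for the identity map on $X$ equipped with the data $(f, \xi_f)$ on both sides. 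Proposition~\ref{id=continuation} then identifies the chain-level second-definition map, up to chain homotopy, with the continuation map $\Psi_{01} = \Id$ on this subcomplex, extended by zero on critical points of $F$ outside $X$. This is precisely the inclusion chain map $\alpha \otimes x \mapsto \alpha \otimes x$ that constitutes the first definition. The generalization from closed submanifolds to arbitrary embeddings, and then to diffeomorphisms via the composition property, is routine.

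Finally, for a general smooth $\varphi: X \to Y$, fix an embedding $\chi: X \to D^m$ and consider $\varphi^\chi = (\chi, \varphi): X \to D^m \times Y$ and the inclusion $i = i_0: Y \hookrightarrow D^m \times Y$. The maps $i \circ \varphi$ and $\varphi^\chi$ are homotopic through the straight-line homotopy $H_t(x) = (t\chi(x), \varphi(x))$, and the pulled-back local systems $(i\varphi)^*\pi^*\cF$ and $(\varphi^\chi)^*\pi^*\cF$ both equal $\varphi^*\cF$ tautologically, so no identification $\Psi^\varphi$ intervenes. Using the three preceding items for the second definition,
\[ i_* \circ \varphi_*^{\mathrm{2nd}} \; = \; (i\circ \varphi)_*^{\mathrm{2nd}} \; = \; (\varphi^\chi)_*^{\mathrm{2nd}} \; = \; (\varphi^\chi)_*^{\mathrm{1st}}, \]
where the first equality is composition, the second is homotopy invariance, and the third is the embedding case. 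Since $i_*$ is an isomorphism and $\ol\varphi_* = (i_*)^{-1} \circ \varphi^\chi_*^{\mathrm{1st}}$ by the very definition~\eqref{eq:direct-map-general}, we conclude $\varphi_*^{\mathrm{2nd}} = \ol\varphi_*$ in homology.

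The main obstacle will be the composition property in step one: identifying the codimension-one boundary of $\ol\cM^{\psi\varphi}(x,z')$ with the expected product strata requires a gluing theorem transverse to both Morse--Smale conditions and the transversality of $\varphi$ and $\psi$, and the sign bookkeeping must correctly absorb the $(-1)^{|x|-|y'|-1}$ correction of~\eqref{eq:correction-rcs} at each factor. Once the evaluation maps $q^{\psi\varphi}, q^\psi, q^\varphi$ are shown to be compatible up to a homotopy satisfying the concatenation relations of Lemma~\ref{lem:homotopic-evaluations}, the conclusion follows by the usual appeal to uniqueness of representing chain systems.
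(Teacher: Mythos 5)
Your overall strategy—reduce to embeddings, then use the factorization through $D\times Y$ to handle a general smooth $\varphi$—does parallel the paper's three-step proof (embeddings, projection $\pi$, and a specific composition). However, your step 1 contains a definite error that would block the argument. You claim that $\ol\cM^{\psi\varphi}(x,z')$ ``acquires additional boundary strata of the form $\cM^\varphi(x,y')\times\cM^\psi(y',z')$.'' This cannot happen: $\dim \cM^{\psi\varphi}(x,z') = |x|-|z'|$, while $\dim\bigl(\cM^\varphi(x,y')\times\cM^\psi(y',z')\bigr) = (|x|-|y'|)+(|y'|-|z'|) = |x|-|z'|$, so the products have the \emph{same} dimension, not codimension one. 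They are not boundary strata of the same compactified moduli space. Proving the composition property for the second definition requires a separate one-parameter family of moduli spaces $\cH(x,z'') = \bigcup_{t>0}\, W^u(x)\cap(\psi\,\phi^t_\eta\,\varphi)^{-1}(W^s(z''))$, of dimension $|x|-|z''|+1$, whose compactification has $\ol\cM^{\psi\varphi}(x,z'')$ (at $t\to 0$) and the product strata (at $t\to\infty$) as \emph{distinct} codimension-one boundary components; this is precisely what \S\ref{sec:properties-f*-revisited} does. Your concluding remark, that ``the main obstacle'' is ``a gluing theorem,'' misses the point—the obstacle is that the product strata simply are not there, and a new moduli space is needed to interpolate.

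There is also a structural issue: you propose to first prove general composition and homotopy invariance for the second definition, then deduce the equivalence. The paper deliberately avoids this. Corollary~\ref{cor:alex=mihai} derives properties 1–4 (including composition and homotopy) for the second definition \emph{as a consequence} of Proposition~\ref{alex=mihai}, not the other way around, so your order of logic risks being circular or at least much heavier than necessary. The paper instead proves a single, hand-picked instance of the composition property—namely $\pi_*\circ\varphi^\chi_* = (\pi\circ\varphi^\chi)_*$—which is tractable because the product structure of $D\times Y$ makes the moduli spaces $\ol\cM^{\varphi^\chi}(x,(0,y'))$ project homeomorphically (and orientation-preservingly) onto $\ol\cM^\varphi(x,y')$; the remaining discrepancy is a homotopy of evaluation maps handled by Lemma~\ref{lem:homotopic-evaluations}. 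Your reduction via $i\circ\varphi\simeq\varphi^\chi$ and $i_*\circ\varphi_* = (i\varphi)_*$ would also work in principle, since the moduli spaces for $i\circ\varphi$ likewise collapse nicely, but you would still need to establish the specific composition-with-$i$ and homotopy-invariance statements at the chain level, with the identification morphisms (which your ``no $\Psi^\varphi$ intervenes'' remark handles correctly for the local systems, but not for the chain maps themselves). Finally, in your step 2 you only treat the special choice $\Xi^Y=\Xi^Y_\varphi$; Proposition~\ref{alex=mihai} requires the equality for \emph{arbitrary} $\Xi^Y$, and the paper's argument uses Proposition~\ref{id=continuation} in an essential way to absorb the continuation map to the given $\Xi^Y$—a step you skip.
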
 

This statement should be understood as follows: for any  choice   (representing chain system on $\ol\calm(x,y')$, homotopies $H^X$ and $H^Y$) needed to define $\varphi_*$, this map coincides in homology with the one defined in~\S\ref{sec:functoriality-first-definition} between the complexes associated to $\Xi^X$ and $\Xi^Y$. Since the latter is well defined we obviously get 

\begin{corollary}\label{cor:alex=mihai} The  map $\varphi_*: H_*(X ;\varphi^*\calf)\ri H_*(Y;\calf)$ given by~\eqref{eq:direct-second-definition}  is well defined and satisfies Properties 1-4 from~\S\ref{sec:functoriality-properties}. \qed 
\end{corollary}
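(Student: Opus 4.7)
The plan is to reduce the proposition to Proposition~\ref{id=continuation} via the same embedding trick used to define $\bar\varphi_*$ in §\ref{sec:funct-direct-general}. Fix an embedding $\chi:(X,\star_X)\to(D^m,0)$, set $\varphi^\chi=(\chi,\varphi):X\to D^m\times Y$ and let $i_0:Y\hookrightarrow D^m\times Y$, $i_0(y)=(0,y)$. By definition $\bar\varphi_*=(\bar{i_0}_*)^{-1}\circ\bar{\varphi^\chi}_*$, so it is enough to prove: (A) the second-definition $\varphi_*$ satisfies $\varphi_*=(i_{0*})^{-1}\circ\varphi^\chi_*$ in homology, and (B) for embeddings the two definitions coincide in homology.

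For (A), I would first establish a chain-level composition property for the second definition: given composable smooth maps $X\xrightarrow{\varphi}Y\xrightarrow{\psi}Z$, the maps $(\psi\circ\varphi)_*$ and $\psi_*\circ\varphi_*$ are chain homotopic. This is proved by introducing mixed moduli spaces
\[
\cN(x,z'')=\bigsqcup_{y'\in\Crit(g)}\bigl(W^u(x)\,{_\varphi\!\times}\,W^s(y')\bigr)\times\bigl(W^u(y')\,{_\psi\!\times}\,W^s(z'')\bigr),
\]
seen as a codimension-one boundary stratum of a larger compactified space, and constructing a representing chain system whose boundary equation, after evaluation into $\Omega Z$, yields the desired chain homotopy. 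Combining this with homotopy invariance of the second definition — proved via parametrized moduli spaces $\bigcup_{t\in[0,1]}\cM^{\varphi_t}(x,y')$ with the evaluation~\eqref{eq:formula-evaluation-direct} made continuous in $t$ — and applying it to the homotopy $(t\chi,\varphi)$ between $i_0\circ\varphi$ and $\varphi^\chi$, one obtains $i_{0*}\circ\varphi_*\simeq\varphi^\chi_*$. Proposition~\ref{id=continuation} on $D^m\times Y$ ensures $i_{0*}$ is invertible in homology, giving (A).

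For (B), let $\iota:X\hookrightarrow Y$ be an embedding. Start from Morse data on $Y$ adapted to $\iota$ as in §\ref{sec:funct-closed-submanifolds} (with $F=f+A\|\cdot\|^2$), then perturb it generically to restore Morse-Smale transversality of $W^u_f(x)$ with $W^s_F(y')$ in $Y$ — this perturbation is needed since for strictly adapted data the relevant intersections lie inside $X$ with dimension one less than expected. In the perturbed setting, the only contribution to $\varphi_*$ at homology level with $|x|=|y'|$ comes from $\cM^\iota(x,x)=\{x\}$, and the sign conventions~\eqref{eq:correction-rcs} together with the minus sign in $\nu_{x,x}=-q^\iota_{x,x,*}(\sigma_{x,x})$ produce the constant loop at $\star_Y$, so $\varphi_*(\alpha\otimes x)=\alpha\otimes\iota(x)$, matching $\bar\iota_*$. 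The higher-dimensional moduli spaces $\cM^\iota(x,y')$ for $|x|>|y'|$ yield terms that cancel in homology, which one sees by an interpolation through the adapted non-transverse data together with Proposition~\ref{id=continuation}.

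The main obstacle will be (A), and specifically the composition property: the gluing analysis of the mixed moduli spaces $\cN(x,z'')$, together with the orientation sign computations (which mirror those of Lemma~\ref{lem:orientation-difference-direct} but now with two transversality conditions), is the technical heart of the argument. The remaining ingredients — homotopy invariance and the handling of perturbations in (B) — are direct analogues of the arguments already developed in §\ref{sec:invariance-homology}, while Proposition~\ref{id=continuation} is used as a black box to close the reduction.
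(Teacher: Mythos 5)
Your overall strategy is the same as the paper's: show the second-definition $\varphi_*$ agrees in homology with the first-definition $\ol\varphi_*$, then invoke Theorem~\ref{thm:f*!}. The paper proves this as Proposition~\ref{alex=mihai} via three steps (a) embeddings, (b) the projection $\pi:D\times Y\to Y$, (c) the particular case $\pi_*\circ\varphi^\chi_*=\varphi_*$, and the corollary is immediate. Your decomposition into (A) and (B) is broadly parallel, and you correctly identify Proposition~\ref{id=continuation} as the engine. But step (B) as you wrote it has a genuine flaw.

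You claim that for $\iota$-adapted data one must ``perturb to restore Morse-Smale transversality of $W^u_f(x)$ with $W^s_F(y')$ in $Y$'' because ``the relevant intersections lie inside $X$.'' This misidentifies which Morse data appears in the target. The second-definition moduli space is $\cM^\iota(x,y')=W^u_f(x)\cap\iota^{-1}(W^s_G(y'))$, where $G$ is the Morse function of an \emph{arbitrary} target datum $\Xi^Y$, not the adapted extension $F$. The crucial observation in the paper is that when $\Xi^Y_\iota$ is adapted with inward pointing gradient, $\ol{W}^u_f(x)=\ol{W}^u_F(x)$ as oriented submanifolds of $Y$, so that
$\ol\cM^\iota(x,y')\cong\ol{W}^u_F(x)\,_\Id\!\times\,\ol{W}^s_G(y')\cong\ol\cM^\Id(x,y')$,
and transversality holds generically because $F$ and $G$ are two independent functions on $Y$. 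No perturbation is called for, and Proposition~\ref{id=continuation} then identifies $\varphi_*$ directly with the continuation map which is $\ol\iota_*$. Your alternative claim that ``the higher-dimensional moduli spaces $\cM^\iota(x,y')$ for $|x|>|y'|$ yield terms that cancel in homology'' is not correct: those terms are essential (they encode the twisting cocycle), they do not vanish, and they are precisely the entries of the continuation cocycle. You cannot dismiss them; you must match them against $\ol\iota_*$ via the $\Id$-identification above.

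On step (A): your plan to first prove a full composition property (via the mixed moduli spaces $\cN(x,z'')$) and a homotopy invariance property for the second definition, then combine them, is logically sound but more than is needed and technically heavier. Note that $\cN(x,z'')$ as you write it is only the $t=+\infty$ boundary stratum; the full moduli space must interpolate via a finite-length gradient segment $\phi^t_\eta$ on the middle manifold $Y$ (this is the paper's $\cH(x,z'')$ in \S\ref{sec:properties-f*-revisited}). Without specifying that interior parametrized family, the gluing analysis you gesture at cannot be set up. The paper instead proves only the one identity $\pi_*\circ\varphi^\chi_*=\varphi_*$ directly, by observing that the projection $\Pi:\ol\cM^{\varphi^\chi}(x,(0,y'))\to\ol\cM^\varphi(x,y')$ is an orientation-preserving homeomorphism and the evaluation maps commute up to a boundary-compatible homotopy (Lemma~\ref{lem:homotopic-evaluations-bis}), which is more economical and avoids developing the full composition machinery before the equivalence of definitions is established.
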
 
\begin{proof}[Proof of Proposition~\ref{alex=mihai}]

We break the proof into three steps:

 (a) The two maps coincide in homology for embeddings $\varphi:X\ri Y$. 

(b)   Let  $D= D^m$ be a disc with $0$ as a basepoint and  $\pi: D\times  Y\ri Y$ the projection. We denote by $\Xi^D= (h, -\nabla h, o_{\{0\}}=+, \caly=0, \theta =\Id)$ the usual set of data which completes the Morse function $h(x)=\|x\|^2$.    On $D$ we choose the trivial homotopy $H^D=\Id$, and we choose a homotopy $H^Y$ on $Y$.  Then the map $\pi_*: C_*(D\times Y; \Xi^D\times \Xi^Y;\pi^*\calf)\ri C_*(Y,\Xi^Y;\calf)$ constructed using the homotopies $\Id\times H^Y$ and $H^Y$ coincides in homology with the one defined in 
\S\ref{sec:functoriality-first-definition}.

(c) Consider an arbitrary smooth map $\varphi$ and an embedding $\varphi^\chi=(\chi, \varphi) : (X,\star_X)\ri [D\times Y, (0,\star_Y)]$. Let $\cF$ be a DG-module over $C_*(\Omega_{\star_Y} Y)$. The  maps defined in this section $\varphi_*: C_*(X,  \Xi^X; \varphi^*\calf)\ri C_*(Y,  \Xi^Y;\calf)$, $\pi_*: C_*(D\times Y, \Xi^D\times \Xi^Y;\pi^*\calf)\ri C_*(Y,\Xi^Y;\calf)$ (the direct map of the projection $\pi$), and  $\varphi^\chi_*: C_*(X, \Xi^X ;\varphi^*\calf)\ri C_*(D\times Y, \Xi^D\times \Xi^Y;\pi^*\calf)$ (the direct map of the embedding $\varphi^\chi$), satisfy the following particular case of the composition property in homology : 
$$\pi_*\circ\varphi^\chi_*\, =\, (\pi\circ\varphi^\chi)_*\, =\, \varphi_*.$$
Here the map $\varphi_*$ is defined using arbitrary choices for the representing chain system and the homotopies, whereas for $\pi_*$ and  $\varphi^\chi_*$ we use  some adapted choices of the representing chain systems and the same   homotopies $H^X$, $H^Y$ on $X$, resp. $Y$ and $\Id\times H^Y$ on $D\times Y$.  

Let us show how these three steps imply Proposition~\ref{alex=mihai}. Recall the notation  $\ol\varphi_*$ for the direct map defined in~\S\ref{sec:functoriality-first-definition}. Take an embedding $\chi:(X,\star_X)\ri (D,0)$ for some disc $D$. By definition (see \eqref{eq:direct-map-general}) we have $$\ol\varphi_* \, =\,( \ol i_*)^{-1}\circ\ol \varphi^\chi_*\, =\, \ol\pi_*\circ \ol \varphi^\chi_*.$$ 
We should point out here that the above maps are defined at the chain level: $\ol\pi_*= (i_*)^{-1} : C_*(Y, \Xi^Y; \calf) \ri C_*(D\times Y, \Xi^D\times \Xi^Y; \pi^*\calf)$ by $$\alpha\otimes (0,x)\mapsto\alpha\otimes x $$
and $\ol\varphi^\chi_* : C_*(X, \Xi^X ;\varphi^*\calf)\ri C_*(D\times Y, \Xi^D\times \Xi^Y;\pi^*\calf)$ as in  the proof of (a) below. Now by (a) and (b) we have   $\ol \varphi^\chi_*=\varphi^\chi_*$ and $\ol\pi_*=\pi_*$. Applying (c) we get the sequence of equalities in homology
$$\varphi_*\, =\, (\pi\circ\varphi^\chi)_*\, =\, \pi_*\circ \varphi^\chi_*\, =\, \ol\pi_*\circ \ol \varphi^\chi_*\, =\, \ol\varphi_*.$$
Thus $\varphi_*=\ol\varphi_*$ in homology as desired.

We now prove the three steps above.
 
\smallskip 
\underline{Proof of (a).} We may suppose that $\varphi :X\hookrightarrow Y$ is an inclusion; the generalization to the case of embeddings is straightforward. Proceed as in~\S\ref{sec:funct-closed-submanifolds} to define $\ol\varphi_*$: extend the set of data $\Xi^X$ on $X$  first to a tubular neighborhood $U$  of $X$ in $Y$ with inward gradient and then to the whole of $Y$. Denote the extension by $\Xi_\varphi^Y$.  
If $f$ is the Morse function on $X$, for $x\in \Crit(f)$ and $\alpha \in\cF$ we have by definition of $\ol \varphi_*:C_*(X,\Xi^X;\varphi^*\calf)\ri C_*(Y,\Xi^Y;\calf)$: 
$$\ol\varphi_*(\alpha\otimes x) \, =\, \Psi(\alpha\otimes x),$$
where $\Psi$ is the continuation map between  the data $\Xi^Y_\varphi$ and $\Xi^Y$ (compare to~\eqref{eq:formula-direct-complexes} where the same formula was given for the embedding $\varphi^\chi$).  Let us now express $\varphi_*$.   If  $F:Y\ri \R$ is the Morse function of $\Xi^Y_\varphi$ (which is an extension of $f$) we get  by construction the equality between oriented manifolds $\ol W^u_f(x)= \ol W^u_F(x)$ for any  $x\in \Crit(f)$ (since the gradient points inwards along the boundary of the tubular neighborhood of $X$). Therefore, if $G$ is the Morse function of $\Xi^Y$, we get for  $y'\in \Crit(G)$ that 
$$
\ol \cM^\varphi(x,y')  \cong \overline{W}_f^u(x) \, _\varphi\!\times \overline{W}_G^s(y')\, =\,  \overline{W}_F^u(x) \, _\Id\!\times \overline{W}_G^s(y') \cong \ol\calm^\Id(x,y').$$
This identification preserves orientations and enables us to take the same (corrected) representing chain system on both spaces,  cf.~\eqref{eq:correction-rcs} %for the meaning of ``corrected''). 

  We finally obtain
$$
\phi_*(\alpha\otimes x)\, =\, \Id_*(\alpha\otimes x),
$$ 
where $\Id_*: C_*(Y,\Xi_\varphi^Y;\calf)\ri C_*(Y,\Xi^Y;\calf)$ is the direct map associated to $\Id$ for the two different sets of data, constructed with a homotopy on the domain which extends $H^X$ and the given  homotopy $H^Y$ on the target : for these choices the evaluation maps  $q^\varphi$ and $q^\Id$ obviously match with the identification above. We apply Proposition~\ref{id=continuation} to finish the proof of our  claim that  $\varphi_*=\ol \varphi_*$ for embeddings. 

\smallskip
\underline{Proof of (b).}   We describe the maps $$\pi_*:C_*(D\times Y, \Xi^D\times \Xi^Y;\pi^*\calf)\ri C_*(Y,\Xi^Y;\calf)$$ and $$\ol\pi_*: C_*(D\times Y, \Xi^D\times \Xi^Y;\pi^*\calf)\ri C_*(Y,\Xi^Y;\calf).$$ 
If $G: Y\ri \R$ is the Morse function of $\Xi^Y$ then  $\ol W^u_{h+G}(0,x)  = \{0\}\times \wu_G(x)$ for any $x\in \Crit(G)$. Thus for any $x,y'\in \Crit(G)$ we have 
\begin{align*}
\ol\calm^\pi((0,x),y') & \cong \wu_{h+G}(0,x) \, _\pi\!\times \overline{W}_G^s(y')\\
& \cong \{0\} \times \ol\calm_G^\Id(x,y') \cong \ol\calm_G^\Id(x,y').
\end{align*}
This identification preserves orientations and enables us to choose the representing chain systems as well as the corrected representing chain systems accordingly. Notice that   the evaluations $q^\pi: \ol\calm((0,x),y')\ri \Omega Y$ and $q^\Id: \ol\calm_G^\Id(x,y')\ri\Omega Y$ clearly also  match with this identification. Therefore $\pi_*(\alpha\otimes(0,x))=\Id_*(\alpha\otimes x)$, where $\Id_*: C_*(Y,\Xi^Y;\calf)\ri C_*(Y,\Xi^Y;\calf)$ is the direct map. Applying again Proposition~\ref{id=continuation} we find that $\Id_*=\Id$ and therefore 
$$\pi_*(\alpha\otimes (0,x))\, =\, \alpha\otimes x$$ for any $x\in \crit(G)$ and $\alpha\in \calf$.
On the other hand we know that 
$$\ol\pi_*(\alpha\otimes (0,x))\, =\, (i_*)^{-1} (\alpha\otimes (0,x))\, =\, \alpha\otimes x,$$ 
which proves the desired equality. 

\smallskip
\underline{Proof of (c).} 
Let us  give a formula for $\varphi^\chi_*$. If $G$ is the Morse function of $\Xi^Y$ then we have $\ol W^s_{h+G}(0,y')\, =\, D\times \ol W^s_G(y')$ for any $y'\in \Crit(G)$. Denoting $f: X\ri\R$  the Morse function of $\Xi^X$ we then get for any $x\in \Crit(f)$ and $y'\in \Crit(G)$ : 
\begin{align*}
\ol\calm^{\varphi^\chi}(x, (0,y'))\,  & \cong \wu_{f}(x) \, _{(\chi,\varphi)}\!\times (D\times \overline{W}_{G}^s(y')) \\
& \cong\, \{(a,b, \chi(a))\, |\, (a,b)\in  \wu_{f}(x) \, _{\varphi}\!\times  \overline{W}_{G}^s(y')\}.
\end{align*}
This space projects onto  $\ol\calm^\varphi(x,y')=\wu_{f}(x) \, _{\varphi}\!\times  \overline{W}_{G}^s(y')$ and the projection $\Pi$  is a homeomorphism. Note that $\Pi$ preserves the orientations. Indeed, the orientation rule \eqref{eq:orientation-direct-second} writes here 
$$
\left(\ori \, \ol\calm^{\varphi^\chi}(x,(0,y')), \ori\, \ol W_{h+G}^u(0,y')\right)\, =\, \ori\, \ol W^u_f(x).
$$
Since $\ol W_{h+G}^u(0,y')= \ol W^u_G(y')$ as oriented manifolds, this coincides with the orientation rule for $\ol\calm^\varphi(x,y')$, i.e.,
$$\left(\ori \, \ol\calm^{\varphi}(x,y'), \ori\, \ol W_{G}^u(y')\right)\, =\, \ori\, \ol W^u_f(x).$$
This enables us to  choose 
$(\Pi^{-1})_*(\sigma_{x,y'})$  as a corrected  representing chain system on $\ol \calm^{\varphi^\chi}((0,x), y')$, where $(\sigma_{x,y'})$ is a  corrected representing chain system for $\ol \calm^\varphi(x,y')$. Using  the homotopies $H^X$ on the domain and $\Id\times H^Y$ on the target we define the evaluation map $q^{\varphi^\chi}:\ol\calm^{\varphi^\chi}(x, (0,y'))\ri \Omega(D\times Y)$  using the formula~\eqref{eq:formula-evaluation-direct}. Evaluating the above representing chain system we get 
$$
\nu^{\varphi^\chi}_{x,(0,y')}\, =\, - q^{\varphi^\chi}_*(\Pi^{-1}_*(\sigma_{x,y'}))
$$ 
and then 
$$\varphi^\chi_*(\alpha\otimes x)\, =\, \alpha\sum_{y\in \crit G}\pi_*(\nu_{x,y'}^{\varphi^\chi})\otimes (0,y')$$
for any $\alpha\in \calf$ and $x\in \crit(f)$, where $\pi: D\times Y\ri Y$ is the projection. Let us explain the reason for the appearance of $\pi_*$ in this formula. For a general  direct map $\psi_*: C_*(X;\psi^*\calf)\ri C_*(Y, \calf)$ defined by $\psi_*(\alpha\otimes x)=\alpha\cdot\sum_y\nu_{x,y'}\otimes y'$ the product $\alpha\cdot \nu_{x,y'}$ is the module product for $\calf$ since $\nu_{x,y'}$ is already a chain on $\Omega Y$ by definition. In the above case $\psi= \varphi^\chi$, the role of $\calf$ is played by $\pi^*\calf$ (and $(\varphi^\chi)^*\pi^*\calf=\varphi^*\calf$), so that we have to use the multiplication rule for the module $\pi^*\calf$. 

We proved in  (b) the equality of direct maps $\pi_*=\ol \pi_*$ in homology. Using that formula for the latter  we get at the homology level : 
$$\pi_*\circ \varphi^\chi_*(\alpha \otimes x) \,=  \ol\pi_*\circ \varphi^\chi_*(\alpha \otimes x) \,= \, \alpha\sum_{y\in \crit G}\pi_*(\nu_{x,(0,y')}^{\varphi^\chi})\otimes y',$$
which we want to compare to 
$$\varphi_*(\alpha\otimes x) \, =\, \alpha\sum_{y\in \crit G}\nu_{x,y'}^{\varphi}\otimes y',$$
where $\nu^\varphi_{x,y'}= - q^{\varphi}_*(\sigma_{x,y'})$. 
At this point it would have been nice to have a commutative diagram 
$$\xymatrix{\ol\calm^{\varphi^\chi}(x,(0,y'))\ar[r]^-{q^{\varphi^{\chi}}}\ar[d]_{\Pi} & \Omega(D\times Y)\ar[d]^-\pi\\
\ol \calm^\varphi(x,y')\ar[r]^-{q^\varphi} &\Omega Y}$$
If this was true we would have gotten 
\begin{align*}
\pi_*(\nu^{\varphi^{\chi}}_{x, (0,y')}) & = -\pi_* q^{\varphi^{\chi}}_*(\Pi^{-1}_*(\sigma_{x,y'})) \\
& =  -q^{\varphi}_* \Pi_*(\Pi^{-1}_*(\sigma_{x,y'})) = -q^\varphi_*(\sigma_{x,y'}) = \nu^\varphi_{x,y'},
\end{align*}
and this would have finished the proof. However, the above diagram is only homotopy commutative: 

\begin{lemma}\label{lem:homotopic-evaluations-bis}  There is a homotopy $(q^s)_{s\in[0,1]}$ between $\pi\circ q^{\varphi^\chi}$ and $q^\varphi\circ \Pi$ whose maps $q^s: \ol\calm(x, (0,y'))\ri \Omega Y$, $s\in [0,1]$ satisfy the following conditions on the boundary $\p \ol\calm(x, (0,y'))$: 
\begin{itemize}
\item for all $(\lambda,\ol\lambda)\in \ol\call_f(x,y)\times \ol\calm(y,(0,y'))$, $x,y\in \crit(f)$,  $y'\in \crit(G)$, we have  
\begin{equation} \label{eq:first-homotopy-condition} 
q^{s}(\lambda, \ol\lambda)\, =\, \Omega\varphi (q^X(\lambda))\#q^s(\ol\lambda).
\end{equation} 
\item for all $ (\ol\lambda,\lambda')\in \ol\calm(x, (0,x'))\times \ol\call_G(x',y'))$, $x\in \crit(f)$, $x', y'\in \crit(G)$, we have 
\begin{equation}\label{eq:second-homotopy-condition} 
q^s(\ol\lambda, \lambda')\, =\, q^s(\ol\lambda)\#q^Y(\lambda').
\end{equation}
\end{itemize}
\end{lemma} 

This lemma is sufficient to prove the proposition. Indeed, similarly to the end of the proof of Proposition~\ref{id=continuation}, we are in the situation of Lemma~\ref{lem:homotopic-evaluations}: the representing chain system $\sigma_{x, (0,y')}=\Pi_*^{-1}(\sigma_{x,y'})$ defined on $\ol\calm^{\varphi^\chi}(x,(0,y'))$ satisfies an equation of the form~\eqref{eq:newinvarianceforchains}, and moreover we have two evaluation maps $\pi\circ q^{\varphi^\chi}$ and $q^\varphi\circ\Pi: \ol\calm^{\varphi^\chi}\ri \Omega Y$ which are homotopic through a homotopy satisfying~\eqref{eq:concatenation1} and~\eqref{eq:concatenation2} (which correspond to~\eqref{eq:first-homotopy-condition} and~\eqref{eq:second-homotopy-condition}  in the statement above). We may therefore apply Lemma~\ref{lem:homotopic-evaluations} and infer that the maps constructed with the two cocycles obtained form these evaluations are identical in homology. In other words, we get the claimed equality $\pi_*\circ\varphi^\chi_*\, =\, \varphi_*$. This completes the proof of Step~(c), and hence the proof of Proposition~\ref{alex=mihai}. 
\end{proof}

\begin{proof}[Proof of Lemma~\ref{lem:homotopic-evaluations-bis} ]

Let us take a closer look at the two maps which are claimed to be homotopic. We denote by $(a,\varphi^\chi(a))$ a point in 
$$
\ol \calm^{\varphi^\chi}(x,(0,y'))\cong \ol W^u_f(x)\, _{\varphi^\chi}\!\!\times  \overline{W}_{h+G}^s(0,y'),
$$  
we denote $\lambda_a$ the unparametrized (possibly broken)  flow line on $X$ which joins $x$ to $a$, and we denote  $\lambda'_{\varphi^\chi(a)}= (\lambda'_\chi(a), \lambda'_\varphi(a))$ the unparametrized (possibly broken)  flow line on $D\times Y$ which runs from $(\chi(a),\varphi(a))$ to $(0,y')$. Note that  $\lambda'_{\varphi(a)}=\pi(\lambda'_{\varphi^\chi(a)})$ is the (unparame\-trized) flow line on $Y$ which descends from $\varphi(a)$ to $y'$. 
By definition we have 
\begin{align*}
\pi\circ & q^{\varphi^\chi}(a, \varphi^\chi(a))\\
& =\pi\left[\varphi^{\chi}(q^X(\lambda_a))\#\varphi^\chi(H_{1-t}^X(a)) \# (\Id\times H_t^Y)(\varphi^\chi(a))
\#q^{D\times Y}(\lambda'_{\varphi^{\chi}(a)})
\right],
\end{align*}
which after distributing $\pi$ becomes 
\begin{align*}
\pi\circ & q^{\varphi^\chi}(a, \varphi^\chi(a))\\
& = \varphi(q^X(\lambda_a))\#\varphi(H_{1-t}(a))\# H^Y_t(\varphi(a))\#\pi\left(q^{D\times Y}(\lambda'_{\varphi^{\chi}(a)})\right).
\end{align*}
On the other hand, again by definition 
\begin{align*}
q^\varphi(\Pi(a,\varphi^\chi(a))) & = q^\varphi(a,\varphi(a)) \\
& = \varphi(q^X(\lambda_a))\#\varphi(H_{1-t}(a))\# H^Y_t(\varphi(a))\#q^Y(\lambda'_{\varphi(a)})).
\end{align*}
These two concatenations coincide except for the fourth term. Let us compare these last terms in the two concatenations. By definition $$q^{D\times Y}\, =\,  \theta^{D\times Y}\circ p^{D\times Y}\circ \Gamma^{D\times Y}\, =\, [\Id\times (\theta^Y\circ p^Y) ]\circ \Gamma^{D\times Y},$$
and therefore $\pi\circ q^{D\times Y} (\lambda'_{\varphi^\chi(a)})= \theta^Y\circ p^Y\circ \pi\circ  \Gamma^{D\times Y}(\lambda'_{\varphi^\chi(a)}))$. The last term of the second concatenation writes $q^Y(\lambda'_\varphi(a))= \theta^Y\circ p^Y\circ \Gamma^Y(\lambda'_{\varphi(a)})$. This means that we have to compare $\pi\circ  \Gamma^{D\times Y}(\lambda'_{\varphi^\chi(a)})$ and $\Gamma^Y(\lambda'_{\varphi(a)})$. We recall that by definition (see the proof of Lemma~\ref{lecture}),  the function  $\Gamma$ applied to some  gradient  flow line  yields its parametrization by the values of the Morse function. Therefore both $\pi\circ  \Gamma^{D\times Y}(\lambda'_{\varphi^\chi(a)})$ and $\Gamma^Y(\lambda'_{\varphi(a)})$ are parametrizations of the same gradient line $\lambda'_{\varphi(a)}$ obtained using different Morse functions : the function $h+G: D\times Y\ri \R$ for the flow line $(\lambda'_{\chi(a)},\lambda'_{\varphi(a)})$  for the former, and the function $G:Y\ri \R$ for the latter. So one is a reparametrization of the other, which implies in particular that they are homotopic. 

Let us now check relations~\eqref{eq:first-homotopy-condition} and~\eqref{eq:second-homotopy-condition}. The first one is immediate: it only concerns the first term of the expressions for $\pi\circ q^{\varphi^{\chi}}$ and $q^\varphi\circ \Pi$ above, whereas the homotopy only affects the last term as we have just seen. Actually this relation is a direct consequence of  the property 
$$q^X(\lambda,\lambda_a)\, =\, q^X(\lambda)\#q^X(\lambda_a)$$
for the broken orbits $(\lambda,\lambda_a)\in \ol\call_f(x,y)\times \ol W_f^u(y)\subset  \ol W^u_f(x)$, which is satisfied by the evaluation maps on $X$.

More care needs to be taken with~\eqref{eq:second-homotopy-condition} since it concerns the fourth and last term of the two expressions. As above we have two parametrizations $\Gamma^{D\times Y}$ and $\Gamma^Y$ of a broken orbit $(\lambda'_{\varphi(a)},\lambda')\in\ol W^s_G(x')\times \ol \call_G(x',y') \subset \ol W^s(y')$: the first one is given by the values of $h+G:D\times Y\ri \R$ for the broken orbit $[(\lambda'_{\chi(a)},\lambda'_{\varphi(a)}), (0,\lambda')]\in \ol W^s_G(0,x')\times \ol \call_{h+G}((0,x'),(0,y'))$, and the other one is given by the values of $G:Y\ri \R$. On the last factor $(0,\lambda')$ the values of $h+G$ are those of $G$, so that we actually get two parametrized paths defined by concatenations $\gamma_0\#\gamma$ and $\gamma_1\#\gamma$  such that $\gamma_1$ is a reparametrization of $\gamma_0$. We have to use a homotopy of the form $\gamma_s\#\gamma$ between these paths in order to ensure~\eqref{eq:second-homotopy-condition}. This is of course possible (see also the end of the proof of Lemma~\ref{lem:homotopic-evaluations-bisbis}), and finishes the proof of Lemma~\ref{lem:homotopic-evaluations-bis}.
\end{proof}

\section{Properties of direct maps revisited} \label{sec:properties-f*-revisited}

As we have already mentioned, in view of the fact that the map $\varphi_*$ defined in this section coincides with the one  from~\S\ref{sec:functoriality-first-definition}, it follows that it satisfies all the desired properties stated in Theorem~\ref{thm:f*!}. We find it nevertheless instructive to sketch alternative proofs of these properties which rely directly on the moduli spaces $\cM^{\varphi}(x,y')$ involved in the definition of the map $\varphi_*$. 

\begin{proof}[Sketch alternative proof of Theorem~\ref{thm:f*!} for direct maps] 
\qquad 

\smallskip 
1. {\it Proof of the {\sc (Identity)} property, i.e., $\mathrm{Id}_*=\mathrm{Id}$.}  We will sometimes denote in this paragraph $\varphi=\mathrm{Id}:X\to X$. We will prove that $\varphi_*$ induces in homology an isomorphism which is also an idempotent, i.e., $\varphi_*\circ\varphi_*=\varphi_*$, hence $\varphi_*$ is the identity.

The start of the proof is the observation that we can use the same auxiliary data (Morse function, pseudo-gradient, embedded collapsing tree, homotopy inverse $\theta$ for the collapsing map $p$) in the source and in the target, because such a choice satisfies the correct transversality conditions. Given $x,y'\in \Crit(f)$, the moduli spaces $\cM(x,y')$ which were previously defined for $\varphi$ become now the moduli spaces of \emph{parametrized} gradient trajectories from $x$ to $y'$. In contrast to the case of classical Morse homology, the map $\mathrm{Id}$ typically does not induce the identity at chain level because the higher dimensional moduli spaces of gradient trajectories are in general non-empty. Nevertheless, $\mathrm{Id}_*$ differs at chain level from the identity by terms which are of strictly lower order in the filtration: if $|x|=|y'|$ then $\cM(x,y')=\varnothing$ unless $x=y'$, in which case $m_{x, x}$ is the 0-chain given by the loop $H(1-\cdot, x)\#H(\cdot,x)$, where $H$ is the chosen homotopy from $\mathrm{Id}$ to $\theta\circ p$. This loop is homotopic to the constant loop at $x$ and therefore $\mathrm{Id}_*$ induces the identity on the 2nd page of the spectral sequence. As a consequence, it induces an isomorphism in homology.

To prove that $\mathrm{Id}_*$ is an idempotent in homology we view the moduli spaces of parametrized gradient trajectories $\cM(x,y')$ from $x$ to $y'$ as moduli spaces of unparametrized gradient trajectories in $\cL(x,y')$ carrying one additional marked point. We then consider the moduli spaces $\cH(x,y')$ consisting of unparametrized gradient trajectories in $\cL(x,y')$ carrying \emph{two} additional marked points. Any two such points are naturally ordered by the gradient direction along the gradient trajectory. The moduli spaces $\cH(x,y')$ have dimension $|x|-|y'|+1$ if $|x|-|y'|\ge 1$, respectively $0$ if $|x|=|y'|$. The compactification $\ol \cH(x,y')$ is a manifold with boundary and corners such that 
\begin{align*}
\p \ol \cH(x,y')  = & \bigcup_y \ol\cL(x,y)\times \ol \cH(y,y') \cup \bigcup_{x'} \ol\cH(x,x')\times \ol\cL(x',y')  \\
& \cup \ol \cM(x,y') \cup \bigcup_{\tilde z} \ol \cM(x,\tilde z)\times \ol \cM(\tilde z,y').
\end{align*}
The first two kinds of terms in the boundary correspond to breaking of gradient trajectories with the marked points staying at finite distance, the third term in the boundary corresponds to the two points colliding, and the fourth term corresponds to the gradient distance between the two points becoming infinite and resulting in an intermediate breaking.
The construction of a representing chain system for the compactified moduli spaces $\ol\cH(x,y')$ gives rise to a collection of chains $h_{x, x} = \star$ and 
$$
h_{x, y'}\in C_{|x|-|y'|+1}(\Omega X) \quad \mbox{ for } |x|-|y'|\ge 1,
$$
such that 
\begin{align*}
\p h_{x, y'}  = & \sum_{\tilde z} \nu_{x, \tilde z}\nu_{\tilde z,  y'} - \nu_{x, y'} \\
& + \sum_{y} (-1)^{|x|-|y|} m_{x, y}h_{y, y'} + \sum_{x'} (-1)^{|x|-|x'|} h_{x, x'}m_{x', \, y'}.
\end{align*}
The first term on the right hand side describes the composition $\varphi_*\circ\varphi_*$, the second term describes $\varphi_*$, and the equation amounts to saying that the collection $(h_{x, y'})$ defines a chain homotopy between $\varphi_*\circ\varphi_*$ and $\varphi_*$ (see~\S\ref{sec:DG-Morse}). Therefore we have $\varphi_*\circ\varphi_*=\varphi_*$ in homology. 
 
\smallskip 
2. {\it Proof of {\sc (Functoriality)}}. 
%The relation $(\psi\varphi)^*\cF=\varphi^*\psi^*\cF$ is immediate from the definition, because $\Omega (\psi\varphi) = \Omega \psi\circ \Omega \varphi$. 
The identity 
$$
(\psi\varphi)_*=\psi_*\varphi_*
$$
is proved by showing that the corresponding chain maps are homotopic at chain level. %We only give a sketch of the argument. 
Denote $\xi$, $\eta$, $\zeta$ the pseudo-gradients on $X$, $Y$, $Z$, and $\phi^t_\eta$ the flow of $\eta$ for $t\in \R$. The composition $\psi_*\circ\varphi_*$ is described using product moduli spaces $\ol \cM^\varphi(x,y')\times \ol \cM^\psi(y',z'')$, where the first factor involves the map $\varphi$ and the second factor involves the map $\psi$. The homotopy is constructed by considering the moduli spaces 
$$
\cH(x,z'') = \bigcup_{t>0} W^u(x) \cap (\psi \phi^t_{\eta} \varphi)^{-1}(W^s(z'')). 
$$
These have to be interpreted as matching configurations consisting of a half-infinite trajectory of $\xi$ flowing out of $x$ to a certain point $a$ in $X$, followed by a trajectory of $\eta$ of finite length $t>0$ starting at $\varphi(a)$ and ending at the point $b=\varphi^t_\eta \varphi(a)$ in $Y$, followed by a half-infinite trajectory of $\zeta$ starting from $\psi(b)$ and flowing into $z''$ in $Z$. The compactification $\ol\cH(x,z'')$ is a manifold with boundary and corners such that 
\begin{align*}
\p \ol\cH(x,z'') = & \bigcup_y \ol\cL(x,y)\times \ol\cH(y,z'') \cup \bigcup_{y''}\ol\cH(x,y'')\times \ol\cL(y'',z'')\\
& \cup \ol\cM^{\psi\varphi}(x,z'') \cup \bigcup_{y'} \ol \cM^\varphi(x,y')\times \ol\cM^\psi(y',z''). 
\end{align*}
Here the first two terms on the right hand side correspond to gradient breaking of the half-infinite trajectories at their asymptotes, the third term corresponds to the limit $t\to 0$ and involves the composition $\psi\varphi$, whereas the fourth term corresponds to gradient breaking as $t\to\infty$. The construction of a representing chain system for the compactified moduli spaces $\ol\cH(x,z'')$ results in a collection of chains 
$$
h_{x, z''}\in C_{|x|-|z''|+1}(\Omega Z)
$$
such that 
\begin{align*}
\p h_{x, z''} = & \sum_{y'} (\Omega\psi)_*\nu_{x, y'}\nu_{y',\, z''} - \nu_{x, z''} \\ 
& + \sum_{y} (-1)^{|x|-|y|} (\Omega \psi\varphi)_*m_{x, y}h_{y, z''} + \sum_{y''} (-1)^{|x|-|y''|} h_{x, y''}m_{y'',\, z''}.
\end{align*}
This provides the desired homotopy, in the manner of~\S\ref{sec:DG-Morse}. 

\smallskip 
{\it Proof of {\sc (Homotopy)}}. Denote $C_*(g;\cF)$ the Morse complex on $Y$ with coefficients in $\cF$, and $C_*(f;\varphi_0^*\cF)$, $C_*(f;\varphi_1^*\cF)$ the Morse complexes on $X$ with coefficients in $\varphi_0^*\cF$, respectively $\varphi_1^*\cF$, where $\varphi_0,\varphi_1:X\to Y$ are homotopic via $\varphi = (\varphi_t)_{t\in [0,1]}$. We claim that there is a canonical up to homotopy chain homotopy equivalence 
$$
\Psi^\varphi:C_*(f;\varphi_0^*\cF)\stackrel\simeq\longrightarrow C_*(f;\varphi_1^*\cF),
$$
and a homotopy commutative diagram 
$$
\xymatrix{
C_*(f;\varphi_0^*\cF)\ar[rr]^{\varphi_{0*}} \ar[dr]_{\Psi^\varphi} & & C_*(g;\cF) \\
& C_*(f;\varphi_1^*\cF)\ar[ur]_{\varphi_{1*}} &  
}
$$ 

\smallskip
{\it Step~1: definition of $\Psi^\varphi:C_*(f;\varphi_0^*\cF)\stackrel\simeq\longrightarrow C_*(f;\varphi_1^*\cF)$}. 
The chain map $\Psi^\varphi$ is defined from a continuation cocycle $\{h^\varphi_{x,y}\in C_{|x|-|y|}(\Omega Y)\, : \, x,y\in\Crit(f)\}$, and the idea is to build the latter by evaluating the moduli spaces of parametrized trajectories for $f$ into $\Omega Y$ using the homotopy $\varphi$. Denote $\cM(x,y)$ the moduli space of parametrized Morse trajectories running between $x,y\in\Crit(f)$, viewed as the moduli space of pairs $(\gamma,a)$ consisting of an unparametrized trajectory $\gamma\in\cL(x,y)$ and a marked point $a$ on $\gamma$. Note that $\cM(x,y)$ is a smooth manifold of dimension $\dim \cM(x,y)=|x|-|y|$, and in the case $|x|=|y|$ this moduli space is nonempty only for $x=y$, when it consists of the single constant trajectory at $x$. This moduli space compactifies to a manifold with boundary and corners such that 
$$
\p \ol \cM(x,y)=\bigcup_z \ol \cL(x,z)\times \ol \cM(z,y) \cup \bigcup_z \ol \cM(x,z)\times \ol \cL(z,y).
$$
To define the evaluation map we introduce the following notation: given $\gamma\in\cL(x,y)$ and $s\in [0,f(x)-f(y)]$, we denote $\gamma(s)$ the unique point on $\overline{\im\, \gamma}$ such that $f(\gamma(s))=f(x)-s$. Thus $s\mapsto \gamma(s)$ is the reparametrization of $\gamma$ by the levels of $f$, which was previously denoted $\Gamma^X(\gamma)$ (see Lemma~\ref{lecture}). We have in particular $x=\gamma(0)$, $a=\gamma(f(x)-f(a))$, and $y=\gamma(f(x)-f(y))$. Denote $\gamma_{x,a}=\gamma|_{[0,f(x)-f(a)]}$ and $\gamma_{a,y}=\gamma|_{[f(x)-f(a),f(x)-f(y)]}$. As in the previous sections, denote $p^X$ the collapsing map on $X$ and $\theta^X$ its chosen homotopy inverse. We define an evaluation map $\ev:\cM(x,y)\to \Omega Y$ as follows: the image of $(\gamma,a)$ is the Moore loop $[0,f(x)-f(y)+1]\to Y$ given by 
%$$
%\ev(\gamma,a)(s)=\left\{\begin{array}{ll}
%\varphi_0(\theta^Xp^X\gamma(s)),&s\in [0,f(x)-f(a)],\\
%\varphi_{s-f(x)+f(a)}(\theta^Xp^X(a)),&s\in [f(x)-f(a),f(x)-f(a)+1],\\
%\varphi_1(\theta^Xp^X\gamma(s-1)),&\!\!\!\!\!\!\!\!\!s\in [f(x)-f(a)+1,f(x)-f(y)+1].
%\end{array}\right.
%$$
\begin{align*}
\ev(\gamma,a)=\varphi_0 (\theta^Xp^X\gamma_{x,a}) 
\, \# \, \{\varphi_t(\theta^Xp^X(a))\}_{t\in [0,1]} \, \# \, \varphi_1(\theta^Xp^X\gamma_{a,y}).
\end{align*}
\begin{figure}
  \centering
  \includegraphics[scale=.75]{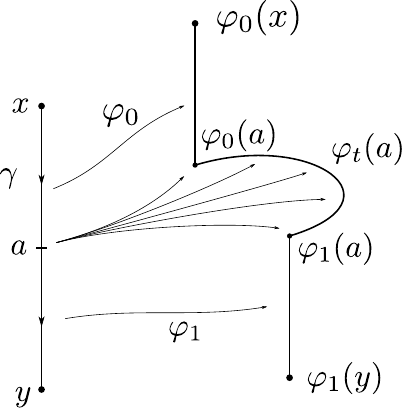}
  \caption{The map $\ev:\cM(x,y)\to \Omega Y$}
  \label{fig:Homotopy1}
\end{figure}
Intuitively, this evaluation map sends $\gamma$ to the concatenation of the following paths: first $\varphi_0\circ\gamma$, where $\gamma$ is traversed between $x$ and $a$, then $\varphi_t(a)$, $t\in[0,1]$, and finally $\varphi_1\circ\gamma$, where $\gamma$ is traversed between $a$ and $y$. As discussed in the previous sections, the role of the composition $\theta^Xp^X$ is to transform paths in loops. The evaluation map extends to the compactification and is compatible with breaking. Applying it to a representing chain system for the moduli spaces $\ol\cM(x,y)$, $x,y\in\Crit(f)$ we obtain the desired continuation cocycle $\{h^\varphi_{x,y}\in C_{|x|-|y|}(\Omega Y)\, : \, x,y\in\Crit(f)\}$, subordinated to the twisting cocycles $(\Omega\varphi_{0*}m_{x,y})$ and $(\Omega\varphi_{1*}m_{x,y})$, which defines the chain map $\Psi^\varphi$. The continuation cocycle is such that $h^\varphi_{x,y}=0$ for $|x|=|y|$ and $x\neq y$, hence the chain map is lower triangular with respect to the Morse filtration. If the homotopy preserves the basepoint, the map $\Psi^\varphi$ is the identity on the diagonal and therefore a chain isomorphism. In general, it is only a chain homotopy equivalence, the homotopy inverse being given by the analogous map constructed from the reverse homotopy.

\smallskip 
{\it Step~2: proof of the homotopy identity $\varphi_{1*}\circ \Psi^\varphi = \varphi_{0*}$}. 
We consider moduli spaces $\cM^{[0,1]}(x,y')$ consisting of pairs $(t,a)$ with $t\in [0,1]$ and $a\in  W^u(x) \cap \varphi_t^{-1}(W^s(y'))$, 
with unstable and stable manifolds considered with respect to $t$-dependent negative pseudo-gradients $\xi_t$ and $\eta_t$. The moduli space $\cM^{[0,1]}(x,y')$ has dimension $|x|-|y'|+1$ and its compactification is a manifold with boundary and corners such that 
\begin{align*}
\p \ol \cM^{[0,1]}(x,y')  = & \bigcup_y \ol\cL(x,y)\times \ol \cM^{[0,1]}(y,y') \cup \bigcup_{x'} \ol\cM^{[0,1]}(x,x')\times \ol\cL(x',y')  \\
& \cup \ol \cM^{\varphi_1}(x,y') \cup \ol \cM^{\varphi_0}(x,y').
\end{align*}
This produces a system of chain representatives $c^{[0,1]}_{x,y'}$ such that 
\begin{align*}
\p c^{[0,1]}_{x,y'} = c^1_{x,y'} & -c^0_{x,y'} \\
& + \sum_y(-1)^{|x|-|y|} s_{x,y}\times c^{[0,1]}_{y,y'} + \sum_{x'}(-1)^{|x|-|x'|} c^{[0,1]}_{x,x'}\times s_{x',y'}.
\end{align*}
At this point we have to deal with the following issue: while each element $(t,a)\in\cM^{[0,1]}(x,y')$ evaluates naturally into $\Omega Y$ via $\varphi_t$ and this evaluation extends continuously to the compactification, it does not give rise to a correct homotopy cocycle between the continuation cocycles defining $\varphi_{0*}$ and $\varphi_{1*}$. The problem is that the chains $s_{x,y}$ are evaluated into $Y$ in an uncontrolled way via the maps $\varphi_t$, $t\in[0,1]$. 
We solve this issue using adapted evaluation maps.

(i) We define an evaluation map $\ev_0:\cM^{[0,1]}(x,y')\to \Omega Y$ as follows. Given a point $(t,a)\in\cM^{[0,1]}(x,y')$, denote $\gamma'_{X,t}:(-\infty,0]\to X$ the half-infinite trajectory of $\xi_t$ running from $x$ to $a$, and $\gamma'_{Y,t}:[0,\infty)\to Y$ the half-infinite trajectory of $\eta_t$ running from $\varphi_t(a)$ to $y'$. As in Step~1, let $\gamma^X_t:[0,f(x)-f(a)]\to X$ be the reparametrization of $\gamma'_{X,t}$ by the levels of $f$, and $\gamma^Y_t:[0,g(\varphi_t(a))-g(y')]\to Y$ the reparametrization of $\gamma'_{Y,t}$ by the levels of $g$. 
Denote $f_{x,a}=f(x)-f(a)$ and $g_{t,a,y'}=g(\varphi_t(a))-g(y')$. 
We define $\ev_0(t,a)$ to be the Moore loop $\ev_0(t,a):[0,f_{x,a}+t+2+g_{t,a,y'}]\to Y$ given by 
%$$
%\ev_0(t,a)(s)=\left\{\begin{array}{ll}
%\varphi_0(\gamma^X(s)),& s\in [0,f_{x,a}],\\
%\varphi_{s-f_{x,a}}(a),& s\in [f_{x,a},f_{x,a}+t],\\
%\gamma^Y(s-f_{x,a}-t),& s\in [f_{x,a}+t,f_{x,a}+t+g_{t,a,y'}].
%\end{array}\right.
%$$
\begin{align*}
\ev_0(t,a)=\varphi_0(\theta^Xp^X\gamma^X_t)\, \# \, \{\varphi_\tau & (\theta^Xp^X(a))\}_{\tau\in [0,t]} \\ 
& \# \, \varphi_t(H^X(1-\cdot,a)) \, \# \, H^Y(\cdot,\varphi_t(a)) \, \# \, \gamma^Y_t.
\end{align*}
\begin{figure}
  \centering
  \includegraphics[scale=.75]{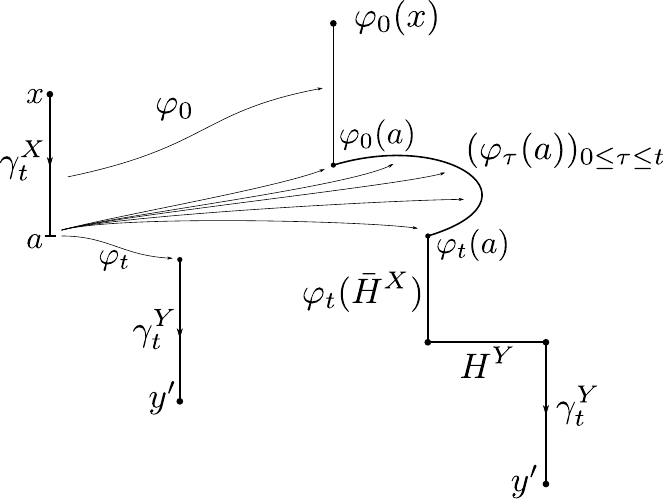}
  \caption{The map $\ev_0:\cM^{[0,1]}(x,y')\to \Omega Y$.}
  \label{fig:Homotopy2}
\end{figure}
This evaluation map extends continuously to the compactification. By applying it to the representing chain system $c^{[0,1]}_{x,y'}$ 
%is sent to the Moore loop $[0,f(x)-f(p)+t+g(\varphi_u(p))-g(y')]\to Y$ which sends the half-infinite trajectory $\gamma:(-\infty,0]\to X$ issued from $x$ to the length $f(x)-f(p)$ reparametrization of the path $(-\infty,0]\to Y$, $s\mapsto \varphi_0(\gamma(s))$ for $s\le -t$, and $s\mapsto \varphi_{s+t}(\gamma(s))$ for $s\in[-t,0]$. 
we obtain a homotopy cocycle $h^{[0,1]}_{x,y'}$ between the continuation cocycle $h^0_{x,y'}$ which defines $\varphi_{0*}:C_*(f;\varphi_0^*\cF)\to C_*(g;\cF)$ and the continuation cocycle $\tilde h^1_{x,y'}$ obtained by evaluating via $\ev_0$ the representing chains for the moduli spaces $\ol\cM^{\varphi_1}(x,y')$, which also defines a chain map $C_*(f;\varphi_0^*\cF)\to C_*(g;\cF)$. 

(ii) We consider now the $1$-parameter family of evaluation maps $\ev_L$, $L\ge 0$ defined on elements of $\cM^{\varphi_1}(x,y')$ by varying from $a$ to $x$ the point of insertion of the homotopy $\varphi_\tau$, $\tau\in[0,1]$ along the half-infinite gradient trajectory $\gamma'_X$. Denote $f_{x,b}=f(x)-f(b)$ and $g_{a,y'}=g_{1,a,y'}=g(\varphi_1(a))-g(y')$. Denote also $\gamma'_X=\gamma'_{X,1}$, $\gamma^X=\gamma^X_1$ and $\gamma'_Y=\gamma'_{Y,1}$, $\gamma^Y=\gamma^Y_1$. We define $\ev_L(a)$ to be the Moore loop $\ev_L(a):[0,f_{x,a}+3+g_{a,y'}]\to Y$ given by 
%$$
%\ev_L(s)=\left\{\begin{array}{ll}
%\varphi_0(\gamma_X(s)),& s\in[0,f_{x,\gamma'_X(-L)}],\\
%\varphi_{s-f_{x,\gamma'_X(-L)}}(\gamma'_X(-L)),& s\in [f_{x,\gamma'_X(-L)},f_{x,\gamma'_X(-L)}+1],\\
%\varphi_1(\gamma_X(s-1)),& s\in [f_{x,\gamma'_X(-L)}+1,f_{x,a}+1],\\ 
%\gamma_Y(s-f_{x,a}-1),& s\in [f_{x,a}+1,f_{x,a}+1+g_{a,y'}].
%\end{array}\right.
%$$
\begin{align*}
\ev&_L (a) \, = \, \varphi_0  (\theta^Xp^X\gamma^X|_{[0,f_{x,\gamma'_X(-L)}]}) \, \# \, 
\{\varphi_\tau(\theta^Xp^X\gamma'_X(-L))\}_{\tau\in[0,1]} \\
& \#\, \varphi_1(\theta^Xp^X\gamma^X|_{[f_{x,\gamma'_X(-L)},f_{x,a}]}) \, \# \, \varphi_1(H^X(1-\cdot,a)) \, \# \, 
H^Y(\cdot,\varphi_1(a)) \, \# \, \gamma^Y. 
\end{align*}
%it sends the half-infinite trajectory $\gamma:(-\infty,0]\to X$ issued from $x$ to the length $f(x)-f(\gamma(0))$ reparametrization of the path $(-\infty,0]\to Y$, $s\mapsto \varphi_0(\gamma(s))$ for $s\le -L-1$, $s\mapsto \varphi_{s+L+1}(\gamma(s))$ for $s\in[-L-1,-L]$, and $s\mapsto \varphi_1(\gamma(s))$ for $s\in[-L,0]$. 
\begin{figure}
  \centering
  \includegraphics[scale=.75]{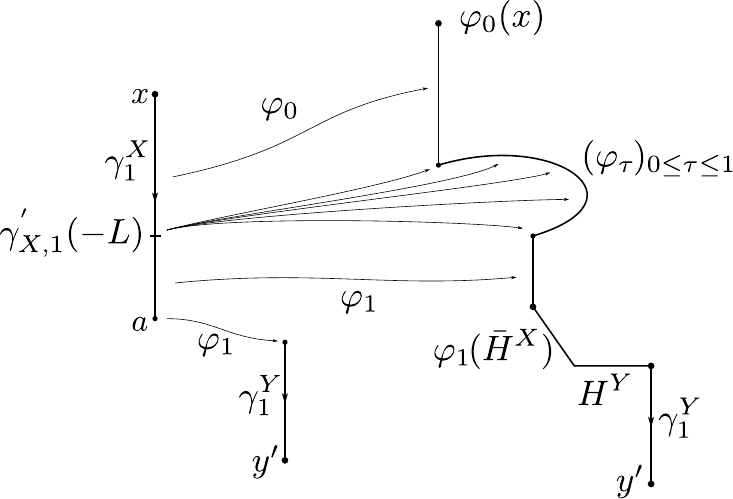}
  \caption{The maps $\ev_{L}:\cM^{\varphi_1}(x,y')$ for $L\geq0$.}
  \label{fig:Homotopy3}
\end{figure}
For $L=0$ we read the continuation cocycle $\tilde h^1_{x,y'}$ from (i). For $L=\infty$ we read the composition between the cocycle $h^1_{y,y'}$ which defines $\varphi_{1*}:C_*(f;\varphi_1^*\cF)\to C_*(g;\cF)$, and the continuation cocycle $h^\varphi_{x,y}$ from Step~1. Upon evaluating $\cM^{\varphi_1}(x,y')$ using the family $\ev_L$, $L\ge 0$ we obtain a homotopy cocycle between these two continuation cocycles.  

\smallskip 
The outcome of (i) and (ii) is that the identity $\varphi_{1*}\circ \Psi^\varphi = \varphi_{0*}$ holds up to homotopy, as claimed. 

\smallskip
4. {\it Proof of the {\sc (Spectral sequence)}}. This follows much in the manner of the proof given using the first definition of the direct maps, and we omit the details. 
\end{proof}

\section{Shriek map induced in homology}\label{sec:second-definition-shriek} 

In this section we give our alternative definition for the shriek map $\varphi_!$ associated to a smooth map $\varphi:X\to Y$ between \emph{orientable} manifolds. 

The proof of the corresponding statements from Theorem~\ref{thm:f*!} will follow from Proposition~\ref{alex=mihai-shriek}, which is proved in the next section and which asserts that this definition of the shriek map is the same as the one from~\S\ref{sec:functoriality-first-definition}.

We proceed as for the definition of direct maps in \S\ref{sec:funct-direct-alex} but with the roles of the stable and unstable manifolds interchanged. We denote $\star_X\in X$, $\star_Y\in Y$ the basepoints, and we choose two sets of data $\Xi^X$ and $\Xi^Y$ on $X$, respectively $Y$. We work under the standing assumption $\varphi(\star_X)=\star_Y$, we assume w.l.o.g. that $\varphi$ is smooth and 
for all $x'\in \Crit(g)$ and $y\in \Crit(f)$ we have 
$$
\varphi|_{W^s(y)}\pitchfork W^u(x'),
$$
where $f$ and $g$ are the Morse functions on $X$, respectively $Y$. We define 
$$
\cM^{\varphi_!}(x',y) = W^s(y)\cap \varphi^{-1}(W^u(x')).
$$
When no confusion can arise we also use the abridged notation 
$$
\cM(x',y)=\cM^{\varphi_!}(x',y).
$$ 
Because the intersection is transverse this is a smooth manifold of dimension 
$$
\dim\, \cM(x',y)=|x'|-|y|+m-n. 
$$
It is convenient to use the notation 
\begin{equation}\label{eq: shifted-degrees} 
[x']\, =\, |x'|+m \gol \mathrm{and}\gol [y]\, =\, |y|+n,
\end{equation} 
so that the dimension of $\cM(x',y)$ equals $[x']-[y]$. By shifting  the indices of the critical points in this way, we get new  gradings  on the twisted complexes for which the shriek map $H_*(Y;\calf)\ri H_*(X; \varphi^*\calf)$ that we will construct below becomes degree preserving. The compactification 
$$
\ol \cM(x',y)\cong \overline{W}^s(y) \, _\varphi\!\times \overline{W}^u(x')
$$ 
is a manifold with boundary and corners of dimension $[x']-[y]$  such that 
$$
\p \ol \cM(x',y) = \bigcup_{y'} \ol \cL(x',y')\times \ol\cM(y',y) \cup \bigcup_{x} \ol\cM(x',x)\times\ol \cL(x,y).
$$

In order to construct a representing chain system on these spaces we need to orient them. We orient first the stable manifolds by setting 
\begin{equation}\label{eq:orientation-rule-stable} 
\left(\ori\,  \ol W^s(y),\ori\, \ol W^u(y)\right)\, =\, \ori\, X,
\end{equation} 
and similarly for the stable manifolds on $Y$. Then we orient $\ol\calm(x',y)$ by 
\begin{equation}\label{eq:shriek-orientation-first}
\left(\coori\,  \ol W^u_g(x'), \ori\, \ol\calm(x',y)\right)\, =\, \ori\, \ol  W_f^s(y),
\end{equation} 
which we may also write
\begin{equation}\label{eq:shriek-orientation-second}
\left( \ori\,  \ol W^s_g(x'), \ori\, \ol\calm(x',y), \right)\, =\, \ori\, \ol  W_f^s(y).
\end{equation} 
  This convention is different  from our usual orientation rule for  transverse intersections \eqref{eq:transverse-orientation}. We chose it in the particular case of shriek maps since it yields  simpler orientation computations  for the  different spaces involved in the construction. For instance we remark the following equality :
  \begin{lemma}\label{lem:direct=shriek-for-Id}  Consider the identity map $\Id:X\ri X$, where $X$ is endowed with two possibly different Morse-Smale couples, $(f,\xi_f)$ on the domain and $(g,\xi_g)$ on  the target. Then for any $x'\in \crit(g)$ and $y\in \crit(f)$ we have 
  $$ \ol\calm^{\Id_!}(x',y)\, =\,  \ol\calm^{\Id}(x',y)$$
  {\bf as oriented manifolds}, where on the right hand side we have  the moduli space associated to the identity viewed as $\Id^{-1}: (X,g,\xi_g)\ri (X,f,\xi_f)$.
  \end{lemma}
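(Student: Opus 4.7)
The plan is to verify the identification in two steps: first at the level of underlying sets, which is immediate from the definitions, and then at the level of orientations, where the actual content lies.

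Set-theoretically, applying the shriek construction to $\Id : (X, f, \xi_f) \to (X, g, \xi_g)$ gives $\cM^{\Id_!}(x', y) = W_f^s(y) \cap W_g^u(x')$, while the direct-map construction applied to $\Id^{-1} : (X, g, \xi_g) \to (X, f, \xi_f)$ gives $\cM^{\Id}(x', y) = W_g^u(x') \cap W_f^s(y)$. These coincide as subsets of $X$, the transversality hypotheses match, and the boundary strata of both compactifications are indexed by identical pairs of broken Morse trajectories (of $g$ on one side, of $f$ on the other), so the identification extends to $\ol\cM$.

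For the orientation comparison, I would work at an arbitrary interior point $p$ of the intersection and fix a decomposition $T_pX = T_p\cM \oplus C_1 \oplus C_2$, where $C_1 \subset T_pW_g^u(x')$ (dimension $|y|$) is a complement of $T_p\cM$ in $T_pW_g^u(x')$, and $C_2 \subset T_pW_f^s(y)$ (dimension $n-|x'|$) is a complement of $T_p\cM$ in $T_pW_f^s(y)$; the transversality $T_pW_g^u(x') + T_pW_f^s(y) = T_pX$ makes this work. Via the canonical projections onto the quotients, $\coori\, W_f^s(y)$ is represented by $\ori\, C_1$ and $\coori\, W_g^u(x')$ is represented by $\ori\, C_2$. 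The key step is then to expand $\ori\, X$ in two ways. From the shriek convention~\eqref{eq:shriek-orientation-first} one has $\ori\, W_f^s(y) = (\ori\, C_2,\, \ori\, \cM^{\Id_!})$, and from~\eqref{eq:orientation-rule-stable} (extended along $W_f^s(y)$ from the critical point $y$) one has $\ori\, X = (\ori\, W_f^s(y),\, \ori\, C_1)$; together these give $\ori\, X = (\ori\, C_2,\, \ori\, \cM^{\Id_!},\, \ori\, C_1)$. From the direct-map convention~\eqref{eq:orientation-direct-first} one has $\ori\, W_g^u(x') = (\ori\, \cM^{\Id},\, \ori\, C_1)$, and from~\eqref{eq:orientation-rule-stable} (extended along $W_g^u(x')$ from $x'$, using that $\ori\, W_g^s(x')|_{x'}$ represents $\coori\, W_g^u(x')$) one has $\ori\, X = (\ori\, C_2,\, \ori\, W_g^u(x'))$; together these give $\ori\, X = (\ori\, C_2,\, \ori\, \cM^{\Id},\, \ori\, C_1)$. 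Comparing the two expressions forces $\ori\, \cM^{\Id_!} = \ori\, \cM^{\Id}$ on the interior, and hence on $\ol\cM$ by continuity.

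The main subtlety is the asymmetry built into~\eqref{eq:orientation-rule-stable} between stable and unstable manifolds: the shriek rule puts $\coori\, W^u$ on the \emph{left} while the direct rule puts $\coori\, W^s$ on the \emph{right}, and it is precisely this asymmetry (together with the fact that $\coori\, W^u$ is naturally identified with $\ori\, W^s$ whereas $\coori\, W^s$ is naturally identified with $\ori\, W^u$) that makes the two different-looking orientation rules yield the same oriented manifold. One should also take some care in transporting the identities $\ori\, X = (\ori\, W^s, \coori\, W^s)$ on $W_f^s(y)$ and $\ori\, X = (\coori\, W^u, \ori\, W^u)$ on $W_g^u(x')$ from their respective critical points to the common point $p$, but this is routine since both relations are preserved under parallel transport along the respective manifolds.
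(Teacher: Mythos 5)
Your proof is correct and follows essentially the same route as the paper's. The paper sandwiches both $\ol\calm^{\Id_!}$ and $\ol\calm^{\Id}$ in a common expression $(\ori \ol W_g^s(x'),\ -\ ,\ \ori \ol W_f^u(y)) = \ori X$, using $\ori\ol W_g^s(x')$ and $\ori\ol W_f^u(y)$ directly as the coorientation representatives, whereas you replace them by the pointwise complements $C_2$ and $C_1$; these are the same coorientations, transported along the unstable and stable manifolds, so the two presentations are interchangeable.
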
  
  
  \begin{proof} The two manifolds above are defined by the same (transverse) intersection $\ol W^u_{g}(x') \pitchfork \ol W^s_f(y)$ (more precisely, it is the fibered product of $\ol W^u_{g}(x')$ and $ W^s_f(y)$ defined by the canonical applications which send these Latour cells into $X$). So the only thing to prove is that their orientations are identical. 
  Using the conventions \eqref{eq:orientation-rule-stable} and \eqref{eq:shriek-orientation-second} we write  
 $$ (\ori \, \ol W_g^s(x'), \ori\,  \ol\calm^{\Id_!}(x',y),  \ori\,  \ol W_f^u(y))  \!=\! (\ori\,  \ol W_f^s(y), \ori\,  \ol W^u_f(y)) \!=\!\ori \, X.$$
 On the other hand our convention \eqref{eq:orientation-direct-second} for  direct maps yields 
 $$ (\ori\, \ol W_g^s(x'), \ori\, \ol \calm^{\Id}(x',y), \ori\, \ol W_f^u(y)) \! = \!(\ori \, \ol W_g^s(x'), \ori\, \ol W^u_g(x'))\! =\! \ori\, X,$$
 proving thus our claim
 $$ \ori\, \ol \calm^{\Id_!}(x',y)\, =\, \ori\, \ol \calm^{\Id}(x',y).$$
  \end{proof}

Prior to the construction of a representing chain system  we have to compare the boundary orientation for the strata $\ol \cL(x',y')\times \ol\cM(y',y)$ and $\ol\cM(x',x)\times\ol \cL(x,y)$ of $\partial \ol\calm(x',y)$ with their product orientation. We find:
   
\begin{lemma}\label{lem:orientation-difference-shriek} We have the following equalities of orientations: 
$$
\ori \, \, \partial\ol\calm^{\varphi_!}(x',y) \, =\, (-1)^{[x']-[y']} \left(\ori\, \ol\call_g(x',y'), \ori\, \ol \calm^{\varphi_!}(y',y)\right)
$$
and 
$$
\ori\, \, \partial\ol\calm^{\varphi_!}(x',y)\, =\, (-1)^{[x']-[y]-1}\left(\ori\, \ol\calm^{\varphi_!}(x',x)), \ori\, \ol\call_f(x,y')\right).
$$
\end{lemma}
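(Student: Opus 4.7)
The strategy is to mirror the proof of Lemma \ref{lem:orientation-difference-direct}, interchanging the roles of stable and unstable manifolds and replacing the orientation conventions \eqref{eq:orientation-direct-first}--\eqref{eq:orientation-direct-second} by \eqref{eq:orientation-rule-stable}--\eqref{eq:shriek-orientation-second}. A preparatory step is to establish the stable analogue of Proposition \ref{orientationsMorse},
$$\ori\partial\overline W^s_f(y)=(-1)^{|x|-|y|-1}\,\ori(\overline W^s_f(x)\times\overline\cL_f(x,y))$$
at the stratum $\overline W^s_f(x)\times\overline\cL_f(x,y)\subset\partial\overline W^s_f(y)$, either by repeating the outward-normal argument of Proposition \ref{orientationsMorse} with the convention \eqref{eq:orientation-rule-stable}, or equivalently by applying Proposition \ref{orientationsMorse} to $-f$ (for which $W^u_{-f}=W^s_f$ and $|y|_{-f}-|x|_{-f}=|x|_f-|y|_f$).

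For the first identity, at a boundary point $(\lambda,\bar\lambda)\in\overline\cL_g(x',y')\times\overline\cM^{\varphi_!}(y',y)$ with $\bar\lambda=(\lambda_{\varphi(a)},\lambda_a)$, the breaking occurs on the $Y$-side and $\overline W^s_f(y)$ stays smooth near $a$. I would view $\overline\cM^{\varphi_!}(x',y)$ locally as the boundary submanifold $\overline W^s_f(y)\pitchfork\varphi^{-1}(\overline W^u_g(x'))$, so that an outward normal $n$ to $\partial\overline\cM^{\varphi_!}$ is tangent to $\overline W^s_f(y)$ and $\varphi_*(n)$ is an outward normal to $\overline W^u_g(x')$ at the stratum $\cL_g(x',y')\times W^u_g(y')$. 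Combining $(n,\ori\partial\overline\cM^{\varphi_!})=\ori\overline\cM^{\varphi_!}$ with the orientation rule \eqref{eq:shriek-orientation-second} and Proposition \ref{orientationsMorse} for $\overline W^u_g(x')$ (sign $(-1)^{|x'|-|y'|}$) then yields the first sign $(-1)^{[x']-[y']}=(-1)^{|x'|-|y'|}$, the shift $m$ cancelling between $[x']$ and $[y']$.

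For the second identity, at $(\bar\lambda,\lambda)\in\overline\cM^{\varphi_!}(x',x)\times\overline\cL_f(x,y)$ the breaking is on the $X$-side and I would instead use $\overline W^s_f(y)$ as the ambient oriented manifold with breaking boundary. Plugging the stable analogue of Proposition \ref{orientationsMorse} into the same template as in the proof of Lemma \ref{lem:orientation-difference-direct}, together with \eqref{eq:shriek-orientation-second} and careful accounting of the codimension $n-|x'|$ of $\varphi^{-1}(\overline W^u_g(x'))$ in $\overline W^s_f(y)$, produces the second sign $(-1)^{[x']-[y]-1}=(-1)^{|x'|-|y|+m-n-1}$. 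The hard part will be precisely this sign bookkeeping: the shift $m-n$ emerges from the dimensional mismatch between the $X$-side stable orientation of $\overline W^s_f(y)$ (via the stable analogue of Proposition \ref{orientationsMorse}) and the $Y$-side orientation of $\overline\cM^{\varphi_!}(x',y)$ pulled back from $\overline W^u_g(x')$ through \eqref{eq:shriek-orientation-second}. A useful consistency check is provided by Lemma \ref{lem:direct=shriek-for-Id}: for $X=Y$ and $\varphi=\Id$ we have $m=n$, the shift vanishes, and both signs reduce to those of Lemma \ref{lem:orientation-difference-direct}.
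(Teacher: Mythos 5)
Your high-level strategy — outward-normal argument for boundary orientations, combined with the shriek orientation convention \eqref{eq:shriek-orientation-second} — is indeed the one the paper uses, and the consistency check via Lemma~\ref{lem:direct=shriek-for-Id} is a nice touch. But the preparatory step you propose has a genuine sign error, and the rest of the argument is not carried out in enough detail to catch it.

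Concretely: your claimed ``stable analogue of Proposition~\ref{orientationsMorse}''
$$\ori\partial\overline W^s_f(y)=(-1)^{|x|-|y|-1}\,\ori\bigl(\overline W^s_f(x)\times\overline\cL_f(x,y)\bigr)$$
is incorrect. The paper's cleaner intermediate result is the relation \eqref{eq:changed-orientation-rule}, namely $(\ori\, \ol W^s(x), \ori\, \ol \call(x,y), -\xi) = \ori\, \ol W^s(y)$, obtained by inserting $\ori\ol W^s(x)$ into the orientation rule~\eqref{eq:orientation-rule} and cancelling $\ori\ol W^u(y)$ via~\eqref{eq:orientation-rule-stable}. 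Using that, and the outward normal $n=-\xi$ to $\partial\ol W^s_f(y)$, one finds that moving $-\xi$ to the front across a factor of dimension $\dim\bigl(\ol W^s_f(x)\times\ol\call_f(x,y)\bigr)=d-|y|-1$ (where $d=\dim X$) gives the correct sign $(-1)^{d-|y|-1}$, not $(-1)^{|x|-|y|-1}$. These differ by $(-1)^{d-|x|}$, which is not $+1$ in general. Your alternative derivation ``by applying Proposition~\ref{orientationsMorse} to $-f$'' faces a related pitfall: the orientations of $\ol\call_f(x,y)$ throughout this chapter are pinned down by the unstable manifolds of $f$ via~\eqref{eq:orientation-rule}, whereas the $-f$ version would naturally give an orientation in terms of $-f$-unstable manifolds (i.e.\ $f$-stable manifolds); converting between the two via~\eqref{eq:orientation-rule-stable}, and also swapping the tensor factor order, introduces exactly the dimension-dependent signs you dropped.

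Because you only wave at the ``careful sign bookkeeping'' that ``produces the second sign'' rather than carrying it out, the incorrect preparatory sign does not visibly propagate to your stated conclusion — you simply quote the target $(-1)^{[x']-[y]-1}$ — but plugging your lemma into the template would in fact \emph{not} reproduce it without additional compensating errors. I recommend deriving and using~\eqref{eq:changed-orientation-rule} directly (the paper's route), which sidesteps the stable Latour-cell boundary sign entirely and keeps the bookkeeping to permuting a single vector $-\xi$ across one- and two-factor products, and then running the explicit chain of equalities with~\eqref{eq:shriek-orientation-second}.
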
 
\begin{proof} Recall that the orientation rule \eqref{eq:orientation-rule}  for $\ol\call_g(x',y')$ is
$$\left(\ori\, \ol \call_g(x',y'), -\xi , \ori\, \ol  W_g^u(y')\right) \, =\, \ori \, \ol W^u_g(x'),$$
where $\xi$ is a gradient vector field. We use the relation \eqref{eq:orientation-rule-stable} to express this orientation  in terms of stable manifolds instead of unstable manifolds. We start by adding $\ori\, \ol W_g^s(x')$ at the left on both sides and infer
\begin{align*}
(\ori\, \ol W^s(x'), \ori\, \ol \call_g(x',y'), & -\xi , \ori\, \ol  W_g^u(y')) \\
& = (\ori\, \ol W^s_g(x'), \ori \, \ol W^u_g(x')) = \ori\, X. 
\end{align*}

Writing $\ori\,  X =\left(\ori\, \ol W_g^s(y'), \ori \, \ol W^u_g(y')\right)$ and removing $\ori \, \ol W^u_g(y')$ from both sides yields 
\begin{equation}\label{eq:changed-orientation-rule} 
\left(\ori\, \ol W^s(x'), \ori\, \ol \call_g(x',y'), -\xi\right)\, =\, \ori\, \ol W^s(y').
\end{equation}

The rest of the proof proceeds as in Lemma~\ref{lem:orientation-difference-direct}. Let us start with the  first relation. Denote as usual by $n$ the outward normal of $ \ol\calm^{\varphi_!}(x',y)$ at some boundary point $(\lambda',\ol \lambda)\in \ol\call_g(x',y')\times \ol \calm^{\varphi_!}(y',y)$.

%Arguing with orientations in a manner similar to that of~\S\ref{sec:invariance-homology} we see that the boundary orientation for a stratum $\ol \cL(x',y')\times \ol\cM(y',y)$ coincides with the product orientation, whereas the boundary orientation for a stratum $\ol\cM(x',x)\times\ol \cL(x,y)$ differs from the product orientation by a sign $(-1)^{|x'|-|x|+1}$. 

We view an element of $\cM(x',y)$ as consisting of a half-infinite trajectory $\lambda$  in $X$ starting at a point $a$ and flowing into $y$, of a half-infinite trajectory $\lambda'$  in $Y$ flowing from the critical point $x'$ to a certain point $b$, the two trajectories being coupled by the condition $\varphi(a)=b$. Denoting $\ol \lambda = (\lambda_a, \lambda'_{\varphi(a)})$ and arguing as in the proof of Lemma~\ref{lem:orientation-difference-direct} we may view $n$ as the outward normal to the boundary of $\ol W^u(x')$ at the point $(\lambda,  \lambda'_a)\in \ol\call_g(x',y')\times \ol W^u(y')$. Then, considering the boundary point $(\lambda, y')\in \partial \ol W^u(x')$, we may take $n=\xi$, the gradient vector field. 

With that in mind, we first write  the usual rule for the boundary orientation 
$$\left(n, \ori\,  \partial \ol \calm^{\varphi_!}(x',y)\right) \, =\, \ol \calm^{\varphi_!}(x',y)$$
and therefore, by \eqref{eq:shriek-orientation-second} 
\begin{align*}
(\ori\, \ol W_g^s(x'),  n, & \ori\,  \partial \ol \calm^{\varphi_!}(x',y))  \\
& = (\ori\, \ol W_g^s(x'),  \ori\, \calm^{\varphi_!}(x',y)) = \ori\, \ol W_f^s(y).
\end{align*}
Now, in order to compare it with the product orientation, we write using \eqref{eq:changed-orientation-rule} in the fourth equality below and \eqref{eq:shriek-orientation-second} in the fifth one :
\begin{align*}\big(\ori\, & \ol W_g^s(x'),  n, \ori\, \ol \call_g(x',y'), \ori\,  \ol \calm^{\varphi_!}(y',y)\big) \\
& =\left(\ori\, \ol W_g^s(x'),  \xi, \ori\, \ol \call_g(x',y'), \ori\,  \ol \calm^{\varphi_!}(y',y)\right)\\
& =-\left(\ori\, \ol W_g^s(x'),  -\xi, \ori\, \ol \call_g(x',y'), \ori\,  \ol \calm^{\varphi_!}(y',y)\right)\\
& =(-1)^{|x'|-|y'|}\left(\ori\, \ol W_g^s(x'),  \ori\, \ol \call_g(x',y'), -\xi, \ori\,  \ol \calm^{\varphi_!}(y',y)\right)\\
& =(-1)^{|x'|-|y'|}\left(\ori\, \ol W_g^s(y'), \ori\,  \ol \calm^{\varphi_!}(y',y)\right)\\
&=(-1)^{|x'|-|y'|}\ori\, \ol W^s_f(y)\\
&=(-1)^{[x']-[y']}\ori\, \ol W^s_f(y).
\end{align*} 
This proves our first claim. 

Let us now prove the second relation in an analogous way. The outward normal $n$ of $\ol\calm^{\varphi_!}(x',y)$ at some boundary  point $(\ol\lambda, \lambda)\in \ol\calm^{\varphi_!}(x',x)\times \ol\call_f(x,y) $ with $\ol \lambda= (\lambda_a, \lambda'_{\varphi(a)})$  may be viewed as the outward normal of $\ol W_f^s(y)$ at $(\lambda_a, \lambda)\in \ol W_f^s(x)\times \ol\call_f(x,y)\in \partial \ol W_f^s(y)$. Moving to the point $(x,\lambda)$ of this boundary, we may consider $n=-\xi$, the opposite of the gradient vector field which points outwards at this point (after being normalised and extended by continuity as in the proof of Proposition~\ref{orientationsMorse}). 
As above, we have on the one hand 
\begin{align*}
(\ori\, \ol W_g^s(x'),  & n, \ori\,  \partial \ol \calm^{\varphi_!}(x',y))  \\
& = (\ori\, \ol W_g^s(x'),  \ori\, \calm^{\varphi_!}(x',y)) = \ori\, \ol W_f^s(y),
\end{align*}
and on the other hand (using \eqref{eq:shriek-orientation-second} in the third equality as well as  \eqref{eq:changed-orientation-rule} for $\ol\call_f(x,y)$  in the last equality)  
\begin{align*}\big(\ori\, \ol W_g^s(x'), &  n, \ori\,  \ol \calm^{\varphi_!}(x',x), \ori\, \ol\call_f(x,y)\big) \\
& =\left(\ori\, \ol W_g^s(x'),  -\xi,  \ori\,  \ol \calm^{\varphi_!}(x',x), \ori\, \ol \call_f(x,y)\right)\\
& =(-1)^{[x']-[x]}\left(\ori\, \ol W_g^s(x'),  \ori\,  \ol \calm^{\varphi_!}(x',x), -\xi, \ori\, \ol \call_{f}(x,y)\right)\\
& =(-1)^{[x']-[x]}\left(\ori\, \ol W_f^s(x), -\xi,  \ori\, \ol \call_f(x,y)\right)\\
& =(-1)^{[x']-[x]+[x]-[y]-1}\left(\ori\, \ol W_f^s(x), \ori\,  \ol \call_f(x,y), -\xi\right)\\
&=(-1)^{[x']-[y]-1}\ori\, \ol W^s_f(y).
\end{align*} 
The comparison between the two finishes our proof. 
\end{proof}

Now following once again the recipe  of  Proposition~\ref{representingchain} we construct  a representing chain system $(\tau_{x',y})$ for the moduli spaces   $\ol\calm^{\varphi_!}(x',y)$ which according to the previous lemma satisfies the following equation : 
$$
\p\tau_{x',y}\, =\, (-1)^{[x']-[y']}\sum_{y'\in \crit(g)}s_{x',y'}\times \tau_{y',y}-(-1)^{[x']-[y]}\sum_{x\in \crit(f)} \tau_{x',x}\times s_{x,y}.
$$ 
As in the case of direct maps we need to correct this representing chain system with a sign in order to get an equation of the form \eqref{eq:newinvarianceforchains}, namely by setting  as in \eqref{eq:correction-rcs}: \begin{equation}\label{eq:correction-rcs-shriek}
\sigma_{x',y}\, =\, (-1)^{[x']-[y]-1}\tau_{x',y}.
\end{equation} 
which indeed satisfies an equation of the desired form 
$$\p\sigma_{x',y}\, =\, \sum_{y'\in \crit(g)}s_{x',y'}\times \sigma_{y',y}-(-1)^{[x']-[x]}\sum_{x\in \crit(f)} \sigma_{x',x}\times s_{x,y}.$$
As previously the next step is to evaluate this corrected representing chain system into $\Omega Y$. At this purpose we use the homotopies $H^Y$ and $H^X$ on $Y$ resp. $X$ that we have already defined to construct the evaluation maps for direct maps; they respectively join $\Id_Y$ to $\theta^Y\circ p^Y$ and $\Id_X$ to $\theta^X\circ p^X$. To define the evaluation map we start with the half-infinite trajectory $\lambda'_{\varphi(a)}$  in $Y$ which we evaluate with $q^Y$ getting thus a path from $\star_Y$ to $\theta^Y\circ p^Y(\varphi(a))$. Then we use the reversed homotopy $H^Y(1-t,\varphi(a))$ to extend this path until $\varphi(a)$. Afterwards we do the same to the half-infinite trajectory $\lambda_a$ in $X$  whose evaluation $q^X$ is a path in $X$ from $\theta^X\circ p^X(a)$
to $\star_X$. In order to close the loop in $Y$ we have to precede it by the homotopy $H^X(t,a)$ and then transport the concatenation  to $Y$ via $\varphi$.  This gives a continuous application $$q^{\varphi_!}_{x',y}:\ol\calm^{\varphi_!}(x',y)\ri \Omega Y$$ whose formula is 
\begin{equation}\label{eq:evaluation-shriek}
q^{\varphi_!}_{x',y}(\lambda_a, \lambda'_{\varphi(a)})=q^Y(\lambda'_{\varphi(a)})\#H^Y(1-t, \varphi(a))\#\varphi(H^X(t,a))\#\varphi(q^X(\lambda_a))
\end{equation} 

By definition, the restriction of $q^{\varphi_!}_{x',y}$ on the boundary $\partial \ol\calm^{\varphi_!}(x',y)$ satisfies 
$$q^{\varphi_!}_{x',y}(\lambda', \ol \lambda) \, =\, q_{x',y'}^Y(\lambda')\# q^{\varphi_!}_{y',y}(\ol\lambda)$$
for all $(\lambda',\ol \lambda)\in \ol\call_g(x',y')\times \ol\calm^{\varphi_!}(y',y)$  and
$$q^{\varphi_!}_{x',y}( \ol \lambda, \lambda) \, =\, q^{\varphi_!}_{x',x}(\ol\lambda)\#\varphi(q_{x,y}^X(\lambda))$$
for all $(\ol \lambda,\lambda)\in  \ol\calm^{\varphi_!}(x',x)\times \ol\call_f(x,y)$.

 %Mapping via $\varphi$ the half-infinite trajectory from $X$ into $Y$ provides a map $\cM(x',y)\to \Omega Y$ which extends continuously to the compactification. The construction of a representing chain system for the compactified moduli spaces $\ol\cM(x',y)$ leads to
 
 Defining  $$\nu_{x,y'}\, =\, - q^{\varphi_!}_{x',y,*}(\sigma_{x',y})$$  leads to a collection of chains 
$$
\nu_{x',\, y}\in C_{[x']-[y]}(\Omega Y)
$$
such that 
$$
\p \nu_{x',\, y}= \sum_{y'} (m_{x', \, y'})\nu_{y', \, y} - \sum_{x}(-1)^{[x']-[x]}\nu_{x',\, x}(\Omega \varphi)_*m_{x, y}.
$$

We now define 
\begin{equation}\label{eq:shriek-second-definition} 
\varphi_! : \cF\otimes \Crit(g)\to \cF\otimes \Crit(f), \qquad \varphi_!(\alpha\otimes x') = \sum_{y} \alpha\cdot \nu_{x',\, y}\otimes y
\end{equation} 
where the critical points are graded by the modified index $[\, \, \, ]$. 
A direct verification shows that this is a chain map, see also~\S\ref{sec:invariance-homology}. 
\rmk\label{rem:shriek-for-boundary} As in Remark \ref{rmk:direct-for-boundary} we point out that the definition of $\varphi_!$ adapts to the case when $X$ and $Y$ have boundary.  In order to get the same compactification of $\calm^{\varphi_!}(x',y)$ and define $\varphi_!$ in a similar way, we need to take a (negative) gradient which points outwards $\p X$ and we need to assume that $\varphi(\ol X)\subset \mathrm{int}\, Y$. The target of $\varphi_!$ is therefore the relative homology $H_*(X, \p X; \varphi^*\calf)$.  If $Y$ has boundary, the shriek map $\varphi_!$ factors as $H_*(Y; \calf)\ri H_*(Y, \p Y; \calf) \ri H_*(X,\p X; \varphi^*\calf)$.
\kmr
\section{The two definitions of shriek maps coincide} 

The goal of this section is to prove Proposition~\ref{alex=mihai-shriek} which asserts that  the shriek  map from~\S\ref{sec:functoriality-first-definition} and the one defined in the previous section coincide in homology. This will imply 
the independence of the second definition of the shriek map   with respect to the various choices of auxiliary data. Furthermore we infer all the properties from Theorem~\ref{thm:f*!} for this alternative definition of the shriek map. 
 
%The proofs of the other statements in Proposition~\ref{prop:f!} are similar to their analogues from Proposition~\ref{prop:f*}. We therefore omit the details. 
%\end{proof}

\begin{proposition}\label{alex=mihai-shriek} Let $\varphi:(X, \star_X)\ri (Y, \star_Y)$ be a smooth map between oriented manifolds,  $\Xi^X$ and $\Xi^Y$ two sets of data defined respectively on $(X,\star_X)$ and $(Y, \star_Y)$, and $\calf$ a DG-module over $C_*(\Omega_{\star_Y}Y)$. 

Then the  map $\varphi_! : C_*(Y,\Xi^Y;\calf)\ri C_{*+\mathrm{dim}(X)-\mathrm{dim}(Y)}(X,\Xi^X;\varphi^\ast\calf)$ defined by~\eqref{eq:shriek-second-definition}  coincides with the one defined in the last part of \S\ref{sec:funct-direct-general}.

\end{proposition}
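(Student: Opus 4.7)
The plan is to follow closely the pattern of Proposition~\ref{alex=mihai} for direct maps, decomposing the argument into three steps. Denoting by $\ol\varphi_!$ the first definition of the shriek map (from~\S\ref{sec:functoriality-first-definition}) and by $\varphi_!$ the second definition (from~\S\ref{sec:second-definition-shriek}), I would aim to show $\ol\varphi_! = \varphi_!$ in homology via three claims: (a) the two definitions coincide for co-oriented embeddings; (b) they coincide for the projection $\pi: D \times Y \to Y$; and (c) a composition-style identity $\varphi^\chi_! \circ \pi_! = \varphi_!$ holds at the level of the second definition, where $\varphi^\chi = (\chi,\varphi)$ for some embedding $\chi: X \to D$.

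For step (a), with $\varphi: X \hookrightarrow Y$ a co-oriented embedding, I would extend the data $\Xi^X$ to $\Xi^Y_\varphi$ on $Y$ via a tubular neighborhood with outward-pointing negative pseudo-gradient, as in~\S\ref{sec:funct-closed-submanifolds}. Combining Lemma~\ref{lem:direct=shriek-for-Id} with the orientation conventions of~\S\ref{subsec: orientation-conventions} and the grading shift~\eqref{eq: shifted-degrees}, one identifies $\ol\cM^{\varphi_!}(x', y) \cong \ol\cM^{\Id_!}(x', y)$ as oriented manifolds, where $\Id$ denotes the identity on $Y$ between two data sets. The evaluation maps and corrected representing chain systems (see~\eqref{eq:correction-rcs-shriek}) can be arranged to match. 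The shriek analog of Proposition~\ref{id=continuation}---whose proof is a straightforward adaptation, using Lemmas~\ref{lem:R-orientation} and~\ref{lem:homotopic-evaluations-bisbis} with the roles of stable and unstable manifolds interchanged---then identifies $\Id_!$ with a continuation map in homology, recovering the first definition. For step (b), choosing $\Xi^{D\times Y}=\Xi^D\times \Xi^Y$ with $\Xi^D$ built from $h(x)=-\|x\|^2$ (unique maximum at $0$), I would observe that $\ol\cM^{\pi_!}(x',(0,y_0)) = \{0\}\times \ol\cM^{\Id_!}(x',y_0)$ as oriented manifolds for all $x',y_0\in \Crit(g)$, and that the corresponding evaluation maps match. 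The shriek analog of Proposition~\ref{id=continuation} then implies that $\pi_!$ (second definition) coincides in homology with $(i_!)^{-1}$ (first definition), where $i: Y \hookrightarrow D\times Y$ is the zero-section inclusion.

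Step (c) is the core technical step. Working with compatible data choices (the homotopies $H^X$, $H^Y$ on the one hand and $H^X$, $\Id_D\times H^Y$ on the other), I would identify $\ol\cM^{\varphi^\chi_!}((0,y_0),y) \cong \ol\cM^{\varphi_!}(y_0,y)$ through a natural projection $\Pi$, checking that orientations (with the sign correction~\eqref{eq:correction-rcs-shriek}) are preserved---this is the shriek counterpart of the identification $\ol\cM^{\varphi^\chi}(x,(0,y')) \cong \ol\cM^\varphi(x,y')$ used in step (c) of Proposition~\ref{alex=mihai}. The evaluation maps $\pi \circ q^{\varphi^\chi_!}$ and $q^{\varphi_!} \circ \Pi$ will not be equal, but I would construct a homotopy between them satisfying the concatenation conditions analogous to~\eqref{eq:first-homotopy-condition} and~\eqref{eq:second-homotopy-condition} on the two types of boundary strata of $\ol\cM^{\varphi^\chi_!}((0,y_0),y)$---this is the shriek counterpart of Lemma~\ref{lem:homotopic-evaluations-bis}. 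Applying the shriek version of Lemma~\ref{lem:homotopic-evaluations} then produces the desired chain-level identity. Combining the three steps with the formula $\ol\varphi_! = \ol{\varphi^\chi}_! \circ (\ol i_!)^{-1}$ from~\S\ref{sec:funct-direct-general} yields $\ol\varphi_! = \varphi_!$ in homology.

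The main obstacle will be the orientation and sign bookkeeping, which is considerably more delicate than in the direct case: the shriek orientation conventions~\eqref{eq:shriek-orientation-first}--\eqref{eq:shriek-orientation-second} differ from the direct ones, the grading shift $[\,\cdot\,]$ from~\eqref{eq: shifted-degrees} must be carried consistently through all three steps, and the signs in the boundary orientations (Lemma~\ref{lem:orientation-difference-shriek}) differ from those for direct maps (Lemma~\ref{lem:orientation-difference-direct}). A subsidiary obstacle is establishing the shriek analog of Proposition~\ref{id=continuation}, which requires redoing the trajectory-space analysis of~\eqref{eq:LF-MId}--\eqref{eq:LF-MId-bar} with stable manifolds replacing unstable ones, and verifying the corresponding sign in the orientation computation analogous to Lemma~\ref{lem:R-orientation}---the expected sign being dictated by the shifted grading~\eqref{eq: shifted-degrees} rather than the usual index.
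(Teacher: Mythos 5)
Your three-step decomposition (a)--(c) matches the paper's proof exactly, and each of steps (a), (b), (c) is carried out along essentially the same lines you sketch, including the use of \eqref{eq:correction-rcs-shriek}, the orientation comparison \eqref{eq:stable=stable}, and a homotopy between evaluation maps in step (c) analogous to Lemma~\ref{lem:homotopic-evaluations-bis}. Where your route genuinely diverges is the ``shriek analog of Proposition~\ref{id=continuation}'': you propose to re-run the entire trajectory-space and orientation machinery of Proposition~\ref{id=continuation} with stable manifolds in place of unstable ones (re-deriving analogues of Lemmas~\ref{lem:R-orientation} and~\ref{lem:homotopic-evaluations-bisbis} and re-checking the sign with the shifted grading), whereas the paper's Proposition~\ref{id=continuation-shriek} avoids all of this with a short reduction: it shows the \emph{chain-level} identity $\Id_! = \Id^{-1}_*$, and then simply invokes the already-proved Proposition~\ref{id=continuation} for the direct map of the identity. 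That reduction rests on three easy observations: Lemma~\ref{lem:direct=shriek-for-Id} identifies $\ol\calm^{\Id_!}(x',y)$ and $\ol\calm^{\Id^{-1}}(x',y)$ as oriented manifolds; the sign corrections \eqref{eq:correction-rcs} and \eqref{eq:correction-rcs-shriek} coincide in this case because $[x']-[y]=|x'|-|y|$; and the evaluation formula \eqref{eq:formula-evaluation-direct} for $\varphi=\Id^{-1}$ agrees with \eqref{eq:evaluation-shriek}. Your route would work, but the ``subsidiary obstacle'' you flag (the sign bookkeeping analogous to Lemma~\ref{lem:R-orientation}) disappears entirely in the paper's approach. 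One small correction: in your step (a) you invoke Lemma~\ref{lem:direct=shriek-for-Id} to identify $\ol\cM^{\varphi_!}(x',y)$ with $\ol\cM^{\Id_!}(x',y)$, but that lemma only compares the $\Id_!$- and $\Id^{-1}$-moduli spaces; the identification you actually need for a cooriented inclusion uses the equality of oriented stable manifolds $\ol W^s_f(y) = \ol W^s_F(y)$ proved in \eqref{eq:stable=stable}, which relies on the conventions \eqref{eq:orientation-N}, \eqref{eq:both-oriented}, and \eqref{eq:orientation-rule-stable}.
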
 

\begin{proof}
As for its counterpart for direct maps (Proposition~\ref{alex=mihai}), the proof relies on the following statement on the shriek map of the identity. 

\begin{proposition} \label{id=continuation-shriek}  (compare to Proposition~\ref{id=continuation} )
For any two sets of data  $\Xi_0$ and $\Xi_1$ defined on $(X,\star)$ and for any DG-module $\calf$  over $C_*(\Omega X)$, the map $\Id_!: C_*(X,\Xi_1;\calf)\ri C_*(X,\Xi_0;\calf)$ defined as in \S\ref{sec:second-definition-shriek} coincides with the continuation map $\Psi_{10}$ in homology. In particular, it  does not depend on the choices made for its definition (representing chain system, homotopies etc). 
\end{proposition} 
\begin{proof}[Proof of Proposition~\ref{id=continuation-shriek}] We claim  that $\Id_!$ actually coincides with  the direct map $\Id^{-1}_*: C_*(X,\Xi_1;\calf)\ri C_*(X,\Xi_0;\calf)$. The conclusion then follows by applying Proposition~\ref{id=continuation} to the inverse of the direct map of the identity  $\Id^{-1}_*$. 

The proof that $\Id_!=\Id^{-1}_*$ is straightforward. We have already pointed out in Lemma~\ref{lem:direct=shriek-for-Id} that the moduli spaces $\ol\calm^{\Id^{-1}}(x',y)$ and $\ol\calm^{\Id_!}(x',y)$ are the same as oriented manifolds, which enables us to pick the same representing chain system $(\tau_{x',y})$ for both of them. Then, the signs in \eqref{eq:correction-rcs} and \eqref{eq:correction-rcs-shriek} also coincide (note that $[x']-[y]=|x'|-|y|$ here), which means that we have the same corrected representing chain system $(\sigma_{x',y})$. Finally, as one may easily check, formula \eqref{eq:formula-evaluation-direct} for $\varphi=\Id^{-1}$ which defines the evaluation map $q_{x',y}^{\Id^{-1}}$ is identical to formula \eqref{eq:evaluation-shriek} for $q_{x',y}^{\Id_!}$. Therefore the two morphisms of complexes are equal. 
\end{proof}

The remaining part of the proof of Proposition~\ref{alex=mihai-shriek} is pretty similar to that of Proposition~\ref{alex=mihai}. Namely, denoting by $\ol\varphi_!$ the shriek map defined in \S\ref{sec:funct-direct-general} and by $\varphi_!$ the one defined in \S\ref{sec:second-definition-shriek} we successively prove the following three claims. 

 (a) The two maps coincide in homology for embeddings $\varphi:X\ri Y$. 

(b)   The two maps coincide in homology for the projection $\pi:D\times Y\ri Y$, where $D= D^m$ is a disc with $0$ as  basepoint. We denote by $\ori \, D$ the usual  orientation of the disk $D$.  In this statement $Y$ is endowed with a set of data $\Xi^Y$ and a homotopy $H^Y$, and these induce on $D\times Y$ a set of data $\Xi^D\times\Xi^Y$ with $\Xi^D= (h, -\nabla h, o_{\{0\}}= \ori\, D, \caly=0, \theta =\Id)$ the standard set of data associated to the Morse function $h(x)=-\|x\|^2$, and a homotopy given by $\Id\times H^Y$. 

(c) The shriek map $\varphi_!$ satisfies the composition property  in the following particular case:  Consider an arbitrary smooth map $\varphi$ and an embedding $\varphi^\chi=(\chi, \varphi) : (X,\star_X)\ri \big(D\times Y, (0,\star_Y)\big)$.  Then 
$$\varphi^\chi_!\circ\pi_!\, =\, (\pi\circ\varphi^\chi)_!\, =\, \varphi_!.$$
Here the map $\varphi_!$ is defined using arbitrary choices for the representing chain system on $\ol\calm^{\varphi_!}(x',y)$ and  for the homotopies, whereas the two other maps are defined using some adapted choices for the representing chain systems and using the homotopies $H^X$, $H^Y$ on $X$, resp. $Y$, and $\Id\times H^Y$ on $D\times Y$.  

Denote $i:Y\ri D\times  Y$ the inclusion $x\mapsto(0,x)$. 
Given that $\ol\varphi_!= \ol\varphi^\chi_!\circ \ol i^{-1}_!$ by definition, and  $\ol i_!\circ \ol \pi_!=\Id$ by the composition property (which is satisfied by the maps defined in \S\ref{sec:functoriality-first-definition}), we get the desired conclusion by applying these three steps as follows:
$$
\ol \varphi_!= \ol  \varphi^\chi_!\circ \ol i^{-1}_!= \ol  \varphi^\chi_!\circ \ol \pi_!= \varphi^\chi_! \circ\pi_!= (\pi\circ\varphi^\chi)_!=\varphi_!.
$$

\smallskip 

\underline{Proof of (a).} We first assume that $\varphi :X\hookrightarrow Y$ is an inclusion. Recall the definition of $\ol\varphi_!$ from  \S\ref{sec:funct-closed-submanifolds}. Denote respectively by $f:X\ri\R$ and $G:Y\ri \R$ the Morse functions of $\Xi^X$ and $\Xi^Y$.   Extend the set of data $\Xi^X$ on $X$  first to a tubular neighborhood $U$  of $X$ in $Y$ with  gradient pointing outwards and then to the whole of $Y$. Denote the extension by $\Xi_\varphi^Y$ and $F: Y\ri \R$ its Morse function: $F$ has no critical points in $U$ except for those of $f$. 
The definition of $\ol \varphi_!$  in \S\ref{sec:funct-closed-submanifolds} was given for the data $\Xi^Y_\varphi$ on $Y$, therefore we have to insert a continuation map in order to define $\ol \varphi_!:C_*(Y,\Xi^Y;\calf)\ri C_{*+\mathrm{dim}(X)-\mathrm{dim}(Y)}(X,\Xi^X;\varphi^*\calf)$.  We get   for $x\in \Crit(G)$ and $\alpha \in\cF$ the formula
\begin{equation}\label{eq:formula-phi-bar}
\ol\varphi_!(\alpha\otimes x)  =\mathrm{pr}(\Psi(\alpha\otimes x)),
\end{equation} 
where $\Psi$ is the continuation map between  the data $\Xi^Y$ and $\Xi^Y_\varphi$ and $\mathrm{pr}$ is the projection 
 $$\mathrm{pr}(\alpha\otimes y)\, =\, \begin{cases}
  \alpha \otimes y& \text{ if } y\in \crit(f),\\
  0&\text{ if }y\not\in \crit(f),
\end{cases}$$
defined for $\alpha\in \cF$ and $y\in \crit(F)$. 
 Let us now show that the same formula is valid for $\varphi_!$.   Since the negative gradient of $F$ points outwards along the boundary of the tubular neighbourhood of $X$ in $Y$, we obviously have  $\ol W^s_f(x)= \ol W^s_F(x)$ for any  $x\in \Crit(f)$. Thus for any $x'\in \crit(G)$ and $y\in \Crit(F)$ we have 
$$
\ol \cM^{\varphi_!} (x',y)  \cong \overline{W}_G^u(x') \, _\varphi\!\times \overline{W}_f^s(y)\, =\,  \overline{W}_G^u(x') \, _\Id\!\times \overline{W}_F^s(y) \cong \ol\calm^{\Id_!}(x',y).$$
This will enable us to relate $\varphi_!$ and $\Id_!$. 

We start by showing that  the identification above  preserves orientations.  The orientation rule \eqref{eq:shriek-orientation-second} for  the two spaces yields 
$$\left(\ori\, \ol W_G^s(x'), \ori\, \ol\calm^{\varphi_!}(x',y)\right) \, =\, \ori\, \ol W^s_f(y)$$
and 
$$\left(\ori\, \ol W_G^s(x'), \ori\, \ol\calm^{\Id_!}(x',y)\right) \, =\, \ori\, \ol W^s_F(y).$$
We therefore have to compare the orientations of $\ol W^s_f(y)$ and $\ol W^s_F(y)$. The orientation convention \eqref{eq:orientation-rule-stable} writes here
$$\left(\ori\, \ol W^s_f(y), \ori\, \ol W^u_f(y)\right)\, =\, \ori\, X$$ and 
$$\left(\ori\, \ol W^s_F(y), \ori\, \ol W^u_F(y)\right)\, =\, \ori\, Y.$$ 
Denoting by $N$ the normal bundle of the submanifold $X\subset Y$, we fixed in \S\ref{sec:funct-closed-submanifolds} the  orientation rule \eqref{eq:orientation-N} 
$$\left(\ori\, \ol W^u_f(y), \ori\, N\right) \, =\, \ori\, \ol W^u_F(y).$$
Combined with the above this gives 
$$\left(\ori\,  \ol W_F^s(y), \ori\,  \ol W^u_f(y), \ori\, N\right)\ \, =\, \ori \, Y.$$
On the other hand,  the orientation rule \eqref{eq:both-oriented} 
$$\left(\ori\, X, \ori\, N\right)\, =\, \ori\,  Y$$ 
implies
$$\left(\ori\, \ol W^s_f(y), \ori\, \ol W^u_f(y), \ori \, N\right)\, =\, \left(\ori\, X, \ori\, N\right)\, =\, \ori \, Y. $$
We thus find \begin{equation}\label{eq:stable=stable} \ori\, \ol W^s_f(y)=\ori\, \ol W_F^s(y),\end{equation}  and hence we have $\ori \, \ol \calm^{\varphi_!}(x',y)=\ori\, \ol\calm^{\Id_!}(x',y)$ as claimed. 
Therefore we may (and will)  take the same  representing chain system on both spaces. This has to be corrected by the sign $(-1)^{[x'] -[y]-1}$  (cf.~\eqref{eq:correction-rcs-shriek}),  but this sign is the same for both spaces since equals $\mathrm{dim}(\ol \calm^{\varphi_!}(x',y))-1 =\mathrm{dim}(\ol\calm^{\Id_!}(x',y))-1$. Therefore we get identical corrected representing chain systems.
  
 Now using a homotopy which extends $H^X$ on the domain of $\Id:Y\ri Y$, and using the homotopy $H^Y$ on  the target, it is easy to check that the evaluation maps $q_{x',y}^{\varphi_!}$ and $q^{\Id_!}_{x',y}$ are also identical for any $x'\in \crit(G)$ and $y\in \crit(f)$. Putting this together we infer that $\varphi_!\, =\, \mathrm{pr}\circ \Id_!$.
 We finish the proof of the argument for inclusions by applying Proposition~\ref{id=continuation-shriek}, which implies
 $\varphi_! \, =\,  \mathrm{pr}\circ \Psi\, \, =\, \ol \varphi_!$
 using \eqref{eq:formula-phi-bar}.
 
 More generally, if $\varphi:X\ri Y$ is an embedding between oriented manifolds, we see it as the composition $i\circ \varphi_X$ where $\varphi_X:X\ri \varphi(X)$ is a diffeomorphism which preserves orientations and $i:\varphi(X) \hookrightarrow Y$ is the inclusion. We use $\varphi$ to  transport the set of data $\Xi^X$  on $\varphi(X)$,  where we denote it by  $\varphi_*(\Xi^X)$. We consider  $\ol{i}_!, i_!: C_*(Y, \Xi^Y;\calf)\ri C_*(\varphi(X), \varphi_*(\Xi^X); i^*\calf)$, the two shriek maps defined by the inclusion;  by the above we have $\ol{i}_!= i_!$. 
 
 By definition we have $\ol\varphi_!= (\varphi_X)_!\circ \ol i_!= (\varphi_X^{-1})_*\circ \ol i_!$. On the other hand there is a natural identification  $\ol\calm^{\varphi_!}(x',y)\cong \ol \calm^{i_!}(x',\varphi(y))$ which moreover preserves orientations.  It is easy to see that this identification matches with the evaluations $q_{x',y}^{\varphi_!}:\ol\calm^{\varphi_!}(x',y)\ri \Omega Y$ and $q_{x',\varphi(y)}: \ol\calm^{i_!}(x',\varphi(y))\ri \Omega Y$. Since $(\phi_X)_!=(\varphi_X^{-1})_*$ maps $\alpha\otimes \varphi(y)$ to $\alpha\otimes y$ we infer that $\varphi_!= (\phi_X)_!\circ i_!$  and therefore, as $i_!=\ol i_!$, we get $\varphi_!=\ol\varphi_!$, as claimed. 
 
 \smallskip
\underline{Proof of (b).}  The map  $\ol\pi_!$ is induced in  homology by the map of complexes  $C_*(Y,\Xi^Y;\calf)\ri C_{*+m}(D^m\times Y, \Xi^D\times \Xi^Y;\pi^*\calf)$  defined by 
$$\ol\pi_!(\alpha\otimes x) \, =\, \alpha\otimes(0,x).$$
Indeed, the shriek map of the inclusion $i: Y\hookrightarrow D\times Y$ was defined in~\S\ref{sec:funct-closed-submanifolds} by $\ol i_!(\alpha\otimes(0,x))=\alpha \otimes x$, and since $\pi\circ i=\Id$ and $i_!$ was proved to be an isomorphism,  we get  $\ol\pi_!=(\ol i_!)^{-1}$ in homology using  the composition rule. Note that this particular formula for $\ol i_!$, and therefore also the one for $\ol\pi_!$, are valid under the orientation convention~\eqref{eq:orientation-N} for unstable manifolds from~\S\ref{sec:funct-closed-submanifolds}, as was discussed above. We will use in the sequel the fact that the orientations conventions 
yield the shriek map of the inclusion $i$.

 Let us now describe   the   map $\pi_!: C_*(Y,\Xi^Y;\calf)\ri C_{*+m}(D\times Y, \Xi^D\times \Xi^Y;\pi^*\calf)$ and prove that it coincides with $\ol\pi_!$ in homology.

  If $G: Y\ri \R$ is the Morse function of $\Xi^Y$ then we  have  $\ol W^s_{h+G}(0,y)  = \{0\}\times \ol W^s_G(y)$ for any $y\in \Crit(G)$ since $0$ is a maximum for  the function $h$ . Thus for any $x',y\in \Crit(G)$ we have 
\begin{align*}
\ol\calm^{\pi_!}(x',(0,y)) & \cong \ol W^s_{h+G}(0,y) \, _\pi\!\times \overline{W}_G^u(x') \\
& \cong \{0\} \times \ol\calm_G^{\Id_!}(x',y)\cong \ol\calm_G^{\Id_!}(x',y).
\end{align*}
Moreover this identification preserves the orientations. To see this, recall that the orientations are given by the rule \eqref{eq:shriek-orientation-second}  and  it suffices to justify that $\ori\, \ol W_{h+G}^s(0,y)= \ori \, \ol W^s_G(y)$. We proved this property \eqref{eq:stable=stable} at point (a) above for any embedding $\varphi:X\ri Y$, so we may apply it to our particular situation $i: Y\hookrightarrow D\times Y$. 

We are therefore allowed to choose the same representing chain system for the two sets of moduli spaces $ \ol\calm^{\pi_!}(x',(0,y))$ and  $\ol\calm_G^{\Id_!}(x',y)$, and moreover the same corrected representing chain system (see \eqref{eq:correction-rcs-shriek}), since the sign of the correction is $(-1)^{\mathrm{dimension}-1}$. 
 
  It is also easy to check that, due to the particular  choice of our data  on $D$,   the evaluations $q^{\pi_!}: \ol\calm^{\pi_!}(x',(0,y))\ri \Omega Y$ and $q^{\Id_!}: \ol\calm_G^{\Id_!}(x',y)\ri\Omega Y$ coincide (via the identification above). Recalling  the formula for $\ol\pi_!$ given at the beginning of the proof we  infer that 
  $$\pi_! = \ol\pi_!\circ \Id_!,$$
   where $\Id_!: C_*(Y,\Xi^Y;\calf)\ri C_*(Y,\Xi^Y;\calf)$ is the shriek map of the identity.  Applying Proposition~\ref{id=continuation-shriek} we find that $\Id_!=\Id$ and the conclusion follows. 
   
   \smallskip \underline{Proof of (c).} The proof goes along the same lines as the proof of Proposition~\ref{alex=mihai}(c).
We establish a formula at the level of complexes for the shriek map of the embedding  $\varphi^\chi: X\ri D\times Y$ and try to relate it to the formula for $\varphi_!$.  We begin with the description of the relevant moduli spaces.   Denote as above by  $G$  the Morse function for $\Xi^Y$ and by $h: D\ri \R$ the function with a unique maximum at $0$ for the set of data $\Xi^D$.  The negative gradient of $h$ points outwards along $\partial D$, so we have $\ol W^u_{h+G}(0,x')\, =\, D\times \ol W^s_G(x')$ for any $x'\in \Crit(G)$. Denoting by  $f: X\ri\R$  the Morse function of $\Xi^X$ we then get for any   $x'\in \Crit(G)$ and $y\in \crit(f)$: 
\begin{align*}
\ol\calm^{\varphi^\chi_!}((0,x'),y)\,  & \cong \ol W^s_{f}(y) \, _{(\chi,\varphi)}\!\times (D\times \overline{W}_{G}^u(x')) \\
& \cong\, \{(a,b, \chi(a))\, |\, (a,b)\in  \ol W^s_{f}(y) \, _{\varphi}\!\times  \overline{W}_{G}^u(x')\}.
\end{align*}
This space projects onto  $\ol\calm^{\varphi_!}(x',y)=\ol W^s_{f}(y) \, _{\varphi}\!\times  \overline{W}_{G}^u(x')$ and the projection $\Pi$  is a homeomorphism. Let us show that $\Pi$ preserves the orientations. The orientation rule \eqref{eq:shriek-orientation-second} writes here 
$$
\left(\ori\, \ol W_{h+G}^s(0,x'), \ori \, \ol\calm^{\varphi^\chi_!}((0,x'),y))\right)\, =\, \ori\, \ol W^s_f(y).
$$
According to \eqref{eq:stable=stable} we have  $\ol W_{h+G}^s(0,x')= \ol W^s_G(x')$ {\it as oriented manifolds} for any embedding.  Therefore we find the same  orientation as for  $\ol\calm^\varphi(x,y')$, i.e.,
$$\left(\ori\, \ol W_{G}^s(x'), \ori \, \ol\calm^{\varphi_!}(x',y)\right)\, =\, \ori\, \ol W^s_f(y),$$
which proves that $\Pi$ preserves orientations. 
As a consequence, we can choose 
$(\Pi^{-1})_*(\sigma_{x',y})$  as a corrected  representing chain system on $\ol \calm^{\varphi^\chi}((0,x'), y)$, where $(\sigma_{x',y})$ is a  corrected representing chain system for $\ol \calm^\varphi(x',y)$ (the correcting sign is the same for both moduli spaces since they have the same dimension). 

Using  the homotopies $H^X$ on  the source and $\Id\times H^Y$ on the target we then  define the evaluation map $q^{\varphi^\chi_!}:\ol\calm^{\varphi^\chi_!}((0,x'),y)\ri \Omega(D\times Y)$  using the formula~\eqref{eq:evaluation-shriek}. Evaluating the above representing chain system we get 
$$
\nu^{\varphi^\chi_!}_{(0,x'),y}\, =\, - q^{\varphi^\chi_!}_*(\Pi^{-1}_*(\sigma_{x',y})),
$$ 
and then 
$$\varphi^\chi_!(\alpha\otimes (0,x'))\, =\, \alpha\sum_{y\in \crit f}\pi_*(\nu_{(0,x') ,y}^{\varphi^\chi_!})\otimes y$$
for any $\alpha\in \calf$ and $x'\in \crit(G)$, where $\pi: D\times Y\ri Y$ is the projection. As in the proof of Proposition~\ref{alex=mihai} the presence of  $\pi_*$ in the above formula is due to the fact that the DG-module on $D\times Y$ is $\pi^*\calf$. 

Now using the equality (in homology)  $\pi_!= \ol\pi_!$ proved in (b) and the formula for the latter we obtain for any $x'\in \crit(G)$ and $\alpha\in \calf$ 
\begin{align*}
\varphi^\chi_!\circ \pi_!(\alpha\otimes x') & = \varphi^\chi_!\circ \ol\pi_!(\alpha\otimes x')\\
& = \varphi^\chi_!(\alpha\otimes (0,x'))\, =\, \alpha\sum_{y\in \crit f}\pi_*(\nu_{(0,x') ,y}^{\varphi^\chi_!})\otimes y.
\end{align*}
This has to be compared with 
$$\varphi_!(\alpha\otimes x') \, =\, \alpha\sum_{y\in \crit f}\nu_{x',y}^{\varphi_!}\otimes y,$$
where $\nu^{\varphi_!}_{x,y'}= - q^{\varphi_!}_*(\sigma_{x',y})$. 
There would be equality between the two if the diagram 
$$\xymatrix{\ol\calm^{\varphi^\chi_!}((0,x'),y)\ar[r]^-{q^{\varphi^{\chi}_!}}\ar[d]_{\Pi} & \Omega(D\times Y)\ar[d]^-\pi\\
\ol \calm^{\varphi_!}(x,y')\ar[r]^-{q^{\varphi_!}} &\Omega Y}$$
was commutative. However, just like in the proof Proposition~\ref{alex=mihai}, we are only able to show that the diagram is homotopy commutative. The proof of this result is completely analogous to the one of Lemma~\ref{lem:homotopic-evaluations-bis} and we leave the details to the reader. 

Having a homotopy commutative diagram however suffices to conclude that $\varphi^\chi_!\circ \pi_!=\varphi_!$ at the level of homology, in a similar manner as in the proof of Proposition~\ref{alex=mihai}. 
This concludes the proof of Proposition~\ref{alex=mihai-shriek}.
\end{proof} 

%%%%%%
\section[Maps between manifolds of equal dimension]{Maps between closed oriented manifolds of equal dimension} \label{sec:degree}
%%%%%%

We prove in this section in the context of DG-coefficients two results about shriek maps between closed orientable manifolds of the same dimension. Their counterparts with local coefficients are classical (see for example~\cite[Chapter~VIII, Proposition~10.10]{Dold}).

\begin{proposition} \label{prop:phi*phi!}
Let $\varphi:X\to Y$ be a map of degree $d\in\Z$ between closed oriented manifolds of equal dimensions. Let $\cF$ be a DG local system on $Y$. Then 
$$
\varphi_* \varphi_!=d\cdot \Id.
$$
In particular, if $d=\pm 1$ the map $\varphi_!$ is injective and the map $\varphi_*$ is surjective. 
\end{proposition}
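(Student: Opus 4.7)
The plan is to use the moduli-space definition of $\varphi_*$ and $\varphi_!$ from Chapter~\ref{sec:second-definition} and construct a one-parameter cobordism of moduli spaces interpolating between $\varphi_*\varphi_!$ and $d\cdot\Id_*$.

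First I would unwrap the composition at chain level. Choose Morse-Smale pairs $(f,\xi_f)$ on $X$ and $(g,\xi_g)$ on $Y$ generic enough so that both $\varphi_*$ and $\varphi_!$ are defined by~\eqref{eq:direct-second-definition} and~\eqref{eq:shriek-second-definition}. Then
\[
(\varphi_*\varphi_!)(\alpha\otimes x')=\sum_{z'\in\Crit(g)}\alpha\cdot\Big(\sum_{y\in\Crit(f)}\nu^{\varphi_!}_{x',y}\,\nu^\varphi_{y,z'}\Big)\otimes z',
\]
where $\nu^{\varphi_!}_{x',y}$ and $\nu^\varphi_{y,z'}$ are obtained by evaluating corrected representing chain systems on $\ol\cM^{\varphi_!}(x',y)$ and $\ol\cM^\varphi(y,z')$ respectively. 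The concatenation corresponds geometrically to the product moduli spaces $\ol\cM^{\varphi_!}(x',y)\times\ol\cM^\varphi(y,z')$, which I want to realize as the ``$t=\infty$'' stratum of a suitable cobordism.

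To build such a cobordism, introduce the interpolating moduli space
\[
\cH(x',z')=\big\{(t,a)\in[0,+\infty)\times X\,:\,\varphi(a)\in W^u_g(x'),\ \varphi(\phi^t_{\xi_f}(a))\in W^s_g(z')\big\},
\]
where $\phi^t_{\xi_f}$ denotes the time-$t$ flow of $\xi_f$. After choosing the pseudo-gradients generically, $\cH(x',z')$ is a smooth manifold of dimension $|x'|-|z'|+1$, and standard compactness arguments identify its natural compactification $\ol\cH(x',z')$ as a manifold with boundary and corners with
\begin{align*}
\partial\ol\cH(x',z')=\, &\varphi^{-1}\!\big(\ol\cM^{\Id}(x',z')\big)\ \cup\ \bigcup_{y\in\Crit(f)}\ol\cM^{\varphi_!}(x',y)\times\ol\cM^\varphi(y,z')\\
&\cup\ \bigcup_{w'}\ol\cL_g(x',w')\times\ol\cH(w',z')\ \cup\ \bigcup_{w'}\ol\cH(x',w')\times\ol\cL_g(w',z'),
\end{align*}
where the first stratum is $t=0$ and the second is $t=\infty$. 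Define an evaluation map $q^{\cH}_{x',z'}:\ol\cH(x',z')\to\Omega Y$ built by inserting between the two half-infinite $\xi_g$-trajectories the path $\{\varphi(\phi^\tau_{\xi_f}(a))\}_{\tau\in[0,t]}$, corrected at the endpoints by the homotopies $H^X$, $H^Y$ as in~\eqref{eq:formula-evaluation-direct}; this evaluation is compatible with the boundary stratification. Evaluating a representing chain system of $\ol\cH(x',z')$ then yields chains $h_{x',z'}\in C_{|x'|-|z'|+1}(\Omega Y)$ satisfying, after the sign corrections dictated by Lemmas~\ref{lem:orientation-difference-direct} and~\ref{lem:orientation-difference-shriek}, an algebraic homotopy relation
\[
\sum_{y}\nu^{\varphi_!}_{x',y}\nu^\varphi_{y,z'}-\mu_{x',z'}=\partial h_{x',z'}+\sum_{w'}m^g_{x',w'}h_{w',z'}+\sum_{w'}(-1)^{|x'|-|w'|}h_{x',w'}m^g_{w',z'},
\]
with $\mu_{x',z'}\in C_{|x'|-|z'|}(\Omega Y)$ the cocycle evaluated from the $t=0$ stratum.

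The core step, and the main obstacle I expect, is to identify $\mu_{x',z'}$ with $d\cdot\nu^{\Id}_{x',z'}$. By construction $\varphi$ restricts to a proper map $\varphi^{-1}(\ol\cM^{\Id}(x',z'))\to\ol\cM^{\Id}(x',z')$ between oriented manifolds of equal dimension $|x'|-|z'|$; a signed count of preimages of any regular value shows that this restriction has degree $d=\deg\varphi$. By choosing the corrected representing chain system on $\varphi^{-1}(\ol\cM^\Id(x',z'))$ to be the pullback (modulo boundary corrections handled inductively on $|x'|-|z'|$) of the corresponding system used to define $\Id_*:C_*(Y;\cF)\to C_*(Y;\cF)$ on $Y$, and because the evaluation map at $t=0$ factors as $q^\Id_{x',z'}\circ\varphi$, the pushforward gives $\mu_{x',z'}=d\cdot\nu^{\Id}_{x',z'}$ at chain level. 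The homotopy relation above then reads $\sum_y\nu^{\varphi_!}_{x',y}\nu^\varphi_{y,z'}-d\cdot\nu^{\Id}_{x',z'}=D h_{x',z'}$, exhibiting a chain homotopy $\varphi_*\varphi_!\simeq d\cdot\Id_*$. Since $\Id_*$ induces the identity on homology by the {\sc (Identity)} property of Theorem~\ref{thm:f*!}, this yields $\varphi_*\varphi_!=d\cdot\Id$ on $H_*(Y;\cF)$. The particular case $d=\pm1$ is then immediate: $d\cdot\Id$ is invertible, so $d\cdot\varphi_*$ is a left inverse of $\varphi_!$ (making $\varphi_!$ injective) and a right inverse of $\varphi_*$ composed with $\varphi_!$ (making $\varphi_*$ surjective).
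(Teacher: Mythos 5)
Your construction of the interpolating moduli space $\cH(x',z')$, its boundary stratification into the $t=0$, $t=\infty$, and Morse-breaking parts, and the resulting algebraic homotopy relation all match the paper's argument. The $t=0$ stratum $\ol\cH_{t=0}(x',z')=\varphi^{-1}(\ol W^u(x')\cap\ol W^s(z'))$ is also correctly identified.

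The gap is in the core step where you claim $\mu_{x',z'}=d\cdot\nu^\Id_{x',z'}$ at chain level ``by choosing the corrected representing chain system on $\varphi^{-1}(\ol\cM^\Id(x',z'))$ to be the pullback'' of the one on $\ol\cM^\Id(x',z')$. The restriction of $\varphi$ to $\varphi^{-1}(\ol\cM^\Id(x',z'))\to\ol\cM^\Id(x',z')$ is a proper degree-$d$ map, not a covering, so there is no well-defined pullback of chains. Proper pushforward $\varphi_*$ multiplies the fundamental class \emph{rel boundary} by $d$, but at the level of actual chain representatives $\varphi_*(\sigma^0_{x',z'})$ will only agree with $d\cdot\nu^\Id_{x',z'}$ up to a boundary plus lower-order terms, so you would need yet another inductive homotopy to close the argument, and the sketch does not supply it. Moreover $\nu^\Id_{x',z'}$ is itself a continuation cocycle whose associated chain map $\Id_*$ is only chain homotopic to $\Id$ (cf.\ Proposition~\ref{id=continuation}), so your plan stacks two chain homotopies.

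The paper avoids both issues by showing that the morphism $\Psi^0$ coming from the $t=0$ stratum is \emph{literally} $d\cdot\Id$ at chain level, not merely homotopic to $d\cdot\Id_*$. The mechanism is the degenerate-chain trick of Lemma~\ref{special-rcs}: the $t=0$ space fibers over the unparametrized moduli space $\ol\cL(x',z')$ via a projection $\pi$ with one-dimensional fiber, and the evaluation $q^0$ factors through $\pi$. One can therefore build the representing chain system $\tilde\sigma^0_{x',z'}$ inductively so that $\pi_*\tilde\sigma^0_{x',z'}=0$ whenever $|x'|>|z'|$ (the chain has dimension one larger than $\ol\cL(x',z')$, so it ``wants to be degenerate''), which forces $\nu^0_{x',z'}=0$ off the diagonal. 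On the diagonal the orientation bookkeeping (equation~\eqref{eq:rule4}) identifies $\tilde\sigma^0_{x',x'}$ with the signed count of $\varphi^{-1}(x')$, i.e.\ $\nu^0_{x',x'}=d\cdot\star$. No comparison to $\Id_*$ and no pullback of chains along a non-covering map is needed. If you replace your ``pullback'' step with this vanishing argument, your plan coincides with the paper's proof.
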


\begin{proof} The identity $\varphi_* \varphi_!=d\cdot \Id$ is proved by showing that the corresponding chain maps are homotopic. We  
use the definition of direct maps and shriek maps from~\S\ref{sec:funct-direct-alex} and~\S\ref{sec:second-definition-shriek}. Denote  by $\Xi^X=(f,\eta,\ldots)$ and by $\Xi^Y=(g,\xi, \ldots)$   two sets of data on $X$, resp.  $Y$, and denote by $\phi^t_\eta$ the flow of the pseudo-gradient  $\eta$ for $t\in\R$. The composition $\varphi_*\circ\varphi_!$ is described using product moduli spaces $\ol{\cM}^{\varphi_!}(x',y)\times \ol{\cM}^\varphi(y,z')$  for $x',z'\in \crit(g)$ and $y\in\crit(f)$, where the first factor involves the pseudo-gradients $\xi$ and $\eta$, and the second factor involves the pseudo-gradients $\eta$ and $\xi$. 
 The homotopy is constructed by considering the moduli spaces
$$
\cH(x',z')=\bigcup_{t>0} \phi^t_\eta\varphi^{-1}(W^u(x')) \cap \varphi^{-1}(W^s(z')).
$$

\begin{figure}
  \centering
  \includegraphics[width=\textwidth]{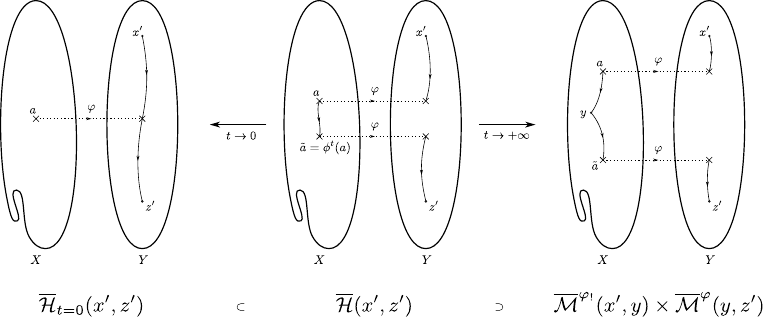}
  \caption{The space $\cH(x',z')$, and its boundary parts associated to $t=0$
    and $t=+\infty$.}
  \label{fig:ShriekStar}
\end{figure}

These have to be interpreted as configurations consisting of a
half-infinite trajectory of $\xi$ flowing out of $x'$ to a point $b$ in
$Y$, a trajectory of $\eta$ of length $t>0$ in $X$ beginning at a point
$a$ such that $\varphi(a)=b$, and a half-infinite trajectory of $\xi$ in
$Y$ beginning at $\varphi(\phi^t_\eta(a))$ and flowing into $z'$. The
compactification $\ol{\cH}(x',z')$ is a manifold with boundary and
corners of dimension $|x'|-|z'|+1$. Its boundary can be described as 
\begin{align*}
\p\ol{\cH}(x',z') = & \bigcup_{x''\in\crit(g)} \!\!\!\ol{\cL}(x',x'')\times \ol{\cH}(x'',z') \cup \bigcup_{z''\in\crit(g)} \!\!\!\ol{\cH}(x',z'')\times \ol{\cL}(z'',z') \\
& \cup \ol{\cH}_{t=0}(x',z') \cup \bigcup_{y \in\crit(f)}\ol{\cM}^{\varphi_!}(x',y)\times \ol{\cM}^\varphi(y,z'),
\end{align*}
where the first two terms correspond to Morse breaking at $\pm\infty$, the fourth term corresponds to Morse breaking in the limit $t\to\infty$, and the third term $\ol{\cH}_{t=0}(x',z')$ corresponds to the limit $t\to 0$ and is described as 
$$
\ol{\cH}_{t=0}(x',z') = \varphi^{-1}(\ol{W}^u(x')\cap \ol{W}^s(z')). 
$$
 See Figure \ref{fig:ShriekStar}. The choice of a representing chain system for the compactified moduli spaces $\ol{\cH}(x',z')$ provides a homotopy between $\varphi_*\varphi_!$ and the morphism defined by the moduli spaces $\ol{\cH}_{t=0}(x',z')$. 
We will show that  the latter induces  $d\cdot \mathrm{Id}$ in homology. 

The moduli space $\ol{\cH}_{t=0}(x',z')$ can be interpreted as the compactification of the space of pairs $(a,\gamma)$, $a\in X$, $\gamma\in \cM(x',z')$ a parametrized trajectory from $x'$ to $z'$, and $\varphi(a)=\gamma(0)$. 
If $x'=z'$ the moduli space $\ol{\cH}_{t=0}(x',x')$ is identified with $\varphi^{-1}(x')$. 
If $|x'|>|z'|$ the moduli space  $\ol{\cH}_{t=0}(x',z')$ is a compact  manifold with boundary and corners of dimension $|x'|-|z'|$; its boundary is 
\begin{align*} \p & \ol{\cH}_{t=0}(x',z')\\
&=\bigcup_{x''\in \crit( g)} \!\!\!\ol\call(x',x'')\times  \ol{\cH}_{t=0}(x'',z')\, \cup\bigcup_{z''\in\crit(g)} \!\!\!\ol{\cH}_{t=0}(x',z'')\times\ol\call(z'',z').\end{align*}

 In order  to prove that $\varphi_*\varphi_!$ and $d\cdot\Id$ are homotopic using the moduli spaces $\ol\cH(x',z')$ and $\ol\cH_{t=0}(x',z')$, we first need to orient these spaces and then to construct  a representing chain system on each one  of them.

 \underline{Orientations.}  To orient $\ol\cH(x',z')$ we proceed as follows: We choose orientations on $X$ and on $Y$ and on the unstable manifolds of all critical points of $f$ and  $g$. Consider the submersion $\Gamma: X\times [0,+\infty)\times X\ri X\times X$ defined by $$\Gamma(a_1, t, a_2)\, =\, (a_1, \phi_\eta^t(a_2))$$
and denote by $Z$ the pre-image of the diagonal $Z=\Gamma^{-1}(\Delta_X)$ which may be seen as the set of  segments of pseudo-gradient trajectories in $X$. This manifold is diffeomorphic to $X\times [0, +\infty)$ and it inherits the orientation of the latter: 
\begin{equation}\label{eq:rule1}
\ori \, Z\, =\, \left(\ori\, X, \p_t\right)
\end{equation}
We compactify it to the manifold with boundary and corners $$\ol Z=Z\cup \bigcup_{y\in \Crit(f)}\ol W^s(y)\times \{+\infty\}\times \ol W^u(y).$$ Note that the boundary $\p\ol Z$ also contains the diagonal $\{(a,0,a)\, |\, a\in X\}$. Now if we define $\chi :\ol Z\ri Y\times Y$ by $$\chi(a_1,t,a_2)= (\varphi(a_1), \varphi(a_2)),$$
 we remark that $\ol\cH(x',z')= \chi^{-1}(\ol W^u(x')\times \ol W^s(z'))$ and we will orient our moduli space using this description: the computations will be simpler if instead of the usual rule $$\left(\ori\, \ol\cH(x',z'), \coori \, \ol W^u(x'), \coori\, \ol W^s(z')\right)\, =\, \ori \, \ol Z$$
we take $$\left( \coori\,  \ol W^u(x'), \ori\, \ol\cH(x',z'), \coori\, \ol W^s(z')\right)\, =\, \ori \, \ol Z$$
which we write using \eqref{eq:rule1}: 
\begin{equation}\label{eq:rule2}
\left( \ori \, \ol W^s(x'), \ori\, \ol\cH(x',z'), \ori\, \ol W^u(z')\right)\, =\, \left( \ori \, X, \p_t\right).\end{equation} 

To orient the stable manifolds we recall the  convention \eqref{eq:orientation-rule-stable}  from \S\ref{sec:second-definition-shriek}:
$$
\left(\ori\,  \ol W^s,\ori\, \ol W^u\right)\, =\, \ori\, X,
$$
and similarly for $Y$. 

We choose to orient  $\ol\cH_{t=0}(x',z')$ as the boundary of $\ol\cH(x',z')$, using the outward normal vector field which is $-\p_t$, so $$\left(-\p_t, \ori \, \ol\cH_{t=0}(x',z')\right)= \ori \, \ol\cH(x',z').$$  Using  \eqref{eq:rule2} this yields the following orientation rule for $\ol\cH_{t=0}(x',z')$: 
$$\left( \ori \, \ol W^s(x'), -\p_t, \ori\, \ol\cH_{t=0}(x',z'), \ori\, \ol W^u(z')\right)\, =\, \left( \ori \, X, \p_t\right). $$
Therefore, moving $\p_t$ to the end of the left hand side 
\begin{equation}\label{eq:rule3}
\left( \ori \, \ol W^s(x'), \ori\, \ol\cH_{t=0}(x',z'), \ori\, \ol W^u(z')\right)\, =\, (-1)^{|x'|+1}  \ori \, X\end{equation} 

The case $x'=z'$ is of particular interest. The manifold $\cH(x',x')$ being $0$-dimensional equation (\ref{eq:rule3}) writes 
$$\left( \ori\, \cH_{t=0}(x',x'), \ori \, \ol W^s(x'), \ori\, \ol W^u(x')\right)\, =\, (-1)^{|x'|+1}  \ori \, X$$
which using \eqref{eq:orientation-rule-stable} becomes: 
\begin{equation}\label{eq:rule4}
\left( \ori\, \cH_{t=0}(x',x'), \ori\, Y\right)\, =\, (-1)^{|x'|+1}\ori\, X.
\end{equation}
This means that the orientation of a point $a\in \varphi^{-1}(x')= \cH_{t=0}(x',x')$ differs from the usual one (which is $\pm 1$ depending on whether $T_a\varphi$ preserves orientations or not) by the sign $(-1)^{|x'|+1}$. 

% Then we orient $\ol\calm(x',y)$ by 
%begin{equation}\label{eq:shriek-orientation-first}
%\left(\coori\,  \ol W^u_g(x'), \ori\, \ol\calm(x',y)\right)\, =\, \ori\, \ol  W_f^s(y),
%\end{equation} 
%which we may also write
%\begin{equation}\label{eq:shriek-orientation-second}
%\left( \ori\,  \ol W^s_g(x'), \ori\, \ol\calm(x',y), \right)\, =\, \ori\, \ol  W_f^s(y).
%\end{equation} 

 The next step is to  determine the difference between the  boundary orientation of   $\p \cH(x',z')$ and the orientations already assigned to the different parts of this boundary. Using the orientation rule \eqref{eq:rule2} we get: 
 
 \begin{lemma}\label{lem:last-orientation-differences} We have: 
 $$\ori\,  \, \p\ol \cH(x',z')\, = \, (-1)^{|x|'-|x''|}\ori\, \left(\ol\call(x',x'')\times \ol\cH(x'',z')\right)$$
 $$\ori\,  \, \p\ol \cH(x',z')\, = \, (-1)^{|x|'-|z'|}\ori\, \left( \ol\cH(x',z'')\times\ol\call(z'',z')\right)$$
 $$\ori\, \,  \p\ol \cH(x',z')\, = \, (-1)^{|x|'}\ori\, \left(\ol\calm^ {\varphi_!}(x',y)\times \ol\calm^\varphi(y,z')\right)$$
 $$\ori\, \,  \p\ol \cH(x',z')\, = \, \ori\, \, \ol\cH_{t=0}(x',z')$$
 \end{lemma}
 \begin{proof} Let $n$ be the outward normal to the space $\p\ol\cH(x',z')$ at a point of $\call(x',x'')\times \cH(x'',z')$. According to \eqref{eq:rule2} we have 
 \begin{equation}\label{compare1}\left( \ori \, \ol W^s(x'), n, \ori\, \p\ol\cH(x',z'), \ori\, \ol W^u(z')\right)\, =\, \left( \ori \, X, \p_t\right).\end{equation}
 
 Reasoning like in the proof of Lemma \ref{lem:orientation-difference-shriek} we may suppose that $n=\xi$,  the outward normal of $\p \ol W^u(x')$ along $\call(x',x'')\times W^u(x'')$, which at $(\lambda,x'')$ is the gradient vector on $Y$. Applying  \eqref{eq:changed-orientation-rule} in the third equality below and \eqref{eq:rule2} in the fourth we get: 
 \begin{align*}\big(& \ori \, \ol W^s(x'),  n, \ori\, \ol\call(x',x''), \ori\, \ol\cH(x'',z'), \ori\, \ol W^u(z')\big)\\
 &=\, \left( \ori \, \ol W^s(x'), \xi,  \ori\,  \ol\call(x',x''), \ori\, \ol\cH(x'',z'), \ori\, \ol W^u(z')\right)\\
  &=\, (-1)^{|x'|-|x''|}\left( \ori \, \ol W^s(x'),   \ori\,  \ol\call(x',x''), -\xi, \ori\, \ol\cH(x'',z'), \ori\, \ol W^u(z')\right)\\
  &=\, (-1)^{|x'|-|x''|}\left( \ori \, \ol W^s(x''),   \ori\, \ol\cH(x'',z'), \ori\, \ol W^u(z')\right)\\
  &=\, (-1)^{|x'|-|x''|}\left( \ori \, X, \p_t\right),
 \end{align*}
 according to \eqref{eq:rule1}. Comparing the above to \eqref{compare1} we get our first relation. 
 
 The proof of the second one is very much similar. Again like in the proof of Lemma \ref{lem:orientation-difference-shriek} we may take $n=-\xi$, the outward normal of $\p\ol W^s(z')$ along $W^s(z'')\times \call(z'',z')$, and we infer applying the orientation rule  \eqref{eq:orientation-rule} in the third equality and \eqref{eq:rule2} in the fourth: 
 \begin{align*}\big( &\ori \, \ol W^s(x'),  n,  \ori\, \ol\cH(x',z''), \ori\, \ol\call(z'',z'),  \ori\, \ol W^u(z')\big)\\
 &=\, \left( \ori \, \ol W^s(x'), -\xi,    \ori\, \ol\cH(x',z''), \ori\, \ol\call(z'',z'), \ori\, \ol W^u(z')\right)\\
  &=\, (-1)^{|x'|-|z'|}\left( \ori \, \ol W^s(x'),     \ori\, \ol\cH(x',z''), \ori\, \ol\call(z'',z'), -\xi, \ori\, \ol W^u(z')\right)\\
  &=\, (-1)^{|x'|-|z'|}\left( \ori \, \ol W^s(x'),     \ori\, \ol\cH(x',z''), \ori\,  \ol W^u(z'')\right)\\
  &=\, (-1)^{|x'|-|z'|}\left( \ori \, X, \p_t\right),
 \end{align*}
which compared to \eqref{compare1} implies the claimed sign difference.

Let us now analyse the third relation. By definition of $\ol\cH(x',z')$ the  outward normal vector  at a point of $\calm^{\varphi_!}(x',y)\times \calm^\varphi(y,z')\subset \p\ol\cH(x',z')$ is also an outward normal vector for $\p\ol Z$ at a point of $W^s(y)\times\{+\infty\}\times W^u(y)$ and via the (orientation preserving)  diffeomorphism between $Z$ and $X\times [0,+\infty)$ we may consider that $n=\p_t$. We get by applying the orientation rules  \eqref{eq:shriek-orientation-second} and  \eqref{eq:orientation-direct-second} for $\ol\calm^{\varphi_!}(x',y)$ resp. $\ol\calm^\varphi(y,z')$: 
\begin{align*} 
\big(& \ori \, \ol W^s(x'),  n,  \ori\, \ol\calm^{\varphi_!}(x',y), \ori\, \ol\calm^\varphi(y,z'),  \ori\, \ol W^u(z')\big)\\
&=\, \left( \ori \, \ol W^s(x'),  \p_t,  \ori\, \ol\calm^{\varphi_!}(x',y), \ori\, \ol\calm^\varphi(y,z'),  \ori\, \ol W^u(z')\right)\\
&=\, (-1)^{|x'|}\left( \ori \, \ol W^s(x'),  \ori\, \ol\calm^{\varphi_!}(x',y), \ori\, \ol\calm^\varphi(y,z'),  \ori\, \ol W^u(z'),\p_t\right)\\
&=\, (-1)^{|x'|}\left( \ori \, \ol W^s(y), \ori\, \ol W^u(y),\p_t\right)\\
&=\, (-1)^{|x'|}\left( \ori X,\p_t\right)
\end{align*}
which implies the third relation after comparison with \eqref{compare1}.

Finally the last relation is valid by definition of the orientation of $\ol\cH_{t=0}(x',z')$. The lemma is proved.  
%For the last relation remark again that at a point $(p,0,p)\in \cH(x',x')_{t=0}\subset \p\ol\cH(x',x')$  the outward normal vector also points outwards $\p \ol Z$ and therefore we may consider $n=-\p_t$. Also recall that the orientation  of $\cH_{t=0}(x',x')=\varphi^{-1}(x')$ at some point $(p,0,p)\equiv p$ was defined to be  the sign of $T_p\varphi$. We get 
%\begin{align*} 
%\big( \ori \, \ol W^s(x')&,  n,  \ori\, \cH_{t=0}(x',x'),  \ori\, \ol W^u(z')\big)\\
%&=\, -(-1)^{|x|'}\left( \ori \, \ol W^s(x'),   \ori\, \cH_{t=0}(x',x'),  \ori\, \ol W^u(x'), \p_t\right)\\
%&=\, -(-1)^{|x|'}\left( \ori \, \ol W^s(x'),   \mathrm{sgn}(T_p\varphi),   \ori\, \ol W^u(x'), \p_t\right)\\
%&=\, -(-1)^{|x|'}\left(  \mathrm{sgn}(T_p\varphi), \ori \, \ol W^s(x'),   \ori\, \ol W^u(x'), \p_t\right)\\
%&=\, -(-1)^{|x|'}\left(  \mathrm{sgn}(T_p\varphi), \ori\, Y, \p_t\right)\\
%&=\, -(-1)^{|x'|}\left(\ori\, X, \p_t\right),
 %\end{align*}
 %as claimed. 
 \end{proof} 
 
 Now we do the same for the manifold $\ol\cH_{t=0}(x',z')$. Based on the orientation rule $\eqref{eq:rule3}$ we get the following sign differences: 
\begin{lemma}\label{lem:last-last-orientation-differences} We have: 
 $$\ori\,  \, \p\ol \cH_{t=0}(x',z')\, = \, \ori\, \left(\ol\call(x',x'')\times \ol\cH_{t=0}(x'',z')\right)$$
 and
 $$\ori\,  \, \p\ol \cH_{t=0}(x',z')\, = \, (-1)^{|x|'-|z'|-1}\ori\, \left( \ol\cH_{t=0}(x',z'')\times\ol\call(z'',z')\right)$$
 \end{lemma}
\begin{proof}

 As above, we may consider the outward normal $n$ at some point belonging to $\ol\call(x',x'')\times\ol\cH_{t=0}(x'',z')\subset \p\cH_{t=0}(x',z')$ as being the outward normal of $\p\ol W^u(x')$ at a point of $\ol\call(x',x'')\times W^u(x'')$ and choosing this point of the form $(\lambda, x'')$ we may suppose $n=\xi$, the (negative) gradient on $Y$. As in the proof of the preceding lemma we compare the boundary orientation given by 
$$\left(n, \ori \, \p\ol\cH_{t=0}(x',z')\right)\, =\, \ori\, \ol\cH_{t=0}(x',z')$$ with the product orientation 
$$\left(n, \ori\, \ol\call(x',x''), \ori \, \p\ol\cH_{t=0}(x'',z')\right)$$
For this purpose we use the relation \eqref{eq:rule3} which transforms the first relation in 
\begin{equation}\label{eq:compare2} 
\left( \ori \, \ol W^s(x'), n, \ori\, \p\ol\cH_{t=0}(x',z'), \ori\, \ol W^u(z')\right)\, =\, (-1)^{|x'|+1}  \ori \, X,
\end{equation} 
while the second writes using \eqref{eq:changed-orientation-rule} in the third equality below and then \eqref{eq:rule3} in the fourth:
\begin{align*}
&\big( \ori \, \ol W^s(x'), n, \ori\, \ol\call(x',x''), \ori\, \ol\cH_{t=0}(x'',z'), \ori\, \ol W^u(z')\big)\\
&= ( \ori \, \ol W^s(x'), \xi, \ori\, \ol\call(x',x''), \ori\, \ol\cH_{t=0}(x'',z'), \ori\, \ol W^u(z'))\\
&= (-1)^{|x'|-|x''|}( \ori \, \ol W^s(x'), \ori\, \ol\call(x',x''), -\xi, \ori\, \ol\cH_{t=0}(x'',z'), \ori\, \ol W^u(z'))\\
&= (-1)^{|x'|-|x''|}( \ori \, \ol W^s(x''), \ori\, \ol\cH_{t=0}(x'',z'), \ori\, \ol W^u(z'))\\
&= (-1)^{|x'|+1}\ori\, X.
\end{align*} 
The comparison with \eqref{eq:compare2} shows the desired relation. 

To prove the second one we analogously may suppose that the outward normal vector $n$ at some point of $\ol\cH_{t=0}(x',z'')\times\ol\call(z'',z')\subset\p\ol\cH_{t=0}(x',z')$ is also pointing outwards $\ol W^s(z')$ along its  boundary at a point of $\ol W^s(z'')\times \ol\call(z'',z')$ and therefore we may take  $n=-\xi$. We determine the sign difference by putting the product orientation in the relation: 
\begin{align*}
&\big( \ori \, \ol W^s(x'), n,  \ori\, \ol\cH_{t=0}(x',z''), \ori\, \ol\call(z'',z'), \ori\, \ol W^u(z')\big)\\
&= (   \ori \, \ol W^s(x'), -\xi,   \ori\, \ol\cH_{t=0}(x',z''), \ori\, \ol\call(z'',z'), \ori\, \ol W^u(z')  )\\
&=\!\!(-1)^{|x'|-|z'|-1} (   \ori \, \ol W^s(x'),    \ori\, \ol\cH_{t=0}(x',z''), \ori\, \ol\call(z'',z'), -\xi, \ori\, \ol W^u(z')  )\\
&=\!\!(-1)^{|x'|-|z'|-1} (   \ori \, \ol W^s(x'),    \ori\, \ol\cH_{t=0}(x',z''), \ori\,   \ol W^u(z'')  )\\
&=\!\!(-1)^{|z'|} \ori \, X,
\end{align*}
where we applied the orientation rule \eqref{eq:orientation-rule} in the third equality and then \eqref{eq:rule3} in the fourth.  By comparing to \eqref{eq:compare2} we observe a sign difference of $(-1)^{|x'|-|z'|-1}$,  as claimed. 
\end{proof} 

\underline{Representing chain systems.} Taking into account all these sign differences we use the same inductive procedure as in Proposition~\ref{representingchain} to construct  representing chain systems on $\ol\cH_{t=0}(x',z')$ and on $\ol \cH(x',z')$. According to Lemma \ref{lem:last-last-orientation-differences}, the first one, denoted by $\sigma^0_{x',z'}$, satisfies 
$$\p\sigma^0_{x',z'}\, =\, \sum_{x''\in \crit(g)}s_{x',x''}\times \sigma^0_{x'',z'}-\sum_{z''\in\crit(g)}(-1)^{|x'|-|z'|}\sigma^0_{x',z''}\times s_{z'',z'}.$$
We want this representing chain system to define a chain map between DG-complexes, so we transform the above relation  in an equation which looks like the one of the continuation cocycle \eqref{eq:DGcont} by setting $\tilde{\sigma}^0_{x',z'} =(-1)^{|z'|}\sigma^0_{x',z'}$. We indeed get 
\begin{equation}\label{eq-rcs0}
\p\tilde{\sigma}^0_{x',z'}\, =\, \sum_{x''\in \crit(g)}s_{x',x''}\times \tilde{\sigma}^0_{x'',z'}-\sum_{z''\in\crit(g)}(-1)^{|x'|-|z''|}\tilde{\sigma}^0_{x',z''}\times s_{z'',z'}.
\end{equation} 

We may construct the chain system  $\tilde{\sigma}^0_{x',z'}$ such that it satisfies an extra property:  Note that there is a canonical projection $\pi:\ol{\cH}_{t=0}(x',z')\to \ol{\cL}(x',z')$ to the moduli space of unparametrized gradient trajectories which forgets the point $a$ and the parametrization of $\gamma$, and the fiber of $\pi$ has dimension $1$ whenever $\ol{\cH}_{t=0}(x',z')$ is nonempty. Reasoning as in the proof of Lemma~\ref{special-rcs} one may construct the representing chain system $(\sigma^0_{x',z'})$ such that $\pi_*\sigma^0_{x',z'}=0$ if $|x'|>|z'|$  and therefore 
\begin{equation}\label{eq:extra-property}
\pi_*(\tilde{\sigma}^0_{x',z'})\, =\, 0\gol \gol \forall \, |x'|>|z'|
\end{equation} 
For $|x'|=|z'|$ we have by definition   $\tilde{\sigma}^0_{x',z'}=0$ except for the case $x'=z'$ when according to \eqref{eq:rule4} we have: 
\begin{equation}\label{eq:extra2} 
\tilde{\sigma}^0_{x',x'}\, =\, -\sum_{a\in \varphi^{-1}(x')}\mathrm{sgn}(T_a\varphi)\cdot a.
\end{equation}

We now study the representing chain system on $\ol\cH(x',z')$. By lemma \ref{lem:last-orientation-differences} the inductive construction yields 
 $S_{x',z'}\in  C_{|x'|-|y'|+1}(\ol \cH(x',z'))$ such that : 
\begin{align*}\label{eq:last-rcs}
\p & S_{x',z'} \\
& = \sigma^0_{x',z'}+ (-1)^{|x'|}\sum_{y\in\crit(f)}\tau_{x',y}\times\tau_{y,z'}\\
&\ + \sum_{x''\in \crit(g)} \!\!\!(-1)^{|x'|-|x''|}s_{x',x''}\times S_{x'',z'}+ \sum_{z''\in\crit(g)}\!\!\!(-1)^{|x'|-|z'|}S_{x',z''}\times s_{z'',z'},
\end{align*} 
In the relation above $\sigma_{x',z'}^0$ is the representing chain system previously constructed on $\ol\cH_{t=0}(x',z')$ whereas $\tau_{x',y}$ and $\tau_{y,z'}$ are respectively the representing chain systems constructed  in \S\ref{sec:second-definition-shriek} and  \S\ref{sec:funct-direct-alex} on $\ol\calm^{\varphi_!}(x',y)$ and $\ol\calm^\varphi(y,z')$. Now remember that in order to define the maps $\varphi_!$ and $\varphi_*$ we had to apply the sign corrections \eqref{eq:correction-rcs-shriek} and \eqref{eq:correction-rcs}, namely: 
$$\sigma_{x',y}\, =\, (-1)^{[x']-[y]-1}\tau_{x',y}\, = \, (-1)^{|x'|-|y|-1}\tau_{x',y}$$
and $$\sigma_{y,z'}\, =\, (-1)^{|y|-|z'|-1}\tau_{y,z'}.$$
Replacing, we  obtain
\begin{align*}
\p &  S_{x',z'} \\
&= \sigma_{x',z'}^0+ ( -1)^{|z'|}\sum_{y\in\crit(f)}\sigma_{x',y}\times\sigma_{y,z'}\\
&\ + \sum_{x''\in \crit(g)}\!\!\! (-1)^{|x'|-|x''|}s_{x',x''}\times S_{x'',z'}+ \sum_{z''\in\crit(g)}\!\!\!(-1)^{|x'|-|z'|}S_{x',z''}\times s_{z'',z'},
\end{align*} 
Finally, set $\widetilde{S}_{x',z'}= (-1)^{|z'|}S_{x',z'}$ and recall that $\tilde{\sigma}^0_{x',z'}= (-1)^{|z'|}\sigma^0_{x',z'}$ by definition.  The relation becomes: 
\begin{align*}
\p & \widetilde{S}_{x',z'} \\
& = \tilde{\sigma}^0_{x',z'}+ \sum_{y\in\crit(f)}\sigma_{x',y}\times\sigma_{y,z'}\\
& \ + \sum_{x''\in \crit(g)} \!\!\!(-1)^{|x'|-|x''|}s_{x',x''}\times \widetilde{S}_{x'',z'}+ \!\!\!\sum_{z''\in\crit(g)}\!\!\!(-1)^{|x'|-|z''|}\widetilde{S}_{x',z''}\times s_{z'',z'}.
\end{align*} 
This looks quite similar to the homotopy relation \eqref{eq:alg-homotopy-eqn}, but we still have to convert it into an equation on $\Omega Y$. 

\underline{Evaluation maps.} We start with the definition of the evaluation on $\ol\cH(x',z')$: Denote by $p^Y:Y\ri Y/\caly^Y$ the projection and $\theta^Y:Y/\caly^Y\ri Y$ its homotopy inverse which is part of   the datum $\Xi^Y$.  Use similar notation for $X$. As in  \S\ref{sec:funct-direct-alex} and \S\ref{sec:second-definition-shriek}
denote by $H^Y: [0,1]\times Y\ri Y$ a homotopy between $\Id$ and $\theta^Y\circ p^Y$ and by $H^X$ its analogue on $X$. We define evaluation maps $q_{x',z'}: \ol\cH(x',z') \ri \Omega Y$ in the following way. Take an element $(\lambda_Y',\lambda_X,\lambda''_Y)\in \ol\cH(x',z')$, where $\lambda_Y'$ is a gradient line on $Y$  from $x'$ to some point $b=\varphi(a)$,  $\lambda_X$ is a gradient line on $X$ from $a$ to some point $\tilde{a}$ and finally $\lambda_Y''$ is a gradient line on $Y$ from $\varphi(\tilde{a})$ to $z'$. If we apply the evaluation maps $q^X$ and $q^Y$ defined by the data $\Xi^X$ and $\Xi^Y$ respectively to these three paths we get $\gamma_1=q^Y(\lambda'_Y)\in \calp_{\star\ri\theta^Y\circ p^Y(\varphi(a))}Y$, then $\gamma_2=q^X(\lambda_X)\in \calp_{\theta^X\circ p^X(p)\ri\theta^X\circ p^X(\tilde{a})}X$ and  finally  $\gamma_3=q^Y(\lambda'_Y)\in \calp_{\theta^Y\circ p^Y(\varphi(\tilde{a}))\ri\star}Y$. To get a loop on $Y$ from these paths we use the homotopies $H^X$ and $H^Y$ and define: 
\begin{align*} 
q_{x',z'}(\lambda',\lambda,\lambda'') =\gamma_1\,\#\,H^Y& (1\!-\!t,\varphi(a))\,\#\,\varphi (H^X(t,a)  \# \gamma_2\#H^X(1\!-\!t,\tilde{a})) \\
& \#\,H^Y(t, \varphi(\tilde{a}))\,\#\,\gamma_3.
\end{align*}

\begin{figure}
  \centering
  \includegraphics[width=\textwidth]{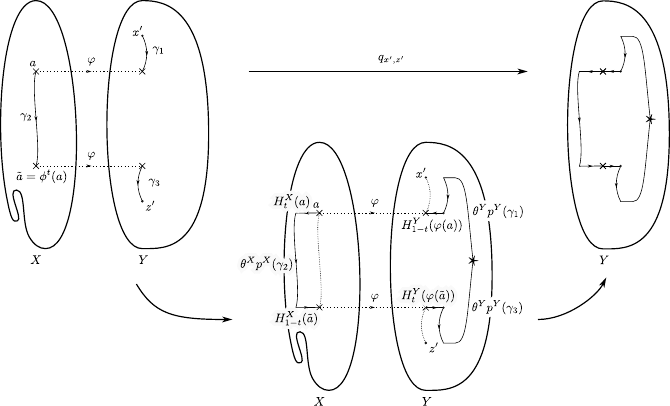}
  \caption{The evaluation map $q_{x',z'}:\ol\cH(x',z') \to \Omega Y$}
  \label{fig:ShriekStarEval}
\end{figure}

Let us see how these evaluations act on  the boundary $\p\ol\cH(x',z')$. It is easy to verify by looking at the formulas \eqref{eq:formula-evaluation-direct} and \eqref{eq:evaluation-shriek} which define the evaluations $q^\varphi_{y,z'}:\ol\calm^\varphi(y,z')\ri\Omega Y $ resp.  $q^{\varphi_!}_{x'y}:\ol\calm^{\varphi_!}(x',y)\ri \Omega Y$ that $$q_{x',z'}\, =\, q_{x',y}^{\varphi_!}\#q_{y,z'}^\varphi$$
on $\ol\calm^{\varphi_!}(x',y)\times\ol\calm^\varphi(y,z')$. In particular, 
\begin{equation}\label{eq:cocycle1}q_{x',y',*}(\sigma_{x',y}\times\sigma_{y,z'}) =(-q^{\varphi_!}_{x',y,*}(\sigma_{x',y}))\cdot (-q^\varphi_{y,z',*}(\sigma_{y,z'}))\, =\, \nu^{\varphi_!}_{x',y}\cdot\nu^{\varphi}_{y,z'},\end{equation}
the cocycle which defines the composition $\varphi_*\varphi_!$.

It is also immediate that $$q_{x',z'}\, =\, q^Y_{x',x''}\#q_{x'',z'}$$
on $\ol\call(x',x'')\times \ol\cH(x'',z')$ and 
$$q_{x',z'}\, =\, q_{x',z''}\#q^Y_{z'',z'}$$
on $\ol\cH(x',z'')\times\ol\call(z'',z')$ where $q^Y$ denotes the evaluation on the trajectory spaces of $Y$. 

On the last part of $\p\ol\cH$ which is $\ol\cH_{t=0}(x',z')$ the evaluation $q_{x',z'}$  equals by definition 
$$\gamma_1\,\#\,H^Y(1\!-\!t,\varphi(a))\,\#\,\varphi\left (H^X(t,a)\#H^X(1\!-\!t,a)\right)\,\#\,H^Y(t, \varphi(a))\,\#\,\gamma_3$$ 

%whereas the latter was defined as  the evaluation $q^Y:\ol \calm(x',z')\ri \Omega Y$ applied to the projection, in other words, with the notations above it equals $\gamma_1\#\gamma_3$. 

But  we have another natural evaluation on this space. Recall that there is an obvious projection $\pi: \ol\cH_{t=0}(x',z')\ri \ol\call(x',z')$ and that we had an  evaluation   $q^Y:\ol\call(x',z')\ri \Omega Y$ defined by the datum $\Xi^Y$ on $Y$. It is natural to consider as evaluation the composition $q^0_{x',z'} =q^Y\circ\pi$. This formula is also valid  for   $x'=z'$ since $\pi$ factors through $\ol\calm(x',z')$ and we may define $q^Y$ on this space. Notice that with the above notation  $q^0_{x',z'}$ equals $\gamma_1\#\gamma_3$.
There is an obvious homotopy between  $q_{x',z'}$ and $q^0_{x'z'}$ which moreover satisfies the hypothesis of Lemma \ref{lem:homotopic-evaluations}. The representing chain system $(\tilde{\sigma}^0_{x',z'})$ verifies Equation \eqref{eq-rcs0}, so it also fulfils  the conditions required by this lemma.  We may therefore apply it and infer that the cocycles  $\nu_{x',z'}:= -q_{x'z',*}(\tilde{\sigma}^0_{x',z'})$ and  $\nu^0_{x',z'}:= -q^0_{x'z',*}(\tilde{\sigma}^0_{x',z'})$ define homotopic chain maps $\Psi,\Psi^0: C_*(Y,\Xi^Y;\calf)\ri C_*(Y, \Xi^Y; \calf)$ (their formula is $\Psi(\alpha\otimes x')=\sum_{z'\in\crit(g)}\alpha\nu_{x',z'}\otimes z'$ and analogously for $\Psi^0$.)  

On the other hand, the cocycle $h_{x',z'}\in C_{|x'|+|z'|+1} (\Omega Y)$ defined by $h_{x',z'}=q_{x',z',*}(\widetilde{S}_{x',z'})$ satisfies an equation of the form \eqref{eq:alg-homotopy-eqn} and therefore yields a chain homotopy between $\Psi$ and $\varphi_*\varphi_!$ (we also used \eqref{eq:cocycle1} here). Therefore $\varphi_*\varphi_!$ is chain homotopic to $\Psi^0$ and it suffices to show that the latter equals $\mathrm{deg}(\varphi)\cdot\Id$ to finish our proof. This is straightforward: by the property \eqref{eq:extra-property} combined with the definition of $q^0$ we get $\nu^0_{x',z'}=0$ for $x'\neq z'$ and finally, since $q^0_{x',x'}$ is constant equal to the basepoint $\star$, the property \eqref{eq:extra2} implies $$\nu^0_{x',x'}=-q_{x',x', *}(\tilde{\sigma}^0_{x',x'})= \mathrm{deg}(\varphi)\cdot \star,$$
as claimed. 

The proof of Proposition \ref{prop:phi*phi!} is now complete. 
\end{proof}

The following corollary was previously announced as Corollary~\ref{cor:homotopy_equivalence_shriek_announcement}.  

\begin{corollary} \label{cor:homotopy_equivalence_shriek}
Let $\varphi:X\to Y$ be an orientation preserving homotopy equivalence between closed oriented manifolds and let $\cF$ be a DG local system on $Y$. The canonical maps $\varphi_!:H_*(Y;\cF)\to H_*(X;\varphi^*\cF)$ and $\varphi_*:H_*(X;\varphi^*\cF)\to H_*(Y;\cF)$ are isomorphisms inverse to each other.  
\end{corollary}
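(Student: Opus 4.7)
The plan is to deduce the corollary from two results already in hand: Corollary~\ref{cor:homotopy_equivalence} (a homotopy equivalence induces isomorphisms on both $\varphi_*$ and $\varphi_!$) and Proposition~\ref{prop:phi*phi!} (the composition formula $\varphi_*\varphi_! = \deg(\varphi)\cdot\mathrm{Id}$). The only thing to verify is that the degree of our $\varphi$ is $+1$.

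First I would observe that $X$ and $Y$ have the same dimension. Indeed, both are closed orientable so their top integer homology is $\mathbb{Z}$, concentrated in degrees $\dim X$ and $\dim Y$ respectively, and the fact that $\varphi$ is a homotopy equivalence forces these degrees to coincide. Next, the standard characterization of degree via fundamental classes, $\varphi_*[X] = \deg(\varphi)\cdot [Y]$ in $H_n(Y;\mathbb{Z})$, together with the hypothesis that $\varphi$ preserves orientations and induces an isomorphism in homology, gives $\deg(\varphi) = +1$. Thus Proposition~\ref{prop:phi*phi!} specializes to
\[
\varphi_*\circ\varphi_! \;=\; \mathrm{Id}_{H_*(Y;\cF)}.
\]

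Now by Corollary~\ref{cor:homotopy_equivalence}, the map $\varphi_*:H_*(X;\varphi^*\cF)\to H_*(Y;\cF)$ is already known to be an isomorphism. Combined with the above equation, one has
\[
\varphi_! \;=\; \varphi_*^{-1}\circ\varphi_*\circ\varphi_! \;=\; \varphi_*^{-1},
\]
which simultaneously shows that $\varphi_!$ is an isomorphism and that it is the two-sided inverse of $\varphi_*$. This completes the proof.

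There is no real obstacle here: every ingredient has been assembled in the preceding sections, and the argument is a short synthesis. The only subtle point worth pointing out is that, conceptually, the equality of $\varphi_*$ and $\varphi_!^{-1}$ in this situation is non-trivial, because $\varphi_*$ and $\varphi_!$ are defined by very different moduli-theoretic constructions; it is the interaction between the composition formula of Proposition~\ref{prop:phi*phi!} and the homotopy invariance packaged in Corollary~\ref{cor:homotopy_equivalence} that forces them to coincide up to inversion.
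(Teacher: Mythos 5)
Your proof follows exactly the same route as the paper: degree one from orientation preservation, $\varphi_*\varphi_!=\Id$ from Proposition~\ref{prop:phi*phi!}, invertibility of $\varphi_*$ from Corollary~\ref{cor:homotopy_equivalence}, and the conclusion $\varphi_!=\varphi_*^{-1}$. The extra care you take in justifying that $\dim X=\dim Y$ and $\deg\varphi=+1$ is sound, just more explicit than the paper's one-line statement that an orientation-preserving homotopy equivalence has degree one.
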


\begin{proof} An orientation preserving homotopy equivalence has degree $1$, hence $\varphi_*\varphi_!=\Id$ by Proposition~\ref{prop:phi*phi!}. On the other hand, by functoriality (Corollary~\ref{cor:homotopy_equivalence}) we know that $\varphi_*$ is an isomorphism. Thus $\varphi_!$ is an isomorphism and $\varphi_!=\varphi_*^{-1}$.
\end{proof}

We close this section with a slight generalization of 
Proposition~\ref{prop:phi*phi!} regarding transverse maps.

Consider two smooth manifolds $X$ and $Y$ and two smooth and transverse
maps $X\xrightarrow{\varphi}Z$ and $Y\xrightarrow{\psi}Z$ into a third
manifold $Z$. The fibre product of $X$ and $Y$ above $\varphi$ and $\psi$ then
fits into the following diagram~:
$$
\begin{tikzcd}
   X \fibreprod{\varphi}{\psi}Y
  \arrow[d,"\pi_{1}"']\arrow[r,"\pi_{2}"]&Y\arrow[d,"\psi"]\\
  X\arrow[r,"\varphi"]& Z\\
\end{tikzcd}
$$

We pick some DG coefficients $\cF$ on $Z$, and pull them back on $X$,
$Y$, and $X\fibreprod{\varphi}{\psi} Y$ via $\varphi$, $\psi$, and
$\varphi\circ \pi_{1}=\psi\circ \pi_{2}$.

\begin{proposition}\label{prop:TransverseSbmfds}
  In the above situation, we have
  $$
  \psi_{!}\varphi_{*} = \pi_{2*}\pi_{1!}
  $$
  and
  $$
  \varphi_{!}\psi_{*} = \pi_{1*}\pi_{2!}.
  $$
\end{proposition}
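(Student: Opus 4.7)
The plan is to prove the first identity $\psi_!\varphi_* = \pi_{2*}\pi_{1!}$; the second follows by interchanging the roles of $(X,\varphi,\pi_1)$ and $(Y,\psi,\pi_2)$. Following the strategy of the proof of Proposition~\ref{prop:phi*phi!}, I will exhibit both sides as chain homotopic to a common chain map built from an intermediate fiber-product moduli space. Fix Morse--Smale data $\Xi^X,\Xi^Y,\Xi^Z$ such that $\varphi$ and $\psi$ are transverse to all stable and unstable manifolds (generic once $\varphi\pitchfork\psi$), and also Morse--Smale data $\Xi^W$ on $W := X \fibreprod{\varphi}{\psi} Y$ generic with respect to $\pi_1,\pi_2$. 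Orient $W$ by the standard fiber-product convention, namely as $(\varphi\times\psi)^{-1}(\Delta_Z)$ using the coorientation of $\Delta_Z\subset Z\times Z$ induced by $\ori\,Z$.

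For $x\in\Crit(f_X)$, $y\in\Crit(f_Y)$, introduce the intermediate moduli space
$$
\cN(x,y) = \ol W^u_X(x) \, _\varphi\!\times \ol W^s_Y(y),
$$
a manifold with boundary and corners of dimension $|x|-|y|+\dim Y - \dim Z$, oriented by combining \eqref{eq:shriek-orientation-second}, \eqref{eq:orientation-rule-stable} and the coorientation of $\psi$. Build a representing chain system $(\sigma^\cN_{x,y})$ inductively as in Proposition~\ref{representingchain}, and an evaluation map $q^\cN_{x,y}:\ol\cN(x,y)\to\Omega Y$ that concatenates: the $\varphi$-image of the evaluation \`a la Lemma~\ref{lecture} of the half-$\xi_X$-trajectory from $x$ to $a$, a short loop $H^Y(1-\cdot,\varphi(a))\# H^Y(\cdot,\psi(b))$ built using that $\varphi(a)=\psi(b)$, and the evaluation of the half-$\xi_Y$-trajectory from $b$ to $y$. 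After the sign correction analogous to \eqref{eq:correction-rcs}, this yields a cocycle $\nu^\cN_{x,y}$ and hence a chain map
$$
\Theta : C_*(X,\Xi^X;\varphi^*\psi^*\cF) \longrightarrow C_{*+\dim Y-\dim Z}(Y,\Xi^Y;\psi^*\cF).
$$

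I then homotope $\Theta$ to each of the two sides using one-parameter families of moduli spaces analogous to $\cH(x',z')$ in the proof of Proposition~\ref{prop:phi*phi!}. On the $\psi_!\varphi_*$ side, set
$$
\cH^{(1)}(x,y) = \bigcup_{t \ge 0}\{(a,b) \in \ol W^u_X(x)\times \ol W^s_Y(y) : \phi^t_{\xi_Z}(\varphi(a))=\psi(b)\};
$$
its $t=0$ stratum is $\ol\cN(x,y)$, its $t=+\infty$ stratum is $\bigcup_{z'} \ol\cM^\varphi(x,z')\times \ol\cM^{\psi_!}(z',y)$, and the remaining strata encode Morse breaking at $x$ and $y$. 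On the $\pi_{2*}\pi_{1!}$ side, using the pseudo-gradient $\xi_W$, set
$$
\cH^{(2)}(x,y) = \bigcup_{t \ge 0}\{(p,q)\in W\times W : \pi_1(p)\in \ol W^u_X(x),\ \pi_2(q)\in \ol W^s_Y(y),\ q=\phi^t_{\xi_W}(p)\};
$$
its $t=0$ stratum is the diagonal, identified with $\ol\cN(x,y)$ via $r\mapsto (\pi_1(r),\pi_2(r))$, and its $t=+\infty$ stratum is $\bigcup_{w}\ol\cM^{\pi_{1!}}(x,w)\times\ol\cM^{\pi_2}(w,y)$. Choose evaluation maps $q^{(i)}:\ol\cH^{(i)}(x,y)\to \Omega Y$ that agree with $q^\cN_{x,y}$ on the $t=0$ stratum and with the evaluations of \S\ref{sec:funct-direct-alex}--\S\ref{sec:second-definition-shriek} on the $t=+\infty$ stratum; as in Lemma~\ref{lem:homotopic-evaluations-bis} any two such compatible choices are homotopic through admissible evaluations, so the homotopy class of the resulting chain homotopy is well defined. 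Evaluating representing chain systems for $\ol\cH^{(i)}$ into $\Omega Y$ then produces, as in Proposition~\ref{prop:homotopy-cocycle}, chain homotopies $\Theta\simeq \psi_!\varphi_*$ and $\Theta\simeq \pi_{2*}\pi_{1!}$, which combine to give the first identity in homology. The second identity is proved by replaying the argument after swapping $X\leftrightarrow Y$, $\varphi\leftrightarrow\psi$, $\pi_1\leftrightarrow\pi_2$.

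The principal obstacle is sign bookkeeping: the boundary orientations of $\ol\cH^{(i)}$ must be matched, via \eqref{eq:shriek-orientation-second}--\eqref{eq:orientation-direct-second}, \eqref{eq:orientation-rule-stable}, \eqref{eq:orientation-N} and \eqref{eq:both-oriented}, with the product orientations on the $t=+\infty$ strata and the fiber-product orientation on the $t=0$ stratum. This mirrors Lemmas~\ref{lem:last-orientation-differences}--\ref{lem:last-last-orientation-differences}; the key additional point is that the fiber-product orientation chosen on $W$ above is precisely the one for which the diagonal identification in $\cH^{(2)}$ and the fiber-product identification in $\cH^{(1)}$ induce the same orientation on $\ol\cN(x,y)$, up to a single correction sign that is absorbed into the definition of the cocycle $\nu^\cN_{x,y}$. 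Once these signs are aligned, the inductive construction of representing chain systems and the evaluation argument proceed exactly as in Propositions~\ref{representingchain} and \ref{prop:phi*phi!}, concluding the proof.
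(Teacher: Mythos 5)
Your proof is correct and follows essentially the same path as the paper's: both sides deform, via a flow-time parameter, to the morphism built from the common $t=0$ stratum (what you call $\cN(x,y)$, which the paper identifies as $\overline{\cH}_{Z,t=0}=\overline{\cH}_{\cap,t=0}$). The only difference is that you name the intermediate chain map $\Theta$ explicitly and spend a bit more ink on orientation matching, but the moduli spaces, the homotopy argument, and the conclusion by symmetry are the same.
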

\begin{proof}
  The proof is very similar to that of Proposition~\ref{prop:phi*phi!}.
  We choose Morse functions $X\xrightarrow{f_{X}}\R$,
  $Y\xrightarrow{f_{Y}}\R$, $Z\xrightarrow{f_{Z}}\R$, and
  $X\fibreprod{\varphi}{\psi}Y \xrightarrow{f_{\cap}} \R$ on the four
  manifolds under consideration, and associated pseudo-gradients such
  that we can make use of the definition of the shriek and direct maps
  given in \S\ref{sec:funct-direct-alex}
  and~\S\ref{sec:second-definition-shriek}. Let $\phi_{X}$, $\phi_{Y}$,
  $\phi_{Z}$, $\phi_{\cap}$ denote the associated flows.  The composition
  $\psi_{!}\varphi_{*}$ (resp. $\pi_{2*}\pi_{1!}$) is given by the moduli
  spaces of the form $\cM^{\varphi}(x, z)\times\cM^{\psi_{!}}(z, y)$
    (resp. $\cM^{\pi_{1!}}(x, t) \times \cM^{\pi_{2}}(t, y)$).

  Consider the homotopy induced by the moduli spaces
  \begin{align*}
  \cH_{Z}(x,y)
    &=\bigcup_{\tau>0}\{\phi_{Z}^{\tau}(\varphi (W^{u}(x)))\cap\psi (W^{s}(y))\}
  \end{align*}
  i.e. by the spaces of configurations consisting of 3 components~:
  \begin{itemize}
  \item
    a half infinite flow line in $X$ from a critical point $x$ to some
    point $p$,
  \item
    a finite piece of flow line in $Z$ from a point $p'$ to another point
    $q$,
  \item
    a half infinite flow line in $Y$ from a point $q'$ to a critical
    point $y$,
  \end{itemize}
  such that $\varphi(p)=p'$ and $q=\psi(q')$.
  
%  The second one is the collection of configurations consisting of 3
%  components~:
%  \begin{itemize}
%  \item
%    a half infinite flow line in $X$ from $x$ to some point $p$,
%  \item
%    a finite piece of flow line in $X\fibreprod{\varphi}{\psi}Y$ from a
%    point $p'$ to another point $q$,
%  \item
%    a half infinite flow line in $Y$ from a point $q'$ to $y$,
%  \end{itemize}
%  such that $p=\pi_{1}(p')$ and $\pi_{2}(q)=q'$.

  The compactification of these moduli spaces can be described as
  follows~:
  \begin{multline}
  \overline\cH_{Z}= {\cH_Z\, \cup\, }
  \overline\cL_{X}(x,x')\times\overline\cH_{Z}(x',y)
  \cup
  \overline\cH_{Z}(x,y')\times\overline\cL_{Y}(y',y)
  \\
  \cup
  \overline\cM^{\varphi}(x,z)\times\overline\cM^{\psi!}_{Y}(z,y)
  \cup
  \overline\cH_{Z,t=0}(x,y)
  \end{multline}
  
%  \begin{multline}
%  \overline\cH_{Q}=
%  \overline\cL_{X}(x,x')\times\overline\cH_{\cap}(x',y)
%  \cup
%  \overline\cH_{\cap}(x,y')\times\overline\cL_{Y}(y',y)
%  \\
%  \cup
%  \overline\cM^{\pi_{1}!}(x,z)\times\overline\cM^{\pi_{2}}(z,y)
%  \cup
%  \overline\cH_{\cap,t=0}(x,y).
%  \end{multline}
  
  The first three terms of the boundary $\p \cH_Z$ correspond to Morse
  breaking in the infinite and finite parts, and the fourth to the limit
  $t\to 0$, which can also be described as
  \begin{align*}
   \overline\cH_{Z,t=0}(x,y)
   &= \{(p,q)\in W^{u}(x)\times W^{s}(y), \varphi(p)=\psi(q)\}.
%    \overline\cH_{\cap,t=0}(x,y)
%    &= \{(p,z,q)\in W^{u}(x)\times (X\fibreprod{\varphi}{\psi}Y)\times W^{s}(y),
%      \pi_{1}(z)=p \text{ and }\pi_{2}(z)=q\}\\
 \end{align*}

 The analogous moduli spaces for the composition $\pi_{2*}\pi_{1!}$ are
 \begin{align*}
   \cH_{\cap}(x,y)
   &=\bigcup_{\tau>0}\{\phi_{\cap}^{\tau}\pi_{1}^{-1} W^{u}(x)\cap\pi_{2}^{-1}W^{s}(y)\},
 \end{align*}
 and the part of the compactification coresponding to the limit $t=0$ is
 \begin{align*}
 \overline\cH_{\cap,t=0}(x,y)
 = \{(p,z,q)\in W^{u}(x)\times & (X\fibreprod{\varphi}{\psi}Y)\times W^{s}(y) \, : \\
& \pi_{1}(z)=p \text{ and }\pi_{2}(z)=q\}.
 \end{align*}
 A choice of representing chains of $\cH_{Z}(x,y)$ (resp.
 $\cH_{\cap}(x,y)$) induces a homotopy from $\psi_{!}\varphi_{*}$ (resp.
 $\pi_{1!}\pi_{2*}$) to the morphism associated to the moduli spaces
 $\cH_{Z,t=0}(x,y)$ (resp. $\cH_{\cap,t=0}(x,y)$).       

 On the other hand, we have 
 \begin{align*}
   \overline\cH_{\cap,t=0}(x,y)
   &= \{(p,q)\in W^{u}(x)\times W^{s}(y) \, : \, 
     (p,q)\in X\fibreprod{\varphi}{\psi}Y\}\\
   &= \{(p,q)\in W^{u}(x)\times W^{s}(y) \, : \, 
     \varphi(p)=\psi(q)\}\\
   &=\overline\cH_{Z,t=0}(x,y).
 \end{align*}
  We conclude that at $\psi_{!}\varphi_{*}$ and $\pi_{1!}\pi_{2*}$ are
  related by a homotopy, and this proves the first relation in the
  statement.

  \medskip

  The second relation follows by symmetry.
\end{proof}

\begin{remark}
  Proposition~\ref{prop:TransverseSbmfds} can be seen as a
  generalization of Proposition~\ref{prop:phi*phi!}, which can be recovered as follows. Given a degree $d$ smooth map
  $X\xrightarrow{\varphi}Y$ between manifolds of the same dimension we
  have the following diagram~:
  $$
  \begin{tikzcd}
    (X\times Y) \fibreprod{(\varphi\times\id)}{\Delta}Y
    \arrow[d,"\pi_{1}"']\arrow[r,"\pi_{2}"]&Y\arrow[d,"\Delta"]\\
    X\times Y \arrow[r,"\varphi\times\id"]&Y\times Y,\\
  \end{tikzcd}
  $$
  where $\Delta$ is the inclusion of the diagonal. But $(X\times Y)
  \fibreprod{\varphi\times\id}{\Delta}Y \simeq X$, and with this
  identification, $\pi_{1}=\Id\times\varphi$, $\pi_{2}=\varphi$, and the
  above diagram can be written as   
  $$
  \begin{tikzcd}
    X
    \arrow[d,"\id\times\varphi"']\arrow[r,"\varphi"]&Y\arrow[d,"\Delta"]\\
    X\times Y \arrow[r,"\varphi\times\id"]\arrow[d,"\pi_{2}"]&
    Y\times Y%\arrow[dl,"\pi_{2}"]
    \\
    Y% \arrow[r,"\id"]&Q\\
  \end{tikzcd}
  $$
  Proposition~\ref{prop:TransverseSbmfds} then implies that 
  $$
  (\id\times\varphi)_{*}\varphi_{!} =
  (\varphi\times\id)_{!}\Delta_{*}.
  $$
  But $\pi_{2*}(\id\times\varphi)_{*} = \varphi_{*}$ by functoriality,
  and we leave it as an exercise for the interested reader to prove that
  $\pi_{2*}(\varphi\times\id)_{!} = d\cdot  \pi_{2*}$.

  This implies
  $\varphi_{*}\varphi_{!}=d\cdot \pi_{2*}\Delta_*=d\cdot \id$.
\end{remark}

%%%%%%%%%%%%%%%%%%%%%%%
\chapter{Cohomology and Poincar\'e duality} \label{sec:cohomology-PD}
%%%%%%%%%%%%%%%%%%%%%%%

In this chapter we prove a version of Poincar\'e duality with DG coefficients for closed orientable manifolds. This result has been proved before by E.~Malm in his thesis, see~\cite[Theorem~3.1.2]{Malm-thesis} and the references therein.

\section{Cohomology} We first define Morse cohomology groups with DG local coefficients on a closed orientable manifold $M$. For that purpose we consider $C_{-*}(\Omega M)$ as a DGA of cohomological type, i.e., with differential of degree $+1$. Given an element $a\in C_*(\Omega M)$ we denote $\ol a$ the same element viewed in $C_{-*}(\Omega M)$ with opposite degree. 

\begin{definition} A \emph{cohomological DG local system on $M$}, or equivalently a \emph{cohomological right $C_{-*}(\Omega M)$-module}, is a DG module $\cG^*$ whose differential has degree $+1$ and which is endowed with a multiplication 
$$
\cG^i\otimes C_{-j}(\Omega M)\to \cG^{i+j}.
$$
\end{definition} 

\begin{example} \label{example:coh-loc-sys}
Given a right $C_*(\Omega M)$-module $\cF$ (homological DG local system), we denote $\ol\cF$ the right $C_{-*}(\Omega M)$-module defined by grading $\cF$ in opposite degree and leaving the differential unchanged:
$$
\ol \cF^i = \cF_{-i}.
$$
Then $\ol\cF$ is a cohomological DG local system. Given an element $f\in \cF$, we denote $\ol f$ the same element viewed in $\ol\cF$ and graded in opposite degree. 
\end{example} 

\begin{definition} 
	Given a Morse function $f:M\to \R$, a \emph{cohomological cocycle adapted to $f$} is a collection $(m_{y^\vee, \, x^\vee})$ indexed by $x^\vee,y^\vee\in\Crit(f)$
 with $m_{y^\vee, \, x^\vee}\in C_{-*}(\Omega M)$ of degree 
	$$
	|m_{y^\vee, \, x^\vee}| = -(|x^\vee|-|y^\vee|-1)=|y^\vee|-|x^\vee| +1,
	$$
	and satisfying the relation 
	$$
	\p m_{y^\vee, \, x^\vee} = \sum_{z^\vee} (-1)^{|y^\vee|-|z^\vee|} m_{y^\vee, \, z^\vee} m_{z^\vee, \,  x^\vee}. 
	$$
\end{definition} 	

The definition is motivated by the following example. 

\begin{example}  \label{ex:cohomological_cocycle}
	Given a Morse function $f$ and a negative pseudo-gradient vector field $\xi$, the moduli spaces $\cL^\uparrow(y,x)$ of \emph{positive} pseudo-gradient trajectories from $y$ to $x$, i.e., trajectories of $-\xi$ running from $y$ at $-\infty$ to $x$ at $+\infty$, possess compactifications $\ol\cL^\uparrow(y,x)$ which are manifolds with boundary with corners of dimension $|x|-|y|-1$. They give rise through the procedure used in the construction of the homological Barraud-Cornea cocycle to elements $m^\uparrow_{y, x}\in C_{|x|-|y|-1}(\Omega M)$ which satisfy the relation 
	$$
	\p m^\uparrow_{y, x} = \sum_z (-1)^{|y|-|z|} m^\uparrow_{y, z}m^\uparrow_{z, x}.
	$$
	When viewing $m^\uparrow_{y, x}$ as an element in $C_{-*}(\Omega M)$ of opposite degree $|y|-|x|+1$ we denote it by $m_{y^\vee, \, x^\vee}$. With the convention $|x^\vee|=|x|$, we have $|m_{y^\vee, \, x^\vee}| = |y^\vee|-|x^\vee|+1$. Hence $(m_{y^\vee, \, x^\vee})$ is a cohomological cocycle adapted to $f$. 
\end{example}

For the next example denote $I:\Omega M\to \Omega M$ the involution given by reparametrizing loops backwards, and denote $I_*$ the map induced at chain level. 
Note that $I_*$ is an anti-algebra homomorphism, meaning that $I_*(\alpha\beta)=(-1)^{|\alpha||\beta|} I_*\beta I_*\alpha$.

\begin{example} \label{example:coh-cocycle-I*}
Given a homological cocycle $\{m_{x, y}\}$, the formula 
$$
\{m_{y^\vee, \, x^\vee}=(-1)^{|x||y|+|x|+1}I_*m_{x, y}\}
$$ 
defines a cohomological cocycle, with $m_{y^\vee, \, x^\vee}$ understood to live in opposite degree $-(|x|-|y|-1)=|y^\vee|-|x^\vee|+1$. To check the cocycle relation we start with the equation $\p m_{x, y}=\sum_z (-1)^{|x|-|z|} m_{x, z}m_{z, y}$, we apply $I_*$ and use that it is an anti-algebra homomorphism to obtain 
\begin{align*}
\p m_{y^\vee, \, x^\vee} & = (-1)^{|x||y|+|x|+1} I_* \left(\sum_z (-1)^{|x|-|z|} m_{x, z}m_{z, y}\right) \\
& = \sum_z (-1)^{|x||y|+1 -|z|  + (|x|-|z|-1)(|z|-|y|-1)} I_*m_{z, y} I_* m_{x, z}\\
& = \sum_z \eps(|x|,|y|,|z|) m_{y^\vee, \, z^\vee} m_{z^\vee, \,  x^\vee} \\
& = \sum_z (-1)^{|y|+|z|} m_{y^\vee, \, z^\vee} m_{z^\vee, \,  x^\vee} \\
& = \sum_z (-1)^{|y^\vee| - |z^\vee|} m_{y^\vee, \, z^\vee} m_{z^\vee, \,  x^\vee},
\end{align*}
with $\eps(|x|,|y|,|z|)=(-1)^{|x||y|+1 -|z|  + (|x|-|z|-1)(|z|-|y|-1) + |z||y| + |z|  + |x||z|+|x|}$.
\end{example}

\begin{definition}
	Given a cohomological DG local system $\cG$ and a cohomological cocycle $(m_{y^\vee, \, x^\vee})$ adapted to the Morse function $f$, the \emph{cohomological Morse complex of $f$ with coefficients in $\cG$} is defined as 
	$$
	C^*(f;\cG) = \cG\otimes \langle \Crit(f)^\vee \rangle,
	$$
	where $\Crit(f)^\vee$ is the set of critical points of $f$ with elements denoted $x^\vee$ for each $x\in\Crit(f)$, endowed with the differential of degree $+1$ given by 
	$$
	d(\alpha\otimes x^\vee) = \p \alpha\otimes x^\vee + (-1)^{|\alpha|} \sum_{y^\vee} \alpha m_{x^\vee, \,  y^\vee} \otimes y^\vee.
	$$ 
\end{definition}

In the remaining of this section we reinterpret this definition as a derived Hom. Given a right $C_*(\Omega M)$-module $\cF$, denote $\cF^{\mathrm{left}}$ the left $C_*(\Omega M)$-module whose underlying chain complex is $\cF$ and with module structure given by 
$$
\alpha\cdot f = (-1)^{|\alpha||f|} f\cdot I_*\alpha.
$$
Recall also from Example~\ref{example:coh-loc-sys} the cohomological DG local system $\ol\cF$.

The following proposition should be compared with~\cite[Definition~3.1.1]{Malm-thesis}.
 
\begin{proposition} \label{prop:Morse-coh-as-derived-hom}
We have a chain homotopy equivalence 
$$
C^*(f;\ol\cF)\simeq R \Hom_{C_*(\Omega M)} (\Z,\cF^{\mathrm{left}}).
$$
\end{proposition}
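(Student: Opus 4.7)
The approach is to combine the Barraud--Cornea resolution from Theorem~\ref{thm:equivalence-BC-Brown} with the involution $I:\Omega M\to \Omega M$ (reversing loop parametrization) to produce a semi-free left $C_*(\Omega M)$-resolution of the trivial module $\Z$, and then identify the resulting $\Hom$ complex with $C^*(f;\ol\cF)$.

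First, by Theorem~\ref{thm:equivalence-BC-Brown} the twisted complex $(C_*(\Omega M)\otimes C_*(f),\p+\mathfrak{m}')$ is a semi-free right $C_*(\Omega M)$-resolution of $\Z$. The anti-algebra automorphism $I_*$ of $C_*(\Omega M)$ converts right modules into left ones via $a\cdot m:=(-1)^{|a||m|}\,m\cdot I_*a$ (exactly as in the definition of $\cF^{\mathrm{left}}$). Applying this conversion turns the Barraud--Cornea complex into a semi-free left $C_*(\Omega M)$-resolution $P\to\Z$. Semi-freeness is preserved because it depends only on the index filtration on the generators $\{1\otimes x\}_{x\in\Crit(f)}$, which is intrinsic.

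Second, since semi-free resolutions compute the derived Hom,
\[
R\Hom_{C_*(\Omega M)}(\Z,\cF^{\mathrm{left}})\;\simeq\;\Hom_{C_*(\Omega M)}(P,\cF^{\mathrm{left}}).
\]
A left-linear map $\varphi:P\to\cF^{\mathrm{left}}$ of cohomological degree $k$ is determined by its values $\varphi_x:=\varphi(1\otimes x)\in\cF_{|x|-k}=\ol\cF^{k-|x|}$ on the generators. This gives an isomorphism of graded modules
\[
\Hom^k_{C_*(\Omega M)}(P,\cF^{\mathrm{left}})\;\cong\;\bigoplus_{x\in\Crit(f)}\ol\cF^{k-|x|}\;=\;C^k(f;\ol\cF).
\]

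Finally, I would verify that the Hom differential $D\varphi=\p^\cF\circ\varphi-(-1)^k\varphi\circ\p^P$ translates under this identification into the cohomological Morse differential $d(\alpha\otimes x^\vee)=\p\alpha\otimes x^\vee+(-1)^{|\alpha|}\sum_{y^\vee}\alpha\cdot m_{x^\vee,y^\vee}\otimes y^\vee$ for a cohomological cocycle $(m_{x^\vee,y^\vee})$ adapted to $f$, constructed from the Barraud--Cornea cocycle via $I_*$ (essentially the cocycle of Example~\ref{example:coh-cocycle-I*}). The main obstacle is the sign bookkeeping: three sources of signs must be aligned simultaneously---the grading reversal from $\cF$ to $\ol\cF$, the anti-algebra conversion of modules via $I_*$ in the definitions of both $P$ and $\cF^{\mathrm{left}}$, and the Koszul convention in the Hom differential---and one must check that they combine to reproduce precisely the cohomological Maurer--Cartan relation and the stated cochain differential. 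Once the signs line up, the identification is immediate.
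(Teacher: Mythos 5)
Your overall strategy matches the paper's: take the Barraud--Cornea semi-free resolution $P$ of $\Z$, compute $\Hom_{C_*(\Omega M)}(P,\cF^{\mathrm{left}})$, identify the underlying graded module with $\bigoplus_x\ol\cF[|x|]$, and verify the differential matches the cohomological Morse differential built from the cocycle of Example~\ref{example:coh-cocycle-I*}. Your degree bookkeeping in the identification step and your choice of cohomological cocycle are correct.

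There is, however, a genuine error at the start. The twisted complex $(C_*(\Omega M)\otimes C_*(f),\p+\mathfrak{m}')$ is a \emph{left} $C_*(\Omega M)$-module (by left multiplication on the $C_*(\Omega M)$ factor), not a right one. The Maurer--Cartan element $\mathfrak{m}'$ multiplies by $m_{x,y}$ on the \emph{right}, so right multiplication by $\beta\in C_*(\Omega M)$ fails to commute with the twisted differential, whereas left multiplication does; this is precisely why Remark~\ref{rmk:remark_on_Tor} can write $C_*(f;\cF)\simeq\cF\stackrel{L}{\otimes}_{C_*(\Omega M)}\Z$ with $\cF$ a right module tensored against a left-module resolution of $\Z$. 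Consequently, your step of converting the Barraud--Cornea complex to a left module via $I_*$ is not only unnecessary but ill-defined, since there is no right-module structure to convert. As a result, your list of ``three sources of signs'' overcounts: the $I_*$-twist appears only in the definition of $\cF^{\mathrm{left}}$, not in $P$. Beyond that, the final paragraph only asserts that the signs align; this is the technical heart of the proof, and it is not obvious --- the paper's chain isomorphism $\Phi$ sends $\ol f\otimes y^\vee\mapsto (-1)^{|y|}\ell_{f,y}$, and that factor $(-1)^{|y|}$ emerges only after a careful computation of the $\Hom$ differential against the $\cF^{\mathrm{left}}$ action and the $I_*$-twisted cocycle. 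You would need to actually carry out that computation rather than defer it.
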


\begin{proof}
Using the semi-free resolution of the trivial left $C_*(\Omega M)$-module $\Z$ given by the Barraud-Cornea cocycle we can write the right hand side as 
$\Hom_{C_*(\Omega M)}(C_*(\Omega M)\otimes C_*(M),\cF^{\mathrm{left}})$. To write the differential explicitly we consider an element $\ell\in \Hom_{C_*(\Omega M)}(C_*(\Omega M)\otimes C_*(M),\cF^{\mathrm{left}})$ and we let $f_x=\ell(1\otimes x)$, so that $|f_x|=|\ell|+|x|$. Then 
\begin{align*} 
(\delta \ell)& (\alpha\otimes x) \\
& = \p(\ell(\alpha\otimes x)) - (-1)^{|\ell|}\ell(\p(\alpha\otimes x))\\
& = (-1)^{|\ell||\alpha|} \p(\alpha\cdot f_x) - (-1)^{|\ell|}\ell(\p \alpha\otimes x + (-1)^{|\alpha|}\sum_y\alpha m_{x, y}\otimes y)\\
& = (-1)^{|\ell||\alpha| +|\alpha|(|\ell|+|x|)} \p(f_x\cdot I_*\alpha) - (-1)^{|\ell|+|\ell|(|\alpha|-1)} \p\alpha\cdot f_x\\
& \quad -\sum_y(-1)^{|\ell|+|\alpha| + |\ell| (|\alpha| +|x|-|y|-1)} \alpha m_{x, y}\cdot f_y\\
& = (-1)^{|\alpha||x|} \p f_x \cdot I_*\alpha\\
& \quad -\sum_y (-1)^{|\ell|+|\alpha|+|\ell||\alpha|+|\ell|(|x|-|y|-1)+(|\ell|+|y|)(|x|-|y|-1)} \alpha \cdot f_y \cdot I_* m_{x, y}\\
& = (-1)^{|\ell||\alpha| + |\alpha|}\alpha\left(\p f_x - (-1)^{|\ell|}\sum_y (-1)^{|x||y|} f_y\cdot I_* m_{x, y}\right).
\end{align*}

We now consider the cohomological Morse complex $C^*(f;\ol\cF)$ defined using the cohomological cocycle $\{m_{y^\vee, \, x^\vee}=(-1)^{|x||y|+|x|+1}I_*m_{x, y}\}$ from Example~\ref{example:coh-cocycle-I*} (this is possible since the Morse complex with DG coefficients does not depend up to chain homotopy on the choice of cocycle). We define the $\Z$-linear map 
$$
C^*(f;\ol\cF)=\ol\cF\otimes \langle \Crit(f)^\vee\rangle \stackrel\Phi\longrightarrow \Hom_{C_*(\Omega M)}(C_*(\Omega M)\otimes C_*(M),\cF^{\mathrm{left}}),
$$
$$
\ol f\otimes y^\vee\mapsto (-1)^{|y|} \ell_{f,y},
$$
where $\ell_{f,y}(1\otimes y)=f$ and $\ell_{f,y}(1\otimes x)=0$ for $x\neq y$. Thus $|\ell_{f,y}|=|y|-|f|=|y^\vee|+|\ol f|$ and $\Phi$ has degree $0$. Clearly $\Phi$ is $\Z$-linear and is an isomorphism at chain level. To finish the proof it is enough to prove that it is a chain map, and for this we compute 
\begin{align*}
\delta\Phi(\ol f\otimes y^\vee) & =(-1)^{|y|}\delta(\ell_{f,y})\\
& \mapsto
\left\{\begin{array}{lll} 
1\otimes y& \mapsto & (-1)^{|y|}\p f,\\
1\otimes x& \mapsto & -(-1)^{|y|+ |y|-|f| +|x||y|}f\cdot I_*m_{x, y},
\end{array}\right.
\end{align*}
and 
\begin{align*} 
\Phi\delta(\ol f\otimes y^\vee) & = \Phi(\delta \ol f\otimes y^\vee + (-1)^{|\ol f|}\sum_x \ol f \cdot m_{y^\vee, \, x^\vee}\otimes x^\vee)\\
& = \Phi(\ol{\p f}\otimes y^\vee + (-1)^{|f|}\sum_x (-1)^{|x||y|+|x|+1}\ol{f \cdot I_*m_{x, y}}\otimes x^\vee)\\ 
& \mapsto 
\left\{\begin{array}{lll}
1\otimes y&\mapsto& (-1)^{|y|} \p f,\\
1\otimes x&\mapsto& (-1)^{|x|+|f|+|x||y|+|x|+1} f\cdot I_*m_{x, y}.
\end{array}\right.
\end{align*}
These two expressions are equal.
\end{proof}

%\ref{rmk:remark_on_Tor}

\section{Poincar\'e duality}

Let $M$ be a closed manifold of dimension $n$. {\bf We assume in this section that $M$ is orientable and oriented.} Let $f:M\to \R$ be a Morse function and $\cF$ a homological DG local system. Recall that $\cF$ determines a cohomological local system $\ol\cF$ as in Example~\ref{example:coh-loc-sys}. Choose a negative pseudo-gradient $\xi$, an embedded collapsing tree $\cY$, and a homotopy inverse $\theta$ for the projection $M\to M/\cY$. Define the homological Morse complex $C_*(-f;\cF)$ using $\cY,\theta$ and the cocycle $(m^{-f}_{x, y})$ determined by $-\xi$. Define the cohomological Morse complex $C^*(f;\ol \cF)$ using $\cY,\theta$ and the cocycle $(m^f_{x^\vee, \,  y^\vee})$ determined by $\xi$ as in Example~\ref{ex:cohomological_cocycle}. With these choices we have: 

\begin{proposition} \label{prop:PD-chain-level}
	There is a canonical isomorphism of chain complexes 
	$$
	PD : C_*(-f;\cF)\stackrel\simeq\longrightarrow C^{n-*}(f;\ol \cF)
	$$
defined on generators by 
$$
PD(\alpha\otimes x)=\ol\alpha\otimes x^\vee.
$$
\end{proposition}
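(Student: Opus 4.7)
The plan is to show that both complexes arise from essentially identical underlying geometric and combinatorial data, and that $PD$ provides the natural chain-level dictionary between them. I break this into three steps.

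\emph{Step 1 (matching of generators).} Since $\Crit(-f)=\Crit(f)$ as sets and $|x|_{-f}=n-|x|_f$, the bijection $x\mapsto x^\vee$ identifies the two sets of generators in a degree-compatible way: an element of total homological degree $p$ in $C_*(-f;\cF)$ is sent to an element of cohomological degree $-|\alpha|+|x|_f=n-p$ in $C^*(f;\ol\cF)$, as required by the target grading $C^{n-*}(f;\ol\cF)$.

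\emph{Step 2 (matching of cocycles).} The negative pseudo-gradient of $-f$ is $-\xi$, hence $W^u_{-f,-\xi}(x)=W^s_{f,\xi}(x)$ and $W^s_{-f,-\xi}(x)=W^u_{f,\xi}(x)$. Consequently $\ol\cL_{-f,-\xi}(x,y)$ coincides as a set with the compactified moduli space $\ol\cL^\uparrow_{f,\xi}(x,y)$ of positive $\xi$-trajectories from $x$ to $y$ (with $|x|_f<|y|_f$) underlying the cohomological cocycle of Example~\ref{ex:cohomological_cocycle}. Orienting $W^u_{-f}(x)=W^s_f(x)$ by applying the convention \eqref{eq:orientation-rule-stable} to a chosen orientation of $W^u_f(x)$ and to the ambient orientation of $M$ yields identical oriented moduli spaces for the two constructions. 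Since the tree $\cY$, the map $\theta$, and the evaluation procedure are the same for both complexes, we can pick identical representing chain systems and conclude that, at chain level,
\[
m^{-f}_{x,y}=m^\uparrow_{x,y}\in C_{|y|_f-|x|_f-1}(\Omega M),\qquad m^f_{x^\vee,y^\vee}=\ol{m^{-f}_{x,y}},
\]
where $\ol{(\cdot)}$ denotes passage from $C_*(\Omega M)$ to $C_{-*}(\Omega M)$.

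\emph{Step 3 (chain-map verification).} With Step~2 in hand, $d\circ PD=PD\circ\p$ reduces to a routine sign check. Expanding on a generator $\alpha\otimes x$ gives
\[
PD\bigl(\p(\alpha\otimes x)\bigr)=\ol{\p\alpha}\otimes x^\vee+(-1)^{|\alpha|}\sum_{y}\ol{\alpha\cdot m^{-f}_{x,y}}\otimes y^\vee
\]
and
\[
d\bigl(PD(\alpha\otimes x)\bigr)=\p\ol\alpha\otimes x^\vee+(-1)^{|\ol\alpha|}\sum_{y^\vee}\ol\alpha\cdot m^f_{x^\vee,y^\vee}\otimes y^\vee.
\]
Using $\p\ol\alpha=\ol{\p\alpha}$, $|\ol\alpha|=-|\alpha|$, and the rule $\ol\alpha\cdot\ol b=\ol{\alpha\cdot b}$ for the right action of $C_{-*}(\Omega M)$ on $\ol\cF$, the two expressions agree term by term. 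Finally, $PD$ is a $\Z$-linear bijection on generators, hence an isomorphism of chain complexes.

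\emph{Main obstacle.} The crux is Step~2: the orientation bookkeeping behind the chain-level identity $m^{-f}_{x,y}=m^\uparrow_{x,y}$. This requires comparing the orientation induced on $\cL_{-f}(x,y)$ by the orientation rule \eqref{eq:orientation-rule} applied to the pair $(-f,-\xi)$ with the orientation induced on $\cL^\uparrow_f(x,y)=W^s_f(x)\pitchfork W^u_f(y)$ via \eqref{eq:orientation-rule-stable}, for all degree differences. The verification is elementary given the conventions of \S\ref{subsec: orientation-conventions} and \S\ref{sec:second-definition-shriek}, but must be carried out with care to ensure that no discrepancy sign appears that would propagate into Step~3.
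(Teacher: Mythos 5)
Your proposal is correct and follows essentially the same route as the paper. The paper's proof is exactly your Step~3 computation, preceded by the one-line observation (which you unpack in Step~2) that the cocycles $m^{-f}_{x,y}$ and $m^f_{x^\vee, y^\vee}$ coincide up to the opposite grading, since both are built from the same $(-\xi)$-trajectory spaces with the same auxiliary data $(\cY,\theta)$ and the same orientations of the unstable manifolds of $-\xi$; the paper treats your "main obstacle" as immediate rather than requiring a separate orientation check, because no orientation comparison between $W^u_f$ and $W^s_f$ actually enters — one simply uses the same orientations of $W^u_{-f}=W^s_f$ on both sides, so the cocycles agree by construction rather than by a sign computation.
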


In the previous formula $x$ is a critical point of $-f$ and hence a generator of the homological Morse complex, but it is also a critical point of $f$ and hence determines a generator $x^\vee$ of the cohomological Morse complex. Their degrees are related by 
$$
|x^\vee|=n-|x|.
$$

\begin{proof} It is clear that $PD$ is a bijection. To prove that $PD$ is a chain map, 
%denote $(m_{x, y}^{-f})$ the homological cocycle for $f$ obtained using $-\xi$ and $\cY$, and $(m_{x^\vee, \,  y^\vee}^f)$ the cohomological cocycle for $f$ obtained using $\xi$ and $\cY$. Then 
note that the cocycles $m_{x, y}^{-f}$ and $m_{x^\vee, \,  y^\vee}^f$ coincide except for their degrees which are opposite, so that $m_{x^\vee, \,  y^\vee}^f = \ol{m_{x, y}^{-f}}$.
We compute:
\begin{align*}
PD(\p(\alpha\otimes x)) & = PD (\p \alpha\otimes x + (-1)^{|\alpha|} \sum_y \alpha  m_{x, y}^{-f}\otimes y) \\
& = \ol{\p \alpha}\otimes x^\vee + (-1)^{|\alpha|} \sum_y \ol{\alpha m_{x, y}^{-f}}\otimes y^\vee \\
& = \p\ol\alpha \otimes x^\vee + (-1)^{|\alpha|} \sum_y \ol\alpha \cdot \ol{m_{x, y}^{-f}} \otimes y^\vee \\
& = \p\ol\alpha \otimes x^\vee + (-1)^{|\alpha|} \sum_y \ol\alpha \cdot m_{x^\vee, \,  y^\vee}^f\otimes y^\vee \\
& = d(\ol\alpha\otimes x^\vee) = d (PD(\alpha\otimes x)). 
\end{align*}
\end{proof}

In the next statement we denote by $H_*$ the Morse homology groups, and by $H^*$ the Morse cohomology groups with DG-coefficients. 

\begin{theorem}[Poincar\'e duality with DG coefficients, see also~{\cite[Theorem~3.1.2]{Malm-thesis}}] \label{thm:PD-orientable}
Let $M$ be a closed {\bf oriented} manifold of dimension $n$, let $\cF$ be a homological DG local system and denote $\ol\cF$ the cohomological local system obtained from $\cF$ by reversing the sign of the grading. The following Poincar\'e duality isomorphism holds: 
$$
PD: H_*(M;\cF) \stackrel\simeq\longrightarrow H^{n-*}(M;\ol\cF).
$$
\end{theorem}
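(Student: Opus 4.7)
My plan is to reduce the theorem to Proposition~\ref{prop:PD-chain-level} combined with the invariance results of Chapter~\ref{sec:invariance-homology} and the cohomological analogue. Fix a Morse function $f$, a Morse-Smale pseudo-gradient $\xi$, an embedded collapsing tree $\cY$ with homotopy inverse $\theta$ for the projection, and orientations $o$ of the unstable manifolds of $f$. These choices simultaneously produce: (a) a homological Morse complex $C_*(-f;\cF)$, built using the function $-f$, the pseudo-gradient $-\xi$, the same tree $\cY$ and $\theta$, and the orientations of the unstable manifolds of $-f$ (which equal the stable manifolds of $f$) induced from $o$ and the global orientation of $M$ via the convention~\eqref{eq:orientation-rule-stable}; and (b) a cohomological Morse complex $C^*(f;\ol\cF)$ built from the positive-gradient moduli spaces of $(f,\xi)$ as in Example~\ref{ex:cohomological_cocycle}. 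By Proposition~\ref{prop:PD-chain-level} the map $\alpha\otimes x\mapsto \ol\alpha\otimes x^\vee$ is an isomorphism of chain complexes
\[
C_*(-f;\cF)\stackrel\simeq\longrightarrow C^{n-*}(f;\ol\cF),
\]
hence induces an isomorphism in (co)homology.

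The next step is to identify the homologies of these particular models with the invariantly defined groups $H_*(M;\cF)$ and $H^{n-*}(M;\ol\cF)$. On the homological side, Theorem~\ref{independence} supplies a canonical chain homotopy equivalence between $C_*(-f;\cF)$ and any other DG Morse complex for $M$ with coefficients in $\cF$, so $H_*(-f;\cF)\cong H_*(M;\cF)$ canonically. On the cohomological side, one either runs the entirely analogous invariance argument (continuation cocycles built from Morse data on $[0,1]\times M$ for homotopies of Morse functions, with all arrows dualized), or, more economically, invokes Proposition~\ref{prop:Morse-coh-as-derived-hom}, which shows $C^*(f;\ol\cF)\simeq R\Hom_{C_*(\Omega M)}(\Z,\cF^{\mathrm{left}})$; the right-hand side is manifestly independent of $f$. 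Either way we obtain $H^{n-*}(f;\ol\cF)\cong H^{n-*}(M;\ol\cF)$. Composing the three isomorphisms yields the desired $PD: H_*(M;\cF)\stackrel\simeq\longrightarrow H^{n-*}(M;\ol\cF)$.

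The main obstacle is purely bookkeeping, and it is already localized in Proposition~\ref{prop:PD-chain-level}: one must check that with the compatible orientation conventions the two Barraud-Cornea-type cocycles really satisfy $m^f_{x^\vee,y^\vee}=\ol{m^{-f}_{x,y}}$, i.e., that the oriented trajectory moduli space $\ol\cL^{-f,-\xi}(x,y)$ coincides with the oriented moduli space $\ol\cL^{\uparrow}_{f,\xi}(y,x)$ after applying the evaluation maps into $\Omega M$ built from the common tree $\cY$ and homotopy inverse $\theta$. This is where orientability of $M$ is essential: it is needed to orient the stable manifolds of $f$ (equivalently, the unstable manifolds of $-f$) in a way compatible with the coorientations coming from $o$, via $(\ori W^s(y),\ori W^u(y))=\ori M$. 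Once this identification is established, the chain-level computation in Proposition~\ref{prop:PD-chain-level} goes through verbatim, and the passage to homology is formal. The non-orientable case requires twisting by the orientation local system $\ul\scro^M$ and is deferred to~\S\ref{sec:non-orientable}.
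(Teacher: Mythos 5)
Your proposal is correct and follows the same route as the paper, whose proof reduces to a single sentence citing Proposition~\ref{prop:PD-chain-level} together with invariance of DG Morse homology under the choice of Morse data. One small slip, harmless for the argument: the moduli space identity in your last paragraph should read $\ol\cL^{-f,-\xi}(x,y)=\ol\cL^{\uparrow}_{f,\xi}(x,y)$ \emph{without} the index swap, since both moduli spaces parametrize $(-\xi)$-trajectories running from $x$ at $-\infty$ to $y$ at $+\infty$, so the two Barraud-Cornea-type cocycles in Proposition~\ref{prop:PD-chain-level} are evaluated on literally the same oriented moduli space.
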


\begin{proof}
This follows directly from Proposition~\ref{prop:PD-chain-level} using the invariance of DG Morse homology with respect to the choice of Morse function. 
\end{proof}

\section{Poincar\'e duality and shriek maps}

\begin{proposition}[map induced in cohomology] \label{prop:f_upper_*}
Let $X$, $Y$ be closed smooth manifolds. Let $\cF$ be a cohomological DG local system on $Y$. A continuous map $\varphi:X\to Y$ induces in cohomology a canonical degree $0$ morphism  
$$
\varphi^*: H^*(Y;\cF)\to H^*(X;\varphi^*\cF)
$$
with the following properties: 
\begin{enumerate} 
\item {\sc (Identity)} We have $\mathrm{Id}^*=\mathrm{Id}$. 
\item {\sc (Composition)} Given maps $X\stackrel{\varphi}\longrightarrow Y \stackrel{\psi}\longrightarrow Z$ and a cohomological DG local system $\cF$ on $Z$, we have 
$$
(\psi\varphi)^* = \varphi^*\psi^* : H^*(Z;\cF)\to H^*(X;\varphi^*\psi^*\cF).
$$
\item {\sc (Homotopy)} Homotopic maps induce equal morphisms. 
\item {\sc (Spectral sequence)} The morphism $\varphi^*$ is the limit of a morphism between the spectral sequences associated to the corresponding enriched complexes, given at the second page by 
$$ 
\varphi^{p,*}: H^{p}( Y ; H^q(\calf) )\ri H^p( X; \varphi^*H^q(\calf)).
$$ 
i.e., the map induced by $\varphi$ in cohomology with coefficients in $H^q(\calf)$. 
\end{enumerate}
\end{proposition}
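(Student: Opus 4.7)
The plan is to construct $\varphi^*$ by dualizing the direct map $\varphi_*$ of Theorem~\ref{thm:f*!}, using the identification of cohomology as derived Hom from Proposition~\ref{prop:Morse-coh-as-derived-hom}. First I would fix Morse data on $X$ and $Y$ producing Barraud-Cornea cocycles, and recall that the associated twisted complexes $C_*(f;C_*(\Omega X))$ and $C_*(g;C_*(\Omega Y))$ are semi-free resolutions of $\Z$ (Theorem~\ref{thm:equivalence-BC-Brown}). The chain-level direct map $\varphi_*$ is then a chain map from the former, viewed as a $C_*(\Omega Y)$-module via $(\Omega\varphi)_*$, to the latter. Applying $\Hom_{C_*(\Omega Y)}(-,-)$ into the appropriate left module associated to $\cG$ and invoking Proposition~\ref{prop:Morse-coh-as-derived-hom} to identify the resulting Hom-complexes with cochain complexes would yield the desired chain map
$$
\varphi^*: C^*(g;\cG)\to C^*(f;\varphi^*\cG).
$$
At the level of cocycles this amounts to setting $\mu_{{y'}^\vee,x^\vee}=(-1)^\eps I_*(\nu_{x,y'})$ with a sign $\eps$ patterned on Example~\ref{example:coh-cocycle-I*}, where $(\nu_{x,y'})$ is the cocycle for $\varphi_*$ and $I_*$ is the loop-reversing involution. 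Alternatively, one could construct $\varphi^*$ directly from moduli spaces by mirroring the second definition of~\S\ref{sec:funct-direct-alex} with the roles of stable and unstable manifolds swapped, as was done for shriek maps in~\S\ref{sec:second-definition-shriek}; this gives an explicit and equivalent construction.

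Once $\varphi^*$ is defined, the four properties will follow essentially formally from the corresponding properties of $\varphi_*$. Since the $\Hom$ functor sends $C_*(\Omega Y)$-linear chain maps to chain maps and chain homotopies to chain homotopies, {\sc(Identity)} transfers from $\mathrm{Id}_*=\mathrm{Id}$, {\sc(Composition)} $(\psi\varphi)^*=\varphi^*\psi^*$ dualizes $(\psi\varphi)_*=\psi_*\varphi_*$, and {\sc(Homotopy)} transfers because homotopic maps give chain-homotopic direct maps. The {\sc(Spectral sequence)} property will be checked by observing that $\varphi^*$ preserves the Morse filtration and that, on the second page, the cocycle $\mu_{{y'}^\vee,x^\vee}$ reduces to its class in $H_0(\Omega Y)\cong\Z[\pi_1(Y)]$, which encodes the usual pullback in cohomology with local coefficients in $H^q(\cG)$, paralleling Remark~\ref{rem:spectral-direct}. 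Well-definedness (independence of auxiliary data) is inherited from the corresponding statements for $\varphi_*$ established in Theorem~\ref{independence}.

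The principal obstacle will be the careful bookkeeping of signs in the dualization $\nu\mapsto\mu$. Example~\ref{example:coh-cocycle-I*}, which converts the homological Barraud-Cornea cocycle to a cohomological one, provides the template for the sign $\eps$, but extending this template to cocycles $\nu_{x,y'}$ built from moduli spaces $\cM^\varphi(x,y')$ involving two Morse functions requires additional care --- essentially, one must verify that the cocycle relation~\eqref{MC-invariance} dualizes correctly to the cohomological analogue. Once the signs are in place, the remainder of the proof is a direct transcription of the arguments already developed for $\varphi_*$ and $\varphi_!$ in Chapters~\ref{sec:functoriality-first-definition}--\ref{sec:second-definition}, with no genuinely new conceptual content.
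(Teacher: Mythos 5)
Your primary route — dualizing $\varphi_*$ through the derived-$\Hom$ identification of Proposition~\ref{prop:Morse-coh-as-derived-hom} and the loop-reversal $I_*$ of Example~\ref{example:coh-cocycle-I*} — is valid and genuinely different from what the paper does. The paper's proof is a two-line observation: one keeps the \emph{same} moduli spaces $\cM^\varphi(x,y')=W^u(x)\cap\varphi^{-1}(W^s(y'))$ used for $\varphi_*$ and simply reads the inputs and outputs in reverse when evaluating into $\Omega Y$, exactly as Example~\ref{ex:cohomological_cocycle} reads the compactified trajectory spaces of $\xi$ backward to obtain the cohomological cocycle from positive pseudo-gradient trajectories. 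The dimension count is automatic: $\dim\cM^\varphi(x,y')=|x|-|y'|$, which is precisely the homological degree required of $\mu_{{y'}^\vee,x^\vee}$ for a degree-$0$ cochain map. Your algebraic route, by contrast, constructs $\varphi^*$ as $\Hom_{C_*(\Omega Y)}(\tilde\varphi_*,\cdot)$ after pushing $C_*(f;C_*(\Omega X))$ forward to a $C_*(\Omega Y)$-module via $(\Omega\varphi)_*$; this requires carefully tracking the equivariance of $\tilde\varphi_*$ and the sign bookkeeping through $I_*$, but it has the advantage of making the relationship with Tor/Ext completely transparent and of importing properties~1--4 formally from Theorem~\ref{thm:f*!}. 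Either approach is acceptable; the paper's is shorter because it outsources all the work to the already-established functoriality of $\varphi_*$.

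Your suggested alternative, however, is incorrect as phrased. You write that one ``could construct $\varphi^*$ directly from moduli spaces by mirroring the second definition of~\S\ref{sec:funct-direct-alex} with the roles of stable and unstable manifolds swapped, as was done for shriek maps in~\S\ref{sec:second-definition-shriek}.'' Swapping stable and unstable manifolds gives the \emph{shriek} moduli spaces $\cM^{\varphi_!}(x',y)=W^s(y)\cap\varphi^{-1}(W^u(x'))$, whose dimension is $|x'|-|y|+\dim X - \dim Y$, not $|x'|-|y|$. Those moduli spaces compute the degree-$(\dim X - \dim Y)$ map $\varphi_!$, which agrees with $\varphi^*$ only after composing with Poincar\'e duality on both sides (Proposition~\ref{prop:PD-shriek}), a step which costs orientation data and a degree shift. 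The moduli spaces that directly produce the degree-$0$ map $\varphi^*$ are the \emph{same} ones used for $\varphi_*$, merely evaluated from $y'$ to $x$ rather than from $x$ to $y'$. In a non-orientable situation your alternative route would not even make sense without the extra twist by $\ul\scro^X\otimes\ul\scro^Y$; the paper's reversed-reading construction of $\varphi^*$ has no orientability hypothesis at all, which is part of the point of stating Proposition~\ref{prop:f_upper_*} separately from the shriek construction.
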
 

\begin{proof}
The construction of $\varphi^*$ uses the same moduli spaces as the ones used for $\varphi_*$, except that the outputs and inputs are exchanged as in Example~\ref{ex:cohomological_cocycle}. All the statements are proved as in Theorem~\ref{thm:f*!}.
\end{proof}

\begin{proposition}\label{prop:PD-shriek} Let $X^m$, $Y^n$ be closed smooth manifolds of respective dimensions $m$ and $n$. {\bf We assume that both $X$ and $Y$ are orientable and oriented.} Given a DG local system on $Y$, we have a commutative diagram 
$$
\xymatrix{ 
H_{*+m-n}(X;\varphi^*\cF) & \ar[l]_-{\varphi_!} H_*(Y;\cF) \ar[d]^{PD}_\simeq \\
H^{n-*}(X;\varphi^*\ol\cF) \ar[u]^{PD^{-1}}_\simeq & \ar[l]_-{\varphi^*} H^{n-*}(Y;\ol\cF). 
}
$$
\end{proposition}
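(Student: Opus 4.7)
The plan is to reduce the commutativity of the diagram to a statement on the $E^2$-page of the spectral sequences associated to the Morse-index filtrations, where it becomes the classical Poincar\'e duality for local coefficients. Choose Morse-Smale pairs $(f,\xi)$ on $X$ and $(g,\eta)$ on $Y$ generic enough that $\varphi$ is transverse to all the stable and unstable manifolds appearing in the definitions of the four maps in the diagram. Compute the homology groups using the pairs $(-f,-\xi)$ and $(-g,-\eta)$, and the cohomology groups using $(f,\xi)$ and $(g,\eta)$ together with the cohomological cocycle of Example~\ref{ex:cohomological_cocycle}. With these choices the chain-level $PD$ isomorphism of Proposition~\ref{prop:PD-chain-level} is realized explicitly as $\alpha\otimes x\mapsto\ol\alpha\otimes x^\vee$, and all four maps in the diagram are available at chain level.

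By Theorem~\ref{thm:f*!}(4) and Proposition~\ref{prop:f_upper_*}(4), both $\varphi_!$ and $\varphi^*$ arise as limits of morphisms of spectral sequences whose $E^2$-pages are the classical shriek
$$\varphi_{p,!}:H_p(Y;H_q(\cF))\to H_{p+m-n}(X;\varphi^*H_q(\cF))$$
and the classical pullback
$$\varphi^{p,*}:H^p(Y;H^q(\ol\cF))\to H^p(X;\varphi^*H^q(\ol\cF))$$
in Morse (co)homology with ordinary local coefficients. The chain-level $PD$ respects the Morse-index filtrations: the ascending filtration by $-f$-index on $C_*(-f;\cF)$ corresponds to the ascending filtration by $f$-index on $C^{m-*}(f;\ol\cF)$ via $|x|_{-f}=m-|x|_f$. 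Hence $PD$ also induces a morphism between the two spectral sequences, whose $E^2$-page is identified (using the natural identification $H^{-q}(\ol\cF)\cong H_q(\cF)$ of $H_0(\Omega X)$-modules) with the classical Poincar\'e duality $H_p(-;H_q(\cF))\cong H^{m-p}(-;H_q(\cF))$ of Morse homology with local coefficients.

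Commutativity on the $E^2$-page is then the classical statement that Poincar\'e duality intertwines the shriek map in homology with the pullback in cohomology for maps between closed oriented manifolds with ordinary local coefficients (compare the reference cited before Proposition~\ref{prop:phi*phi!}). Since the three maps $\varphi_!$, $\varphi^*$ and $PD$ are morphisms of convergent spectral sequences agreeing at $E^2$, the comparison theorem gives commutativity at $E^\infty$, and combined with the fact that $PD$ is already an isomorphism at chain level this yields the commutativity of the diagram at the level of (co)homology groups.

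The main obstacle is to verify carefully the filtration compatibility of $PD$ and the fact that on $E^2$ it is realized as classical Poincar\'e duality on Morse (co)homology with local coefficients in $H_q(\cF)$; this requires reconciling the opposite conventions (ascending versus descending) for homological and cohomological filtrations and tracking the identification $H^{-q}(\ol\cF)\cong H_q(\cF)$. A more hands-on alternative would be to prove commutativity directly at chain level: the shriek cocycle of $\varphi_!$ computed with the data $(-f,-\xi),(-g,-\eta)$ is built from the moduli spaces
$$\cM^{\varphi_!}_{-f,-g}(x',y)=W^s_{-f}(y)\cap\varphi^{-1}(W^u_{-g}(x'))=W^u_f(y)\cap\varphi^{-1}(W^s_g(x')),$$
which by Proposition~\ref{prop:f_upper_*} are precisely the moduli spaces used to define $\varphi^*$ for the data $(f,\xi),(g,\eta)$, and both cocycles are obtained by the same evaluation procedure into $\Omega Y$; the remaining problem is then a sign comparison between the orientation conventions \eqref{eq:shriek-orientation-second} for $\varphi_!$ and \eqref{eq:orientation-direct-second} (dualized) for $\varphi^*$, together with the grading signs implicit in $PD$. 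Either approach completes the proof.
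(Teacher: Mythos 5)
The spectral-sequence route you propose as your main argument has a genuine gap at the last step. It is true that two morphisms of spectral sequences agreeing on $E^2$ agree on every later page, hence on $E^\infty$. But agreement on $E^\infty$ only gives agreement between the maps induced on the associated graded of the filtration on homology; it does \emph{not} imply that the two maps agree on the homology groups themselves. Concretely, two filtration-preserving chain maps can induce the same morphism of spectral sequences and still induce different maps on homology, the difference being a map that strictly lowers the filtration degree. (A simple instance: on $C=\Z^2$ concentrated in degree $0$, filtered by $F_0=\Z\times 0\subset F_1=C$, the identity and $(a,b)\mapsto(a+b,b)$ are both filtration-preserving, agree on $E^0=E^\infty$, yet differ on $H_0=C$.) The parenthetical appeal to $PD$ being an isomorphism at chain level does not repair this: it does not rule out a filtration-lowering discrepancy between $\varphi_!$ and $PD^{-1}\circ\varphi^*\circ PD$. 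Without some additional input (degeneration, a splitting, or a chain-level identification) the argument does not conclude.

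The ``hands-on alternative'' you sketch at the end is the correct approach, and it is in fact what the paper does (the paper's proof is essentially ``follows directly from the constructions''). With the conventions of Proposition~\ref{prop:PD-chain-level} one computes $C_*(X;\varphi^*\cF)$ and $C_*(Y;\cF)$ from $(-f,-\xi)$ and $(-g,-\eta)$, and the corresponding cohomology complexes from $(f,\xi)$, $(g,\eta)$; the chain-level $PD$ is then $\alpha\otimes x\mapsto\ol\alpha\otimes x^\vee$. Your observation that
$$
\cM^{\varphi_!}_{-f,-g}(x',y)=W^s_{-f}(y)\cap\varphi^{-1}(W^u_{-g}(x'))=W^u_f(y)\cap\varphi^{-1}(W^s_g(x'))
$$
is exactly the point: these are the same moduli spaces as those defining $\varphi^*$ for the pair $(f,g)$ (the moduli spaces of $\varphi_*$ with inputs and outputs exchanged, per the construction in the proof of Proposition~\ref{prop:f_upper_*}), and the same evaluation into $\Omega Y$ is used, so the two cocycles agree up to the sign/grading that $PD$ itself introduces. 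The only remaining work is the orientation and sign bookkeeping, reconciling \eqref{eq:shriek-orientation-second} with the dualized version of \eqref{eq:orientation-direct-second} under the involution $f\leftrightarrow -f$, together with the grading shift in $\ol\cF$. That is the substantive content of the proof. So: discard the spectral sequence argument as written, promote the alternative to the main argument, and carry out the sign check.
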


\begin{proof}
This follows directly from the constructions of the maps $\varphi^*$, $\varphi_!$, and from the proof of Poincar\'e duality. The equality $\ol{\varphi^*\cF}=\varphi^*\ol\cF$ is obvious.
\end{proof}

One consequence of the above relationship between shriek maps and maps induced in cohomology is that the statements in Proposition~\ref{prop:f_upper_*} are equivalent to the corresponding statements for shriek maps in Theorem~\ref{thm:f*!}.

%%%%%
\chapter[Duality for non-orientable manifolds]{Shriek maps and Poincaré duality for non-orientable manifolds} \label{sec:non-orientable}
%%%%%

In this chapter we prove a Poincar\'e duality theorem on non-orientable manifolds, and we also construct shriek maps in that setting. Along the way we recast the definition of the Morse complex with DG-coefficients using the notion of orientation line.  
To the best of our knowledge, this notion was originally used by Latour~\cite{Latour} in the context of constant coefficients. It has been extensively used in recent years in Floer theory, e.g. in~\cite{Abouzaid-cotangent}. 

\section{Algebraic preliminaries} 

\subsection{Tensor product of DG local systems}

\begin{definition}[Tensor product of DG local systems] Given two right $C_*(\Omega M)$-modules $\cF_1,\cF_2$ with common ground ring $\K$, their tensor product $\cF_1\otimes_{\mathbb{K}} \cF_2$ is naturally a right $C_*(\Omega M)$-module with multiplication given by the composition
\begin{align*} 
\cF_1\otimes \cF_2\otimes & C_*(\Omega M)\stackrel{1\otimes 1 \otimes \Delta_*}\longrightarrow \cF_1\otimes \cF_2\otimes C_*(\Omega M) \otimes C_*(\Omega M) \\ 
& \stackrel{1\otimes \tau_{23}\otimes 1}\longrightarrow \cF_1\otimes C_*(\Omega M)\otimes \cF_2 \otimes C_*(\Omega M) \to \cF_1\otimes \cF_2.
\end{align*}
Here $\Delta_*$ is the diagonal map on cubical chains, $1$ denotes the identity map and $\tau_{23}$ denotes the twist on the 2nd and 3rd factor.
\end{definition} 

\begin{example}
Given a fibration $F\hookrightarrow E\to M$, we explain in~\S\ref{sec:fibrations} that it determines a unique-up-to-homotopy right $C_*(\Omega M)$-module structure on $\cF=C_*(F)$. Consider now two fibrations $E_1\longrightarrow M \longleftarrow E_2$ over the same base with fibers $F_1$, $F_2$ and corresponding right $C_*(\Omega M)$-module structures on $\cF_1=C_*(F_1)$ and $\cF_2=C_*(F_2)$. The fiber product $E_1 \times_M E_2$ is a fibration with fiber $F_1\times F_2$, and the corresponding right $C_*(\Omega M)$-module is chain homotopy equivalent to the tensor product $\cF_1\otimes \cF_2$. 
\end{example}

\subsection{Orientation lines}

We describe in this subsection a formalism for orientations that is due to Abouzaid~\cite{Abouzaid-cotangent} and is well-adapted for working with twisting local systems. We rephrase in this language our previous orientation conventions from~\S\ref{subsec: orientation-conventions}, see~\eqref{eq:transverse-orientation-new}. We use these same conventions in~\cite{BDHO-cotangent}. 

{\bf Orientation lines.} We call a $\Z$-graded free abelian group of rank 1 an \emph{orientation line}. An isomorphism of orientation lines is a graded isomorphism of the underlying free rank 1 abelian groups. In this setting we have the following canonical isomorphisms.
\begin{itemize}
\item Given two orientation lines $\ell_1,\ell_2$, their tensor product $\ell_1\otimes \ell_2$ is by definition supported in degree $\deg \ell_1 +\deg \ell_2$. The abelian group $\Z$ 
supported in degree $0$ is a neutral element for the tensor product.  
We have a canonical \emph{twist isomorphism} 
\begin{equation} \label{eq:ori_lines_twist}
\ell_1\otimes \ell_2\stackrel \simeq \longrightarrow \ell_2\otimes \ell_1
\end{equation}
given by $v_1\otimes v_2\mapsto (-1)^{\deg \ell_1 \cdot \deg \ell_2} v_2\otimes v_1$. 
\item  Given an orientation line $\ell$, we denote by $\ell^{-1}=\Hom_\Z(\ell,\Z)$ its dual, supported in degree $\deg \ell^{-1}=-\deg \ell$. There is a canonical \emph{evaluation isomorphism} 
\begin{equation} \label{eq:ori_lines_evaluation}
\ell^{-1}\otimes \ell\stackrel \simeq \longrightarrow \Z,\qquad \alpha\otimes v\mapsto \alpha(v). 
\end{equation}
Thus $\ell^{-1}$ plays the role of an inverse for $\ell$ with respect to the tensor product. 
There is a canonical isomorphism 
\begin{equation} \label{eq:ori_lines_inverse_of_product}
\ell_2^{-1}\otimes\ell_1^{-1}\simeq (\ell_1\otimes\ell_2)^{-1}, 
\end{equation}  
where a tensor product $\alpha_2\otimes\alpha_1\in\ell_2^{-1}\otimes\ell_1^{-1}$ is seen as an element of $(\ell_1\otimes \ell_2)^{-1}$ via $\langle \alpha_2\otimes \alpha_1,v_1\otimes v_2\rangle = \alpha_2(v_2) \alpha_1(v_1)$. Also, there is a canonical isomorphism 
\begin{equation} \label{eq:ori_lines_inverse_of_inverse}
\ell\simeq \left(\ell^{-1}\right)^{-1}
\end{equation}
given by $v\mapsto \left(\alpha\mapsto \alpha(v)\right)$.
\end{itemize}

{\bf Shift.} Given an orientation line $\ell$ and an integer $k\in\Z$, let $\ell[k]$ denote the same orientation line with degree shifted \emph{down} by $k$, i.e., $\ell[k]_*=\ell_{*+k}$. 

Given an orientation line $\ell$ supported in degree $\deg \ell$, we use the notation 
$$
\ul\ell=\ell[\deg \ell]
$$ 
for the orientation line seen as supported in degree $0$. 
 
{\bf Oriented orientation lines.} To orient an orientation line means to choose one of its two generators, called ``positive". An isomorphism of orientation lines  has a well-defined sign $\pm 1$ if its source and target are oriented.

Given two oriented orientation lines $\ell_1$ and $\ell_2$, we induce canonically an orientation on their tensor product $\ell_1\otimes \ell_2$ as follows: the positive generator is $v_1\otimes v_2$, where $v_1\in\ell_1$ and $v_2\in\ell_2$ are the positive generators. With this convention, the following \emph{Koszul sign rule} holds:
\begin{center}
{\it The twist isomorphism~\eqref{eq:ori_lines_twist} has sign $(-1)^{\deg\ell_1\cdot \deg\ell_2}$.}
\end{center}

The neutral element $\Z$ for the tensor product is canonically oriented by its generator $1$. As a consequence, given an orientation on $\ell$ we induce canonically an orientation on $\ell^{-1}$ by requiring that the evaluation isomorphism~\eqref{eq:ori_lines_evaluation} be orientation preserving. In other words, a generator $\alpha:\ell\to\Z$ is positive if and only if $\alpha(v)=1$ for the positive generator $v\in\ell$. (Note that the twist isomorphism $\ell^{-1}\otimes \ell\simeq \ell\otimes \ell^{-1}$ has sign $(-1)^{\deg \ell}$.)
%(This procedure determines in particular a canonical isomorphism of \emph{$\Z/2$-graded} orientation lines $\ell\simeq \ell^{-1}$.) 

These conventions for orienting the tensor product and the inverse imply that, given oriented lines $\ell_1$, $\ell_2$, and $\ell$, the isomorphisms~\eqref{eq:ori_lines_inverse_of_product} and~\eqref{eq:ori_lines_inverse_of_inverse} are orientation preserving.

{\bf Orientation lines of real vector spaces.} To a graded $1$-dimensional real vector space $L$ one associates canonically an orientation line $|L|$, defined as the free rank $1$ abelian group generated by the two orientations of $L$ modulo the relation that their sum vanishes, supported in $\deg L$. 

Given a finite dimensional real vector space $V$, its \emph{determinant line} is the graded $1$-dimensional real vector space $\det V=\Lambda^{\max}V$ supported in degree $\dim V$. Let $|V|=|\det V|$ be the corresponding orientation line. 

Since an orientation of $\det V$ is canonically equivalent to an orientation of $V$, we can see $|V|$ as being the free rank 1 abelian group generated by the two orientations of $V$ modulo the relation that their sum vanishes. There is a canonical isomorphism $|0|\simeq \Z$, which sends the positive orientation $+1$ of the $0$-dimensional vector space to $1$.  

{\bf Linear orientation conventions.} 

{\it Direct sum.} Given two finite dimensional real vector spaces $V$ and $W$, we induce an orientation on $V\oplus W$ from orientations of $V$ and $W$ as follows: given positive bases $(v_1,\dots,v_m)$ of $V$ and $(w_1,\dots,w_n)$ of $W$, the basis $(v_1,\dots,v_m,w_1,\dots,w_n)$ is positive. At the level of orientation lines, this is phrased as a canonical isomorphism 
\begin{equation} \label{eq:ori_lines_direct_sum}
|V|\otimes |W| \simeq |V\oplus W|. 
\end{equation}

{\it Short exact sequences.} Given a short exact sequence of finite dimensional real vector spaces 
\begin{equation} 
0\to A\to B\to C\to 0, 
\end{equation}
we induce an orientation on $B$ from orientations of $A$ and $C$ as follows: given positive bases $(a_1,\dots,a_m)$ of $A$ and $(c_1,\dots,c_n)$ of $C$, we choose lifts $\tilde c_1,\dots,\tilde c_n$ for $c_1,\dots,c_n$ and declare the basis $(a_1,\dots,a_m,\tilde c_1,\dots,\tilde c_n)$ of $B$ to be positive. This yields a canonical isomorphism 
\begin{equation} \label{eq:ori_lines_short_exact_sequence}
|A|\otimes |C|\simeq |B|. 
\end{equation}
This isomorphism can be used to induce an orientation on any of the factors $A$, $B$, $C$ from orientations of the two other factors. Note also that this orientation rule is equivalent to the one for the direct sum under the convention that a choice of splitting $C\to B$ gives rise to an isomorphism $B\simeq A\oplus C$.

{\it Transverse intersection and co-orientation.} Let $V$ be a real vector space (with no specified orientation). A \emph{co-orientation} of a subspace $F\subset V$ is an orientation of $V/F$. Let $E\subset V$ be an oriented subspace, let $F\subset V$ be a co-oriented subspace, and assume that $E$ and $F$ are \emph{transverse}, i.e., $E+F=V$. Then $E\cap F$ inherits a canonical orientation from the short exact sequence 
$$
0\to E\cap F \to E \to \frac{E}{E\cap F} \simeq \frac{E+F}{F} = \frac{V}{F}\to 0,
$$
where the isomorphism $E/E\cap F\simeq (E+F)/F$ is the canonical one. We write 
$|E\cap F|\otimes |V/F|\simeq |E|$ or, in the language of~\S\ref{subsec: orientation-conventions}, as 
\begin{equation} \label{eq:transverse-orientation-new} 
(\ori(E\cap F),\coori(F))=\ori(E).
\end{equation}
This formula is the linear analogue of~\eqref{eq:transverse-orientation} and shows that the orientation convention that we describe in this section coincides with the one from~\S\ref{subsec: orientation-conventions}.

{\bf Orientation local system of a manifold.} Given a manifold $M$, let $|M|$ be the local system of graded free rank $1$ abelian groups whose fiber at a point $p$ is the orientation line $|T_pM|$, supported in degree $n=\dim M$. We call it the \emph{orientation local system of $M$} (in this definition, we implicitly view a local system as a bundle of groups). 

{\bf Vector bundles.} Given a real vector bundle $E\to M$, let $|E^{\text{fiber}}|$ be the local system on $M$ whose fiber at $p$ is the orientation line $|E_p|$ of the fiber $E_p$. Yet another local system of interest is $|E||_M$, the restriction of $|E|$ to $M$, whose fiber at $p$ is the orientation line of the total tangent space $|T_pE|$. 

The canonical short exact sequence $0\to E_p\to T_pE \stackrel {d\pi} \longrightarrow T_pM\to 0$ gives rise to a canonical isomorphism 
$$
 |E_p|\otimes |T_pM|\simeq |T_p E|. 
$$ 
By requiring that this isomorphism preserves orientations we induce an orientation on $T_pE$ from orientations of $E_p$ and $T_pM$ (``fiber first, base second"). Pasting these canonical isomorphisms together we get an isomorphism of graded free rank 1 local systems on $M$ 
$$
 |E^{\text{fiber}}|\otimes |M|\simeq |E||_M.
$$

{\bf Manifolds with boundary.} Given a manifold $M$ with boundary, consider the normal bundle $\nu\to \p M$ along the boundary. The previous recipe provides an isomorphism $|\nu^{\text{fiber}}|\otimes |\p M|\simeq |M||_{\p M} $ of local systems on $\p M$. 

In this situation the normal bundle is trivial. {\it We trivialize the normal bundle along $\p M$ using an outward pointing vector field along the boundary $\nu^{\text{out}}$.} This determines a canonical isomorphism $|\R|\otimes |\p M|\simeq |M||_{\p M}$. 

Explicitly, we split $T_pM$ as $T_pM\simeq \R\nu_p^{\text{out}}\oplus T_p\p M$, or equivalently we project $T_pM$ onto $T_p\p M$ with kernel $\R\nu_p^{\text{out}}$, leading to the exact sequence $0\to \R\nu_p^{\text{out}}\to T_pM\to T_p\p M\to 0$. This induces a canonical isomorphism 
$$
|\R\nu_p^{\text{out}}|\otimes |T_p\p M|\simeq |T_pM|. 
$$
The canonical isomorphism $\R\nu_p^{\text{out}}\simeq \R$ induces $|\R|\otimes |\p M|\simeq |M||_{\p M}$.

\subsection{DG orientation local system on a manifold}

Let $M$ be a manifold of dimension $n$ with basepoint $\star$. The orientation local system $|M|$, described previously as a bundle of groups, can be viewed as a DG local system as follows. The underlying complex is $|T_\star M|$, supported in degree $n$, and the right $C_*(\Omega M)$-action is given as follows: 
\begin{itemize} 
\item $C_i(\Omega M)$ acts trivially for $i>0$, 
\item an element $\sum n_i \gamma_i \in C_0(\Omega M)$ acts by $(\sum n_i \mathrm{sign}(\gamma_i))\mathrm{Id}$ with $\mathrm{sign}(\gamma_i)=\pm 1$ according to whether $\gamma_i$ reverses or preserves the orientation. In particular the action of $C_0(\Omega M)$ factors through that of the group ring $\Z[\pi_1(M)]$. 
\end{itemize}

\begin{definition} We denote $\scro^M$ the above DG local system with fiber $|T_\star M|$, and call it \emph{(DG) orientation local system of $M$}. 
\end{definition} 

The local system $\scro^M$ is supported in degree $n=\dim M$, and 
$$
\ul\scro^M=\scro^M[n]
$$ 
is supported in degree $0$. In the sequel statement of Poincar\'e duality we will need to consider tensor products $\cF\otimes \ul\scro^M$, where $\cF$ is a DG local system.

\section{Morse complex and orientation lines} 

For the sequel arguments we need to rephrase in a more intrinsic way the definition of Morse homology and cohomology groups using orientation lines. 

\bigskip
{\it (i) Constant coefficients.} 
To the best of our knowledge, the point of view adopted here is originally due to Latour~\cite{Latour}, 
see also Abouzaid \cite{Abouzaid-cotangent}. 
Let $f:M\to\R$ be a Morse function, and let $\xi, \cY, \theta$ be choices of Morse-Smale pseudo-gradient vector field, collapsing tree and homotopy inverse for the projection $M\to M/\cY$. Given $x\in\Crit(f)$ we denote $\scro_x=|T_x W^u(x)|$ the orientation line of the unstable manifold at $x$. 
Given $y\in\Crit(f)$ such that $|y|=|x|-1$, any isolated gradient line $\gamma$ from $x$ to $y$ induces an isomorphism $\tau^\gamma_{x, y}:\scro_x\stackrel\sim\longrightarrow \scro_y$ determined by the isomorphism $|\R \p_s| \otimes |T_yW^u(y)|\simeq |T_xW^u(x)|$, where $\p_s$ is the $\xi$-direction along the gradient line. The Morse complex $C_*(f;\Z)$ with constant coefficients is defined to be $\bigoplus_x \scro_x$ with differential 
$$
\p|\scro_x=\sum_{|y|=|x|-1} \sum_{\gamma\in\cL(x,y)} \tau^\gamma_{x, y}.
$$
This definition recovers the one from~\S\ref{sec:construction-enriched-morse} as follows. Fix orientations of $\scro_x$, $x\in \Crit(f)$ given by generators $o_x\in\scro_x$. Then $\tau^\gamma_{x, y}(o_x)=\eps_{x,y} o_y$, and the sign is specified by requiring that the isomorphism $|\R \p_s| \otimes |T_yW^u(y)|\simeq |T_xW^u(x)|$ is orientation preserving. On the other hand, with the conventions from~\S\ref{sec:construction-enriched-morse}, and for the set of orientations $\{o_x\in \scro_x\}$, the sign $\eps'_{x,y}$ of a trajectory $\gamma$ is specified by requiring that the isomorphism $|T_yW^u(y)|\otimes |\R\langle -\xi\rangle|\simeq |T_xW^u(x)|$ is orientation preserving (Remark~\ref{orientation-rule}), and therefore differs from $\eps_{x,y}$ by $(-1)^{|x|-|y|}$. By replacing in the computation of $\eps'_{x,y}$ the orientation $o_x$ by $(-1)^{|x|}o_x$, we obtain $\eps'_{x,y}=\eps_{x,y}$. This phenomenon is similar to the one described in Appendix~\ref{app:ori-geo-an}, see Proposition~\ref{prop:C*-an-geom}.

\bigskip 
{\it (ii) DG coefficients.} To define the homological Morse complex with DG coefficients we adapt the previous construction as follows. Recall the space of parametrized Morse trajectories $\widehat{\cL}(x,y)=W^u(x)\cap W^s(y)$, oriented at a point $p$ by the exact sequence $0\to T_p W^u(x)\cap T_p W^s(y)\to T_p W^u(x)\oplus T_p W^s(y)\to T_pM\to 0$, which gives rise to a canonical isomorphism of orientation lines 
$$
|T_p \widehat{\cL}(x,y)|\otimes |T_pM|\stackrel\sim\longrightarrow |T_pW^u(x)|\otimes |T_pW^s(y)|.
$$
Combining this with the isomorphism $|T_yM|\simeq |T_yW^u(y)|\otimes |T_yW^s(y)|$, recalling the moduli space of Morse trajectories $\cL(x,y)\simeq \widehat{\cL}(x,y)/\R \p_s$, and transporting orientations from $p$ to $y$, we obtain a canonical isomorphism  
$$
|\R\p_s|\otimes |\cL(x,y)|\otimes \scro_y\stackrel\sim\longrightarrow \scro_x,
$$
which induces a canonical isomorphism  
\begin{equation} \label{eq:ori_x, y_Lx, y}
\tau_{x, y}:|\cL(x,y)|^{-1}\otimes \scro_x[1]\stackrel\sim\longrightarrow \scro_y. 
\end{equation}
This splits along connected components $\gamma$ of $\cL(x,y)$ as
$\tau_{x, y}=\oplus_\gamma \tau^\gamma_{x, y}$.
\footnote{Note that $\cL(x,y)$ is always orientable, and the
  definition of the Barraud-Cornea cocycle is the same for both
  orientable and nonorientable manifolds. Given orientations of $\scro_x$,
  $x\in \Crit(f)$, the moduli space $\cL(x,y)$ can be oriented by
  requiring that the canonical isomorphism $\tau_{x, y}$ be orientation
  preserving.}

%
%At the level of vector spaces, given two transverse subspaces $E,F\subset V$ of a vector space $V$ such that $E$ is oriented and $F$ is cooriented, the intersection $E\cap F$ is canonically oriented using the canonical isomorphism $E/E\cap F \simeq (E+F)/F$ and the short exact sequence $0\to E\cap F\to E \to E/E\cap F\to 0$.  In our situation, given orientations of $W^u(x)$ and $W^u(y)$, i.e., isomorphisms $o_x\simeq \Z$ and $o_y\simeq \Z$, we obtain a coorientation of $W^s (y)$, hence an orientation of the moduli spaces of parametrized Morse trajectories $\widehat{\cL}(x,y)=W^u(x)\cap W^s(y)$, and further an orientation of the moduli spaces of unparametrized Morse trajectories $\cL(x,y)=\widehat{\cL}(x,y)/\R$. Reversal of the orientation at $x$ or at $y$ results in a reversal of the orientation of $\cL(x,y)$. As a consequence of~\eqref{eq:ori_x, y_Lx, y} we obtain that each component $\gamma$ of the moduli space $\cL(x,y)$ induces an isomorphism of orientation lines 
%\begin{equation} \label{eq:ori_x, y}
%\tau^\gamma_{x, y}: o_x\stackrel\sim\longrightarrow o_y.
%\end{equation} 

Given a Barraud-Cornea cocycle $(m_{x, y})$ we write $m_{x, y}=\sum_\gamma m^\gamma_{x, y}$, where the sum runs over the connected components $\gamma$ of $\cL(x,y)$. Given a $C_*(\Omega M)$-right module $\cF$ we define 
$$
C_*(f;\cF) = \oplus_x \cF\otimes \scro_x 
$$
with differential 
$$
\p| \cF\otimes \scro_x: \, \alpha\otimes o_x\mapsto \p\alpha\otimes o_x + (-1)^{|\alpha|} \sum_y \sum_\gamma \alpha m^\gamma_{x, y} \otimes \tau^\gamma_{x, y}([m^\gamma_{x, y}]\otimes o_x).
$$ 
Here $[m^\gamma_{x, y}]$ is the orientation of $\cL^\gamma(x,y)$ determined by $m^\gamma_{x, y}$. 

In case the cocycle $(m_{x, y})$ is chosen such that $\tau_{x, y}$ is orientation preserving with respect to fixed orientations $o_x$ of $\scro_x$, $x\in \Crit(f)$, the differential becomes 
$$
\p| \cF\otimes \scro_x: \, \alpha\otimes o_x\mapsto \p\alpha\otimes o_x + (-1)^{|\alpha|} \sum_y \alpha m_{x, y} \otimes o_y
$$ 
and the above definition coincides with the one that we have used previously. 

\rmk Here is an alternative point of view for this construction. Recall that the orientation line $\scro_x$ is the free rank $1$ abelian group defined by the quotient $$\scro_x=\Z\langle o_x, \ol{o}_x\rangle/\{o_x+ \ol{o}_x=0\},$$
where $o_x$ and $\ol{o}_x$ are the two possible orientations of $W^u(x)$. Adapting the  construction of the Barraud-Cornea cocycle in \S\ref{sec:construction-enriched-morse}, we naturally get a  family of morphisms of abelian groups $\mu_{x,y}: \scro_x\otimes\scro_y \ri C_{|x|-|y|-1}(\Omega M)$ indexed by $x,y\in \Crit(f)$. Indeed, once we fix a collection of generators $o_x$ respectively for each  $\scro_x$, i.e. a collection of orientations of the unstable manifolds $W^u(x)$, the aforementioned construction yields a cocycle $(m_{x,y})$ and we set $\mu_{x,y}(o_x\otimes o_y)= m_{x,y}$.  In order to have $\mu_{x,y}$ well defined, we use the trivial observation that we may perform the inductive construction of the representing chain system $(s_{x,y})$ in the proof of Proposition~\ref{representingchain}  such that, when we change $o_x$ into $\ol{o}_x$ (or $o_y$ into $\ol{o}_y$), the corresponding chain $s_{x,y}$ changes its sign and therefore so does $m_{x,y}$.  

Within this framework, the differential  of the complex $C_*(f;\calf) =\oplus_x \cF\otimes \scro_x$ defined above writes: 
$$\partial(\alpha\otimes a) \, =\, \partial \alpha \otimes a + (-1)^{|\alpha|}\sum_y\alpha \mu_{x,y}(a\otimes o_y)\otimes o_y. 
$$
\kmr
\begin{remark} \label{rmk:ori-lines-coh}
One can rephrase in a similar way using orientation lines the cohomological Morse complex. 
\end{remark}

\section{Poincar\'e duality in the non-orientable case}

\begin{theorem}[Poincar\'e duality] \label{thm:PD-non-orientable}
Let $M$ be a closed manifold of dimension $n$, denote $\ul\scro^M$ its orientation local system supported in degree $0$, let $\cF$ be a homological DG local system and denote $\ol\cF$ the cohomological local system obtained from $\cF$ by reversing the sign of the grading. The following Poincar\'e duality isomorphism holds: 
$$
PD: HM_*(M;\cF) \stackrel\simeq\longrightarrow HM^{n-*}(M;\ol\cF\otimes \ul\scro^M).
$$
\end{theorem}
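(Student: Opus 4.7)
The plan is to extend the chain-level Poincaré duality map of Proposition~\ref{prop:PD-chain-level} to the non-orientable setting by systematically inserting the twist by $\ul\scro^M$. The orientable argument worked because the identification $C_*(-f;\cF)\simeq C^{n-*}(f;\ol\cF)$, $\alpha\otimes x\mapsto \ol\alpha\otimes x^\vee$, hid the fact that it identifies the orientation of $W^u_f(x)$ with the orientation of $W^u_{-f}(x)=W^s_f(x)$ via a global orientation of $M$. In the non-orientable case, that global identification is unavailable at each critical point only up to the ambient orientation torsor, and this torsor is exactly what $\ul\scro^M$ records as a DG local system via its holonomy action of $\pi_1(M)$.

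Concretely, I would use the intrinsic description of both complexes via orientation lines. On the homological side, the Morse complex of $-f$ reads $C_*(-f;\cF)=\bigoplus_x \cF\otimes\scro^{-f}_x$, with $\scro^{-f}_x=|T_xW^s_f(x)|$ supported in degree $n-|x|$. The short exact sequence $0\to T_xW^u_f(x)\to T_xM\to T_xW^s_f(x)\to 0$ gives a canonical isomorphism $\scro_x\otimes \scro^{-f}_x\simeq (\scro^M)_x$, hence
\[ C_*(-f;\cF) \;\simeq\; \bigoplus_x \cF\otimes (\scro_x)^{-1}\otimes (\scro^M)_x. \]
On the cohomological side, rephrasing Remark~\ref{rmk:ori-lines-coh} in orientation-line form, the cochain complex is $C^*(f;\cG)=\bigoplus_x \cG\otimes (\scro_x)^{-1}$ with $(\scro_x)^{-1}$ in degree $-|x|$. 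Taking $\cG=\ol\cF\otimes\ul\scro^M$ and evaluating $\ul\scro^M$ canonically at each critical point (using parallel transport along the branch of $\cY$ joining the basepoint to $x$, together with a chosen homotopy inverse $\theta$, just as in the construction of the twisting cocycle), one obtains
\[ C^{n-*}(f;\ol\cF\otimes\ul\scro^M) \;\simeq\; \bigoplus_x \ol\cF\otimes(\ul\scro^M)_x\otimes(\scro_x)^{-1}, \]
which, after regrading and identifying $(\ul\scro^M)_x=(\scro^M)_x[n]$ and $\ol\cF=\cF$ as graded abelian groups (with degree reversal), matches the previous display summand by summand.

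The chain-level map $PD$ is then defined to be the tensor product of the reversal-of-degree map $\cF\to \ol\cF$ with the identity on the orientation-line factors. The main technical step is to verify that $PD$ intertwines the two differentials. This amounts to comparing, on the one hand, the Barraud-Cornea cocycle $(m^{-f}_{x,y})$ built from the compactified moduli spaces $\ol\cL_{-f}(x,y)$ evaluated into $\Omega M$, with its images under the $-f$-orientation transport $\scro^{-f}_x\to\scro^{-f}_y$, and on the other hand, the cohomological cocycle $(m^f_{y^\vee,x^\vee})$ from Example~\ref{ex:cohomological_cocycle} acting on $(\scro_x)^{-1}$ and coupled to the holonomy of $\ul\scro^M$ along the same evaluated loops. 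The identity $\cL^\uparrow_f(y,x)=\cL_{-f}(y,x)$ (with reversed parametrization) produces a bijection of moduli spaces, and the reparametrization involution $I_*:C_*(\Omega M)\to C_*(\Omega M)$ converts one cocycle into the other up to the sign recorded in Example~\ref{example:coh-cocycle-I*}. The $\ul\scro^M$ factor exactly absorbs the discrepancy between the $f$- and $-f$-orientation transports at each critical point, because by construction the holonomy of $\ul\scro^M$ along a gradient segment is the unique sign that makes the commutative square
\[ \scro_x\otimes\scro^{-f}_x\simeq (\scro^M)_x,\qquad \scro_y\otimes\scro^{-f}_y\simeq (\scro^M)_y \]
compatible with the two linear transports along the trajectory.

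The hard part will be the sign bookkeeping in this verification: the construction of the cohomological cocycle goes through an anti-algebra map $I_*$, and the Koszul twists produced when one tears apart $\scro_x\otimes (\scro^M)_x^{-1}\otimes\scro_y^{-1}$ in two different orders must precisely cancel the signs $(-1)^{|x||y|+|x|+1}$ of Example~\ref{example:coh-cocycle-I*} and the signs already present in the two differentials~\eqref{diffDGmodule} and the cohomological analogue. Once this chain-level isomorphism is in place, applying the invariance theorems of \S\ref{sec:invariance-homology} (and its cohomological counterpart, which follows by the same methods) immediately yields the claimed Poincaré duality in homology, independently of the choice of $f,\xi,\cY,\theta$.
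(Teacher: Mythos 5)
Your overall plan---rewrite both sides intrinsically using orientation lines, insert the twist by $\ul\scro^M$, match summands via $\scro_x\otimes\scro^{-f}_x\simeq(\scro^M)_x$, and verify the chain map property---is the same strategy the paper uses (Proposition~\ref{prop:PD-chain-level-oM}). However, your proposed route for the sign verification contains a concrete misconception, and the verification itself is not carried out, which is a genuine gap.

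The issue is in the sentence claiming that $\cL^\uparrow_f(y,x)=\cL_{-f}(y,x)$ ``with reversed parametrization'' and that the involution $I_*$ is needed to convert one cocycle into the other. This is not correct. The moduli space $\cL^{-f}(x,y)$ consists of trajectories of $-\xi$ from $x$ to $y$ (the Morse theory of $-f$ with its descending pseudo-gradient $-\xi$), parametrized by descending values of $-f$, i.e.\ starting at $x$ and ending at $y$. The moduli space $\cL^{\uparrow}(x,y)$ of Example~\ref{ex:cohomological_cocycle} consists of exactly the same trajectories of $-\xi$, and ``the procedure used in the construction of the homological Barraud-Cornea cocycle'' applied to them parametrizes them in exactly the same way. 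So $m^{-f}_{x,y}$ and $m^{\uparrow}_{x,y}=m^f_{x^\vee,y^\vee}$ are \emph{literally the same chains} in $C_*(\Omega M)$, only placed in opposite degree. No $I_*$ is involved, and the signs $(-1)^{|x||y|+|x|+1}$ of Example~\ref{example:coh-cocycle-I*} never appear. You have conflated two distinct cohomological cocycles: the one from Example~\ref{ex:cohomological_cocycle} (built from $\cL^{\uparrow}$), which the paper uses in the Poincar\'e duality proof precisely because of this chain-level coincidence, and the one from Example~\ref{example:coh-cocycle-I*} (built via $I_*$ from the homological cocycle for $f$), which is used in the derived-$\Hom$ identification but is a genuinely different cocycle at chain level. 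If you route the argument through $I_*$, the moduli spaces on the two sides no longer coincide, and the sign bookkeeping that you describe as ``the hard part'' is not merely harder but actually different from what you describe; most importantly, it is not done.

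Once you make the correct choice---take the homological complex for $-f$ with cocycle $(m^{-f}_{x,y})$ and the cohomological complex for $f$ with cocycle $(m^f_{x^\vee,y^\vee})$ from Example~\ref{ex:cohomological_cocycle}, built on the same underlying set of $-\xi$-trajectories---the cocycles are identical, and the verification that $PD$ intertwines the differentials reduces to a short diagram chase involving the isomorphisms $\iota_x:\scro^s_x\to(\scro^u_x)^{-1}\otimes\scro^M_x$ together with the commutativity of diagram~\eqref{eq:comm-diag-taufx, y}, where $\tau^M_{x,y}$ (parallel transport of orientation along a connecting trajectory) is matched against the holonomy action of the cocycle on $\ul\scro^M$. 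This is exactly the computation in the proof of Proposition~\ref{prop:PD-chain-level-oM}. Your proposal correctly anticipates all the ingredients---the orientation-line description, the canonical isomorphism coming from $T_xW^u(x)\oplus T_xW^s(x)=T_xM$, the role of $\ul\scro^M$ in absorbing the orientation transports---but the central sign computation is both misdirected and absent, so the proof is incomplete as written.
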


\begin{proof} We repeat the proof of Theorem~\ref{thm:PD-orientable}. The key step is to establish an analogue of Proposition~\ref{prop:PD-chain-level}, which is done in Proposition~\ref{prop:PD-chain-level-oM} below. 
\end{proof} 

We choose a Morse function $f:M\to \R$, a negative pseudo-gradient $\xi$, an embedded collapsing tree $\cY$ and a homotopy inverse $\theta$ for the projection $M\to M/\cY$. 
Denote $\scro^u_x=|T_x W^u(x)|$, $\scro^s_x=|T_x W^s(x)|$, and $\scro^M_x=|T_x M|$.  Since $T_x W^u(x)\oplus T_x W^s(x)=T_x M$, we obtain a canonical isomorphism $\scro^u_x\otimes \scro^s_x\simeq \scro^M_x$, and further a canonical isomorphism 
$$
\iota_x: \scro^s_x \stackrel\simeq\longrightarrow (\scro^u_x)^{-1}\otimes \scro^M_x.
$$

Let $\cF$ be a DG local system. We define the homological Morse complex $C_*(-f;\cF)$ using $\cY,\theta$ and the cocycle $(m^{-f}_{x, y})$ determined by $-\xi$ and the moduli spaces $\cL^{-f}(x,y)$. We define the cohomological Morse complex $C^*(f;\ol\cF\otimes \ul\scro^M)$ using $\cY,\theta$ and the cocycle $(m^f_{x^\vee, \,  y^\vee})$ determined by $\xi$ and the moduli spaces $\cL^{f\uparrow}(x,y)$ as in Example~\ref{ex:cohomological_cocycle}. We denote $\tau^M_{x, y}:\scro^M_x\stackrel\simeq\longrightarrow \scro^M_y$ the canonical isomorphism given by parallel transport of the orientation along a Morse trajectory from $x$ to $y$ (we do not include the trajectory in the notation for readability). 
%We denote $x^\vee$ a critical point of $f$ viewed as a generator of the cohomological Morse complex. 

We have a commutative diagram of canonical isomorphisms 
\begin{equation} \label{eq:comm-diag-taufx, y}
\xymatrix
@C=50pt
{
|\cL^{-f}(x,y)|^{-1}\otimes \scro^u_x(-f)[1] \ar[r]^-{\tau^{-f}_{x, y}} \ar@{=}[d] & \scro^u_y(-f) \ar@{=}[d] \\
|\ol\cL^{f\uparrow}(x,y)|\otimes \scro^s_x(f)[1] \ar[d]_{\iota_x} & \scro^s_y(f) \ar[d]^{\iota_y} \\
|\ol\cL^{f\uparrow}(x,y)|\otimes \scro^u_x(f)[-1]^{-1} \otimes \scro^M_x \ar[r]_-{\tau^{f\uparrow}_{x, y}\otimes \tau^M_{x, y}} & \scro^u_y(f)^{-1} \otimes \scro^M_y.
}
\end{equation}

For the next statement and proof we choose a generator $o^s_x$ of $\scro^u_x(-f)=\scro^s_x(f)$, and we denote $\iota_x(o^s_x)=(o^u_x)^{-1}\otimes o^M_x$, with $o^u_x$, $o^M_x$ generators of $\scro^u_x(f)$, $\scro^M_x$. We also denote $\tau\iota_x:\scro^u_x(-f)=\scro^s_x(f)\stackrel\simeq\longrightarrow \scro^M_x\otimes \scro^u_x(f)^{-1}$ the map $\iota_x$ postcomposed with the twist on the tensor product.

\begin{proposition} \label{prop:PD-chain-level-oM}
There is a canonical isomorphism of chain complexes 
$$
PD:C_*(-f;\cF)\stackrel\simeq\longrightarrow C^{n-*}(f;\ol\cF\otimes o^M)
$$
defined by
$$
\oplus_x \cF\otimes \scro^u_x(-f) \longrightarrow \oplus_{x^\vee} \ol\cF\otimes \scro^M\otimes \scro^u_x(f)^{-1},\qquad \alpha\otimes o^s_x\mapsto \ol\alpha\otimes \tau\iota_x(o^s_x).
$$ 
\end{proposition}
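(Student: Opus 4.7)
The plan is to imitate the proof of Proposition~\ref{prop:PD-chain-level}, with the canonical isomorphism $\iota_x:\scro^s_x(f)\stackrel\simeq\longrightarrow (\scro^u_x(f))^{-1}\otimes \scro^M_x$ playing the role that the tautological identification $x\leftrightarrow x^\vee$ played in the orientable case. First I will verify that the stated formula defines a graded $\Z$-linear isomorphism on each summand: $\cF\otimes \scro^u_x(-f)=\cF\otimes \scro^s_x(f)$ is shifted up by $n-|x|_f$, whereas the cohomological summand $\ol\cF\otimes \scro^M_x\otimes (\scro^u_x(f))^{-1}$ is cohomologically shifted up by $|x|_f-n$, so after the Poincar\'e shift $*\mapsto n-*$ the degrees agree, and bijectivity on each summand is immediate from $\iota_x$ being a canonical isomorphism of orientation lines.

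The central step is the chain map identity $d\circ PD=PD\circ \p$. For this I will exploit the commutative diagram~\eqref{eq:comm-diag-taufx, y}, split along connected components $\gamma$ of the moduli space $\cL^{-f}(x,y)=\cL^{f\uparrow}(x,y)$ (the same manifold viewed with two different orientation-transport rules). For each $\gamma$ this gives
$(\tau_{x, y}^{f\uparrow,\gamma}\otimes \tau_{x, y}^{M,\gamma})\circ\iota_x=\iota_y\circ \tau_{x, y}^{-f,\gamma}$
(up to the unambiguous Koszul signs in the definition of $\tau\iota_x$ and $\ol\cF$), which is precisely the linear-algebraic datum needed to convert a homological transport weighted by the chain $m^{-f,\gamma}_{x, y}\in C_*(\Omega M)$ into the cohomological transport weighted by $m^{f\uparrow,\gamma}_{x, y}$ acting on $\ol\cF$. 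Since both cocycles arise from evaluating the \emph{same} moduli space with the same choice of tree and homotopy inverse $\theta$, the underlying chains in $C_*(\Omega M)$ coincide, and the cohomological cocycle is obtained from the homological one by the overline construction, exactly as in the orientable case.

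The genuinely new ingredient, and the main technical obstacle I anticipate, is the extra factor $\tau_{x, y}^{M,\gamma}$ of parallel transport in $M$. I will show that under the right $C_*(\Omega M)$-action on $\ul\scro^M$ this factor corresponds to right multiplication by the $0$-chain class $\hat m^{-f,\gamma}_{x, y}\in H_0(\Omega M)=\Z[\pi_1 M]$, via the homotopy class of the based loop obtained by collapsing $\cY$. This is the chain-level refinement of the fact that $\ul\scro^M$, as a $\Z[\pi_1 M]$-module, encodes the first Stiefel-Whitney class through its monodromy; positive degree cubical chains act trivially on $\ul\scro^M$, so only the $\pi_0$-information of the cocycle enters, and this is exactly what $\tau^M$ records. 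Once this compatibility is in place, feeding the diagram~\eqref{eq:comm-diag-taufx, y} into the two differentials and performing the same Koszul bookkeeping as in Proposition~\ref{prop:PD-chain-level} yields $d\circ PD=PD\circ \p$, completing the proof.
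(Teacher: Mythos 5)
Your proposal is correct and follows essentially the same route as the paper: verify the degree shift and bijectivity on each summand, then establish $d\circ PD=PD\circ\p$ by feeding the commutative diagram~\eqref{eq:comm-diag-taufx, y} into the two differentials, with the parallel-transport factor $\tau^M_{x,y}$ absorbed by the $C_{-*}(\Omega M)$-module structure on the tensor product $\ol\cF\otimes\ul\scro^M$. Your observation that $\tau^M_{x,y}$ matches the right action of the $\pi_0$-class of the Barraud--Cornea $0$-chain, because positive-degree chains act trivially on $\ul\scro^M$, is exactly the content the paper invokes (slightly more tersely) when it cites the definition of the module structure on $\ol\cF\otimes\ul\scro^M$ in the last step of its computation.
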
 

\begin{proof}
As in the proof of Proposition~\ref{prop:PD-chain-level}, it is clear
that $PD$ is a bijection and we prove that $PD$ is a chain map. The
cocycles $m_{x, y}^{-f}$ and $m_{x^\vee, \,  y^\vee}^f$ coincide except for
their degrees which are opposite, so that $m_{x^\vee, \,  y^\vee}^f =
\ol{m_{x, y}^{-f}}$. We compute:
\begin{align*}
PD(\p& (\alpha\otimes o^s_x)) \\
& = PD \left(\p \alpha\otimes o^s_x + (-1)^{|\alpha|} \sum_y \alpha  m_{x, y}^{-f}\otimes \tau^{-f}_{x, y}(o^s_x)\right) \\
& = \ol{\p \alpha}\otimes \tau \iota_x(o^s_x) + (-1)^{|\alpha|} \sum_y \ol{\alpha m_{x, y}^{-f}}\otimes \tau\iota_y\tau^{-f}_{x, y}(o^s_x) \\
& = \p\ol\alpha \otimes  \tau \iota_x(o^s_x) + (-1)^{|\alpha|} \sum_y \ol\alpha \cdot \ol{m_{x, y}^{-f}} \otimes \tau (\tau^{f\uparrow}_{x, y}\otimes \tau^M_{x, y})\iota_x(o^s_x) \\
& = \p\ol\alpha \otimes \tau \iota_x(o^s_x) + (-1)^{|\alpha|} \sum_y \ol\alpha \cdot m_{x^\vee, \,  y^\vee}^f\otimes \tau^M_{x, y} o^M_x \otimes \tau^{f\uparrow}_{x, y} (o^u_x)^{-1} \\
& = d(\ol\alpha\otimes \iota_x(o^s_x)) \\
& = d (PD(\alpha\otimes x)). 
\end{align*}
For the third equality we used the commutativity of the diagram~\eqref{eq:comm-diag-taufx, y}. For the fifth equality we used the definition of the $C_{-*}(\Omega M)$-module structure on the tensor product $\ol\cF\otimes \ul\scro^M$, and the definition of the cohomological Morse complex for $f$ (see Remark~\ref{rmk:ori-lines-coh}). 
\end{proof}

\begin{remark}
The previous isomorphism is canonical, and this reflects the fact that the group $H_n(M;\ul\scro^M)$ has a canonical generator. This is akin to the fact that the square of an orientation line is canonically oriented: reasoning in terms of Morse theory, one can choose a Morse function on $M$ having a unique maximum $x$, and the canonical generator of $H_n(M;\ul\scro^M)$ is the positive generator of $\ul\scro_x\otimes\scro_x=\ul\scro_x\otimes \scro_x^M$, with $\scro_x$ the orientation line of $T_xW^u(x)=T_xM$. 
\end{remark}

\section{Shriek map in the non-orientable case}

\begin{proposition}[shriek map] \label{prop:f!-non-ori}
Let $X^m$, $Y^n$ be closed smooth manifolds of respective dimensions $m$ and $n$. Denote their respective orientation local systems $\ul\scro_X$ and $\ul\scro_Y$, viewed as supported in degree $0$. Let $\cF$ be a DG local system on $Y$. A continuous map $\varphi:X\to Y$ induces in homology a canonical \emph{shriek map}  
$$
\varphi_!: H_*(Y;\cF\otimes \ul\scro_Y)\to H_{*+m-n}(X;\varphi^*\cF\otimes \ul\scro_X)
$$
which has the same functoriality properties as in the orientable case. In particular,
%\begin{enumerate} 
%\item The map $\varphi_!$ depends only on the homotopy class of $\varphi$. 
%\item We have $\mathrm{Id}_!=\mathrm{Id}$. 
%\item 
given maps $X^m\stackrel{\varphi}\longrightarrow Y^n \stackrel{\psi}\longrightarrow Z^p$ and a DG local system $\cF$ on $Z$, we have 
$$
(\psi\varphi)_! = \varphi_!\psi_! : H_*(Z;\cF\otimes \ul\scro_Z)\to H_{*+m-p}(X;\varphi^*\psi^*\cF\otimes \ul\scro_X).
$$
\qed
%\item If $\varphi$ is a homeomorphism, then $(\varphi^{-1})_!=\varphi_!^{-1}$. 
%\end{enumerate}
\end{proposition}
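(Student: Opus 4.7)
The strategy is to reduce the non-orientable statement to Proposition~\ref{prop:f_upper_*} and Theorem~\ref{thm:PD-non-orientable} via Poincaré duality, thereby transferring both the definition of the shriek map and its functoriality properties from the cohomological side to the homological one.

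First I would observe that the DG local system $\ul\scro^M$ is supported in degree zero, with $C_{>0}(\Omega M)$ acting by zero and $C_0(\Omega M)$ acting through $\pi_1(M)$ by orientation signs. Consequently $\ul\scro^M\otimes \ul\scro^M$ is canonically isomorphic as a DG right $C_*(\Omega M)$-module to the trivial local system $\Z$, since each loop acts by the square of a sign. Applying Theorem~\ref{thm:PD-non-orientable} with $\cF$ replaced by $\cF\otimes \ul\scro^M$, and using the tautological identity $\ol{\cF\otimes \ul\scro^M}=\ol\cF\otimes \ul\scro^M$, I get a canonical Poincaré duality isomorphism
$$
PD_M:\, H_*(M;\cF\otimes \ul\scro^M)\stackrel\simeq\longrightarrow H^{n-*}(M;\ol\cF)
$$
for every closed $n$-manifold $M$ and every homological DG local system $\cF$ on $M$.

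I would then \emph{define} the shriek map by
$$
\varphi_!\, :=\, PD_X^{-1}\circ \varphi^*\circ PD_Y,
$$
where $\varphi^*:H^{n-*}(Y;\ol\cF)\to H^{n-*}(X;\varphi^*\ol\cF)$ is the cohomological pullback from Proposition~\ref{prop:f_upper_*} and $PD_X$ is applied to $\varphi^*\cF$, noting the identity $\ol{\varphi^*\cF}=\varphi^*\ol\cF$. The resulting map has the required source and target because $m-(n-*)=*+m-n$. When $X$ and $Y$ are oriented, a choice of orientations trivializes $\ul\scro_X$ and $\ul\scro_Y$ canonically and Proposition~\ref{prop:PD-shriek} shows that the present definition recovers the one from~\S\S\ref{sec:functoriality-first-definition}--\ref{sec:second-definition}.

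The functoriality properties are then inherited mechanically from those of $\varphi^*$. The identity property is immediate. For composition, given $X\stackrel\varphi\to Y\stackrel\psi\to Z$, I would insert the identity $PD_Y^{-1}\circ PD_Y$ and use Proposition~\ref{prop:f_upper_*} to compute
$$
\varphi_!\circ \psi_! \, =\, PD_X^{-1}\circ \varphi^*\circ \psi^*\circ PD_Z \, =\, PD_X^{-1}\circ (\psi\varphi)^*\circ PD_Z \, =\, (\psi\varphi)_!.
$$
Homotopy invariance and the spectral-sequence compatibility transport from the corresponding cohomological statements in exactly the same way. The main point one must check (and the step that could look like an obstacle but is in fact automatic) is that the canonical isomorphism $\ul\scro^M\otimes \ul\scro^M\simeq\Z$ is genuinely \emph{canonical}, so that the definition of $PD_M$ via the substitution $\cF\rightsquigarrow\cF\otimes \ul\scro^M$ is intrinsic and does not depend on auxiliary choices. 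An alternative, more geometric approach would mimic the chain-level construction of~\S\ref{sec:second-definition-shriek} directly, using the moduli spaces $\cM^{\varphi_!}(x',y)=W^s(y)\cap \varphi^{-1}(W^u(x'))$ with summands now enriched by the orientation lines $\scro^s_{x'}$ and $\scro^s_y$ and using the canonical isomorphisms $\iota_{x'}:\scro^s_{x'}\simeq (\scro^u_{x'})^{-1}\otimes \scro^Y_{x'}$ of Proposition~\ref{prop:PD-chain-level-oM} to extract the twists by $\ul\scro_Y$ and $\ul\scro_X$; the hard part here is the Koszul sign bookkeeping in rearranging tensor products of orientation lines, but no new geometric input is required and agreement with the Poincaré-dual definition above is guaranteed.
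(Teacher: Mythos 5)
Your proposal is correct and matches the paper's own approach exactly: the paper defines $\varphi_!$ in the non-orientable case precisely via the commutative square $\varphi_! = PD_X^{-1}\circ\varphi^*\circ PD_Y$ (displayed right after the proposition), with functoriality inherited from $\varphi^*$, and it likewise mentions the chain-level alternative via the moduli spaces of~\S\ref{sec:second-definition-shriek}. You supply a bit more detail than the paper on the canonical isomorphism $\ul\scro^M\otimes\ul\scro^M\simeq\Z$, which is indeed the point that makes the substitution $\cF\rightsquigarrow\cF\otimes\ul\scro^M$ in Theorem~\ref{thm:PD-non-orientable} produce the right source and target.
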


The construction of the shriek map can be done either directly as in~\S\S~\ref{sec:functoriality-first-definition}-\ref{sec:second-definition}, or through Poincar\'e duality via the commutative diagram (see Proposition~\ref{prop:PD-shriek})
$$
\xymatrix{ 
H_{*+m-n}(X;\varphi^*\cF\otimes \ul\scro_X) & \ar[l]_-{\varphi_!} H_*(Y;\cF\otimes \ul\scro_Y) \ar[d]^{PD}_\simeq \\
H^{n-*}(X;\varphi^*\ol\cF) \ar[u]^{PD^{-1}}_\simeq & \ar[l]_-{\varphi^*} H^{n-*}(Y;\ol\cF). 
}
$$

The following corollary is a straightforward consequence of functoriality of shriek maps.

\begin{corollary} \label{cor:homotopy_equivalence_non-ori}
Let $\varphi:X\to Y$ be a homotopy equivalence and let $\cF$ be a DG local system on $Y$. Denote the orientation local systems on $X$, $Y$ by $\scro_X$, respectively $\scro_Y$, viewed as being supported in degree $0$. The canonical map 
$$
\varphi_!:H_*(Y;\cF\otimes \scro_Y)\to H_*(X;\varphi^*\cF\otimes \scro_X)
$$
is an isomorphism. \qed
\end{corollary}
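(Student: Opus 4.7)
The plan is to mimic the classical argument for usual (co)homology, using only the identity, composition, and homotopy properties of shriek maps listed in Proposition~\ref{prop:f!-non-ori} (which match those of Theorem~\ref{thm:f*!}). The key point is that these functorial properties are available in the non-orientable setting, with the orientation local systems $\ul\scro_X$, $\ul\scro_Y$ being compatible with pullback in the sense that $\varphi^*\ul\scro_Y = \ul\scro_X$ whenever we substitute a homotopy equivalence (more precisely, homotopy invariance provides a canonical identification between the DG Morse homologies built from pulled-back coefficients along homotopic maps).

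First, I would pick a homotopy inverse $\psi:Y\to X$ for $\varphi$, so that $\psi\varphi\simeq \Id_X$ and $\varphi\psi\simeq\Id_Y$. Applying Proposition~\ref{prop:f!-non-ori} to $\cF$ on $Y$, I obtain the shriek map
$$\varphi_!:H_*(Y;\cF\otimes\ul\scro_Y)\to H_{*+m-n}(X;\varphi^*\cF\otimes\ul\scro_X).$$
Applying it again to the DG local system $\varphi^*\cF$ on $X$ with the map $\psi:Y\to X$, I obtain
$$\psi_!:H_{*+m-n}(X;\varphi^*\cF\otimes\ul\scro_X)\to H_{*}(Y;\psi^*\varphi^*\cF\otimes\ul\scro_Y)=H_*(Y;(\varphi\psi)^*\cF\otimes\ul\scro_Y).$$

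Next I would use the composition and homotopy properties. The composition formula gives $\psi_!\circ\varphi_!=(\varphi\psi)_!$, viewed as an endomorphism (up to the canonical identifications of coefficients coming from the homotopy $\varphi\psi\simeq\Id_Y$). By the homotopy property, $(\varphi\psi)_!$ equals $(\Id_Y)_!$ modulo these canonical identifications, and by the identity property $(\Id_Y)_!=\Id$. Hence $\psi_!\circ\varphi_!$ is an isomorphism. Running the same argument on the other side with $\psi\varphi\simeq\Id_X$ gives that $\varphi_!\circ\psi_!$ is an isomorphism as well. Therefore $\varphi_!$ has both a left and a right inverse up to isomorphism, so $\varphi_!$ itself is an isomorphism.

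The only subtlety, and the one thing I would make sure to state carefully rather than treat as routine, is the bookkeeping of coefficients: one has to verify that pulling back $\ul\scro_Y$ along $\varphi$ indeed gives $\ul\scro_X$ (this is true because $\varphi$ is a homotopy equivalence, so the pullback of the orientation local system of $Y$ is naturally isomorphic to that of $X$, and $\ul\scro$ is a classical local system on which the module-theoretic pullback reduces to the topological pullback), and that the canonical identifications provided by homotopy invariance intertwine the shriek maps in the diagrams above. Once this is in place, the argument is formally identical to the classical proof that a homotopy equivalence induces an isomorphism on singular homology via its functorial properties, and no further input is needed.
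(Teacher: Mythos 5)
Your proof is correct and takes exactly the approach the paper intends: the paper states that the corollary "is a straightforward consequence of functoriality of shriek maps," which is precisely the identity--composition--homotopy argument you run (and mirrors the proof of Corollary~\ref{cor:homotopy_equivalence} in the orientable case). One small remark on your flagged "subtlety": the identification $\varphi^*\ul\scro_Y \simeq \ul\scro_X$, while true for a homotopy equivalence of closed manifolds, is not actually needed anywhere in the argument, because Proposition~\ref{prop:f!-non-ori} already formulates $\varphi_!$ with target $H_{*+m-n}(X;\varphi^*\cF\otimes \ul\scro_X)$ --- the $\ul\scro$-factor always refers to the source of $\varphi_!$, never to a pullback of the target's orientation system --- and the only identification you genuinely invoke is the canonical one of Proposition~\ref{prop:homotopy-identification} between $(\varphi\psi)^*\cF$ and $\cF$ coming from the homotopy $\varphi\psi\simeq\Id_Y$ (and symmetrically on the other side), which leaves the $\ul\scro$-factor untouched.
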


%%%%%%%%%%%%%%%%%%%%%%%%%%%%%%%%
\chapter[Beyond manifolds of finite dimension]{Beyond the case of manifolds of finite dimension} 
%%%%%%%%%%%%%%%%%%%%%%%%%%%%%%%%

\label{sec:beyond-manifolds}
The example we have in mind while writing this chapter is the space $\call Q$ of free loops on a compact manifold (possibly with boundary). There are several  Morse-type theories which compute the singular  homology of $\call Q$. In the forthcoming paper~\cite{BDHO-cotangent} we will address the case of the symplectic homology of the cotangent bundle. 
Another example is the Morse theory of  the energy functional $E(\gamma)=\int ||\gamma'(t)||^2dt$  associated to a metric on~$Q$. But its generalisation to  DG-coefficients leads to some serious technical issues. First issue, the energy functional (defined on the Sobolev space $H^{1,2}(\bfs^1,Q)$, in order to work with a Banach manifold) is not ${\mathcal{C}}^\infty$ (or at least regular enough), as required by Sard's theorem in order to get manifold structures on the trajectory spaces between arbitrary critical points.  Second issue, we need to prove that the Latour cells $\ol W^u(x)$ defined by the critical points -- which have all finite index -- are homeomorphic to disks. While the first issue may be  overcome  using   the  work~\cite{AS4} of Abbondandolo  and Schwarz, for the second issue the result of Qin~\cite{Qin-Lizhen} for Morse functions  on Hilbert manifolds is proved under the hypothesis that the gradient is standard near the critical points, i.e. there exists a Morse local model.  We are not aware of a more general statement  in infinite dimensions. 

 Another approach to the Morse theory of the free loop spaces was described by Abouzaid in~\cite{Abouzaid-cotangent}. It is the one which we will generalise in this chapter. The idea is to see $\call Q$ as the direct limit (union) of the spaces $\call^r Q$ of free loops of length bounded above by $r$. For an appropriate choice of  a sequence $(r_n)$ these spaces turn out to be homotopy equivalent to compact manifolds with boundary and corners, on which the classical Morse theory is valid. Our generalisation will first treat  the case of topological spaces which are homotopy equivalent to manifolds. We then discuss in~\S\ref{sec:direct-limits} the Morse theory with DG-coefficients on direct limits, and finally in~\S\ref{sec:DG-for-loops}  the particular  case of $\call Q$.  
 
 Throughout  this chapter all  the spaces have a fixed   basepoint $\star$ and unless otherwise mentioned the applications between them preserve the basepoints.
 Before starting, let us list the expected properties of the homology $H_*(X;\calf)$ when $X$ is  a connected topological space which is not necessarily a finite dimensional manifold and $\calf$ is a DG-module over $C_*(\Omega X)$:
 \begin{enumerate} 
 \item {\it Spectral sequence}. There is a spectral sequence whose second page $E_{pq}^2$ is isomorphic to $H_p(X;H_q(\calf))$ and which converges to $H_*(X;\calf)$. 
 \item {\it Fibration}. If $\cale:F\ri E\ri X$ is a Hurewicz fibration and $\calf=C_*(F)$ then $H_*(X;\calf)\simeq H_*(E)$. 
 \item {\it Direct maps}. If $\varphi:X\ri Y$ is continuous then there exists a map $\varphi_*:H_*(X;\varphi^*\calf)\ri H_*(Y;\calf)$; these maps satisfy the properties 1-4 of~\S\ref{sec:functoriality-properties}.
 \item{\it Pullback.} Let $\cale : F\ri E_Y\ri Y$ be a Hurewicz fibration, $\calf = C_*(F)$ and $E_X$ the total space of the pullback fibration $\varphi^*\cale$, where $\varphi: X\ri Y$ is a continuous map. Let $\widetilde{\varphi}:E_X\ri E_Y$ the application induced by~$\varphi$. Then the direct map $\varphi_*$ coincides with the direct map $\widetilde{\varphi}_*$ via the isomorphisms of (ii). 
 \end{enumerate}

We will show that all these properties are satisfied by our generalisation below.

\section[Manifolds up to homotopy]{Topological spaces which are homotopy equivalent to manifolds}\label{sec:top-spaces}

 Let $u: X \ri K$ be a homotopy equivalence between a compact connected manifold $X$ (possibly with non-empty boundary) and a connected topological space $K$  endowed with a DG-module $\calf$ over $C_*(\Omega K)$. \begin{definition}\label{def:DG-for-top} The DG-homology of $K$ with coefficients in $\calf$ associated to $u$ is 
\begin{equation}\label{eq:defDG-for-top}
H_*^u(K; \calf)\, :=\, H_*(X; u^*\calf).
\end{equation} 
\end{definition}
A first remark is that, if $K$ is itself a manifold, then $u_*: H_*^u(K;\calf )\ri H_*(K;\calf)$ is an isomorphism by Theorem D, since $u$ is a homotopy equivalence. Another obvious observation is given in the next remark.

\rmk\label{rem:spectral-for-top} The homology $H_*^u (K;\calf)$ is the limit of a spectral sequence whose second page $E_{pq}^2$ is canonically isomorphic to the homology with local coefficients  $H_p(K, H_q(\calf))$. This spectral sequence is the one which computes $H_*(X; u^*\calf)$. Its second page is $E_{pq}^2=H_{p}(X, u^*H_q(\calf))$, and since $u$ is a homotopy equivalence, it yields an isomorphism $u_*: E_{pq}^2\ri H_p(K, H_q(\calf))$.
\kmr
We also have the following analogue of Theorem~A/Theorem~\ref{thm:fibration}.
\begin{proposition} \label{prop:fibration-for-top} 
 Let  $u:X\ri K$ as above, $F\ri E\ri K$ a Hurewicz fibration and $\calf=C_*(F)$ the DG-module over $C_*(\Omega K)$ defined by the choice of  a lifting function. Then $H_*^u(K;\calf)$ is isomorphic to $H_*(E)$. 
\end{proposition}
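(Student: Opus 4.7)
The plan is to reduce the statement to Theorem~\ref{thm:fibration} applied to the pullback fibration over the manifold $X$, and then to use standard properties of Hurewicz fibrations under homotopy equivalence of bases to compare the pullback total space with $E$.

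First I would form the pullback fibration $\cale^u : F \hookrightarrow u^*E \to X$ together with the canonical map $\widetilde u : u^*E \to E$ covering $u$, so that we sit inside a pullback square
$$
\xymatrix{ u^*E \ar[r]^-{\widetilde u} \ar[d] & E \ar[d] \\ X \ar[r]^-{u} & K. }
$$
By Corollary~\ref{cor:pullback} (pullback of a Hurewicz fibration), the right $C_*(\Omega X)$-module structure on $C_*(F)$ induced from $\cale^u$ by a choice of transitive lifting function coincides (up to homotopy) with the pullback $u^*\calf$ of the DG local system $\calf = C_*(F)$ over $C_*(\Omega K)$. Since $X$ is a compact manifold (possibly with boundary), we can apply Theorem~\ref{thm:fibration} (together with Remark~\ref{rmk:fibration-boundary} if $\p X \neq \emptyset$) to the fibration $\cale^u$ and obtain an isomorphism
$$
H_*(X; u^*\calf) \;\stackrel{\simeq}{\longrightarrow}\; H_*(u^*E).
$$
By Definition~\ref{def:DG-for-top}, the left-hand side is $H_*^u(K;\calf)$.

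Next I would compare $H_*(u^*E)$ with $H_*(E)$. The point is that, because $\cale$ is a Hurewicz fibration and $u:X\to K$ is a homotopy equivalence, the induced map $\widetilde u : u^*E \to E$ between total spaces is itself a homotopy equivalence. Concretely, choosing a homotopy inverse $v:K\to X$ with homotopies $uv \simeq \id_K$ and $vu \simeq \id_X$, one uses the fact that pullbacks of a fixed Hurewicz fibration along homotopic maps are fiber homotopy equivalent. Applied to $\id_K \simeq uv$ one gets a fiber homotopy equivalence $E \simeq v^*(u^*E)$, and applied to $\id_X \simeq vu$ one gets $u^*E \simeq u^*(v^* E)$; combining these with the naturality of pullback shows that $\widetilde u$ admits a homotopy inverse. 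Hence $\widetilde u_*:H_*(u^*E) \to H_*(E)$ is an isomorphism.

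Composing the two isomorphisms yields
$$
H_*^u(K;\calf) \;=\; H_*(X;u^*\calf) \;\cong\; H_*(u^*E) \;\stackrel{\widetilde u_*}{\cong}\; H_*(E),
$$
as desired. The main technical point is the homotopy invariance of total spaces of Hurewicz fibrations under homotopy equivalence of the base; this is classical (for example, a consequence of Dold's theorem on fiber homotopy equivalence of fibrations pulled back along homotopic maps) but should be invoked carefully. A byproduct is that $H_*^u(K;\calf)$ is in fact independent, up to canonical isomorphism, of the particular choice of manifold model $u:X\to K$.
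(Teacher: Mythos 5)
Your proof is correct and follows essentially the same route as the paper: pull back $\cale$ along $u$ to obtain a fibration over the manifold $X$, apply the Fibration Theorem~\ref{thm:fibration} (via Corollary~\ref{cor:pullback} to match the module structure with $u^*\calf$), and then use that a homotopy equivalence of base spaces induces a homotopy equivalence of total spaces for Hurewicz fibrations. The paper's own proof is terser but invokes exactly these two steps.
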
 

\begin{proof} % The proof is obvious : 
    If $E'$ is the total space of the pullback fibration $u^*E$ then Theorem~\ref{thm:fibration} asserts that $H^u_*(K;\calf)=H_*(X;u^*\calf)$ is isomorphic to $H_*(E')$. Since $u$ is a homotopy equivalence, it defines an isomorphism between $H_*(E')$ and $H_*(E)$. 
\end{proof} 

Now let $u, v: X\ri K$ two {\it homotopic} continuous maps and $\calf$ a DG-module over $C_*(\Omega K)$. Recall that in  Proposition~\ref{prop:homotopy-identification} from~\S\ref{sec:comments-homotopy} we defined an isomorphism $$\Psi: H_*(X;u^*\calf)\ri H_*(X; v^*\calf)$$
which we called \emph{identification isomorphism}. In the aforementioned section, $K$ was supposed to be a manifold for simplicity, but this was actually not needed in the definition of $\Psi$. We also proved in Proposition~\ref{prop-indep-homotopy-identifications} that $\Psi$ does not depend on the homotopy between $u$ and $v$ (see also Remark \ref{indep-homotopy-identifications-for-continuous}), but we do not know how to adapt this proof to the case when $K$ is not a manifold. So in the sequel we will only allow the use of properties (i)-(iii) from Proposition~\ref{prop:homotopy-identification} in the general case of topological spaces, whereas Proposition~\ref{prop-indep-homotopy-identifications} may only be used for manifolds. In the latter case we will denote the identification morphism by $\Psi^{u,v}$ to emphasize that it only depends on $u$ and $v$. As explained in \S\ref{sec:comments-homotopy} the homotopy property for direct maps $u,v:X\ri Y$ between manifolds is expressed as the commutation of the following diagram : 

$$\xymatrix{ 
H_*(X ;u^*\calf) \ar[r]^-{u_*}\ar[d]_-{\Psi^{u,v}} &H_*(Y;\calf)\\
H_*(X;v^*\calf)  \ar[ur]_-{v_*}& 
}$$

%Moreover we proved in Proposition~\ref{prop:indep-homotopy-identifications} that $\Psi$ does not depend on the homotopy 

\paragraph{Direct maps.} 
We now define the direct map induced by a continuous map $\beta : K\ri L$ between topological spaces, both of which are homotopy equivalent to manifolds.  Let $u: X\ri K$ and $v: Y\ri L$ be homotopy equivalences. Choose a continuous map between manifolds $\varphi:X\ri Y$ such that the following diagram is commutative {\it up to homotopy}: 

\begin{equation}
\xymatrix{ 
X  \ar[r]^-{u}\ar[d]^-{\varphi} &K \ar[d]^-{\beta}\\
Y  \ar[r]^-{v}& L. }\label{eq:diagram-direct-map}
\end{equation}
For instance one may take $\varphi=v'\circ\beta\circ u$ where $v'$ is a homotopy inverse for $v$. We therefore have an identification  isomorphism 
$$\Psi : H_*(X,u^*\beta^*\calf)\ri H_*(X; \varphi^*v^*\calf)$$
which a priori depends on the homotopy between $\beta\circ u$ and $v\circ \varphi$. 

\begin{definition}\label{def:direct-map-for-top} We define the direct map $\beta^{u,v}_*: H_*^u(K, \beta^*\calf)\ri H^v_*(L; \calf)$ by the composition 
\begin{equation}\label{eq:def-direct-top}
\varphi_*\circ\Psi : H_*(X; u^*\beta^*\calf)\ri H_*(Y; v^*\calf),
\end{equation}
where $\varphi_*:H_*(X; \varphi^*v^*\calf)\ri H_*(Y;v^*\calf)$ is the direct map and $\Psi$ is the identification isomorphism above. 
\end{definition} 

In order to have a well-defined direct map we have to prove the following:
\begin{proposition}\label{prop:independence-direct-for-top} The map $\beta^{u,v}_*$ does not depend on $\varphi:X\ri Y$ from the homotopy diagram (\ref{eq:diagram-direct-map}), nor on the identification isomorphism $\Psi$.
\end{proposition}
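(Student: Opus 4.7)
The plan is to organize the proof into two main reductions: (1) for a fixed $\varphi$, show that the isomorphism $\Psi$ is independent of the chosen homotopy between $\beta\circ u$ and $v\circ\varphi$; (2) show that different choices of $\varphi$ yield the same composite $\varphi_*\circ \Psi$. I expect the second reduction to be straightforward and the first to be the main obstacle, as it requires a careful loop-based argument in the mapping space.

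For (2), given two valid maps $\varphi_0,\varphi_1:X\to Y$, I will first note that they are themselves homotopic as maps between manifolds: using a homotopy inverse $v':L\to Y$ of $v$, one has $\varphi_0\simeq v'v\varphi_0\simeq v'\beta u\simeq v'v\varphi_1\simeq \varphi_1$. Pick a homotopy $H:\varphi_0\simeq \varphi_1$. Since $X$ and $Y$ are manifolds, the homotopy property of direct maps (Theorem~\ref{thm:f*!}) gives $\varphi_{0*}=\varphi_{1*}\circ \Psi^H$ on $H_*(X;\varphi_0^*v^*\cF)$, and Remark~\ref{identical-identifications} identifies $\Psi^H$ with coefficients $v^*\cF$ with $\Psi^{v\circ H}$ with coefficients $\cF$. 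Combined with property (iii) of Proposition~\ref{prop:homotopy-identification}, this yields
$$
\varphi_{0*}\circ \Psi^{h_0}=\varphi_{1*}\circ \Psi^{h_0\,\#\,(v\circ H)},
$$
where $h_0\,\#\,(v\circ H)$ is another homotopy from $\beta u$ to $v\varphi_1$. This reduces (2) to (1).

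For (1), fix $\varphi$ and let $h,h':\beta u\simeq v\varphi$ be two homotopies. Setting $\mathcal L:=\overline{h'}\,\#\,h$, a loop of maps $X\to L$ based at $\beta u$, property (iii) of Proposition~\ref{prop:homotopy-identification} reduces the problem to showing $\Psi^{\mathcal L}=\mathrm{Id}$ in homology. I will apply $v'$ to obtain $v'\circ\mathcal L$, a loop of maps $X\to Y$ between manifolds based at $v'\beta u$. By Proposition~\ref{prop-indep-homotopy-identifications} applied on $Y$, the identification $\Psi^{v'\mathcal L}$ is independent of the chosen loop and thus equals $\Psi^{\mathrm{const}}=\mathrm{Id}$ by property (i). Using Remark~\ref{identical-identifications} again, this translates into $\Psi^{vv'\mathcal L}=\mathrm{Id}$ with coefficients in $\cF$.

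The remaining step is to bridge $\mathcal L$ (based at $\beta u$) and $vv'\mathcal L$ (based at $vv'\beta u$). Pick a homotopy $K:\mathrm{Id}_L\simeq vv'$ and form the loop based at $\beta u$:
$$
\widetilde{\mathcal L}:=(K\cdot \beta u)\,\#\,(vv'\circ \mathcal L)\,\#\,\overline{K\cdot \beta u}.
$$
A two-parameter family $G_s(t,x)$ obtained by simultaneously growing the conjugating pieces while inserting $K(s,\cdot)$ in the central piece will produce an explicit rel-endpoints homotopy of homotopies from $\mathcal L$ to $\widetilde{\mathcal L}$; by property (ii), $\Psi^{\mathcal L}=\Psi^{\widetilde{\mathcal L}}$. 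Iterating (iii) and using the previous paragraph then gives
$$
\Psi^{\widetilde{\mathcal L}}=\Psi^{\overline{K\beta u}}\circ \Psi^{vv'\mathcal L}\circ \Psi^{K\beta u}=\Psi^{(K\beta u)\,\#\,\overline{K\beta u}},
$$
and the latter is the identification of a loop null-homotopic rel endpoints, hence equal to $\mathrm{Id}$ by (i) and (ii). The delicate point will be checking the explicit construction of the two-parameter family $G_s$ and verifying that every application of Proposition~\ref{prop:homotopy-identification} and Remark~\ref{identical-identifications} remains valid in this mixed setting, where the target is a general topological space but the source is always the manifold $X$.
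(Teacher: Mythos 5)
Your proposal is correct, and it takes a genuinely different route from the paper's. The paper packages everything into a single auxiliary statement, Lemma~\ref{lem:homotopy-generalization} (a ``generalized homotopy property''): for $\varphi,\chi\colon X\to Y$ between manifolds such that $v\varphi\simeq v\chi$ via a homotopy $\Upsilon$ with values in the non-manifold target $L$, one has $\chi_*\circ\Psi^{\Upsilon}=\varphi_*$. Given this lemma, the proposition falls out in three lines via property~(iii) of Proposition~\ref{prop:homotopy-identification}. The lemma itself is proved by a diagram chase built from Lemma~\ref{lem:naturality-for-identification} (naturality of identification isomorphisms) and Lemma~\ref{lem:identification=id} ($\Psi^{u,\mathrm{Id}}=u_*$), exploiting the fact that $(wv)\varphi$ and $(wv)\chi$ are genuinely homotopic as maps between manifolds, where $w=v'$. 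You instead split into independence-from-$\varphi$ (your step~(2), which reduces to step~(1) exactly as you say) and independence-from-the-homotopy (your step~(1)), and you handle~(1) by reducing to showing $\Psi^{\mathcal L}=\mathrm{Id}$ for a loop $\mathcal L$ in $\mathrm{Map}(X,L)$, then conjugating by a path to land in the $vv'$-orbit, pushing through $v'$ to obtain a loop in $\mathrm{Map}(X,Y)$, and applying Proposition~\ref{prop-indep-homotopy-identifications} and Remark~\ref{identical-identifications}. Both proofs hinge on the same mechanism (post-composing with a homotopy inverse of $v$ to reduce to the manifold situation), but you have assembled it in a different order. It is worth noting that your route actually proves the stronger claim $\Psi^{\mathcal L}=\mathrm{Id}$ for loops of maps into a general topological space $L$, i.e. independence of the identification isomorphism itself — a point that the authors explicitly decline to claim in the paragraph before Lemma~\ref{lem:identification=id} (``we do not know how to adapt this proof to the case when $K$ is not a manifold''). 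The paper's Lemma~\ref{lem:homotopy-generalization} only yields independence of the composite $\varphi_*\circ\Psi$, which suffices but is weaker; your conjugation trick is what buys the extra mileage, at the cost of the explicit two-parameter reparametrization, which is standard but should be written out. The remaining bookkeeping in your argument — the basepoint condition at the endpoints of $K\cdot\beta u$, the coefficient matching via Remark~\ref{identical-identifications}, and the applicability of (i)--(iii) of Proposition~\ref{prop:homotopy-identification} in the mixed manifold-source/topological-target setting — all check out.
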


Building towards the proof we start with a simple lemma: 
\begin{lemma}\label{lem:identification=id}
Let $X$ be a manifold, $\calf$ a DG-module over $C_*(\Omega X)$  and  $u: X\ri X$ a continuous map which is homotopic to the identity. Then the identification  morphism $\Psi^{u,\Id}$ equals the direct map $u_*:H_*(X;\calf)\ri H_*(X;\calf)$.
\end{lemma}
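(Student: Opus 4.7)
The plan is to deduce this lemma directly from the properties of direct maps that have already been established for continuous maps between manifolds in Theorem~\ref{thm:f*!} (and which hold in the form recalled at the end of Section~\ref{sec:funct-for-continuous}, in particular via Remark~\ref{indep-homotopy-identifications-for-continuous}). The crucial inputs are the {\sc (Homotopy)} property and the {\sc (Identity)} property for direct maps.

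First I would invoke the {\sc (Homotopy)} property, as reformulated in the commutative diagram~\eqref{diag:homotopy-invariance} of Section~\ref{sec:comments-homotopy}, applied to the two homotopic maps $u, \Id : X \to X$. This yields the equality in homology
\[
u_* \;=\; \Id_* \circ \Psi^{u, \Id} \;:\; H_*(X; u^*\calf) \longrightarrow H_*(X; \calf).
\]
Then I would apply the {\sc (Identity)} property $\Id_* = \Id$ on $H_*(X; \calf)$, so that the right hand side simplifies to $\Psi^{u, \Id}$, giving the desired identification $u_* = \Psi^{u, \Id}$.

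The only subtlety worth mentioning is that this reasoning requires the identification morphism to depend only on $u$ and $\Id$, and not on the chosen homotopy between them; this is precisely Proposition~\ref{prop-indep-homotopy-identifications} (extended to continuous maps in Remark~\ref{indep-homotopy-identifications-for-continuous}), whose hypotheses are satisfied here because $X$ is a manifold. No further obstacle appears, since for manifolds the full content of Theorem~\ref{thm:f*!} is at our disposal; the argument is purely formal and does not require going back to the chain-level construction of $\Psi^{u,\Id}$.
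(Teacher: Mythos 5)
Your argument is exactly the paper's: apply the {\sc (Homotopy)} property in the form $u_* = \Id_* \circ \Psi^{u,\Id}$ and then use $\Id_* = \Id$. The remark about independence of the homotopy (Proposition~\ref{prop-indep-homotopy-identifications} / Remark~\ref{indep-homotopy-identifications-for-continuous}) is a correct and appropriate justification for the notation $\Psi^{u,\Id}$.
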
 
\begin{proof} This is an immediate application of the identity property $\Id_*=\Id$ and the homotopy property (Theorem D or \ref{thm:f*!}), which in this case writes 
$$\xymatrix{ 
H_*(X ;u^*\calf) \ar[r]^-{u_*}\ar[d]_-{\Psi^{u,\Id}} &H_*(Y;\calf)\\
H_*(X;\calf)  \ar[ur]_-{\Id_*}& \, 
}$$
\end{proof}

The next lemma is a generalization of the homotopy property.
\begin{lemma}\label{lem:homotopy-generalization}
Let $v: Y\ri L$ be a homotopy equivalence between  a manifold $Y$  and a topological space $L$. Let $X$ be a manifold and $\varphi, \chi:X\ri Y$ two (continuous) maps such that $v\circ\varphi$ and $v\circ\chi$ are homotopic. Consider  a DG-module $\calf$ over $C_*(\Omega L)$. Then for any homotopy $\Upsilon$ between $v\circ\varphi$ and $v\circ\chi$  the following diagram is commutative:
$$\xymatrix{ 
H_*(X ;\varphi^*v^*\calf) \ar[r]^-{\varphi_{*}}\ar[d]_-{\Psi^{\Upsilon}} &H_*(Y;v^*\calf)\\
H_*(X;\chi^*v^*\calf)  \ar[ur]_-{\chi_{*}}& \, 
}$$
\end{lemma}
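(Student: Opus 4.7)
The plan is to reduce the statement to the homotopy property for direct maps between manifolds (Theorem~D\,(iii), equivalent to the commutative triangle displayed just before Proposition~\ref{prop:homotopy-identification}), by producing an auxiliary homotopy $\tilde{\upsilon}: X \times [0,1] \to Y$ between $\varphi$ and $\chi$ whose composition with $v$ agrees with $\Upsilon$ rel endpoints as a homotopy in $L$.

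First I would produce such a $\tilde{\upsilon}$. Since $v: Y \to L$ is a homotopy equivalence of spaces, composition with $v$ induces a homotopy equivalence $v_{*}: \Map(X,Y) \to \Map(X,L)$, hence a bijection on $\pi_{0}$ and isomorphisms on $\pi_{1}$ at every basepoint. From $v\varphi \simeq v\chi$ we deduce $\varphi \simeq \chi$ in $Y$; pick any such homotopy $\tilde{\upsilon}_{0}$. The concatenation $\Upsilon^{-1} \# v\tilde{\upsilon}_{0}$ is a loop in $\Map(X,L)$ based at $v\varphi$, so it lifts through $v_{*}$ to a loop $\tilde{\gamma}$ in $\Map(X,Y)$ based at $\varphi$. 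Setting $\tilde{\upsilon} = \tilde{\gamma}^{-1} \# \tilde{\upsilon}_{0}$, the composition $v\tilde{\upsilon}$ differs from $\Upsilon$ by a null-homotopic loop in $\Map(X,L)$ and is therefore homotopic to $\Upsilon$ rel endpoints.

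Next I would chain three identifications of morphisms. Proposition~\ref{prop:homotopy-identification}\,(ii), applied to the two rel-endpoint-homotopic homotopies $v\tilde{\upsilon}$ and $\Upsilon$ in $L$, gives $\Psi^{\Upsilon} = \Psi^{v\tilde{\upsilon}}$ as maps $H_{*}(X; \varphi^{*}v^{*}\calf) \to H_{*}(X; \chi^{*}v^{*}\calf)$. Remark~\ref{identical-identifications}, applied with $v: Y \to L$ playing the role of the fixed map and $\tilde{\upsilon}$ playing the role of the homotopy in $Y$, identifies $\Psi^{v\tilde{\upsilon}}$ with $\Psi^{\tilde{\upsilon}}$, the identification morphism computed inside the manifold $Y$ for the DG-module $v^{*}\calf$ over $C_{*}(\Omega Y)$. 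Finally, since $\tilde{\upsilon}$ is a homotopy between maps of manifolds, Theorem~D\,(iii) yields $\varphi_{*} = \chi_{*} \circ \Psi^{\tilde{\upsilon}}$. Chaining these identities gives $\chi_{*} \circ \Psi^{\Upsilon} = \chi_{*} \circ \Psi^{\tilde{\upsilon}} = \varphi_{*}$, which is the desired commutativity.

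The main obstacle is the lifting argument in the first step. It relies on the fact that the mapping space functor $\Map(X, -)$ preserves homotopy equivalences, so that $v$ being a homotopy equivalence propagates to isomorphisms on $\pi_{0}$ and $\pi_{1}$ of mapping spaces. Without the strong compatibility $v\tilde{\upsilon} \simeq \Upsilon$ rel endpoints, one would only obtain agreement after post-composition with $v_{*}$, which is insufficient here since $\Psi^{\Upsilon}$ depends a priori on the specific homotopy $\Upsilon$ in $L$: recall that the independence result Proposition~\ref{prop-indep-homotopy-identifications} was only established in the manifold setting and is explicitly unavailable for the target $L$.
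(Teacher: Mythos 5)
Your proof is correct, but it takes a genuinely different route from the paper's. The paper argues by inserting the homotopy inverse $w\colon L\to Y$ and building a large diagram of direct maps and identification morphisms inside $Y$ (using the naturality Lemma~\ref{lem:naturality-for-identification}, Proposition~\ref{prop:homotopy-identification}(iii), Lemma~\ref{lem:identification=id}, and the homotopy property for the two manifold-to-manifold maps $(w\circ v)\circ\varphi$ and $(w\circ v)\circ\chi$); it never needs to produce a homotopy in $Y$ compatible with $\Upsilon$. You instead construct a specific homotopy $\tilde\upsilon$ between $\varphi$ and $\chi$ in $Y$ whose image under $v$ is homotopic to $\Upsilon$ rel endpoints, which reduces the lemma to a clean chain of three established facts: Proposition~\ref{prop:homotopy-identification}(ii) (valid for arbitrary target spaces), Remark~\ref{identical-identifications}, and the homotopy property over the manifold~$Y$. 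What you buy is a short computation at the end; what you pay is the external input that $\Map(X,-)$ sends the homotopy equivalence $v$ to a homotopy equivalence $v_*$, and hence that $v_*$ is $\pi_0$-bijective and $\pi_1$-isomorphic at every basepoint. That input is correct here (a homotopy inverse $w$ postcomposes to a homotopy inverse $w_*$, and continuity of the homotopy $\Map(X,Y)\times[0,1]\to\Map(X,Y)$ follows from the exponential law since the compact manifold $X$ is locally compact Hausdorff), but it lies outside the toolkit the paper otherwise confines itself to. Your formulae for the concatenations are a bit ambiguous about path-composition conventions — the loop you lift should be $v\tilde\upsilon_0\#\Upsilon^{-1}$ (traverse $v\tilde\upsilon_0$ first, then $\Upsilon^{-1}$) for it to be based at $v\varphi$ and for $\tilde\gamma^{-1}\#\tilde\upsilon_0$ to be composable — but this is cosmetic and does not affect correctness.
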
 
\begin{proof} Note that if $\varphi$ and $\chi$ are homotopic  then the statement is just the usual homotopy property for these maps and the DG-module $v^*\calf$ (via  Remark \ref{identical-identifications}). Our hypothesis being weaker, we use a different argument.  Let $w:L\ri Y$ be a homotopy inverse for $v$. Consider the following diagram which only contains maps between manifolds: 
$${\scriptsize \xymatrix@C+1.2pc{ H_*(Y;(w\circ v)^*v^*\calf)\ar[r]^-{\Psi^{v\circ(w\circ v),v}}&H_*(Y; v^*\calf)\ar[r]^-{=}&H_*(Y;v^*\calf) &H_*(Y;(w\circ v)^*v^*\calf)\ar[l]_(.55){\Psi^{v\circ(w\circ v),v}}\\
H_*(X;\varphi^*(w\circ v)^*v^*\calf)\ar[u]^- {\varphi_*} \ar[r]^-{\Psi^{v\circ(w\circ v)\circ\varphi, v\circ\varphi}}\ar@/_2pc/[rrr]^-{\Psi^{v\circ(w\circ v)\circ\varphi,v\circ(w\circ v)\circ\chi}}&H_*(X;\varphi^*v^*\calf)\ar[r]^-{\Psi^\Upsilon}\ar[u]^-{\varphi_*}&H_*(X;\chi^*v^*\calf)\ar[u]^-{\chi_*}&H_*(X;\chi^*(w\circ v)^*v^*\calf)\ar[u]^- {\chi_*} \ar[l]_-{\Psi^{v\circ(w\circ v)\circ\chi, v\circ\chi}}}}
$$
Our goal is to prove that the middle square of this diagram is commutative. The left and the right squares are commutative by the naturality result Lemma~\ref{lem:naturality-for-identification}. The lower part is also commutative by Proposition~\ref{prop:homotopy-identification}, item (iii). There is another commutation relation, the one concerning  the lower curved arrow,  the leftmost and the rightmost arrows and the upper horizontal  part of the diagram. Indeed, using Remark \ref{identical-identifications} this commutation writes $$\Psi^{w\circ v, \Id}\circ\chi_*\circ\Psi^{(w\circ v)\circ\varphi, (v\circ w)\circ\chi}=\Psi^{w\circ v, \Id}\circ\varphi_*,$$
and applying Lemma~\ref{lem:identification=id} we infer that this is equivalent to 
$$(w\circ v)_*\circ \chi_*\circ \Psi^{(w\circ v)\circ\varphi, (v\circ w)\circ\chi} \, =\, (w\circ v)_*\circ\varphi_*,$$
which is exactly the homotopy property for the two homotopic applications between manifolds  $(w\circ v)\circ \varphi$ and $(w\circ v)\circ\chi$. 

Using all these commutations we easily deduce that $$\chi_*\circ\Psi^\Upsilon\circ \Psi^{v\circ(w\circ v)\circ\varphi, v\circ\varphi}\, =\, \varphi_*\circ \Psi^{v\circ(w\circ v)\circ\varphi, v\circ\varphi}.
$$
Since the identification map $\Psi^{v\circ(w\circ v)\circ\varphi, v\circ\varphi}$ is an isomorphism, we get the desired relation $\chi_*\circ\Psi^\Upsilon=\varphi_*$. This finishes the proof of the lemma. 
\end{proof}

\begin{proof}[Proof of Proposition~\ref{prop:independence-direct-for-top}] Let $\varphi : X\ri Y$ and $\chi:X\ri Y$ be two continuous maps which fit into the homotopy commutative diagram (\ref{eq:diagram-direct-map}), i.e. $\beta\circ u$ is homotopic to both $v\circ\varphi$ and $v\circ\chi$.   Choose  two homotopies between $\beta\circ u$ and these maps and denote by   $\Psi: H_*(X; u^*\beta^*\calf)\ri H_*(X; \varphi^*v^*\calf)$ and  $\Psi': H_*(X; u^*\beta^*\calf)\ri H_*(X; \chi^*v^*\calf)$ the corresponding identification morphisms; denote also by $\beta_*^{u,v}$ respectively $\beta_*^{'u,v}$ the two direct maps thus defined by \eqref{eq:def-direct-top}; our goal is to prove $\beta_*^{u,v}=\beta_*^{'u,v}$. 

Consider the homotopy between $v\circ\varphi$ and $v\circ\chi$ obtained by the concatenation of the two above and denote by $\Psi^\Upsilon: H_*(X;\varphi^*v^*\calf)\ri H_*(X;\chi^*v^*\calf)$ the corresponding identification isomorphism. Our proof is implied by the following  diagram: 
$$\xymatrix@R+1.5pc@C+1.3pc{H_*(X;\chi^*v^*\calf)\ar[dr]^-{\chi_*}&\, \\
H_*(X; u^*\beta^*\calf)\ar[u]^-{\Psi'}\ar[d]_-{\Psi}\ar@<.5ex>[r]^-{\beta_*^{'u,v}}\ar@<-.5ex>[r]_-{\beta^{u,v}_*}&H_*(Y;v^ *\calf)\\
H_*(X;\varphi^*v^*\calf)\ar[ur]_-{\varphi_*}\ar@/^4pc/[uu]^-{\Psi^\Upsilon}&\, }$$
The upper and the lower right triangles are commutative by definition of the maps $\beta^{'u,v}_*$ and $\beta^{u,v}_*$. The leftmost part is also commutative by Proposition~\ref{prop:homotopy-identification}, item (iii). Finally the previous lemma yields the commutation of the exterior part of the diagram: $\Psi^\Upsilon\circ\chi_*=\varphi_*$. We easily infer the desired equality from all these commutations: 
$$
\beta^{'u,v}_*\, =\, \chi_*\circ\Psi'\, =\, \chi_*\circ\Psi^\Upsilon\circ\Psi\, =\,  \varphi_*\circ\Psi\, =\, \beta^{u,v}_*.
$$
The proof of Proposition~\ref{prop:independence-direct-for-top} is now complete. 
\end{proof}

The direct maps $\beta^{u,v}_*$ satisfy properties 1-4 from~\S\ref{sec:functoriality-properties}, which in this case are stated as follows:
\begin{proposition}\label{prop:funct-properties-for-top} The maps $\beta^{u,v}_*$ defined above satisfy the following properties:
  \begin{itemize}
  \item[1.] Identity: $\Id^{u,u}_*=\Id_*$. 
  \item[2.] Composition: $\delta^{v,w}_*\circ\beta^{u,v}_*=(\delta\circ\beta)^{u,w}_*$. 
  \item[3.] Homotopy: If $\beta_0, \beta_1:K\ri L$ are homotopic through a homotopy $\beta$ then $\beta^{u,v}_{0,*}=\beta_{1;*}^{u,v}\circ\Psi^{\beta\circ u}$, where $$\Psi^{\beta\circ u}: H_*(X; u^*\beta_0^* \calf)\ri H_*(X; u^*\beta_1^*\calf)$$ is the identification isomorphism induced by the homotopy $\beta\circ u $. 
  \item[4.] Spectral sequence:  The direct map  $\beta^{u,v}_*$ is the limit of a morphism between the  spectral sequences defined in Remark~\ref{rem:spectral-for-top}, which at the second page $E_{pq}^2$ %is
     identifies with the direct map between homologies with local coefficients 
$$\beta_p: H_*(K, \beta^*H_q(\calf))\ri H_p(L, H_q(\calf)).$$
%via the identification given in this remark.
\end{itemize}

\end{proposition}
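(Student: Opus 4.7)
The plan is to derive each of the four properties from the corresponding properties for direct maps between manifolds (Theorem~\ref{thm:f*!}), combined with the algebraic properties of the identification isomorphism $\Psi$ from Proposition~\ref{prop:homotopy-identification} and the naturality Lemma~\ref{lem:naturality-for-identification}. Throughout, we use Proposition~\ref{prop:independence-direct-for-top}, which grants us the freedom to choose any $\varphi$ fitting into the homotopy commutative diagram~\eqref{eq:diagram-direct-map} and any homotopy witnessing it. For property~1, I would take the trivial $\varphi=\mathrm{Id}_X$ together with the constant homotopy $u\simeq u$; then $\Psi$ is homotopic to the identity by Proposition~\ref{prop:homotopy-identification}(i), so $\mathrm{Id}^{u,u}_*=(\mathrm{Id}_X)_*\circ\mathrm{Id}=\mathrm{Id}$ using the identity property for direct maps of manifolds.

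For property~3, given a homotopy $(\beta_t)$ between $\beta_0,\beta_1:K\to L$, I would choose a single $\varphi:X\to Y$ with $v\varphi\simeq\beta_0 u$ (which then automatically satisfies $v\varphi\simeq\beta_1 u$ via concatenation with $\beta\circ u$). The two direct maps are then $\beta^{u,v}_{i,*}=\varphi_*\circ\Psi^{\beta_i u,v\varphi}$. Property (iii) of Proposition~\ref{prop:homotopy-identification} applied to the concatenation of the homotopy $\beta\circ u:\beta_0 u\simeq\beta_1 u$ with the chosen homotopy $\beta_1 u\simeq v\varphi$ yields $\Psi^{\beta_0 u,v\varphi}=\Psi^{\beta_1 u,v\varphi}\circ\Psi^{\beta\circ u}$, and composing with $\varphi_*$ produces exactly $\beta^{u,v}_{0,*}=\beta^{u,v}_{1,*}\circ\Psi^{\beta\circ u}$.

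Property~2 is the main obstacle, because we have to transport an identification map across a direct map and then recognise the resulting concatenation of homotopies. Choose $\varphi:X\to Y$ and $\psi:Y\to Z$ witnessing $\beta^{u,v}_*$ and $\delta^{v,w}_*$; then $\psi\varphi:X\to Z$ witnesses $(\delta\beta)^{u,w}_*$, so by Proposition~\ref{prop:independence-direct-for-top} we may use it. Unfolding the definitions,
\[
\delta^{v,w}_*\circ\beta^{u,v}_*=\psi_*\circ\Psi^{\delta v,\,w\psi}\circ\varphi_*\circ\Psi^{\beta u,\,v\varphi}.
\]
I would apply Lemma~\ref{lem:naturality-for-identification} with $\zeta=\varphi$ to rewrite $\Psi^{\delta v,w\psi}\circ\varphi_*=\varphi_*\circ\Psi^{(\delta v)\varphi,(w\psi)\varphi}$, then use the functoriality of $\varphi_*$ on manifolds, $\psi_*\circ\varphi_*=(\psi\varphi)_*$. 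Remark~\ref{identical-identifications} identifies $\Psi^{\beta u,v\varphi}$ (taken with coefficients in $\delta^*\calf$) with $\Psi^{\delta\beta u,\delta v\varphi}$ (with coefficients in $\calf$), so that Proposition~\ref{prop:homotopy-identification}(iii) applied to the concatenation $\delta\beta u\simeq\delta v\varphi\simeq w\psi\varphi$ collapses the two remaining identification maps into $\Psi^{\delta\beta u,w\psi\varphi}$. This yields $(\psi\varphi)_*\circ\Psi^{\delta\beta u,w\psi\varphi}=(\delta\beta)^{u,w}_*$, as required.

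For property~4, I would observe that the chain-level direct map $\varphi_*$ from~\S\ref{sec:functoriality-first-definition} preserves the canonical Morse filtration (Remark~\ref{rem:spectral-direct}), and that the identification $\Psi$ is built from a continuation cocycle and hence also preserves it. Consequently the composite $\varphi_*\circ\Psi$ induces a morphism between the two spectral sequences. On the second page, $\Psi^2$ is the identity (the homotopy $\beta u\simeq v\varphi$ descends to an identification of local systems which is tautological on $H_q(\calf)$, by the usual invariance of homology with local coefficients under homotopy), and $\varphi_*^2$ is the direct map in homology with local coefficients $H_q(\calf)$ induced by $\varphi$. Via the canonical isomorphisms of the second pages with $H_p(K;\beta^*H_q(\calf))$ and $H_p(L;H_q(\calf))$ from Remark~\ref{rem:spectral-for-top}, the induced map on the second page becomes the standard map $\beta_p$ in singular homology with local coefficients, as required.
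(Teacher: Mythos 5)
Your proof is correct and follows essentially the same strategy as the paper's: for each property you reduce to the corresponding manifold-level property via the unfolded definition $\beta^{u,v}_*=\varphi_*\circ\Psi$, using Proposition~\ref{prop:homotopy-identification}(iii) and Lemma~\ref{lem:naturality-for-identification} to commute and collapse the identification isomorphisms, and invoking Proposition~\ref{prop:independence-direct-for-top} to permit convenient choices of $\varphi$ and of witnessing homotopies. The paper presents the arguments for properties~2 and~3 as commutative diagrams rather than equational chains, but the ingredients and their order of deployment are the same. One minor point: in property~4, saying ``$\Psi^2$ is the identity'' is a slight abuse — $\Psi^2$ is the canonical isomorphism between the two local-system homologies $H_p(X;u^*\beta_0^*H_q(\calf))\to H_p(X;\varphi^*v^*H_q(\calf))$ coming from homotopy invariance of local coefficients, not a literal identity; it only becomes the identity once one identifies both sides with $H_p(K;\beta^*H_q(\calf))$ via Remark~\ref{rem:spectral-for-top}. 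Your intended meaning is clear and the conclusion is right, but the wording should be adjusted.
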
 

\begin{proof} The proof is straightforward. We will only sketch the arguments and leave the details to the reader.
  
  {\it 1. Identity. } Obvious.
  
{\it 2. Composition.} For the following diagram which is commutative in homotopy 
$$\xymatrix{X\ar[r]^-{u}\ar[d]_-{\varphi}\ar@/_2pc/[dd]_-{\zeta\circ\varphi}& K\ar[d]^-{\beta}\ar@/^2pc/[dd]^-{\delta\circ\beta}\\
Y\ar[r]^-{v}\ar[d]_-{\zeta}&L\ar[d]^-{\delta}\\
Z\ar[r]^-{w}&M}$$
we define the direct maps $\beta^{u,v}_*$, $\delta^{v,w}_*$ and $(\delta\circ\beta)^{u,w}_*$ and get the diagram 
$$\xymatrix@R+1.5pc{ H_*(X;u^*\beta^*\delta^*\calf)\ar[d]_-{\Psi} \ar[dr]^-{\beta^{u,v}_*}\ar@/^3pc/ [ddrr]^-{(\delta\circ\beta)^{u,w}_*}\ar@/_5pc/ [dd]_-{\Psi^{''}}&\, &\, \\
H_*(X; \varphi^*v^*\delta^*\calf)\ar[d]^-{\Psi'}\ar[r]^-{\varphi_*}&H_*(Y;v^*\delta^*\calf)\ar[dr]^-{\delta^{v,w}_*} \ar[d]^-{\Psi^{'''}}&\, \\
H_*(X;\varphi^*\zeta^*w^*\calf)\ar[r]^-{\varphi_*}\ar@/_2pc/ [rr]_-{(\zeta\circ\varphi)_*}&H_*(Y;\zeta^*w^*\calf)\ar[r]^-{\zeta_*} &H_*(Z;w^*\calf)}$$

The goal is to prove the commutativity  of the upper left part. All the other parts  of the diagram are commutative by definition (the upper, the lower right and the curved exterior  triangles), by Proposition~\ref{prop:homotopy-identification}.iii (the leftmost part) and by the naturality Lemma~\ref{lem:naturality-for-identification} (the square). The conclusion follows.

{\it 3. Homotopy.} For the homotopy commutative diagram 
$$\xymatrix{X\ar[r]^-{u}\ar[d]_-{\varphi}& K\ar[d]^-{\beta_0,\beta_1}\\
Y\ar[r]^-{v}&L}$$
we get 
$$\xymatrix@R+1.5pc@C+1.3pc{H_*(X;u^*\beta_0^*\calf)\ar[d]_-{\Psi}\ar[dr]^-{\beta^{u,v}_{0,*}}\ar@/_4pc/[dd]_-{\Psi^{\beta\circ u}}&\, \\
H_*(X; \varphi^*v^*\calf)\ar[r]^-{\varphi_*}&H_*(Y;v^ *\calf)\\
H_*(X;u^*\beta_1^*\calf)\ar[u]^-{\Psi'}\ar[ur]_-{\beta^{u,v}_{1,*}}&\, }$$

The goal is to prove the commutativity of the curved exterior triangle; the upper and lower right triangles are commutative (by definition) and the leftmost part is also commutative (Proposition~\ref{prop:homotopy-identification}.iii). This implies the homotopy property.

{\it 4. Spectral sequence.} Straightforward. 
\end{proof}

\rmk\label{rem:independence-direct-for-top}A consequence of this proposition is that $H_*^u(K;\calf)$ does not depend on the homotopy equivalence $u:X\ri K$. Indeed, if $v:Y\ri K$ is another one, the direct map 
$$\Id^{u,v}_*: H_*^u(K;\calf)\ri H_*^v(K; \calf)$$ associated to $\Id_K$ is an isomorphism, with inverse $\Id^{v,u}_*$.  Using these identifications  we may therefore define  $H_*(K;\calf)$. Similarly we may define the direct map $\beta_*:H_*(K;\beta^*\calf)\ri H_*(L;\calf)$ induced by $\beta:K\ri L$ without specifying the homotopy equivalences since the maps $\beta^{u,v}_*$ defined above are compatible with the identifications. Indeed, the diagram 
$$\xymatrix{H_*^{u}(K;\beta^*\calf)\ar[r]^-{\Id^{u,u'}_*}\ar[d]_-{\beta^{u,v}_*}&H_*^{u'}(K;\beta^*\calf)\ar[d]^-{\beta^{u',v'}_*}\\
H_*^{u}(L;\calf)\ar[r]^-{\Id^{v,v'}_*}&H_*^{v'}(L; \calf)}$$
is commutative by the composition property.
\kmr

We now prove the property (iv) (Pullback) above: 
\begin{proposition}\label{prop:direct-map-fibrations-for-top}Let $\beta: K\ri L$ be a continuous map between topological spaces which are homotopy equivalent to compact manifolds and let $\cale: F\ri E_L\ri L$ be a Hurewicz fibration over $L$. Denote by $\calf$ the DG-module~$ C_*(F)$ over $C_*(\Omega L)$
and by  $E_K$ the total space of the pullback fibration $\beta^*\cale$. Then  the direct map $\beta_*: H_*(K;\beta^*\calf)\ri H_*(L; \calf)$ coincides via the isomorphisms of Proposition~\ref{prop:fibration-for-top} with $\widetilde{\beta}_*: H_*(E_K)\ri H_*(E_L)$, the map induced by $\beta$ between the singular homologies of the total spaces. 
\end{proposition}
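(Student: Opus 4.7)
The plan is to reduce the statement to the two manifold-theoretic ingredients we have already established: Proposition~\ref{prop:direct-for-fibrations} (identification of $\varphi_{*}$ with $\widetilde{\varphi}_{*}$ for smooth maps between closed manifolds) and Lemma~\ref{lem:homotopy-invariance-fibrations} (naturality of the fibration isomorphism under homotopies), which together suffice to handle each piece of the definition of $\beta_{*}$.

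By Definition~\ref{def:direct-map-for-top}, the direct map factors as
\[
H^{u}_{*}(K;\beta^{*}\calf)=H_{*}(X;(\beta\circ u)^{*}\calf)\xrightarrow{\Psi}H_{*}(X;(v\circ\varphi)^{*}\calf)\xrightarrow{\varphi_{*}} H_{*}(Y;v^{*}\calf)=H^{v}_{*}(L;\calf),
\]
where $\varphi:X\to Y$ fits in the homotopy commutative square~\eqref{eq:diagram-direct-map} and $\Psi$ is the identification isomorphism attached to a chosen homotopy $H:[0,1]\times X\to L$ from $\beta\circ u$ to $v\circ\varphi$. On the other hand, the isomorphism of Proposition~\ref{prop:fibration-for-top} is the composition
$H_{*}(X;(\beta\circ u)^{*}\calf)\xrightarrow{\cong}H_{*}(u^{*}E_{K})\xrightarrow{\widetilde u_{*}}H_{*}(E_{K})$
(and similarly for $L$), where the first arrow is the isomorphism from Corollary~\ref{cor:pullback}. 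The plan is therefore to establish commutativity of the diagram
\[
\xymatrix@C=1.2pc{
H_{*}(X;(\beta\circ u)^{*}\calf)\ar[r]^-{\Psi}\ar[d]^{\cong} & H_{*}(X;(v\circ\varphi)^{*}\calf)\ar[r]^-{\varphi_{*}}\ar[d]^{\cong} & H_{*}(Y;v^{*}\calf)\ar[d]^{\cong}\\
H_{*}(u^{*}E_{K})\ar[r]^-{\widetilde H_{*}} & H_{*}((v\circ\varphi)^{*}E_{L})\ar[r]^-{\widetilde\varphi_{*}} & H_{*}(v^{*}E_{L}),
}
\]
where $\widetilde H$ is the canonical fiber homotopy equivalence $u^{*}E_{K}=(\beta\circ u)^{*}E_{L}\to (v\circ\varphi)^{*}E_{L}$ induced by $H$, and the vertical arrows come from Corollary~\ref{cor:pullback}. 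The right-hand square is an immediate instance of Proposition~\ref{prop:direct-for-fibrations}, applied to the smooth map $\varphi$ between the closed manifolds $X$ and $Y$ and the Hurewicz fibration $v^{*}\cale$.

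The hard part will be the commutativity of the left-hand square, since $L$ is not a priori a manifold and Lemma~\ref{lem:homotopy-invariance-fibrations} cannot be invoked directly on the pair $(\beta\circ u, v\circ\varphi)$. I plan to sidestep this by pulling the whole picture back to the manifold-with-boundary $W:=[0,1]\times X$: the homotopy $H$ becomes a single continuous map $W\to L$, the pullback $H^{*}\cale$ is a Hurewicz fibration over the manifold $W$, and the two fibrations appearing in the square are precisely the restrictions of $H^{*}\cale$ along the smooth inclusions $i_{0},i_{1}:X\hookrightarrow W$. Since $i_{0}$ and $i_{1}$ are homotopic maps between manifolds, Lemma~\ref{lem:homotopy-invariance-fibrations} applies to the fibration $H^{*}\cale$ over $W$ and identifies the DG identification attached to the natural homotopy between $i_{0}$ and $i_{1}$ with the canonical homotopy equivalence $\widetilde H_{*}$ on total spaces. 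A direct comparison with Proposition~\ref{prop:homotopy-identification} shows that this DG identification coincides with $\Psi$ (both are constructed from the same homotopy $H$), which yields the commutativity of the left square.

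To conclude, it remains to combine the two commutative squares with the topological observation that the maps $\widetilde\beta\circ\widetilde u$ and $\widetilde v\circ\widetilde\varphi\circ\widetilde H$ from $u^{*}E_{K}$ to $E_{L}$ are homotopic: both are obtained by restricting the projection $H^{*}E_{L}\to E_{L}$ to the slices $\{0\}\times X$ and $\{1\}\times X$ of $W$, which are connected by the tautological homotopy in $W$, and this homotopy lifts through any lifting function of $H^{*}\cale$. Passing to singular homology gives $\widetilde\beta_{*}\circ\widetilde u_{*}=\widetilde v_{*}\circ\widetilde\varphi_{*}\circ\widetilde H_{*}$, which, together with the commutativity of the diagram above and the definition of $\beta_{*}$, yields the identification of $\beta_{*}$ with $\widetilde\beta_{*}$ as required.
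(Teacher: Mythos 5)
Your proof is correct and follows essentially the same route as the paper's: factor $\beta_*$ as $\varphi_*\circ\Psi$, identify $\varphi_*$ with $\widetilde\varphi_*$ via Proposition~\ref{prop:direct-for-fibrations}, and handle $\Psi$ by a homotopy-invariance argument for the fibration isomorphisms. Where the paper directly invokes Lemma~\ref{lem:homotopy-invariance-fibrations}, observing that its proof never actually uses the manifold structure of the target, you instead re-derive the needed commutativity by pulling back to $W=[0,1]\times X$ and applying the lemma to the slice inclusions $i_0,i_1\colon X\hookrightarrow W$ -- which is precisely the reduction that the lemma's own proof already performs, so your detour is harmless but not necessary.
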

\begin{proof} We have to check the commutativity  of the diagram 
$$\xymatrix{H_*(K;\beta^*\calf)\ar[d]_-{\beta_*}\ar[r]^-{\Psi_K}_-{\simeq}&H_*(E_K)\ar[d]_-{\widetilde{\beta}_*}\\
H_*(L;\calf)\ar[r]^-{\Psi_L}_-{\simeq}& H_*(E_L)}
$$
where $\Psi_K$ and $\Psi_L$ are the isomorphisms given by Proposition~\ref{prop:fibration-for-top}. Choose two homotopy equivalences $u:X\ri K$ and $v:Y\ri L$, where $X$ and $Y$ are compact manifolds,   and a continuous map $\varphi:X \ri Y$ such that $v\circ \varphi$ and $\beta\circ u$ are homotopic. By definition $H_*(K;\beta^*\calf)= H_*(X; u^*\beta^*\calf)$, $H_*(L;\calf)= H_*(Y;v^*\calf)$, and describing the isomorphisms $\Psi_K$ and $\Psi_L$ as in the proof of Proposition~\ref{prop:fibration-for-top} the diagram above becomes: 
$$\xymatrix{H_*(X;u^*\beta^*\calf)\ar[r]^-{\Psi_X^{\beta\circ u}}_-{\simeq} \ar[d]^- {\beta^{u,v}_*}\ar@{.>}@/^2pc/[rr]^{\Psi_K}&H_*(E_X^{\beta\circ u})\ar[r]^-{\widetilde{u}_*}_-{\simeq} &H_*(E_K)\ar[d]^{\widetilde{\beta}_*}\\
H_*(Y;v^*\calf)\ar[r]^{\Psi_Y}_-{\simeq}\ar@{.>}@/_2pc/[rr]_-{\Psi_L}&H_*(E_Y)\ar[r]^{\widetilde{v}_*}_-{\simeq}&H_*(E_L)}$$
Here $E_X^{\beta\circ u}$ and $E_Y$ are the total spaces of the pullback fibrations $u^*\beta^*\cale$ resp. $v^*\cale$, the isomorphisms $\Psi_X^{\beta\circ u}$ and $\Psi_Y$ are given by Theorem \ref{thm:fibration}, and $\widetilde{u}, \widetilde{v}$ are induced by $u$ resp. $v$ between the corresponding total spaces.

In order to prove that it is commutative we complete the diagram into 

$$\xymatrix@R+1.5pc@C+1.5pc{H_*(X;u^*\beta^*\calf)\ar[r]^-{\Psi_X^{\beta\circ u}}_-{\simeq} \ar[d]^- {\beta^{u,v}_*}\ar@/_3.5pc/[dd]_-{\Psi}&H_*(E_X^{\beta\circ u})\ar[r]^-{\widetilde{u}_*}_-{\simeq} \ar[dr]^-{(\widetilde{\beta\circ u})_*}&H_*(E_K)\ar[d]^{\widetilde{\beta}_*}\\
H_*(Y;v^*\calf)\ar[r]^{\Psi_Y}_-{\simeq}&H_*(E_Y)\ar[r]^{\widetilde{v}_*}_-{\simeq}&H_*(E_L)\\
H_*(X;\varphi^*v^*\calf)\ar[r]^-{\Psi_X^{v\circ\varphi}}_-{\simeq}\ar[u]_-{\varphi_*}& H_*(E_X^{v\circ\varphi})\ar[u]_-{\widetilde{\varphi}_*}\ar[ur]_-{(\widetilde{v\circ\varphi})_*}&\, 
}$$

Note that the parts of the new addition to the diagram are commutative: the two triangles on the right side are commutative by naturality of the maps induced 	on the total spaces. On the left side we added the identification isomorphism defined by the homotopy between $\beta\circ u$ and $v\circ\varphi$; the commutativity $\beta^{u,v}_*=\varphi_*\circ\Psi$ is the   definition \eqref{eq:def-direct-top} of $\beta^{u,v}_*$. The lower square is commutative by Proposition~\ref{prop:direct-for-fibrations}, which is the version for manifolds of the current proposition. We have another commutation on the diagram, by applying Lemma \ref{lem:homotopy-invariance-fibrations} to the homotopic maps $\beta\circ u$ and $v\circ\varphi:X\ri L$ (this lemma was stated for a manifold $L$ but the proof in the case when $L$ is topological space is the same): 
$$(\widetilde{\beta\circ u})_*\circ \Psi^{\beta\circ u}_X\, =\, (\widetilde{v\circ \varphi})_*\circ \Psi^{v\circ \varphi}\circ \Psi.$$
With all these relations in hand we are able to complete the proof of our proposition. We have: 
\begin{align*}\widetilde{\beta}_*\circ\widetilde{u}_*\circ \Psi_X^{\beta\circ u}&\, = \, (\widetilde{\beta\circ u})_*\circ \Psi_X^{\beta\circ u}\, =\, 
 (\widetilde{v\circ \varphi})_*\circ \Psi^{v\circ \varphi}\circ \Psi \\
 &= \widetilde{v}_*\circ\widetilde{\varphi}_*\circ \Psi^{v\circ \varphi}\circ \Psi \, =\, 
\widetilde{v}_*\circ\Psi_Y\circ \varphi_*\circ\Psi\\
&= \widetilde{v}_*\circ\Psi_Y\circ\beta^{u,v}_*,
\end{align*} 
which is the commutativity relation claimed by our statement.
\end{proof} 

\section{Direct limits}\label{sec:direct-limits} 

Let $(X_n)_{n\in{\bf N}}$ be a collection of compact connected manifolds, or more generally of connected topological spaces which are homotopy equivalent to manifolds. We suppose that they are  endowed with basepoints and with  continuous maps $i_n:X_n\ri X_{n+1}$ which preserve these basepoints. Consider the direct system defined by $(X_n,i_n)$ and its direct limit in the category of pointed topological spaces, which we denote by $X$. We denote by $j_n:X_n\ri X$ the continuous maps defined by the direct limit; we have $j_{n+1}\circ i_n=j_n$. Let $\calf$ be a DG-module over $C_*(\Omega X)$, the chains on the space of loops based at the basepoint of $X$. 
We define the homology of $X$ with coefficients in $\calf$ by 
\begin{equation}\label{eq:homology-direct-limit} 
H_*(X;\calf)\, =\, \varinjlim H_*(X_n; j_n^*\calf),
\end{equation}
where the direct system in the right hand side is defined by the direct maps $i_{n,*}: H_{*}(X_n;i_n^*j^*_{n+1}\calf)\ri H_{*}(X_n;j^*_{n+1}\calf)$

\xmpl\label{ex:-direct-limit-manifolds} Suppose that $X_n$ is a submanifold  of $X_{n+1}$ for any   $n$ and  that  $i_n: X_n\hookrightarrow X_{n+1}$ is the inclusion. In this case $X=\bigcup_n X_n$ and we are able to describe an enriched complex whose homology is $H_*(X;\calf)$. We take a set of data $\Xi^0$ on $X_0$ and extend it to a set of data $\Xi^1$ on $X_1$ as in the definition of the direct maps for submanifolds in \S\ref{sec:funct-closed-submanifolds}; for this choice the enriched complex $C_*(X_0,\Xi^0; j_0^*\calf)$ is a subcomplex of $C_*(X_1, \Xi^1;j_1^*\calf)$. Then we extend $\Xi^1$ to $\Xi^2$ on $X_2$, then to $\Xi^3$ on $X_3$ and so on. We thus get a set of data $\Xi$ on $X$ yielding  a complex $C_*(X,\Xi;\calf)$ which can be identified with the direct limit of $C_*(X_n, \Xi^n ; j_n^*\calf)$. Its homology is $H_*(X; \calf)$, using the commutation between  direct limit and homology. 
\lpmx

\begin{proposition}\label{prop:spectral-for-limits} If $X$ and $\calf$ are as above then $H_*(X;\calf)$ is the limit of a spectral sequence whose second page is isomorphic to the homology with local coefficients $H_p(X, H_q\calf)$. 
\end{proposition}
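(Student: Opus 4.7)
The strategy is to take the colimit of the spectral sequences associated to each $X_n$, use exactness of filtered colimits of abelian groups to identify the resulting object as a spectral sequence, and then identify its second page with the homology of $X$ with local coefficients via the fact that singular (or cubical) homology commutes with sequential colimits of spaces.

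First, I would fix the DG local system $\calf$ on $X$ and, for each $n$, pull it back along $j_n : X_n \to X$. The relation $j_n = j_{n+1} \circ i_n$ gives the canonical identification $j_n^*\calf = i_n^* j_{n+1}^*\calf$ used in the definition of the direct system in~\eqref{eq:homology-direct-limit}. Applying Remark~\ref{rem:spectral-for-top} to each $X_n$ produces a spectral sequence $(E^{r,n}_{pq},d^{r,n})$ converging to $H_*(X_n; j_n^*\calf)$ with second page $E^{2,n}_{pq} \simeq H_p(X_n, j_n^*H_q(\calf))$. Since $\calf$ and $C_*(\Omega X)$ are supported in nonnegative degrees, each such spectral sequence lives in the first quadrant and converges strongly.

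Next, by Proposition~\ref{prop:funct-properties-for-top}, property~4, the direct map $i_{n,*}$ is the limit of a morphism between these spectral sequences that on the second page coincides with the direct map
\[ H_p(X_n, j_n^*H_q(\calf)) \longrightarrow H_p(X_{n+1}, j_{n+1}^*H_q(\calf)) \]
in homology with local coefficients. I would then form the filtered colimit $E^r_{pq} := \varinjlim_n E^{r,n}_{pq}$ with differentials $d^r := \varinjlim_n d^{r,n}$. Because filtered colimits of abelian groups are exact, the functor $\varinjlim_n$ commutes with taking homology, which shows both that $(E^r_{pq},d^r)$ is a spectral sequence and that its pages agree with the colimit of the pages. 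In particular, strong convergence is preserved in the first quadrant, so this limit spectral sequence converges to $\varinjlim_n H_*(X_n; j_n^*\calf) = H_*(X;\calf)$ by the definition~\eqref{eq:homology-direct-limit}.

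Finally, I would identify the second page. The colimit gives $E^2_{pq} \simeq \varinjlim_n H_p(X_n, j_n^*H_q(\calf))$, and the claim reduces to the identification
\[ \varinjlim_n H_p(X_n, j_n^*H_q(\calf)) \;\simeq\; H_p(X, H_q(\calf)). \]
This is the standard fact that homology with local coefficients commutes with sequential colimits of spaces, which follows from the fact that the singular (or cubical) chain complex of $X$ endowed with the colimit topology is the colimit of the singular chain complexes of the $X_n$: any singular simplex of $X$ factors through some $X_n$, and the local system $H_q(\calf)$ on $X$ restricts on $X_n$ to $j_n^*H_q(\calf)$. The main point requiring care is precisely the verification that $X$ carries (or can be replaced up to weak equivalence by a space carrying) the weak topology making singular chains commute with the colimit — in the motivating example of $\call Q = \bigcup_n \call^{r_n} Q$ discussed in~\S\ref{sec:DG-for-loops} this is automatic, and more generally one may argue by replacing $X$ with the telescope (mapping telescope) of the system $(X_n, i_n)$, which is homotopy equivalent to $X$ and in which sequential colimits of singular chains are well behaved. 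This is the step I expect to require the most attention; the spectral sequence assembly itself is formal.
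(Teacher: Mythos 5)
Your proof takes essentially the same route as the paper: form the colimit $E^r_{pq}=\varinjlim_n E^{r,n}_{pq}$ of the spectral sequences given by Remark~\ref{rem:spectral-for-top}, use Proposition~\ref{prop:funct-properties-for-top} (property 4) to know the transition maps are morphisms of spectral sequences, invoke exactness of filtered colimits so that pages, convergence, and the $E^\infty$-term all pass to the colimit, and finally identify $E^2_{pq}\simeq \varinjlim_n H_p(X_n, j_n^*H_q\cF)\simeq H_p(X, H_q\cF)$. The only place you go beyond the paper is in flagging the possible subtlety in the last identification (whether singular chains on $X$ with the colimit topology really are the colimit of the chains on the $X_n$) and proposing the mapping-telescope fix; the paper's proof simply asserts $\varinjlim_n H_p(X_n, H_q\cF)=H_p(X,H_q\cF)$ without comment, so your extra care there is a small but genuine improvement rather than a divergence in strategy.
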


\begin{proof} 
This is an easy consequence of the fact that the direct maps $i_{n*}: H_*(X_n;j_n^*\calf)\ri H_*(X_{n+1};j_{n+1}^*\calf)$ satisfy the spectral sequence property, i.e. they are the (spectral sequence) limits of direct maps  $i_{n,p,q}^r: E_{n,p,q}^r\ri E_{n+1,p,q}^r$ between the spectral sequences corresponding to the enriched homologies of $X_n$, resp. $X_{n+1}$, such that at the second page the map $i_{n, p, q}^2: H_p(X_n, H_q(\calf))\ri H_p(X_{n+1}, H_q(\calf))$ is the usual direct map induced by $i_n$  in homology with local coefficients. This enables us to define $$E_{p,q}^r\, =\, \varinjlim_n E_{n,p,q}^r.$$
Since direct limits commute with homology we infer that $E_{p,q}^r$ is a spectral sequence: 
$$E_{p,q}^{r+1}\, =\,  \varinjlim_n E_{n,p,q}^{r+1}\, =\,  \varinjlim_n H_*(E_{n,p,q}^r)\, =\,  H_*(\varinjlim_n E_{n,p,q}^r)\, =\, H_*(E_{p,q}^r),$$
and using the same argument 
\begin{align*}
H_*(X;\calf) & =\varinjlim_n H_*(X_n;j_n^*\calf)  \\
& =  \varinjlim_n H_* (E_{n,p,q}^\infty)  =  H_*( \varinjlim_n E_{n,p,q}^\infty)=H_* (E_{pq}^\infty).
\end{align*}
This means that the spectral sequence $E_{p,q}^r$ converges to $H_*(X;\calf)$.  Moreover $$
E_{p,q}^2\, =\,  \varinjlim_n E_{n,p,q}^2\, =\,  \varinjlim_n H_p(X_n, H_q\calf) \, =\, H_p(X, H_q(\calf)),$$
as claimed. 
\end{proof}

The next statement is the analogue of Theorem~\ref{thm:fibration} and of Proposition~\ref{prop:fibration-for-top} for direct limits: 
\begin{proposition}\label{prop:fibration-direct-limits} Let $X= \varinjlim X_n$ and $\cale: F\ri E\ri X$ a Hurewicz fibration. Denote by $\calf$ the DG-module $C_*(F)$ over $C_*(\Omega X)$. Then $H_*(X;\calf)$ is isomorphic to $H_*(E)$.
\end{proposition}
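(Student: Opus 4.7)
The strategy is to reduce the statement to Proposition~\ref{prop:fibration-for-top} applied levelwise, and then to commute direct limits with singular homology on both sides. By definition we have
$$
H_*(X;\cF) = \varinjlim_n H_*(X_n; j_n^*\cF),
$$
with structure maps $i_{n*}\colon H_*(X_n;i_n^*j_{n+1}^*\cF)\to H_*(X_{n+1};j_{n+1}^*\cF)$. On the topological side, denote by $\cE_n\colon F\hookrightarrow E_n\to X_n$ the pullback fibration $j_n^*\cE$, with total spaces related by canonical maps $\tilde\imath_n\colon E_n\to E_{n+1}$ induced by $i_n$, such that $E = \varinjlim_n E_n$ (as a topological colimit; this equality holds because pullback commutes with colimits in the category of spaces over a colimit, and Hurewicz fibrations are compatible with this via the chosen lifting function).

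First, for each $n$, Proposition~\ref{prop:fibration-for-top} yields an isomorphism
$$
\Psi_n\colon H_*(X_n;j_n^*\cF)\stackrel{\simeq}{\longrightarrow} H_*(E_n).
$$
The first key step is to check that $(\Psi_n)_n$ is a morphism of direct systems, i.e.\ that the square
$$
\xymatrix@C+1pc{
H_*(X_n;i_n^*j_{n+1}^*\cF)\ar[r]^-{i_{n*}}\ar[d]_-{\Psi_n}^-{\simeq} & H_*(X_{n+1};j_{n+1}^*\cF)\ar[d]_-{\Psi_{n+1}}^-{\simeq}\\
H_*(E_n)\ar[r]_-{\tilde\imath_{n*}} & H_*(E_{n+1})
}
$$
commutes. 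This is exactly the content of Proposition~\ref{prop:direct-map-fibrations-for-top} applied to the continuous map $i_n\colon X_n\to X_{n+1}$ and the fibration $\cE_{n+1}$, once we identify $E_n = i_n^* E_{n+1}$ and note that the map between total spaces induced by $i_n$ is $\tilde\imath_n$.

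Passing to the direct limit then produces an isomorphism
$$
\varinjlim_n \Psi_n\colon H_*(X;\cF) = \varinjlim_n H_*(X_n;j_n^*\cF) \stackrel{\simeq}{\longrightarrow} \varinjlim_n H_*(E_n).
$$
The last step is to identify the right-hand side with $H_*(E)$ using the fact that singular homology commutes with direct limits of spaces along suitable systems of maps; concretely, any singular simplex $\Delta^k\to E$ factors through some $E_n$ by compactness, so the canonical map $\varinjlim_n H_*(E_n)\to H_*(\varinjlim_n E_n) = H_*(E)$ is an isomorphism. Composing these two isomorphisms yields the desired $H_*(X;\cF)\cong H_*(E)$.

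The main obstacle I anticipate is the verification that $E = \varinjlim_n E_n$ as topological spaces in a strong enough sense that singular chains commute with the colimit. This requires the maps $\tilde\imath_n\colon E_n\hookrightarrow E_{n+1}$ to be well-behaved enough (e.g.\ cofibrations, or at least so that every compact subset of $E$ is contained in some $E_n$). When the $i_n\colon X_n\to X_{n+1}$ are cofibrations (as in the motivating Example~\ref{ex:-direct-limit-manifolds}) this is automatic; in the general case one must check that the Hurewicz fibration $E\to X$ is a colimit of the $E_n$ in a manner compatible with compact-supports arguments for singular homology. The commutation with singular homology is the standard fact that $H_*$ preserves such colimits, but justifying that the topology on $E$ is indeed the colimit topology is the delicate point and may require choosing compatible transitive lifting functions (as in~\S\ref{sec:lifting-functions}) on each $\cE_n$ extending one another.
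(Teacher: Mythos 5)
Your proposal matches the paper's proof essentially line for line: the paper also applies the levelwise isomorphisms $H_*(X_n;j_n^*\cF)\simeq H_*(E_n)$, invokes Proposition~\ref{prop:direct-map-fibrations-for-top} to check that these isomorphisms intertwine the structure maps $i_{n*}$ and $\tilde\imath_{n*}$, and passes to the colimit using $\varinjlim E_n = E$ (which the paper obtains by writing $E_n = X_n\,{}_{j_n}\!\!\times_\pi E$) together with the commutation of singular homology with direct limits. The only difference is one of tone: you flag the colimit identification and the compact-supports argument as a potential delicacy, whereas the paper states it without further comment; your caution is not misplaced, but in the regime the paper actually uses this (direct systems as in Example~\ref{ex:-direct-limit-manifolds}, and the loop-space filtration in~\S\ref{sec:DG-for-loops}) the structure maps are closed inclusions, so the standard compactness argument applies directly.
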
 

\begin{proof} Let $i_n: X_n\ri X_{n+1}$, $j_n: X_n\ri X$ be the continuous maps associated to the direct limit, $E_n$ the total space of the pullback fibration $j_n^*\cale$ and $\widetilde{i}_n: E_n\ri E_{n+1}$ the maps induced by $i_n$. Writing $E_n=X_n \, _{j_n}\!\!\times_{\pi}E$ (where $\pi :E\ri X$ is the projection) we see that $\varinjlim E_n=E$. We infer 
$$H_*(X; \calf)\, =\, \varinjlim_{i_{n*}} H_*(X_n; j_n^*\calf)\, \simeq\, \varinjlim_{\widetilde{i}_{n*}} H_*(E_n)\, = \, H_*(\varinjlim E_n)\, =\, H_*(E).$$
Note that we used  here Proposition~\ref{prop:direct-map-fibrations-for-top} which implies  that the direct map  $i_{n*}:H_*(X_n;j_n^*\calf)\ri H_*(X ; j_{n+1}^*\calf)$ coincides with $\widetilde{i}_n: H_*(E_n)\ri H_*(E_{n+1})$ via the isomorphisms between homology with DG coefficients and homology of the total spaces. 
\end{proof}

\paragraph{Direct maps.}

Let $(X_n, i_n)$ and $(Y_n, k_n)$ be two direct systems of connected topological spaces which are homotopy equivalent to compact manifolds and denote by $X$, resp. $Y$, their direct limits. Denote by $j_n:X_n\ri X$,  $l _n:Y_n\ri Y$ the maps associated to the direct limits. Consider $(\varphi_n)$,  a family of continuous maps $\varphi_n:X_n\ri Y_n$ which define a morphism of direct systems, i.e. satisfy $\varphi_{n+1}\circ i_n=k_n\circ \varphi_n$. This defines a continuous map $\varphi=\varinjlim\varphi_n : X\ri Y$. If $\calf$ is a DG-module over  $C_*(\Omega Y)$, the direct maps 
$$\varphi_{n*}: H_*(X_n; j_n^*\varphi^*\calf)=H_*(X_n; \varphi_n^* l_n^*\calf)\ri H_*(Y_n;l_n^*\calf)$$ define a morphism of direct systems. We define the direct map as
\begin{equation}\label{eq:def-direct-for-limits}
\varphi_*: H_*(X; \varphi^*\calf)\ri H_*(Y;\calf),\qquad \varphi_*\, =\, \varinjlim \varphi_{n*}.
\end{equation}
\rmk\label{rmk:properties-map-direct-limits} The map $\varphi_*$ between  limits satisfies the four properties from~\S\ref{sec:functoriality-properties}: identity, composition, homotopy and spectral sequence. This is a straightforward consequence  of the fact that the direct maps $\varphi_{n*}$ satisfy these properties. We point out that the hypothesis we need for the homotopy property to be satisfied  is that the homotopy between two maps $\varphi, \chi: \varinjlim X_n \ri\varinjlim Y_n$ is the direct limit of homotopies $[0,1]\times X_n\ri Y_n$ between $\varphi_n$ and $\chi_n$. The identification isomorphism $\Psi: H_*(X; \varphi^*\calf)\ri H_*(X; \chi^*\calf)$ will be the direct limit of the identification isomorphisms defined by these homotopies. 
\kmr
  
  We finish this section with the following analogue of Proposition \ref{prop:direct-map-fibrations-for-top}.
  \begin{proposition}\label{rmk:map-fibration-direct-limits} Let $\varphi_n:X_n\ri Y_n$ be a family of continuous maps between direct systems $(X_n,i_n)$ and $(Y_n,k_n)$ and denote by $\varphi: X\ri Y$ the direct limit $\varinjlim\varphi_n$ between $X=\varinjlim X_n$ and $Y=\varinjlim Y_n$. Consider  a Hurewicz fibration $\cale:F\ri E_Y\ri Y$ and its pullback $\varphi^*\cale$ whose total space is denoted by $E_X$.  Then, if  $\calf$ is the DG-module $C_*(F)$,  the direct morphism $\varphi_*: H_*(X;\varphi^*\calf)\ri H_*(Y;\calf)$ coincides -- via the isomorphisms of Proposition~\ref{prop:fibration-direct-limits}, -- with the map $\widetilde{\varphi}_*:H_*(E_X)\ri H_*(E_Y)$ induced by $\varphi$ between the singular homologies of the total spaces.  \end{proposition}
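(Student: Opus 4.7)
The strategy is to reduce the statement to Proposition~\ref{prop:direct-map-fibrations-for-top} by passing to direct limits in a functorial manner. The key point is that the isomorphisms of Proposition~\ref{prop:fibration-direct-limits} are themselves obtained as direct limits of the isomorphisms from Proposition~\ref{prop:fibration-for-top} applied level by level, so the whole commutative square will appear as a direct limit of commutative squares.

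\smallskip

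\textbf{Step 1: level-wise setup.} For each $n$, denote by $E_{X_n}$, resp. $E_{Y_n}$, the total spaces of the pullback fibrations $(\varphi \circ j_n)^*\cale = j_n^*\varphi^*\cale$ over $X_n$, respectively $l_n^*\cale$ over $Y_n$. The maps $\varphi_n : X_n \to Y_n$ induce natural maps $\widetilde{\varphi}_n : E_{X_n} \to E_{Y_n}$, and the direct system morphisms $i_n$, $k_n$ induce $\widetilde{i}_n : E_{X_n} \to E_{X_{n+1}}$ and $\widetilde{k}_n : E_{Y_n} \to E_{Y_{n+1}}$. The relations $\varphi_{n+1} \circ i_n = k_n \circ \varphi_n$ lift to $\widetilde{\varphi}_{n+1} \circ \widetilde{i}_n = \widetilde{k}_n \circ \widetilde{\varphi}_n$. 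Writing $E_{X_n} = X_n \,_{\varphi \circ j_n}\!\!\times_\pi E_Y$ and similarly for $E_{Y_n}$, one sees as in the proof of Proposition~\ref{prop:fibration-direct-limits} that $E_X = \varinjlim E_{X_n}$ and $E_Y = \varinjlim E_{Y_n}$, and moreover $\widetilde{\varphi} = \varinjlim \widetilde{\varphi}_n$.

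\smallskip

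\textbf{Step 2: level-wise commutativity.} By Proposition~\ref{prop:direct-map-fibrations-for-top} applied to the map $\varphi_n : X_n \to Y_n$ and to the fibration $l_n^*\cale$, the diagram
$$
\xymatrix{
H_*(X_n; \varphi_n^* l_n^*\calf) \ar[r]^-{\Psi_{X_n}}_-{\simeq} \ar[d]_-{\varphi_{n*}} & H_*(E_{X_n}) \ar[d]^-{\widetilde{\varphi}_{n*}} \\
H_*(Y_n; l_n^*\calf) \ar[r]^-{\Psi_{Y_n}}_-{\simeq} & H_*(E_{Y_n})
}
$$
commutes, where $\Psi_{X_n}$ and $\Psi_{Y_n}$ are the isomorphisms given by Proposition~\ref{prop:fibration-for-top}.

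\smallskip

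\textbf{Step 3: compatibility with the direct systems.} We verify that the family $\{\Psi_{X_n}\}$ (and similarly $\{\Psi_{Y_n}\}$) defines a morphism of direct systems, i.e., that the diagrams
$$
\xymatrix{
H_*(X_n; j_n^*\varphi^*\calf) \ar[r]^-{\Psi_{X_n}}_-{\simeq} \ar[d]_-{i_{n*}} & H_*(E_{X_n}) \ar[d]^-{\widetilde{i}_{n*}} \\
H_*(X_{n+1}; j_{n+1}^*\varphi^*\calf) \ar[r]^-{\Psi_{X_{n+1}}}_-{\simeq} & H_*(E_{X_{n+1}})
}
$$
commute. But this is exactly Proposition~\ref{prop:direct-map-fibrations-for-top} applied to $i_n : X_n \to X_{n+1}$ and to the fibration $j_{n+1}^*\varphi^*\cale$. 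An identical argument works for $\{\Psi_{Y_n}\}$.

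\smallskip

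\textbf{Step 4: passage to the limit.} Since homology commutes with direct limits, we may define
$$
\Psi_X := \varinjlim \Psi_{X_n} : H_*(X; \varphi^*\calf) \stackrel\simeq\longrightarrow H_*(E_X),
\quad \Psi_Y := \varinjlim \Psi_{Y_n} : H_*(Y; \calf) \stackrel\simeq\longrightarrow H_*(E_Y),
$$
and these coincide with the isomorphisms produced in the proof of Proposition~\ref{prop:fibration-direct-limits}. By definition $\varphi_* = \varinjlim \varphi_{n*}$ and $\widetilde{\varphi}_* = \varinjlim \widetilde{\varphi}_{n*}$. Taking the direct limit of the commutative squares from Step~2, which is legitimate thanks to Step~3, yields the desired commutative square
$$
\xymatrix{
H_*(X; \varphi^*\calf) \ar[r]^-{\Psi_X}_-{\simeq} \ar[d]_-{\varphi_*} & H_*(E_X) \ar[d]^-{\widetilde{\varphi}_*} \\
H_*(Y; \calf) \ar[r]^-{\Psi_Y}_-{\simeq} & H_*(E_Y).
}
$$

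\smallskip

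The only nontrivial technical point is the verification in Step~3 that the family $\{\Psi_{X_n}\}$ is compatible with the direct system, so that its direct limit recovers the isomorphism of Proposition~\ref{prop:fibration-direct-limits}. This is the main obstacle, but it is handled cleanly by a second application of Proposition~\ref{prop:direct-map-fibrations-for-top}, this time to the structural maps $i_n$ and $k_n$ of the direct systems rather than to $\varphi_n$. Once this compatibility is in place, the proof reduces to taking direct limits of commutative squares.
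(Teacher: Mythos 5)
Your proposal is correct and follows essentially the same route as the paper: level-wise application of Proposition~\ref{prop:direct-map-fibrations-for-top}, verification that the level-wise isomorphisms form a morphism of direct systems (which, as you note, is again an instance of that same proposition applied to the structural maps $i_n$ — the paper deduces this more tersely by simply pointing to the proof of Proposition~\ref{prop:fibration-direct-limits}), and then passage to the limit of commutative squares. Your Step~3 makes explicit a compatibility that the paper delegates to the earlier proof, but the underlying argument is identical.
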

  
  \begin{proof} Let $j_n:X_n\ri X$ and $l_n:Y_n\ri Y$ be the maps defined by the direct limits and denote $E_n^X$, $E_n^Y$ the total spaces of the pullback fibrations $j_n^*\varphi^*\cale$ respectively $l_n^*\cale$.   The proof of Proposition~\ref{prop:fibration-direct-limits} shows that the isomorphism $$\Psi^X:H_*(X;\varphi^*\calf) \stackrel{\simeq}{\longrightarrow} H_*(E_X)$$ is the direct limit of the isomorphisms 
  $$
  \Psi^X_n: H_*(X_n; j_n^*\varphi^*\calf) \stackrel{\simeq}{\longrightarrow} H_*(E_n^X),
  $$
  and similarly 
  $$\Psi^Y:H_*(Y;\calf) \stackrel{\simeq}{\longrightarrow} H_*(E_Y)$$ is the direct limit of  $$\Psi^Y_n: H_*(Y_n; l_n^*\calf) \stackrel{\simeq}{\longrightarrow} H_*(E_n^Y).$$
Our result is proved by passing  to the direct limit in the following  diagram,  which is commutative by Proposition~\ref{prop:direct-map-fibrations-for-top}: 
$$\xymatrix@C+1.5pc{H_*(E_n^X)\ar[r]^-{\widetilde{\varphi}_{n*}} & H_*(E^Y_n)\\
H_*(X_n; j_n^* \varphi^*\calf)\ar[r]^-{\varphi_{n*}}\ar[u]^-{\Psi_X^n}_-{\simeq}&H_*(Y_n;l_n^*\calf)\ar[u]^-{\Psi_Y^n}_-{\simeq}}$$
  \end{proof}

\section[DG Morse homology on the free loop space]{Homology with DG-coefficients on the free loop space}\label{sec:DG-for-loops}

We adapt to DG-coefficients the presentation of Abouzaid (\cite{Abouzaid-cotangent}, Chapter 11) of the twisted homology of the free loop space.  We start with the contractible loops and then explain how the definition adapts to the other connected components of the free loop space.  Let $Q$ be a compact manifold and $X=\call Q$ the space of {\it contractible} free loops on $Q$. We will suppose w.l.o.g. that they are piecewise smooth; the space $\call_{\mathrm{ps}}Q$ of such loops has the same homotopy type as $\call Q$ and, if $i:\call_{\mathrm{ps}}Q\hookrightarrow \call Q$ is the inclusion, we will define $H_*(\call Q;\calf)= H_*(\call_{\mathrm{ps}}Q; i^*\calf)$.    Fix a Riemannian metric $g$ on $Q$ and denote for each $r\geq 0$ by  $\call^r Q$ the connected  component of $\star$  in  $\call_{\mathrm{ps}}Q$  formed by loops of length $L\leq r$ . If $(r_n)$ is an increasing sequence which tends to $+\infty$ and $i_n :\call^{r_n}Q\hookrightarrow \call^{r_{n+1}}Q$ are the inclusions, then clearly $\call Q=\varinjlim \call^{r_n} Q$. 

\begin{definition}\label{def:admissible-sequence} 
We call a sequence $(r_n)$ as above \emph{admissible} if, for each $n$, the space $\call^{r_n}Q$ is homotopy equivalent to a compact manifold (possibly with boundary).
\end{definition} 

M. Abouzaid proves in \cite[Proposition~2.4]{Abouzaid-cotangent} the following result: 
\begin{proposition}\label{prop:existence-admissible} Given a metric $g$ on $Q$, there exist  sequences$\, \, (r_n)$ which are admissible and tend to infinity.

\end{proposition}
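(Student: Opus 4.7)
The plan is to recover the classical finite-dimensional approximation of loop spaces by piecewise geodesics, which goes back to Milnor's treatment of path spaces in \emph{Morse Theory} and adapts directly to free loops. Let $\epsilon_0>0$ be strictly smaller than the injectivity radius of $(Q,g)$. For each $r>0$, I would choose an integer $N=N(r)$ with $r/N<\epsilon_0$, and introduce the finite-dimensional model
\[
\mathcal{L}^r_N Q \;=\; \bigl\{(q_0,\dots,q_{N-1})\in Q^N \, :\, d(q_i,q_{i+1})\le r/N\text{ for all } i\bmod N\bigr\},
\]
which I identify with the set of piecewise geodesic loops having $N$ pieces of equal time-length and each of metric length $\le r/N$ (and hence of total length $\le r$). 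One restricts further to the connected component of the constant loop to stay inside the contractible free loops.

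First I would verify that $\mathcal{L}^r_N Q$ is a compact manifold with boundary and corners. Since $r/N$ lies below the injectivity radius, the function $(p,q)\mapsto d(p,q)^2$ is smooth on the open set $\{d<\epsilon_0\}\subset Q\times Q$ and has nonvanishing differential along its regular level sets. Therefore the inequalities $d(q_i,q_{i+1})\le r/N$ cut out of the compact smooth manifold $Q^N$ a compact codimension-zero submanifold with boundary whose corner strata correspond to the subsets of indices $i$ where equality is attained.

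Next I would establish that the inclusion $\mathcal{L}^r_N Q\hookrightarrow\mathcal{L}^r Q$ is a homotopy equivalence. Given a loop $\gamma$ of length $L\le r$, reparametrize it proportionally to arc length over $[0,1]$; then $d(\gamma(i/N),\gamma((i+1)/N))\le L/N\le r/N<\epsilon_0$, so the piecewise geodesic loop $P_N(\gamma)$ with vertices $\gamma(i/N)$ is well-defined and lies in $\mathcal{L}^r_N Q$. The deformation that, on each interval $[i/N,(i+1)/N]$, linearly interpolates between $\gamma$ and the unique minimizing geodesic through its endpoints (using the exponential map and the uniqueness of short geodesics) provides an explicit homotopy between the identity of $\mathcal{L}^r Q$ and $P_N$, preserving the length bound. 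The constant loop is fixed throughout, so the whole construction preserves the contractible component, and this is the technical heart of the argument that I expect to be the main obstacle: writing the interpolation carefully enough so that the length estimate is preserved along the deformation and so that the map is continuous on the space of piecewise smooth loops (including at the constant loop, where arc-length parametrization degenerates and one must argue separately using the fact that a small neighborhood of the constant loop retracts onto $Q$).

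Finally, I would pick any increasing sequence $r_n\to\infty$ together with integers $N_n$ satisfying $r_n/N_n<\epsilon_0$. The previous steps then give a homotopy equivalence $\mathcal{L}^{r_n}Q\simeq \mathcal{L}^{r_n}_{N_n}Q$ onto a compact manifold with boundary and corners, so $(r_n)$ is admissible in the sense of Definition~\ref{def:admissible-sequence}, and the proposition follows.
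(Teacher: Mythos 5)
Your construction is essentially the same finite-dimensional model of $\cL^r Q$ used by the paper (which in turn follows Abouzaid), so the overall strategy is sound. There is, however, one genuine gap and one smaller but important difference of approach that I want to flag.

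The gap is the step where you assert that the inequalities $d(q_i,q_{i+1})\le r/N$ cut out a manifold with boundary and corners in $Q^N$. Your argument — that $d(\cdot,\cdot)^2$ is smooth below the injectivity radius with nonvanishing differential on the sphere $\{d=r/N\}$ — shows only that \emph{each individual} constraint is cut out by a regular value, i.e., that each hypersurface $H_i=\{d(q_i,q_{i+1})=r/N\}$ is smooth. To conclude that the intersection of the $N$ sublevel sets is a manifold with corners you also need the active boundary hypersurfaces to meet transversely at every boundary point (the differentials $df_i$, for $i$ in the set of active indices, must be linearly independent). This is not automatic; it is a genuine transversality condition on the thresholds. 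The paper sidesteps the issue by taking non-uniform bounds $\delta_i^n$ on the $i$-th side and then appealing to genericity: ``for a generic choice of $\delta_i^n$ the space $X_n^*$ is a manifold with boundary and corners.'' Your uniform choice $\delta_i=r/N$ is a codimension-$(N-1)$ locus in the space of thresholds, so you cannot invoke that genericity statement directly; you would need either to perturb to non-uniform $\delta_i$ (as the paper does), or to argue that for a generic real $r$ the uniform thresholds $(r/N,\dots,r/N)$ still hit a regular value of the map $(q_0,\dots,q_{N-1})\mapsto(d(q_0,q_1),\dots,d(q_{N-1},q_0))$. Neither is done in your proposal.

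Apart from that, the homotopy-equivalence step via the Milnor deformation onto broken geodesics is the right idea, and you correctly flag the subtlety at the constant loop where arc-length reparametrization degenerates. Two small remarks: the phrase ``linearly interpolates between $\gamma$ and the unique minimizing geodesic'' should be made precise (one interpolates by growing the geodesic subarcs, as in Milnor, not by a convex combination of paths), and once you pass to non-uniform $\delta_i$, the evaluation map $v_n$ at the corresponding non-uniform arc-length fractions — rather than at the uniform fractions $i/N$ — is what appears in the paper.
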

 
 The idea of the proof is that a  piecewise smooth free loop is homotopic to a  concatenation of (short)  geodesics and  the endpoints of these geodesics define  a subset of $Q^N$ for some integer $N$. We will present a sketch of the proof at the end of this section. 
 
 The two preceding sections then allow us to define the homology of the space $\call Q$ with DG-coefficients: Choose a basepoint $\star$ on $Q$ and take the constant loop $\star$ as basepoint on $\call Q$. Let $\calf$ be a $DG$ module over $C_*(\Omega\call Q)$, $g$ a metric on $Q$ and $(r_n)$ an admissible sequence. Define 
 \begin{equation}\label{eq:homology-free-loops}
 H_*(\call Q;\calf)\, =\, \varinjlim H_*(\call^{r_n}Q; j_n^*\calf) 
 \end{equation} 
 
 where $j_n:\call^{r_n}Q\ri \call Q$ are the inclusions. It is easy to see that: 
 \begin{proposition}\label{prop:independence-of-metric}
 The homology $H_*(\call Q;\calf)$ does not depend on the metric $g$, nor on the choice of an admissible sequence. 
 \end{proposition}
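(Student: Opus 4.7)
The plan is to reduce both independence statements to the general principle that cofinal subsystems of a filtered system have the same colimit, using the functoriality of DG homology already established in Proposition~\ref{prop:funct-properties-for-top} and the invariance from Remark~\ref{rem:independence-direct-for-top}.

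\emph{Step 1: independence of the admissible sequence for fixed $g$.} Let $(r_n)$ and $(r'_n)$ be two admissible sequences tending to $+\infty$. Both are cofinal in $[0,+\infty)$, so one extracts subsequences $(r_{n_k})$ and $(r'_{m_k})$ interlacing as
\[
r_{n_1}\le r'_{m_1}\le r_{n_2}\le r'_{m_2}\le \cdots.
\]
Every inclusion in the resulting chain
\[
\cdots\hookrightarrow \call^{r_{n_k}}Q\hookrightarrow \call^{r'_{m_k}}Q\hookrightarrow \call^{r_{n_{k+1}}}Q\hookrightarrow\cdots
\]
is a continuous map between topological spaces that are homotopy equivalent to compact manifolds (by admissibility of both sequences), hence induces a direct map in DG-homology by Definition~\ref{def:direct-map-for-top}. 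Composition and identity properties (Proposition~\ref{prop:funct-properties-for-top}) ensure that these direct maps assemble into a single direct system, and that the original systems $(\call^{r_n}Q)_n$ and $(\call^{r'_n}Q)_n$ map cofinally into it. Passing to colimits yields a canonical isomorphism between the two candidate definitions of $H_*(\call Q;\calf)$.

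\emph{Step 2: independence of the metric.} Let $g$ and $g'$ be two metrics on the compact manifold $Q$. They are bilipschitz equivalent, so there is a constant $C\ge 1$ such that for every $r>0$
\[
\call^{r}_g Q\;\subset\;\call^{Cr}_{g'} Q\;\subset\;\call^{C^2 r}_g Q.
\]
Pick admissible sequences $(r_n)$ for $g$ and $(s_n)$ for $g'$, both tending to $+\infty$ by Proposition~\ref{prop:existence-admissible}. Alternately choosing admissible values for the two metrics (each admissible set being cofinal in $[0,\infty)$), one builds an interleaved chain
\[
\cdots\subset \call^{r_{n_k}}_g Q\subset \call^{s_{m_k}}_{g'} Q\subset \call^{r_{n_{k+1}}}_g Q\subset\cdots
\]
in which every consecutive inclusion realises one of the two displayed above. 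As in Step~1, each arrow induces a direct map of DG-homologies, and the cofinality argument identifies the $g$- and $g'$-colimits with the colimit of this common interleaved system.

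\emph{Main obstacle.} The formal part---cofinality of colimits and interleaving---is routine once functoriality (Proposition~\ref{prop:funct-properties-for-top}) and homotopy invariance (Remark~\ref{rem:independence-direct-for-top}) are in place. The only genuine geometric input is Proposition~\ref{prop:existence-admissible}: one needs admissible values of the length parameter to be unbounded in each metric. The extra subtlety in Step~2 is that admissibility for $g$ is not preserved by inclusion into the $g'$-filtration, so one cannot simply push an admissible sequence for $g$ across; this is why one has to alternate between admissible values for the two metrics rather than transport a single sequence. Beyond that, there are no analytical surprises.
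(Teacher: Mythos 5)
Your argument is correct and is essentially the paper's: both proofs start from the bilipschitz comparison $\tfrac{1}{C}\|\cdot\|_g\le\|\cdot\|_{g'}\le C\|\cdot\|_g$ on the compact manifold $Q$, extract interlaced subsequences of the two admissible sequences, and then apply the composition and identity properties of direct maps (Proposition~\ref{prop:funct-properties-for-top}) together with the cofinality principle for colimits. The paper packages the cofinality step as a "two-out-of-three" argument on explicit maps $\Id_*$, $\Id'_*$, $\Id''_*$ (showing $\Id'_*\circ\Id_*$ and $\Id''_*\circ\Id'_*$ are colimits of inclusions within a single system, hence isomorphisms) and records a specific composite isomorphism~\eqref{eq:identification-for-loops} so that it can later be checked to be independent of the subsequence choice and used as a canonical identification; you reach the same conclusion by observing that both systems sit cofinally in the interleaved one. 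The only thing your version leaves implicit is the explicit form of the identification isomorphism, which the paper needs downstream (e.g.\ to verify that the direct maps $\call\varphi_*$ are compatible with these identifications); otherwise the two arguments coincide.
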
 
 
 \begin{proof} Let $g$,  $g'$ be two metrics on $Q$ and $(r_n)$, $(r_n')$ admissible sequences corresponding to $g$, resp. $g'$. Since $Q$ is compact there is some $C>0$ such that, for any $q\in Q$ and $v\in T_qQ$, 
 $$\frac{1}{C}||v||_g\, \leq\, ||v||_{g'}\, \leq\, C||v||_g.$$
 We infer that there exist subsequences $(r'_{k_n})$ of $(r'_n)$ and $(r_{l_n})$ of $(r_n)$ such that 
 $$\call_g^{r_n}Q\, \subset\, \call^{r'_{k_n}}_{g'}Q\, \subset\, \call^{r_{l_n}}_gQ.$$
 We denote by $\varphi_n$ and $\varphi'_n$ these two inclusions. They induce morphisms of direct systems, whose limits are both the identity map $\Id:\call Q\ri \call Q$. However, they do not necessarily define the identity morphism at the level of homology with DG-coefficients (for the same kind of reason why the direct map $\Id_*: C_*(X; \Xi;\calf) \ri C_*(X, \Xi';\calf)$ is not  the identity in homology, but rather the continuation isomorphism for the data  $\Xi$ and $\Xi'$, see Proposition~\ref{id=continuation}).  We therefore use two different notations:   $$\Id_*=\varinjlim \varphi_{n*}: H_*(\call Q, g, (r_n); \calf)\ri H_*(\call Q, g', (r'_{k_n});\calf)$$ and $$\Id_{*}'=\varinjlim\varphi'_{n*}:H_*(\call Q, g', (r'_{k_n}); \calf)\ri H_*(\call Q,  g, (r_{l_n});\calf)$$
 for some chosen DG-module $\calf$ over $C_*(\Omega\call Q)$. Now direct maps satisfy the composition property (Remark \ref{rmk:properties-map-direct-limits}) and therefore $$\Id'_*\circ \Id_*=(\Id'\circ \Id)_*=\varinjlim(\varphi'_n\circ\varphi_n)_*.$$
 But $\varphi'_n\circ\varphi_n$ is the inclusion $\call^{r_n}Q\subset \call^{r_{k_n}} Q$  and by definition of the direct limit $\varinjlim(\varphi'_n\circ\varphi_n)_*$ is an isomorphism. By an analogous argument, taking a subsequence of $(k_n)$ we find some direct map $\Id''_*$ such that $\Id''_*\circ \Id'_*$ is an isomorphism. As a consequence, $\Id'_*$ is an isomorphism and therefore $\Id_*$ is also an isomorphism.  Therefore our claimed isomorphism is given by the composition \begin{equation}\label{eq:identification-for-loops}\xymatrix@C=20pt{H_*(\call Q, g, (r_n); \calf)\ar[r]_-{\Id_*}^-{\simeq}& H_*(\call Q, g', (r'_{k_n});\calf) & H_*(\call Q, g', (r'_n);\calf)\ar[l]_-{\simeq}}.\end{equation}
 It is an easy exercise to prove that the isomorphism above does not depend on the choice of the subsequence $(k'_n)$. Therefore we are allowed to use the notation $H_*(\call Q;\calf)$ and use these identification isomorphisms for different sets of data $[g, (r_n)]$ used to define this homology. 
 
 \end{proof}

\rmk\label{rmk:properties-homology-loop-space} By Proposition~\ref{prop:spectral-for-limits} we get that $H_*(\call Q; \calf)$ is the limit of a spectral sequence with second page $E_{pq}^2= H_p(\call Q; H_q(\calf))$. Also, by Proposition~\ref{prop:fibration-direct-limits} we obtain that, for a Hurewicz fibration $\cale: F\ri E\ri \call Q$, the homology $H_*(\call Q; C_*(F))$ is isomorphic to the singular homology $H_*(E)$. 
\kmr

\paragraph{Direct maps.} According to~\S\ref{sec:direct-limits} a family $\varphi=(\varphi_n)$ of continuous maps which defines a morphism between two direct systems yields a direct map in homology with DG-coefficients between their limits. We will define such a map for free loop spaces in two cases: 

\emph{Case 1}. Consider a smooth  map $\varphi:Q\ri V$ between two compact manifolds. Denote by $\call\varphi: \call Q\ri \call V$ the map induced by $\varphi$ on the free loop space. Let $\calf$ be a DG-module over $C_*(\Omega \call V)$ . Our purpose is to define a direct map $\call\varphi_*: H_*(\call Q; \call\varphi^*\calf)\ri H_*(\call V;\calf)$. 

Choose $[g^Q,(r_n^Q)]$ and $[g^V, (r_n^V)]$  sets of data (metric + admissible sequence) on these manifolds. Notice that  for any $v \in T_q Q$ we have $||d\varphi(v) ||_{g^V} \leq C\cdot ||v||_{g^Q}$, for $C=\mathrm{sup}_{q\in Q}||d_q\varphi||$. In particular, by taking a subsequence we may suppose that  the admissible sequence $(r_n^V)$ satisfies  $\call\varphi (\call^{r_n^Q}Q) \subset \call^{r_n^V}V$. Let $\call\varphi_n$ be the restriction of $\call\varphi$ to $\call\varphi (\call^{r_n^Q}Q)$;  the maps $\call\varphi_n$  define a morphism of direct systems and we obviously  have $\varinjlim\call\varphi_n= \call\varphi$. Define the direct map  $$\call\varphi_*: H_*(\call Q, g^Q, (r_n^Q);\call\varphi^*\calf)\ri H_*(\call V, g^V, (r_{k_n}^V); \calf)$$ as the direct limit of   $\call\varphi_{n*}$, as in the previous section. 

% We define the direct map $\call\varphi_*$ as the composition : 
%\begin{equation}\label{eq:def-direct-map-loops} \xymatrix{H_*(\call Q, g^Q, (r^Q_n); \call\varphi^*\calf)\ar@/_2pc/[rr]_-{\call\varphi_*}\ar[r]_-{\call\varphi^\#_*}& H_*(\call V, g^V, (r^V_{k_n});\calf) & H_*(\call V, g^V, (r^V_n);\calf)\ar[l]^- {i_*}_-{\simeq}}.\end{equation}
One may check that $\call\varphi_*$ is well defined, i.e. compatible with the identifications~\eqref{eq:identification-for-loops}, by considering the following commutative diagram: 
$$\xymatrix{H_*(\call Q, g^Q, (r^Q_n); \call\varphi^*\calf)\ar[r]_-{\call\varphi_*}\ar[d]_-{\Id^Q_*}^-{\simeq}& H_*(\call V, g^V, (r^V_{n});\calf) \ar[d]_-{\Id^V_*}^-{\simeq}\\
H_*(\call Q, g^{'Q}, (r^{'Q}_{j_n}); \call\varphi^*\calf)\ar[r]_-{\call\varphi_*}& H_*(\call V, g^{'V}, (r^{'V}_{p_n});\calf) \\
H_*(\call Q, g^{'Q}, (r^{'Q}_n); \call\varphi^*\calf)\ar[r]_-{\call\varphi_*}\ar[u]^-{i_*}_-{\simeq}& H_*(\call V, g^{'V}, (r^{'V}_{n});\calf) \ar[u]^-{i_*}_-{\simeq}}
$$

%$$\xymatrix{H_*(\call Q, g^Q, (r^Q_n); \call\varphi^*\calf)\ar[r]_-{\call\varphi^\#_*}\ar[d]_-{\Id^Q_*}^-{\simeq}& H_*(\call V, g^V, (r^V_{k_n});\calf) \ar[d]_-{\Id^V_*}^-{\simeq}& H_*(\call V, g^V, (r^V_n);\calf)\ar[l]^- {i_*}_-{\simeq}\ar[d]_-{\Id^V_*}^-{\simeq}\\
%H_*(\call Q, g^{'Q}, (r^{'Q}_{j_n}); \call\varphi^*\calf)\ar[r]_-{\call\varphi^\#_*}& H_*(\call V, g^{'V}, (r^{'V}_{p_n});\calf) & H_*(\call V, g^{'V}, (r^{'V}_{l_n});\calf)\ar[l]^- {i_*}_-{\simeq}\\
%H_*(\call Q, g^{'Q}, (r^{'Q}_n); \call\varphi^*\calf)\ar[r]_-{\call\varphi^\#_*}\ar[u]^-{i_*}_-{\simeq}& H_*(\call V, g^{'V}, (r^{'V}_{m_n});\calf) \ar[u]^-{i_*}_-{\simeq}& H_*(\call V, g^{'V}, (r^{'V}_n);\calf)\ar[l]^- {i_*}_-{\simeq}\ar[u]^-{i_*}_-{\simeq}}
%$$
where the leftmost and rightmost arrows define the identifications and the subsequence $(r^{'V}_{p_n})$  satisfies $r^{'V}_{p_n}\geq \mathrm{max}(r^{'V}_{j_n}, C r^{V}_{n})$.

\rmk\label{prop:properties-direct-for-loops}  The map $\call\varphi_*$ has the following properties: 
\begin{itemize}
\item It satisfies the properties from~\S\ref{sec:functoriality-properties}:  identity, composition, homotopy and spectral sequence. These follow immediately from the corresponding properties of direct maps between direct limits (Remark ~\ref{rmk:properties-map-direct-limits}). 
\item If $\cale: F\ri E_{\call V}\ri \call V$ is a Hurewicz fibration and $E_{\call Q}$ is the total space of the pullback $\call\varphi^*\cale$ then, denoting $\calf=C_*(F)$,  we have that  $\call\varphi_*:H_*(\call Q; \call\varphi^*\calf)\ri H_*(\call V;\calf)$ coincides with $\call\varphi_*: H_*(E_Q)\ri H_*(E_V)$ via the isomorphisms of Remark \ref{rmk:properties-homology-loop-space}. This is an easy consequence of Proposition~\ref{rmk:map-fibration-direct-limits}. 
\end{itemize}
\kmr

Let us also define the direct map $\call\varphi_*$ for $\varphi:Q\ri V$ {\it continuous}. Recall the notation $\call_{\mathrm{ps}}Q$ for the piecewise smooth loops, and denote by  $i_Q: \call_{\mathrm{ps}}Q\hookrightarrow \call Q$ the inclusion. We recall that, by definition, $H_*(\call Q; \calf)= H_*(\call_{\mathrm{ps}}Q;i_Q^*\calf)$.  Choose $\ol\varphi: Q\ri V$ a smooth approximation of $\varphi$. We get a diagram
$$\xymatrix {\call_{\mathrm{ps}} Q\ar[d]_-{\call\ol\varphi}\ar[r]^-{i_Q}&\call Q\ar[d]^-{\call\varphi}\\
\call_{\mathrm{ps}}V\ar[r]^-{i_V}&\call V}$$
which is commutative in homotopy. Let $\calf$ be a DG-module over $C_*(\Omega \call V)$ and denote by  $\Psi: H_*(\call_{\mathrm{ps}} Q; i_Q^*\call\varphi^*\calf)\ri H_*(\call_{\mathrm{ps}} Q; \call\ol\varphi^*i_V^*\calf)$ the identification isomorphism defined by the above homotopy. We define the direct map $\call\varphi_* : H_*(\call_{\mathrm{ps}}Q; i_Q^*\call\varphi^*\calf) \ri H_*(\call_{\mathrm{ps}}V;i_V^*\calf)$ as the composition
$$
\call\varphi_* \, =\, \call\ol\varphi_*\circ \Psi.
$$
(Compare to \eqref{eq:def-direct-top} in Definition \ref{def:direct-map-for-top}.) The fact that this definition does not depend on the choice of the approximation $\ol\varphi$, and the fact that the properties from Remark~\ref{prop:properties-direct-for-loops} continue to hold in this case, are proved in the same way as in~\S\ref{sec:top-spaces}. We omit the details.

\medskip
\emph{Case 2}. Let $Q$ be a smooth manifold and $i:Q \hookrightarrow \call Q$  the inclusion. Consider a DG-module $\calf$ over $C_*(\Omega\call Q)$. We define the direct map $i_*:H_*(Q;i^*\calf)\ri H_*(\call Q;\calf)$.  As above, we replace $\call Q$ by the space $\call_{\mathrm{ps}}Q$ of piecewise smooth loops. We take a metric $g^Q$ on $Q$ and an admissible sequence $(r_n)$. Then, writing $Q=\varinjlim Q_n$ with $Q_n=Q$, we notice that $i: Q\ri \call Q$ may be seen as the direct limit of $i_n=i:Q_n \ri \call^{r_n}Q$. Following \eqref{eq:def-direct-for-limits} in \S \ref{sec:direct-limits}, we  define $i_*=\varinjlim i_{n*}$. As in the aforementioned section, if $E$ is the total space of a Hurewicz fibration $\cale:F\ri E\ri \call Q$ and $\calf =C_*(F)$, then $i_*$ is the direct map in singular homology $H_*(E|_Q)\ri  H_*(E)$, where $E|_Q$ is the total space of the pullback fibration $i^*\cale$. 

%\begin

%{\it 1. Identity.} Obvious from the definition. \\
%{\it 2. Composition.}  Follows immediately from the 

%Let $\chi:V\ri W$ be a smooth map and $\calf$ a DG-module over $C_*(\Omega \call W)$. The property easily follows from the commutative diagram: 
 %$${\small \xymatrix@C-1pc{H_*(\call Q, g^Q, (r^Q_n); \call\varphi^*\call\chi^*\calf)\ar@/^2pc/[rr]^-{\call\varphi_*}\ar[r]^ -{\call\varphi^\#_*}\ar[dr]_-{\call(\chi\circ\varphi)^\#_*}\ar@/_2.5pc/[ddrr]_-{\call(\chi\circ\varphi)_*}& %H_*(\call V, g^V, (r^V_{k_n});\call\chi^*\calf) \ar[d]_-{\call\chi^\#_*}& H_*(\call V, g^V, (r^V_n);\call\chi^*\calf)\ar[l]_- {i_*}^-{\simeq}\ar@/^5.5pc/[dd]^-{\call\chi_*}\ar[d]_-{\call\chi^\#_*}\\
 %\, & H_*(\call W, g^W, (r_{p_n}^W); \calf)& H_*(\call W, g^W, (r_{j_n}^W); \calf)\ar[l]_-{i_*}^- {\simeq} \\
% \, &\, & H_*(\call W, g^W, (r_n^W);\calf)\ar[ul]_-{i_*}^-{\simeq} \ar[u]_-{i_*}^-{\simeq}}}$$
 %where the terms of the subsequence $(r_{p_n}^W)$ are large enough to have all the maps above well-defined. 

%{\it 3. Homotopy.} Let $\varphi: [0,1]\times Q\ri V$ be a smooth homotopy between $\varphi_0$ and $\varphi_1: Q\ri V$. 
%\end{proof} 

For further purposes it is useful to  present a

\begin{proof}[Sketch of the proof of Proposition~\ref{prop:existence-admissible}] Let $g$ be a metric on $Q$. We may suppose w.l.o.g. that its injectivity radius is larger than $4$. For any integer $n\geq 1$ consider $n$ numbers $\delta_i^n\in (0,2)$ such that $r_n=\sum_{i=1}^n\delta_i^n$ is an increasing sequence which tends to infinity. Denote by $d$ the distance on $Q$ and define 
\begin{align*}
X_n^*= \{(q_0, \ldots q_{n-1})\subset Q^n\, : \, d(q_{i-1},q_i)\leq \delta_i^n, \, \forall & i=1,\ldots, n,\\
& \mathrm{where}\,  \,  q_{n}\equiv q_0\}.
\end{align*}

For a generic choice of $\delta_i^n$ the space $X_n^*$ is a manifold with boundary and corners. Note that the results of the preceding two sections are valid for manifolds with corners: we take the (negative) gradient pointing inwards along the boundary and thus the trajectories between critical points  stay away from $\p X_n^*$. We may think of an element in $X_n^*$ as encoding the vertices of an $n$-gon in $Q$ whose edges are geodesics. Now denote by $\call_*^rQ$ the space of all free loops (not necessarily contractible)  of length less than $r$. Define $u_n: X^*_n\ri \call^{r_n}_*Q$ by mapping an element of $X_n^*$ into its corresponding polygon. Abouzaid proves in \cite{Abouzaid-cotangent} that $u_n$ is a homotopy equivalence; if $L(\gamma)$ is the length of a loop $\gamma$ (supposed piecewise smooth, as above, and parametrized at unit speed), then
$$
v_n (\gamma) = (\gamma(0), \gamma(\delta_1^n\tfrac{L(\gamma)}{r_n}),  \gamma((\delta_1^n+ \delta_2^n)\tfrac{L(\gamma)}{r_n}), \ldots, 
 \gamma((\delta_1^n+\delta_2^n+\cdots\delta_{n-1}^n)\tfrac{L(\gamma)}{r_n}))
$$
is a homotopy inverse of $u_n$. 

Considering  $X_n$, the connected component of $X_n^*$ which contains the basepoint $(\star, \star, \cdots, \star)$, we infer that $u_n: X_n\ri \call^{r_n}Q$ is a homotopy equivalence which preserves the basepoints,  as claimed.
\end{proof} 

\rmk We may also choose the numbers $\delta_i^n$ to satisfy the property $\delta_i^n~\leq~ \delta_{i+1}^{n+1}$ for all $i=1,\ldots, n$. For this choice we have a natural inclusion $\varphi_n:X_n\ri X_{n+1}$ defined by $\varphi_n(q_0,\ldots, q_{n-1})=(q_0,q_0,\ldots q_{n-1})$ and the diagram 
$$\xymatrix{X_n\ar[r]^-{u_n}\ar[d]_-{\varphi_n}&\call^{r_n}Q\ar[d]^-{i_n}\\
X_{n+1}\ar[r]^-{u_{n+1}} &\call^{r_{n+1}}Q}$$
is commutative. This gives a description of the direct system of manifolds in the limit of which we may define a Morse complex with DG-coefficients, whose homology is by definition the one of $\call Q$. 
\kmr

\rmk\label{rmk:fixed-homotopy-class} {\bf Definition for non-contractible free loops.}   Let $a$ be a free homotopy class of loops and $\call_{a} Q$ the connected component of the free loop space  defined by this class. We defined above the homology with DG-coefficients for $a=0$; the definition easily adapts to the general case. We first fix a basepoint $\gamma_a\in \call_{a}Q$. After deforming it into a piecewise geodesic loop we may suppose that it belongs to the image of $u_{n_0}(X_{n_0}^*)$ for some sufficiently large $n_0$. Let $\star_{n_0}\in X_{n_0}^*$ such that $u_{n_0}(\star_{n_0})=\gamma_a$ and take as basepoint of $X_n^*$ the point $\star_n=\varphi_{n-1} \circ\varphi_{n-2}\circ\cdots\circ\varphi_{n_0}(\star_{n_0})$ for $ n\geq n_0 +1$, where $\varphi_k$ are the maps of the previous remark. For $n\geq n_0$ we denote  by $\call_{a}^{r_n}Q$ the connected component containing $\gamma_a$ of the free loops belonging to  the class $a$ of length $\leq r$ and by $X_n^{a}$ the component of $X_n^*$ which contains  $\star_n$. We remark that the proof of Proposition~\ref{prop:existence-admissible} immediately implies that $u_n:X_n^{a}\ri \call_{a}^{r_n}$ is a homotopy equivalence which preserves the basepoints.  On the other hand we also have $\call_{a}Q=\varinjlim\call_{a}^{r_n}Q$. We may therefore define $H_*(\call_{a}Q; \calf)$  analogously to~\eqref{eq:homology-free-loops}, the direct system $(\call^{r_n}_{a}Q)$ being considered  for $n\geq n_0$. The assertions of Proposition~\ref{prop:independence-of-metric},  remain valid, as well as Remark \ref{rmk:properties-homology-loop-space}. Direct maps are also defined in a similar manner: for any continuous map $\varphi : Q\ri V$  we have $\varphi_*: H_*(\call_{a}Q;\varphi^*\calf) \ri H_*(\call_{\varphi(a)}V;\calf)$ and these maps satisfy the properties of Remark \ref{prop:properties-direct-for-loops}. \kmr

\appendix

\chapter[Geometric and analytic orientations]{Comparison of geometric and analytic orientations in Morse theory} \label{app:ori-geo-an}

Our purpose in this appendix is to compare two sets of orientations for moduli spaces of Morse trajectories. The first one, which we refer to as \emph{geometric orientation}, is the one used in the rest of this book and is based on presenting the moduli spaces of Morse trajectories as transverse intersections of submanifolds of the ambient manifold, see~\S\ref{sec:construction-enriched-morse}. The second one, which we refer to as \emph{analytic orientation}, was used in~\cite{Schwarz_book} and is based on the notion of coherent orientation for suitable spaces of Fredholm operators defined on the compactified real line. Such a comparison is needed in order to relate Floer theory on aspherical manifolds to Morse theory~\cite{BDHO-cotangent}.  

This appendix is structured as follows: we recall in~\S\ref{sec:geometric_ori} the definition of the geometric orientation, we recall in~\S\ref{sec:analytic_ori} the definition of the analytic orientation, and we compare the two in~\S\ref{sec:geometric-analytic-comparison}. To make the appendix self-contained, we recall some relevant definitions from the body of the book. 

We consider a closed manifold $X$ of dimension $d$, a Morse function $f:X\to\R$, and a Morse-Smale negative pseudo-gradient vector field for $f$, denoted by $\xi$. Given $x\in\Crit(f)$ we denote by $W^u(x)$ its \emph{unstable manifold} with respect to $\xi$ and $W^s(x)$ its \emph{stable manifold} with respect to $\xi$. That $\xi$ is Morse-Smale means that the intersection $W^u(x)\cap W^s(y)$ is transverse for all $x,y\in\Crit(f)$. 

\section{Geometric orientation}  \label{sec:geometric_ori}

Following~\S\ref{sec:construction-enriched-morse} we let 
$$
\cM(x,y)=W^u(x)\cap W^s(y)
$$ 
and call it \emph{the space of parametrized $\xi$-trajectories}. The terminology is motivated by the fact that any point $p\in\cM(x,y)$ determines a unique trajectory $\gamma:\R\to X$ such that $\dot\gamma=\xi\circ \gamma$ and $\gamma(0)=p$, which satisfies in addition $\lim_{s\to-\infty}\gamma(s)=x$, $\lim_{s\to+\infty}\gamma(s)=y$. Conversely, any trajectory $\gamma:\R\to X$ such that $\dot\gamma=\xi\circ \gamma$ and $\lim_{s\to-\infty}\gamma(s)=x$, $\lim_{s\to+\infty}\gamma(s)=y$ can be identified with the point $p=\gamma(0)$. 

Let $S^s(y)=W^s(y)\cap f^{-1}(f(y)+\eps)$ be the \emph{stable sphere of $y$}, where $\eps>0$ is chosen such that the interval $(f(y),f(y)+\eps]$ does not contain critical values of $f$. We let 
$$
\cL(x,y)=W^u(x)\cap S^s(y)
$$
and call it \emph{the moduli space of unparametrized $\xi$-trajectories}. The terminology is motivated by the fact that, given any trajectory $\gamma:\R\to X$ such that $\dot\gamma=\xi\circ \gamma$ and $\lim_{s\to-\infty}\gamma(s)=x$, $\lim_{s\to+\infty}\gamma(s)=y$, the intersection $\im\,\gamma\cap S^s(y)$ consists of a single point, which belongs to $\cL(x,y)$. Thus $\cL(x,y)$ is naturally identified with the quotient of $\cM(x,y)$ by the $\R$-action that translates a point along the unique $\xi$-trajectory passing through it. 

In~\S\ref{sec:construction-enriched-morse} we oriented the moduli spaces $\cL(x,y)$ by first choosing orientations of all the unstable manifolds $W^u(x)$, $x\in\Crit(f)$ and then requiring that the canonical isomorphism  
\begin{equation} \label{eq:iso-ori}
|\cL(x,y)|\otimes |\R\langle -\xi\rangle|\otimes |W^u(y)|\simeq |W^u(x)|
\end{equation}
be orientation preserving (see Remark~\ref{orientation-rule}). 

\begin{definition}
Given a choice of orientation of the unstable manifolds $W^u(x)$, $x\in\Crit(f)$, the resulting family of orientations of the moduli spaces $\cL(x,y)$ is called \emph{geometric orientation}. 
\end{definition}

For the purpose of this section we now recall the definition of the isomorphism~\eqref{eq:iso-ori}. The inclusion $\cL(x,y)\hookrightarrow W^u(x)$ induces at any point $p\in\cL(x,y)$ the short exact sequence 
$$
0\to T_p\cL(x,y)\to T_pW^u(x)\to T_pW^u(x)/T_p\cL(x,y) \equiv T_pX/T_pS^s(y)\to 0
$$ 
and thus determines a canonical isomorphism 
$$
|\cL(x,y)|\otimes |T_pX/T_pS^s(y)|\simeq |W^u(x)|.
$$ 
Here the identification $T_pW^u(x)/T_p\cL(x,y) \equiv T_pX/T_pS^s(y)$ is canonical and holds because the intersection $W^u(x)\cap S^s(y)$ is transverse, i.e. $T_pX=T_pW^u(x)+T_pS^s(y)$ (see also~\eqref{eq:transverse-orientation-new}). On the other hand, the inclusions $T_pS^s(y)\subset T_pW^s(y)\subset T_pX$ determine the short exact sequence 
$$
0\to T_pW^s(y)/T_pS^s(y)\to \break T_pX/T_pS^s(y)\to T_pX/T_pW^s(y)\to 0
$$ 
which gives rise to a canonical isomorphism
$$
|T_pW^s(y)/T_pS^s(y)|\otimes |T_pX/T_pW^s(y)|\simeq |T_pX/T_pS^s(y)|.
$$ 
Now $S^s(y)$ is canonically cooriented in $W^s(y)$ as the boundary of the disc $W^s(y)\cap \{f\le f(y)+\eps\}$ by its exterior normal $-\xi$, and $W^s(y)$ is canonically cooriented in $X$ by $W^u(y)$ since $T_yW^u(y)\equiv T_yX/T_y W^s(y)$. In other words we have canonical isomorphisms 
$$
|T_pW^s(y)/T_pS^s(y)|\simeq |\R\langle -\xi(p)\rangle|\quad \mbox{and} \quad |T_pX/T_pW^s(y)|\simeq |W^u(y)|. 
$$
The above isomorphisms combine into~\eqref{eq:iso-ori}.

\section{Analytic orientation} \label{sec:analytic_ori}

Given $x,y\in\Crit(f)$, the space $\cM(x,y)$ of $\xi$-trajectories connecting $x$ and $y$ can be interpreted as the zero set of the Fredholm section $\gamma\mapsto \dot\gamma-\xi(\gamma)$ of a suitable Hilbert bundle over a space of paths connecting $x$ to $y$, see~\cite[Appendix~A]{Schwarz_book}.\footnote{More precisely, one considers the space $\cP$ of paths $\R\to X$ of Sobolev class $W^{1,2}$ and asymptotic to $x$, resp. $y$, at $\pm\infty$, and the Hilbert bundle over $\cP$ with fiber  $L^2(\gamma^*TX)$, the space of $L^2$-vector fields along $\gamma$. Here the $L^2$-scalar product is understood with respect to some reference Riemannian metric on $X$.} The linearization $D_\gamma$ of this section at $\gamma\in\cM(x,y)$ is a Fredholm operator that acts on vector fields along $\gamma$. Once a reference Riemannian metric on $X$ has been chosen, the linearization can be expressed as $D_\gamma X = \nabla_s X - \nabla_X\xi(\gamma)$. The Fredholm index of $D_\gamma$ is equal to the difference of Morse indices $|x|-|y|$, and $D_\gamma$ is surjective if $\xi$ is Morse-Smale~\cite[Theorem~3.3]{Salamon_BLMS90} (surjectivity of $D_\gamma$ at all $\gamma\in\cM(x,y)$ and the Morse-Smale condition are actually equivalent). As a consequence, under the assumption that $\xi$ is Morse-Smale we obtain a canonical isomorphism of vector spaces 
$$
T_\gamma\cM(x,y)\simeq \ker D_\gamma
$$
and therefore a canonical isomorphism of orientation lines 
\begin{equation} \label{eq:TMxyDgamma}
|T_\gamma\cM(x,y)|\simeq |D_\gamma|.
\end{equation}

Choose once and for all orthogonal isomorphisms $T_xX\simeq \R^d$ for $x\in\Crit(f)$ and denote by $A_x\in Sym_d(\R)$ the $d\times d$-symmetric matrix that represents the Hessian of $f$ at $x$ in this trivialization. 
%Each trajectory $\gamma\in \cM(x,y)$, which is a priori defined on $\R$, can be uniquely extended to a smooth map $\ol\gamma:\ol\R\to X$ by setting $\ol\gamma(-\infty)=x$ and $\ol\gamma(+\infty)=y$. 
There is a unique up to homotopy orthogonal trivialization $\gamma^*TM\simeq \R\times \R^d$ that is asymptotic at $\pm\infty$ to the chosen trivializations at $x$ and $y$, and in this trivialization the operator $D_\gamma$ takes the form 
\begin{equation} \label{eq:Dgamma}
D_\gamma:W^{1,2}(\R,\R^d)\to L^2(\R,\R^d),\qquad X\mapsto \p_s X +A(s)X(s),
\end{equation}
where $A:\R\to Sym_d(\R)$ is a path of symmetric matrices such that\break $\lim_{s\to-\infty}A(s)=A_x$, the Hessian of $f$ at $x$, and $\lim_{s\to+\infty}A(s)=A_y$, the Hessian of $f$ at $y$. Since $A_x$ and $A_y$ are nondegenerate, any operator of the form~\eqref{eq:Dgamma} with fixed asymptotic behavior $A_x$ and $A_y$ is Fredholm, see~\cite[Theorem~3.3]{Salamon_BLMS90} or~\cite[Proposition~2.12]{Schwarz_book}, and the space $\cD_{x,y}$ of such operators is convex. The determinant bundle over $\cD_{x,y}$, denoted by ${\det}_{x,y}$, is therefore trivializable. As a consequence, the orientation bundle $|\cM(x,y)|$ is also trivializable because, by~\eqref{eq:TMxyDgamma}, it is isomorphic to the pullback of $|{\det}_{x,y}|$ under the map $\cM(x,y)\to \cD_{x,y}$, $\gamma\mapsto D_\gamma$. This implies that the spaces of Morse trajectories $\cM(x,y)$ are orientable. 

Given $x,z,y\in\Crit(f)$ and $R\gg 0$ there is a gluing map  
$$
\#_R:\cK_R\subset \cM(x,z)\times\cM(z,y)\to \cM(x,y),
$$
where $\cK_R\subset \cM(x,z)\times\cM(z,y)$ is a family of compact sets defined for $R$ large enough, which can be chosen to exhaust $\cM(x,z)\times\cM(z,y)$ as $R\to\infty$. This family of gluing maps induces a canonical isomorphism 
\begin{equation} \label{eq:oriMxzy}
|\cM(x,z)|\otimes |\cM(z,y)|\simeq |\cM(x,y)|. 
\end{equation}
By definition, a \emph{coherent orientation} of the moduli spaces $\cM(x,y)$ is a trivialization of $|\cM(x,y)|$ for all $x,y\in\Crit(f)$ such that the above canonical isomorphisms are orientation preserving. 

The key fact is that coherent orientations exist. This is proved in the context of Morse theory in~\cite[\S3.2]{Schwarz_book} by a method similar to the classical one in Floer theory~\cite{FH-coherent}. The starting point for the proof is that the isomorphisms~\eqref{eq:oriMxzy} are induced by canonical isomorphisms~\cite{FH-coherent,Schwarz_book} 
\begin{equation} \label{eq:detxzy}
|{\det}_{x,z}|\otimes |{\det}_{z,y}|\simeq |{\det}_{x,y}|
\end{equation}
determined by the \emph{linear gluing map}\footnote{To define the linear gluing map one first chooses a smooth nondecreasing cutoff function $\rho:\R\to[0,1]$ such that $\rho\equiv 0$ on $(-\infty,0]$ and $\rho\equiv 1$ on $[1,+\infty)$. Then, given $R>0$ and operators $D_1=\p_s + A_1(s)$ and $D_2=\p_s + A_2(s)$, one defines $D_1\#_R D_2= \p_s + A^R(s)$ with $A^R(s)=A_1(s+R)$ for $s\le -1$, $A^R(s)=(1-\rho(-s)-\rho(s))A_z + \rho(-s)A_1(s+R) + \rho(s)A_2(s-R)$ for $s\in[-1,1]$, and $A^R(s)=A_2(s-R)$ for $s\ge 1$.}
$$
\#_R:\cD_{x,z}\times\cD_{z,y}\to \cD_{x,y}, \qquad (D_1,D_2)\mapsto D_1\#_R D_2.
$$
Thus, in order to prove the existence of coherent orientations for the spaces of connecting trajectories $\cM(x,y)$, it is enough to prove the existence of orientations of the determinant bundles ${\det}_{x,y}$, $x,y\in\Crit(f)$ such that the isomorphisms~\eqref{eq:detxzy} are orientation preserving (we call such orientations \emph{coherent} as well). A key property of the isomorphisms~\eqref{eq:detxzy} is \emph{associativity}, i.e. commutativity of the diagram
$$
\xymatrix
@C=50pt
{
|{\det}_{x,z}|\otimes |{\det}_{z,y}|\otimes |{\det}_{y,w}| \ar[r]^-\simeq \ar[d]^-\simeq & |{\det}_{x,y}|\otimes |{\det}_{y,w}| \ar[d]^-\simeq\\
|{\det}_{x,z}|\otimes |{\det}_{z,w}|\ar[r]^-\simeq &  |{\det}_{x,w}|.
}
$$
(See~\cite[Theorem~6]{Schwarz_book} or~\cite[Theorem~10]{FH-coherent}.) 

Coherent orientations on the determinant bundles ${\det}_{x,y}$ can be constructed in various ways, but the overall scheme is predetermined: an initial choice of orientations for \emph{some} determinant lines induces via~\eqref{eq:detxzy} orientations for \emph{all} determinant lines, and the resulting set of orientations is coherent by the associativity property. The recipe from~\cite{Schwarz_book}, which is analogous to the one for Cauchy-Riemann operators from~\cite{FH-coherent}, stays in the realm of operators defined on the real line: it starts with the choice of orientations of ${\det}_{x_0,y}$ with $x_0\in\Crit(f)$ fixed and $y\in\Crit(f)$ arbitrary, such that the orientation of ${\det}_{x_0,x_0}$ is the canonical one. These induce orientations for ${\det}_{y,x_0}$, which further induce coherent orientations of ${\det}_{x,y}$. 

We will proceed in a slightly different manner, inspired by the recipe for Cauchy-Riemann operators from~\cite{Bourgeois-Mohnke,BOauto}, and deduce coherent orientations from a choice of orientations of determinant bundles of Fredholm operators defined over half-lines. More precisely, we consider operators of the form~\eqref{eq:Dgamma} with domain of definition $(-\infty,0]$ and asymptotic condition imposed only at $-\infty$, i.e., 
$$
D^u_x:W^{1,2}((-\infty,0],\R^d)\to L^2((-\infty,0],\R^d),\qquad X\mapsto \p_s X +A(s)X(s),
$$
where $A:(-\infty,0]\to Sym_d(\R)$ is a path of symmetric matrices such that $\lim_{s\to-\infty}A(s)=A_x$. The arguments of~\cite[Theorem~3.3]{Salamon_BLMS90} or~\cite[Proposition~2.12]{Schwarz_book} adapt in a straightforward way to show that the operator $D^u_x$ is Fredholm of index $\ind D^u_x=|x|$, the Morse index of $x$. We denote by $\cD^u_x$ the space of such operators and note as before that it is convex, so that its determinant bundle, denoted by ${\det}_x$, is orientable. 

\begin{remark}
Although we will not use them in the sequel, we introduce for completeness two other contractible spaces consisting of Fredholm operators of the form~\eqref{eq:Dgamma}. The space $\cD^s_x$ consists of operators 
$$
D^s_x:W^{1,2}([0,+\infty),\R^d)\to L^2([0,+\infty),\R^d),\qquad X\mapsto \p_s X +A(s)X(s),
$$
where $A:[0,+\infty)\to Sym_d(\R)$ is a path of symmetric matrices such that $\lim_{s\to+\infty}A(s)=A_x$. Any operator $D^s_x$ is Fredholm of index $\ind D^s_x=d-|x|$. 
The space $\cD$ consists of operators 
$$
D:W^{1,2}(I,\R^d)\to L^2(I,\R^d),\qquad X\mapsto \p_s X +A(s)X(s),
$$
where $I\subset \R$ is a closed interval of nonzero length and $A:I\to Sym_d(\R)$ is a path of symmetric matrices. Any operator $D$ is Fredholm of index $\ind D=d$.
\end{remark}

We now choose an orientation of ${\det}_x$ for each $x\in\Crit(f)$ and induce from it a coherent orientation of the determinant lines ${\det}_{x,y}$ for all $x,y\in\Crit(f)$ as follows. There is again a family of linear gluing maps 
$$
\#_R:\cK_R\subset \cD_{x,y}\times \cD^u_y\to \cD^u_x
$$
defined for $R\gg 0$, which induces a canonical isomorphism 
\begin{equation} \label{eq:detxydetxdety}
|{\det}_{x,y}|\otimes |{\det}_y|\simeq |{\det}_x|.
\end{equation}
Having chosen orientations of ${\det}_x$ and ${\det}_y$, there is a unique orientation of ${\det}_{x,y}$ such that this isomorphism is orientation preserving. Associativity of gluing now takes the form of the commutative diagram  
$$
\xymatrix
@C=50pt
{
|{\det}_{x,z}|\otimes |{\det}_{z,y}|\otimes |{\det}_y| \ar[d] \ar[r]^-\simeq & |{\det}_{x,z}|\otimes |{\det}_z| \ar[d]^\simeq \\
|{\det}_{x,y}|\otimes |{\det}_y| \ar[r]^-\simeq &  |{\det}_x|
}
$$
By definition the horizontal arrows and the right vertical arrow in the above diagram are orientation preserving, and therefore the left vertical arrow is also orientation preserving, i.e., the canonical isomorphism~\eqref{eq:oriMxzy} is orientation preserving. This proves that the resulting system of orientations on ${\det}_{x,y}$ is coherent. 

We infer a coherent system of orientations for the spaces of trajectories $\cM(x,y)$, and these induce a system of orientations for the moduli spaces $\cL(x,y)=\cM(x,y)/\R$, with $\R$-action determined by the infinitesimal generator $\xi$, by requiring that the canonical isomorphism 
\begin{equation} \label{eq:RLM}
|\R\langle\xi\rangle|\otimes |\cL(x,y)|\simeq |\cM(x,y)|
\end{equation}
be orientation preserving. 

\begin{definition}
Given a choice of orientations of the determinant bundles ${\det}_x$, $x\in\Crit(f)$, the resulting family of orientations of the moduli spaces $\cL(x,y)$ is called \emph{analytic orientation}. 
\end{definition}

\begin{remark}
While our recipe for constructing coherent orientations starting from choices of orientations of the determinant lines ${\det}_x$, $x\in\Crit(f)$ can be rephrased in the terms of~\cite[\S3.2]{Schwarz_book}, its advantage is that it is directly suited for a comparison with the geometric orientation of the moduli spaces.   
\end{remark}

\section{Comparison between the two orientations} \label{sec:geometric-analytic-comparison}

By construction, the geometric orientation of the moduli spaces $\cL(x,y)$ depends on a choice of orientations of the unstable manifolds $W^u(x)$, $x\in\Crit(f)$, and the analytic orientation depends on a choice of orientations of the determinant bundles ${\det}_x$, $x\in\Crit(f)$. A first step in comparing the geometric and analytic orientations is to show that these choices are canonically equivalent. 

\begin{lemma}
A choice of orientation of the unstable manifold $W^u(x)$ for some $x\in\Crit(f)$ is canonically equivalent to a choice of orientation of the determinant bundle ${\det}_x$.  
\end{lemma}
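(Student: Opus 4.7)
The plan is to construct an explicit canonical isomorphism of orientation lines $|{\det}_x| \simeq |W^u(x)|$, which immediately gives the claimed equivalence between orientations of the two sides. The construction proceeds in four steps.

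First, I would verify that every operator $D \in \cD^u_x$ is surjective. Since the domain $W^{1,2}((-\infty,0],\R^d)$ imposes no boundary condition at $s=0$, a standard half-line adaptation of the Fredholm arguments of~\cite[Appendix~A]{Schwarz_book} (see also~\cite[Theorem~3.3]{Salamon_BLMS90}) shows that $D$ has trivial cokernel. The Fredholm index is then $\mathrm{ind}\, D = \dim \ker D = |x|$, and the canonical exact sequence gives a natural isomorphism $|\det D| \simeq |\ker D|$ at every $D \in \cD^u_x$.

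Second, I would evaluate at the constant-asymptotic model $D_0 = \p_s + A_x \in \cD^u_x$, where $A_x \in \Sym_d(\R)$ represents $\mathrm{Hess}_x f$ in the chosen frame. Solutions of $D_0 X = 0$ are $X(s) = e^{-A_x s} v$, and $X$ lies in $W^{1,2}$ on $(-\infty,0]$ precisely when $v$ belongs to the negative eigenspace $E^-(A_x)$ of $A_x$ (a direct asymptotic computation). Evaluation at $s=0$ therefore gives a canonical linear isomorphism
$$
\mathrm{ev}_0 : \ker D_0 \stackrel{\simeq}{\longrightarrow} E^-(A_x).
$$

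Third, I would identify $E^-(A_x)$ with $T_xW^u(x)$. The Morse lemma, together with the fact that $\xi$ is a \emph{negative} pseudo-gradient, shows that $T_xW^u(x) \subset T_xX$ is exactly the negative eigenspace of $\mathrm{Hess}_x f$; under the chosen identification $T_xX \simeq \R^d$, this subspace becomes $E^-(A_x)$. Combining with the contractibility of $\cD^u_x$ (which trivializes the determinant bundle $|{\det}_x|$, so restriction to any fiber is a canonical isomorphism), I obtain the chain of canonical isomorphisms
$$
\Phi_x : |{\det}_x| \stackrel{\simeq}{\longrightarrow} |\det D_0| \stackrel{\simeq}{\longrightarrow} |\ker D_0| \stackrel{\simeq}{\longrightarrow} |E^-(A_x)| = |W^u(x)|.
$$

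Finally, I would check that $\Phi_x$ is independent of the initial choice of orthogonal identification $T_xX \simeq \R^d$. A change by $g \in O(d)$ transforms $A_x \mapsto g^{-1}A_xg$, $E^-(A_x) \mapsto g^{-1}E^-(A_x)$, and the model operator $D_0$ correspondingly; the evaluation $\mathrm{ev}_0$ and the inclusion into $T_xX$ are both equivariant, so the induced map on orientation lines is unchanged. This shows $\Phi_x$ is intrinsic.

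The main potential obstacle is the first step: proving surjectivity of every $D \in \cD^u_x$ and verifying that the isomorphism $|\det D| \simeq |\ker D|$ varies continuously across the convex family $\cD^u_x$ (so that trivializing $|{\det}_x|$ at one point trivializes it globally). Once this analytic input is in place, the remaining steps are purely a chase through canonical linear-algebra isomorphisms.
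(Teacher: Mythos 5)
Your argument follows the same route as the paper's: trivialize $|{\det}_x|$ over the contractible space $\cD^u_x$, pass to the constant-asymptotic model $D_0=\p_s+A_x$, compute its kernel as the span of $\{e^{-\lambda_i s}E_i\,:\,\lambda_i<0\}$, and identify this (via evaluation at $s=0$, which the paper leaves implicit) with the negative eigenspace of $A_x$, i.e.\ with $T_xW^u(x)$. The extra step checking independence of the orthogonal frame is a sound, if optional, addition—the paper sidesteps it by fixing the frames once and for all.
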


\begin{proof} A choice of orientation of ${\det}_x$ is equivalent to a choice of orientation of any of its fibers, so that it is enough to study the operator $D^u_x$ given by a constant path $A(s)\equiv A_x$. Denote by $\lambda_1,\dots,\lambda_d$ the eigenvalues of $A_x$, ordered such that $\lambda_1,\dots,\lambda_k<0$ and $\lambda_{k+1},\dots,\lambda_d>0$ with $k=|x|$, and denote by $E_1,\dots,E_d$ an orthonormal basis of eigenvectors. The operator $D^u_x$ is surjective (this can be deduced for example by a direct truncation argument from the bijectivity of the operator $D_{x,x}=\p_s+A_x \in\cD_{x,x}$~\cite[Theorem~3.3]{Salamon_BLMS90}), so that $\det D^u_x=\Lambda^{\max} \ker D^u_x$. A general solution to the equation $D_{x,x}X=0$ is $X(s)=\sum_{i=1}^dc_ie^{-\lambda_i s}E_i$ with $c_i\in\R$. This map belongs to $W^{1,2}((-\infty,0],\R^d)$ if and only if $c_{k+1}=\dots=c_d=0$, so that $\ker D^u_x=\mathrm{Vect}\langle e^{-\lambda_i s}E_i\, : \, i=1,\dots,k\rangle$. An orientation of $\ker D^u_x$ is therefore equivalent to an orientation of $\Vect\langle E_1,\dots,E_k\rangle$, the negative eigenspace of $A_x$, and this is in turn equivalent to an orientation of $W^u(x)$.
\end{proof}

\begin{proposition} \label{prop:comparison-or-geom-analytic}
Given equivalent choices of orientations of $W^u(x)$ and ${\det}_x$ for $x\in\Crit(f)$, the induced geometric and analytic orientations of the moduli spaces $\cL(x,y)$ differ by a sign equal to $(-1)^{|x|-|y|}$. 
\end{proposition}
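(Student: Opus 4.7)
Pick $p\in\cL(x,y)$ on a trajectory $\gamma\in\cM(x,y)$. The plan is to rewrite both orientation conventions as assertions about a common abelian-group isomorphism of the form $|\cM(x,y)|\otimes|W^u(y)|\simeq|W^u(x)|$, with the factor $|\cM(x,y)|$ broken up as $|\R\langle\xi\rangle|\otimes|\cL(x,y)|$, and to read off the sign by combining a Koszul reordering with the sign that relates the oriented lines $|\R\langle-\xi\rangle|$ and $|\R\langle\xi\rangle|$.

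Unwinding: the geometric convention asserts that $|\cL(x,y)|_{\text{geo}}\otimes|\R\langle-\xi\rangle|\otimes|W^u(y)|\simeq|W^u(x)|$ is orientation-preserving. The analytic convention, combining in turn the identification $|\cM(x,y)|\simeq|\R\langle\xi\rangle|\otimes|\cL(x,y)|_{\text{an}}$, the isomorphism $|\cM(x,y)|\simeq|\det_{x,y}|$ coming from $T_\gamma\cM(x,y)\simeq\ker D_\gamma$, the linear gluing $|\det_{x,y}|\otimes|\det_y|\simeq|\det_x|$, and the Lemma's identification $|\det_z|\simeq|W^u(z)|$, asserts that $|\R\langle\xi\rangle|\otimes|\cL(x,y)|_{\text{an}}\otimes|W^u(y)|\simeq|W^u(x)|$ is orientation-preserving. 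The canonical isomorphism $\rho$ between the two sources carries a Koszul sign $(-1)^{(|x|-|y|-1)\cdot 1}$ from the swap of $|\cL(x,y)|$ and $|\R|$, together with a further sign $-1$ (since $|\R\langle-\xi\rangle|$ and $|\R\langle\xi\rangle|$ are the same underlying line with opposite orientations), for a total of $(-1)^{|x|-|y|}$.

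\textbf{Key step.} The substance of the argument is to show that, after these algebraic rearrangements, both sides use the \emph{same} abelian-group isomorphism $|\cM(x,y)|\otimes|W^u(y)|\to|W^u(x)|$. In the Morse--Smale setting every operator is surjective, so the linear gluing reduces, at the level of determinants, to a kernel pre-gluing $\ker D_\gamma\oplus\ker D^u_y\xrightarrow{\#_R}\ker D^u_x$, which is an isomorphism by index addition. Under the canonical identifications $\ker D_\gamma\simeq T_\gamma\cM(x,y)$ and $\ker D^u_z\simeq T_zW^u(z)$ supplied by the Lemma, this pre-gluing corresponds to the geometric asymptotic decomposition $T_pW^u(x)\simeq T_\gamma\cM(x,y)\oplus T_yW^u(y)$ valid at $p=\gamma(R)$ for $R$ large, in which $T_yW^u(y)$ is transported along $\gamma$ by the linearised flow and realised as a complement of $T_p\cM(x,y)$ in $T_pW^u(x)$. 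Verifying this correspondence is a standard asymptotic analysis in the spirit of \cite[\S3.2]{Schwarz_book} and \cite{FH-coherent,Bourgeois-Mohnke}, matching the gluing parameter $R$ with the time parameter along $\gamma$ and controlling exponential-decay corrections.

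\textbf{Conclusion and main obstacle.} Once the key step is in place, both maps $|\R\langle\xi\rangle|\otimes|\cL(x,y)|\otimes|W^u(y)|\to|W^u(x)|$ coincide as maps of abelian groups; the analytic one is orientation-preserving while the geometric one composed with $\rho$ carries sign $(-1)^{|x|-|y|}$, and equating them forces $|\cL(x,y)|_{\text{an}}=(-1)^{|x|-|y|}|\cL(x,y)|_{\text{geo}}$, as claimed. The main obstacle is the key step itself, namely the precise identification between the linear gluing on determinants and the geometric asymptotic splitting of $TW^u(x)$ along $\gamma$. A plausible simplification is to verify the proposition by an explicit computation in the basic case $|x|-|y|=1$, where $\cL(x,y)$ is discrete and a hands-on comparison on a single trajectory yields the sign $-1=(-1)^1$, and then propagate the result to arbitrary index difference by induction, using the coherence of both orientation systems under composition of trajectories (Proposition~\ref{orientationsMorse} on the geometric side and associativity of the gluing isomorphisms~\eqref{eq:detxzy} on the analytic side).
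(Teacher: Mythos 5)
Your sign bookkeeping is exactly the paper's: a Koszul swap of $|\cL(x,y)|$ past $|\R|$ contributes $(-1)^{|x|-|y|-1}$, the flip from $|\R\langle -\xi\rangle|$ to $|\R\langle \xi\rangle|$ contributes $-1$, giving $(-1)^{|x|-|y|}$. The genuinely valuable part of your proposal is that you have put your finger on what the paper leaves implicit. The paper concludes with the sentence that the two isomorphisms ``only differ by rearranging the factors'' and reads off the sign, without arguing that the analytic map (Fredholm identification $T_\gamma\cM(x,y)\simeq\ker D_\gamma$, linear gluing on determinants, and the kernel computation relating $\det_x$ to $|W^u(x)|$) and the geometric map (transverse intersection with $S^s(y)$ and the filtration through $W^s(y)$) agree as abelian-group isomorphisms once the factors are matched up. You are correct that this is the substantive claim and that it requires an asymptotic/gluing argument identifying the pre-glued kernel $\ker D_\gamma\oplus\ker D^u_y\to\ker D^u_x$ with the flow-transported decomposition of $T_pW^u(x)$ far along $\gamma$; you flag it, cite the right sources, and propose a sensible fallback. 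Your suggested alternative (hands-on check at index difference $1$, then induction using Proposition~\ref{orientationsMorse} and associativity of~\eqref{eq:detxzy}) is plausible but not obviously less work: you would still need to verify that the analytic coherence isomorphism~\eqref{eq:oriMxzy}, after factoring out $\R$-translations on all three moduli spaces, reproduces the boundary orientation of~$\ol\cL(x,y)$, which is itself a nontrivial bookkeeping step. So the two routes are of comparable difficulty; the merit of your write-up over the paper's is that it makes explicit where the hard analysis actually sits.
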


\begin{proof}
The geometric orientation is determined by requiring that the canonical isomorphism
\begin{equation} \label{eq:geometric-ori-app}
|\cL(x,y)|\otimes |\R\langle -\xi\rangle|\otimes |W^u(y)|\simeq |W^u(x)| 
\end{equation}
is orientation preserving. 

The analytic orientation is determined by requiring that the canonical isomorphisms~\eqref{eq:detxydetxdety} and~\eqref{eq:RLM} are orientation preserving, hence requiring that the canonical isomorphism
$
|\R\langle\xi\rangle|\otimes |\cL(x,y)|\otimes |{\det}_y|\simeq |{\det}_x|
$
is orientation preserving. Under the equivalence of orientations between $W^u(x)$ and ${\det}_x$ for $x\in \Crit(f)$, this is equivalent to requiring that the canonical isomorphism 
\begin{equation} \label{eq:analytic-ori-app}
|\R\langle\xi\rangle|\otimes |\cL(x,y)|\otimes |W^u(y)|\simeq |W^u(x)|
\end{equation}
is orientation preserving. The definitions of the canonical isomorphisms~\eqref{eq:analytic-ori-app} and~\eqref{eq:geometric-ori-app} imply that they only differ by rearranging the factors, and therefore the analytic orientation of $\cL(x,y)$ differs from the geometric orientation by the sign $(-1)\cdot (-1)^{\dim\cL(x,y)}=(-1)^{|x|-|y|}$.  
\end{proof}

\section{Comparing the homologies}

\begin{corollary} \label{cor:o-obar}
Let $\{o_x\, : \, x\in\Crit(f)\}$ be a choice of orientations of $W^u(x)$ or, equivalently, of ${\det}_x$. The analytic orientation of the moduli spaces $\cL(x,y)$ determined by $\{o_x\}$ is equal to the geometric orientation determined by $\{(-1)^{|x|}o_x\}$. 
\end{corollary}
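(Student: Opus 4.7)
The plan is to deduce the corollary as a straightforward consequence of Proposition~\ref{prop:comparison-or-geom-analytic} together with a calculation of how the geometric orientation transforms under a change of orientations of the unstable manifolds. Let $\mathrm{An}(\{o_x\})$ and $\mathrm{Geo}(\{o_x\})$ denote the analytic and geometric orientations of $\cL(x,y)$ induced by a common choice $\{o_x\}$ of orientations of $W^u(x)\simeq \mathrm{det}_x$. Proposition~\ref{prop:comparison-or-geom-analytic} asserts
\[
\mathrm{An}(\{o_x\}) = (-1)^{|x|-|y|}\,\mathrm{Geo}(\{o_x\}).
\]
So the goal is to show that $(-1)^{|x|-|y|}\,\mathrm{Geo}(\{o_x\}) = \mathrm{Geo}(\{(-1)^{|x|}o_x\})$.

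The key observation is to inspect the defining isomorphism~\eqref{eq:iso-ori}, namely $|\cL(x,y)|\otimes |\R\langle -\xi\rangle|\otimes |W^u(y)|\simeq |W^u(x)|$, and to see how both sides react to flipping the orientations at $x$ and at $y$. Flipping $o_x$ reverses the right-hand side, and therefore reverses the induced orientation of $\cL(x,y)$ on the left. Similarly, flipping $o_y$ reverses the factor $|W^u(y)|$ on the left, so it also reverses the induced orientation of $\cL(x,y)$. Replacing $\{o_x\}$ by $\{(-1)^{|x|}o_x\}$ therefore multiplies $\mathrm{Geo}(\{o_x\})$ by $(-1)^{|x|}\cdot(-1)^{|y|}=(-1)^{|x|+|y|}$:
\[
\mathrm{Geo}(\{(-1)^{|x|}o_x\}) = (-1)^{|x|+|y|}\,\mathrm{Geo}(\{o_x\}).
\]

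The proof then concludes by noting that $(-1)^{|x|+|y|}=(-1)^{|x|-|y|}$, since the two exponents differ by $2|y|$. Combining this with the two displayed identities above yields $\mathrm{Geo}(\{(-1)^{|x|}o_x\}) = \mathrm{An}(\{o_x\})$, which is exactly the claim. There is no genuine obstacle here; the only thing to be careful about is that the sign change in the geometric rule really does come from both endpoints (contribution of $W^u(x)$ on the right and of $W^u(y)$ on the left of~\eqref{eq:iso-ori}), and not from the middle factor $|\R\langle-\xi\rangle|$, whose orientation does not depend on the $o_x$'s.
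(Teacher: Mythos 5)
Your proposal is correct and follows essentially the same approach as the paper: the paper's one-line proof invokes Proposition~\ref{prop:comparison-or-geom-analytic} together with the geometric orientation rule~\eqref{eq:geometric-ori-app}, and you have simply made explicit how that rule behaves under flipping $o_x$ and $o_y$, which is exactly the intended content.
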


\begin{proof}
This follows directly from Proposition~\ref{prop:comparison-or-geom-analytic} and the geometric orientation rule~\eqref{eq:geometric-ori-app}. 
\end{proof}

We now explain how the corollary implies that Morse homology with DG local coefficients does not depend on whether we choose geometric- or analytic orientations. 
\begin{center}
Given $o=\{o_x\}$, we set $\bar o=\{(-1)^{|x|}o_x\}$. 
\end{center}
Let $\Xi=(f,\xi,o, s^{\text{geom}}_{x,y}(o),\cY,\theta)$ be Morse data, where $f$ is a Morse function, $\xi$ is a Morse-Smale negative pseudo-gradient vector field, $o=\{o_x\}$ is a choice of orientations for $W^u(x)$, $\{s^{\text{geom}}_{x,y}(o)\}$ is a representing chain system for the compactified moduli spaces $\ol\cL(x,y)$ endowed with the \emph{geometric} orientation induced by $o$, $\cY$ is a collapsing tree, and $\theta$ is a homotopy inverse for the collapsing map $X\to X/\cY$.

We denote $s^{\text{an}}_{x,y}(o)=(-1)^{|x|-|y|}s^{\text{geom}}_{x,y}(o)$. By Proposition~\ref{prop:comparison-or-geom-analytic}, this is a representing chain system for the compactified moduli spaces $\ol\cL(x,y)$ endowed with the \emph{analytic} orientation induced by $o$. On the other hand, by Corollary~\ref{cor:o-obar} we have $s^{\text{an}}_{x,y}(o)=s^{\text{geom}}_{x,y}(\bar o)$. 

Twisting cocycles $m_{x,y}\in C_{|x|-|y|-1}(\Omega X)$ are obtained from representing chain systems $s_{x,y}$ using the collapsing tree $\cY$ and the homotopy inverse $\theta$. Accordingly, we obtain twisting cocycles $m^{\text{geom}}_{x,y}(o)$, $m^{\text{an}}_{x,y}(o)$ and $m^{\text{geom}}_{x,y}(\bar o)$ and we have 
$$
(-1)^{|x|-|y|}m^{\text{geom}}_{x,y}=m^{\text{an}}_{x,y}(o)=m^{\text{geom}}_{x,y}(\bar o).
$$
This implies the following.
\begin{proposition} \label{prop:C*-an-geom} Given a DG-local system $\cF$, we have an equality of chain complexes
$$
FC_*(X,f,\xi,o,s^{\text{an}}_{x,y}(o),\cY,\theta;\cF)= FC_*(X,f,\xi,\bar o,s^{\text{geom}}_{x,y}(\bar o),\cY,\theta;\cF).
$$
\qed
\end{proposition}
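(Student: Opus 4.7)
The plan is to observe that the stated equality of chain complexes is a direct consequence of the identity
$$
m^{\text{an}}_{x,y}(o)\, =\, m^{\text{geom}}_{x,y}(\bar o)
$$
established in the discussion preceding the proposition, together with the fact that the underlying graded module $\cF\otimes\langle\Crit(f)\rangle$ of the twisted complex does not depend on the choice of orientations of the unstable manifolds, only on the set $\Crit(f)$ itself.

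First I would recall the chain of equalities. Proposition~\ref{prop:comparison-or-geom-analytic} gives $s^{\text{an}}_{x,y}(o)=(-1)^{|x|-|y|}s^{\text{geom}}_{x,y}(o)$, and Corollary~\ref{cor:o-obar} gives $s^{\text{an}}_{x,y}(o)=s^{\text{geom}}_{x,y}(\bar o)$. Applying the evaluation map $q_{x,y}=\theta\circ p\circ\Gamma$ from Lemma~\ref{lecture}, which depends only on $(\cY,\theta)$ and not on the choice of orientations, to both sides yields $m^{\text{an}}_{x,y}(o)=m^{\text{geom}}_{x,y}(\bar o)$ as elements of $C_{|x|-|y|-1}(\Omega X)$.

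Next I would write down the two twisted differentials explicitly. By the recipe of~\S\ref{sec:construction-enriched-morse} (see~\eqref{diffDGmodule}), both complexes have the same underlying graded $\Z$-module $\cF\otimes\langle\Crit(f)\rangle$, and their differentials are
$$
\p^{\text{an}}(\alpha\otimes x) = \p\alpha\otimes x + (-1)^{|\alpha|}\sum_y \alpha\cdot m^{\text{an}}_{x,y}(o)\otimes y
$$
and
$$
\p^{\text{geom}}(\alpha\otimes x) = \p\alpha\otimes x + (-1)^{|\alpha|}\sum_y \alpha\cdot m^{\text{geom}}_{x,y}(\bar o)\otimes y,
$$
respectively. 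Since $m^{\text{an}}_{x,y}(o)=m^{\text{geom}}_{x,y}(\bar o)$ for every pair $(x,y)$, these two differentials coincide term by term, so the two chain complexes are literally equal.

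There is no real obstacle in this proof: all the work has been done in Proposition~\ref{prop:comparison-or-geom-analytic} and Corollary~\ref{cor:o-obar}, and the proposition is just a bookkeeping statement repackaging that sign comparison in terms of the fully assembled Morse complex. The only subtle point worth emphasizing in the write-up is that passing to $\bar o$ does not alter the generators of the free module but only the representing chain systems (via the change of sign of the fundamental classes of $\ol\cL(x,y)$), and that this change of sign is exactly absorbed by the sign discrepancy $(-1)^{|x|-|y|}$ between geometric and analytic orientations.
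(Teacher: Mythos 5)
Your proposal is correct and reproduces the paper's argument: the equality $m^{\text{an}}_{x,y}(o)=m^{\text{geom}}_{x,y}(\bar o)$ (coming from Proposition~\ref{prop:comparison-or-geom-analytic}, Corollary~\ref{cor:o-obar}, and the fact that the evaluation map depends only on $(\cY,\theta)$) forces the two twisted differentials to agree term by term on the common underlying module $\cF\otimes\langle\Crit(f)\rangle$. The paper treats this as immediate (the proposition carries a \qed with no separate proof block), and your write-up simply spells out the same bookkeeping.
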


\bibliographystyle{alpha}
\bibliography{000_dgfloer}

\def\cprime{$'$} \def\cprime{$'$}
\begin{thebibliography}{BDHO24}

\bibitem[AB21]{Abouzaid-Blumberg}
Mohammed Abouzaid and Andrew Blumberg.
\newblock Arnold {C}onjecture and {M}orava {K}-theory.
\newblock ar{X}iv:2103.01507, 2021.

\bibitem[Abo12]{Abouzaid2012a}
Mohammed Abouzaid.
\newblock On the wrapped {F}ukaya category and based loops.
\newblock {\em J. Symplectic Geom.}, 10(1):27--79, 2012.

\bibitem[Abo15]{Abouzaid-cotangent}
Mohammed Abouzaid.
\newblock Symplectic cohomology and {V}iterbo's theorem.
\newblock In {\em Free loop spaces in geometry and topology}, volume~24 of {\em
  IRMA Lect. Math. Theor. Phys.}, pages 271--485. Eur. Math. Soc., Z\"urich,
  2015.

\bibitem[Abo22]{Abouzaid-flows}
Mohammed Abouzaid.
\newblock An axiomatic approach to virtual chains.
\newblock arXiv:2201.02911, 2022.

\bibitem[AD14]{Audin-Damian_English}
Mich\`ele Audin and Mihai Damian.
\newblock {\em Morse theory and {F}loer homology}.
\newblock Universitext. Springer, London; EDP Sciences, Les Ulis, 2014.
\newblock Translated from the 2010 French original by Reinie Ern\'{e}.

\bibitem[AMS23]{AMS}
Mohammed Abouzaid, Mark Mc{L}ean, and Ivan Smith.
\newblock Gromov-{W}itten invariants in complex-oriented generalised cohomology
  theories.
\newblock arXiv:2307.01883, 2023.

\bibitem[AS09]{AS4}
Alberto Abbondandolo and Matthias Schwarz.
\newblock A smooth pseudo-gradient for the {L}agrangian action functional.
\newblock {\em Adv. Nonlinear Studies}, 9, 2009.

\bibitem[AS18]{Arias-Abad-Schatz}
Camilo~Arias Abad and Florian Sch\"{a}tz.
\newblock Flat {$\mathbb{Z}$}-graded connections and loop spaces.
\newblock {\em Int. Math. Res. Not. IMRN}, (4):961--1008, 2018.

\bibitem[BC07]{BC07}
Jean-Fran\c{c}ois Barraud and Octav Cornea.
\newblock Lagrangian intersections and the {S}erre spectral sequence.
\newblock {\em Ann. of Math. (2)}, 166(3):657--722, 2007.

\bibitem[BDHO24]{BDHO-cotangent}
Jean-Fran\c{c}ois Barraud, Mihai Damian, Vincent Humili\`ere, and Alexandru
  Oancea.
\newblock Floer homology with {DG} coefficients. {A}pplications to cotangent
  bundles.
\newblock arXiv:2404.07953, 2024.

\bibitem[BHS19]{Banyaga-Hurtubise-Spaeth}
Augustin Banyaga, David Hurtubise, and Peter Spaeth.
\newblock Twisted {M}orse complexes.
\newblock arXiv:1911.07818, 2019.

\bibitem[BM04]{Bourgeois-Mohnke}
Fr{\'e}d{\'e}ric Bourgeois and Klaus Mohnke.
\newblock Coherent orientations in symplectic field theory.
\newblock {\em Math. Z.}, 248(1):123--146, 2004.

\bibitem[BO09]{BOauto}
F.~Bourgeois and A.~Oancea.
\newblock Symplectic homology, autonomous {H}amiltonians, and {M}orse-{B}ott
  moduli spaces.
\newblock {\em Duke Math. J.}, 146(1):71--174, 2009.

\bibitem[Bro59]{Brown1959}
Edgar~H. Brown, Jr.
\newblock Twisted tensor products. {I}.
\newblock {\em Ann. of Math. (2)}, 69:223--246, 1959.

\bibitem[Bro66]{RBrown1966}
R.~Brown.
\newblock Two examples in homotopy theory.
\newblock {\em Proc. Cambridge Philos. Soc.}, 62:575--576, 1966.

\bibitem[Cha17]{Charette2017}
Fran\c{c}ois Charette.
\newblock Floer homology of fibrations {I}: {R}epresenting flow lines in
  {M}oore path spaces.
\newblock arXiv:1709.03557, 2017.

\bibitem[CJS95]{cohen-jones-segal}
R.~L. Cohen, J.~D.~S. Jones, and G.~B. Segal.
\newblock Floer's infinite-dimensional {M}orse theory and homotopy theory.
\newblock In {\em The {F}loer memorial volume}, volume 133 of {\em Progr.
  Math.}, pages 297--325. Birkh\"{a}user, Basel, 1995.

\bibitem[CM95]{CarlssonMilgram}
Gunnar Carlsson and R.~James Milgram.
\newblock Stable homotopy and iterated loop spaces.
\newblock In {\em Handbook of algebraic topology}, pages 505--583.
  North-Holland, Amsterdam, 1995.

\bibitem[Dam12]{damian2012}
Mihai Damian.
\newblock Floer homology on the universal cover, {A}udin's conjecture and other
  constraints on {L}agrangian submanifolds.
\newblock {\em Comment. Math. Helv.}, 87(2):433--462, 2012.

\bibitem[Del86]{Deligne}
Pierre Deligne.
\newblock Letter to {J}.~{M}illson.
\newblock Available online at
  https://publications.ias.edu/sites/default/files/millson.pdf. {A}pril 24,
  1986.

\bibitem[DK69]{DyerKahn1969}
Eldon Dyer and Daniel~S. Kahn.
\newblock Some spectral sequences associated with fibrations.
\newblock {\em Trans. Amer. Math. Soc.}, 145:397--437, 1969.

\bibitem[Dol80]{Dold}
Albrecht Dold.
\newblock {\em Lectures on algebraic topology}, volume 200 of {\em Grundlehren
  der Mathematischen Wissenschaften [Fundamental Principles of Mathematical
  Sciences]}.
\newblock Springer-Verlag, Berlin-New York, second edition, 1980.

\bibitem[Fad59]{Fadell1959}
Edward Fadell.
\newblock On fiber spaces.
\newblock {\em Trans. Amer. Math. Soc.}, 90:1--14, 1959.

\bibitem[Fad60]{Fadell-review-Brown}
Edward Fadell.
\newblock Review of {E}dgar {H}. {B}rown, {J}r., ``{T}wisted tensor products.
  {I}". {A}nn. of {M}ath. (2), 69:223--246, 1959.
\newblock MathSciNet review MR0105687, 1960.

\bibitem[FH93]{FH-coherent}
Andreas Floer and Helmut Hofer.
\newblock Coherent orientations for periodic orbit problems in symplectic
  geometry.
\newblock {\em Math. Z.}, 212(1):13--38, 1993.

\bibitem[FHT95]{FelixHalperinThomas1995}
Yves F\'{e}lix, Steve Halperin, and Jean-Claude Thomas.
\newblock Differential graded algebras in topology.
\newblock In {\em Handbook of algebraic topology}, pages 829--865.
  North-Holland, Amsterdam, 1995.

\bibitem[Flo89]{Floer-Witten}
Andreas Floer.
\newblock Witten's complex and infinite-dimensional {M}orse theory.
\newblock {\em J. Differential Geom.}, 30(1):207--221, 1989.

\bibitem[God73]{Godement}
Roger Godement.
\newblock {\em Topologie alg\'ebrique et th\'eorie des faisceaux}.
\newblock Hermann, Paris, 1973.
\newblock Troisi{\`e}me {\'e}dition revue et corrig{\'e}e, Publications de
  l'Institut de Math{\'e}matique de l'Universit{\'e} de Strasbourg, XIII,
  Actualit{\'e}s Scientifiques et Industrielles, No. 1252.

\bibitem[GS08]{Gruher-Salvatore}
Kate Gruher and Paolo Salvatore.
\newblock Generalized string topology operations.
\newblock {\em Proc. Lond. Math. Soc. (3)}, 96(1):78--106, 2008.

\bibitem[Gug60]{Gugenheim60}
V.~K. A.~M. Gugenheim.
\newblock On a theorem of {E}. {H}. {B}rown.
\newblock {\em Illinois J. Math.}, 4:292--311, 1960.

\bibitem[Hol15]{Holstein}
Julian V.~S. Holstein.
\newblock Morita cohomology.
\newblock {\em Math. Proc. Cambridge Philos. Soc.}, 158(1):1--26, 2015.

\bibitem[HP22]{Hirschi-Porcelli}
Amanda Hirschi and Noah Porcelli.
\newblock Lagrangian intersections and cuplength in generalised cohomology.
\newblock arXiv:2211.07559, 2022.

\bibitem[Hur55]{Hurewicz1955}
Witold Hurewicz.
\newblock On the concept of fiber space.
\newblock {\em Proc. Nat. Acad. Sci. U.S.A.}, 41:956--961, 1955.

\bibitem[Hut08]{Hutchings-families}
Michael Hutchings.
\newblock Floer homology of families {I}.
\newblock {\em Alg. Geom. Topol.}, 8:435--492, 2008.

\bibitem[Lar21]{Large-thesis}
Tim Large.
\newblock {\em Spectral {F}ukaya {C}ategories for {L}iouville {M}anifolds}.
\newblock ProQuest LLC, Ann Arbor, MI, 2021.
\newblock Thesis (Ph.D.)--Massachusetts Institute of Technology. Available at
  {\tt https://dspace.mit.edu/handle/1721.1/139233}.

\bibitem[Lat94]{Latour}
Fran\c{c}ois Latour.
\newblock Existence de {$1$}-formes ferm\'{e}es non singuli\`{e}res dans une
  classe de cohomologie de de {R}ham.
\newblock {\em Inst. Hautes \'{E}tudes Sci. Publ. Math.}, (80):135--194 (1995),
  1994.

\bibitem[LV12]{LodayVallette-AlgOp}
Jean-Louis Loday and Bruno Vallette.
\newblock {\em Algebraic operads}, volume 346 of {\em Grundlehren der
  mathematischen Wissenschaften [Fundamental Principles of Mathematical
  Sciences]}.
\newblock Springer, Heidelberg, 2012.

\bibitem[Mac67]{MacLane-Homology}
Saunders MacLane.
\newblock {\em Homology}.
\newblock Die Grundlehren der mathematischen Wissenschaften, Band 114.
  Springer-Verlag, Berlin-New York, first edition, 1967.

\bibitem[Mal10]{Malm-thesis}
Eric~James Malm.
\newblock {\em String {T}opology and the {B}ased {L}oop {S}pace}.
\newblock ProQuest LLC, Ann Arbor, MI, 2010.
\newblock Thesis (Ph.D.)--Stanford University. Available online at {\tt
  https://purl.stanford.edu/vs885hy7344}.

\bibitem[Maz21]{Mazuir2}
Thibaut Mazuir.
\newblock Higher algebra of {$A_\infty$} and {$\Omega BAs$}-algebras in {M}orse
  theory {II}.
\newblock ar{X}iv:2102.08996, 2021.

\bibitem[McC01]{McC}
John McCleary.
\newblock {\em A user's guide to spectral sequences}, volume~58 of {\em
  Cambridge Studies in Advanced Mathematics}.
\newblock Cambridge University Press, Cambridge, second edition, 2001.

\bibitem[Par16]{Pardon-algebraic}
John Pardon.
\newblock An algebraic approach to virtual fundamental cycles on moduli spaces
  of pseudo-holomorphic curves.
\newblock {\em Geom. Topol.}, 20(2):779--1034, 2016.

\bibitem[Por24]{Porcelli}
Noah~W. Porcelli.
\newblock Families of relatively exact {L}agrangians, free loop spaces and
  generalised homology.
\newblock {\em Selecta Math. (N.S.)}, 30(2):Paper No. 21, 53, 2024.

\bibitem[PT22]{Porta-Teyssier}
Mauro Porta and Jean-Baptiste Teyssier.
\newblock Topological exodromy with coefficients.
\newblock ar{X}iv:2211.05004, 2022.

\bibitem[Qin10]{Qin-Lizhen}
Lizhen Qin.
\newblock On moduli spaces and {CW} structures arising from {M}orse theory on
  {H}ilbert manifolds.
\newblock {\em J. Topol. Anal.}, 2(4):469--526, 2010.

\bibitem[Rez21]{rezchikov}
Semen~Kirillovich Rezchikov.
\newblock {\em Floer {H}omology via {T}wisted {L}oop {S}paces}.
\newblock ProQuest LLC, Ann Arbor, MI, 2021.
\newblock Thesis (Ph.D.)--Columbia University. ar{X}iv:1909.01325.

\bibitem[Rie24]{Riegel}
Robin Riegel.
\newblock PhD thesis, Work in progress, 2024.

\bibitem[Sal90]{Salamon_BLMS90}
Dietmar Salamon.
\newblock Morse theory, the {C}onley index and {F}loer homology.
\newblock {\em Bull. London Math. Soc.}, 22(2):113--140, 1990.

\bibitem[Sch93]{Schwarz_book}
Matthias Schwarz.
\newblock {\em Morse homology}, volume 111 of {\em Progress in Mathematics}.
\newblock Birkh\"{a}user Verlag, Basel, 1993.

\bibitem[Ser51]{Serre1951}
Jean-Pierre Serre.
\newblock Homologie singuli\`ere des espaces fibr\'es. {A}pplications.
\newblock {\em Ann. of Math. (2)}, 54:425--505, 1951.

\bibitem[SG11]{StromGoodwillieMO}
Jeffrey Strom and Thomas Goodwillie.
\newblock The fiber of a {S}erre fibration.
\newblock
  https://mathoverflow.net/questions/53729/the-fiber-of-a-serre-fibration,
  2011.

\bibitem[Zho19]{Zhou-MB}
Zhengyi Zhou.
\newblock Morse-{B}ott cohomology from homological perturbation theory, to
  appear in {\it AGT}. ar{X}iv:1902.06587, 2019.

\bibitem[Zho23]{Zhou-ring}
Zhengyi Zhou.
\newblock On the cohomology ring of symplectic fillings.
\newblock {\em Algebr. Geom. Topol.}, 23(4):1693--1724, 2023.

\end{thebibliography}

%%%%%
% Listing of authors at the end of the paper for the "article" class
%
% Comment out when using the "amsart" class
% Uncomment when using the "article" class
% 
\bigskip

{\small

\medskip
\noindent Jean-François Barraud \\
\noindent Institut de mathématiques de Toulouse, Université Paul Sabatier – Toulouse
III, 118 route de Narbonne, F-31062 Toulouse Cedex 9, France.\\
{\it e-mail:} barraud@math.univ-toulouse.fr
\medskip

\medskip
\noindent Mihai Damian \\
\noindent Université de Strasbourg, Institut de recherche mathématique avancée, IRMA, Strasbourg, France.\\
{\it e-mail:} damian@math.unistra.fr 
\medskip

\medskip
\noindent Vincent Humili\`ere \\
\noindent Sorbonne Université and Université de Paris, CNRS, IMJ-PRG, F-75006 Paris, France\\
\noindent \& Institut Universitaire de France.\\
{\it e-mail:} vincent.humiliere@imj-prg.fr
\medskip

\medskip
\noindent Alexandru Oancea \\
\noindent Université de Strasbourg, Institut de recherche mathématique avancée, IRMA, Strasbourg, France.\\
{\it e-mail:} oancea@unistra.fr 
\medskip

}

\end{document}